\newcommand{\proj}{\operatorname{proj}}
\newcommand{\Dom}{\operatorname {Dom}}
\newcommand{\NE}{\operatorname {NE}}
\newcommand{\SL}{\operatorname {SL}}
\renewcommand{\sp}{{\ \ }}
\newcommand{\1}{{\mathbb 1 }}
\renewcommand{\Re}{\operatorname {Re}}
\renewcommand{\Im}{\operatorname {Im}}
\newcommand\hC{{\widehat{\mathbb C}}}
\newcommand\LL{{\mathcal L}}
\newcommand\NN{\mathcal{N}}
\newcommand{\Mandel}{{\mathcal{M}}}
\newcommand\MM{\mathscr{M}}
\newcommand\bMM{{\pmb {\MM}}}
\newcommand\binfty{{\infty}}
\newcommand\PT{\mathbb{ T}}
\newcommand\mbbS{{\mathbb S}}
\newcommand\mI{{\mathfrak I}}
\newcommand\M{\mathbb M}
\newcommand\YY{\mathcal{Y}}
\newcommand\HH{\boldsymbol{\Delta}}
\newcommand\Hors{{\mathcal H}} 
\newcommand\mHors{{\boldsymbol H}} 
\newcommand{\bDelta}{{\mathbb{\Delta}}}
\newcommand{\bPi}{{\mathbb{\Pi}}}
\newcommand{\skel}{{\mathfrak T}}
\newcommand{\bA}{{\mathbb{A}}}
\newcommand{\bB}{{\mathbb{B}}}
\newcommand{\bO}{{\mathbb{O}}}
\newcommand\aRR{\mathcal{R}^-}
\newcommand\RRR{{\mathfrak R}}
\newcommand\RR{\mathcal{R}}
\newcommand\cRR{R}
\newcommand\mRR{\boldsymbol R}
\newcommand\prm{{\operatorname{prm}}}
\newcommand\RRc{\mathcal{R}_{\prm}}
\newcommand\cRRc{R_{\prm}}
\newcommand\mRRc{ \boldsymbol R_{\prm}}
\newcommand\mF{\mathfrak{F}}
\newcommand\intr{\operatorname{int}}
\newcommand\seq{{\operatorname{\mathbf s}}}
\newcommand\Jul{{\mathfrak J}}
\newcommand\Kfilled{{\mathfrak K}}
\newcommand\Post{{\mathfrak P}}
\newcommand\ovPost{{\overline{\mathfrak P}}}
\newcommand\frakX{{\mathfrak X}}
\newcommand\frakY{{\mathfrak Y}}
\newcommand\frakZ{{\mathfrak Z}}
\newcommand{\bUpsilon}{{\mathbf \Upsilon}}
\newcommand{\bM}{{\mathbf M}}
\newcommand{\bF}{{\mathbf F}}
\newcommand{\tF}{{\mathfrak F}}
\newcommand{\bZ}{{\mathbf Z}}
\newcommand{\bL}{{\mathbf L}}
\newcommand{\bX}{{\mathbf X}}
\newcommand{\bY}{{\mathbf Y}}
\newcommand{\bW}{{\mathbf W}}
\newcommand{\bQ}{{\mathbf Q}}
\newcommand{\bR}{{\mathbf R}}
\newcommand{\bE}{{\mathbf E}}
\newcommand{\bD}{{\mathbf D}}
\newcommand{\bT}{{\mathbf T}}
\newcommand{\bJ}{{\mathbf J}}
\newcommand{\bS}{{\mathbf S}}
\newcommand{\bU}{{\mathbf U}}
\newcommand{\bP}{{\mathbf P}}
\newcommand{\bH}{{\mathbf H}}
\newcommand{\bG}{{\mathbf G}}
\newcommand{\bK}{{\mathbf K}}
\newcommand{\bI}{{\mathbf I}}
\newcommand{\bbf}{{\mathbf f}}
\newcommand{\bbn}{{\mathbf n}}
\newcommand{\bbk}{{\mathbf k}}
\newcommand{\bbs}{{\mathbf s}}
\newcommand{\bbh}{{\mathbf h}}
\newcommand{\btau}{{ \boldsymbol {\tau}}}
\newcommand{\bbg}{{\mathbf g}}
\newcommand{\Rect}{{\mathfrak R }}
\newcommand{\str}{{\star}}
\newcommand{\bxx}{{\mathbb{x}}}
\newcommand{\byy}{{\mathbb{y}}}
\newcommand{\mm}{\mathbb{m}}
\newcommand{\pp}{\mathfrak{p}}
\newcommand{\ee}{\mathbf e}
\renewcommand{\aa}{\mathfrak{a}}
\newcommand{\bb}{{\mathbf b}}
\newcommand{\vv}{\mathfrak{v}}
\newcommand{\ww}{\mathfrak{w}}
\renewcommand{\qq}{\mathfrak{q}}
\renewcommand{\tt}{\mathfrak{t}}
\newcommand{\ff}{\mathfrak{f}}
\newcommand{\rr}{\mathfrak{r}}
\renewcommand{\ss}{\mathfrak{s}}
\newcommand{\kk}{\mathfrak{k}}
\newcommand{\A}{\mathbf A}
\newcommand{\B}{\mathbf B}
\newcommand\wall{{Q}}
\newcommand\inn{{\Omega}}
\newcommand\binn{{\mathbb \inn}}
\newcommand\wgamma{{\widetilde \gamma}}
\def\loc{{\mathrm{loc}}}
\newcommand\strai{{\boldsymbol \chi}}
\newcommand\WW{\mathcal{W}}
\newcommand\QQ{\mathcal{Q}}
\newcommand\Unst{{\boldsymbol{\mathcal{W}}^u}}
\newcommand\UnstLoc{{\boldsymbol{\mathcal{W}}^u_\loc}}
\newcommand\UU{\mathcal{U}}
\newcommand\bUU{{ \boldsymbol{\mathcal U}}}
\newcommand\bVV{{ \boldsymbol{\mathcal V}}}
\newcommand\bWW{{ \boldsymbol{\mathcal W}}}
\newcommand\bOO{{ \boldsymbol{\mathcal O}}}
\newcommand\BB{\mathcal{B}}
\renewcommand\AA{\mathcal{A}}
\newcommand\scrP{\mathscr{P}}
\newcommand\PP{\mathcal{P}}
\newcommand\out{{\operatorname{out}}}
\newcommand\up{{\operatorname{up}}}
\newcommand\ext{{\operatorname{ext}}}
\newcommand\new{{\operatorname{new}}}
\newcommand\cp{{}}
\newcommand\Sieg{{\operatorname{Sieg}}}
\newcommand\Pacm{{\operatorname{Pacm}}}
\newcommand\frb{{\operatorname{frb}}}
\newtheorem{claim}{Claim}
\newtheorem{claim2}{Claim}
\newcommand{\wC}{\widehat{\mathbb{C}}}
\newcommand{\C}{\mathbb{C}}
\newcommand{\Q}{\mathbb{Q}}
\newcommand{\R}{\mathbb{R}}
\newcommand{\N}{\mathbb{N}}
\newcommand{\Left}{\mathfrak{L}}
\newcommand{\Right}{\mathfrak{R}}
\renewcommand{\H}{\mathbb{H}}
\newcommand{\oH}{\overline {\mathbb{H}}}
\newcommand{\Z}{\mathbb{Z}}
\newcommand{\T}{\mathbb{T}}
\newcommand{\Circle}{\mathbb{S}^1}
\newcommand{\Disk}{\mathbb{D}}
\newcommand{\ovDisk}{{\overline \Disk}}
\newcommand{\Parab}{\mathbb{P}}
\newcommand{\Lbb}{{\mathbb L}}
\newcommand{\Ibb}{{\mathbb I}}
\newcommand{\Sbb}{{\mathbb X}}
\newcommand{\Sby}{{\mathbb Y}}
 \newcommand{\Fol}{{\mathcal F}}
  \newcommand{\bFol}{{\boldsymbol{ \mathcal F}}}
\newtheorem{thm}{Theorem}[section]
\newtheorem{cor}[thm]{Corollary}
\newtheorem{lem}[thm]{Lemma}
\newtheorem{prop}[thm]{Proposition}
\newtheorem{rem}[thm]{Remark}
\newtheorem{property}[thm]{Property}
\newtheorem{conj}[thm]{Conjecture}
\theoremstyle{remark}
\numberwithin{equation}{section}
\theoremstyle{definition}
\font\nt=cmr7
\def\be{\begin{equation}}
\newcommand{\area}{{\operatorname{area}\ }}
\newcommand{\Fat}{{\mathfrak F}}
\newcommand{\Esc}{{\boldsymbol{\operatorname{Esc}}}}
\newcommand{\ParEsc}{{\boldsymbol{\mathcal E}}}
\newcommand{\DEsc}{{\boldsymbol{\mathcal D}}}
\newcommand{\bnd}{{\mathrm{bnd}}}
\newcommand{\ra}{\rightarrow}
\def\QL{{\text{QL}}}
\newcommand{\QG}{{\mathcal {QL}}}
\newcommand{\Conn}{{\textgoth M}}
\newcommand{\diam}{\operatorname{diam}}
\newcommand{\dist}{\operatorname{dist}}
\renewcommand{\mod}{\operatorname{mod}}
\newcommand{\orb}{\operatorname{orb}}
\newcommand{\id}{\operatorname{id}}
\newcommand{\CV}{{\operatorname{CV}}}
\newcommand{\CP}{{\operatorname{CP}}}
\newcommand{\balpha}{{\boldsymbol \alpha}}
\newcommand{\bgamma}{{\boldsymbol{ \gamma}}}
\newcommand{\bdelta}{{\boldsymbol{ \delta}}}
\newcommand{\brho}{{\boldsymbol{ \rho}}}
\newcommand{\bchi}{{\boldsymbol{ \chi}}}
\DeclareFontFamily{U}{mathb}{\hyphenchar\font45}
\DeclareFontShape{U}{mathb}{m}{n}{ <5> <6> <7> <8> <9> <10> gen * mathb <10.95> mathb10 <12> <14.4> <17.28> <20.74> <24.88> mathb12 }{}
\DeclareSymbolFont{mathb}{U}{mathb}{m}{n}
\DeclareMathSymbol{\selfmap}{3}{mathb}{"FD}
\newcommand{\RN}[1]{\MakeUppercase{\romannumeral#1}}%
\def\note#1
\theoremstyle{nonumberbreak}
\begin{document}

\title[MLC at satellite parameters]{Local connectivity of the Mandelbrot set\\ at some satellite parameters of bounded type}
\author{Dzmitry Dudko}
\author{Mikhail Lyubich}

\begin{abstract}

We explore geometric properties of the Mandelbrot set $\Mandel$, 
and the corresponding Julia sets $\Jul_c$,   near the main cardioid.
Namely, we establish that:  a)  $\Mandel$ is  locally connected  at certain infinitely renormalizable parameters $c$ of \emph{bounded satellite} type,  providing first examples of this kind; b) The Julia sets $\Jul_c$ are also locally connected
and have positive area; c) $\Mandel$ is self-similar  near  Siegel parameters of periodic type.  We approach these problems by analyzing the unstable manifold of the pacman renormalization operator constructed in \cite{DLS} as a global
transcendental family. It is the first occasion when external rays and puzzles of limiting transcendental maps are applied to study the Polynomial dynamics.

\end{abstract}
\maketitle

\setcounter{tocdepth}{1}

\tableofcontents

\section{Introduction}

\subsection{Main themes }
This paper touches upon several central themes of Holomorphic Dynamics:
Rigidity and MLC,  local connectivity and area of Julia sets, in their interplay with  Renormalization Theory. Developing further 
the {\em  Pacman Renormalization Theory} 
designed in \cite{DLS} (jointly with Nikita Selinger), we demonstrate that near-neutral dynamics can be studied as transcendental dynamics on renormalization unstable manifolds. As an application, we produce parameters of new kind, previously unaccessible,
where the Mandelbrot set is locally connected and whose Julia sets 
are locally connected and have positive area. 

The MLC problem (of local connectivity of the Mandelbrot set) goes back to the classical work 
of Douady and Hubbard from the 1980s. The original motivation was to produce a precise topological model for 
the Mandelbrot set $\Mandel$. Soon afterwards a deep connection to the Mostow Rigidity phenomenon was revealed, due to insights by Thurston and Sullivan.  
Around 1990s, due to the work of Yoccoz, the problem was closely linked to 
the Quadratic-like Renormalization, 
by reducing it to the case of  {\em infinitely renormalizable parameters}. 

{\em Quadratic-like Renormalization} appears in two flavors:
{\em primitive}  and {\em satellite}. 
The former generates stronger  expansion allowing for  a better control,
which has led to substantial progress in the past 20 years. 
In particular, MLC was established by Jeremy Kahn 
and the second author \cite{L:acta,KL1,KL2}
under the {\em molecule condition} when all the renormalizations
stay  uniformly away from the satellite type. 
This covers, in particular,  all parameters of {\em bounded primitive type}
\cite{K}.   

The satellite renormalization  is delicately related to the 
non-expanding rotation regime
near the main cardioid of $\Mandel$, and progress in 
understanding of this kind of phenomenon has been slower. 
Recently, Cheraghi and Shishikura \cite{CS},
using the Inou-Shishikura {\em  Almost Parabolic Renormalization}  \cite{IS},  
have constructed a certain set of parameters of {\em unbounded satellite type} where
MLC holds. In this paper we construct, by completely different methods,  first examples of  parameters of 
{\em bounded satellite type} where MLC holds.    
Moreover,  Julia sets are also locally connected at these parameters (``JLC'').

 Problem of area of Julia sets is intimately related to the MLC and JLC problems,
and progress in these three problem has been made in parallel. 
First examples of Julia sets of
positive area were constructed by Buff and Cheritat around 2006 \cite{BC}. 
Their machinery  has produced  examples of three types: 
{\em Cremer, Siegel,} and  {\em infinitely renormalizable of unbounded type}
(probably, all {\em non-locally-connected}).
In a more recent  work by Artur Avila and the second author \cite{AL-posmeas}, 
 {\em infinitely renormalizable } examples of {\em bounded primitive type}
were constructed (all {\em locally connected}). The machinery developed there applies to  maps constructed in this paper
giving first examples of Julia sets of positive area for 
 {\em infinitely renormalizable } maps of {\em bounded satellite type}
 (also locally connected).

 Our main tool is {\em Pacman Renormalization} developed in \cite{DLS}.
It combines features of two classical Renormalization Theories: {\em  Quadratic-like}  and {\em Siegel}.  
The latter   
originated in the 1980s in  physics literature, which yielded a  long-standing renormalization conjecture. 
In the 1990s  McMullen constructed a Siegel renormalization periodic point  and described its maximal analytic extension for 
any rotation number of periodic type \cite{McM3}. It was  proven in \cite{DLS} that this point is hyperbolic with one-dimensional unstable manifold $\WW^u$. (Let us note that in the mid 2000’s Inou and Shishikura proved the existence and hyperbolicity of Siegel renormalization fixed points of sufficiently high combinatorial type using a completely different approach, based upon the parabolic perturbation theory~\cite{IS}. On the other hand, Gaidashev and Yampolsky gave a computer assistant proof of hyperbolicity for the golden mean rotation number~\cite{GY}.)

In this paper we study the above unstable manifold $\WW^u$ as a one-parameter transcendental family. 

It was shown in~\cite{DLS} that every map in $\WW^u$ admits a maximal analytic extension to a $\sigma$-proper map onto $\C$. Using ideas of Transcendental Dynamics (compare~\cites{DK,  EL,E, SZ, RRRS,Re,BL,BR}), we construct ``external rays" and describe the associated 
``puzzle structure" for this family (\S\S \ref{s:Dyn F_str}--\ref{s:HolMot Esc}). This allows us to construct an appropriate quadratic-like family inside  $\WW^u$
(\S \ref{s:par pacm}). Using hyperbolicity of the pacman renormalization established in \cite{DLS}, we transfer this family along the associated {\em hybrid lamination} in the space of pacmen,
from $\WW^u$ to the quadratic family,  yielding desired parameters (\S \ref{s:proof:main thms}). Let us note that even though similarity between neutral and transcendental dynamics has long been observed (see~\cites{Ep,Sh:HD,SY,CS}), to the best of our knowledge, the external and puzzle structure of the associated transcendental families has never before been explored and applied to the polynomial dynamics; see~\S\ref{ss:Rem Transc Dynam} for a further discussion.

We remark in conclusion that the unstable manifold of a quadratic-like renormalization operator can be described in a similar way as it is done in this paper for a pacman renormalization operator. In fact, some steps are simpler for quadratic-like maps thanks to their nice external structure and a simpler algebraic structure of the associated cascade.

\begin{figure}[t!]
\[\begin{tikzpicture}[scale=1.3]

\coordinate  (w0) at (-3.9,1.5);

\node (v0) at (-2.9,1.4)  {};

\draw  (v0) ellipse (3.5 and 2);

\node[shift={(0.15,0)}]  at (w0)  {$\alpha$};

\coordinate (v1) at (-2.2,1.3) {};
\coordinate (v2) at (-0.6,2.1) {} {};
\coordinate (v3) at (-0.6,0.6) {} {};

  \draw[ line width=0.5pt,red] 
            (v3) 
            .. controls (v3) and (v1) ..
            (v1)
            .. controls (v1) and (v2) ..
            (v2)
            .. controls (-7.3,4.4) and (-7.1,-1.5) ..
            (v3);

\coordinate (w1) at (-4.72,2.42) {} {};

\draw[line width=0.5pt,red]  (w0)--(w1);

\coordinate (w2) at (-6.1,0.6)  {};
\draw  [line width=0.5pt]  (w0)--(w2);

\draw[line  width=0.5pt,-latex] (-3.1,1.6) .. controls (-2.6,2.1) and (-3.1,3.2) .. (-3.8,2.8);

\node at (-5.5,0.7) {$\gamma_1$};
\node at (-2.8,2.7) {$f$};
\node[red] at (-4.5,1.9) {$\gamma_0$};

\node[red] at (-1.2,1.6) {$\gamma_+$};
\node[red] at (-1.1,0.9) {$\gamma_-$};
\node at (-1.9,4.6) {};
\node[red] at (-3.5,0.8) {$U$};
\node at (-1.6,0.1) {$V$};
\node at (-2.3,1.3) {$\alpha'$};
\end{tikzpicture}\]
\caption{A (full) pacman is a $2:1$ map $f:U
\to V$ such that the critical arc $\gamma_1$ has $3$ preimages: $\gamma_0$, $\gamma_+$ and $\gamma_-$. }
\label{Fg:Pacman}
\end{figure}

\subsection{Statement of the results}
\label{ss:Stat of Results}
Let us pass to a more technical description. 
Let $c(\theta),\theta\in \R/\Z$, be the parameterization of the main cardioid $\partial \HH$ by the rotation number $\theta$. Consider the molecule map $\mRRc\colon \Mandel \dashrightarrow \Mandel$ (see~\cite[Appendix C]{DLS}); its action on $\partial \HH$ is given by
\begin{equation}
\label{eq:ActOnRotNumb}
\theta\longrightarrow \frac{\theta}{1-\theta}\sp \mbox{ if }0\le \theta \le \frac{1}{2};\quad \sp \theta\longrightarrow\frac{2\theta-1}{\theta} \sp \mbox{if }\frac{1}{2}\le \theta\le 1. 
\end{equation}
 Let us fix an  $\mRRc$-periodic point $c(\theta)\in \partial \HH$ with period $\mm$. Note that $z^2+c(\theta)$ is a Siegel polynomial.

Let $\Mandel_0$ be a small copy of the Mandelbrot set centered at the main molecule such that $\Mandel_0$ is close to $c(\theta)$. By the Yoccoz inequality, $\Mandel_0$ is contained in a small neighborhood of  $c(\theta)$. Define inductively $\Mandel_{n-1}$ to be the unique preimage of $\Mandel_{n}$ under $\mRRc^\mm$ so that $\Mandel_{n-1}$  is also in a small neighborhood of  $c(\theta)$; i.e.~the $\Mandel_n$ shrink to $c(\theta)$. For $n\le 0$ denote by $\mRR_n\colon \Mandel_n\to \Mandel$ the Douady-Hubbard straightening map.

A map $g\colon \C\dashrightarrow \C$ is $1+\varepsilon$-\emph{conformal} at $z_0\in \Dom g$ if $g$ has the derivative $g'(z_0)$ at $z_0$ such that the corresponding linear map approximates $g$ with an error term:
\[g(z_0+z)=g(z_0)+ g'(z_0) z +O(|z|^{1+\varepsilon}),\sp\sp z+z_0\in \Dom g.\]

\begin{thm}
\label{thm:main}
 There is a small copy $\Mandel_0$ of the Mandelbrot set close to $c(\theta)$ and centered at the main molecule such that the preimages $\Mandel_n$ of $\Mandel_0$ as above {\bf scale} linearly at $c(\theta)$: the map
\begin{equation}
\label{eq:thm:main}
\mRRc^\mm\colon \{c(\theta)\}\cup \bigcup_{n<0} \Mandel_n  \to \{c(\theta)\}\cup  \bigcup_{n\le 0} \Mandel_n
\end{equation}
is $1+\varepsilon$-conformal at $c(\theta)$. Moreover, for every $n\le 0$ we have
\begin{itemize}
\item {\bf rigidity}: the set $\displaystyle{\bigcap _{i\ge 0}\Dom (\mRR^i_n)}=\{c_n\}$ is a singleton; 
\item {\bf JLC}: the Julia set of $z^2+c_n$ is locally connected;
\item  {\bf positive measure:} the Julia set of $z^2+c_n$ has positive measure.
\end{itemize}
\end{thm}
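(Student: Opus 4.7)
The plan is to transfer the desired statement from a quadratic-like subfamily built \emph{inside} the one-dimensional unstable manifold $\WW^u$ of the hyperbolic pacman fixed point $f_*$ of combinatorial type $\mm$, to the standard quadratic family $z\mapsto z^2+c$, using the hybrid lamination in the space of pacmen. First I would recall from \cite{DLS} that the pacman renormalization operator has a hyperbolic fixed point $f_*$ whose single expanding eigenvalue $\rho$ lies along $\WW^u$, with the stable direction tangent to the hybrid class. Because maps in $\WW^u$ extend to $\sigma$-proper branched coverings of $\C$, they live in a transcendental-like universe; the external-ray and puzzle-piece machinery developed in \S\S\ref{s:Dyn F_str}--\ref{s:HolMot Esc} would be used to locate inside $\WW^u$ a quadratic-like subfamily whose connectedness locus is a ``baby Mandelbrot set'' $\Mandel^{\WW}_0$, together with its iterated pacman-preimages $\Mandel^{\WW}_n$ shrinking geometrically to $f_*$ at rate $\rho^{-n}$.

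\textbf{Scaling at $c(\theta)$.} Hyperbolic linearization of the $C^{1+\alpha}$ renormalization operator at $f_*$ makes the action of $\mRR^\mm$ on $\WW^u$ analytically conjugate to multiplication by $\rho$, hence $1+\varepsilon$-conformal at the fixed point. To transport this to parameter space, I would invoke the hybrid holonomy constructed in \S\ref{s:par pacm}: it sends $\Mandel^{\WW}_n$ biholomorphically onto a little copy $\Mandel_n\subset\Mandel$ close to $c(\theta)$, and conjugates $\mRRc^\mm$ (acting on the molecule map side) to the pacman renormalization on $\WW^u$. Smoothness of the hybrid holonomy, together with $1+\varepsilon$-conformality of the linearization on $\WW^u$, combines to give $1+\varepsilon$-conformality of \eqref{eq:thm:main} at $c(\theta)$.

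\textbf{Rigidity and JLC.} For each $n\le 0$, the construction realizes $c_n$ as an infinitely renormalizable quadratic whose deep renormalizations have combinatorics bounded by those of $\mm$ and the position inside $\Mandel_0^{\WW}$. To obtain the singleton intersection $\bigcap_i \Dom(\mRR_n^i)=\{c_n\}$ and local connectivity of the Julia set $J(z^2+c_n)$, I would pull back the pacman puzzle through the tower of renormalizations, using hyperbolicity to get uniform $\mod$-bounds between successive puzzle levels; these bounds force both parameter-piece diameters in $\Mandel$ and dynamical-puzzle-piece diameters in $J(z^2+c_n)$ to shrink to zero, yielding simultaneously rigidity and JLC in the spirit of the standard Yoccoz/Kahn--Lyubich argument, but with the satellite combinatorics now controlled through the pacman model rather than through primitive complex bounds.

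\textbf{Positive area, and the main obstacle.} For the area statement I would adapt the Avila--Lyubich scheme of \cite{AL-posmeas}: the uniform a priori bounds and the geometric shrinking of the pacman tower supply the quantitative ``thick trap'' estimates for the post-critical set in each fundamental annulus, and one measures the escaping set level-by-level as in the primitive case, with the pacman fundamental domains of $f_*$ replacing the quadratic-like fundamental annuli. The point where I expect the most work is not this last adaptation, nor the hyperbolic transfer, but the transcendental-dynamics step in Phase~1: verifying that the $\sigma$-proper extensions of maps in $\WW^u$ have sufficiently tame asymptotic behaviour (no accumulation of singular values, well-behaved tracts, controlled external-ray landing) for the puzzle machinery to cut out a genuine quadratic-like subfamily with a well-defined straightening onto $\Mandel$. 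Once that structure is secured, the remaining arguments are principled adaptations of techniques already in the literature.
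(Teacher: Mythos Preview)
Your overall strategy---build a quadratic-like family inside $\WW^u$, use hyperbolicity of the pacman operator, and transfer to the quadratic family via a stable lamination---matches the paper's architecture. However, there are two substantive divergences and one factual slip worth flagging.

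\textbf{Holonomy is not biholomorphic.} The hybrid holonomy along the stable lamination $\bFol$ (constructed in \S\ref{s:proof:main thms}, not \S\ref{s:par pacm}) is only quasiconformal; it becomes \emph{asymptotically} conformal near $f_\star$ by the $\lambda$-lemma. This is enough for $1+\varepsilon$-conformality of \eqref{eq:thm:main}, but you should not claim a biholomorphic transfer.

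\textbf{Rigidity and JLC take a different route.} You propose to pull back puzzle pieces and invoke Yoccoz/Kahn--Lyubich-style shrinking. The paper does neither. For rigidity, it sets up a \emph{homoclinic horseshoe}: composing $\RR^{\bbn-m}$ with a quadratic-like renormalization $\RR_\QL^{\bullet 3}$ and a Siegel return $\RR_\Sieg$ yields an operator $\RRR$ that is uniformly hyperbolic on a Cantor set of parameters (the orbit of $g\in\bFol_m$ spends arbitrarily many iterates near $f_\star$, so expansion/contraction rates blow up). Rigidity is then the standard fact that $\varepsilon$-close orbits of a hyperbolic map coincide. For JLC, the paper extracts \emph{unbranched a priori bounds}: the valuable-flower construction (\S\ref{s:val flow}) guarantees that the quadratic-like renormalization domain $O'$ meets $\Post(g)$ only in the small filled Julia set, with definite modulus; lifting this through the tower gives unbranched bounds at every deep level, and JLC follows from the classical criterion (\S\ref{sss:JLC}). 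Your puzzle-shrinking proposal is precisely where satellite combinatorics resists standard techniques---the Kahn--Lyubich complex bounds require the molecule condition, which fails here---so it is not clear your route would close without essentially rediscovering the valuable-flower/unbranched mechanism.

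\textbf{Positive area.} Your sketch is correct in spirit; the paper indeed adapts \cite{AL-posmeas} via trapping disks constructed at many pacman-renormalization scales (\S\ref{ss:PositArea}).
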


 In fact, we construct a horseshoe of parameters where local connectivity holds. We also show that $p_n(z)\coloneqq z^2+c_n$ has a forward invariant \emph{valuable flower} $X_n$ containing the postcritical set of $p_n$ such that $X_n$ is in a small neighborhood of the closed Siegel disk of $z^2+c(\theta)$. This is a partial case of the conjecture on the upper semi-continuity of the mother hedgehog, see~\cite{DLS}*{Appendix C}.

There are examples of infinitely renormalizable polynomials with non-locally connected Julia sets \cite{Mi}. The examples are based on near-parabolic effects when small Julia sets are forced not to shrink. This may actually be the only mechanism for non-locally connectivity of the Julia sets in the infinitely renormalizable case. See~\cites{Hu, Mi, J,McM2, L:acta, KL1, KL2} for classes of locally connected Julia sets. Our results demonstrate that the Julia sets behave nicely near Siegel parameters of bounded type. There is also  substantial progress in understanding near-parabolic Julia sets in the Inou-Shishikura class, see  \cites{CS,SY, Che }. 

It was shown in~\cite{Yarrington} that the Julia set of an infinitely renormalizable polynomial $p$ has measure $0$ if the renormalizations of $p$ stay ``sufficiently far away'' from the main molecule.  Our results indicate that if the renormalizations of $p$ are close to the Siegel maps, then the Julia set of $p$ inherits a positive mass from Siegel filled-in Julia sets. Thus, one may expect a certain monotonicity of the measure depending on how far the renormalizations of $p$ are away from the main hyperbolic component. This is also consistent with the Hubbard conjecture stating that the measure of the filled-in Julia set of every parameter in the main hyperbolic component is at least some universal $\varepsilon>0$. It is recently shown in~\cite{DS} that the classical Feigenbaum polynomial has Hausdorff dimension less than $2$, and consequently it has measure~$0$.

\subsection{Outline of the paper} {\bf Section~\ref{s:SectRenorm}} reviews combinatorial aspects of the pacman and maximal prepacman renormalizations. It starts by discussing the model case of disk rotations $\Lbb_\theta\coloneqq [z\mapsto \ee(\theta) z]\colon \overline \Disk\selfmap$ and the associated commuting pairs
\[(T_{\vv^-}, \sp T_{\ww})=(T_{-\vv}, \sp T_{\ww}) \coloneqq [z\mapsto z-\vv,\sp z\mapsto z+\ww] \colon \overline \H_-\selfmap,\]
where $\Lbb_\theta$ is the quotient of $(T_{\vv^-}, \sp T_{\ww})$ under $z\mapsto z+\vv+\ww$. We define the \emph{sector renormalization} acting on rotations of $\overline \Disk$ and the renormalization on commuting pairs, see~\S\ref{sss:renorm:pair of rotat}; the latter is a rescaled iteration of a given pair. Lemma~\ref{lem:PT:rot} summarizes basic properties of arising antirenormalization matrices.

In~\S\ref{s:sector renorm of homeo}, we extend the discussion to the cases of self-homeomorphisms and partial self-homeomorphisms of the disk $\overline \Disk$. Propositions~\ref{prop:beta:FunDom} and~\ref{prop:quot of fund domain} relate the dynamical planes of (partial) self-homeomorphisms of $\Disk$ to the dynamical planes of associated commuting pairs.

In {\bf Section~\ref{s:PacmanRenorm}}, we collect the background information on the pacman renormalization from~\cite{DLS}. A \emph{pacman} is an almost $2:1$ map with a covering structure illustrated on Figure~\ref{Fg:Pacman}, while a \emph{prepacman} (Figure~\ref{Fg:Prepacman}) is a commuting pair obtained by cutting a pacman along its ``critical arc $\gamma_1$''. For every rotational number of periodic type  $\theta=\mRRc^\mm(\theta)$, there is an associated analytic pacman renormalization operator $\RR\colon \BB\dashrightarrow \BB$ in a suitable Banach space $\BB$ with a hyperbolic fixed point $f_\str=\RR (f_\str)$, where $f_\str$ is a pacman that has a Siegel disk with rotation number $\theta$. We denote by $\WW^s$ and $\WW^u$ the stable and unstable manifolds of $f_\str$ respectively.

\emph{Maximal prepacmen.} A key fact is that every $f\in \WW^u$ has a \emph{maximal prepacman} (see~\eqref{eq:Max Prep U pm}): a prepacman of $f$ with an embedding into $\C$ such that both prepacman maps admit maximal extensions 
\[\bF= (\bbf_-\colon  \bX_-\to \C,\sp \bbf_+\colon  \bX_+\to \C),
\] 
as \emph{$\sigma$-proper coverings} of $\C$, i.e.~$\bbf_\pm \mid \bX_\pm$ is an increasing union of proper maps. The construction of a maximal prepacman goes as follows. Every $f\in \WW^u$ can be antirenormalized infinitely many times \[f=f_0,\sp f_{-1},\sp f_{-2},\dots,\sp \sp\sp\sp \RR f_{n-1}=f_n.\]
As Figure~\ref{Fig:Sf1dash} illustrates, we cut $f_0\colon U_0\to V_0$ along its critical arc $\gamma_1$ and then embed the sector $S_0\coloneqq V_0\setminus \gamma_0$ into the dynamical plane of $f_{-1}\colon U_{-1}\to V_{-1}$. Then we cut $f_{-1}\colon U_{-1}\to V_{-1}$ along $\gamma_1$ and embed $S_{-1}\coloneqq V_{-1}\setminus \gamma_{1}$ into the dynamical plane of $f_{-2}$. Continuing (and linearizing) this process, we construct $\bF=\bF_0$ as a ``direct union'' of $f_n\colon U_n\to V_n,\sp n\le 0$, cut along $\gamma_1$.

In the course of the construction of $\bF$, the $\alpha$-fixed point of $f$ ``goes to infinity''. To better  relate the dynamical planes of $\bF$ and $f$, we formally add the fixed point $\balpha$ to the dynamical plane of $\bF$ and introduce an appropriate ``wall topology'' for $\C\cup \{\balpha\}$ so that small neighborhoods of $\balpha(\bF)$ are ``full lifts (lifts followed by spreading around)'' of small neighborhoods of $\alpha(f)$, see~\S\ref{ss:balpha:wall topoloy}. We also introduce a ``renormalization triangulation'' for $\bF$ to control its dynamics near $\balpha$ (see~\S\ref{ss:RenTriang}) and to project dynamical objects from the $\bF$-plane to the $f$-plane, see Theorem~\ref{thm:quot of fund domain} (which is an application of Proposition~\ref{prop:quot of fund domain}).

\emph{Global unstable manifold.} Let $\UnstLoc =\{\bF\colon f\in \WW^u\}\simeq \WW^u$ be the space of maximal prepacmen. The operator $\RR$ acts on $\UnstLoc$ as the multiplication by $\lambda_\str$, where $\lambda_\str>1$. In the dynamical planes, $\RR(\bF)$ is a rescaled iteration of $\bF$ (see~\eqref{eq:iter of max prep}). Therefore, we can globalize $\UnstLoc\subset\Unst\simeq \C$. We can also view $\Unst$ as the space of rescaled limits of quadratic polynomials, see~\S\ref{ss:geom pict}. Therefore, a zoomed picture of the Mandelbrot set near ${c(\theta)}$ gives a good approximation to $\Unst$, see Figure~\ref{Fig:unst man+max sieg}.

We follow up with a discussion of  basic dynamical properties of maximal prepacmen in {\bf Section~\ref{s:max prep}.}

\emph{Cascade.} Consider a maximal prepacman $\bF\in \Unst$, and set $\bF_n\coloneqq \RR^n(\bF)$. For $n\le 0$ we denote by $\bF^\#_n=\big(\bbf^\#_{n,\pm}\big)$ the rescaled version of $\bF_n$ so that $\bbf_\pm=\bbf_{0,\pm}$ are iterates of $\bbf^\#_{n,\pm}$. The cascade \[\bF^{\ge 0}\sp\coloneqq \sp \big<
\id, \sp \bbf_{n,\pm}^\#:\sp n\le 0\big>\] is the semigroup generated by all $\bF^{\#}_n$; see~\S\ref{ss:PowerTriples} for an equivalent definition. It turns out that every map in $\bF^{\ge 0}$ can be written as $\bF^P$ with the usual power law $\bF^{P}\circ \bF^Q=\bF^{P+Q}$, where $P,Q$ belong to a dense semigroup $\PT$ of $\R_{\ge0}$.

\emph{Fatou, Julia, and escaping sets.} 
The \emph{Fatou set} $\Fat(\bF)$ of $\bF$ is the set where the family $\{\bF^P\}$ is normal. The \emph{Julia set} $\Jul( \bF)$ is the complement of the Fatou set. The cascade $\bF^{\ge0}$ has a single critical orbit: the set of critical values of $\bF^P$ is exactly $\displaystyle{\bigcup_{Q<P} \bF^Q\{0\}}$. The postcritical set of $\bF^P$ is $\Post(\bF)=\displaystyle{\bigcup_{Q\in \PT} \bF^Q\{0\}}$. The dynamics of $\bF^{\ge0 }$ is proper discontinuous in an appropriate sense; in particular, for every $P\in \PT$ and every $x\in \C$ the set $\displaystyle{\bigcup_{Q<P}\bF^{Q}\{x\}}$ is discrete, see Lemma~\ref{lem:discr of dyn}.

 Given $P\in \PT_{>0}$, the \emph{$P$-th escaping set}
 is
\[\Esc_P(\bF)\coloneqq \C\setminus \Dom(\bF^P).\] 
The \emph{escaping set} is
$\Esc(\bF)\coloneqq \bigcup_{P>0} \Esc_P(\bF).$ 
We have $\overline { \Esc(\bF)}=\Jul(\bF)$, see Corollary~\ref{cor:Jul is ovl Esc}.

In {\bf Section~\ref{s:Dyn F_str}}, we study the dynamical plane of the renormalization fixed point $\bF_\str$.
Since $\RR\bF_\str= \bF_\str$, the dynamical self-similarity ${A_\str\colon z\mapsto \mu_\str z}$ conjugates $\bF_\str ^P$ to $\bF_\str^{P\tt}$, where $P\in \PT$ and $\tt>1$, see~\S\ref{ss:PowerTriples}. This allows us to give an explicit description of the dynamical plane of $\bF_\str$. It has an invariant unbounded Siegel disk $\bZ_\str$ -- the rescaled limit of $Z_\str$, see Figure~\ref{Fig:unst man+max sieg}. Every Fatou component $\bZ_i$ of $\bF_\str$ is either $\bZ_\str$ or its preimage under a certain iterate $\bF_\str^T$ with $T\in \PT_{>0}.$ We prove in~\S\ref{ss:alpha-points} that each $\bZ_i$ is a bounded subset of $\C$. Moreover, if $T$ is minimal, then $\overline \bZ_i\cap \Esc_T(\bF_\str)=\{\alpha_i\}$ is a singleton, and $\bF_\str^T\colon \bZ_i\to \bZ_\str$ extends continuously to $\bF_\str^T\colon \overline \bZ_i\to \overline \bZ_\str\cup \{\balpha\}$ so that $\bF_\str^T(\alpha_
i)=\balpha$. We say that $\alpha_i$ is an \emph{alpha-point of generation $|\alpha_i|\coloneqq T$}.

Alpha-points are cut points of $\Esc(\bF_\str)$: each set $\Esc(\bF_\str)\setminus \{\alpha_i\}$ has two bounded components and one unbounded, see Figure~\ref{Fig:GeomScal}. Moreover, there is a unique curve in $\Esc(\bF_\str)$ connecting any two alpha-points. We write $\alpha_i\succ\alpha_j$ if $\alpha_i$ is in one of the bounded components of $\Esc(\bF_\str)\setminus\{\alpha_j\}$. It follows that $|\alpha_i|> |\alpha_j|$ and, moreover, there is a unique simple arc $[\alpha_i,\alpha_j]\subset \Esc (\bF_\str)$ connecting $\alpha_i$ and $\alpha_j$. We say that $[\alpha_i,\alpha_j]$ is a \emph{ray segment}. An \emph{external ray} is a maximal concatenation of ray segments, see~\S\ref{ss:ExtRayFstr}. We show that external rays have a tree structure: every two external rays eventually meet.

In {\bf Section~\ref{s:HolMot Esc}}, applying the Fatou and Riesz Theorems,  we show that the escaping set $\Esc_P(\bF)$ is the set of accumulation points of $\bF^{-P}(x)$ for all $x\in\C$. Then we deduce that $\Esc(\bF)$ moves locally holomorphically unless it hits an iterated preimage of $0$. Therefore, $\bF$ has the same external structure as $\bF_\str$ with appropriate adjustments when $0\in\Esc(\bF)$.

In {\bf Section~\ref{s:par pacm}}, we show that $\Unst$ contains certain ternary satellite small copies of the Mandelbrot set. The argument is illustrated on Figure~\ref{Fig:Rec Small copy} and goes as follows.  The renormalization Siegel fixed point $\bF_\str$ belongs to the boundary of the main hyperbolic component $\HH\subset \Unst$. Let $\bF_\rr\in \partial \HH$ be a parabolic prepacman close to $\bF_\str$. Then there is a satellite hyperbolic component $\HH_\rr\subset \Unst$ attached at $\bF_\rr$. In a small neighborhood of $\bF_\rr$, there is another parabolic prepacman $\bF_{\rr,\ss}$; let $\HH_{\rr,\ss}\subset \Unst$ be the secondary satellite hyperbolic component attached at $\bF_{\rr,\ss}$. For every $\bG\in \partial \HH_{\rr,\ss}\setminus \{\bF_{\rr,\ss}\}$, appropriate periodic rays in $\Esc(\bG)$ land together and form a quadratic-like domain (after a thickening) for the partial small copy $\bMM(\HH_{\rr,\ss})$ centered at $\HH_{\rr,\ss}$. We {\bf do not know} whether $\bMM(\HH_{\rr,\ss})$ is bounded or even complete -- this is related to the realization of parameter rays, see~\S\ref{ss:param rays}. However, there is a locally continuous straightening map $\chi\colon \bMM(\HH_{\rr,\ss})\to \Mandel$; using the Yoccoz inequality for $\Mandel$ and then $\chi^{-1}$, we can find a parabolic prepacman $\bF_{\rr,\ss,\tt}\in \partial \HH_{\rr,\ss}$ in a small neighborhood of $\bF_{\rr,\ss}$ together with a ternary small copy of the Mandelbrot set $\bMM_0= \bMM_{\rr,\ss,\tt}\subset \Unst$ attached to $\bF_{\rr,\ss,\tt}$. 

We set $\bMM_n\coloneqq \RR^n (\bMM_0)\subset \Unst, n\in \Z$, to be the renormalization orbit of $\bMM_0$.

In {\bf Section~\ref{s:val flow}}, we prove the Valuable Flower Theorem: for $n\ll 0$, if $\bG\in \bMM_n$, then the associated pacman $g\in \WW^u$ has a \emph{valuable flower} $X(g)$ around $\alpha(g)$ in a small neighborhood of $\overline Z(f_\str)$ defined as the ``combinatorial connected hull'' of the cycle of secondary small filled-in Julia sets, see~\S\ref{ss:ValFlow:pacman} and Figure~\ref{Fig:thm:proj of val flow}. In particular, $X(g)$ contains the postcritical set of $g$. We construct first the valuable flower $\bX(\bG)$ in the dynamical plane of $\bG$  (where external rays of $\bG$ help to control the location of $\bX(\bG)$) and then project to the $g$-plane using Theorem~\ref{thm:quot of fund domain}. We denote by $\MM_n\subset \WW^u$ the set of pacmen $g$ with $\bG\in \bMM_n$.

The valuable flower $X(g)$ labels the hybrid class of $g$: there is a unique quadratic polynomial $p\in \Mandel$ such that $p$ has a valuable flower $X(p)$ and $g$ and $p$ are hybrid conjugate in neighborhoods of their valuable flowers $X(g)$ and $X(p)$. It provides us with the straightening map from $\MM_n$ to the associated small copy $\Mandel_n\subset \Mandel$ containing all $p=p(g)$ with $g\in \MM_n$.

The proofs of the main results are collected in {\bf Section~\ref{s:proof:main thms}.} We first construct a \emph{stable lamination} in the space of pacmen as follows. For $\bbn\ll 0$, there is a local complex codimension-one lamination $\bFol_\bbn$ in a small neighborhood of $\MM_\bbn$ characterized by the property that pacmen in the same leaf are hybrid conjugate in neighborhoods of their valuable flowers, see~\S\ref{ss:StabLamin}. For $m\le \bbn$ we define $\bFol_m$ to be the pullback of $\bFol_\bbn$ under $\RR^{\bbn-m}$. By hyperbolicity of $\RR$,
\[\bFol\coloneqq \bigcup_{m\le \bbn} \bFol_m\cup \{\WW^s\}\] 
forms a codimension-one lamination.

\emph{Proof of the main results (rough outline).}  Theorem~\ref{thm:main} essentially follows from the hyperbolicity of $\RR$ combined with the holonomy along $\bFol$. The stable manifold $\WW^s$ intersects the slice of quadratic polynomials $\QQ$ at $c(\theta)$. For $n\ll 0$, the intersection of $\bFol_n$ with the (renormalized) slice $\QQ$ is the ternary copy $\Mandel_n$ of the Mandelbrot set. The map $\mRR\coloneqq \mRRc^\mm$ from~\eqref{eq:thm:main} factorizes as $\RR\mid \QQ$ postcomposed with the holonomy along $\bFol$ bringing $\RR(\QQ)\cap \bFol$ back to $\QQ$. Since the holonomy is  asymptotically conformal, the hyperbolicity of $\RR$ implies the scaling theorem.

For $k< n$, we have $\mRR_k=\mRR_n\circ \mRR^{n-k}$. Since \[\mRR^{n-k}\big( c(\theta) +z\big)\approx c(\theta) +\lambda_\str ^{n-k} z,\sp\sp \lambda_\str>1\] is expanding, the composition $\mRR_n\circ \mRR^{n-k}=\mRR_k$ is expanding for fixed $n<0$ and a sufficiently big $n-k \gg 0$. Therefore, the non-escaping set of $\mRR_n\colon \Mandel_n\to \Mandel$ consists of a single parameter $c_n$. This implies the rigidity part of Theorem~\ref{thm:main}.  


Let $g_n\colon U_n\to V_n$ be the quadratic-like renormalization fixed point associated with $\mRR_n\colon (\Mandel_n,c_n)\to (\Mandel,c_n)$, where $g_n$ is hybrid conjugate to $p_n(z)=z^2+c_n$. Consequently, $g_n$ has a quadratic-like restriction $g_n^{m(n)}\colon U'_n\to V'_n$ affinely conjugate to $g_n\mid U_n$. Such a structure already implies the JLC. In fact, a priori bounds for the $g_n$ are controlled by a quadratic-like domain of the copy $\bMM_0\subset \Unst$ discussed above; i.e. the a priori bounds for $g_n$ are uniform over $n$.

As $n\to -\infty$, the map $g_n\colon  U_n\to V_n$ tends to a Siegel quadratic-like map $g_\str\colon U_\str\to V_\str$ so that the valuable flower $X(g_n)$ approximates the Siegel disk $\overline Z(g_\str)$. For such an approximation, the Koebe-type estimates of~\cite{AL-posmeas}*{\S\S6.6--6.8} imply that for $n\ll 0$ the probability of the $g_n$-orbit of $z\in \overline Z(g_\str)$ to enter $U'_n$ is much higher than the probability of escaping $U_n$. By~\cite{AL-AMS} the Julia set of $g_n$ (and hence of $p_n$) has positive area. Since our notations are different, we will recap the argument in~\S\ref{ss:PositArea} -- see its beginning for a short outline and comparison with \cite{AL-posmeas}.

\subsection{Remarks about Transcendental and Neutral Dynamics} \label{ss:Rem Transc Dynam}It has been long observed that the dynamics near indifferent periodic points is  strikingly similar to the transcendental dynamics. For instance, the Julia sets at Cremer points look similar to ``Cantor bouquets'' -- the Julia sets of certain exponential maps \cite{dg}. This was a guiding idea for Shishikura's seminal work~\cite{Sh:HD}. The similarity between Neutral and Transcendental dynamics was broadly advertised in numerous talks by Shishikura, Epstein, Rempe, and others. It has become clear over time that the Cremer and Siegel phenomena are not as mysterious as they once were viewed. However, only recently they were rigorously explained in the Inou-Shishikura class~\cite{SY,Che}.

A substantial difficulty in the neutral renormalization theory is that arising maps (such as pacmen, see Figure~\ref{Fg:Pacman}) are {\bf not} genuine branched covering. It appears that some arising issues can be resolved by considering transcendental extensions on the unstable manifolds. In the 1990s, McMullen observed that renormalization periodic points (in both, quadratic-like and Siegel cases) admit maximal extensions as $\sigma$-proper branched coverings of the complex plane.\footnote{The domain of analyticity for the Feigenbaum renormalization fixed point was first studied by H. Epstein~\cites{E1,E2}.} In~\cite{DLS} we extended this result to every map on the unstable manifold $\WW^u$ of a Siegel renormalization periodic point. This was a key ingredient in our proof that $\dim \WW^u=1$. Roughly: since maximal prepacmen have a single critical orbit, they naturally form a one-dimensional space.

In the current paper, we construct external rays and develop the puzzle theory for the limiting transcendental family $\WW^u$. We also introduce a machinery to transfer results from $\WW^u$ to the dynamical planes of rational maps.  We believe that with further advancements in the Transcendental Dynamics (the theory of parameter rays, see \S\ref{ss:param rays}) and the Neutral Renormalization Theory (the full hyperbolicity over all combinatorics), the understanding of the near-Neutral Dynamics could be brought to an essentially complete form. For illustration, let us discuss below two central ideas from Sections~\ref{s:val flow} and \ref{s:proof:main thms}. 



\emph{Valuable flowers.} An essential ingredient in the constructions of positive area Julia sets in~\cites{BC, AL-posmeas} is the Buff-Cheritat lemma asserting that certain perturbations of a Siegel map $f$ have the postcritical sets in a small neighborhood of the Siegel disk $Z(f)$. The Buff-Cheritat lemma allows Koebe-type area estimates (see~\S\ref{ss:PositArea}) and is an application of the Almost Parabolic Renormalization Theory~\cite{IS}. 

A valuable flower is roughly the ``combinatorial connected hull'' of the postcritical set and is a near-neutral analogy of the filled Julia set of a polynomial. Just like filled Julia sets of polynomials depend upper semicontinuously on the parameter, we conjectured in~\cite{DLS}*{Appendix C.4} that Siegel disks/hedgehogs/valuable flowers depend upper semicontinuously on the parameter in (at least) the main molecule of the Mandelbrot set. In  Section~\ref{s:val flow}, we designed a soft argument for the upper-semicontinuity: once a valuable flower $ \bX(\bG)$ is recognized in the dynamical plane of a maximal prepacman $\bG$, the flowers $ \bX(\bG_n), \sp \bG_0=\bG,\sp n\le 0$, converge to the Siegel disk $\overline \bZ(\bF_\str)$ under the antirenormalization. The argument is based on the convergence in $\wC\setminus \bZ_\str$ of wakes of $\bG_n$ to the wakes of $\bF_\str$, see Lemma~\ref{lem:bW:bG to bFstr}.

We emphasize that the positive area property is almost automatic, once a valuable flower is constructed and sufficiently many antirenormalizations are taken, see Remark~\ref{rem:posit area}.


\emph{Hybrid lamination.}
It was shown in~\cite{L:FCT} that hybrid classes foliate the connectedness locus of the complex space of all quadratic-like maps. The argument can not be adopted to the space of pacmen -- they are not branched coverings. 

To construct hybrid lamination in the space of pacmen, we employ the renormalization. We first recognize maps on the unstable manifold labeled by their valuable flowers. This leads to a local lamination in a small neighborhood of recognized parameters. Pulling back the local lamination using the renormalization, we obtain a fairly dense hybrid lamination in a neighborhood of the stable manifold. 

If the full hyperbolic renormalization horseshoe is constructed for neutral renormalization, then the renormalization will be much more efficient in creating hybrid lamination -- it can be pulled back along various branches. Combined with Conjecture~\ref{conj:P-P relation}, this may lead a compete theory, see Remark~\ref{rem: pacman str map}.



\subsection*{Acknowledgments} The first author was partially supported by Simons Foundation grant of the IMS,  the ERC grant ``HOLOGRAM'', and the NSF grant DMS
2055532. The second author has been partly supported by the NSF, the Hagler and Clay Fellowships, the Institute for Theoretical Studies at ETH (Zurich), and MSRI (Berkeley).

We also thank Artur Avila for stimulating the discussion of the area problem.

Results of this paper were first announced at the conference in memory of Yoccoz, May 2017, Coll\`ege de France (Paris).

A number of our pictures are made with W. Jung's program \emph{Mandel}.

We thank the referee for carefully reading the paper and making many useful comments.

\section{Sector renormalization}
\label{s:SectRenorm}
In this section we refine the discussion of the sector renormalization from~\cite[Appendices A and B]{DLS}.

\subsection{Sector renormalization of rotations}
\label{ss:ren of rotat}

\begin{figure}[tp!]
\begin{tikzpicture}[  scale=1.3]

  \begin{scope}[shift={(0,0)}, scale =0.5]

\draw(0.,0.) circle (4.cm);

\draw[red ,shift={(0.,0.)}, fill=red, fill opacity=0.2]
 plot[domain=2.557797598845188:3.8669922442899227,variable=\t]({1.*4.*cos(\t r)+0.*4.*sin(\t r)},{0.*4.*cos(\t r)+1.*4.*sin(\t r)});

\draw[draw opacity=0, red, fill, fill opacity=0.2] (0.,0.)-- (-3.337507402085898,2.204777617135533)
  -- (-2.9929375178397235,-2.653737932484554)--(0.,0.);
  \draw[red ] (0.,0.)-- (-3.337507402085898,2.204777617135533)
   (-2.9929375178397235,-2.653737932484554)--(0.,0.);
    \draw[blue] 
   (1.26, 3.8)--(0,0);
   \draw (-0.42, 2.44) node {$\Sbb_-$};
   \draw (-2, 0) node {$\Sby$}; 
    \draw (1.5, -1.5) node {$\Sbb_+$}; 
    \draw (-0.62, 2.14) edge[->,bend right] node[left]{$\Lbb_\theta$}(-1.8,0.3);
    \draw (-2,-0.7) node {(delete)};
    \draw  (-1.8,-0.9) edge[->,bend right] node[left]{$\Lbb_\theta$}(-0.32, -2.5);
    \draw (1.94, 0.68) edge[->,bend right] node[right]{$\Lbb_\theta$}(-0., 2.24);
    \node at (-3.6, -2.94){$\Lbb_\theta(\Ibb)$};
    \node at (-3.6, 2.64){$\Ibb$};
\end{scope}

 \begin{scope}[ shift={(5,0)}, scale =0.5, rotate around={180:(0,0)}]
\draw(0.,0.) circle (4.cm);

\begin{scope}[rotate around={7:(0,0)}]
\draw[blue ,shift={(0.,0.)}, fill=blue, fill opacity=0.2]
 plot[domain=2.557797598845188:3.8669922442899227,variable=\t]({1.*4.*cos(\t r)+0.*4.*sin(\t r)},{0.*4.*cos(\t r)+1.*4.*sin(\t r)});

\draw[draw opacity=0, blue, fill, fill opacity=0.2] (0.,0.)-- (-3.337507402085898,2.204777617135533)
  -- (-2.9929375178397235,-2.653737932484554)--(0.,0.);
    \draw[blue ] (0.,0.)-- (-3.337507402085898,2.204777617135533)
   (-2.9929375178397235,-2.653737932484554)--(0.,0.);
\end{scope}  
  
  \node at (-4.3, 0){$1$};

   \draw[blue] (0,0)--(-4, -0);
   \draw (-3, -1.3) node {$\Sbb_+$};
   \draw (-2.92, 0.86) node {$\Sbb_-$};

    \draw (2,-0.7) node {\LARGE (delete)};
      \draw (-0.62, 2.14) edge[->,bend right] node[right]{$\Lbb_\theta$}(-1.8,0.3);

    \draw  (-1.8,-0.9) edge[->,bend right] node[right]{$\Lbb_\theta$}(-0.32, -2.5);
     \draw (1.94, 0.68) edge[->,bend right] node[left]{$\Lbb_\theta$}(-0., 2.24);
     \draw[fill]
     (0.22, -2.6) circle (0.05cm)
     (0.9, -2.5) circle (0.05cm)
     (1.6, -2.1) circle (0.05cm);
\end{scope}    
    
\end{tikzpicture}
\caption{Left: the prime renormalization deletes the smallest sector $\Sby$ between $\Ibb$ and $\Lbb_\theta(\Ibb)$ and projects $(\Lbb^2_\theta \mid \Sbb_-~,\sp \Lbb_\theta \mid \Sby_+)$ to a new rotation. Right: more generally, a sector renormalization realizes the first return map to a sector $\Sbb_-\cup\Sbb_+$}
\label{Fg:RenOfRotDisc}
\end{figure}

 Consider the rotation of the unit disk 
\begin{equation}
\label{eq:defn:Lbb}
  \Lbb_\theta\colon  \ovDisk\to \ovDisk,\sp z\to \ee(\theta) z
 \end{equation}
 by an angle $\theta\in \R/\Z$. Choose a closed internal ray $\Ibb$ of $\ovDisk$, and let $\Sby\subset\overline {\Disk}$ be the smallest closed sector between $\Ibb$ and $\Lbb_\theta(\Ibb)$, see the left side of Figure~\ref{Fg:RenOfRotDisc}. Consider \[\Sbb_-\coloneqq \Lbb_{\theta}^{-1}(\Sby)\sp \text{ and }\sp\Sbb_+\coloneqq \overline {\Disk\setminus\left( \Sby\cup \Sbb_-\right)}.\]
 Then 
\begin{equation}
\label{eq:ap:FRM:DeletSect}
(\Lbb_\theta\mid \Sbb_+,\sp  \Lbb^2_\theta\mid \Sbb_-)
\end{equation}is the first return of points in $\Sbb_-\cup \Sbb_+$ back to $\Sbb_-\cup \Sbb_+$. Let $\omega\in [0,1/2]$ be the angle of $\Sby$ at the vertex $0$. Assuming $1\not\in \Sby$, the map $z\to z^{1/(1-\omega)}$ projects~\eqref{eq:ap:FRM:DeletSect} to a new rotation $\Lbb_{\cRRc(\theta)}\colon \ovDisk\to \ovDisk$, called the \emph{prime renormalization} of $\Lbb_\theta$.  Direct calculations (see~\cite{DLS}*{Lemma A.1}) show that
 
\begin{equation}
\label{eq:R_prm}
\cRRc(\theta)  = \begin{cases} \frac{\theta}{1-\theta}& \mbox{if }0\le \theta \le \frac{1}{2}, \\
\frac{2\theta-1}{\theta} & \mbox{if }\frac{1}{2}\le \theta\le 1.
\end{cases}
\end{equation}
 
  More generally, a \emph{sector renormalization} $\RR$ of $\Lbb_\theta$ is (see Figure~\ref{Fg:RenOfRotDisc})
\begin{itemize}
\item a \emph{renormalization} sector $\Sbb$ presented as a union of two subsectors  $\Sbb_-\cup \Sbb_+$ normalized so that $1\in \Sbb_-\cap \Sbb_+$;
\item  a pair of iterates, called a sector \emph{pre-renormalization}, 
\begin{equation}
\label{eq:app:FirstReturnToSector}
\left(\Lbb_\theta ^\aa\mid \Sbb_- , \sp\sp \Lbb_\theta ^\bb\mid \Sbb_+\right)
\end{equation}
realizing the first return of points in $\Sbb_-\cup \Sbb_+$ back to $\Sbb$; and 
\item the gluing map 
\[\psi\colon \Sbb_-\cup \Sbb_+\to \ovDisk,\sp\sp z\to z^{1/\omega},\]
projecting~\eqref{eq:app:FirstReturnToSector} to a new rotation $\Lbb_\mu$,
where $\omega$ is the angle of $\Sbb$ at $0$.
\end{itemize}
  
  A sector renormalization is an iteration of the prime renormalization (see~\cite[Lemma A.2]{DLS}); in particular, $ \mu=\cRRc^m(\theta)$ for some $m\ge1$.

\subsubsection{Renormalization of pairs of translations}
\label{sss:renorm:pair of rotat}
 Let us now lift these renormalizations of rotations to the universal cover of $\overline {\Disk}^*\coloneqq \overline {\Disk}\setminus\{0\}$. Write 
\[ \H_-\coloneqq\{z\mid \Im(z)< 0\}.\] 
 Writing $\vv^-=-\vv$, the translations
\[T_{\vv^-}=T_{-\vv}\coloneqq T_{ -\theta}\colon z\mapsto z-\theta, \sp \sp T_{\ww}\coloneqq  T_{ 1-\theta }\colon z\mapsto z +1-\theta\]  are two lifts of $\Lbb_{\theta}\colon \overline {\Disk}^*\to \overline {\Disk}^*$ 
under 
\begin{equation}
\label{eq:sect ren:unv cover}
\ee_-\colon \sp z\mapsto e^{-2\pi i  z } \colon\sp \oH_- \to \overline {\Disk}^*.
\end{equation}
Since $\chi\coloneqq T_\ww\circ T_{\vv^-}^{-1}$ is a deck transformation of~\eqref{eq:sect ren:unv cover}, we have $\Lbb_\theta\simeq T_{-\vv}/\langle \chi\rangle $ as a map on $\ovDisk^*$.

\begin{figure}[tp!]
\begin{tikzpicture}
\draw  (-6,0) edge[->]   (2,0); 
\node[above] at  (2,0){$\vv^-$};
\draw  (0,4) edge[<-] (0,-1);
\node[right] at (0,4){$\ww$};

\draw  (-4,4)--  (1,-1); 
\node[right] at (1,-1){$\vv=\ww$}; 
\draw[blue] (-2.5,1) edge[bend left,->] node[below left] {$\left(\begin{matrix}
        1 &1\\0 &1
        \end{matrix}\right)$ }(-3.5,3.8); 
        
\draw[red] (-0.5,1) edge[bend right,->] node[above right] {$\left(\begin{matrix}
        1 &0\\1 &1
        \end{matrix}\right)$ }(-2.5,2.2);         

\end{tikzpicture}
\caption{The map $\cRRc$ is $2$-to-$1$ on $\R_{\le0}\times \R_{\ge0}$; it maps two $1/8$-subsectors to $\R_{\le0}\times \R_{\ge0}$.}
\label{Fig:act of prim ren}
\end{figure}

For a non-zero vector $\left(\begin{matrix}
        \vv^- \\
  \ww
        \end{matrix} \right)=\left(\begin{matrix}
        -\vv \\
  \ww
        \end{matrix} \right)$ in $\R_{\le0}\times \R_{\ge0}$ 
write, (see Figure~\ref{Fig:act of prim ren})

 \begin{equation}
\label{eq:R_prm:proj}
\left(\begin{matrix}
        -\vv_1 \\
  \ww_1
        \end{matrix} \right) =  
  \left(\begin{matrix}
        \vv^-_1 \\
  \ww_1
        \end{matrix} \right) = \cRRc \left(\begin{matrix}
        \vv^- \\
  \ww
        \end{matrix} \right)\coloneqq  
        \left\{ \begin{matrix}
          \left(\begin{matrix}
        \vv^-+\ww \\
         \ww
        \end{matrix} \right) &\sp \text{ if $\vv\ge \ww$},\\&&\\
          \left(\begin{matrix}
        \vv^- \\
  \ww+\vv^-
        \end{matrix} \right) &\sp \text{ if $\ww> \vv$}.
        \end{matrix} 
       \right.
 \end{equation}
and observe that~\eqref{eq:R_prm} is the \emph{projectivization} of~\eqref{eq:R_prm:proj} via 
 \begin{equation}
\label{eq:v w to theta}
\left(\begin{matrix}
        -\vv \\
  \ww
        \end{matrix} \right) \to \frac{\vv}{\vv+\ww}\eqqcolon\theta\in \R/\Z.        
\end{equation}

The \emph{prime pre-renormalization} of the commuting pair $T_{\vv^-},~T_{\ww}$ with $\vv^-,\ww\in \R_{\le 0}\times \R_{\ge0}$ is the commuting pair \[(T_{\vv^-_1},~T_{\ww_1})=\RRc(T_{\vv^-},~T_{\ww});\] it is obtained (see Figure~\ref{Fig:prime pre ren}) by replacing $T_{\vv^-}$ with $T_{\vv^-}\circ T_\ww$ if $\vv\ge \ww$, and by replacing $T_{\ww}$ with $T_{\vv^-}\circ T_\ww$ otherwise. We denote by $\chi_1\coloneqq T_{\ww_1}\circ T_{\vv^-_1}^{-1}$ the new deck transformation. If $\Lbb_\theta\simeq T_{\vv^-}/\langle \chi \rangle $ (i.e.~\eqref{eq:v w to theta} holds), then $\Lbb_{\cRRc(\theta)}\simeq T_{\vv^-_1}/\langle \chi_1 \rangle$ as a map on $\ovDisk^*$.

Let us now consider an iteration of~\eqref{eq:R_prm:proj}. Recall that matrices  $\left(\begin{matrix}
        1 &1\\0 &1
        \end{matrix}\right)$ and $\left(\begin{matrix}
        1 &0\\1 &1
        \end{matrix}\right)$ generate the modular group $\SL_2(\Z)$. We write  \[\SL^{(\mm)}_2(\Z)\coloneqq \left\{I_1I_2,\dots, I_m\mid I_i \in \left\{\left(\begin{matrix}
        1 &1\\0 &1
        \end{matrix}\right), \left(\begin{matrix}
        1 &0\\1 &1
        \end{matrix}\right) \right\}\right\},\] and we denote by $\SL^+_2(\Z)=\displaystyle \bigcup_{\mm\ge 0}\SL_2^{(\mm)}(\Z)$ the ``positive'' sub-semigroup of $\SL_2(\Z)$. The quadrant $\R_{\le 0}\times\R_{\ge 0}$ splits into $2^\mm$ equal closed sectors so that on each sector $S$ the map $\cRRc^\mm$ is equal to 
\begin{equation}
  \label{eq:sect to quarter}      
          x\mapsto \M x \colon\sp  S \twoheadrightarrow\R_{\le0}\times \R_{\ge 0}        
\end{equation} for a certain $\M\in \SL_2^{(\mm)}(\Z)$. (As a consequence, $\SL^+_2(\Z)$ is a free semigroup.)

Since $S$ is a proper subsector of $ \R_{\le0}\times \R_{\ge 0} $, the operator ~\eqref{eq:sect to quarter} has an eigenvector $\left(\begin{matrix}
        \vv^- \\ \ww
        \end{matrix}\right)\in S$,  unique up to scaling. Note that $\vv=-\vv^-,\ww>0$ unless $S$ is a boundary sector containing either $\left(\begin{matrix}
        -1\\0
        \end{matrix}\right)$ or $\left(\begin{matrix}
        0\\1
        \end{matrix}\right)$; in that case $\M \in \left\{ \left(\begin{matrix}
        1 &n\\0 &1
        \end{matrix}\right), \left(\begin{matrix}
        1 &0\\n &1
        \end{matrix}\right)\right\}$.

We assume that $S$ is not a boundary sector. Then all the entries of $\M$ are positive and $\M$ has two eigenvalues $\tt>1$ and $1/\tt<1$ so that
\begin{equation}
\label{eq:vv1 ww1}
 \left(\begin{matrix}
        \vv^-_1 \\
  \ww_1
        \end{matrix} \right) \coloneqq 1/\tt\left(\begin{matrix}
        \vv^- \\ \ww
        \end{matrix}\right) = \M \left(\begin{matrix}
        \vv^- \\ \ww
        \end{matrix}\right) .
  \end{equation}

\begin{figure}[tp!]
\begin{tikzpicture}
\draw (-7,0) -- (3.5,0);
\draw[red] (0,0 ) -- (0,-3)
(-6,0 ) -- (-6,-3)
(2.5,0 ) -- (2.5,-3);
\draw[red,dashed]  (-3.5,0 ) -- (-3.5,-3);
\draw[blue,bend right =20] (-0.1,-1) edge[->] node[below]{$T_{\vv^-}$}(-5.9,-1);
\draw[blue,bend left =30] (0.1,-1) edge[->] node[below]{$T_{\ww}$}(2.4,-1);
\draw[blue,bend right =10] (-0.1,-2.8) edge[->] node[below]{$T_{\vv^-_1}$}(-3.4,-2.8);
\draw[blue,bend left =10] (-3.4,-2) edge[->] node[below]{$\chi_1$}(2.4,-2);
\end{tikzpicture}
\caption{The prime pre-renormalization of a pair of translations $T_{\vv^-},~T_{\ww}$ replaces $T_{\vv^-}$ with $T_{\vv_1}\coloneqq T_{\vv^-}\circ T_{\ww}$ if $\vv\ge \ww$; in this case the new deck transformation is $\chi_1=T_{\vv}=T_{\ww_1}\circ T_{\vv^-_1}^{-1}$.}
\label{Fig:prime pre ren}
\end{figure}

Writing $\theta=\frac{\vv}{\vv+\ww}$, we see that $\theta =\cRRc^\mm(\theta)$, and we say that $\M$ is the \emph{antirenormalization matrix} associated with $\theta =\cRRc^\mm(\theta)$. It is easy to see that all periodic points of $\cRRc$ arise from the above construction.

\begin{lem} 
\label{lem:lambda:t*t}
For $\theta$ and $\tt$ as above, we have
 \[\left(\cRRc^\mm\right)'(\theta)=\tt^2.\]
\end{lem}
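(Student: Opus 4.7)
The idea is to express $\cRRc^\mm$ as an explicit M\"obius transformation coming from the linear action~\eqref{eq:sect to quarter} of $\M$ on $\R_{\le 0}\times\R_{\ge 0}$, and then compute its derivative using $\det \M = 1$.

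Writing $\M = \left(\begin{smallmatrix} a & b \\ c & d \end{smallmatrix}\right)$ and using $\vv^- = -\vv$, the rule~\eqref{eq:sect to quarter} becomes $\vv_1 = a\vv - b\ww$, $\ww_1 = -c\vv + d\ww$. Substituting $\vv = \theta(\vv+\ww)$ and $\ww = (1-\theta)(\vv+\ww)$ into $\theta_1 = \vv_1/(\vv_1 + \ww_1)$ and simplifying yields
\[
\cRRc^\mm(\theta) = \frac{(a+b)\theta - b}{(a+b-c-d)\theta + (d-b)} \eqqcolon \frac{P(\theta)}{Q(\theta)}.
\]
A short calculation (the $\theta$ terms cancel) gives $P'Q - PQ' = ad - bc = \det \M = 1$, hence $(\cRRc^\mm)'(\theta) = 1/Q(\theta)^2$ for all $\theta$ in the relevant subinterval.

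To finish, I would evaluate $Q$ at the fixed point. The simplification above also produces the identity $Q(\theta)\,(\vv + \ww) = \vv_1 + \ww_1$. At the fixed direction corresponding to the eigenvector $(\vv^-_*, \ww_*)$, relation~\eqref{eq:vv1 ww1} gives $\vv_{1,*} + \ww_{1,*} = (\vv_* + \ww_*)/\tt$, so $Q(\theta_*) = 1/\tt$. Therefore $(\cRRc^\mm)'(\theta_*) = \tt^2$.

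There is no real obstacle — the whole argument is a direct computation. The only subtle point is the sign change $\vv^- = -\vv$, which replaces the linear action of $\M$ by its conjugate via $\operatorname{diag}(-1,1)$; this does not affect the eigenvalues $\tt$ and $1/\tt$. Conceptually, the result reflects the general fact that the derivative of the projectivization of a $2\times 2$ operator at a fixed direction with eigenvalue $\la_1$ equals $\la_2/\la_1$; with eigenvalues $\tt$ and $1/\tt$ and the fixed direction having eigenvalue $1/\tt$, the ratio is $\tt^2$.
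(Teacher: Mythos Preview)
Your proposal is correct and follows essentially the same approach as the paper: both view $\cRRc^\mm$ as the M\"obius transformation induced by $\M$, and both reduce the derivative computation to the identity $(\cRRc^\mm)'(\theta) = \big((\vv+\ww)/(\vv_1+\ww_1)\big)^2$, which at the fixed eigendirection yields $\tt^2$ via~\eqref{eq:vv1 ww1}. Your write-up simply makes explicit the ``direct calculations'' the paper leaves to the reader (the formula $P'Q-PQ'=\det\M=1$ and the relation $Q(\theta)(\vv+\ww)=\vv_1+\ww_1$), and your closing remark about the projectivization eigenvalue ratio $\la_2/\la_1$ is exactly the conceptual content of the paper's one-line first proof.
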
        
 \begin{proof}
We can view $\theta$ as a fixed point of the M\"obius transformation induced by $\M$; its derivative $\left(\cRRc^\mm\right)'(\theta)$ is equal to $\tt^2$. 

Equivalently, direct calculations show that if $ \left(\begin{matrix}
        \vv^-_1 \\
  \ww_1
        \end{matrix} \right) = \cRRc \left(\begin{matrix}
        \vv^- \\
  \ww
        \end{matrix} \right)$, then
\[ \cRRc'\left(\frac{\vv}{\vv+\ww}\right)=\left(\frac{\vv+\ww}{\vv_1+\ww_1}\right)^2.\] If~\eqref{eq:vv1 ww1} holds, then $\frac{\vv+\ww}{\vv_1+\ww_1}=\tt$ and the claim follows.
 \end{proof}
 
\subsubsection{Cascade $\big(T^P\big)_{P\in \PT}$}
\label{sss:power-triples}
 Let us fix $\vv,\ww,\theta,\mm, \tt, \M$ as above so that, in particular,~\eqref{eq:vv1 ww1} holds. Observe that $\tt\not\in \Q$, because $\tt>1$, $\det \M=1$, but the entries of $\M$ are positive integers. We set $\cRR\coloneqq \cRRc^\mm$. For $n\in \Z$, write \[\vv_n\coloneqq \tt^{-n}\vv,\sp\sp \ww_n \coloneqq \tt^{-n} \ww;\]
then $(T_{\vv^-_n},\sp T_{\ww_n} )_{n\in \Z}$ is the full pre-renormalization tower: \begin{equation}
\label{eq:defn RR on transl}
\RR \left(T_{\vv^-_n},\sp T_{\ww_n} \right)=\left(T_{\vv^-_{n+1}},\sp T_{\ww_{n+1}} \right),
\end{equation}
where $\RR=\RRc^\mm$.

For an abelian semigroup
 \[\PT^{n}\coloneqq \{(n,a,b)\mid a,b \in\Z_{\ge 0}\}\simeq\Z_{\ge0}^2\]
we define the monomorphism
\[\sigma_n\colon \PT^{n}\hookrightarrow \PT^{n-1}\colon (n,(a,b))\mapsto (n-1,(a,b)\M).\]

 For a \emph{power-triple} $(n,a,b)\in \Z\times \Z_{\ge0 }\times \Z_{\ge0}$ we write 
\begin{equation} 
 \label{eq:T n a b}
 T^{(n,a,b)}\coloneqq T^a_{\vv^-_n}\circ T^b_{\ww_n}=T_{\tt^{-n}(b\ww-a\vv)},
\end{equation}
(Later on the non-invertible maximal prepacman $\big(\bbf_{n,-}^\#,~ \bbf_{n,+}^\#\big)$ will play the role of $(T_{\vv^-_n},~T_{\ww_n})$.)

\begin{figure}[tp!]

{\begin{tikzpicture}[description/.style={fill=white,inner sep=2pt}]
    \matrix (m) [matrix of math nodes, row sep=4.5em,
    column sep=10em, text height=1.5ex, text depth=0.25ex]
{     & {\PT^{n-1}}&\\
     { \Z^2_{\ge0}}& & {\R_{\ge 0}}\\
   &{\PT^{n}}&\\};
    \path[->,font=\scriptsize]
 (m-2-1) edge[dashed, ->] node[description] {$\simeq$} (m-1-2) 
 (m-2-1)  edge[dashed,->] node[description] {$\simeq$} (m-3-2) 
  (m-3-2)  edge[->] node[description] {$\M\simeq\sigma_n$}  (m-1-2) 
  (m-1-2)  edge[right hook->]  node[description] {$ \tt ^{n-1}\proj_\tt \simeq \iota $} (m-2-3) 
  (m-3-2)  edge[right hook->]  node[description] {$ \tt ^{n}\proj_\tt\simeq \iota  $} (m-2-3) ;

  \end{tikzpicture}}
\caption{Every $\PT^n$ is a copy of $\Z^2_{\ge0}$ and can be consistently embedded into $\R_{\ge 0}$. This induces the embedding $\iota$ of $\displaystyle{\PT\coloneqq \lim_{\longrightarrow }\PT^{n}}$ into $\R_{\ge 0}$. 
Note that dashed arrows do not commute with solid arrows.}
\label{Fig:Diagram T^n}
\end{figure}

Observe that 
\begin{equation}
\label{eq:T n a b=n-1 c d}
T^{(n,a,b)}=T^{\sigma_n(n,a,b)}.
\end{equation}
Indeed, $T^{(n,a,b)}$ is the translation by
\[(a,b) \left(\begin{matrix} v_n^- \\ w_n  \end{matrix} \right)=(a,b) \left(\M  \left(\begin{matrix} v_{n-1}^- \\ w_{n-1}  \end{matrix} \right)\right)=
\left( (a,b) \M \right)  \left(\begin{matrix} v_{n-1}^- \\ w_{n-1}  \end{matrix}\right);\]
thus $T^{(n,a,b)}= T^{(n-1, (a,b)\M)}$. 

We define the \emph{semi-group of power-triples} as the direct limit
\begin{equation}
\label{eq:defn:PT}
\PT\coloneqq \lim_{\longrightarrow }\PT^{n} =\{(x_i)_{i\le k}\mid k\in \Z, \sigma_i(x_i)=x_{i-1}\}.
\end{equation}
We also write $\PT=\displaystyle\bigcup_{n\in \Z} \PT^n=\{(n,a,b)\in \Z\times \Z^2_{\ge 0}\}/\sim$. By~\eqref{eq:T n a b=n-1 c d}, $\PT$ acts naturally on $\R$ by translations; i.e.~$(T^P)_{P\in \PT}$ is a \emph{cascade}. 

\begin{lem} 
\label{lem:PT:rot}
The action of $\PT$ on $\R$ is free: $T^{(n,a,b)}=T^{(m,c,d)}$ if and only if 
\begin{equation}
\label{eq:lem:PT:rot}
(a,b)\M^n = (c,d)\M^{m} .
\end{equation}

 Let $e_{\tt} \in \R^2_{>0},e_{1/\tt} \in \R_{<0}\times  \R_{>0}$ be the eigen-covectors (viewed as rows) of the eigenvalues $\tt$ and $1/\tt$ of $\M$:
\[e_{\tt} \M =\tt e_\tt,\sp\sp\sp  e_{1/\tt} \M =  (1 /\tt )e_{1/\tt}.\]  
   Decompose every covector $\omega \in \R^2_{\ge 0}$ as
\[\omega= \proj_\tt(\omega) e_\tt+\proj_{1/\tt}(\omega) e_{1/\tt},\sp \sp \proj_\tt(\omega)\in \R_{\ge 0},\sp \proj_{1/\tt}(\omega)\in \R.\] Then 
\[\iota(n,a,b)\coloneqq  \tt ^n\proj_\tt  \left(\begin{matrix}
          a \\ b
        \end{matrix}\right)\]
  induces an embedding of $\PT$ into $\R_{\ge 0}$  (see also Figure~\ref{Fig:Diagram T^n}) such that
\[
\iota(n-1,a,b)= \iota(n,a,b)/\tt.
\]

View now $\PT$ as a sub-semigroup of $\R_{\ge0}$. This turns $\PT$ into a linearly ordered semi-group with subtraction:
\[\text {if }\sp P\ge T,\sp\text{ then }\sp  P-T\in \PT,  \sp \sp \sp \sp P,T\in \PT \subset \R_{\ge 0} ;\]
 $P\mapsto \tt P\colon (n,a,b)\mapsto (n+1,a,b)$ is an automorphism of $\PT$; and:
\begin{equation}
\label{eq:T^P vs T^tP}
 T^P=A_{ \tt^{n}} \circ \left(T^{\tt^n P} \right) \circ A_{\tt^{-n}} ,
\end{equation}
where $A_{\tt^n}\colon z\mapsto \tt^n z$ is scaling.
\end{lem}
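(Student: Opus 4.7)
The plan is to handle the identity~\eqref{eq:lem:PT:rot} and the injectivity of $\iota$ together via one eigenspace/lattice calculation, then to read off the semigroup structure and the conjugacy formula directly.

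First I compute that $T^{(n,a,b)}$ is translation by $\tt^{-n}(b\ww-a\vv)=\tt^{-n}(a,b)\begin{pmatrix}-\vv\\\ww\end{pmatrix}$. Since $\begin{pmatrix}-\vv\\\ww\end{pmatrix}$ is a right $1/\tt$-eigenvector of $\M$, this translation amount equals $(a,b)\M^n\begin{pmatrix}-\vv\\\ww\end{pmatrix}$. Thus $T^{(n,a,b)}=T^{(m,c,d)}$ is equivalent to the integer row vectors $u\coloneqq(a,b)\M^n$ and $u'\coloneqq(c,d)\M^m$ pairing identically with $\begin{pmatrix}-\vv\\\ww\end{pmatrix}$. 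Because eigenvectors of $\M$ for distinct eigenvalues are orthogonal under the natural pairing, $e_\tt\begin{pmatrix}-\vv\\\ww\end{pmatrix}=0$, so this agreement forces $u-u'\in\R e_\tt$. Since $\tt\notin\Q$ (already noted in the text) and the eigenvectors of $\M$ have coordinates in $\Q(\tt)$, the line $\R e_\tt$ has irrational slope; therefore $\Z^2\cap\R e_\tt=\{0\}$ and $u=u'$, proving~\eqref{eq:lem:PT:rot}.

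For the map $\iota$: well-definedness $\iota(n-1,(a,b)\M)=\iota(n,a,b)$ follows from the left-eigen identity $\proj_\tt((a,b)\M)=\tt\,\proj_\tt(a,b)$, and the rescaling $\iota(n-1,a,b)=\iota(n,a,b)/\tt$ is immediate from the definition. Injectivity is the same lattice argument applied in the complementary direction: bringing two classes to a common scale, $\iota(P)=\iota(Q)$ means $\proj_\tt$ vanishes on an integer vector in the difference, which then lies on $\R e_{1/\tt}$ and hence equals $0$. That $\iota(\PT)\subset\R_{\ge 0}$ uses $e_\tt\in\R^2_{>0}$ and $e_{1/\tt}\in\R_{<0}\times\R_{>0}$: expanding $(1,0),(0,1)$ in the basis $(e_\tt,e_{1/\tt})$ yields positive $e_\tt$-coordinates for both, so every nonzero $\omega\in\R^2_{\ge 0}$ has $\proj_\tt(\omega)>0$. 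Linearity of $\proj_\tt$ then makes $\iota$ a semigroup homomorphism.

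The remaining statements follow. The linear order on $\iota(\PT)$ is inherited from $\R_{\ge 0}$. For subtraction: if $\iota(P)>\iota(T)$ with $P,T$ represented at common scale $n$ by $(n,a,b)$ and $(n,c,d)$, the vector $(a-c,b-d)\in\Z^2$ has $\proj_\tt>0$; applying $\M^k$ and decomposing, $(a-c,b-d)\M^k=\alpha\tt^k e_\tt+\beta\tt^{-k}e_{1/\tt}$, which for $k$ large is dominated by the first term and lies in $\R^2_{>0}$, giving a valid representative $(n-k,(a-c,b-d)\M^k)\in\PT$ of $P-T$. The automorphism $P\mapsto\tt P$ is $(n,a,b)\mapsto(n+1,a,b)$, preserves addition, and satisfies $\iota(\tt P)=\tt\,\iota(P)$ by inspection. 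Finally,~\eqref{eq:T^P vs T^tP} is a direct verification from the explicit translation amount in~\eqref{eq:T n a b}, expressing both sides as translations by the same value. The principal obstacle is the lattice/irrationality step, which appears twice (for~\eqref{eq:lem:PT:rot} and for injectivity of $\iota$); both instances reduce to the statement that the eigen-lines of $\M\in\SL_2(\Z)$ meet $\Z^2$ only at the origin, immediate from the irrationality of $\tt$.
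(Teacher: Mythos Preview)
Your proof is correct and follows essentially the same approach as the paper: both arguments reduce freeness and injectivity of $\iota$ to the fact that the eigen-lines of $\M\in\SL_2(\Z)$ meet $\Z^2$ only at the origin (equivalently, that $\vv,\ww$ and the coordinates of $e_{1/\tt}$ are rationally independent, since $\tt\notin\Q$), and both handle subtraction by applying a large power of $\M$ to push the difference into the positive quadrant. Your presentation is slightly more unified in making the eigen-line/lattice step explicit as the common mechanism, and your observation that $(a,b)\M^n\in\Z^2$ for all $n\in\Z$ (since $\M^{-1}\in\SL_2(\Z)$) lets you work directly at level $0$ rather than first passing to a common level as the paper does.
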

\begin{proof}
If the action of $\PT$ on $\R$ is not free, then there are $(n,a,b)\not= (n,c,d)$ such that
$T^{(n,a,b)}=T^{(n,c,d)}$; this is equivalent to $(a-c)\vv^-+(b-d)\ww=0$. Since $\tt\not\in \Q$, the coordinates $\vv^-$ and $\ww$ are rationally independent. Therefore, $a=c$ and $b=d$. 

Clearly, $\iota\colon \PT\to \R_{\ge0}$ is a homomorphism such that $\iota(n-1,a,b)= \iota(n,a,b)/\tt$. If $\iota$ was not an embedding, then there would be $(a,b)\not=(c,d)\in \Z^2_{\ge 0}$ with 
$\proj_\tt (
          a , b)= \proj_\tt  (
          c , d
        )$; i.e.~ $(
          a-c , b-d)$ is a non-zero integer covector parallel to $e_{1/\tt}$. This is impossible because coordinates of $e_{1/\tt}$ are rationally independent.

If $\iota(n,a,b)> \iota(n,c,d)$, then for sufficiently big $m\gg 0$ the covector
  \begin{align*}
 \left(
          a_m , b_m \right) \coloneqq & \left(
          a-c , b-d \right)\M^m \\ =& \tt^m\proj_\tt \left(
          a-c , b-d \right)   e_\tt +  \tt^{-m}\proj_{1/\tt} \left(
          a-c , b-d \right)  e_{1/\tt} 
\end{align*}
   has positive coordinates because $\tt>1$ and $\proj_\tt \left(
          a-c , b-d \right)  =\iota(n,a,b)- \iota(n,c,d)>0$. Therefore, $ (n,a,b)-(n,c,d)\simeq (n-m, a_m, b_m) \in \PT$. The remaining claims follow immediately from the definitions.
\end{proof}  

\subsubsection{Renormalization tilings}
\label{sss:ren tiling:transl}
Consider the following closed intervals
\[J_{0}(0)\coloneqq [-\vv,0],\sp\sp J_{0}(1)=[0,\ww].\]
Note that $\widetilde J_0\coloneqq J_{0}(0)\cup J_{0}(1)$ is a fundamental domain for the deck transformation $\chi$ and that
\[T_{\vv^-}\colon  J_{0}(1) \to \widetilde J_0,\sp T_{\ww}\colon  J_{0}(0) \to \widetilde J_0 \]
 realizes the first return of points in $\widetilde J_0$ back to $\widetilde J_0$ under the cascade $T^{\ge0}\coloneqq \big(T^P\big)_{P\in \PT}$.

\begin{figure}
\begin{tikzpicture}
  \tikzmath{
\b1=10*0.61803398875;
\a1=(0.61803398875-1)*10;
\x1=2.6180339928;
\x2=2.6180339928*2.6180339928;
}

\draw (\a1-0.3,0)--(\b1+0.3,0);

\draw (0,-0.2)-- (0,0.3);
\node[above] at (0,0.3){$0$};

\draw (\a1, -0.2)-- (\a1, 0.3);
\node[above] at (\a1,0.3){$-\vv_{-2}$};

\draw (\b1, -0.2)-- (\b1, 0.3);
\node[above] at (\b1,0.3){$\ww_{-2}$};

\node[red,above] at (-1.8,0){$J_{-2}(0)$};
\draw (-1.8,0.6) edge[->,bend left] node[below]{$T_{\ww_{-2}}$}  (-1.8+\b1,0.6);
\node[blue,above] at (3,0){$J_{-2}(1)$};
\draw (3,-0.1) edge[->,bend left=20] node[below]{$T_{-\vv_{-2}}$}  (3+\a1,-0.1);

\node[left] at(\a1-0.3,0){$\mathbb {J}_{-2}$};

\begin{scope}[shift={(0,-1)}]

\draw (\a1-0.3,-1.5)--(\b1+0.3,-1.5);
\node[left] at(\a1-0.3,-1.5){$\mathbb {J}_{-1}$};

\begin{scope}[shift={(0,-1.5)}]
\draw (\b1/\x1, -0.15)-- (\b1/\x1, 0.2);
\draw (0,-0.15)-- (0,0.2);
\node[blue,above] at (3/\x1,0){$J_{-1}(1)$};
\draw (3/\x1,-0.1) edge[->,bend left=20] node[below]{$T_{-\vv_{-2}}$}  (3/\x1+\a1,-0.1);
\end{scope}

\begin{scope}[shift={(0,-1.5)}]
\draw (\a1/\x1, -0.15)-- (\a1/\x1, 0.2);
\draw (0,-0.15)-- (0,0.2);
\node[red,above] at (-1.8/\x1,0){$J_{-1}(0)$};
\draw (-1.8/\x1,0.6) edge[->,bend left=15] node[below]{$T_{\ww_{-2}}$}  (-1.8/\x1+\b1,0.6);
\end{scope}

\begin{scope}[shift={(\a1,-1.5)}]
\draw (\b1/\x1, -0.15)-- (\b1/\x1, 0.2);
\draw (0,-0.15)-- (0,0.2);
\node[blue,above] at (3/\x1,0){$J_{-1}(-1)$};
\end{scope}

\begin{scope}[shift={(\a1+\b1,-1.5)}]
\draw (\b1/\x1, -0.15)-- (\b1/\x1, 0.2);
\draw (0,-0.15)-- (0,0.2);
\node[blue,above] at (3/\x1,0){$J_{-1}(2)$};
\end{scope}

\begin{scope}[shift={(\b1,-1.5)}]
\draw (\a1/\x1, -0.15)-- (\a1/\x1, 0.2);
\draw (0,-0.15)-- (0,0.2);
\node[red,above] at (-1.8/\x1,0){$J_{-1}(3)$};
\end{scope}

\end{scope}


\begin{scope}[shift={(0,-2)}]

\draw (\a1-0.3,-3)--(\b1+0.3,-3);
\node[left] at(\a1-0.3,-3){$\mathbb {J}_0$}; 
       
\begin{scope}[shift={(0,-3)}]
\draw (\b1/\x2, -0.15)-- (\b1/\x2, 0.2);
\draw (0,-0.15)-- (0,0.2);
\node[blue,above] at (3/\x2,0){$1$};
\draw (3/\x2,-0.1) edge[->,bend left=20] node[below]{$T_{-\vv_{-2}}$}  (3/\x2+\a1,-0.1);

\end{scope}

\begin{scope}[shift={(0,-3)}]
\draw (\a1/\x2, -0.15)-- (\a1/\x2, 0.2);
\draw (0,-0.15)-- (0,0.2);
\node[red,above] at (-1.8/\x2,0){$0$};
\draw (-1.8/\x2,0.6) edge[->,bend left=15] node[below]{$T_{\ww_{-2}}$}  (-1.8/\x2+\b1,0.6);
\end{scope}

\begin{scope}[shift={(\a1,-3)}]
\draw (\b1/\x2, -0.15)-- (\b1/\x2, 0.2);
\draw (0,-0.15)-- (0,0.2);
\node[blue,above] at (3/\x2,0){$-4$};
\end{scope}

\begin{scope}[shift={(\a1+\b1,-3)}]
\draw (\b1/\x2, -0.15)-- (\b1/\x2, 0.2);
\draw (0,-0.15)-- (0,0.2);
\node[blue,above] at (3/\x2,0){$4$};
\end{scope}

\begin{scope}[shift={(\a1+\b1+\a1,-3)}]
\draw (\b1/\x2, -0.15)-- (\b1/\x2, 0.2);
\draw (0,-0.15)-- (0,0.2);
\node[blue,above] at (3/\x2,0){$-1$};
\end{scope}

\begin{scope}[shift={(\a1+\b1+\a1+\b1,-3)}]
\draw (\b1/\x2, -0.15)-- (\b1/\x2, 0.2);
\draw (0,-0.15)-- (0,0.2);
\node[blue,above] at (3/\x2,0){$7$};
\end{scope}

\begin{scope}[shift={(\a1+\b1+\a1+\b1+\a1,-3)}]
\draw (\b1/\x2, -0.15)-- (\b1/\x2, 0.2);
\draw (0,-0.15)-- (0,0.2);
\node[blue,above] at (3/\x2,0){$2$};
\end{scope}

\begin{scope}[shift={(\a1+\b1+\a1+\b1+\a1+\a1,-3)}]
\draw (\b1/\x2, -0.15)-- (\b1/\x2, 0.2);
\draw (0,-0.15)-- (0,0.2);
\node[blue,above] at (3/\x2,0){$-3$};
\end{scope}

\begin{scope}[shift={(\a1+\b1+\a1+\b1+\a1+\a1+\b1,-3)}]
\draw (\b1/\x2, -0.15)-- (\b1/\x2, 0.2);
\draw (0,-0.15)-- (0,0.2);
\node[blue,above] at (3/\x2,0){$5$};
\end{scope}

\begin{scope}[shift={(\b1,-3)}]
\draw (\a1/\x2, -0.15)-- (\a1/\x2, 0.2);
\draw (0,-0.15)-- (0,0.2);
\node[red,above] at (-1.8/\x2,0){$8$};
\end{scope}

\begin{scope}[shift={(\b1+\a1,-3)}]
\draw (\a1/\x2, -0.15)-- (\a1/\x2, 0.2);
\draw (0,-0.15)-- (0,0.2);
\node[red,above] at (-1.8/\x2,0){$3$};
\end{scope}

\begin{scope}[shift={(\b1+\a1+\a1,-3)}]
\draw (\a1/\x2, -0.15)-- (\a1/\x2, 0.2);
\draw (0,-0.15)-- (0,0.2);
\node[red,above] at (-1.8/\x2,0){$-2$};
\end{scope}

\begin{scope}[shift={(\b1+\a1+\a1+\b1,-3)}]
\draw (\a1/\x2, -0.15)-- (\a1/\x2, 0.2);
\draw (0,-0.15)-- (0,0.2);
\node[red,above] at (-1.8/\x2,0){$6$};
\end{scope}

\end{scope}
\end{tikzpicture}

\caption{Renormalization tilings $\mathbb J_{0}, \mathbb J_{-1}, \mathbb J_{-2}$ for the golden mean combinatorics: $-\vv\approx -0.382$ and $\ww\approx 0.618$. The tiling $\mathbb J_{0}\cap [-\vv_2,\ww_2]$ is obtained by spreading around $J_0(0)$ and $J_0(1)$ using $T_{-\vv_2}$ and $T_{\ww_2}$. Similarly, $\mathbb J_{-1}\cap [-\vv_2,\ww_2]$ is obtained by spreading around $J_{-1}(0)$ and $J_{-1}(1)$ using $T_{-\vv_2}$ and $T_{\ww_2}$. Note that $\mathbb J_{n-1}$ is the rescaling of $\mathbb J_{n}$ by $\approx 2.618$.} 
\label{Fig:ren part of R}
\end{figure}
 
 The \emph{renormalization tiling} $\mathbb J_0$ of level $0$ (see Figure~\ref{Fig:ren part of R}) consists of closed intervals
\[\left\{T^P (J_0(0)) \mid P<(0,0,1)\right\}\cup \left\{T^P (J_0(1)) \mid P<(0,1,0)\right\} ,\] 
 i.e.~$\mathbb J_0$ consists of all the interval in the forward $T^{\ge0}$-orbits of $J_{0}(0) ,J_{0}(1)$ before they return back to $\widetilde J_0$. We say that 
 $\mathbb J_0$ is \emph{obtained by spreading} around $J_0(0)$ and $J_0(1)$ and we enumerate intervals in $\mathbb J_0$ by $\Z$ from left-to-right.

 Similarly, the \emph{renormalization tiling} $\mathbb J_n$ of level $n$ is defined: it is obtained by spreading around the intervals $J_{n}(0)\coloneqq [-\vv_n,0]$ and $ J_{n}(1)=[0,\ww_n]$. Note that $\mathbb J_n$ is the rescaling of $\mathbb J_0$ by $\tt^{-n}$.
 
\begin{lem}[Proper discontinuity]
\label{lem:transl:prop disc}
If $P\in \PT_{>0}\subset \R_{>0}$ is small, then $|T^P(0)|$ is large. 
\end{lem}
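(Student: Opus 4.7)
The plan is to show that $P\cdot|T^P(0)|$ is bounded below by a positive constant on $\PT_{>0}$; this immediately gives $|T^P(0)|\to\infty$ as $P\to 0^+$. Writing $P=(n,a,b)$ with $(a,b)\in\Z^2_{\ge 0}\setminus\{0\}$, equation~\eqref{eq:T n a b} at $z=0$ gives
\[
T^P(0)=\tt^{-n}(b\ww - a\vv) = \tt^{-n}\,(a,b)\, u_{1/\tt},
\]
where $u_{1/\tt}=(-\vv,\ww)^T$ is the eigenvector of $\M$ of eigenvalue $1/\tt$. Combined with the identity $P=\tt^n\proj_\tt(a,b)$ from Lemma~\ref{lem:PT:rot}, the powers of $\tt$ cancel, and the duality relations $e_\tt u_{1/\tt}=0=e_{1/\tt}u_\tt$ (valid for any $2\times 2$ matrix with distinct eigenvalues) yield
\[
P\cdot|T^P(0)| = \proj_\tt(a,b)\cdot|b\ww-a\vv| = \frac{\bigl|((a,b)\,u_\tt)\cdot((a,b)\,u_{1/\tt})\bigr|}{e_\tt u_\tt}.
\]

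The key step is to recognize the numerator $N(a,b)\coloneqq ((a,b)\,u_\tt)\cdot((a,b)\,u_{1/\tt})$ as (a scalar multiple of) the Galois norm form of the real quadratic extension $\Q(\tt)=\Q(\sqrt{\tr(\M)^2-4})$ over $\Q$. Since $\M$ has integer entries and Galois-conjugate eigenvalues $\tt,1/\tt$, the eigenvectors $u_\tt$ and $u_{1/\tt}$ are Galois conjugates of each other and can be chosen with coordinates in $\Z[\tt]$. With this choice, $N$ becomes a $\Z$-valued quadratic form on $\Z^2$. It is anisotropic because its zero set is the union of the two lines $\R u_\tt$ and $\R u_{1/\tt}$, both of irrational slope, so they meet $\Z^2$ only at the origin. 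Hence $|N(a,b)|\ge 1$ for every non-zero $(a,b)\in\Z^2$, and in particular $P\cdot|T^P(0)|\ge C>0$ for a constant $C$ depending only on $\M$.

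The main technical obstacle is bookkeeping around normalizations: one must fix a scaling of the eigenvectors so that $N$ takes integer values and then absorb the resulting factor into the constant $C$. This is entirely elementary, but if one wishes to avoid algebraic number theory altogether, the same conclusion follows from the softer observation that $N$ is a rational-coefficient anisotropic quadratic form, so its values on $\Z^2\setminus\{0\}$ form a discrete subset of $\R\setminus\{0\}$, which already suffices for the required lower bound.
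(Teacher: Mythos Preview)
Your argument is correct. The one point requiring care—which you rightly flag—is that the given $u_{1/\tt}=(-\vv,\ww)^T$ need not itself have entries in $\Q(\tt)$, so one cannot literally take its Galois conjugate; but rescaling $u_{1/\tt}$ (and hence $N$) by a nonzero real constant fixes this and is absorbed into $C$, exactly as you say. The softer variant at the end also works: a rational-coefficient quadratic form takes values in $\tfrac{1}{d}\Z$ for some $d$, so anisotropy over $\Q$ already forces $|N|\ge 1/d$ on $\Z^2\setminus\{0\}$.

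The paper takes a completely different, two-line route via the renormalization tiling of \S\ref{sss:ren tiling:transl}: the pair $(T_{\vv^-_n},T_{\ww_n})$ realizes the first return of $\widetilde J_n=[-\vv_n,\ww_n]$ to itself under the cascade $T^{\ge 0}$, so for $P<\min\{(n,0,1),(n,1,0)\}$ the point $T^P(0)$ has not yet returned and lies outside $\widetilde J_n$. Since $\widetilde J_n$ has length $\asymp\tt^{-n}$ while the threshold $\min\{(n,0,1),(n,1,0)\}$ is $\asymp\tt^n$, letting $n\to-\infty$ gives the result (and implicitly the same $|T^P(0)|\gtrsim 1/P$ bound you prove). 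The paper's argument is native to the tiling framework of the surrounding section and needs no algebra; yours is independent of that framework, makes the arithmetic of $\Q(\tt)$ do the work, and extracts the constant more explicitly.
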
 
\begin{proof}
For $n\ll0$, consider $\widetilde J_n\coloneqq J_{n}(0)\cup J_{n}(1)=[-\vv_n,\ww_n]$. By construction, if $P < \min \{(n,0,1),(n,1,0)\}$, then $T^P(0)\not\in \widetilde J_n$.
\end{proof}

\subsection{Combinatorics of close returns}
\label{ss:closed returns}
Consider the cascade $(T^P)_{P\in \PT}$ from \S\ref{sss:power-triples}. By Lemma~\ref{lem:PT:rot}, we view $\PT$ as a sub-semigroup of $\R_{\ge0}$.

For $P\in \PT_{>0}$, let $b_P=\T^{-P}(0)$ be the unique preimage of $0$ under $T^P$. Then $P$ is the \emph{generation} of $b_P$. We say that $b_P$ is \emph{dominant} if $[0,b_P]$ contains no $b_Q$ with $Q<P$. By definition, if $b_P$ and $b_Q$ are dominant such that $P<Q$, then $b_Q,0$ are on the same side of $b_P$.

Since every interval $[a,b]\subset \R$ contains at most finitely many $b_P$ with $P\le Q$ for every $Q$ (see Lemma~\ref{lem:transl:prop disc}), dominant points can accumulate only at $0$ and $\infty$. Moreover, if $b_P$ is close to $0$, then $P$ is big; while if $b_P$ is close to $\infty$, then $P$ is small. Therefore, we can enumerate all dominant points as $(b_{P_n})_{n\in \Z}$ such that $P_n>P_{n-1}$ for all $n\in \Z$. Suppressing indices, we write $b_n=b_{P_n}$.

By~\eqref{eq:T^P vs T^tP}, $b_P$ is dominant if and only if $b_{\tt P}=A_{1/\tt} (b_P)$ is dominant. Since $\{b_n\}$ are enumerated by their escaping time, there is a $k>0$ such that \[\tt P_i=P_{i+k}.\]
Let us also note that if $b_n,b_m$ are on the same side of $0$, then $b_n$ is closer to $0$ than $b_m$ if and only if $P_n>P_m$.

\begin{lem}
\label{lem:dynam of domin}
For every $[b_i,b_{i+1}]$ there is a $Q\in \PT_{>0}$ and $[b_n,b_m]$ with $i\ge m>n$ such that $T^Q$ maps $[b_i,b_{i+1}]$ to $[b_n,b_m]$.
\end{lem}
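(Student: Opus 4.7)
The plan is to exploit two symmetries of the cascade $(T^P)_{P\in\PT}$: the translation group law $T^{P+Q}=T^P\circ T^Q$ together with the subtraction property of $\PT$ (Lemma~\ref{lem:PT:rot}), and the self-similar rescaling $A_\tt$ from~\eqref{eq:T^P vs T^tP}, which conjugates $T^P$ to $T^{\tt P}$ and hence maps dominant preimages to dominant preimages with a fixed index shift $A_\tt(b_i)=b_{i-k}$ for some $k\in\Z_{>0}$ (the latter follows from the remark preceding the lemma, noting that scaling a dominant point by $1/\tt$ yields a dominant point whose $\PT$-index is $\tt$ times larger).

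First I would reduce to a compact range using $A_\tt$. Applying $A_\tt^N$ for a suitable $N\ge 0$ rescales the gap $[b_i,b_{i+1}]$ so that it becomes comparable in size to the fundamental window $\widetilde J_0=[-\vv,\ww]$ of the renormalization tiling $\mathbb J_0$ from \S\ref{sss:ren tiling:transl}. By the conjugation $A_\tt\circ T^Q\circ A_\tt^{-1}=T^{\tt Q}$, any translation $Q'\in\PT_{>0}$ that works for the rescaled gap lifts to $Q=\tt^{-N}Q'\in\PT_{>0}$ that works for the original one; moreover, the index shift by $Nk$ is uniform across all dominant points, so the required inequality $i\ge m>n$ is preserved under this reduction.

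Second, I would solve the problem in the compact window by fitting the gap into $\mathbb J_0$. Each tile of $\mathbb J_0$ is a translate $T^R(J_0(j))$ of one of the two basic pieces $J_0(0)=[-\vv,0]$ or $J_0(1)=[0,\ww]$, and the endpoints of the tile are dominant preimages whose generations are controlled by $R$ via~\eqref{eq:T n a b}. For each combinatorial position of $[b_i,b_{i+1}]$ inside $\mathbb J_0$, the required $Q$ is obtained as the translation that moves the tile containing the gap to its ``parent'' tile at the previous renormalization level $\mathbb J_{-1}$ (whose endpoints are necessarily further from $0$, hence have the required smaller generations $P_n$ and $P_m$).

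The main obstacle is the case in which $0\in[b_i,b_{i+1}]$ and the gap straddles the seam between $J_0(0)$ and $J_0(1)$; it is not then contained in a single tile of $\mathbb J_0$. There I would use the dominance hypothesis---no $b_Q$ with $Q<\min(P_i,P_{i+1})$ lies between $0$ and either endpoint of $[b_i,b_{i+1}]$---to identify $[b_i,b_{i+1}]$ with $\widetilde J_n$ for some $n\le 0$. The self-similar rescaling in Step~1 then reduces the straddling case to a non-straddling one handled in Step~2, completing the argument.
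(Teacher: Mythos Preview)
Your approach via the renormalization tilings $\mathbb J_n$ is genuinely different from the paper's, but it has real gaps. The most basic one is a mismatch of objects: the endpoints of tiles in $\mathbb J_n$ are \emph{forward} orbit points $T^P(0)$, whereas the $b_P=T^{-P}(0)$ you are tracking are \emph{preimages} of $0$. So when in Step~2 you translate a tile of $\mathbb J_0$ to its parent in $\mathbb J_{-1}$, nothing you have written guarantees that the images $T^Q(b_i)$ and $T^Q(b_{i+1})$ are themselves \emph{dominant} preimages---and that is exactly the content of the lemma. You would first need to prove that consecutive dominant gaps coincide with tiles of some $\mathbb J_n$ (up to reflection), which is not established.

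Step~3 is also circular. The scaling $A_\tt$ fixes $0$, so if $0\in[b_i,b_{i+1}]$ before rescaling it remains there afterward; the straddling case cannot be reduced to a non-straddling one this way. The identification $[b_i,b_{i+1}]=\widetilde J_n=[-\vv_n,\ww_n]$ is wrong as stated: the dominant points nearest $0$ on the two sides are $b_{(n,1,0)}=\vv_n>0$ and $b_{(n',0,1)}=-\ww_{n'}<0$, so a straddling gap has the form $[-\ww_{n'},\vv_n]$, not $[-\vv_n,\ww_n]$.

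The paper's proof avoids tilings and is a two-case argument based on the position of $0$ relative to $[b_i,b_{i-1}]$. If $0\in[b_i,b_{i-1}]$, set $Q=P_{i-1}$; then $T^Q(b_{i-1})=0$, and the dominance of $b_i$ and $b_{i+1}$ transports under the translation $T^Q$ to show that both $T^Q(b_i)$ and $T^Q(b_{i+1})$ are dominant with index $<i-1$. If $0\notin[b_i,b_{i-1}]$ (say $0<b_i<b_{i-1}$), set $Q=P_i-P_{i-1}$; then $T^Q(b_i)=b_{i-1}$ automatically, and one checks $T^Q(b_{i+1})=b_i$ by showing $T^{-Q}(b_i)$ is dominant and ruling out any index strictly between $i$ and $i+1$. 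In each case $Q$ is explicit and the dominance of both image endpoints comes from a short transport argument rather than any tiling structure.
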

\begin{proof}
Suppose first that $0\in [b_i,b_{i-1}]$, see Figure~\ref{Fig:comb of b_n}. Then $b_{i+1}\in [b_i,b_{i-1}]$. Set $Q\coloneqq P_{i-1}$. Observe that $T^Q(b_i)$ is of smallest generation in \[[T^Q(b_i),0]=T^Q[b_{i},b_{i-1}]\] among the $b_P$, while $T^Q(b_{i+1})$ is of smallest generation in \[[T^Q(b_{i+1}),0]=T^Q[b_{i+1},b_{i-1}].\] Therefore, $T^Q(b_{i})$ and $T^Q(b_{i+1})$ are dominant; i.e.~$T^Q(b_{i})=b_n$ and $T^Q(b_{i+1})=b_m$ for some $n,m<i-1$.

\begin{figure}[tp!]  
\begin{tikzpicture}
\begin{scope}

\draw (-6,0)--(4,0);
\draw[line width=2pt,blue] (1.5,0) -- (-0.8,0);
\draw[line width=2pt,blue] (-3.1,0)  -- (-5.5,0);
\filldraw[red] (0.5,0) circle (2pt) node[below]{$0$}
(1.5,0) circle (2pt)node[below]{$b_{i+1}$}
(3.5,0) circle (2pt) node[below]{$b_{i-1}$}
(-0.8,0) circle (2pt) node[below]{$b_{i}$}
(-3.1,0) circle (2pt) node[below]{$b_{n}$}
(-5.5,0) circle (2pt) node[below]{$b_{m}$};
\draw (3.4,0.2)  edge[->,bend right] node[above]{$T^{Q}$} (0.6,0.2); 

\draw[shift={(0.6,0)}] (-0.9,0.2)  edge[->,bend right] node[above]{$T^{Q}$} (-5.4,0.2);

\end{scope}

\begin{scope}[shift={(0,-2.3)}]
\draw (-6,0)--(4,0);
\filldraw[red] (-2,0) circle (2pt) node[below]{$0$}
(0.5,0) circle (2pt)node[below]{$b_{i}$}
(3.5,0) circle (2pt) node[below]{$b_{i-1}$}
(-3.8,0) circle (2pt) node[below]{$b_{i+1}$};
\draw (0.6,0.2)  edge[->,bend left] node[above]{$T^{Q}$} (3.4,0.2); 
\draw (-3.7,0.2)  edge[->,bend left] node[above]{$T^{Q}$} (0.4,0.2); 
\end{scope}
\end{tikzpicture}
\caption{Illustration to the proof of Lemma~\ref{lem:dynam of domin}. Above: the configuration $b_i<0< b_{i-1}$, then $b_{i+1}\in [b_i,b_{i-1}]$. Set $Q\coloneqq P_{i-1}$. Since $b_i$ dominates on $[b_i,b_{i-1})$, we see that $b_m$ is dominant; similarly $b_n$ is dominant. Below: the configuration $0< b_i< b_{i-1}$, then $b_{i+1}<b_i$. For $Q\coloneqq P_{i}-P_{i-1}$ we obtain $T^Q(b_{i+1})=b_i$ because $T^{-Q}(b_i)$ dominates in $T^{-Q}[0,b_{i-1})$. Note that in both configurations the relative position of $0$ and $b_{i+1}$ is irrelevant.}
 \label{Fig:comb of b_n}
\end{figure}
Suppose now that $b_i\in [0,b_{i-1}]$ and assume that $0< b_i< b_{i-1}$; the opposite case is symmetric. Set $Q\coloneqq P_i-P_{i-1}$. We claim that $T^{-Q}(b_i)$ is dominant and, moreover, $T^Q(b_{i+1})=b_i$. Indeed, since $b_i$ dominates (i.e.~has the smallest generation) in $[0,b_{i-1})$, we obtain that $T^{-Q}(b_i)$ dominates in $T^{-Q}[0,b_{i-1})=[T^{-Q}(0),b_i)\ni 0$. This implies that $T^{-Q}(b_i)$ is dominant, thus $T^{-Q}(b_{i})=b_{i+a}$ for some $a\ge 1$. 

Suppose $a>1$. Since $P_{i+a}>P_{i+1}$, the image $T^Q(b_{i+1})$ has smaller generation than $b_i$. Since $T^Q(b_{i+1})\in (b_{i+1},b_{i-1})$, we obtain that $T^Q(b_{i+1})$ is dominant. This contradicts the enumeration of $(b_i)_{i\in \Z}$: the generation of $T^Q(b_{i+1})$ is between the generations of $b_i$ and $b_{i-1}$. 
\end{proof}

\subsection{Sector renormalization of homeomorphisms}
\label{s:sector renorm of homeo}
Consider a homeomorphism $f\colon \ovDisk\to\ovDisk$ with $f(0)=0$. We denote by $\theta$ the rotation angle of ${f\mid \Circle}$. If $\theta\not\in \Q$, then $f\mid \Circle $ is semi-conjugate to the rotation $\Lbb_{\theta}\colon \Circle\to \Circle$, see~\eqref{eq:defn:Lbb}. As we will show in this section, sector (anti- and pre-) renormalization can be defined for homeomorphisms of $\ovDisk$ in the same way as for rotations. We will specify certain conditions ensuring the transfer of curves between different dynamical planes in the renormalization tower of $f$. The results also hold with necessary adjustments for partial homeomorphisms $f\colon \ovDisk\dashrightarrow\ovDisk$.

\subsubsection{Dividing arcs}
\label{sss:div arcs} Let $\gamma_0$ be a simple arc connecting $0$ to a point in $\partial \Disk$. Then $\gamma_0$ is called \emph{dividing} if $\gamma_0\cap f(\gamma_0)=\{0\}$. Clearly, $\gamma_0$ is dividing if and only if $\gamma_i\coloneqq f^i(\gamma_0)$ is dividing for all $i\in \Z$. Note that $\gamma_i$ and $\gamma_{i+j}$ can intersect away from $0$ if $j\ge 2$.

 The curves $\gamma_0, \gamma_1$ split $\ovDisk$ into two closed sectors $\A$ and $\B$ denoted so that $\intr \A, \gamma_1,\intr \B,\gamma_0$ are clockwise oriented around $0$, see the left-hand of~Figure~\ref{Fig:f and its 1 3 anti ren}. We say that $\gamma_0=\ell(\A)=\rho(\B)$ is the \emph{left boundary of $\A$} and the \emph{right boundary of $\B$} and we say that $\gamma_1=\rho(\A)=\ell(\B)$ is the \emph{right boundary of $\A$} and \emph{the left boundary of $\B$}. 
 
 Let $X,Y$ be topological spaces and let $g\colon X\dashrightarrow Y$ be a partially defined continuous map. We define
\[X\sqcup_g Y\coloneqq  X\sqcup Y/ (\Dom g \ni x\sim g(x)\in \Im g).\]

Consider two sectors $S_0,S_1\in \{\A,\B\}$; each $S_i$ is a copy of either $\A$ or $\B$. We define the map $g\colon \rho(S_0)\dashrightarrow \ell(S_{1})$ by
\begin{equation}
\label{eq:deff:g_k}
    g\coloneqq \begin{cases}
    \id \colon \gamma_1\to \gamma_1& \text{ if } (S_0,S_{1})\cong (\A,\B),\\
        \id \colon \gamma_0\to \gamma_0& \text{ if } (S_0,S_{1})\cong (\B,\A),\\
        f^{-1} \colon \gamma_1 \to \gamma_0& \text{ if } (S_0,S_{1})\cong (\A,\A),\\
          f \colon \gamma_0 \to \gamma_1& \text{ if } (S_0,S_{1})\cong (\B,\B).
    \end{cases}
\end{equation} The \emph{dynamical gluing of  $S_0, S_1$} is $S_0\sqcup_g S_1$. Similarly, the dynamical gluing is defined for any finite or infinite sequence $\seq=(S_i)_i\in \{\A,\B\}^k$ with $k\le \infty$. If $\seq$ is a finite sequence, then the result of the dynamical gluing is a closed sector. If, in addition, we glue the right boundary of the last sector of $\seq$ with the left boundary of the first sector of $\seq$, then the result is a closed topological disk.

\subsubsection{Prime antirenormalizations} 
The (clockwise) \emph{$1/3$ antirenormalization of $f$} is a homeomorphism $f_{-1}\colon W\to W$ such that (see Figure~\ref{Fig:f and its 1 3 anti ren})
\begin{itemize}
\item $W$ is a closed topological disk that is the dynamical gluing of sectors $W[0]$, $W[1]$, $W[2]$, where  $W[0]$ and $W[1]$ are copies of $\A$, while $W[2]$ is a copy of $\B$;
\item the map $f_{-1}\colon W[0]\to W[1]$ is the canonical isomorphism of copies of $\A$;
\item the map $f_{-1}\colon W[1,2]\to W[0,1]$ is identified with $f\colon W\setminus \gamma_0\to W\setminus \gamma_1$.
\end{itemize}

Similarly, the $p/q$-antirenormalization is defined for any rational number $p/q$, see~\cite{DLS}*{\S~B.1.3}. The $1/3$ and $2/3$ antirenormalization are called \emph{prime}. Any other antirenormalization is an iteration of prime antirenormalizations, see~\cite{DLS}*{Lemma B.3}.

It follows from direct calculations that:

\begin{lem}[Inverse to~\eqref{eq:R_prm}]
Let $f\colon \ovDisk \to \ovDisk$ be a homeomorphism and let $\mu\in (0,1)$ be the rotation number of $f\mid \Circle$.
\begin{itemize}
\item If $f_{-1}\colon \ovDisk\to \ovDisk$ is the (clockwise) $1/3$ antirenormalization of $f$, then the rotation number of $f_{-1}\mid \Circle$ is 
 \[\theta=\frac{\mu}{1+\mu}.\]
\item If $f_{-1}\colon \ovDisk\to \ovDisk$ is the (clockwise) $2/3$ antirenormalization of $f$, then the rotation number of $f_{-1}\mid \Circle$ is 
\[\theta= \frac{1}{2-\mu}.\]
\end{itemize}\qed
\end{lem}

\begin{figure}
\centering{\begin{tikzpicture}
\draw[red] (0,0) circle (2.5cm);

 \draw  (0,0) -- (2.5,0);
 \node at (2.15,0.15) {$\gamma_0$};
\draw[rotate around={155:(0,0)}] (0,0) -- (2.5,0);
\draw (-1, 1)node {$\gamma_1=f(\gamma_0)$};

\draw (0.7,1.8) node {$\B$};
\draw (0,-1.25) node {$\A$};

\begin{scope}[shift={(6,0)}]

\draw[red] (0,0) circle (2.5cm);

\draw  (0,0) -- (2.5,0); 
\draw[rotate around={90:(0,0)}] (0,0) -- (2.5,0);
\draw[rotate around={225:(0,0)}] (0,0) -- (2.5,0);
 
\draw (1.1,1.1) node {$\B$};
\draw (0.8,-1.65) node {$\A$}; 

\draw (-1.75,0.7) node {$\A$}; 

\draw[blue] (0.6,0.8) edge[<-, bend right] node[above]{$f$} (-0.8,0.5); 

\draw[blue] (-0.9,0.1) edge[<-, bend right] node[left]{$\id$} (0.1,-1);

\draw[blue] (0.7,-1.) edge[<-, bend right] node[right]{$f$} (0.9,0.6); 
 
\node at (2.15,0.15) {$\gamma_0$};
\node at (2.15,-0.2) {$\gamma_0$};

\node at (0.2,2.15) {$\gamma_1$};
\node at (-0.15,2.15) {$\gamma_1$};

\node at (-1.65,-1.4) {$\gamma_0$};
\node at (-1.3,-1.65) {$\gamma_1$};

\end{scope}

\end{tikzpicture}}
\caption{Left: a homeomorphism $f\colon W\to W$ and a diving pair $\gamma_0,\gamma_1$. Right: the $1/3$ antirenormalization of $f$ (with respect to the clockwise orientation).}
\label{Fig:f and its 1 3 anti ren}
\end{figure}

\subsubsection{Pre-antirenormalizations}
\label{sss:pre antiren}

Recall from~\eqref{eq:sect ren:unv cover} that $\ee_-\colon\oH_-\to \overline {\Disk}^*$ denotes the universal covering map from the lower half plane to the punctured disk. We enumerate preimages of $\gamma_0,\gamma_1$ under $\ee_-$ as $\wgamma_i$ from left-to-right such that $\ee_-$ maps $\wgamma_i$ to $\gamma_{i \mod 2}$. We specify the lifts $f_-$ and $f_+$ of $f$ such that 
\begin{itemize}
\item $f_-$ maps $\wgamma_0$ to $\wgamma_{-1}$; and 
\item $f_+$ maps $\wgamma_0$ to $\wgamma_{1}$.
\end{itemize}
Then $\chi\coloneqq f_+\circ f_-^{-1}$ is a deck transformation of $\ee_-$. Note that $f_-$ and $f_+^{-1}$ move points to the left of $\oH_-$ while $f_+$ and $f_-^{-1}$ move points to the right of $\oH_-$.

The \emph{$1/3$ pre-antirenormalization} of $(f_-, f_+)$ is the commuting pair
\begin{equation}
\label{eq:anti ren:1/3}
 (f_{-, \ -1} \coloneqq f_-\circ f_+^{-1}=\chi^{-1},\sp f_{+,\ -1}\coloneqq f_+).
 \end{equation}
Setting $\chi_{-1}\coloneqq f_{+,-1}\circ f_{-,-1}^{-1}$ and identifying $\ovDisk^*\simeq \oH_-/\langle \chi_{-1} \rangle$, we recover the $1/3$ antirenormalization of $f$: \[f_{-1}\coloneqq f_{-1,\ -}/\langle \chi_{-1} \rangle\colon \ovDisk\to \ovDisk,\sp\sp f_{-1}(0)=0, \]
see \cite{DLS}*{Lemma~B.4}.

Similarly, the \emph{$2/3$ pre-antirenormalization} of $(f_-, f_+)$ is the commuting pair
\begin{equation}
\label{eq:anti ren:2/3} (f_{-, \ -1} \coloneqq f_-,\sp f_{+,\ -1}\coloneqq f_+\circ f_{-}^{-1}=\chi).
\end{equation}
Setting $\chi_{-1}\coloneqq f_{+,-1}\circ f_{-,-1}^{-1}$ and identifying $\ovDisk^*\simeq \oH_-/\langle \chi_{-1} \rangle$, we recover the $2/3$ \emph{antirenormalization}\[f_{-1}\coloneqq f_{-1,\ -}/\langle \chi_{-1} \rangle\colon \ovDisk\to \ovDisk,\sp\sp f_{-1}(0)=0, \]
see \cite{DLS}*{Lemma~B.6}.

\subsubsection{Tower of pre-antirenormalizations}
Let us fix an irrational periodic point $\theta=\cRRc^\mm(\theta)$ of~\eqref{eq:R_prm}. Then $\cRRc^\mm$ has an inverse branch $\cRR^-$ mapping $\Circle\setminus \{0\}$ into a neighborhood of $\theta$. Let us specify the antirenormalization operator associated with $\cRR^-$. For $i\in \{1,\dots, \mm\}$, set $\aRR_i$ to be 
\begin{itemize}
\item the $1/3$ pre-antirenormalization if $\cRRc^i(\theta)\in (1/2,1)$, 
\item the $2/3$ pre-antirenormalization if $\cRRc^i(\theta)\in (0,1/2)$;
\end{itemize}
and set
\begin{equation}
\label{eq:aRR}
\aRR\coloneqq \aRR_1\circ \aRR_2\circ \dots\circ \aRR_m.
\end{equation}
This operator is inverse to $\RR$ from~\eqref{eq:defn RR on transl}. Since $\theta\not\in \Q$, both $1/3$ and $2/3$ pre-antirenormalizations appear in~\eqref{eq:aRR}.

We have the \emph{pre-antirenormalization tower}:
\[F_n\coloneqq (f_{n,-},f_{n,+})\coloneqq (\aRR)^{-n}(f_-,f_+)\sp\sp \text{ for }n\le 0.\]
Writing $\chi_n\coloneqq f_{n,-}^{-1}\circ f_{n,+}$ and identifying $\oH_-/\langle \chi_n \rangle \simeq \ovDisk^*$, we obtain the projection
\begin{equation}
\label{eq:rho_n:proj}
\brho_n \colon \oH_-\to \oH_-/\langle \chi_n \rangle \subset \ovDisk
\end{equation} semi-conjugating the pair $F_n$ to a homeomorphism $f_n\colon \ovDisk\to \ovDisk$, where $f_n(0)\coloneqq 0$. If $\mu$ is the rotation number of $f_0\mid \Circle$, then $\big(\cRR^-\big)^{-n}(\mu)$ is the rotation number of $f_n\mid \Circle$.

\begin{lem}
\label{lem:iter of pairs}
For $m\le n\le0$, we have:
\begin{itemize}
\item $f_{m,-}$ and $f_{m,+}^{-1}$  are compositions of $f_{n,-}$ and $f_{n,+}^{-1}$; and
\item $f_{m,+}$ and $f_{m,-}^{-1}$ are compositions of $f_{n,+}$ and $f_{n,-}^{-1}$.
\end{itemize}
\end{lem}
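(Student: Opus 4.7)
My plan is to prove the lemma by induction on the number of prime pre-antirenormalization steps separating levels $m$ and $n$. By the factorization~\eqref{eq:aRR}, the passage from $F_n$ to $F_m$ consists of $\mm(n-m)$ prime steps (each either a $1/3$ or a $2/3$ pre-antirenormalization) applied in succession; hence it suffices to verify the conclusion for one prime step and then iterate.

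For a single step, inspecting~\eqref{eq:anti ren:1/3} and~\eqref{eq:anti ren:2/3}, a pair $(g_-, g_+)$ is replaced by
\begin{equation*}
(g_-', g_+') \in \bigl\{ (g_-\circ g_+^{-1},\ g_+),\ (g_-,\ g_+\circ g_-^{-1}) \bigr\}.
\end{equation*}
In both options, $g_-'$ and $(g_+')^{-1}$ are compositions built only from the ``leftward'' letters $\{g_-, g_+^{-1}\}$, while symmetrically $g_+'$ and $(g_-')^{-1}$ are compositions built only from the ``rightward'' letters $\{g_+, g_-^{-1}\}$. Indeed, in the $1/3$ case $(g_-')^{-1} = g_+\circ g_-^{-1}$ and $(g_+')^{-1} = g_+^{-1}$, while in the $2/3$ case $(g_-')^{-1} = g_-^{-1}$ and $(g_+')^{-1} = g_-\circ g_+^{-1}$.

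To finish, I argue by downward induction on $k$ from $k=n$ to $k=m$ that $f_{k,-}$ and $f_{k,+}^{-1}$ are compositions of $f_{n,-}$ and $f_{n,+}^{-1}$, and that $f_{k,+}$ and $f_{k,-}^{-1}$ are compositions of $f_{n,+}$ and $f_{n,-}^{-1}$. The base case $k=n$ is trivial, and the inductive step follows by applying the single-step analysis above to $(f_{k,-}, f_{k,+})$ and substituting the inductive expressions for $f_{k,\pm}$ and $f_{k,\mp}^{-1}$ in terms of the level-$n$ generators. The hard part here is only bookkeeping — the real content is the observation that both prime pre-antirenormalizations respect the partition of the generating set into leftward and rightward letters, never mixing the two sides. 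No analytic subtlety enters.
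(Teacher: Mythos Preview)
Your proof is correct and follows exactly the same approach as the paper: reduce to a single prime pre-antirenormalization step via the factorization~\eqref{eq:aRR}, then read off the claim from~\eqref{eq:anti ren:1/3} and~\eqref{eq:anti ren:2/3}. The paper's proof is a one-sentence version of what you wrote out in detail.
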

\begin{proof}
The operator $\aRR$ is a composition of prime pre-antirenormalizations; the corresponding statement for a prime antirenormalization is immediate from the definition, see~\eqref{eq:anti ren:1/3} and \eqref{eq:anti ren:2/3}.
\end{proof}

\subsubsection{Cascade $F^{\ge0}=\big(F^P\big)_{P\in \PT}$}
Similar to~\eqref{eq:T n a b}, we define
\begin{equation} 
 \label{eq:F n a b}
 F^{(n,a,b)}\coloneqq f^a_{n,-}\circ f^b_{n,+},
\end{equation}
where $(n,a,b)\in \Z_{\le 0}\times \Z_{\ge0}^2$. 
As in~\S\ref{sss:power-triples}, $F^T$ depends only on the image of $(n,a,b)$ in the
semi-group of power-triples $\PT$, see~\eqref{eq:defn:PT}. 

Using Lemma~\ref{lem:PT:rot}, we view $\PT$ as a sub-semigroup of $\R_{\ge0}$.

\subsubsection{Renormalization triangulations}
\label{sss:RenTrian}

Let $\Delta_0(0)$ be the strip between $\wgamma_{-1}$ and $\wgamma_0$, and let 
$\Delta_0(1)$ be the strip between $\wgamma_0$ and $\wgamma_1$. We will refer to $\Delta_0(0)$ and $\Delta_0(1)$ as \emph{triangles}.

As in~\S\ref{sss:ren tiling:transl} we define the \emph{renormalization triangulation} $\bDelta_0$ of level $0$ to be
\[\left\{F^P (\Delta_0(0)) \mid P<(0,0,1)\right\}\cup \left\{F^P (\Delta_0(1)) \mid P<(0,1,0)\right\},\]
 i.e.~$\bDelta_0$ consists of all the triangles in the forward $F$-orbits of $\Delta_{0}(0) ,\Delta_{0}(1)$ before they return back to $\Delta_0(0,1)$. We say that $\bDelta_0$ is obtained by \emph{spreading around} $\Delta_{0}(0) $ and $\Delta_{0}(1)$. We enumerate triangles of $\bDelta_0$ from left-to-right as $\Delta_0(i)$ with $i\in\Z$.

Similarly, we define the triangulation $\bDelta_n$ for $n\le 0$. The triangle $\Delta_n(0)$ is bounded by $f_{n,-}(\wgamma_0)\cup \wgamma_0$, and the triangle $\Delta_n(0)$ is bounded by $\wgamma_0\cup f_{n,+}(\wgamma_0)$. The triangulation $\bDelta_n$ is obtained by spreading around  $\Delta_{n}(0) $ and $\Delta_{n}(1)$.

Note that $\bDelta_n(0,1)$ is a fundamental domain of $\chi_n$, and we have a projection $\brho_n\colon \bDelta_n(0,1)\to \ovDisk$ (see~\eqref{eq:rho_n:proj}) gluing the left and right boundaries of $\bDelta_n(0,1)$.

\subsubsection{Dynamics of triangles of $\bDelta_0$}
\label{sss:dyn of Delta}
 Consider the triangulation $\bDelta_0$. For every $i\in \Z$, we write $\seq[i]\coloneqq \B$ if $\Delta_0(i)$ is in the forward orbit of $\Delta_0(0)$ before it returns to $\Delta_0(0,1)$; and we write $\seq[i]\coloneqq \A$ if $\Delta_0(i)$ is in the forward orbit of $\Delta_0(1)$ before it returns to $\Delta_0(0,1)$.

We can view $\bDelta_0\simeq \oH_-$ as the dynamical gluing of $(\seq[i])_{i\in \Z}$ (see~\S\ref{sss:div arcs}), where the left boundary of $\seq[i+1]$ is glued with the right boundary of $\seq[i]$. The dynamics of triangles of $\bDelta_0$ is described as follows.    
  For every $i\in \Z$ there exists $P(i)$  such that 
\begin{itemize}
  \item if $\seq[i]=\B$, then $P(i)<(0,0,1)$ and 
\begin{equation}
\label{eq:isom B copies}
F^{P(i)}\colon \Delta_0(0)\to \Delta_0(i)
\end{equation}
 is an isomorphism of copies of $\B$; 
\item if $\seq[i]=\A$, then $P(i)<(0,1,0)$ and
\begin{equation}
\label{eq:isom A copies}
F^{P(i)}\colon \Delta_0(1)\to \Delta_0(i)
\end{equation}
is an isomorphism of copies of $\A$.
\end{itemize}   
Conversely, for every $P<(0,0,1)$ there exists $i$ such that $\seq[i]=\B$ and $P=P(i)$; and for every $P<(0,1,0)$ there exists $i$ such that $\seq[i]=\A$ and $P=P(i)$. On the other hand, for every $Q<\min \{(0,0,1),(0,1,0)\}$ there exists $j$ with 
\begin{equation}
\seq[j]=\B,\sp \seq[j+1]=\A,\sp P(j)= (0,0,1)-Q,\sp P(j+1)= (0,1,0)-Q
\end{equation}
such that 
\begin{equation}
\label{eq:bF^Q}
F^Q\colon \Delta_0(j,j+1)\to \Delta_0(0,1)\sp \text{ is identified with }\sp f\colon \ovDisk\setminus \gamma_0\to \ovDisk\setminus \gamma_1.
\end{equation}
In other words, $\bF^{-Q}\mid \Delta(0,1)$ is identified with $f^{-1}\mid \big(\ovDisk \setminus \gamma_1\big)$.

\subsubsection{Walls}
\label{sss:walls}
A \emph{wall around $0$ respecting $\gamma_0,\gamma_1$} is either a closed annulus or a simple closed curve $\Pi\subset\ovDisk$ such that
\begin{enumerate}
\item $\C\setminus \Pi$ has two connected components. Moreover, denoting by $\inn$ the bounded component of $\C\setminus \Pi$, we have $0\in \inn$.
\item $\gamma_0\cap \Pi$ and $\gamma_1\cap \Pi$ are connected.
\item if $x\in \inn$, then $f^{\pm 1}(x)\in \Pi\cup \inn$.
\end{enumerate}
In other words, points in $\ovDisk$ do not jump over $\Pi$ under one iteration of $f$. If $\Pi$ is a simple closed curve, then $f$ restricts to an actual homeomorphism $f\colon \overline \inn\to \overline \inn$.

We say that $\Pi$ is an \emph{$N$-wall} if it takes at least $N\ge 1$ iterates of $f^{\pm1}$ for points in $\inn$ to cross $\Pi$.  

By definition, $\Pi\cap \B$ and $\Pi\cap \A$ are connected closed rectangles (possibly degenerate if $\Pi$ is a curve). Let us denote by $\Pi(0)$ and $\Pi(1)$ the images of $\Pi\cap \B$ and $\Pi\cap \A$ under $\Delta_0(0)\simeq \B$ and $\Delta_0(1)\simeq \A$. The wall $\bPi$ is obtained by spreading around  $\Pi(0)$ and $\Pi(1)$
\[\bPi\coloneqq \left\{F^P (\Pi(0)) \mid P<(0,0,1)\right\}\cup \left\{F^P (\Pi(1)) \mid P<(0,1,0)\right\},\]
 i.e.~$\bPi$ consists of all the rectangles in the forward $F^{\ge0 }$-orbits of $\Pi(0) ,\Pi(1)$ before they return back to $\Delta_0(0,1)$.  We enumerate rectangles of $\bPi$ from left-to-right as $\Pi(i)$ with $i\in\Z$.

Every $\Pi(i)$ is a copy of either $\Pi\cap \B$ or $\Pi\cap \A$, and $\Pi(i)$ is glued to $\Pi(i+1)$ along $\id$ or $f^{\pm 1}$, see~\eqref{eq:deff:g_k}. Therefore, $\bPi$ is connected, and, moreover, if $\Pi$ is an $N$-wall, then $\bPi$ is an $(N-1)$-wall: it takes at least $N-1$ iterates of $f_{\pm}$ for a point to cross $\bPi$.

\subsubsection{The boundary point $\balpha$}
\label{sss:balpha}
Let us add the \emph{boundary point} $\balpha$ to $\oH_-$ which will be the preimage of $0$ under $\brho_n\colon \bDelta_n(0,1)\sqcup \{\balpha\}\to \ovDisk$. 

We now introduce the \emph{wall topology} $\Xi$ on $\oH_-\sqcup \{\balpha\}$. For a wall $\bPi\subset \oH_-$, let $Q'_{\bPi}\subset \oH_-$ be the connected component of $\C\setminus \bPi$ below $\bPi$, i.e.~$Q'_{\bPi}\not\ni 0$. We write $Q_{\bPi}\coloneqq Q'_{\bPi} \sqcup \{\balpha\}$. In the topology $\Xi$ of $\oH_-\sqcup \{\balpha\}$ open sets are generated by open sets of $\oH_-$ and $\{Q_{\bPi} \colon {\bPi\text{ is a }wall}\}$.

\begin{figure}
 \begin{tikzpicture}
 
\draw (-5,0) -- (5,0); 
\draw[red] (1,0)
 .. controls (2,-3) and (4,-3.8) ..
(5,-4);

\draw[red,shift={(-3,0)}] (1,0)
 .. controls (2,-3) and (4,-3.8) ..
(5,-4)
 .. controls (6,-4.3) and (7,-4.3) ..
(8,-4.3) ;

\draw[blue, dashed] (-2,0) -- (-2,-5);
\draw[blue, dashed] (1,0) -- (1,-5);

\draw (-1.9,-4.8) edge[->,bend left] node[above]{$\chi\colon z\mapsto z+\ww$}(0.9,-4.8); 

\node[red] at (-1,-1.3){$\beta$};
\node[red] at (2.1,-1.3){$\chi(\beta)$};

\node[red] at (2.1,-3.3){$S$};

\end{tikzpicture} 
\caption{The sector $S$ between $\beta$ and $\chi(\beta)$ is not a fundamental domain of $\chi\colon z\mapsto z+\ww$ because not every orbit passes trough $S$.}
\end{figure}

\begin{rem} It can be shown that $\Xi$ is independent of the choice of $\gamma_0$. 
\end{rem}

\subsubsection{Fundamental domains}
A simple arc $\beta\colon [0,1)\to \oH_-$ is \emph{dividing for $n$} if 
\begin{enumerate}
\item $\beta (0)\in \partial \H_-$ and $\beta(t)\in \H_-$ for $t>0$;
\item $\beta$ lands at $\balpha$ with respect to the wall topology $\Xi$; and\label{case:beta lands}
\item $\beta$ does not intersect $f_{n,-}(\beta)$ and $f_{n,+}(\beta)$.
\end{enumerate}For example, $\wgamma_i$ are dividing arcs.

If $\beta$ is dividing, then $\oH_-\setminus \beta$ has two connected components. (Indeed, ~\eqref{case:beta lands} implies that $\displaystyle\lim_{t\to 1} \beta(t)=\infty$.) We denote the closures of the connected components of  $\oH_-\setminus \beta$  by $\Left$ and $\Right$ enumerated so that $\Left$ is on the left of $\beta$ (i.e.~$\Left\ni x$ for $x\ll0$) while $\Right$ is on the right of $\beta$ (i.e.~$\Right\ni x$ for $x\gg 0$).

\begin{lem}
\label{lem:F:motion of pts}
For every $m\le n$, the maps $f_{m,-}, f_{m,+}^{-1}$ move points to the left:
\[f_{m,-} (\Left)\subset \Left \sp \text{ and } f_{m,+}^{-1} (\Left)\subset \Left,\]
while $f_{m,+}, f_{m,-}^{-1}$ move points to the right:
\[f_{m,-}^{-1} (\Right)\subset \Right \sp \text{ and } f_{m,+} (\Right)\subset \Right\]
\end{lem}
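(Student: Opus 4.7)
The plan is to reduce to the base case $m=n$ via Lemma~\ref{lem:iter of pairs}, and then extract the base case from the defining properties of a dividing arc together with the prescribed ``direction'' of the lifts $f_{n,-}$ and $f_{n,+}$ on $\partial\H_-$. By Lemma~\ref{lem:iter of pairs}, for every $m\le n$ the maps $f_{m,-}$ and $f_{m,+}^{-1}$ are compositions of $f_{n,-}$ and $f_{n,+}^{-1}$, while $f_{m,+}$ and $f_{m,-}^{-1}$ are compositions of $f_{n,+}$ and $f_{n,-}^{-1}$. Hence once we know that $\Left$ is a trap for the pair $\{f_{n,-},f_{n,+}^{-1}\}$ and $\Right$ is a trap for $\{f_{n,+},f_{n,-}^{-1}\}$, all $m\le n$ cases follow by composition. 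By the evident symmetry between left and right, it then suffices to prove $f_{n,-}(\Left)\subset \Left$.

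I would handle the base case in two steps. Step~1: show $f_{n,-}(\beta)\subset \overline\Left$. This uses (i)~the disjointness $f_{n,-}(\beta)\cap \beta=\emptyset$ built into the definition of a dividing arc for $n$; (ii)~the fact that $f_{n,-}$ extends continuously to fix $\balpha$ in the wall topology, because $f_n(0)=0$ and, per~\S\ref{sss:walls}, walls spread to walls in $\oH_-$, so every $Q_\bPi$-neighborhood of $\balpha$ is mapped into another such neighborhood, whence $f_{n,-}(\beta)$ also lands at $\balpha$; and (iii)~the leftward shift of $f_{n,-}$ on $\partial\H_-$, which forces $f_{n,-}(\beta(0))\in \Left$. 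A simple arc starting in $\Left$, landing at $\balpha$, and disjoint from $\beta$ must lie in $\overline\Left$. Step~2: upgrade to $f_{n,-}(\Left)\subset \Left$. The set $f_{n,-}(\Left)$ is open and connected; by Step~1 applied to both $f_{n,-}$ and (by the same argument) $f_{n,-}^{-1}$, it is disjoint from $\beta$ and from $f_{n,-}(\beta)$. On the other hand it contains images of points of $\Left\cap \partial\H_-$ far to the left, which are shifted still further left and hence lie in $\Left$. Connectedness then forces $f_{n,-}(\Left)\subset \Left$. The argument for $f_{n,+}^{-1}(\Left)\subset \Left$ (and the symmetric statements for $\Right$) is identical, noting that $f_{n,+}^{-1}$ shifts $\partial\H_-$ leftward.

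The main obstacle is justifying the ``leftward shift of $f_{n,-}$ on $\partial\H_-$'' invoked in~(iii). For $n=0$ this is built into the choice of lifts in~\S\ref{sss:pre antiren}: $f_-$ is defined as the lift that sends $\wgamma_0$ to $\wgamma_{-1}$, which sits to the left of $\wgamma_0$ in the left-to-right enumeration. For $n<0$ I would argue inductively, using~\eqref{eq:anti ren:1/3}--\eqref{eq:anti ren:2/3}: each prime pre-antirenormalization produces the new $f_{\bullet,-}$ as a composition of the old $f_{\bullet,-}$ and $f_{\bullet,+}^{-1}$, both of which shift points of $\partial\H_-$ leftward by the inductive hypothesis, so the composition does so as well. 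This inductive bookkeeping, while mechanical, is the subtlest ingredient and deserves careful treatment in a full write-up.
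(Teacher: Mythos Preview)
Your proposal is correct and follows the same strategy as the paper: reduce the general case $m\le n$ to the base case $m=n$ via Lemma~\ref{lem:iter of pairs}, then extract the base case from the definition of a dividing arc together with the left/right behavior of the lifts. The paper's proof is extremely terse here (``the case $m=n$ follows from the definition''), and your elaboration of Step~1 and Step~2 is precisely the content hidden behind that phrase.

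Two minor remarks. First, in Step~2 you write that $f_{n,-}(\Left)$ is open; but $\Left$ is defined as a \emph{closure}, so $f_{n,-}(\Left)$ is closed. This does not affect your argument: once you know $f_{n,-}(\beta)\subset\Left$ and $f_{n,-}(\beta)\cap\beta=\emptyset$, the homeomorphism $f_{n,-}$ carries the left side of $\beta$ to the left side of $f_{n,-}(\beta)$, which is contained in $\Left$. Second, the ``main obstacle'' you flag (the leftward shift of $f_{n,-}$ on $\partial\H_-$) is actually already handled by Lemma~\ref{lem:iter of pairs} itself, applied with the pair $(m,n)$ replaced by $(n,0)$: it gives $f_{n,-}$ as a composition of $f_{0,-}$ and $f_{0,+}^{-1}$, both of which shift $\partial\H_-$ leftward by the explicit choice of lifts in~\S\ref{sss:pre antiren}. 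So your inductive bookkeeping via \eqref{eq:anti ren:1/3}--\eqref{eq:anti ren:2/3} is just a reproof of that lemma and can be replaced by a direct citation.
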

\begin{proof}
The case $m=n$ follows from the definition. The general case follows by induction because $f_{m,-}$ and $f_{m,+}^{-1}$  are compositions of $f_{n,-}$ and $f_{n,+}^{-1}$, while  $f_{m,+}$ and $f_{m,-}^{-1}$ are compositions of $f_{n,+}$ and $f_{n,-}^{-1}$, see Lemma~\ref{lem:iter of pairs}.
\end{proof}

\begin{prop}
\label{prop:beta:FunDom}
If $\beta$ is a dividing curve for $n$, then for all $m\le n$ the closed sector $S_m\subset \oH_-$ bounded by $f_{m,-}(\beta)\cup f_{m,+}(\beta)$ and containing $\beta$ is a fundamental domain for $\chi_m$.

Conversely, if $\beta_0\subset \ovDisk$ is a dividing curve of $f_n$ (see~\S\ref{sss:div arcs}), then a lift $\beta\subset \oH_-$ of $\beta_0$ is a dividing arc for $n$.
\end{prop}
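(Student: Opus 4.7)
My plan is to handle the two directions separately, starting with the converse since it follows directly from covering space theory. Given $\beta_0 \subset \ovDisk$ dividing for $f_n$, take a connected lift $\beta \subset \H_-$ of $\beta_0 \setminus \{0\}$ under $\brho_n$. Property~(1) is automatic since $\partial \Disk$ lifts to $\partial \H_-$. Property~(3) is by projection: any $x \in \beta \cap f_{n,\pm}(\beta)$ with $x \ne \balpha$ would satisfy $\brho_n(x) \in \beta_0 \cap f_n(\beta_0) = \{0\}$, forcing $x = \balpha$. Property~(2) --- landing at $\balpha$ in the wall topology $\Xi$ --- translates via the spreading construction of \S\ref{sss:walls} to the requirement that $\beta_0$ eventually enter every inner region $\inn$ of a wall $\Pi$ around $0$, which is exactly the usual landing condition of $\beta_0$ at $0$.

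For the forward direction, I first show by induction on $n - m \ge 0$ the \emph{persistence} statement $\beta \cap f_{m,\pm}(\beta) \subset \{\balpha\}$. The base case is the hypothesis. For the step from level $m+1$ to $m$ in the $1/3$ case~\eqref{eq:anti ren:1/3}, one has $f_{m,-} = f_{m+1,-}\circ f_{m+1,+}^{-1}$ and $f_{m,+} = f_{m+1,+}$; the second identity gives $\beta \cap f_{m,+}(\beta) \subset \{\balpha\}$ immediately. For the left generator, set $\beta' := f_{m+1,+}^{-1}(\beta)$, so $f_{m,-}(\beta) = f_{m+1,-}(\beta')$. By Lemma~\ref{lem:F:motion of pts}, $f_{m+1,+}^{-1}$ sends $\Left$ into $\Left$, hence $\beta' \subset \Left$, and the induction hypothesis together with injectivity of $f_{m+1,+}$ gives $\beta' \cap \beta \subset \{\balpha\}$. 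Similarly $f_{m+1,-}^{-1}(\beta) \subset \Right$ and $f_{m+1,-}^{-1}(\beta) \cap \beta \subset \{\balpha\}$, so the intersection $\beta' \cap f_{m+1,-}^{-1}(\beta)$ is contained in the closed arc $\Left \cap \Right = \beta$, hence in $\{\balpha\}$; applying $f_{m+1,-}$ yields $f_{m,-}(\beta) \cap \beta \subset \{\balpha\}$. The $2/3$ case~\eqref{eq:anti ren:2/3} is symmetric.

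Once persistence is established, $\beta_L := f_{m,-}(\beta)$ and $\beta_R := f_{m,+}(\beta) = \chi_m(\beta_L)$ are simple arcs landing at $\balpha$, lying strictly on the left and right of $\beta$ respectively; hence $S_m$ is a well-defined closed topological sector with vertex $\balpha$ containing $\beta$, and $\chi_m$ identifies its two boundary arcs. To conclude that $S_m$ is a fundamental domain for $\chi_m$, I would project through $\brho_m : \H_- \to \ovDisk^*$: persistence applied to all $\chi_m$-translates of $\beta_L$ (pairwise disjoint off $\balpha$) shows that $\brho_m|\beta_L$ is an embedding, so $\brho_m$ sends $S_m$ onto all of $\ovDisk^*$ and is injective on $\intr S_m$ --- which is exactly the fundamental domain property. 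The main obstacle I anticipate is the persistence induction itself: the key topological fact (that two simple arcs landing at $\balpha$ on opposite closed sides of $\beta$ and meeting $\beta$ only at $\balpha$ cannot meet off $\balpha$) must be deployed carefully in coordination with Lemmas~\ref{lem:iter of pairs} and~\ref{lem:F:motion of pts}, since at each inductive step one must upgrade \emph{weak} left/right containment into \emph{strict} separation away from $\balpha$.
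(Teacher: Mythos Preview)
Your converse direction and your persistence induction are correct, although the latter is more elaborate than needed: the paper obtains $\beta\cap f_{m,\pm}(\beta)=\emptyset$ in a single line from Lemmas~\ref{lem:iter of pairs} and~\ref{lem:F:motion of pts}, noting that $f_{m,-}(\beta)$ lies to the left of $f_{n,-}(\beta)$, which is already disjoint from $\beta$ by hypothesis.

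The real gap is in your fundamental-domain step, and you have misidentified where the difficulty lies. You write: ``$\brho_m|\beta_L$ is an embedding, so $\brho_m$ sends $S_m$ onto all of $\ovDisk^*$.'' This ``so'' is precisely what fails without condition~(2): pairwise disjointness of the $\chi_m$-translates of $\beta_L$ gives injectivity of $\brho_m|{\intr S_m}$, but \emph{not} surjectivity --- the paper's figure captioned ``The sector $S$ between $\beta$ and $\chi(\beta)$ is not a fundamental domain'' exhibits exactly a $\beta$ going to $\infty$ with disjoint translates whose intervening sectors fail to tile $\oH_-$. To make your projection argument work you would need that $\brho_m(\beta_L)$ lands at $0$ (so that it separates $\ovDisk^*$); you assert that $\beta_L$ lands at $\balpha$ without proof, and you never relate landing in the wall topology $\Xi$ (defined via level-$0$ walls) to the level-$m$ projection $\brho_m$. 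That link is not established in the paper and is not obvious.

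The paper's approach is entirely different and uses condition~(2) directly. Supposing some $z$ has a $\chi_m$-orbit lying, say, entirely to the left of $S_m$, one picks a $1$-wall $\bA$ separating $z$ from $\balpha$; since $f_{m,\pm}(\beta)$ land at $\balpha$ (this is where (2) enters), $S_m$ meets only finitely many truncated triangles $\Delta(i)\setminus Q$. Hence for $j\gg 0$ the truncated triangle $\Delta(j)\setminus Q$ lies to the right of $S_m$. Now a short iterate $F^P$ carries $\Delta(i)\ni z$ homeomorphically onto such a $\Delta(j)$, placing $F^P(z)$ to the right of $S_m$; but $\chi_m^k(z)$ differs from $F^P(z)$ by a word in $f_{n,+},f_{n,-}^{-1}$, which by Lemma~\ref{lem:F:motion of pts} cannot move a point from the right of $S_m$ to its left. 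This contradiction shows every orbit meets $S_m$.
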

\noindent As a corollary, 
\[\ovDisk\simeq \oH_-/\langle\chi_m \rangle\simeq S_m/_{f_{m,-}(\beta)\ni x\sim \chi_m(x) \in f_{m,+}(\beta)}.\]
To prove Proposition~\ref{prop:beta:FunDom} we need to verify that every $\chi_m$-orbit passes through $S_m$. Figure~\ref{case:beta lands} illustrates that Condition~\eqref{case:beta lands} can not be relaxed to the condition ``$\beta$ goes to infinity.'' 

\begin{proof}[Proof of Proposition~\ref{prop:beta:FunDom}]
Since $f_{m,-}(\beta)$ is on the left of $f_{n,-}(\beta)$ (by Lemmas~\ref{lem:iter of pairs} and~\ref{lem:F:motion of pts}), the arcs $f_{m,-}(\beta)$ and $\beta$ are disjoint. Similarly, $f_{m,+}(\beta)$ and $\beta$ are disjoint. We need to show that for every $z\in \oH_-$ there is a $k\in \Z$ such that $\chi^k_m(z)\in S_m$.

Suppose converse, there is a $z\in  \oH_-$ such that $\chi^k_m(z)\not\in S_m$ for all $k\in \Z$. Since point can not jump over $S_m$ under the iteration of $\chi_m$, the orbit of $z$ is either on the left of $S_m$ or on the right of $S_m$.

Let us assume that there is a $z\in \oH_-$ whose orbit is on the left of $S_m$; the opposite case is analogous. We will show that $F^P(z)$ is on the right from $S_m$ for some $P\in \PT$. Then for some $a,b\ge 0$ and $k\in \Z$, we would have \[f_{n,+}^a\circ f_{n,-}^{-b}\circ F^P(z)=\chi_m^k(z).\]
 This will be a contradiction, because $\chi_m^k(z)$ is still on the right of $S_m$ for all $k\in\Z$ by Lemma~\ref{lem:F:motion of pts}.

Let us choose a $1$-wall $\bA$ separating $z$ from $\balpha$. We denote by $Q$ the connected component of $\oH_-\setminus \bA$ attached to $\balpha$.
Since $f_{m,-}(\beta)$ and $ f_{m,+}(\beta)$ land at $\balpha$, the sector $S_m$ intersects at most finitely many truncated triangles $\Delta(i)\setminus Q$. This means that $\Delta(j)\setminus Q$ is on the right of $S_m$ for $j\gg0$. 

Write $z \in \Delta(i)$. There is a small $P\in \PT$ and a big $j\gg0$ such that $F^P\colon \Delta(i)\to \Delta(j)$ is a homeomorphism. Then $F^P(z)\subset \Delta(j)\setminus Q$ is on the right from $S_m$. This proves that $S_m$ is a fundamental domain.

 The converse statement is immediate.
\end{proof}

\subsubsection{Partial homeomorphisms}
\label{ss:PartiHomeo}
Let us show that Proposition~\ref{prop:beta:FunDom} with necessary adjustments holds for partial homeomorphisms. 

Consider  a partial homeomorphism $f\colon \ovDisk\dashrightarrow \ovDisk$ with $f(0)=0$ such that $\Dom f$ and $\Im f$ are closed topological disks containing $0$ in their interiors.

As in~\S\ref{sss:div arcs}, let $\gamma_0, \gamma_1$ be two simple arcs connecting $0$ to points in  $\partial \Disk$ such that $\gamma_0$ and $\gamma_1$ are disjoint except for $0$ and such that $\gamma_1$ is the image of $\gamma_0$ in the following sense: $\gamma'_0\coloneqq \gamma_0\cap \Dom f$ and  $\gamma'_1\coloneqq \gamma_1\cap \Im f$ are simple closed curves such that $f$ maps $\gamma'_0$ to $\gamma'_1$. We call $\gamma_0, \gamma_1$ a \emph{dividing pair}. Then $\gamma_0 \cup \gamma_1$ splits $\ovDisk$ into two closed sectors $\A$ and $\B$ denoted so that $\intr \A, \gamma_1,\intr \B,\gamma_0$ are clockwise oriented around $0$. 

Similar to~\eqref{eq:deff:g_k}, given two sectors $S_i,S_{i+1}\in \{\A,\B\}$, we naturally have a partial map $g\colon \rho(S_0)\dashrightarrow \ell(S_{1})$ defined by 
\begin{equation}
\label{eq:deff:g_k:part}
    g\coloneqq \begin{cases}
    \id \colon \gamma_1\to \gamma_1& \text{ if } (S_i,S_{i+1})\cong (\A,\B),\\
        \id \colon \gamma_0\to \gamma_0& \text{ if } (S_i,S_{i+1})\cong (\B,\A),\\
        f^{-1} \colon \gamma'_1 \to \gamma'_0& \text{ if } (S_i,S_{i+1})\cong (\A,\A),\\
          f \colon \gamma'_0 \to \gamma'_1& \text{ if } (S_i,S_{i+1})\cong (\B,\B),
    \end{cases}
\end{equation} The \emph{dynamical gluing} of a sequence of sectors $(S_i)_{i}\in \{\A,\B\}^k$ with $k\le \infty$ is defined in the same way as in~\S\ref{sss:div arcs}; i.e.~the left boundary of $S_i$ is glued with the right boundary of $S_{i-1}$ along~\eqref{eq:deff:g_k:part}.  

Let $\seq\in \{\A,\B\}^\Z$ be the sequence from \S\ref{sss:dyn of Delta} (where $f$ is assumed to be a homeomorphism), and let $\bDelta_0$ be corresponding dynamical gluing. Then $\bDelta_0$ is a triangulation \emph{associated} with $\gamma_0,\gamma_1$, we enumerate triangles of $\bDelta_0$ from left-to-right as $\Delta_0(i)\simeq \seq[i]$, with $i\in \Z$.

For every $Q<\min\{(0,1,0),(0,0,1)\}$, we define the partial homeomorphism $\bF^Q\colon  \bDelta_0\dashrightarrow \bDelta_0$ trianglewise using \eqref{eq:isom B copies},\eqref{eq:isom A copies}, and~\eqref{eq:bF^Q}. Taking iterates, we obtain the cascade of partial homeomorphisms \[F^{\ge0} \coloneqq \left\{ F^P\colon \bDelta_0\dashrightarrow \bDelta_0 \mid  P\in \PT\right\}.\] In particular, $f_-=F^{(0,1,0)}$ and $f_+=F^{(0,0,1)}$ are well defined. The commuting pair 
\begin{equation}
\label{eq:FRM:Delta 0 1}
f_+\colon \Delta_0(0)\to \Delta_0(0,1)\sp \text{ and }f_-\colon \Delta_0(1)\to \Delta_0(0,1)
\end{equation}
 realizes the first return of points in $\Delta_0(0,1)$ back to $\Delta_0(0,1)$. The pair~\eqref{eq:FRM:Delta 0 1} is obtained by cutting $f\colon \ovDisk\dashrightarrow \ovDisk$ along $\gamma_1$. Conversely, there is a projection
 \[\brho\colon \Delta_0(0,1)\to \ovDisk\] semi-conjugating $F=(f_-,f_+)$ to $f$ such that $\brho$ glues the left boundary $\lambda$ of $\Delta_0(0,1)$ with its right boundary $\rho$. The gluing map $\bchi\colon \lambda\to \rho$ coincides with $f_+\circ f_-^{-1}\colon \lambda\dashrightarrow \rho$ in $\Dom (f_+\circ f_-^{-1})$.
 
 A \emph{wall $\Pi\subset \Dom f\cap \Im f$ around $0$ respecting $\gamma_0,\gamma_1$} for $f\colon \ovDisk\dashrightarrow \ovDisk$ is defined in the same way as in the case of homeomorphisms, see~\S\ref{sss:walls}. 
 Lifting $\Pi$ under $\brho$ and spreading $\Pi$ around, we obtain the connected strip $\bPi\subset \bDelta$. As in \S\ref{sss:balpha}, we add the boundary point $\balpha$ to $\bDelta_0$, and we endow $\bDelta_0\sqcup \{\balpha\}$ with the \emph{wall topology}.

Fix a wall $\Pi$ and its full lift $\bPi\subset \bDelta$. Let us denote by $Q$ the connected component of $\bDelta\setminus \bPi$ attached to $\balpha$. Note that $Q\subset \Dom(f_-) \cap \Dom (f_+)$.

A \emph{fundamental domain} for $f$ is a sector $S^\new$ with distinguished sides $\lambda^\new$ and $\rho^\new$ such that
\begin{enumerate}
\item $S^\new\setminus Q=\Delta(0,1)\setminus Q$, and $\lambda^\new\setminus Q=\lambda \setminus Q$, and $\rho^\new\setminus Q=\rho\setminus Q$,\label{defn:FD:Cond:1}
\item $f_+\circ f_-^{-1}\big(\lambda^\new\cap (\bPi\cup Q)\big)\subset \rho^\new$ and $ f_-^{-1}\big(\lambda^\new\cap (\bPi\cup Q)\big)\subset \intr(S^\new)$;  
\item $\lambda^\new$ and $\rho^\new$ land at $\balpha$.
\end{enumerate}

We define the gluing map
\[\bchi^\new \colon \lambda \to \rho \] to be 
\begin{itemize}
\item $\bchi$ on $\lambda\setminus Q$; and 
\item $F^{(0,0,1)-(0,1,0)}=f_+\circ f_-^{-1}$ on $\lambda\cap Q$.
\end{itemize}

Gluing the left and right boundaries of $S^\new$ along $\bchi^\new$, we obtain a closed topological disk $W$. We realize $W$ as a subset of $\C$, and we denote by \[\brho^\new\colon S^\new\to W,\sp\sp \brho^\new(\balpha)=0\] the induced projection. Then $F=(f_-,f_+)$ projects to 
\[f^\new\colon W\dashrightarrow W.\]
Write $\inn =\brho (Q\cap S)\subset \ovDisk$ and $\inn^\new =\brho^\new (Q\cap S^\new)\subset W$.  

\begin{prop}
\label{prop:quot of fund domain}
Let $S^\new$ be a fundamental domain for $f$ as above. Then the quotient map
$f^\new\colon W\dashrightarrow W$ is canonically conjugate to $f\colon \ovDisk\dashrightarrow \ovDisk$.

Conversely, suppose $\gamma_0^\new,\gamma_1^\new$ is a dividing pair such that $\gamma_0^\new \setminus \inn =\gamma_0 \setminus \inn$ and $\gamma_1^\new \setminus \inn =\gamma_1 \setminus \inn$. Let $\bDelta_0^\new$ be the triangulation associated with $\gamma_0^\new,\gamma_1^\new$. Then the cascades $F\mid \bDelta_0$ and $F\mid \bDelta^\new_0$ are canonically conjugate.
\end{prop}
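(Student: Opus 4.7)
The strategy is to identify both $\ovDisk$ and $W$ as quotients of $\bDelta_0$ by the partial cyclic action generated by $\chi \coloneqq f_+ \circ f_-^{-1}$, with $\Delta_0(0,1)$ and $S^\new$ playing the role of two different fundamental domains. Since the cascade $F^{\ge 0}$ commutes with $\chi$ in the appropriate partial sense, both quotient dynamics $f$ and $f^\new$ descend from the same object on $\bDelta_0$, and hence are canonically conjugate. For the converse statement, the plan is to directly exhibit a homeomorphism $H\colon \bDelta_0 \to \bDelta^\new_0$ that is the identity outside the lifted neighborhood $Q$ of $\balpha$ and that conjugates the two cascades trianglewise.

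For the main assertion I would first verify that $S^\new$ is a fundamental domain for the partial action of $\langle \chi \rangle$ on $\bDelta_0$. Outside $Q$, condition~\ref{defn:FD:Cond:1} guarantees that $S^\new$ agrees with $\Delta_0(0,1)$, so nothing new needs to be checked there. Inside $Q$, the second defining condition together with Lemma~\ref{lem:F:motion of pts} forces $f_-^{-1}$ to push $\lambda^\new \cap (\bPi \cup Q)$ strictly into $\intr(S^\new)$, which prevents the $\chi$-orbit of any point from re-entering $S^\new$ twice. Combined with the landing condition at $\balpha$ in the wall topology, an argument parallel to the proof of Proposition~\ref{prop:beta:FunDom} (separate a hypothetical missed orbit from $\balpha$ by a wall $\bA$, then translate via $\chi$ toward $\balpha$ until the orbit enters $S^\new$) shows that every $\chi$-orbit in $\bDelta_0$ meets $S^\new$. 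Having established this, I would define the conjugacy $h \colon \ovDisk \to W$ by sending $y = \brho(\tilde y)$ with $\tilde y \in \Delta_0(0,1)$ to $\brho^\new(\chi^n(\tilde y))$, where $n$ is the unique integer (modulo boundary identifications) with $\chi^n(\tilde y) \in S^\new$; outside $\inn$ this map is literally the identity, and well-definedness on the boundaries follows because $\bchi$ and $\bchi^\new$ are induced by the same $\chi$. The equivariance $h \circ f = f^\new \circ h$ is then immediate, since both $f$ and $f^\new$ descend from $F$ via the projections $\brho, \brho^\new$, and these projections commute with $F$ wherever the composition makes sense.

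For the converse I would observe that the cascade $F^{\ge 0}$ depends only on $f$ and not on the choice of dividing pair; the triangulations $\bDelta_0$ and $\bDelta^\new_0$ are merely two different geometric partitions of the same combinatorial object. Since $\gamma_i^\new \setminus \inn = \gamma_i \setminus \inn$, the lifted curves agree outside $Q$, and so do the triangulations there; a homeomorphism $H \colon \bDelta_0 \to \bDelta^\new_0$ which is the identity outside $Q$ can be extended inside $Q$ trianglewise, using the canonical isomorphisms~\eqref{eq:isom B copies}--\eqref{eq:bF^Q} between copies of $\A$ and $\B$. The equivariance $H \circ F = F \circ H$ is then automatic, because $F$ itself acts by exactly these formulas on the triangles. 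The main obstacle lies in the first assertion: genuinely verifying that $S^\new$ is a fundamental domain for $\chi$, i.e., that no orbit is missed or counted twice near the boundary of $Q$ and along $\lambda^\new, \rho^\new$. This requires carefully exploiting the wall topology near $\balpha$, the landing condition of the new boundary arcs, and the monotonicity built into Lemma~\ref{lem:F:motion of pts}.
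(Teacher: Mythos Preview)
Your approach is correct but takes a longer route than the paper. You propose to verify directly that $S^\new$ is a fundamental domain for the partial action of $\chi$ on $\bDelta_0$, by adapting the proof of Proposition~\ref{prop:beta:FunDom} to the partial-homeomorphism setting. The paper instead \emph{reduces} to Proposition~\ref{prop:beta:FunDom}: since $S^\new$ and $\Delta_0(0,1)$ agree outside $Q$ by condition~\eqref{defn:FD:Cond:1}, one may modify $f\colon \ovDisk \dashrightarrow \ovDisk$ away from $\inn \cup f(\inn) \cup f^{-1}(\inn)$ into a genuine self-homeomorphism of $\ovDisk$ carrying $\gamma_0$ to $\gamma_1$. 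This modification does not touch $f\mid \inn$, so the quotient construction near $\balpha$ is unchanged, and Proposition~\ref{prop:beta:FunDom} now applies verbatim to the extended homeomorphism---for both the direct and the converse statements.

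Your route buys a self-contained argument at the cost of re-verifying, in the partial setting, the ``no $\chi$-orbit is missed'' step; this is precisely the obstacle you flag at the end, and it requires some care because $\chi = f_+ \circ f_-^{-1}$ is only partially defined outside $Q$. The paper's reduction sidesteps this entirely: the extension of $f$ to a full homeomorphism is a purely topological device with no dynamical cost, since everything that matters for the conjugacy lives inside $\inn$, where $f$ was already defined.
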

\noindent The canonical homeomorphism $h\colon \ovDisk\to W$ has the following characterization:
\begin{itemize}
\item $h\colon \ovDisk\setminus \inn\to W\setminus \inn^\new$ is the canonical identification using Condition~\eqref{defn:FD:Cond:1};
\item if $ h\circ \brho(x)=\brho^\new(y)\in \inn^\new$ for some $x,y\in \bDelta$, then $x$ and $y$ are related by the action of the deck transformation $\bchi\mid Q$: for some $n\in \Z$ we have $x\in \Dom (\bchi\mid Q)^n$ and $\bchi^n(x)=y.$
\end{itemize}
Equivalently, the canonical homeomorphism $h\colon \ovDisk\to W$ can be characterized using unique extension along curves as in~\cite{DLS}*{Theorem B.8}. A similar characterization has a canonical homeomorphism between $\bDelta_0$ and $\bDelta_0^\new$.
\begin{proof}[Proof of Proposition~\ref{prop:quot of fund domain}]
 Denote by $\inn$ and $\inn^\new$ the images of $Q\sqcup \{\balpha\}$ under $\brho$ and $\brho^\new$ respectively. Condition~\eqref{defn:FD:Cond:1} implies that $\ovDisk\setminus \inn$ is canonically identified with $W\setminus \inn ^\new$ by an equivariant homeomorphism.

We can now modify $f\colon \ovDisk\dashrightarrow \ovDisk$ away from $\inn \cup f(\inn)\cup f^{-1}(\inn)$ to obtain a self-homeomorphism $f\colon  \ovDisk \to \ovDisk$ mapping $\gamma_0$ to $\gamma_1$. Since the modification does not affect $f\mid \inn$, we obtain a reformulation of  Proposition~\ref{prop:beta:FunDom}.
\end{proof}

\section{Background on the pacman renormalization}
\label{s:PacmanRenorm}

\subsection{Quadratic-like renormalization}
\label{ss:QL renorm}
Recall that copies of the Mandelbrot set are canonically homeomorphic via the straightening map, see~\cite{DH}. See also~\cites{DH:Orsay,L-book} for the background on the Mandelbrot set. Given a small copy $\Mandel_s$ of the Mandelbrot set, we denote by $\mRR_s\colon  \Mandel_s \to \Mandel$ the canonical homeomorphism between $\Mandel_s$ and $\Mandel$.

A quadratic polynomial $p_c=z^2+c, \sp c\in \Mandel$, is \emph{renormalizable} if $c$ belongs to a small copy of the Mandelbrot set. Let $\Mandel_s$ be the biggest small copy of $\Mandel$ containing $c$. We set $\mRR_\QL(c)\coloneqq \mRR_s(c)$. This way we obtain the partial map $\mRR_\QL\colon \Mandel\dashrightarrow \Mandel$. 

A map $p_c$ is \emph{infinitely renormalizable} if $c$ is within the non-escaping set \[ \NE(\mRR_\QL) \coloneqq \bigcap_{i\ge0} \Dom \mRR^i_\QL\]
of $\mRR_\QL$. If the connected component of $\NE(\mRR_\QL)$ containing $c$ is a singleton, then $\Mandel$ is locally connected at $c$. The global MLC conjecture is equivalent to the assertion that every connected component of $\NE(\mRR_\QL)$ is a singleton. 

An infinitely renormalizable parameter $c$ has \emph{bounded type} if its orbit $\mRR_\QL^n(c)$ belongs to finitely many maximal small copies of $\Mandel$. If, moreover, the maximal copies of $\Mandel$ containing $\mRR_\QL^n(c)$ are satellite, then $c$ has  \emph{bounded satellite type}.

\subsubsection{Analytic renormalization operator}\label{ssss:QL anal ren} (See~\cite{L:FCT} for details.) Let us write a quadratic-like map as $f\colon U\to V$, and let us denote by $[f\colon U\to V]$ the quadratic-like germ of $f$ considered up to affine equivalence. The modulus of the fundamental annulus $V\setminus\overline U$ is the \emph{modulus of $f$}, denoted by $\mod f$. We denote by $\Kfilled(f)$ and $\Post(f)$ the non-escaping and postcritical sets of $f$.
 
In~\cite{L:FCT} the space $\QG$ of \emph{quadratic-like germs} is supplied with complex analytic structure. Let $\Conn\subset \QG$ be the connectedness locus;~i.e.~the set of germs with a non-escaping critical point.  

 The hybrid classes form a codimension-one lamination $\bFol_\QL$ of $\Conn$ with complex codimension-one analytic leaves. Every leaf of $\Fol_i\in \bFol_\QL$ contains a unique parameter $c_i $ in the actual Mandelbrot set. By collapsing every $\Fol_i\in \bFol_\QL$ to $c_i$, we obtain the projection $\strai\colon \Conn\to \Mandel$.

Consider a small copy $\Mandel_s \subsetneq \Mandel$ of period $n>1$. There is an analytic operator $\RR_s\colon \QG\dashrightarrow \QG$ associated with $\Mandel_s$ such that
\[\RR_s [f\colon U\to V] = [f^n\colon U_i\to V_i].\]
We assume that $V_i$ contains the critical value of $f$. The operator $\RR_s$ satisfies $\strai \circ \RR_s=\mRR_s\circ \strai$. If $\Mandel_s$ is satellite, then $\RR_s$ is defined on a neighborhood of $\strai ^{-1}(\Mandel_s\setminus \{cusp\})$; and if $\Mandel_s$ is primitive, then $\RR_s$ is defined on a neighborhood of $\strai ^{-1}(\Mandel_s)$.  

Combining all $\RR_s$ over all the maximal copies $\Mandel_s \subsetneq \Mandel$, we obtain an analytic operator $\RR_\QL\colon \QG\dashrightarrow \QG$ such that (with appropriate choices of branches)
\[\strai \circ \RR_\QL=\mRR_\QL\circ \strai .\]

\subsubsection{Satellite copies}
\label{sss:not:SatCopies}
Consider a rational number $\rr\in \Q$ between $0$ and $1$. We denote by $\Mandel_\rr\subset \Mandel$ the primary satellite copy of $\Mandel$ with rotation number $\rr$. In other words, $\Mandel_\rr$ is the unique copy of the Mandelbrot set attached to the parabolic parameter $p_{c(\rr)}\in \partial \HH$ with rotation number $\rr$. We have the canonical homeomorphism $\mRR_\rr\colon \Mandel_\rr\to \Mandel$. 

For $\rr,\ss \in \Q\cap (0,1)$, we write $\Mandel_{\rr,\ss}\coloneqq \RR_{\rr}^{-1} (\Mandel_\ss )$ and we denote by $\mRR_{\rr,\ss}\colon \Mandel_{\rr,\ss}\to \Mandel$ the canonical homeomorphism. The construction continues by induction. In particular, $\Mandel_{\rr,\ss,\tt}\coloneqq \mRR^{-1}_{\rr,\ss} (\Mandel_\tt)$.

\subsubsection{Local connectivity of the Julia set}
\label{sss:JLC}
An infinitely renormalizable quadratic-like map $f\colon U\to V$ is said to satisfy an \emph{unbranched a priori bounds} (see~\cite{McM2}) if there is an $\varepsilon>0$ such that for infinitely many $n$ the renormalization $f_n\coloneqq \RR^n_\QL (f)$ can be written as $f_n\colon U_n\to V_n$ so that 
\begin{equation}
\label{eq:defn:unbr cond}
V_n\cap \Post (f)\subset \Kfilled(f_n)
\end{equation}
 and the modulus of $V_n\setminus U_n$ is at least $\varepsilon$. We will refer to~\eqref{eq:defn:unbr cond} as the \emph{unbranched condition}. It is known that any infinitely renormalizable quadratic-like map with unbranched a \emph{priori} bounds has locally connected Julia set; see~\cite{J},\cite{McM2},~\cite{L:acta}*{Theorem~\RN{6}} for the reference.

\subsubsection{Renormalization periodic points and horseshoes} \label{sss:renorm horsh}A quadratic-like map $f\colon U\to V$ is a \emph{renormalization periodic point} if it is conformally conjugate to its proper quadratic-like renormalization: there is an iteration 
\[ f_\bullet =f^m\colon U_\bullet \to V_\bullet, \sp\sp\sp U_\bullet \subset U\]
and a conformal conjugacy (renormalization change of variables) $\phi\colon V_\bullet\to V$ between $f_\bullet \mid U_\bullet$ and $f\mid U$. The projection $\bchi(f)\in \Mandel$ is a periodic point of $\mRR_\QL$, see~\S\ref{ssss:QL anal ren}. We usually assume that $c_1(f)=c_1(f_\bullet)$ i.e., $c_1$ is a fixed point of $\phi$. Since $U_\bullet\subsetneqq U$, we have $|\phi'(c_1)|>1$ by the Shwarz lemma.

Linearizing $\phi$ one can assume that it is affine: if $L$ is the linearizer for $\phi$, then replacing $\phi,f,f_\bullet$ with their conjugacies by $L$ we obtain that the new $f_\bullet$ is affinely conjugate to$f$.

By a \emph{renormalization horseshoe} $\Hors$ we mean a precompact family of quadratic-like maps $f\colon U_f \to V_f$ such that every $f\in \Hors$ has a quadratic-like renormalization $f_1 =f ^{m(f)}\colon U_{f_1} \to V_{f_1}$ conformally conjugate to a map $\hat f_1$ in $\Hors$ and such that the renormalization $f\mapsto \hat f_1\colon \Hors\selfmap$ is injective. A horseshoe $\Hors$ is \emph{hyperbolic} if it is a hyperbolic set of a renormalization operator defined on a neighborhood of $\Hors$.

 Since the renormalization change of variables in a horseshoe $\Hors$ is conformal, unbrached a priori bounds descends from the top to all deep scales: if
 \[  \mod (V_f\setminus U_f)\ge \varepsilon\sp\sp\text{ and }\sp\sp V_{f_1} \cap \Post(f) \subset  \Kfilled (f_1) \sp\sp \text{ for all }\sp f\in \Hors\]
 then $\mod (V_{f_n}\setminus U_{f_n})\ge \varepsilon$ and $V_{f_n} \cap \Post(f) \subset  \Kfilled (f_n)$ for all $n$.

\subsection{Pacmen} In this subsection we collect the background information on the pacman renormalization from~\cite{DLS}. A \emph{full pacman} is a map \[f:\overline{U}
\to \overline{V}\] such that (see  Figure~\ref{Fg:Pacman} and~\cite[Definition 2.1]{DLS})
\begin{itemize}
\item $f(\alpha)=\alpha$;
\item   $\overline{U}$ is a closed topological disk with $\overline{U}\subset V$;

\item the critical arc $\gamma_1$ has exactly $3$ lifts $\gamma_0\subset U$ and $\gamma_-~,\gamma_+ \subset \partial U$ such that $\gamma_0$ starts at the fixed point $\alpha$ while $\gamma_-~,\gamma_+$ start at the pre-fixed point $\alpha'$; we assume that $\gamma_1$ does not intersect $\gamma_0,\gamma_-~,$  $\gamma_+$ away from $\alpha$;
\item $f:U\to V$ is analytic and $f:U\setminus \gamma_0\to V\setminus \gamma_1$ is a two-to-one branched covering;  
\item $f$ admits a locally conformal extension through $\partial U\setminus \{\alpha'\}$.
\end{itemize}

\begin{figure}[t!]
\[\begin{tikzpicture}[scale=1.3]

\coordinate  (w0) at (-3.9,1.5);

\draw (w0) circle (0.2cm);
\draw[right] (-3.7,1.5) node {$O\approx O_0$};

\node (v0) at (-2.9,1.4)  {};

\draw  (v0) ellipse (3.5 and 2);

\coordinate (v1) at (-2.2,1.3) {};
\coordinate (v1a) at (-2.02,1.39) {};
\coordinate (v1b) at (-2.02,1.21) {};

\coordinate (v2) at (-0.6,2.1) {} {};
\coordinate (v3) at (-0.6,0.6) {} {};

  \draw[ line width=0.5pt,orange] 
            (v3) 
            .. controls (v3) and (v1b) ..
            (v1b);
    \draw[orange, line width=0.5pt]           
            (v1a).. controls (v1a) and (v2)..(v2);
    \draw[line width=0.5pt,red]  
            (v2)
            .. controls (-7.3,4.4) and (-7.1,-1.5) ..
            (v3);

\draw [orange, shift={(-2.2,1.3)}, scale=0.2] plot[domain=-2.74468443448354:2.6325217651674095,variable=\t]({-1.*1.0089598604503545*cos(\t r)+0.*1.0089598604503545*sin(\t r)},{-0.*1.0089598604503545*cos(\t r)+1.*1.0089598604503545*sin(\t r)});

\coordinate (w1) at (-4.7,2.4) {} {};

\draw[line width=0.5pt,red]  (w0)--(w1);

\coordinate (w2) at (-6.1,0.6)  {};
\draw  [line width=0.5pt]  (w0)--(w2);

\draw[line  width=0.5pt,-latex] (-3.1,1.6) .. controls (-2.6,2.1) and (-3.1,3.2) .. (-3.8,2.8);

\node at (-5.5,0.5) {$\gamma_1$};
\node at (-2.8,2.7) {$f$};
\node[red] at (-4.5,1.9) {$\gamma_0$};

\node at (-1.9,4.6) {};
\node[red] at (-3.5,0.8) {$U$};
\node at (-1.6,0.1) {$V$};
\node[red] at (-3.6,-0.05) {$\partial ^\ext U$};
\node[orange] at (-1.1,1.3) {$\partial ^\frb U$};
\end{tikzpicture}\]
\caption{A pacman is a truncated version of a full pacman, see Figure~\ref{Fg:Pacman}; it is an almost $2:1$ map $f:(U,O_0)\to (V,O)$ with $f(\partial U)\subset \partial V\cup \gamma_1\cup \partial O$.}
\label{Fg:TruncPacman}
\end{figure}

A pacman is a obtained from a full pacman by removing a small neighborhood of $\alpha'$, see Figure~\ref{Fg:TruncPacman}.  More precisely, a \emph{pacman} is an analytic map
\begin{equation}
\label{eq:TrunPacm}
f\colon (U, O_0)\to (V,O)
\end{equation}
with $f(\partial U)\subset \partial V \cup \gamma_1\cup \partial O$ such that
\begin{itemize}
\item $O_0$ and $O$ are disk neighborhoods of $\alpha$ and $f$ maps $O_0$ conformally to $O$;
\item $f$ admits a locally conformal extension through $\partial U$;
\item every point in $V\setminus O$ has two preimages in $U$ as for a full pacman while every point in $O$ has a single preimage in $O_0$ (in other words, $f$ can be topologically extended to a full topological pacman by adding topologically the second preimage of $O\setminus \gamma_1$).
\end{itemize}

We recognize the following two subsets of the boundary of $U$:  the \emph{external boundary} $\partial^{\ext} U := f^{-1}(\partial V)$ and \emph{the forbidden part of the boundary} $\partial^{\frb} U:= \overline {\partial U\setminus \partial^{\ext} U}$.

Given a pacman $f\colon U\to V$, its \emph{non-escaping set} is \[\Kfilled_f\coloneqq \bigcap_{n\ge 0} f^{-n}\left(\overline U\right),\]
it is sensitive to a small deformation of $\partial U$. The \emph{escaping set} of $f$ is $V\setminus \Kfilled_f$. 

 Let us embed a topological rectangle $\Rect$ in $\overline{V} \setminus U$ so that the bottom horizontal side is equal to $\partial^{\ext} U $ and the top horizontal side is a subset of $\partial V$. The images of the vertical lines within $\Rect$ form a lamination of $\overline{V} \setminus U$. We pull back this lamination to all iterated preimages $f^{-n}(\Rect)$. Leaves of this lamination that start at $\partial V$ are called \emph{external ray segments} of  $f$; infinite external ray segments are called \emph{external rays} of $f$. Note that if $\gamma$ is an external ray, then $f(\gamma)\coloneqq f(\gamma\cap U)$ is also an external ray. Every external ray $\gamma$ has a well defined external angle $\phi$ such that the angle of $f(\gamma)$ is $2\phi$, see~\cite{DLS}*{\S 2.1}.

\begin{figure}[t!]

\centering{\begin{tikzpicture}[ scale=1.18]


\draw[ draw opacity=0 ,fill=red, fill opacity=0.2]
  (0,0) --(-1.34,3.54)--  (-1.34+ 1.09,3.54+0.72)-- (-3 + 1.09 ,2.4+0.72)--(-2.7 + 1.09 ,1.38+0.72)--(-4.22 + 1.09, 1.6+0.72)--(-4.9127182890505665,1.0896661385188597) --(0,0);

\draw[blue] (0,0) edge node[below] {$\beta_-$}  (-6.34,1.44)
(-6.34,1.44) edge (0.26,5.82)
(0.26,5.82) edge node[right] {$\beta_+$}  (0,0);

\draw[blue] (0,0) edge node[below] {$\beta$}   (-4.78,2.47);

\node[blue,below ] at (0,0) {$\alpha$};

\draw[red] (-2.7+ 1.09 ,0.72+1.38)--(-4.22+ 1.09 ,0.72+ 1.6);
\draw[red] (-4.9127182890505665,1.0896661385188597)-- (-4.22+ 1.09 ,0.72+ 1.6);

\draw[red,fill=red, fill opacity=0.1] (-2.7+ 1.09 ,0.72+1.38)--(-3+ 1.09 ,0.72+2.4)--(-3.74+ 1.09 ,0.72+1.92)--(-2.7+ 1.09 ,0.72+1.38);

\draw[draw opacity=0,fill=green, fill opacity=0.1] (0,0)--(-1.34,3.54)-- (0.21, 4.64)--(0,0);

\draw[red]  (0,0) --(-1.34,3.54)-- (-3+ 1.09 ,0.72+2.4);

\draw[red] (-1.34,3.54)-- (0.21, 4.64);

\draw[blue] (-4.48, 1.92) node{$S_+$};
\draw[blue] (-1.9, 3.8) node{$S_-$};

\draw[red] (-1.64, 1.28) node{$\Upsilon_-$};
\draw[red] (-3.15+ 1.09 ,0.72+ 1.90) node{$\Upsilon_+$};

\draw (-3.15+ 1.09 ,0.72+ 2.10) edge[->,bend right]  (-4.48, 2.12);

\draw[red] (-0.48, 2.8) node{$U_+$};

\draw(-0.38, 3.1)  edge[->,bend right]  (-4.58, 2.22);

\draw  (-2.34, 1.28)  edge[->,bend left]   node[left ]{$2:1$} (-1.9, 3.6);

\draw[dashed,red] (-1.71,4.51) edge node[right] {$\beta_0$} (-1.34, 3.54);

\draw (-1.74, 4.76) edge[->,bend right] node[above]{gluing} node[below]{ $\beta_\pm\simeq \gamma_1$} (-3.74, 5.2);


\begin{scope}[shift={(-4.5,6.2)} ,yscale=-1,scale=0.7]

\coordinate  (w0) at (-3.9,1.5);

\node (v0) at (-2.9,1.4)  {};

\draw[blue]  (v0) ellipse (3.5 and 2);

\coordinate (v1) at (-2.2,1.3) {};
\coordinate (v2) at (-0.6,2.1) {} {};
\coordinate (v3) at (-0.6,0.6) {} {};

  \draw[ line width=0.5pt,red] 
            (v3) 
            .. controls (v3) and (v1) ..
            (v1)
            .. controls (v1) and (v2) ..
            (v2)
            .. controls (-7.3,4.4) and (-7.1,-1.5) ..
            (v3);

\coordinate (w1) at (-4.7,2.4) {} {};

\draw[line width=0.5pt]  (w0)--(w1);

\coordinate (w2) at (-6.1,0.6)  {};
\draw  [line width=0.5pt]  (w0)--(w2);

\draw  [line width=0.5pt]  (w0)edge node[right]{$\gamma_2$}(-4.1,-0.5);

\node[right] at (w0) {$\alpha$};
\node at (-5.5,0.7) {$\gamma_1$};
\node[] at (-4.5,1.9) {$\gamma_0$};

\node at (-1.9,4.6) {};
\end{scope}

\end{tikzpicture}}

\caption{A (full) prepacman $(f_-\colon U_-\to S,\sp f_+\colon U_+\to S)$. We have $U_-=\Upsilon_-\cup \Upsilon_+$ and $f_-$ maps $\Upsilon_-$ two-to-one to $S_-$ and $\Upsilon_+$ to $S_+$. The map $f_+$ maps $U_+$ univalently onto $S_+$. After gluing dynamically $\beta_-$ and $\beta_+$ we obtain a full pacman: the arcs $\beta_-$ and $\beta_+$ project to $\gamma_1$, the arc $\beta_0$ projects to $\gamma_0$, and the arc $\beta$ projects to $\gamma_2$.
 }
\label{Fg:Prepacman}
\end{figure}

\subsubsection{Prepacmen}\label{sss:prepacman} (See Figure~\ref{Fg:Prepacman} and~\cite[Definition 2.2]{DLS}.) A prepacman is a pair of commuting maps that can be glued into a pacman. More precisely, consider a sector $S$ with boundary rays
$\beta_-,\beta_+\subset \partial S$ and with an interior ray $\beta_0$ that divides $S$ into two subsectors $T_-,T_+$. Let $f_-\colon U_-\to S, \sp f_+\colon U_+\to S$ be a pair of holomorphic maps, defined on $U_-\subset T_-$ and $U_+\subset T_+$. We say that $F=(f_\pm \colon U_{\pm}\to S)$ is a \emph{prepacman} if there exists a
gluing $\psi$ of $S$ which projects $(f_-,f_+)$ onto a pacman $f\colon U\to V$ where $\psi\mid \intr S$ is conformal, $\beta_-, \beta_+$ are mapped to the critical arc $\gamma_1=\psi(\beta_\pm)$, and $\beta_0$ is mapped to $\gamma_0$.

The definition implies that $f_-$ and $f_+$ commute in a neighborhood of $\beta_0$. Note that every pacman $f\colon U\to V$ has a prepacman obtained by cutting $V$ along the critical arc $\gamma_1$. Dynamical objects (such as the non-escaping set) of a prepacman $F$ are preimages of the corresponding dynamical objects of $f$ under $\psi$.

\begin{figure}[t!]

\centering{\begin{tikzpicture}[ scale=0.77]


\draw[red, fill=red, fill opacity=0.3]
  (0,0) --(-1.34,3.54)--  (-3+ 1.09 ,0.72+2.4)--(-3.74+ 1.09 ,0.72+1.92)--(-2.7+ 1.09 ,0.72+1.38)--(-4.22+ 1.09 ,0.72+ 1.6)--(-4.92, 1.12) --(0,0);

\draw[green,fill=green, fill opacity=0.3] (0,0)--(-1.2, 3.18)-- (0.18/1.2, 4.09/1.2)--(0,0);

\draw[blue] (0,0)-- (-9.96/1.1, 2.26/1.1) -- (-6.9, 3.6)--(0.3, 6.75) --(0,0);

\draw[blue] (0,0) edge   (-6.9, 3.6);

\draw[red] (-0.48, 2.8) node{$U_+$};


\draw[green ,fill=green, fill opacity=0.3] (0,0)--(-4.92, 1.12)--(-3.52*1.1,-1.32*1.1)--(0.,0.);
\draw[red, fill=red, fill opacity=0.3,scale=1.2] (0,0)--(-3.52,-1.32)--(-1.06 -0.50062, 0.42016-3.34)--(-0.6 -0.50062, 0.42016-2.4)
--(-0.32 -0.50062, 0.42016-3.78)
--(0.28, -4.32)
--(0.,0.);


\draw[green,fill=green, fill opacity=0.3] (0.,0.)-- (0.28*1.2, -4.32*1.2)-- (2.28*1.1, -3.26*1.1)--(0,0);

\draw[red,fill=red, fill opacity=0.3,scale=1.2] (0,0)--(2.28, -3.26)-- (3.82-0.25,-0.3-1.28)
-- (2.74-0.2,-0.25-1.36)
-- (4.3-0.25,-0.3-0.6)
-- (5.05, 0.32)
-- (0.,0.);

\draw[green,fill=green, fill opacity=0.3] (0,0)--(5.05*1.2, 0.32*1.2)
--(4.7675*1.1, 1.60875*1.1)-- (0.,0.);


\draw[red,fill=red, fill opacity=0.3,scale=1.2] (0,0)--(4.7675, 1.60875)-- (1.58+0.50125*2,-0.6775*2+4.64)-- (1.24+0.50125*2,-0.6775*2+3.88)--(1.2+0.50125*2,-0.6775*2+ 4.97)-- (1.2, 4.45)--(0,0);

\draw[red,scale=1.2] (1.24+0.50125*2,-0.6775*2+3.88)--(1.89+0.50125*2,-0.6775*2+ 4.37);

\draw[green,fill=green, fill opacity=0.3] (0,0)--(1.2*1.2, 4.45*1.2)--(0.25, 5.51)--(0,0);



\draw (0.705, 5.06812)edge[->, bend right]  (-4.68-1.2,+0.5+ 2.32);

\draw[blue] (-4.78-0.4,+0.3+ 1.92) node{$S_+$};

\draw[blue] (-1.7-0.4,-0.3+ 3.7) node{$S_-$};
\draw[red] (-1.58+0.3,+ 0.97) node{$U_-$};

\draw(1.58*1.2+0.50125*2.4,-0.6775*2.4 +4.31*1.2)  edge[->,bend right=35]  (-4.68-0.6,+0.3+ 2.32);

\draw  (-0.96, 0.45) edge[->,bend right]  node[left]{$f$} (-0.56-0.4,+0.3 -1.11); 
\draw  (-0.46, -1.37) edge[->,bend right]  node[below]{$f$} (2.25, -1.18);
\draw  (2.75, -0.9) edge[->,bend right] (2.41, 2.05);
\draw  (1.79*1.1, 2.81*1.1)edge[->,bend right] node[below]{$2:1$} (-1.5-0.4,-0.3+3.8);
\draw[scale=1.1] (1.33, 2.39) node{$c_0$};

\end{tikzpicture}}

\caption{ Pacman renormalization of $f$: the first return map of points in $U_-\cup U_+$ back to $S=S_-\cup S_+$ is a prepacman. Spreading around $U_\pm$: the orbits of $U_-$ and $U_+$ before returning back to $S$ triangulate a neighborhood $\bDelta$ of $\alpha$; we obtain $f\colon \bDelta\to \bDelta\cup S$, and we require that $\bDelta$ is compactly contained in $\Dom f$.}
\label{Fg:Prepacman:OrbitOfUi}
\end{figure}

\subsubsection{Pacman renormalization}\label{sss:prepacman} (See Figure~\ref{Fg:Prepacman:OrbitOfUi} and~\cite[Definition 2.3]{DLS}.) We say that a holomorphic map $f\colon (U,\alpha)\to   (V,\alpha)$ with a distinguished $\alpha$-fixed point is \emph{pacman renormalizable} if there exists a prepacman \[G=(g_-=f^\aa \colon U_-\to S,\sp g_+=f^\bb \colon U_+\to S)\] defined on a sector  $S \subset V$ with vertex at $\alpha$  such that $g_-~, g_+$ are iterates of $f$ realizing the first return map to $S$ and such that the $f$-orbits of $ U_-~, U_+$ before they return to $S$ cover a neighborhood of $\alpha$ compactly contained in $U$. We call $G$ the \emph{pre-renormalization} of $f$ and the pacman $g\colon \widehat{U}\to   \widehat{V} $ is the \emph{renormalization} of $f$. By default, we assume that $S$ contains the critical value of $f$.

The numbers $\aa,\bb$ are the \emph{renormalization return times}. The renormalization of $f$ is called \emph{prime} if $\aa+\bb=3$. Combinatorially, a pacman renormalization is an iteration of prime
renormalizations, see~\S\ref{s:SectRenorm}.

We define $\bDelta=\bDelta_G$ to be the union of points in the $f$-orbits of $\overline U_-~,\overline U_+$ before they return to $S$. Naturally, $\bDelta$ is a triangulated neighborhood of $\alpha$, see Figure~\ref{Fg:Prepacman:OrbitOfUi}. We call $\bDelta$ a \emph{renormalization triangulation} and we will often say that $\bDelta$ is obtained by \emph{spreading around} $U_-~,U_+$. We require $\bDelta_G\Subset \Dom f$ and $\bDelta_G\cup S\Subset \Im f$.

An \emph{indifferent pacman} is a pacman with indifferent $\alpha$-fixed point. The \emph{rotation number} of an indifferent pacman $f$ is $\theta \in \R/\Z$ so that $\ee(\theta)$ is the multiplier at $\alpha(f)$. If, in addition, $\theta\in \Q$, then $f$ is \emph{parabolic}. A pacman renormalization of an indifferent pacman is again an indifferent pacman.

\subsubsection{Banach neighborhoods} (See \cite[\S 2.4]{DLS}.)
Consider a pacman $f:(U_f,O_0,\gamma_0)\to (V,O,\gamma_1)$ with a non-empty truncation disk $O$. We assume that there is a topological disk $\widetilde U\Supset U_f$ with a piecewise smooth boundary such that $f$ extends analytically to $\widetilde U$ and continuously to its closure.
Choose a small $\varepsilon>0$ and define $N_{\widetilde U}(f, \varepsilon)$ to be the set of analytic maps $g:\widetilde U\to \C$ with continuous extensions to $\partial \widetilde U$ such that
\[\sup_{z\in \widetilde U} |f(z)-g(z)| < \varepsilon.\]
Then $N_{\widetilde U}(f,\varepsilon)$ is a Banach ball; it is the $\varepsilon$ neighborhood of $f$ in the Banach space of maps defined on $\widetilde U$.~\cite{DLS}*{Lemma 2.5} asserts that if $\gamma_0,\gamma_1$ land at $\alpha$ at distinct well-defined angles and $\varepsilon$ is sufficiently small, then every $g\in N_{\widetilde U}(f,\varepsilon)$ has a domain $U_g\subset \widetilde U$ such that $g\colon U_g\to V$ is a pacman with the same $V,\gamma_1,O$ (up to translation).

\subsubsection{Pacman analytic operator}\label{sss:RenOperat} (Summary of~\cite[\S 2.5]{DLS}.)
Suppose that a pacman $\hat f:\widehat U\to  \widehat V $ is a renormalization of a holomorphic map $f\colon (U,\alpha)\to   (V,\alpha)$ via a quotient map  $\psi_f:S_f\to \widehat V$. Assume that the curves $\beta_0, \beta_-,  \beta_+$ (see Figures~\ref{Fg:Prepacman} and~\ref{Fg:Prepacman:OrbitOfUi}) land at $\alpha$ at pairwise distinct well-defined angles. \cite{DLS}*{Theorem 2.7} asserts that for every sufficiently small neighborhood $N_{\widetilde U}(f,\varepsilon)$, 
there exists a compact analytic pacman renormalization operator $\mathcal{R}\colon g \mapsto \hat g$ defined on $  N_{\widetilde U}(f,\varepsilon) $ such that $\mathcal{R}(f)=\hat{f}$. Moreover, the gluing map $\psi_g$, used in this renormalization, also depends analytically on $g$. Note that the operator $\RR$ is non-dynamical: it goes from a small Banach neighborhood of $f$ (where $f$ needs not be a pacman) to a certain small Banach neighborhood of the pacman $\widehat f$. If $\widehat f=f$ as germs and $\widehat U\supset \widetilde U$, i.e., there is an ``improvement of the domain'', then $\RR\colon N_{\widetilde U}(f,\varepsilon) \to N_{\widetilde U}(f,\delta) , \sp \RR f=f$ is a dynamical operator for sufficiently small $\varepsilon$ and $\delta$.

\subsubsection{Siegel pacmen}
\label{sss:SiegPacmen}
(See~\cite[\S3]{DLS}) A holomorphic map $f\colon U\to V$ is  \emph{Siegel} if it has a fixed point $\alpha$, a Siegel quasidisk $\overline Z_f\ni \alpha$ compactly contained in $U$, and a unique critical point $c_0\in U$ that is on the boundary of $ Z_f$. Note that in~\cite{AL-posmeas} a Siegel map is assumed to satisfy additional technical requirements; these requirements are satisfied by restricting $f$ to an appropriate small neighborhood of $\overline Z_f$. 

It follows from \cite[Theorem 3.19, Proposition 4.3]{AL-posmeas} that any two Siegel maps with the same rotation number of bounded type are hybrid conjugate on neighborhoods of their closed Siegel disks.

A pacman $f:U\to V$ is \emph{Siegel} if 
\begin{itemize}
\item $f$ is a Siegel map with Siegel disk $Z_f$ centered at $\alpha$;
\item the critical arc $\gamma_1$ is the concatenation of an external ray $R_1$ followed by an inner ray $I_1$ of $Z_f$ such that the unique point in the intersection $\gamma_1\cap \partial Z_f$ is not precritical;  and
\item writing $f\colon (U, O_0)\to (V,O)$ as in~\eqref{eq:TrunPacm}, the disk  $O $ is a subset of $Z_f$ bounded by its equipotential.
\end{itemize}
The \emph{rotation number} of a Siegel pacman (or a Siegel map)
 is $\theta \in \R/\Z$ so that $\ee(\theta)$ is the multiplier at $\alpha$. It follows that the rotation number of Siegel map is in $ \Theta_\bnd$ -- the set of combinatorially bounded rotation numbers (i.e., rotation numbers with continued fraction expansion where all its coefficients are bounded).

 If $f$ is a Siegel pacman, then all external rays of $f$ land. Moreover, $\partial Z_f$ is in the closure of repelling periodic points, and every neighborhood of $\partial Z_f$ contains a repelling periodic cycle. Moreover, the non-escaping set of $f$ is locally connected.

A Siegel pacman is \emph{standard} if $\gamma_0$ passes through the critical value; equivalently if $\gamma_1$ passes through the image of the critical value. By~\cite[Corollary 3.7 and Lemma 3.4]{DLS} every Siegel map can be renormalized to a standard Siegel pacman.

\subsubsection{Hyperbolic pacman renormalization self-operator}\label{sss:HypSelfOper} 
Let us now fix a rotation number $\theta_\str$ periodic under $\cRRc$, see~\eqref{eq:R_prm}. By~\cite[Theorems 3.16 and 7.7]{DLS}, there is a pacman renormalization operator $\RR\colon N_{\widetilde U}(f_\str,\varepsilon)\to N_{\widetilde U}(f_\str,\delta)$ 
with a fixed standard Siegel pacman $\RR f_\str =f_\str$ such that the rotation angle of $f_\str$ is $\theta_\str$. Moreover, $\RR$ is compact, analytic, and hyperbolic. We write this operator as $\RR\colon \BB \dashrightarrow \BB$, where $\BB=N_{\widetilde U}(f_\str,\delta)$.

The renormalization operator $\RR\colon \BB \dashrightarrow \BB$ is hyperbolic at $f_\str$ with one-dimensional unstable manifold $\WW^u$ and codimension-one stable manifold $\WW^s$. In a small neighborhood of $f_\str$ the stable manifold $\WW^s$ coincide with the set of pacmen in $\BB$ that have the same multiplier at the $\alpha$-fixed point as $f_\str$. Moreover, every pacman in $\WW^s$ is Siegel. In a small neighborhood of $f_\str$ the unstable manifold $\WW^u$ is parametrized by the multipliers of the $\alpha$-fixed points of $f\in \WW^u$. 

Our convention is that the renormalization change of variables $\psi_f\colon S_f\to V$ associated with $\RR f$ is defined near the critical value; i.e.~$S_f\ni c_1(f)$ and $\psi_f(c_1(f))=c_1(\RR f)$. In other words, the renormalization zooms at the critical value. 

By~\cite[Lemma 3.18]{DLS}, $\RR$ acts on the rotation numbers of indifferent pacmen as $\cRRc^\mm$ for some $\mm\ge 2$. Namely, if $f\in \BB$ is an indifferent pacman with rotation number $\theta$, then $\RR f$ is again an indifferent pacman with rotation number $\cRRc^\mm (\theta)$. In particular, $\cRRc^\mm(\theta_\str)=\theta_\str$. We call $\mm$ the \emph{renormalization period} of $\RR$. We will show in Proposition~\ref{prop:Ren oper with min per} that $\RR\colon \BB\dashrightarrow \BB$ can be constructed so that $\mm$ is the minimal period of $\theta_\str$ under $\cRRc$.

\subsubsection{Combinatorics of $\RR\colon \BB\dashrightarrow \BB$}
\label{sss:AntMatrix}
Since the renormalization $\RR f_\str=f_\str$ restricted to the Siegel quasidisk $\overline Z_\str$ is the $\mm$-th iterate of the prime renormalization, we have (see~\S\ref{ss:ren of rotat}):
\begin{equation}
\label{eq:theta is a per pnt}
\theta_\str=\cRRc^\mm(\theta_\str).
\end{equation}

\begin{lem}[\cite{DLS}*{Lemma 3.17}]
\label{lem:RR:rot numb act}
If $f\in \BB$ is an indifferent pacman with a rotation number $\theta$, then $\RR f$ is again an indifferent pacman with rotation number $\cRRc^\mm(\theta)$.
\end{lem}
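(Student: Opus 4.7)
The plan is to reduce the statement to the case of a prime pacman renormalization and then invoke the direct computation for rotations already carried out in \S\ref{ss:ren of rotat}. By the construction of $\RR$ in \S\ref{sss:HypSelfOper}, its combinatorial type at $f_\str$ is that of the $\mm$-fold iterate of prime sector renormalization applied to the linearized rotation $\Lbb_{\theta_\str}$ on $\overline Z_\str$. For $f \in \BB$ sufficiently close to $f_\str$, the curves $\gamma_0,\gamma_1,\beta_\pm$ and the renormalization sector $S_f$ vary continuously with $f$ and land at $\alpha(f)$ at pairwise distinct well-defined angles; hence the integer return times defining each prime stage are locally constant. Consequently $\RR$ decomposes on $\BB$ as
\[
\RR = \RR^{(\mm)} \circ \RR^{(\mm-1)} \circ \cdots \circ \RR^{(1)},
\]
where each $\RR^{(i)}$ is a prime pacman renormalization operator of type $1/3$ or $2/3$. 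By induction it suffices to establish the statement for a single prime operator, i.e.~to show that a prime pacman renormalization sends rotation number $\theta$ to $\cRRc(\theta)$, with $\cRRc$ as in~\eqref{eq:R_prm}.

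For the prime case, write the associated prepacman as $(f^\aa,f^\bb)$ with $\aa+\bb=3$ on a renormalization sector $S$ with vertex $\alpha$, and let $\psi\colon S\to \hat V$ be the renormalization change of variables. Choose a local coordinate at $\alpha$ normalized so that $\alpha=0$, and let $2\pi\omega$ be the opening angle of $S$ at $\alpha$. Then $\psi$ is modeled near $0$ on the gluing $z\mapsto z^{1/\omega}$ used for rotations in \S\ref{ss:ren of rotat}. The semi-conjugacy $\psi\circ f^\aa = \hat f\circ \psi$ on the appropriate boundary arcs, together with $f^\aa(z) = \ee(\aa\theta)\,z + O(z^2)$, yields
\[
\hat f'(0) = \ee\!\left(\aa\theta/\omega\right).
\]
Because the one-jets at $\alpha$ of $f^\aa$, $f^\bb$, and $\psi$ coincide with the corresponding jets for the rotation $\Lbb_\theta$ and its sector gluing (up to a conformal change of coordinate), the formula for the renormalized multiplier is identical to the rotation case computed in \cite{DLS}*{Lemma~A.1}, namely $\ee(\cRRc(\theta))$, with the $1/3$- or $2/3$-branch of~\eqref{eq:R_prm} selected according to the type of $\RR^{(1)}$. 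In particular $|\hat f'(\alpha)| = 1$, so $\hat f$ is again indifferent with rotation number $\cRRc(\theta)$, and iterating $\mm$ times yields $\cRRc^\mm(\theta)$ as claimed.

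The main obstacle is the degenerate case of rational $\theta$ (parabolic $f$), where $\psi$ fails to be injective at $\alpha$ and the semi-conjugacy $\psi\circ f^\aa = \hat f\circ \psi$ must be interpreted on the attracting petals of $f^\aa$. However, the algebraic identity producing $\hat f'(\alpha)$ depends only on the formal jets of $f^\aa,f^\bb,\psi$ at $\alpha$, which vary continuously with $f\in\BB$; hence the formula $\hat f'(\alpha)=\ee(\cRRc(\theta))$ persists in the parabolic limit by continuity, and the combinatorial rotation number of the parabolic $\hat f$ equals $\cRRc(\theta)$. Verifying this combinatorial consistency (equivalently, that the number of attracting petals of $\hat f$ matches the denominator of $\cRRc(\theta)$) is the only genuinely delicate point and is already implicit in the sector-renormalization analysis of \S\ref{ss:ren of rotat}.
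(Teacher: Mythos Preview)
The paper does not give its own proof of this lemma: it simply cites \cite{DLS}*{Lemma 3.17}. Your argument is essentially a reconstruction of that proof --- reduce to the prime case via the combinatorial decomposition recorded in \S\ref{sss:prepacman}, then compute the multiplier of $\hat f$ at $\hat\alpha$ by comparing the one-jets of $f^\aa$, $f^\bb$, and the gluing map $\psi$ at $\alpha$ with the model rotation computation of \cite{DLS}*{Lemma~A.1}. This is the correct approach and your outline is sound.

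Two minor remarks. First, your formula $\hat f'(0)=\ee(\aa\theta/\omega)$ needs the sector angle $\omega$ and the return time to be matched consistently (using the $\bb$-piece with $\bb=1$ in the $1/3$ case gives $\theta/(1-\theta)$ directly); as written it reads slightly ambiguously, but the content is right. Second, your final paragraph about attracting petals and combinatorial rotation number is unnecessary: the lemma only asserts that the \emph{multiplier} of $\hat f$ at $\hat\alpha$ is $\ee(\cRRc^\mm(\theta))$, and this follows from the jet computation together with continuity of the multiplier in $f\in\BB$, with no separate parabolic analysis required.
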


 Since the unstable manifold $\WW^u$ is parametrized by the multipliers of the $\alpha$-fixed points, the unstable eigenvalue $\lambda_\str$ is equal to the derivative $(\cRRc^\mm)'(\theta_\str)$.

Let $\M$ be the antirenormalization matrix associated with~\eqref{eq:theta is a per pnt}, see~\eqref{eq:sect to quarter}. Recall that $\M$ has positive entries. As in \S\ref{sss:renorm:pair of rotat}, let $\tt$ be the leading eigenvalue of $\M$. By Lemma~\ref{lem:lambda:t*t}, 
\begin{equation}
\label{eq:lambda str is t^2}
\lambda_\str=\tt^2.
\end{equation}

\subsubsection{Operators on near Siegel maps}
\label{sss:from SM to SP}
Consider a Siegel map $g$ with rotation angle $\theta_g$. Suppose $\cRRc^k(\theta_g)=\theta_\str$ for some $k\ge0$. Then  $g$ can be renormalized to a pacman on the stable manifold of $f_\str$, see~\cite[Corollary~3.7 and Lemma~7.8]{DLS}. This allows us to define a compact analytic renormalization operator $\RR_\Sieg\colon\AA\to \BB$ with $\RR_\Sieg (g)\in \WW^s$, where $\AA$ is a small Banach neighborhood of $g$.

\begin{figure}[tp!]

{\begin{tikzpicture}

\begin{scope}[shift={(0,5)},scale =0.8]
\draw[blue, shift ={(0,0.6)},yscale =-1] (-1.1,4.3) edge[<-,bend left=8] node [above right] {$\phi_{-2}=\psi_{-2}^{-1}$} (0.7,6.7);
\end{scope}

\begin{scope}[shift={(0,0)},scale =0.8]
\draw [rotate around={0.:(0.,0.)}] (0.,0.) ellipse (2.246099127742333cm and 1.0222334819623482cm);
\draw (-0.7,-0.04)-- (-1.4589248333679499,-0.7772346338620598);

\draw[red] (-0.7,-0.04)-- (-1.46,0.28);
\draw[red] (-1.46,0.28)-- (-1.06,0.64);
\draw[red] (-1.06,0.64)-- (-0.7,-0.04);
\draw (-1.5,-0.5) node {$\gamma_1$};
\draw[red] (-.5,0.4) node {$S_{-1}$};
\draw[blue, shift ={(0,0.6)},yscale =-1] (-1.1,0.3) edge[<-,bend left] node [ right] {$\phi_{-1}=\psi_{-1}^{-1}$} (0.7,6.7);
\draw[shift= {(0,-1)}] (0.8,0.5) node{$f_{-2}$};

\draw[shift={(1,-3)},blue]  (0,3.2) edge[->,bend left] node [above ] {$h_{-2}$} (6,3);

{\begin{scope}[shift={(8,-1.)},scale =1.5,red]
\draw (-1.,-1.)-- (-1.,2.)-- (1.,2.)-- (1.,-1.);

\draw[scale=0.4] (-1.,-1.)-- (-1.,2.)-- (1.,2.)-- (1.,-1.);
\draw[scale=0.16] (-1.,-1.)-- (-1.,2.)-- (1.,2.)-- (1.,-1.);

\draw (0,1.4) node{$\bS_{-2}$};

\draw[blue] (-0.6,1.2) edge[->] node[above right] {$A_\str$}(-0.47,2.6);

\end{scope}} 

\end{scope}


\begin{scope}[shift={(0,-5)},scale =0.8]
\draw [rotate around={0.:(0.,0.)}] (0.,0.) ellipse (2.246099127742333cm and 1.0222334819623482cm);
\draw (-0.7,-0.04)-- (-1.4589248333679499,-0.7772346338620598);

\draw[red] (-0.7,-0.04)-- (-1.46,0.28);
\draw[red] (-1.46,0.28)-- (-1.06,0.64);
\draw[red] (-1.06,0.64)-- (-0.7,-0.04);
\draw (-1.5,-0.5) node {$\gamma_1$};
\draw[red] (-.5,0.4) node {$S_{0}$};
\draw[blue, shift ={(0,0.6)},yscale =-1] (-1.1,0.3) edge[<-,bend left] node [ right] {$\phi_{0}=\psi_0^{-1}$} (0.7,6.7);
\draw[shift= {(0,-1)}] (0.8,0.5) node{$f_{-1}$};

\draw[shift={(1,-3)},blue]  (0,3.2) edge[->,bend left] node [above ] {$h_{-1}$} (6,3);

{\begin{scope}[shift={(8,-1.)},scale =1.5,red]
\draw (-1.,-1.)-- (-1.,2.)-- (1.,2.)-- (1.,-1.);

\draw[scale=0.4] (-1.,-1.)-- (-1.,2.)-- (1.,2.)-- (1.,-1.);

\draw (0,1.4) node{$\bS_{-1}$};
\draw [blue](-0.6,1.2) edge[->] node[right] {$A_\str$}(-0.25,4.6);
\end{scope}}

\end{scope}


\begin{scope}[shift={(0,-10)},scale =0.8]
\draw [rotate around={0.:(0.,0.)}] (0.,0.) ellipse (2.246099127742333cm and 1.0222334819623482cm);
\draw (-0.7,-0.04)-- (-1.4589248333679499,-0.7772346338620598);

\draw (-1.5,-0.5) node {$\gamma_1$};
\draw[shift= {(0,-1)}] (0.8,0.5) node{$f_{0}$};

\draw[shift={(1,-3)},blue]  (0,3.2) edge[->,bend left] node [above ] {$h_{0}$} (6,3);

{\begin{scope}[shift={(8,-1.)},scale =1.5,red]
\draw (-1.,-1.)-- (-1.,2.)-- (1.,2.)-- (1.,-1.);


\draw (0,1.4) node{$\bS_{0}$};

\draw [blue](-0.6,1.2) edge[->] node[right] {$A_\str $}(-0.25,4.6);

\end{scope}}

\end{scope}

\end{tikzpicture}}

\caption{Tower of antirenormalizations.} \label{Fig:Sf1dash}
\vspace{128in}
\clearpage
\end{figure}

\subsubsection{Maximal prepacmen}(Summary of~\cite[\S5]{DLS}.)
\label{sss:max prepacmen}
Every pacman $f\in \WW^u$, can be anti-renormalized infinitely many times. For $n\le 0$, we write $f_n\coloneqq \RR^{n} f$ and we denote by $F_n$ the associated prepacman (obtained by cutting $f_n$ along its critical arc $\gamma_1$). Let $\psi_{n}\colon S_n\to V$ be the renormalization change of variables realizing the renormalization of $f_{n-1}$. This means that there is a prepacman 
\begin{align*}
 F^{(n-1)}_n= &\left(f^{(n-1)}_{n,\pm }\colon U^{(n-1)}_{n,\pm}\to S^{(n-1)}\right)\\=& \left(f^{\aa}_{n-1}\colon U^{(n-1)}_{n,-}\to S^{(n)},\sp f^{\bb}_{n-1}\colon U^{(n-1)}_{n,+}\to S^{(n)}\right),
\end{align*}
in the dynamical plane of $f_n$ such that $\psi_{n}$ projects $F^{(n-1)}_n$ to $f_n$. We also say that $\psi_{n}^{-1}$ \emph{embeds} $F_n$ to the dynamical plane of $f_{n}$, and we call $F_{n}^{(n-1)}$ the \emph{embedding}.

Write 
\begin{align*}
\psi_{\str}&=\psi_{f_\str},\\
\mu_{\str}&\coloneqq \left(\psi^{-1}_{\str}\right)' (c_1),\\
A_\str&\colon  z\mapsto \mu_{\str} z,\\
T_n&\colon z\mapsto z-c_1(f_n).
 \end{align*}
Then the limit 
\begin{equation}
\label{eq:h_0:defn}
h_0(z)\coloneqq \lim_{n\to -\infty} A_{\str}^{n}\circ T_{n+1} \circ \left(\psi_{n+1}^{-1}\circ \dots \circ \psi_{-1}^{-1}\circ\psi _0^{-1} (z)\right)
\end{equation}
exists for all $z\in V\setminus \gamma_1$ with an extension through $\gamma_1\setminus \{\alpha\}$. The $\alpha$-fixed point is not in the domain of $h_0$: as $z$ approaches $\alpha$, its image $h_0(z)$ approaches $\infty$. Similarly $h_n$ are defined for $n\le 0$. The maps $h_n$ linearize $\psi$-coordinates (see Figure~\ref{Fig:Sf1dash}):
let 
\begin{equation}
\label{eq:Max Prep U pm}
\bF= (\bbf_{\pm}\colon \bU_{\pm }\to \bS)
\end{equation}
 be the image of $F=F_0=(f_\pm \colon U_\pm \to S)$ via $h_0$ (i.e.~$\bS $ is the closure of $h_0 (V\setminus \gamma_1)$, and $\bbf_{\pm}\coloneqq h_0 \circ f_{\pm} \circ h_0^{-1}$), then $\bbf_{0,\pm}$ are iterations of $\bbf_{n,\pm}$ rescaled by $A^{n}_\str$ (see~\eqref{eq:iter of max prep} below). The map~\eqref{eq:Max Prep U pm} is a prepacman (Figure~\ref{Fg:Prepacman}) in $\wC$ with $\alpha=\infty$.

\begin{figure}[tp!]

{\begin{tikzpicture}

\begin{scope}[shift={(0,1)},scale =0.8]
\draw[blue, shift ={(0,0.6)},yscale =-1] (-1.1,4.3) edge[<-,bend left=8] node [above right] {$\phi_{n}$} (0.7,6.7);
\end{scope}

\begin{scope}[shift={(0,-4)},scale =0.8]
\draw [rotate around={0.:(0.,0.)}] (0.,0.) ellipse (2.246099127742333cm and 1.0222334819623482cm);
\draw (-0.7,-0.04)-- (-1.4589248333679499,-0.7772346338620598);

\draw[red] (-0.7,-0.04)-- (-1.35,0.35);
\draw[red] (-1.35,0.35)-- (-1.2,0.5);
\draw[red] (-1.2,0.5)-- (-0.7,-0.04);
\draw (-1.5,-0.5) node {$\gamma_1$};
\draw[red] (-.5,0.4) node {$S_{-n}$};
\draw[blue, shift ={(0,0.6)},yscale =-1] (-1.1,0.3) edge[<-,bend left] node [ right] {$\phi_{n+1}\circ\dots \circ\phi_{0}$} (0,7.7);
\draw[shift= {(0,-1)}] (0.8,0.5) node{$f_{n}$};

\draw[shift={(1,-3)},blue]  (0,3.2) edge[->,bend left] node [above ] {$A_\str^{n}\circ h_{n}$} (7.5,2.5);

{\begin{scope}[shift={(9.1,-4.3)},scale =2.5,red]
\draw (-1.,-1.)-- (-1.,2.)-- (1.,2.)-- (1.,-1.);

\draw[scale=0.2] (-1.,-3.5)-- (-1.,2.)-- (1.,2.)-- (1.,-3.5);

\draw (0.05,0.15) node{$\bS_{0}$};

\draw (0.7,1.4) node{$\bS^\#_{n}$};
\end{scope}} 

\end{scope}


\begin{scope}[shift={(0,-10)},scale =0.8]
\draw [rotate around={0.:(0.,0.)}] (0.,0.) ellipse (2.246099127742333cm and 1.0222334819623482cm);
\draw (-0.7,-0.04)-- (-1.4589248333679499,-0.7772346338620598);

\draw (-1.5,-0.5) node {$\gamma_1$};
\draw[shift= {(0,-1)}] (0.8,0.5) node{$f_{0}$};

\draw[shift={(1,-3)},blue]  (0,3.2) edge[->,bend left] node [above ] {$h_{0}$} (8,6);

\end{scope}

\end{tikzpicture}}

\caption{$\bS^\#_n=A_\str^n \bS_n$ and $\displaystyle\bigcup_{n\le0} \bS^\#_n=\C$, compare with Figure~\ref{Fig:Sf1dash}.} \label{Fig:Sf1dash:2}
\end{figure}

Let $g : X \to Y$ be a holomorphic map between Riemann surfaces.  
Recall that $g$ is:
\begin{itemize}
\item proper, if $g^{-1}(K)$ is compact for each compact $K \subset Y$;
\item $\sigma$-proper (see \cite[\S 8]{McM3}) if each component of $g^{-1}(K)$ is compact for each compact $K \subset Y$; or equivalently if $X$ and $Y$ can be expressed as increasing unions of subsurfaces $X_i$, $Y_i$ such that $g : X_i \to Y_i$ is proper.
\end{itemize}
\noindent A proper map is clearly $\sigma$-proper.

\cite[Theorem 5.5]{DLS} asserts that $\bbf_{\pm}$ admit maximal analytic extensions to $\sigma$-proper maps of the complex plane; we call the pair of extensions
\begin{equation}
\label{eq:MaxPrep}
\bF= (\bbf_-\colon  \bX_-\to \C,\sp \bbf_+\colon  \bX_+\to \C),
\end{equation}
a \emph{maximal prepacman}. The maximal extension~\eqref{eq:MaxPrep} is obtained by iterating a certain number of times in the dynamical plane of $f_k$, $k\le0$.  Namely, a big open topological disk $\bD^k$ around $0$ in the dynamical plane of $\bF$ is identified via $(A_\str^k \circ h_k)^{-1}$ with a fixed disk $D$ in the dynamical plane of $f_k$ around the critical value $c_1(f_k)$, see Figure~\ref{Fig:Sf1dash:2}. Moreover, $D$ also contains $f_k^{\aa_k}(c_1)$ and $f_k^{\bb_k}(c_1)$ for certain $\aa_k,\bb_k$. Let $W_-^{(k)}$ and $W_+^{(k)}$ be the pullbacks of $D$ along the orbits $c_1,f_k(c_1),\dots, f_k^{\aa_k}(c_1)$ and  $c_1,f_k(c_1),\dots, f_k^{\bb_k}(c_1)$ respectively. This way we obtain a pair of branched coverings (see~\cite{DLS}*{(5.9)}) 
\begin{equation}
\label{eq:pair:W_pm to D}
(f_k^{\aa_k}\colon  W_-^{(k)} \to D,\sp  f_k^{\bb_k}\colon  W_+^{(k)}\to D).
\end{equation}
(The main step is to show that the backward orbits of $D$ in these pullbacks do not hit $\partial^\frb U_k$, see Figure~\ref{Fg:TruncPacman}; this is {\cite[Key Lemma 4.8]{DLS}} stated as Lemma~\ref{lem:KeyLemma} later in the paper.) Then $A_\str^k \circ h_k$ conjugates \eqref{eq:pair:W_pm to D} to the pair
\begin{equation}
\label{eq:pair:bW_pm to bD^k}
(\bbf_- \colon  \bW_-^{(k)} \to \bD^k,\sp  \bbf_+\colon \bW_+^{(k)}\to \bD^k)
\end{equation}
so that $\bigcup_{k\ll 0} \bD^k=\C$ and
\[
\Dom \bbf_- = \bigcup_{k \ll 0} \bW_-^{(k)},\sp\sp  \Dom \bbf_+ = \bigcup_{k \ll 0} \bW_+^{(k)}.
\]
It follows from the construction that $\Dom \bbf_-$ and $\Dom \bbf_+$ are simply connected. 

 Let 
\[ \bF^{\#}_n=\left(\bbf_{n,-}^\#, \bbf_{n,+}^\# \right)\coloneqq A_\str^{n}\circ \bF_n\circ A_\str^{-n}\]
be the rescaled version of $\bF_n$, see Figure~\ref{Fig:Sf1dash:2}. Then $\bF^{\#}_{n}$ is an iteration of $\bF^{\#}_{n-1}$: writing the antirenormalization matrix as
\begin{equation}
\label{eq:def:M}
\M\coloneqq \left(\begin{matrix}
         m_{1,1} &m_{1,2} \\ m_{2,1} &m_{2,2}
        \end{matrix}\right)
\end{equation}
we obtain 
\begin{equation}
\label{eq:iter of max prep}
\left\{\begin{array}{c}
\bbf^\#_{n,-}=\left( \bbf^\#_{n-1,-}\right)^{m_{1,1}}\circ \left( \bbf^\#_{n-1,+}\right)^{m_{1,2}}\\
\bbf^\#_{n,+}=\left( \bbf^\#_{n-1,-}\right)^{m_{2,1}}\circ \left( \bbf^\#_{n-1,+}\right)^{m_{2,2}}
\end{array}\right..
\end{equation}
In particular, $\bF=\bF_0^\#$ is an iteration of $\bF_n^\#$.

\subsubsection{Renormalization triangulation}
\label{sss:ren trian:pacmen}
(See~\cite[\S4.1]{DLS} and Figure~\ref{Fig:ren triang:f0}.)
For a pacman $f_0\in \BB$ its \emph{$0$th renormalization triangulation} $\bDelta_0(f_0)$ consists of two closed triangles $\Delta_0(0,f_0)$ and $\Delta_0(1,f_0)$ that are the closures of the connected components of $U_0\setminus (\gamma_0\cup \gamma_1)$. For $n> 0$, the \emph{$n$th renormalization triangulation} $\bDelta_n(f_0)$ consists of all the triangles obtained by spreading around $\Delta_n(0,f_0)$ and $\Delta_n(1,f_0)$ (compare with Figure~\ref{Fg:Prepacman:OrbitOfUi}), where the latter triangles are the embeddings of $\Delta_0(0,f_n)$ and $\Delta_0(1,f_n)$ to the dynamical plane of $f_0$. We also say that $\bDelta_n(f_0)$ is the \emph{full lift} of $\bDelta_0(f_n)$. {\cite[Theorem 4.6]{DLS}} asserts that $ \bDelta_m(f)$ approximates $ \overline Z_\str$ dynamically and geometrically.

More precisely, suppose $f_0\in \BB$ is renormalizable $m\ge 1$ times. We write
\[\phi_k\coloneqq \psi_k^{-1}.\]
The map 
\[ \Phi_{m} \coloneqq \phi_{1}\circ \phi_{2}\circ \dots\circ \phi_{m}\] 
admits a conformal extension from a neighborhood of $c_1(f_m)$ (where $\Phi_{m}$ is defined canonically) to $V\setminus \gamma_1$. The map $\Phi_{m}\colon V\setminus \gamma_1 \to V$ embeds the prepacman $F_m$ to the dynamical plane of $f_0$; we denote the embedding by 
\begin{align*}
 F^{(0)}_m= &\left(f^{(0)}_{m,\pm }\colon U^{(0)}_{m,\pm}\to S_{m}^{(0)}\right)\\=& \left(f^{\aa_{m}}_{0}\colon U^{(0)}_{m,-}\to S_{m}^{(0)}, \sp f^{\bb_{m}}_{0}\colon U^{(0)}_{m,+}\to S_{m}^{(0)}\right),
\end{align*}
where the numbers $\aa_{m}, \bb_{m}$ are the renormalization return times satisfying
\begin{equation}
\left(
         \aa_m, \bb_m
        \right) = \left(
         \aa , \bb
       \right) \mathbb M^{m-1} =\left(
         1 ,1
        \right)\mathbb M^{m}, \sp\sp\sp \aa=\aa_1,\sp\bb=\bb_1.
\end{equation}

\begin{figure}[tp!]
\centering{\begin{tikzpicture}

\draw  (1,-0.1) ellipse (4 and 2.5);

\begin{scope}[shift={(2,0)},rotate=45,scale =0.3]

\draw (0,0) arc (0:270:1);
\coordinate (A1) at (0,0);
\coordinate (B1) at (-1,-1);
\end{scope}
\draw (A1) -- (3,1);
\draw (B1) -- (3,-1);
\draw plot[smooth,tension=0.5] 
coordinates{ (3,1) (2,1.3) (1,1.35) (0,1.2)(-1,0.7) (-1.5, 0.2) (-1.6,-0.2)
 (-1.5, -0.5) (-1,-1) (0,-1.5) (1,-1.65) (2,-1.6) (3,-1)};

 \draw (0,-2.5)-- (0.1,-0.2) --(-1,0.7);
 \node[right] at(0,-2.2) {$\gamma_1$};
 \node[right] at(-0.6,0.4) {$\gamma_0$};

 \draw[blue] (0.9,-0.2) node {$\Delta_0(0)$};
  \draw[red] (-0.9,-0.2) node {$\Delta_0(1)$};

\begin{scope}[shift={(13,0)},scale =0.85]  
  
  \begin{scope}[shift={(-6.5,0)}, scale =1.25]
\begin{scope}[shift={(2,0)},rotate=45,scale =0.3]
\draw (0,0) arc (0:270:1);
\coordinate (A2) at (0,0);
\coordinate (B2) at (-1,-1);
\end{scope}
\draw (A2) -- (3,1);
\draw (B2) -- (3,-1);
\draw plot[smooth,tension=0.5] 
coordinates{ (3,1) (2,1.3) (1,1.35) (0,1.2)(-1,0.7) (-1.5, 0.2) (-1.6,-0.2)
 (-1.5, -0.5) (-1,-1) (0,-1.5) (1,-1.65) (2,-1.6) (3,-1)};
 \coordinate (C) at (0,-1.5); 
\coordinate (D) at (-1,0.7);
  \end{scope}

 \begin{scope}[shift={(-6.5,0.3)},rotate=120,scale =0.08]
\draw (0,0) arc (0:270:1);
\coordinate (A3) at (3,-1);
\draw (0,0)--(A3);
\coordinate (B3) at (1,-3.8);
\draw (B3) --(-1,-1);
\end{scope} 
\draw (A3)-- (-6.8,0.5) -- (-6.4,-0.4)--(-6,0.5)--(B3); 

\begin{scope}[ scale =0.7]
\draw[red] (-9.87, -0) node {$1$}
(-9., -1.3)  node {$3$}; 
\draw[blue] (-9.14, -0.1) node {$0$}
(-9.7, -1.2) node {$2$}
(-8.47, -0.6) node {$4$};
\end{scope}

 \begin{scope}[shift={(-6.95,-0.65)},rotate=250,scale =0.08]
\draw (0,0) arc (0:270:1);
\coordinate (A4) at (3,-1);
\draw (0,0)--(A4);
\coordinate (B4) at (1,-3.8);
\draw (B4) --(-1,-1);
\end{scope} 
\draw (B4)-- (-7.5,-0.3)-- (-6.4,-0.4)--(-6.8,-1.2)--(A4);
\draw(-7.5,-0.3) ..controls (-7.2,0.2)..(-6.8,0.5);

\draw (-6.8,-1.2) .. controls (-6,-1.3) .. (-5.7,-1.2);

\begin{scope}[shift={(-5.6,-0.2)},rotate=55,scale =0.1]
\draw (0,0) arc (0:270:1);
\coordinate (A4) at (3,-1);
\draw (0,0)--(A4);
\coordinate (B4) at (1,-3.8);
\draw (B4) --(-1,-1);
\end{scope} 

\draw (B4) --(-5.7,-1.2)-- (-6.4,-0.4);
\draw (A4) -- (-5.9,0.7) --(-6,0.5);

\draw (-7.8,-0.3) node {$\bDelta_1$};

\end{scope}
\end{tikzpicture}}
\caption{Left: the triangulation $\bDelta_0=\Delta_0(0)\cup \Delta_0(1)$. The closed triangles $\Delta_0(0)$ and $\Delta_0(1)$ are the closures of the connected components of $U\setminus (\gamma_0\cup \gamma_1)$. Right: the triangulation $\bDelta_1(f_0)$ is obtained by spreading around $\Delta_1(0,f_0)$ and $\Delta_1(1,f_0)$ -- the embeddings of $\Delta_0(0,f_1)$ and $ \Delta_0(1,f_1)$ into the dynamical plane of $f_0$.}
\label{Fig:ren triang:f0}
\end{figure}

Write $\Delta_m(0,f_0)\coloneqq \overline U^{(0)}_{m,+}$ and $\Delta_m(1,f_0)\coloneqq \overline U^{(0)}_{m,-}$. Then $\bDelta_m$ consists of triangles 
\[\big\{f_0^i\big(\Delta_m(0,f_0) \big)\mid i\in\{0,1,\dots, \aa_m-1\}\big\}\cup \big\{f_0^i\big(\Delta_m(1,f_0) \big)\mid i\in\{0,1,\dots, \bb_m-1\}\big\}.\] 
We enumerate counterclockwise these triangles as $\Delta_m(i)$ with $i\in \{0,1,\dots, \qq_m-1\}$. By construction, $\Delta_m(0,1)\coloneqq \Delta_m(0)\cup \Delta_m(1)$ contains the critical value $c_1$, while $\Delta_m(-\pp_m,-\pp_m+1)$ contains the critical point, where $\pp_m/\qq_m$ is the \emph{combinatorial rotation number} of $f\mid \bDelta_m $.  

We have:
\begin{equation} 
\label{eq:ImprOfDomain}
 \bDelta_n(f)\Subset \bDelta_{n-1}(f)\Subset \dots\Subset \bDelta_1(f) \Subset \bDelta_1(f)\cup S_f \Subset \bDelta_0(f),
\end{equation}
and renormalization sector $S_f$ is disjoint from $\gamma_1$ away from a small neighborhood of $\alpha$.

\subsubsection{Walls $\bPi_n$ of $\bDelta_n$}
\label{sss:walls bPi} Consider the dynamical plane of a pacman $f$. By a \emph{univalent $N$-wall} we mean a closed annulus $A$ surrounding an open disk $O$ containing $\alpha$, such that $f\mid O\cup A$ is univalent, and for every $z\in O$, we have $(f\mid A\cup O)^{\pm k}(z)\in A\cup O$ for $k\in\{0,1,\dots,  N\}$. In other words, it takes at least $N$ iterates for a point in $O$ to cross $A$. An \emph{$N$-wall} is an annulus $A$ surrounding an open disk $O$ containing $\alpha$ such that $A$ contains a univalent $N$-wall. A wall $A$ \emph{respects} $\gamma_0,\gamma_1$ if $A\cap \gamma_0$ and $A\cap \gamma_1$ are two arcs. 

Let us define the wall $\bPi_n(f)$ of $\bDelta_n$; the wall approximates $\partial Z_\str$ just like $\bDelta_n(f)$ approximates $\overline Z_\str$. In the dynamical plane of $f_\str$ consider its Siegel disk $Z_\str$. It is foliated by equipotentials parametrized by their heights
ranging from 0 (the height of $\alpha$) to $1$ (the height of $\partial Z_\str$). Fix an $r\in (0,1)$ and consider the open subdisk $Z^r$ of $Z_\str$ bounded by the equipotential at height $r$.  Consider next $f\in \WW^u$ close to $f_\str$. Then $\gamma_0(f)$ and $\gamma_1(f)$ still intersect $\partial Z^r$ at single points. The \emph{wall $\bPi_0(f)$ of $\bDelta_0(f)$} is the closed annulus $\bDelta_0(f)\setminus Z^r$ consisting of rectangles \[\Pi_0(0)= \bPi_0 \cap \Delta_0(0)\text{ and  }\Pi_0(1)= \bPi_0 \cap \Delta_0(1).\]

Suppose that $f_n$ with $n>0$ is sufficiently close to $f_\str$ so that $\bPi_0(f_n)$ is defined. Let $\Pi_n(0,f_0)$ and $\Pi_n(1,f_0)$ be the embedding of the rectangles  $\Pi_0(0,f_n)$ and $\Pi_0(1,f_n)$ into the dynamical plane of $f_0$, see~\cite[Lemma 4.2, (2)]{DLS}. The \emph{wall $\bPi_n=\bPi_n(f)$ of $\bDelta_n(f)$} is obtained by spreading around $\Pi_n(0,f_0)$ and $\Pi_n(1,f_0)$. The wall $\bPi_n$ consists of rectangles $\Pi_n(i)=\bPi_n\cap \Delta_n(i)$. We have $\bPi_n(f_0)\Subset \bPi_{n-1}(f_0)$ is an annulus approximating  $\partial Z_\str$ and, moreover, the dynamics $f\mid (\bDelta_n\setminus \bPi_n)$ is univalent.

\subsubsection{Siegel triangulation}
\label{sss:SiegTriang} Consider a pacman $f$ close to $f_\str$ and let $\gamma_1^\new, \gamma^\new_0$ be a new ``dividing'' pair (similar to~\S\ref{ss:PartiHomeo}) that is coincide with $\gamma_0,\gamma_1$ on $\bPi_0$; i.e.:
\[\gamma^\new_0=f(\gamma^\new_1), \sp \gamma_0^\new\cap  \gamma_1^\new=\{\alpha\},\sp \gamma_i^\new\cap \bPi_0=\gamma_i.\]
Let $\bDelta^\new_0$ be the associated new triangulation consisting of the closures of $U_{f}\setminus {(\gamma_0^\new\cup \gamma_1^\new)}$. Such $\bDelta^\new_0$ will appear in Theorem~\ref{thm:quot of fund domain} as the projection of a fundamental domain and will be used in Theorem~\ref{thm:proj of val flow} to select $\gamma_0^\new\setminus \{\alpha\} , \gamma^\new _1\setminus \{\alpha\}$ away from a valuable flower. Lemma 4.3 from \cite{DLS} (see also Proposition~\ref{prop:quot of fund domain}) assets that if $f=\RR^n(f_{-n})$, then $\bDelta^\new_0(f)$ has the full lift $\bDelta_n^\new(f_{-n})$ with $\bPi_n^\new(f_{-n})=\bPi_n(f_{-n})$ just like $\bDelta_n$ is the full lift of $\bDelta_0$. Moreover, the assumption ``$\gamma_i^\new\cap \bPi_0=\gamma_i$'' can be relaxed into ``$\gamma_i^\new\cap \bPi_0$ is sufficiently close to $\gamma_i$.''

We will also consider triangulations that are small perturbations of $\bDelta^\new_n$. A \emph{Siegel triangulation} $\bDelta$ is a triangulated neighborhood of $\alpha$ consisting of closed triangles, each has a vertex at $\alpha$, such that
\begin{itemize}
\item triangles of $\bDelta$ are $\{\Delta(i)\}_{i\in \{0,\dots \qq-1\}}$ enumerated counterclockwise around $\alpha$ so that $\Delta(i)$ intersects only $\Delta(i-1)$ (on the right) and $\Delta(i+1)$ (on the left); $\Delta(i)$ and $\Delta(i+j)$ are disjoint away from $\alpha$ for $j\not\in \{-1,0,1\}$;
\item there is a $\pp>0$ such that $f$ maps $\Delta(i)$ to $\Delta(i+\pp)$ for all $i\not\in \{-\pp, -\pp+1\}$;
\item $\bDelta$ has a distinguished $2$-wall $\bPi$ enclosing $\alpha$ and containing $\partial \bDelta$ such that each $\Pi(i)\coloneqq \bPi\cap \Delta(i)$ is connected and $f$ maps $\Pi(i)$ to $\Pi(i+\pp)$ for all $i\not\in \{-\pp, -\pp+1\}$; and 
\item  $\bPi$ contains a univalent $2$-wall $\wall$ such that each $\wall(i)\coloneqq \wall\cap \Pi(i)$ is connected and $f$ maps $\wall(i)$ to $\wall(i+\pp)$ for all $i\not\in \{-\pp, -\pp+1\}$.
 \end{itemize}

We say that $\bPi$ \emph{approximates} $\partial Z_\str$ if $\partial Z_\str$ is a concatenation of short arcs $J_0J_1\dots J_{\qq-1}$ such that $\Pi(i)$ and $J_i$ are close in the Hausdorff topology.

\begin{lem}[\cite{DLS}*{Lemma 4.4}]
\label{lem:SiegTriangLifting}
Let $f\in \BB$ be a pacman such that all $f,\RR f,\dots, \RR^n f$ are in  a small neighborhood of $f_\str$. Let $\bDelta(\RR^n f)$ be a Siegel triangulation in the dynamical plane of $\RR^n f$ such that $\bPi(\RR^n f)$ approximates $\partial Z_\str$. Then $\bDelta(\RR^n f)$ has a full lift $\bDelta(f)$ which is again a Siegel triangulation. Moreover, $\bPi(f)$ also approximates $\partial Z_\str$.
\end{lem}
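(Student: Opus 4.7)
The plan is to reduce to the case $n=1$ and then iterate, mimicking the construction of the renormalization triangulation $\bDelta_n(f_0)$ in~\S\ref{sss:ren trian:pacmen}. Given a Siegel triangulation $\bDelta \coloneqq \bDelta(\RR f)$, I would produce its one-step lift $\bDelta(f)$ as follows. Let $\Delta(0,\RR f)$ and $\Delta(1,\RR f)$ be the two triangles of $\bDelta$ containing the critical value of $\RR f$ on either side of the critical arc $\gamma_1(\RR f)$. Using $\phi_1 = \psi_1^{-1}$, embed them as closed sectors $\widetilde\Delta_-, \widetilde\Delta_+$ with common vertex $\alpha(f)$ inside $U_0$. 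These will serve as the base triangles of $\bDelta(f)$.

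Next I would spread $\widetilde\Delta_\pm$ around under $f$-iteration, following the combinatorics dictated by $\bDelta$. By the Siegel-triangulation axioms, $\RR f$ cyclically permutes triangles of $\bDelta$ with shift $\pp$; hence every triangle $\Delta(i)$ of $\bDelta$ arises as an $\RR f$-iterate of either $\Delta(0,\RR f)$ or $\Delta(1,\RR f)$. Via the embedding of the prepacman $F^{(0)}_1$ into the dynamical plane of $f$, each such $\RR f$-iterate translates into an $f^{\aa}$- or $f^{\bb}$-iterate applied to $\widetilde\Delta_-$ or $\widetilde\Delta_+$ (with $\aa,\bb$ the renormalization return times); declare the corresponding triangle of $\bDelta(f)$ to be that iterate. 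Define the candidate wall $\bPi(f)$ by spreading around the embedded rectangles $\phi_1\bigl(\bPi(\RR f) \cap (\Delta(0,\RR f) \cup \Delta(1,\RR f))\bigr)$ in exactly the same manner.

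Three properties must then be verified: (i) the relevant $f$-iterates are univalent and the resulting triangles have pairwise disjoint interiors away from $\alpha(f)$; (ii) the dynamical and wall axioms of a Siegel triangulation transfer to $\bDelta(f)$; and (iii) $\bPi(f)$ still approximates $\partial Z_\str$. Points (i) and (ii) should follow because the univalent $2$-wall inside $\bPi(\RR f)$ provides a definite buffer, so the embedded base rectangles lie in a region where $f$ admits the required number of univalent iterations and where the semi-conjugacy via $\phi_1$ between $f^{\aa},f^{\bb}$ and $\RR f$ is exact; this is essentially the same mechanism as in the proof of~\cite{DLS}*{Lemma~4.2}, which handles the case $\bDelta = \bDelta_0(\RR f)$.

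The main obstacle is (iii): tracking how the Hausdorff approximation of $\partial Z_\str$ is preserved under the lifting. For this I would exploit that $\partial Z_\str$ is $\RR$-invariant because $\RR f_\str = f_\str$, and that on $\overline Z_\str$ the change of variables $\phi_1$ realizes an inverse branch of the sector renormalization of $f_\str \mid \overline Z_\str$, which is an isometry in the linearizing coordinate. Combining this with the continuous (in fact analytic) dependence of $\phi_1$ and of the $f$-iterates on the Banach parameter, one transfers the Hausdorff proximity from $\bPi(\RR f)$ to $\bPi(f)$, with error controlled by how close $f$ is to $f_\str$. Iterating the above one-step construction $n$ times, one passes from $\bDelta(\RR^n f)$ to $\bDelta(f)$, completing the proof.
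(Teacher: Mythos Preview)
The paper does not contain a proof of this lemma: it is stated with the citation \cite{DLS}*{Lemma 4.4} and no argument is given. So there is no ``paper's own proof'' to compare against; the result is imported from the earlier paper.

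That said, your sketch matches the strategy one expects from \cite{DLS}: reduce to $n=1$, embed the two base triangles of $\bDelta(\RR f)$ via $\phi_1=\psi_1^{-1}$, spread them around by $f$-iterates following the combinatorics, and lift the wall rectangle by rectangle. Your identification of the univalent $2$-wall as the mechanism guaranteeing that the required iterates stay univalent and well inside the domain is on point, and your treatment of (iii) via the sector-renormalization action on $\overline Z_\str$ together with analytic dependence on the Banach parameter is the right idea. One small clarification: when you spread around, the new triangulation has $\qq'=\aa+\bb$ times as many triangles as $\bDelta(\RR f)$ (not the same number), and you should check the adjacency axiom---that $\Delta(i)$ meets only $\Delta(i\pm 1)$ away from $\alpha$---directly for the lifted triangles, since this is where the $2$-wall buffer is actually used to prevent unwanted overlaps. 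Otherwise the outline is sound.
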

\noindent More generally, $\bDelta(f)$ can be lifted under a renormalization $\RR_\Sieg\colon\AA\to \BB$ defined on near-Siegel maps (see \S\ref{sss:from SM to SP}) assuming that $\bPi(f)$ sufficiently approximates $\partial Z_\str$. In the proof, we first lift $\bPi$ (it has a lift because  $\bPi$  approximates $\partial Z_{f_\str}$), and then extend the lift into $\bDelta\setminus \bPi$ (see Proposition~\ref{prop:quot of fund domain}). If $f_0\in \WW^u$ has a Siegel triangulation $\bDelta(f_0)$, then $\bDelta( f_0)$ has a full lift $\bDelta(f_n)$ converging to $\overline Z_\str$ as $n\to -\infty$.

\subsubsection{Renormalization change of variables near $c_0$}
\label{sss:CofV near c_0}
We will consider in~\S\ref{ss:PositArea} the renormalization change of variables $\psi_0$ defined on a neighborhood of the critical point $c_0$; i.e.~$\psi_0(c_0(f))=c_0(f_1)$ and $\psi_0$ projects the first return of $f$ to $f_1=\RR f$. If $\psi\colon S\to V$ is the renormalization change of variables near the critical value as above, then $\psi_0$ is uniquely characterized by
 \[ \psi \circ f = f_1 \circ \psi_0.\] 
We have $\Dom \psi_0=f^{-1} (S)$ and $\Im \psi_0= \Dom f_1$.

\subsubsection{Parabolic pacmen}(See~\cite{DLS}*{\S 6})
\label{sss:par pacmen}
In a small neighborhood of $f_\str$ consider a parabolic pacman $f_\rr\in \WW^u$ with rotation number $\rr=\pp/\qq$ close to $\theta_\str$. We denote by $H_0$ a small attracting parabolic flower around $\alpha$. Petals in $H_0$ are enumerated counterclockwise as $H^i_0$ with $i\in \{0,1,\dots, \qq-1\}$

We assume that $H_0$ is small
enough so that $H_0 \subset V\setminus \gamma_1$, possibly up to a slight rotation of $\gamma_1$. Therefore, the flower $H_0$ lifts to the dynamical plane of $\bF_\rr$ via the identification $V\setminus \gamma_1\simeq \intr \bS$; we denote by $\bH_0$ the lift. The \emph{global attracting basin} $\bH$ of $\bF_\rr$ is the full orbit of $\bH_0$. There are Fatou coordinates in $\bH_0$; globalizing the Fatou coordinates we obtain that $0\in \bH$.
 \cite[Proposition~6.5]{DLS} parametrizes periodic components of $\bH$ as $\bH^i$ from left-to-right with $0\in \bH^0$, see Figure~\ref{Fig:ss:geom pict}. Every petal $\bH^i$ is an open topological disk in $\widehat \C$. By re-enumerating, we assume that the lift $\bH^0_0$ of $H_0^0$ is contained in $\bH^0$. Note that  $\bH^0_0$  is disjoint from $\partial \bH^0$. The actions of $\bbf^{\#}_{n,\pm}$ on $(\bH^i)_{i\in \Z}$ are given by (see~\cite{DLS}*{(6.7)})
\begin{equation}
\label{eq:ActionOnHp}
 \bbf^{\#}_{n,-}(\bH^i) =  \bH^{i-\pp_n} \text{ and } \bbf^{\#}_{n,+}(\bH^i) = \bH^{i+\qq_n-\pp_n},
\end{equation}
 where $\pp_n/\qq_n$ is the rotation number of $f_n.$

\subsubsection{The molecule map} (See~\cite{DLS}*{Appendix C}.)
\label{ss:MolecMap} Consider a primary $\pp/ \qq$-limb $\LL_{ \pp/\qq}$ of the Mandelbrot set. In the dynamical plane of $p\in \LL_{ \pp/\qq}$ there
are exactly $\qq$ external rays landing at the $\alpha$-fixed point; these rays are permuted as $\pp/\qq$. We can apply the Branner--Douady surgery \cite{BD} and delete the smallest sector between external rays $\gamma,p(\gamma)$ landing at $\alpha$ (as with the prime renormalization of a rotation); the result is a map $\mRRc(p)\in \LL_{\cRRc(\pp/\qq)}$, where $\cRRc(\pp/\qq)$ is defined in~\eqref{eq:R_prm}. This defines a partial continuous map $\mRRc\colon \LL_{ \pp/\qq}\dashrightarrow \LL_{ \cRRc(\pp/\qq)}$ whose inverse is an embedding of $\LL_{ \cRRc(\pp/\qq)}$ into $\LL_{ \pp/\qq}$. If $\pp/\qq=1/2$, then $\mRRc\colon \LL_{ 1/2} \dashrightarrow \LL_{0/1}=\Mandel$ is the canonical homeomorphism between $\Mandel_{1/2}$ and $\Mandel$.

\begin{wrapfigure}[12]{r}{6cm}
  \vspace{-5mm}\begin{tikzpicture}
  \node at (0,0){\includegraphics[scale=0.5]{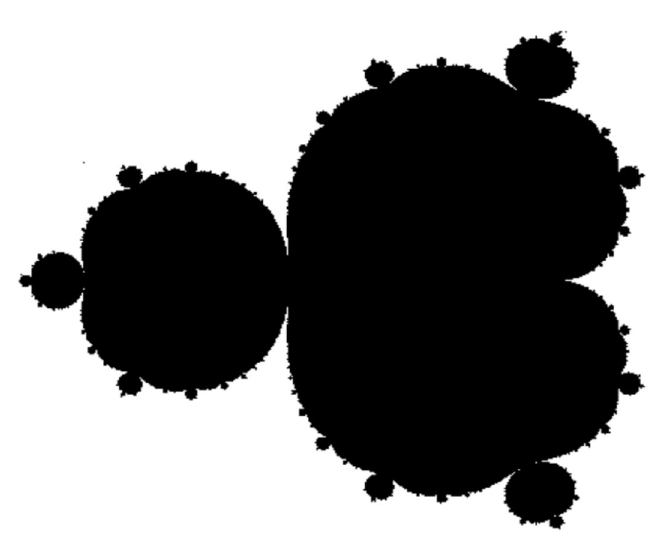}};
 \draw[white] node at (1,0) {$z(z+1)^2$}; 
      
 \end{tikzpicture}
\end{wrapfigure}

The \emph{molecule map} is obtained by taking all $\mRRc\colon \LL_{ \pp/\qq}\dashrightarrow \LL_{ \cRRc(\pp/\qq)}$ and extending continuously $\mRRc$ to the boundary of the main hyperbolic component $\partial \HH$. The molecule map is a $3$-to-$1$ partial map $\mRRc\colon \Mandel \dashrightarrow \Mandel$; its domain depends on the choices of the rays $\gamma$. However, on the boundary of main molecule, $\mRR_c$ is defined canonically and is semiconjugate to $q(z)=z(z+1)^2$ restricted to its Julia set, see the figure. The \emph{Molecule Conjecture} asserts that this is in fact a conjugacy and, moreover, there is a hyperbolic renormalization operator associated with the molecule map.

\section{Dynamics of maximal prepacmen}
\label{s:max prep}
Recall from \S\ref{sss:max prepacmen} that every pacman $f\in \WW^u$ has the associated maximal prepacmen $\bF=(\bbf_-,\bbf_+)$ consisting of two $\sigma$-proper maps. We define $\UnstLoc\simeq \WW^u$ to be the space of maximal prepacmen arising this way.

The renormalization operator on $\UnstLoc$ is an iteration and rescaling by $A_\str$: there are $m_{1,1},m_{1,2},$ $m_{2,1},m_{2,2}\ge 1$ such that (see~\eqref{eq:iter of max prep})
\begin{equation}
\label{eq:ff_n:ff_n-1}
\left\{\begin{array}{c}
\bbf^\#_{n,-}=\left( \bbf^\#_{n-1,-}\right)^{m_{1,1}}\circ \left( \bbf^\#_{n-1,+}\right)^{m_{1,2}}\\
\bbf^\#_{n,+}=\left( \bbf^\#_{n-1,-}\right)^{m_{2,1}}\circ \left( \bbf^\#_{n-1,+}\right)^{m_{2,2}}
\end{array}\right..
\end{equation}
In particular, $\bF=\bF_0^\#$ is an iteration of $\bF_n^\#$. Recall from~\eqref{eq:def:M} that the antirenormalization matrix is defined by
\begin{equation}
\label{eq:def:M:2}
\M\coloneqq \left(\begin{matrix}
         m_{1,1} &m_{1,2} \\ m_{2,1} &m_{2,2}
        \end{matrix}\right).
\end{equation}
Clearly, if
\begin{equation}
\label{eq:relat on triples}
\left(\begin{matrix}
         c,\ d
        \end{matrix}\right) = \left(\begin{matrix}
          a,\ b
        \end{matrix}\right)\M, 
\end{equation} 
then 
\begin{equation}
\label{eq:max prepacm ren}
\left( \bbf^\#_{n,-}\right)^a\circ \left( \bbf^\#_{n,+}\right)^b= \left( \bbf^\#_{n-1,-}\right)^{c}\circ \left( \bbf^\#_{n-1,+}\right)^{d}.
\end{equation}
We denote by $\tF$ the tower $(\bF_n)_{n\le 0}$ and we denote by $\tF^{\#}$ the rescaled tower $(\bF^{\#}_n)_{n\le 0}$.

We can now globalize $\UnstLoc$ as follows. The operator $\RR\colon \UnstLoc \dashrightarrow \UnstLoc$ is conjugate, say via $\bbh$, to $v\mapsto \lambda_\str v$ in a neighborhood of $\bF_\str$ so that $\bbh(\bF_\str)=0$. Using~\eqref{eq:max prepacm ren}, we inductively define $\bF_n^\#$ for all $n\ge 0$ and all $\bF_0$ in a neighborhood of $\bF_\str$. Note that the domain of $\bF_n^\#$  needs not be connected. Define $\bF_n=\RR^n \bF$ to be $A_\str^{-n}\circ \bF_n^\# \circ A_\str^{n} $ and set $\bbh(\bF_n)\coloneqq \lambda_\str^n \bbh(\bF_0)$. We enlarge $\UnstLoc$ by adding all new maximal prepacmen $\{\bF_n\}$. Then $\bbh$ parameterizes $\Unst$ by $\C$; i.e.~the new operator $\RR\colon \Unst\to\Unst $ is globally conjugate to $v\mapsto \lambda_\str v\colon \C\to \C$.

\subsection{$\Unst$ as a geometric limit of the quadratic slice}
\label{ss:geom pict}
The space $\Unst$ naturally arises as the set of limits of rescaled iterations of quadratic polynomials. Let us write $c_\str\coloneqq c(\theta_\str)$ and let us change the normalization of $p_c(z)\coloneqq z^2+c$ by putting the critical value and the parameter $c_\str$ at $0$: 
\[g_c\coloneqq T^{-1}_{c+c_\str} \circ p_{c+c_\str}\circ T_{c+c_\str}.\]

\begin{figure} 
\centering{\begin{tikzpicture}
\node at (0,0){\includegraphics[scale=0.6]{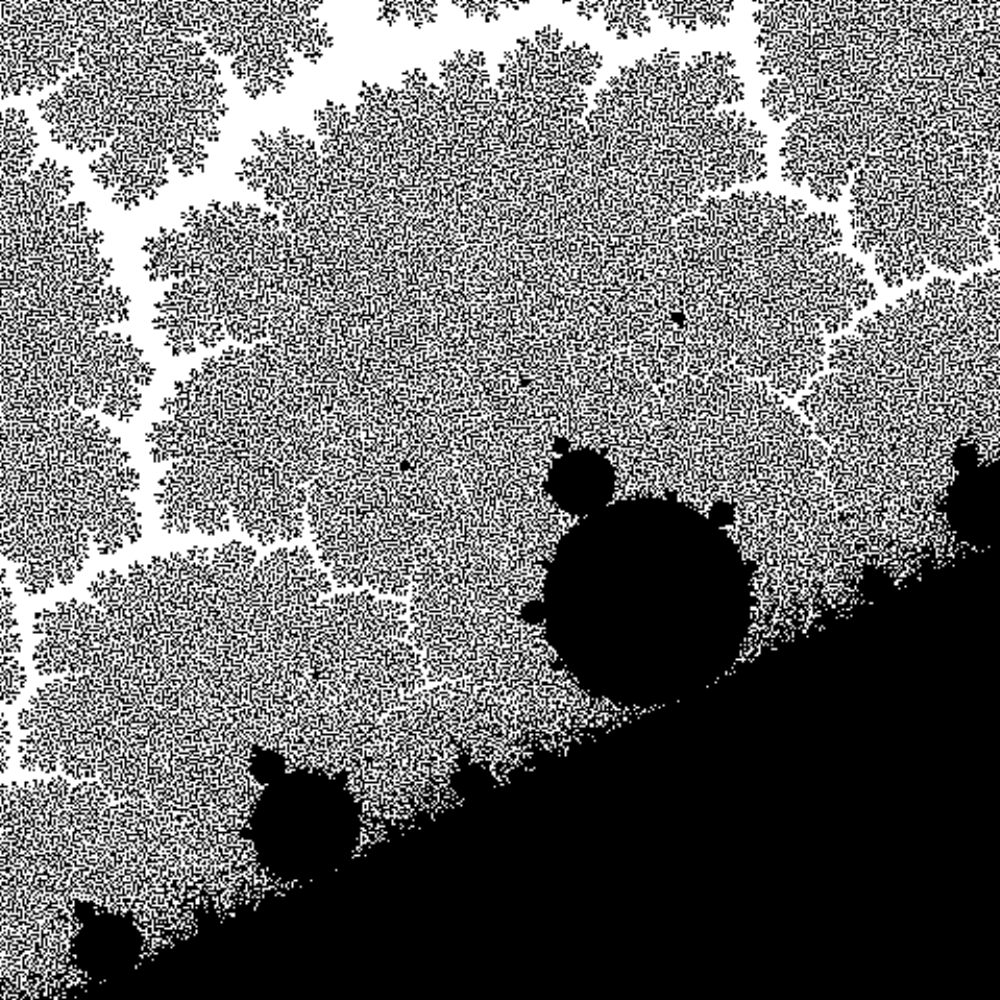}};
  \node at (0,-10.5){\includegraphics[scale=0.6]{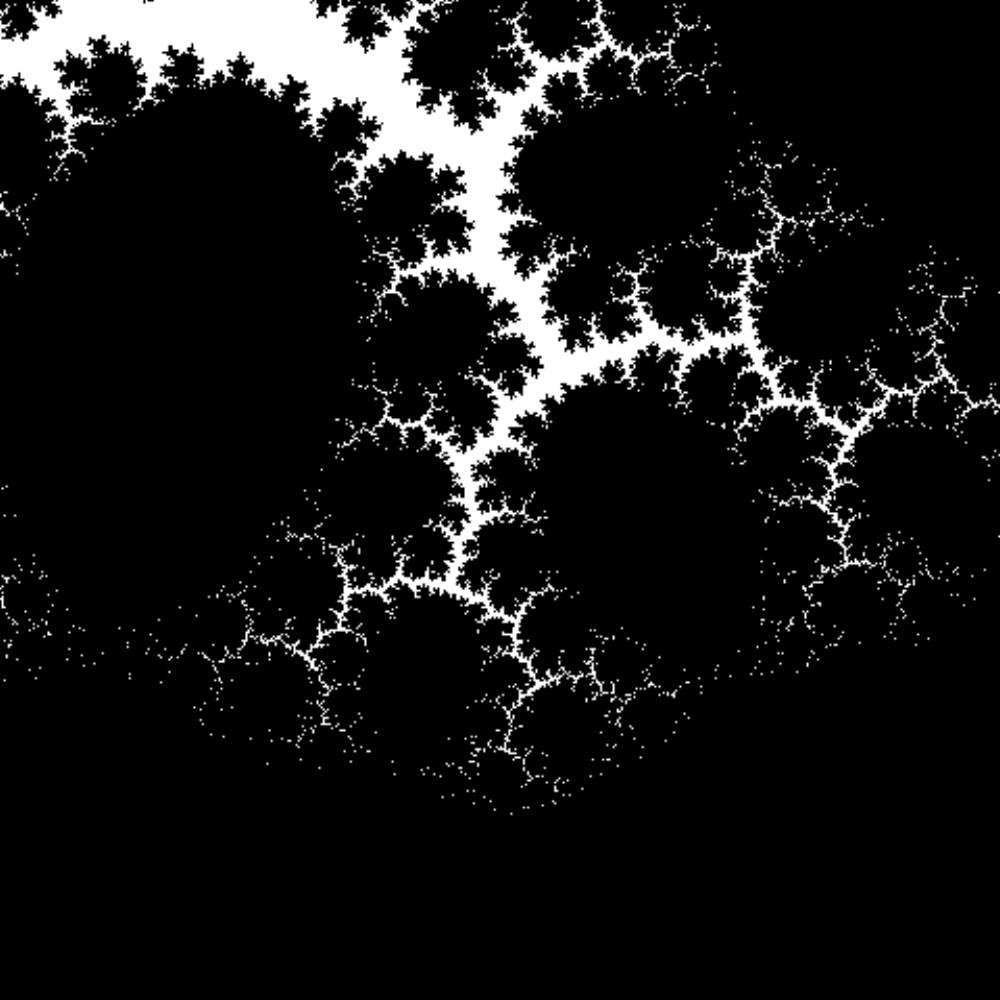}};
 
 \end{tikzpicture}}
\caption{Approximations of $\Unst$ (top) and of the maximal prepacmen $\bF_\str$ (bottom), see~\S\ref{ss:geom pict}. Note that the white set between limbs disappears in the limit.}
\vspace{128in} \label{Fig:unst man+max sieg}
\end{figure}

Using the hyperbolicity of $\RR$, it is possible to show that the  limit
\[\bF_c =\lim_{n\to +\infty} A^{-n}_{\str}\circ \left(g_{c \lambda^{-n}_\str}^{\aa_n},\sp g_{c \lambda^{-n}_\str}^{\bb_n}\right)\circ A^n_{\str}\]
exists and defines the parameterization of $\Unst$ by $c$ such that $\RR \bF_{c}=\bF_{c\lambda_\str}$. The existence of the limit for $c=0$ is shown in~\cite[Theorem 8.1, Claim 7]{McM2}.

Therefore, a zoomed picture of the Mandelbrot set near $f_{c_\str}$ gives a good approximation of $\Unst$, see Figure~\ref{Fig:unst man+max sieg}. Similarly, a zoomed picture of $f_{c_\str+ \lambda_\str^{-n} c}$ near the critical value gives a good approximation of $\bF_c$. This will help us to illustrate by pictures different constructions in the parameter and dynamical planes. 

Let us denote by $\HH$ the \emph{main hyperbolic component} of $\Unst$: the set of maximal prepacmen $\bF\in \Unst$ such that the $\alpha$-fixed point of $f_n$ is attracting for $n\ll0$. Then $\HH$ is the rescaled limit of the main hyperbolic component of the Mandelbrot set and $\partial \HH$ is a straight line passing through $0$.

\subsection{Power-triples} (Compare with~\S\ref{sss:power-triples}.)
\label{ss:PowerTriples}
A \emph{power-triple} is a triple $(n,a,b)\in \Z\times \Z^2_{\ge 0}$. Given a power-triple $P=(n,a,b)$, we write
\[\bF^{P}\coloneqq \left(\bbf^\#_{n,-}\right)^a\circ \left(\bbf^\#_{n,+}\right)^b;\]
$\bF^{P}$ is a $\sigma$-proper map; however its domain needs not be connected.

If $a,b,c,d$ satisfy~\eqref{eq:relat on triples}, then we say that $(n,a,b)$ and $(n-1,c,d)$ are \emph{equivalent power-triples}. This generates the equivalence relation ``$\simeq$'' on the set of power-triples; we will usually consider power-triples up to this natural equivalence relation. By construction, $\bF^P$ depends only on the equivalence class of $P$.

Let $P,Q$ be two power-triples. For every $n\ll0$, there are $a,b,c,d$ such that $P\simeq (n,a,b) $ and $Q\simeq (n,c,d) $. We set 
\[P+Q\simeq (n,a+c,b+d).\]
Then \[\bF^{P+Q}= \bF^{P}\circ \bF^{Q}.\]
We denote by $\PT$ the commutative semigroup consisting of the equivalence classes of power-triples with the operation ``$+$.'' We denote by $0\simeq (n,0,0)$ the \emph{zero power-triple}:  $\bF^0=\id$.

 For $P,Q\in \PT$ we say that $P\ge Q$ if for every sufficiently big $n\ll 0$ the following holds. Write $P\simeq (n,a,b)$ and $Q\simeq (n,c,d)$. Then $a\ge c$ and $b\ge d$. Clearly, $\ge$ is a well defined order on $\PT$. The next lemma is a consequence of Lemma~\ref{lem:PT:rot}:

\begin{lem}
\label{lem:power-triples:order}
For every $P,Q\in \PT$ either $P\ge Q$ or $P\le Q$ holds. There is an order-preserving embedding 
\[\iota\colon (\PT,+,\ge)\hookrightarrow (\R_{\ge 0},+,\ge)\]
and there is a $\tt>1$ such that 
\[
\iota(n-1,a,b)= \iota(n,a,b)/\tt\]
 for all $a,b\ge0$.\qed
\end{lem}

From now on, we fix the order-preserving embedding $\PT\subset \R_{\ge0}$ as in Lemma~\ref{lem:power-triples:order} and we write $(n,a,b)/\tt \coloneqq (n-1,a,b).$

 We denote by $\bF^{\ge 0}$ the \emph{cascade} $(\bF^P)_{P\in \PT}$.  It follows from ~\eqref{eq:ff_n:ff_n-1} that
\begin{equation}
\label{eq:ren:F:F_n}
 \bF_0^P=\left(\bF^{\#}_{-n} \right)^{\tt^n P}.
\end{equation}
If $\bF=\bF_\str$, then~\eqref{eq:ren:F:F_n} takes form
 \begin{equation}
\label{eq:ren:F:F_n^P}
 \bF_\str^P=A_\str^{-n} \circ \left(\bF_{\str}^{\tt^n P} \right) \circ A_\str^{n}.
\end{equation}

\subsection{Renormalization triangulations}
\label{ss:RenTriang}
Recall from \S\ref{sss:ren trian:pacmen} that the triangulation $\bDelta_{-n}(f_n)$ is the full lift of $\bDelta_0(f_0)$. For $\bF$ close to $\bF_\str$ we define the renormalization triangulation $\bDelta_0(\bF_0)$ to be the full lift of $\bDelta_0(f_0)$ to the dynamical plane of $\bF_\str$. More precisely, consider
\begin{equation}
\label{eq:Max Prep U pm:2}
\bbf_-\colon \bU_-\to \bS,\sp \bbf_+\colon \bU_+\to \bS,
\end{equation}
see~\eqref{eq:Max Prep U pm}, and note that this realizes the first return map of points in $\overline \bU_\pm$ back to $\bS$ under the cascade $\bF^{\ge0}$ because~\eqref{eq:Max Prep U pm:2} is the first return map under $\bbf^{\#}_{n,\pm }\colon \bU^\#_{n,\pm}\to \bS^\#_n$ for all $n\le 0$. Then $\bDelta=\bDelta_0(\bF)$ is obtained by spreading around $\Delta_0(0)\coloneqq \overline \bU_+$ and $\Delta_0(1)\coloneqq \overline \bU_-$;~i.e.~$\bDelta_0$ consists of triangles \[ 
\left\{\bF^P(\Delta_0(0))\mid P< (0,0,1) \right\} \cup
\left\{\bF^P(\Delta_0(1))\mid P< (0,1,0) \right\}.
\] We enumerate triangles in $\bDelta_0(\bF)$ as $(\Delta_0(i,\bF))_{i\in \Z}$ from left-to-right so that 
\[\Delta_0(0,\bF)=\overline \bU_+\sp\text{ and }\sp \Delta_0(1,\bF)= \overline\bU_-,\]
see Figure~\ref{Fig:ren triang:bF}. The triangulation $\bDelta_0(\bF)$ depends holomorphically on $\bF$.

\begin{lem}
Every $\Delta_0(i)$ is a triangle in $\hC$ with a vertex at $\infty$. For every compact subset $X\subset \C$, there are at most finitely many triangles in $\bDelta_0$ intersecting $X$.  
\end{lem}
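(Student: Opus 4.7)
The plan is to first treat the base triangles $\Delta_0(0) = \overline{\bU}_-$ and $\Delta_0(1) = \overline{\bU}_+$, then propagate to all remaining indices via the dynamics, and finally exploit the spiraling structure at $\balpha$ for the finiteness claim. For the base case, I would use the construction of the maximal prepacman from~\S\ref{sss:max prepacmen}: $\bU_\pm = \bigcup_{k\ll 0} \bW^{(k)}_\pm$, where each $\bW^{(k)}_\pm$ is the image of the pullback disk $W^{(k)}_\pm$ under $A_\str^k \circ h_k$. Each $\overline{W^{(k)}_\pm}$ is a closed topological triangle in the $f_k$-plane, with vertices near $\alpha(f_k)$, $c_1(f_k)$, and a point on $\partial V$. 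Under the rescaling (expansion by $1/\mu_\str$), the $\alpha$-vertex gets pushed toward $\balpha$ while $c_1(f_k)$ stays at $0$; combined with the simple connectivity of $\bU_\pm$ and the $\sigma$-properness of $\bbf_\pm$, this identifies $\overline{\bU}_\pm$ in $\hC$ as a closed topological triangle with one vertex at $\infty$ (namely $\balpha$), one at $0$, and one at a finite auxiliary boundary point.

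For general $i \notin \{0,1\}$, the definition of $\bDelta_0(\bF)$ in~\S\ref{ss:RenTriang} gives $\Delta_0(i) = \bF^P(\Delta_0(j))$ for $j \in \{0,1\}$ and some $P < \min\{(0,0,1),(0,1,0)\}$. The map $\bF^P$ is a composition of maps $\bbf^{\pm 1}_{n,\pm}$, each an analytic branched covering that sends closed topological triangles to closed topological triangles; moreover $\balpha$ is preserved under the dynamics (as the lift of the fixed point $\alpha$), so the vertex at $\infty$ is propagated to the image. An inductive argument, using the fact that orbits of triangles realize the first-return map before returning to $\bS$, shows that each $\Delta_0(i)$ is again a closed topological triangle in $\hC$ with a vertex at $\infty$.

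For the finiteness statement, I would appeal to the combinatorial model of~\S\ref{sss:RenTrian}: the analogous triangulation of $\oH_-$ by strips $\Delta_0(i)$ accumulates only at $-i\infty$, which is exactly $\balpha$ in the wall topology. Transferring this picture via the canonical identification of triangulations to the $\bF$-plane, the triangles $\Delta_0(i,\bF)$ accumulate only at $\balpha = \infty \in \hC$. Equivalently, for any compact $X \subset \C$, the wall topology produces a wall $\bPi_0 \subset \C$ separating $X$ from $\balpha$, and only the finitely many triangles lying (partly) on the $X$-side of $\bPi_0$ can meet $X$. The main obstacle is Step~1: rigorously verifying the three-vertex structure of $\overline{\bU}_\pm$ in $\hC$ requires careful control on how the boundary arcs of $W^{(k)}_\pm$ behave under $A_\str^k \circ h_k$ as $k\to -\infty$, and in particular that the $\alpha$-vertex does limit to the single point $\infty\in\hC$ rather than producing an interval of boundary at infinity.
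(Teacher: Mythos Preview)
Your Step~1 contains a genuine confusion: you write $\bU_\pm = \bigcup_{k\ll 0} \bW^{(k)}_\pm$, but these are different objects. The sets $\bW^{(k)}_\pm$ from \S\ref{sss:max prepacmen} exhaust the \emph{full domain} $\bX_\pm = \Dom \bbf_\pm$ of the $\sigma$-proper extension, which is an open dense subset of $\C$. By contrast, $\bU_\pm$ in~\eqref{eq:Max Prep U pm} is simply $h_0(U_\pm)$, the image of the small prepacman sector $U_\pm$ under the single map $h_0$. So your proposed exhaustion does not describe $\overline{\bU}_\pm$ at all, and the claim that each $W^{(k)}_\pm$ is a triangle with vertices near $\alpha, c_1, \partial V$ is also off: these are pullbacks of a round disk $D$ along long orbits, not sectors.

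The paper's route avoids this entirely by a reduction you are missing. Since $\bDelta_0(\bF)$ depends holomorphically on $\bF$ near $\bF_\str$, it suffices to prove the lemma for $\bF_\str$. At the fixed point all $\psi_k$ coincide with $\psi_\str$, so $h_\str = h_0$ is precisely the \emph{linearizer} of $\psi_\str$ at $c_1$. A linearizer sends $\alpha$ to $\infty$ and conjugates $\psi_\str$ to the scaling $A_\str$; hence the triangular sectors $S_\str$, $U_{\str,\pm}$ at $\alpha$ are carried to honest triangles in $\hC$ with a vertex at $\infty$, and the same holds for their finitely many spread-around images. This one observation replaces all of your Steps~1--2 and the obstacle you flagged yourself.

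For finiteness, your wall-topology argument is circular in its current form: you identify $\balpha$ with $\infty\in\hC$, but the wall topology on $\C\sqcup\{\balpha\}$ is defined abstractly and is not a priori the spherical topology. The paper instead builds an increasing exhaustion $\bDelta_0^{(n)}(\bF_\str)$ of $\bDelta_0(\bF_\str)$ by \emph{finite} triangulations, each obtained by embedding $\bDelta_{-n}(f_\str)$ via $A_\str^{-n}\circ h_\str$, and shows $\bDelta_0^{(n)} = \bS^\#_{\new,-n}\cap\bDelta_0$. Since $\bS^\#_{\new,-n}$ contains disks of arbitrarily large radius about $0$, any compact $X$ is eventually inside one such $\bS^\#_{\new,-n}$ and hence meets only the finitely many triangles of $\bDelta_0^{(n)}$.
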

\begin{proof}
Since $\bDelta_0(\bF)$ depends holomorphically on $\bF$ in a small neighborhood of $\bF_\str$, it is sufficient to prove the lemma for $\bF_\str$.

For $f_\str$, the map $h_\str\coloneqq h_0$ (see~\eqref{eq:h_0:defn}) is the linearizer of $\psi_\str$ (the renormalization change of variables associated with $f_\str=\RR f_\str$). This implies that $\bS_\str,\bU_{\str,\pm}, $ and all $\Delta(i,\bF_\str)$ are triangles of $\wC$ with a vertex at $\infty$, see Figure~\ref{Fig:Sf1dash}.

Since triangles of $\bDelta_n(f_\str)$ intersect $Z_\str$ along its internal rays, we can slightly rotate $\gamma_1$ so that the new $\gamma_1^\new$ intersects $\bDelta_n(f_\str)$ along the boundary of a certain triangle in $\bDelta_n(f_\str)$. Let us cut $\bDelta_n(f_\str)$ along $\gamma_1^\new$ and embed using $A_\str^{-n} \circ h_{\str}$ the obtained triangulation to the dynamical plane of $\bF_\str$; we denote by $\bDelta^{(n)}_0(\bF_\str)$ the embedding. Let us also denote by $\bS^\#_{\new, -n}$ the closure of $A_\str^{-n} \circ h_{\str}(V\setminus \gamma_1^\new)$, compare with Figure~\ref{Fig:Sf1dash:2}.

Then $\bDelta^{(n)}_0(\bF_\str)\subset \bDelta^{(n+1)}_0(\bF_\str)$, and the union of $\bDelta^{(n)}_0(\bF_\str)$ is $\bDelta_0(\bF_\str)$. Moreover, \[\bDelta_0^{(n)}= \bS^\#_{\new,-n}\cap \bDelta_0.\] Since $\bS^\#_{\new,-n}$ contains a disk around $0$ with a big radius for $n\ll 0$, we have $X\subset \bS^\#_{\new,-n}$ for $n\ll 0$.
\end{proof}

The \emph{$n$th renormalization triangulation $\bDelta_n(\bF)$} is $\bDelta_0(\bF^\#_n)=A_\str^{n}(\bDelta_0(\bF_n))$.

\emph{From now one we assume} that $\WW^u$ is chosen in a sufficiently small neighborhood of $f_\str$ so that every $f\in \RR^{-n}(\WW^u)$ has a renormalization triangulation $\bDelta_n(f)$ and every $\bF\in \RR^{-n}(\UnstLoc)$ has a renormalization triangulation $\bDelta_n(\bF)$ for $n\ge0$.

 For $\bF\in \Unst$, the triangulation $\bDelta_n(\bF)$ is defined for all sufficiently big $n\ll0$. We have
\begin{equation}
\label{eq:Delta_m appr C}
\bigcup _{m\ll 0}\Delta_m(j,\bF)=\C\sp \sp \text{if }\Delta_0(j,\bF_\str)\ni 0. 
\end{equation}
Moreover,
 \begin{equation}
\label{eq:S in Delta-1}
  \bS\setminus \Delta_0(0,1)\subset \Delta_{-1}(0,1).
 \end{equation}
because $S_f\setminus \bDelta_1(f)\Subset V\setminus \gamma_1$, see~\eqref{eq:ImprOfDomain}. 

Given a set $K\subset U_f$ in the dynamical plane of $f\in \WW^u$, the \emph{full lift} $\bK$ of $K$ to the dynamical plane of $\bF$ is defined as follows. Write \[K_0\coloneqq K\cap \Delta_0(0,f),\sp\sp \text{ and }
K_1\coloneqq K\cap \Delta_0(1,f).\]
(Recall that the triangles of $\bDelta_0(f)$ are closed thus $K=K_-\cup K_+$). Let $\bK_0$ and $\bK_1$ be the embeddings of $K_0$ and $K_1$ to the dynamical plane of $\bF$ via $\bS\simeq V\setminus \gamma_1$. Then
\[\bK\coloneqq  \bigcup_{0\le P<(0,0,1)} \bF^P(\bK_0)\bigcup_{0\le P< (0,1,0)} \bF^P(\bK_1).\]

In the dynamical plane of $\bF_\str$, we define its \emph{Siegel} disk $\bZ_\str$ to be the full lift of $Z_\str$  (the Siegel disk of $f_\str$). Then $\bZ_\str$ is a forward  invariant unbounded open disk.

For every $N>1$, if $f\in \WW^u$ is sufficiently close $f_\str$, then the wall $\bPi_0(f)$ contains a univalent $N$-wall $A$ (see~\S\ref{sss:walls bPi}) respecting $\gamma_0,\gamma_1$. The full lift of a univalent $N$-wall of $\bPi_0(f_n)$ is a univalent $N-1$ wall in $\bPi_0(f_0)$.

\subsection{Rational and critical points}
\label{ss:rat crit pts}
A point $x$ is \emph{periodic} if $\bF^P(x)=x$ for a power-triple $P>0$. It will follow from Lemma~\ref{lem:discr of dyn} that every periodic point has a unique minimal \emph{period} $P\in \PT$. If $P$ is the minimal period of $x$, then the \emph{multiplier} of $x$ is $(\bF^P)'(x)$. Periodic and preperiodic points are called \emph{rational}. 

 A \emph{critical point} of a cascade $\bF^{\ge0}$ is a critical point of some $\bF^P$ for $P\in \PT$. The following lemma describes basic properties of critical points. 
\begin{lem}
\label{lem:crit pts}
Consider $\bF\in \Unst$. Then $x$ is a critical point of $\bF^{\ge 0}$ if and only if
 there is a $P>0$ such that $\bF^P(x)=0$. The set of critical points  $\CP\big(\bF^P\big)$ of $\bF^P$ is $\bigcup_{0<S\le P } \bF^{-S}(0)$. The set of critical values $\CV\big(\bF^P\big)$ of $\bF^P$ is $\{  \bF^{S}(0)\mid S< P\}$. The postcritical set $\Post(\bF)$ of $\bF^P$ is the forward orbit of $0$.

Write $K\coloneqq \min\{(0,1,0),\sp (0,0,1)\}.$ Then
 \begin{equation}
\label{eq:CV are away from 0}
\CV\big(\bG^P\big)\setminus \{0\}\subset \bDelta_0(\bG)\setminus \bS\sp \sp \sp \text{ for } P< K \sp \text{ and }\sp \bG\in \UnstLoc.
\end{equation}
For every $P<K$, the set $\CV(\bG^P)$ moves holomorphically with $\bG\in \UnstLoc$, and every critical point of $\bG^P$ has degree $2$ for  $\bG\in \UnstLoc$.

For every $\bF\in \Unst$, there is a $K_{\bF}>0$ such that every critical point of $\bG^P$ has degree $2$ for $P < K_{\bF}$.  

For every $\bF\in \Unst$ and every $P\in \PT$, there exists $k\in  \N$ such that the degree of every critical point of $\bF^P$ is at most $k$. If $0$ is not periodic, then the degree of every critical point of $\bF^P$ is $2$.
\end{lem}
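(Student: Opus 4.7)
The plan is to reduce every claim to the basic structural fact that each factor $\bbf^\#_{n,\pm}$ is conjugate via $A_\str^n\circ h_n$ to an iterate $f_n^k$ of the pacman $f_n$, where $f_n$ has a unique critical point $c_0$ of local degree $2$ with $f_n(c_0)=c_1$, and in $\bF$-coordinates $c_1$ becomes $0$. From this I get the key observation: for each factor $\bbf^\#_{n,\pm}$, the preimages of $0$ coincide exactly with the critical points. Indeed, any $f_n^k$-preimage of $c_1$ must pass through $c_0$ at the penultimate step (since $c_1$ has the unique preimage $c_0$ under $f_n$), and conversely every critical point of $f_n^k$ is an $f_n^i$-preimage of $c_0$, hence an $f_n^{i+1}$-preimage of $c_1$. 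Applying the chain rule to a composition $\bF^P=(\bbf^\#_{n,-})^a\circ(\bbf^\#_{n,+})^b$ then shows that $x$ is a critical point of $\bF^P$ iff the forward $\bF^{\ge 0}$-orbit of $x$ hits $0$ at some time $0<S\le P$. This immediately yields claims~(i)--(iv): $\CP(\bF^P)=\bigcup_{0<S\le P}\bF^{-S}(0)$, $\CV(\bF^P)=\bF^P(\CP(\bF^P))=\{\bF^T(0):0\le T<P\}$, and $\Post(\bF)$ is the forward $\bF^{\ge 0}$-orbit of $0$.

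For~\eqref{eq:CV are away from 0} I will use the combinatorial action of the cascade on the triangulation $\bDelta_0(\bG)$. By construction, $\bbf_\pm\colon\overline\bU_\pm\to\bS$ are the first-return maps of $\bF^{\ge 0}$ to the sector $\bS$, and the spreading-around construction fills $\bDelta_0\setminus\bS$ with the orbits of $\overline\bU_\pm$ before returning. For any $0<T<K=\min\{(0,1,0),(0,0,1)\}$, the element $\bG^T$ has completed neither a full $\bbf_-$ step nor a full $\bbf_+$ step, so starting from the critical value $0\in\bS$ the image $\bG^T(0)$ lands in the transit region $\bDelta_0(\bG)\setminus\bS$. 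Holomorphic motion of $\CV(\bG^P)$ for $P<K$ then follows because each $\bG^T(0)$ is an analytic composition of the factors $\bbf^\#_{n,\pm}(\bG)$ applied to the marked point $0$, and the distinct values sit in distinct triangles of $\bDelta_0(\bG)$ so cannot collide within $\UnstLoc$. For the degree-$2$ claim at $P<K$: if $x$ is a critical point of $\bG^P$ and $S_1$ is the minimal power with $\bG^{S_1}(x)=0$, the chain rule gives $\deg_x\bG^P=2\cdot\deg_0\bG^{P-S_1}$; since $P-S_1<K$, the escape statement~\eqref{eq:CV are away from 0} applied to $\bG^{P-S_1}$ excludes $0\in\bS$ from the set of its critical values, so $0$ is not a critical point of $\bG^{P-S_1}$ and $\deg_0\bG^{P-S_1}=1$.

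For a general $\bF\in\Unst$ and arbitrary $P\in\PT$, I will track the multiplicity directly. If $x$ is a critical point of $\bF^P$ and the forward orbit of $x$ equals $0$ at times $0<S_1<\dots<S_\ell\le P$, iterating the chain rule produces $\deg_x\bF^P=2^\ell$. By the proper-discontinuity Lemma~\ref{lem:discr of dyn}, the set $\{T\in\PT_{>0}:T\le P,\ \bF^T(0)=0\}$ is finite, which uniformly bounds $\ell$ in terms of $P$ and hence bounds the local degree by a constant $k\in\N$; when $0$ is not periodic the set is empty for positive $T$, forcing $\ell=1$ and degree exactly $2$. For~(vii) I set $K_\bF$ equal to the minimal positive $T_0$ with $\bF^{T_0}(0)=0$ if $0$ is periodic, and to any positive value otherwise; then for $P<K_\bF$ the orbit of each critical point of $\bF^P$ meets $0$ exactly once, yielding degree $2$.

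The delicate point I anticipate is~\eqref{eq:CV are away from 0}: I need to carefully unwind the identification between the cascade's combinatorial dynamics on $\bDelta_0$ and the first-return structure of $\bbf_\pm$ to $\bS$, so as to confirm that any cascade element $\bG^T$ of ``length'' strictly less than $K$ cannot return the marked point $0\in\bS$ to $\bS$. Once this is in place, the rest is a bookkeeping exercise combining the chain rule with the discreteness of the critical orbit supplied by Lemma~\ref{lem:discr of dyn}.
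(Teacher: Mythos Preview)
Your argument is correct and follows essentially the same route as the paper: both identify $\bF^P$ locally with an iterate of the pacman $f_k$ (via $A_\str^k\circ h_k$), observe that the unique simple critical point $c_0$ of $f_k$ maps to $c_1\leftrightarrow 0$, and read off $\CP$, $\CV$, $\Post$, and the degree formula $2^{\#\{0<S\le P:\bF^S(x)=0\}}$ from this. Your treatment of \eqref{eq:CV are away from 0} via the first-return structure of $\bbf_\pm\colon\overline\bU_\pm\to\bS$ and the spreading-around triangulation is exactly the paper's argument (the paper phrases it as ``$\bG^S(0)$ lies in a triangle $\Delta_0(i)$ disjoint from $\bS$'').

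The one substantive difference is in the last two claims. The paper obtains $K_\bF$ by antirenormalizing: pick $n$ with $\RR^n\bF\in\UnstLoc$ and set $K_\bF=\tt^n K$; the degree bound then follows because any critical orbit can meet $0$ at most $\lfloor P/K_\bF\rfloor+1$ times. You instead take $K_\bF$ to be the minimal positive return time of $0$ to itself (or anything, if $0$ is aperiodic), and bound the number of visits via the proper-discontinuity Lemma~\ref{lem:discr of dyn}. Both work and give the same bound; your route is slightly more direct but forward-references Lemma~\ref{lem:discr of dyn}, which in the paper comes after Lemma~\ref{lem:crit pts}. This is not circular (the proof of Lemma~\ref{lem:discr of dyn} does not use Lemma~\ref{lem:crit pts}), but you should flag the reordering. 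One small wording issue: your ``key observation'' that preimages of $0$ under a single factor $\bbf^\#_{n,\pm}$ \emph{coincide} with its critical points is not quite right (every such preimage is critical, but a critical point need only map to $0$ at some intermediate $f_n$-step); your subsequent chain-rule argument uses the correct statement, so this is just a phrasing slip.
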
 
\noindent The first claim is essentially  \cite[Lemma~6.1]{DLS}.
\begin{proof}
 Let $\bW\Subset \C$ be an open topological disk, and let $\bW_1$ be a connected component of $\bF^{-P}(\bW)$. For $n\ll 0$, the map $\bF^P\colon \bW_1\to \bW$ is identified via $A_\str^k\circ h_k\colon V\setminus \gamma_1\to  \bS^{\#}_k$ with $f_n^{\ss(k)}:W_1\to W$ for some $\ss(k)\ge 0$ because  $A_\str^k\circ h_k$ conjugates \eqref{eq:pair:W_pm to D} and  \eqref{eq:pair:bW_pm to bD^k}, see \S\ref{sss:max prepacmen}. Therefore, $z\in \bW_1$ is a critical point of $\bF^P$ if and only if the $f_k$-orbit of $\big(A_\str^k\circ h_k\big)^{-1} (z)$ passes through $0$ during the first $\ss(k)$ iterates. This is equivalent to $\bF^S(z)=0$ for some positive $S\le P$. The degree of $\bF^P$ at $z$ is $2^t$, where $t$ is the number of positive $S\le P$ with  $\bF^S(z)=0$.  

 The claims about $\CP\big(\bF^P\big)$, $\CV\big(\bF^P\big)$ and $\Post(\bF)$ are immediate.

For $\bG\in \UnstLoc$ and $P\le K$, the point $\bG^S(0)$ belongs to a certain triangle $\Delta_0(i,\bG)$ that is disjoint from $\bS(\bG)$. This implies that $\bG^S(0)$ is well defined, depends holomorphically on $\bG$, and does not collide with $0$. Since points $\bG^S(0),\bG^Q(0)$ with $0<S<Q<T$ belong to different triangles of $\bDelta_0(\bG)\setminus \bS$, we obtain a holomorphic motion of $\CV(\bG^T)=\{\bG^S(0)\mid S<T\}$ with $\bG\in \UnstLoc$. We also proved~\eqref{eq:CV are away from 0}. Since every critical point of $\bG^P$ with $P\le K$ passes exactly once through $0$, the degree of every such critical point is $2$.

For $\bF\in \Unst$, choose $n\in \Z$ so that $\RR^n(\bF)\in \UnstLoc$. Then $K_{\bF}$ can be taken to be $\tt^{n} K$.

The last claim follows from the observation that every critical point of $\bF^P$ passes through $0$ at most $(P/K_{\bF})+1$ times.
\end{proof}

\subsection{Proper discontinuity of $\bF^{\ge 0}$}
\label{ss:prop disk of F>0}
 The action of the cascade $\bF^{\ge 0}$ is proper discontinuous in the following sense
\begin{lem}
\label{lem:discr of dyn}
For every bounded open set $W\subset \C$ there is a $Q>0$ in $\PT$ such that for all $\bG$ close to $\bF$ the following holds:
\begin{itemize}
\item $W\Subset \Dom \bG^P$ for all $P\le Q$;
\item $\bG^P\mid W$ is univalent for all $P\le Q$; and 
\item $\bG^P(W)\cap \bG^T(W)=\emptyset$ for all $P<T\le Q.$
\end{itemize}

For every $x\in \C$ and $T\in\PT$, the set 
\[\bigcup_{P\le T} \bF^P\{x\} \]
is discrete in $\C$. In particular,  the set of critical values $\CV(\bF^T)$ of $\bF^T$ is discrete in $\C$ for all  $T\in \PT$.
\end{lem}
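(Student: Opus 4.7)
The plan is to reduce both parts of the lemma to the rescaling identity \eqref{eq:ren:F:F_n^P}, $\bF_0^P = (\bF_{-n}^\#)^{\tt^n P}$, combined with the low-iterate analysis provided by \lemref{lem:crit pts}. For part~(2), I will use the linear order with subtraction on $\PT$ (\lemref{lem:PT:rot}) to derive discreteness from the local control established in part~(1).

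For part~(1), fix a bounded open $W \subset \C$. Since $|\mu_\str| < 1$, choose $n \ge 0$ large enough that $A_\str^n(W)$ is compactly contained in a small topological disk $D'$ around the critical value $0$ in the dynamical plane of $\bF_{-n}^\#$, with $D' \Subset \Delta_0(0,1)(\bF_{-n}^\#)$ of the $0$th renormalization triangulation; this is possible by \eqref{eq:Delta_m appr C}. Take $Q > 0$ small enough that $\tt^n Q < \min\{(0,1,0),(0,0,1)\}$, so that for each $P \in \PT$ with $P \le Q$ the iterate $(\bF_{-n}^\#)^{\tt^n P}$ falls within the low-iterate regime of \lemref{lem:crit pts}: its critical values are discrete and lie in $\bDelta_0(\bF_{-n}^\#) \setminus \bS$, hence (after shrinking $D'$ if necessary) avoid $D'$, giving univalence of $(\bF_{-n}^\#)^{\tt^n P}$ on $D'$. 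Moreover, by the combinatorial description in \S\ref{sss:dyn of Delta}, the assignment $P' \mapsto i(P')$ sending each $P' < \min\{(0,1,0),(0,0,1)\}$ to the index of the triangle containing $(\bF_{-n}^\#)^{P'}(\Delta_0(0,1))$ is injective, so the images $(\bF_{-n}^\#)^{\tt^n P}(D')$ are pairwise disjoint for distinct $P \le Q$. Conjugating back by $A_\str^{-n}$ yields the conclusion for $\bF$, and holomorphic dependence of all data on the base map gives the same statements uniformly for $\bG$ in a small Banach neighborhood of $\bF$.

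For part~(2), suppose for contradiction that there exist distinct $P_i \in \PT \cap [0,T]$ with $\bF^{P_i}(x) \to y \in \C$. Fix a bounded open $W$ containing $y$ and apply part~(1) to obtain $Q > 0$ such that $\bF^{P'}(W) \cap \bF^{P''}(W) = \emptyset$ for all distinct $P', P'' \in \PT$ with $P', P'' \le Q$. Since $[0,T] \subset \R$ is compact, a subsequence satisfies $P_i \to P_\infty \in [0,T]$. For all sufficiently large $i$ we have $\bF^{P_i}(x) \in W$; choose $i < j$ large enough with $P_j > P_i$ and $P_j - P_i < Q$, noting $P_j - P_i \in \PT$ by \lemref{lem:PT:rot}. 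Setting $z := \bF^{P_i}(x) \in W$ and $Q' := P_j - P_i$, we obtain $\bF^{Q'}(z) = \bF^{P_j}(x) \in W$, contradicting $\bF^{Q'}(W) \cap W = \emptyset$. Discreteness of $\CV(\bF^T) \subset \bigcup_{P \le T} \bF^P\{0\}$ is then immediate.

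The main obstacle will be verifying in part~(1) that the images $(\bF_{-n}^\#)^{\tt^n P}(D')$ truly land in pairwise distinct triangles of $\bDelta_0(\bF_{-n}^\#)$, which rests on the combinatorial correspondence between power-triples and triangle labels developed in \S\ref{sss:dyn of Delta}; one must also confirm that $D'$ can be chosen small enough to be carried injectively by all the first-return-type maps involved, without colliding with the finitely many critical-value points $(\bF_{-n}^\#)^{Q''}(0)$ for $Q'' < \tt^n Q$.
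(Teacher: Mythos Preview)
Your approach is essentially the same as the paper's, and your argument for part~(2) is a correct contradiction-style variant of the paper's direct counting bound. However, there is a notational confusion in part~(1) that obscures the argument. Since $\bF_{-n}^\# = A_\str^{-n}\circ \bF_{-n}\circ A_\str^n$ acts on the \emph{same} plane as $\bF_0$, and $(\bF_{-n}^\#)^{\tt^n P} = \bF_0^P$ literally as maps, the contraction step $W \mapsto A_\str^n(W)$ is either unnecessary or mislabeled: the triangulation $\bDelta_0(\bF_{-n}^\#)$ equals $\bDelta_{-n}(\bF_0)$, which by \eqref{eq:Delta_m appr C} already exhausts $\C$ as $n\to\infty$, so $W$ itself sits inside some $\Delta_{-n}(j,\bF_0)$ without any rescaling. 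Your ``conjugating back by $A_\str^{-n}$'' would then produce $A_\str^{-n}\bF_0^P A_\str^n(W)$ rather than $\bF_0^P(W)$, which is not what you want. The fix is simply to drop the $A_\str^n$ step and work directly with $W\subset \Delta_m(j,\bF)$ for $m\ll 0$, exactly as the paper does: for $P\le Q:=\tt^m\min\{(0,0,1),(0,1,0)\}$ the map $\bG^P$ carries $\Delta_m(j)$ to a distinct triangle of $\bDelta_m(\bG)$, giving univalence and disjointness at once.
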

\begin{proof}
By~\eqref{eq:Delta_m appr C} there is an $m\le 0$ such that $W\subset \Delta_m(j,\bG)$ for all $\bG$ close to $\bF$. Let us take $Q=\min\{(0,0,1),(0,1,0)\}\tt^m$. For $P\le Q$ the map $\bG^P$ maps 
$\Delta_m(j,\bG)$ to a different triangle of $\bDelta_m(\bG)$; this show the first claim.

\begin{figure}
\centering{\begin{tikzpicture}

\coordinate (A) at (0.15,0);
\coordinate (B) at (0.85,0);
\draw (-0.5,0) -- (A); 
\draw (1.5,0)--(B);
\begin{scope}[shift={(0.4,-0.5)},rotate=135,scale =0.2]
\draw (0,0) arc (0:270:1);
\draw (0,0)--(A);
\draw (-1,-1) -- (B); 
\end{scope}

\draw (-0.5,0)--(-0.5,-3);
\draw (-2,0)--(-0.5,0);

\draw[red] (1.5,-3)--(1.5,1)--(-2,1)--(-2,-3);

\node at (0.5,-2) {$\Delta_0(1)$}; 
\node at (-1.2,-2) {$\Delta_0(0)$}; 

\node at (-2.9,-2) {$\Delta_0(-1)$}; 
\node at (-4.65,-2) {$\Delta_0(-2)$}; 
\node at (2.6,-2) {$\Delta_0(2)$}; 
\node at (4.2,-2) {$\Delta_0(3)$};

\begin{scope}[shift={(2,1)}]
\coordinate (A) at (0.15,0);
\coordinate (B) at (0.85,0);
\draw (-0.5,0) -- (A); 
\draw (1.5,0)--(B);
\begin{scope}[shift={(0.4,-0.5)},rotate=135,scale =0.2]
\draw (0,0) arc (0:270:1);
\draw (0,0)--(A);
\draw (-1,-1) -- (B); 
\end{scope}
\end{scope}
\draw (3.5,1)--(3.5,-3);
\draw (3.5,1)--(4.5,1);

\begin{scope}[shift={(-3.5,0)}]
\coordinate (A) at (0.15,0);
\coordinate (B) at (0.85,0);
\draw (-0.5,0) -- (A); 
\draw (1.5,0)--(B);
\begin{scope}[shift={(0.4,-0.5)},rotate=135,scale =0.2]
\draw (0,0) arc (0:270:1);
\draw (0,0)--(A);
\draw (-1,-1) -- (B); 
\end{scope}
\end{scope}

\draw (-4,0)--(-4,-3);
\draw (-4,0)--(-6,0);
\draw (-5.5,0)--(-5.5,-3);

\draw (-1.2,-1.1) edge[->, bend left ]  (0.8,0.5);
\node[left]at (-1,-0.8) {$\bbf_+$};
\draw (0.4,-1.2) edge[->, bend right ]  (-1.2,0.5);
\node[left]at (0.35,-0.8){$\bbf_-$};
\node [red,above] at(-0.4,0.3) {$\bS$};

\end{tikzpicture}}
\caption{The triangulation $\bDelta_0(\bF)$ is obtained by spreading around the triangles $\Delta_0(0,\bF)$ and  $\Delta_0(1,\bF)$ -- the embeddings of $\Delta_0(1,f)$ and $\Delta_0(0,f)$, see Figure~\ref{Fig:ren triang:f0}.
The triangulation has the same combinatorics as the renormalization tiling of $\R$, see Figure~\ref{Fig:ren part of R}}.
\label{Fig:ren triang:bF}
\end{figure}

Suppose that $\displaystyle\bigcup_{P\le T} \bF^P\{x\}$ accumulates on $y$. Choose a small neighborhood $W$ of $y$. Then there is a $Q>0$ such that $\bF^P(W)$ is disjoint from $W$ for all $P<Q$.  Since $T< kQ$ for some $k\gg 1$, the intersection $W\cap \displaystyle \bigcup_{P\le T} \bF^P\{x\}$ consists of at most $k$ points.

The set of critical values of $\bF^T$ is discrete because it is equal to $\displaystyle\bigcup_{P<T}\bF^P\{0\}$, see Lemma~\ref{lem:crit pts}.
\end{proof}

\begin{cor}
\label{cor:orbit of bY}
Let $\bY$ be a compact set such that $\bY\Subset \Dom \bF^P$. Then for every $\bX\Subset \C$ there are at most finitely many $T\le P$ such that $\bF^T(\bY)$ intersects $\bX$.
\end{cor}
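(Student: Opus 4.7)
The plan is to argue by contradiction, combining the two parts of Lemma~\ref{lem:discr of dyn} with Koebe's distortion and $\tfrac14$-theorems. Suppose for contradiction there are infinitely many pairwise distinct $T_n\in\PT\cap[0,P]$ with $\bF^{T_n}(\bY)\cap\bX\ne\emptyset$. Pick $y_n\in\bY$ and $z_n:=\bF^{T_n}(y_n)\in\bX$, and extract subsequences so that $y_n\to y_\infty\in\bY$, $z_n\to z_\infty\in\bX$, and $T_n\to T_\infty\in[0,P]$. By the first part of Lemma~\ref{lem:discr of dyn} applied at $y_\infty\in\bY\Subset\Dom\bF^P$, there exist a small open disk $W=B(y_\infty,r)$ with $\overline W\subset\Dom\bF^P$ and a constant $Q>0$ such that $\bF^S\mid W$ is univalent and the open sets $\bF^S(W)$ are pairwise disjoint for all $S\in\PT$ with $S\le Q$.

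Next I would bring the times $T_n$ into the disjointness range $[0,Q]$. Using that $\PT$ is closed under subtraction of non-negative elements (Lemma~\ref{lem:PT:rot}), pick $R\in\PT\cap(0,Q]$ with $R\le T_n$ for all large $n$, replace the data $(\bY,P,T_n,y_\infty)$ by $(\bF^R(\bY),\,P-R,\,T_n-R,\,\bF^R(y_\infty))$, and iterate. Each pass strictly decreases the remaining time budget, and after finitely many passes every $T_n\le Q$. Once in this regime, the sets $V_n:=\bF^{T_n}(W)$ are pairwise disjoint and contain $z_n$ for $n$ large. Setting $w_n:=\bF^{T_n}(y_\infty)$ and $\rho_n:=\tfrac14|(\bF^{T_n})'(y_\infty)|\,r$, the Koebe $\tfrac14$-theorem yields $V_n\supset B(w_n,\rho_n)$, Koebe distortion on $B(y_\infty,r/2)$ yields
\[|z_n-w_n|\le C\,|(\bF^{T_n})'(y_\infty)|\,|y_n-y_\infty|=o(\rho_n),\]
and disjointness of $V_n$ forces $|w_n-w_m|\ge\rho_n+\rho_m$.

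A trichotomy on the behaviour of $\rho_n$ closes the argument. If $\rho_n\to 0$ along a subsequence, then $w_n\to z_\infty$, contradicting the discreteness of $\{\bF^S(y_\infty):S\in\PT,\,S\le P\}$ given by the second part of Lemma~\ref{lem:discr of dyn}. If $\rho_n$ remains bounded and bounded away from $0$, the disjoint disks $B(w_n,\rho_n)$ have uniformly comparable size and all cluster in a bounded region (because $|w_n|\le|z_n|+o(\rho_n)$ stays bounded), which accommodates only finitely many such disks. Finally, if $\rho_n\to\infty$, then $|w_n|=O(1)+o(\rho_n)$, so $|w_n-w_m|\le|w_n|+|w_m|=O(1)+o(\rho_n)+o(\rho_m)<\rho_n+\rho_m$ for $n,m$ large, again impossible. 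The main obstacle is verifying that the reduction in paragraph~2 really terminates --- that after each pass the new constant $Q$ (which depends on the new basepoint $\bF^R(y_\infty)$ and on the size of the new disk) does not degenerate too fast, so that finitely many reductions suffice to push all $T_n$ below $Q$.
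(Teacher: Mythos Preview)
Your approach has a genuine gap, which you correctly identify: the termination of the reduction in paragraph~2 is not established, and as stated it cannot be. After reducing by $R\in\PT\cap(0,Q]$, the new basepoint is $\bF^R(y_\infty)$ and you must reapply Lemma~\ref{lem:discr of dyn} there to obtain a new constant $Q'$. But the orbit $\{\bF^S(y_\infty):S\le P\}$ is discrete and hence unbounded; as your cumulative reduction $R_1+R_2+\cdots$ approaches the limiting time $T_\infty$, the successive basepoints $\bF^{R_1+\cdots+R_k}(y_\infty)$ escape to infinity. The constant $Q$ produced by Lemma~\ref{lem:discr of dyn} degenerates to $0$ as the basepoint goes to infinity (in the proof of that lemma $Q=\tt^m\cdot\min\{(0,0,1),(0,1,0)\}$, where $m\to-\infty$ is the triangulation depth needed to capture the basepoint). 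There is no reason the available reductions $\sum Q_k$ should sum past the fixed time $T_\infty$. Even the cleaner variant---subtract $T_N$ for a single large $N$ so that the residuals $T_n-T_N$ are small---runs into the same race: you would need $T_\infty-T_N<Q(\bF^{T_N}(y_\infty))$, and both sides tend to zero with no control on their ratio.

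The paper takes a completely different and much shorter route that sidesteps this entirely. It simply records that for each $y\in\bY$ the orbit $\orb^P_z:=\{\bF^S(z):S\le P\}$ is discrete (second part of Lemma~\ref{lem:discr of dyn}) and depends continuously on $z$ in a neighborhood of $y$ (since every $\bF^S$ with $S\le P$ is holomorphic there, $y$ lying in the open set $\Dom\bF^P$); a compactness argument on $\bY$ then finishes. In effect, rather than pushing all the times $T_n$ below a single $Q$---which forces you to chase the basepoint to infinity---the paper fixes the basepoint at $y\in\bY$ and uses that the \emph{entire} family $\{\bF^S:S\le P\}$ is well-behaved on a fixed neighborhood of $y$. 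Your Koebe--trichotomy argument is correct once you are in the univalent regime, but the reduction needed to reach that regime is exactly where the proof breaks; the paper's argument never needs it.
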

\begin{proof}
For every $y\in \bY$, the orbit $\orb^P_z\coloneqq \{\bF^S(z)\mid S\le P\}$ is discrete and depends continuously on $z$ in a small neighborhood of $y$ because $y\in \Dom \bF^P$. (In fact, if $x\not \in \Dom \bF^P$, then $\orb^P_z$ does not depends continuously on $z$ in a small neighborhood of $x$). The corollary now follows from a compactness argument.
\end{proof}

\begin{cor}
\label{cor:min period:per pts}
Every periodic point has a minimal period.

For every critical point $x$ of $\bF^{\ge 0}$, there is a minimal $P>0$, called the \emph{generation} of $x$, such that $\bF^P(x)=0$. 
\end{cor}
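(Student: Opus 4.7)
The plan is to exploit two ingredients already established: the subtraction property of the linearly ordered semigroup $\PT \subset \R_{\ge 0}$ (Lemma~\ref{lem:power-triples:order}, via Lemma~\ref{lem:PT:rot}), and the proper discontinuity of $\bF^{\ge 0}$ at individual points (Lemma~\ref{lem:discr of dyn}). The crux is that the set of ``return times'' to a given point is closed under taking the larger-minus-smaller difference, while proper discontinuity forces that set to be bounded below by a positive constant; together these two facts force the infimum to be attained.

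For the first claim, set $S \coloneqq \{P \in \PT_{>0} : \bF^P(x) = x\}$, which is nonempty by hypothesis. If $P_1 > P_2$ both lie in $S$, then $P_1 - P_2 \in \PT_{>0}$ by Lemma~\ref{lem:PT:rot}, and applying $\bF^{P_1 - P_2}$ to the identity $\bF^{P_2}(x) = x$ yields $\bF^{P_1 - P_2}(x) = \bF^{P_1}(x) = x$, so $P_1 - P_2 \in S$. Next, applying Lemma~\ref{lem:discr of dyn} to a small bounded open neighborhood $W$ of $x$ produces $Q > 0$ such that $\bF^P(W) \cap W = \emptyset$ for all $0 < P \le Q$; in particular $\bF^P(x) \ne x$ for such $P$, so $S \subset [Q,\infty)$. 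If $\inf S$ were not attained, one could extract a strictly decreasing sequence $P_n \searrow \inf S$ in $S$, and then the successive differences $P_n - P_{n+1} \in S$ would tend to $0$, contradicting $S \subset [Q,\infty)$. Hence $\inf S \in S$ is the minimal period.

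For the second claim, let $x$ be a critical point of $\bF^{\ge 0}$ and set $S \coloneqq \{P \in \PT_{>0} : \bF^P(x) = 0\}$, which is nonempty by Lemma~\ref{lem:crit pts}. If $0$ is not periodic, then any two distinct elements $P_1 > P_2$ of $S$ would force $\bF^{P_1 - P_2}(0) = \bF^{P_1 - P_2}(\bF^{P_2}(x)) = \bF^{P_1}(x) = 0$, contradicting the non-periodicity of $0$; hence $S$ is a singleton. If instead $0$ is periodic, the first part of the corollary (just proved) supplies a minimal period $P_\str > 0$ of $0$, and the same computation shows that any two distinct elements of $S$ differ by a positive period of $0$, hence by at least $P_\str$. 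Thus $S$ is $P_\str$-separated and bounded below, so $\inf S \in S$ and yields the minimal generation.

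The only conceptual subtlety worth flagging is that $\PT$, unlike $\Z_{\ge 0}$, is dense in $\R_{\ge 0}$ (since $\tt$ is irrational), so the positivity of the lower bound on return times is \emph{not} a free arithmetic consequence of the structure of $\PT$. It is precisely the proper discontinuity in Lemma~\ref{lem:discr of dyn}, derived from the dynamics rather than the combinatorics, that provides the gap $Q > 0$; once that gap is available, the subtraction structure of $\PT$ automatically promotes the infimum to a minimum.
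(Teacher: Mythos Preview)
Your proof is correct and follows essentially the same strategy as the paper's: both rely on Lemma~\ref{lem:discr of dyn} to produce a positive gap $Q$ below which no return is possible, and then combine this with the subtraction property of $\PT$ to force the infimum of return times to be attained. For the second claim the paper is slightly more direct---it simply observes that $\{\bF^S(x):S\le P\}$ does not accumulate on $0$ (essentially Corollary~\ref{cor:orbit of bY}), hence there are only finitely many $S\le P$ with $\bF^S(x)=0$---whereas you reduce to the first part via a case split on whether $0$ is periodic; but the content is the same.
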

\begin{proof}
Let $x$ be a periodic point of $\bF$, and let \[\PT_x\coloneqq \big\{P\in \PT \mid \bF^P(x)=x\big\} \]
be the semigroup of all the periods of $x$. By Lemma~\ref{lem:discr of dyn}, there is a neighborhood $W$ of $x$ and a small $Q>0$ such that $\bF^T(W)\cap W=\emptyset $ for all $T\le Q$; in particular, $\bF^T(x)\not= x$. Therefore, $\PT_x$ is of the form $\{nS\mid n\ge 1\}$, where $S>0$ is the minimal period.

By Lemma~\ref{lem:crit pts}, if $x$ is a critical point of $\bF^{\ge0}$, then $\bF^P(x)=0$ for some $P\in \PT$. Since $\{\bF^S(x)\mid S\le P\}$ does not accumulate on $0$, there is a minimal $S>0$ such that $\bF^S(x)=0$.
\end{proof}

\subsection{Walls $\bPi_0(\bF)$}
\label{ss:MP:walls}
Recall from~\S\ref{sss:walls bPi} that a triangulation $\bDelta_n(f)$ has a wall $\bPi_n(f)$ for $f\in \WW^u$. Let $\Pi_0(0,\bF)$ and $\Pi_0(1,\bF)$ be the embeddings of the rectangles  $\Pi_0(0,f)$ and $\Pi_0(1,f)$ to the dynamical plane of $\bF$ via $\bS\simeq V\setminus \gamma_1$. The \emph{wall $\bPi_0(\bF)$ of $\bDelta_0(\bF)$} is obtained by spreading around $\Pi_n(0,\bF)$ and $\Pi_n(1,\bF)$.

The \emph{wall $\bPi_n(\bF)$} is $A_\str^{-n} \big(\bPi_0(\RR^n f)\big)$. We define $\bQ_n(\bF)\coloneqq \bDelta_n\setminus \bPi_n(\bF)$. This is the interior of the full lift of $Q_n(f)\coloneqq \bDelta_n( f)\setminus \bPi_n(f)$.

\subsection{The boundary point $\balpha$}
\label{ss:balpha:wall topoloy}
As in~\S\ref{sss:balpha}, we add a boundary point $\balpha$ at ``$-i \infty$'' to the dynamical space of a prepacman. The boundary point $\balpha(\bF)$ corresponds to $\alpha(f)$. Let us introduce the \emph{wall topology} for $\C\sqcup \balpha(\bF)$, compare with~\S\ref{sss:balpha}.

Consider a univalent wall $A$ respecting $\gamma_0,\gamma_1$ in a small neighborhood of $\alpha(f)$. Then the full lift $\bA(\bF)$ of $A(f)$ is a closed strip in $\C$ such that $\C\setminus \bA(\bF)$ has two connected components. We denote by $\binn=\binn(A)$ the component of $\C\setminus \bA(\bF)$ not containing $0$. Equivalently, $\binn$ is the interior of the full lift of the component of $\C\setminus A$ containing $\alpha$. We say that $\binn$ is \emph{below} $\bA$. The open sets of $\C\sqcup \{\balpha(\bF)\}$ are generated by open sets in $\C$ and by $\binn(A) \sqcup \{\balpha(\bF)\}$ for all univalent walls as above. 
 
 A curve $\ell\colon [0,1)\to \C$ \emph{lands} at $\balpha$ if $\lim _{t\to 1} \ell(t)=\balpha$.
\subsection{A fundamental domain for $\bF$}
\label{ss:fund domain for bF}Recall that for $\bF\in \UnstLoc$, the sector $\bS$ has distinguished sides $\lambda (\bS)$ and $\rho(\bS)$. Moreover $V$ is a quotient of $\bS$ under $\bF^{(0,0,1)-(0,1,0)}=\bbf_-^{-1}\circ \bbf_+\colon \lambda(\bS)\to \rho(\bS)$, and $\lambda (\bS)$, $\rho(\bS)$ project to $\gamma_1$.

Suppose $\bF\in \RR^{-n}\big( \UnstLoc\big)$ for $n\ge 0$. Recall \S\ref{ss:MP:walls} that ${\bQ_n(\bF)=\bDelta_n(\bF)\setminus \bPi_n(\bF)}$. 
A \emph{fundamental domain} in the dynamical plane of $\bF$ is a sector $\bS^\new$ with distinguished sides $\lambda(\bS^\new)$ and $\rho(\bS^\new)$ such that 
\begin{enumerate}
\item $\bS\setminus \bQ_n=\bS^\new\setminus \bQ_n$ and $\lambda(\bS)\setminus \bQ_n=\lambda(\bS^\new)\setminus \bQ_n$ and\newline ${\rho(\bS)\setminus \bQ_n=\rho(\bS^\new)\setminus \bQ_n}$, \label{cond:1:fund domain}
\item $\lambda(\bS^\new)$ and $\rho(\bS^\new)$ land at $\balpha$; and\label{cond:2:fund domain}
\item $\intr(\bS^\new)$ contains an arc $\ell$ such that $\bbf_-(\ell)=\lambda( \bS^\new)$ and $\bbf_+(\ell)=\rho( \bS^\new)$.\label{cond:3:fund domain}
\end{enumerate}

Similar to~\S\ref{ss:PartiHomeo}, we say that $\beta_0,\beta_1=f(\beta_0)$ is a diving pair of arcs in the dynamical plane of a pacman $f\colon U\to V$ if
\begin{itemize}
\item  $\beta_0$ is a simple arc connecting $\alpha$ and point on $\partial U$;
\item  $\beta_1$ is a simple arc connecting $\alpha$ and a point on $\partial V$; 
\item $\beta_0$, $\beta_1$ are disjoint away from $\alpha$.
\end{itemize}

\begin{thm}
\label{thm:quot of fund domain}
Suppose $\bS^\new$ is a fundamental domain in the dynamical plane of $\bF\in  \RR^{-n}\big( \UnstLoc\big)$ as above. Then the quotient of $\bS^\new$ under 
 \[\bF^{(n,0,1)-(n,1,0)}\colon \lambda( \bS^\new)\to \rho( \bS^\new)\] is canonically conformally homeomorphic to $V$. Under this homeomorphism $\bF$ projects to $f$.
 
 Conversely, if $\gamma_0^\new,\gamma_1^\new$ is a dividing pair of arcs such that $\gamma_0^\new\setminus Q_n=\gamma_0$ and $\gamma_1^\new\setminus Q_n=\gamma_1$ (see~\S\ref{ss:MP:walls}), then $h$ extends from a neighborhood of $c_1$ to a conformal map defined on $V\setminus \gamma_1^\new$ so that the closure of $h\big(V\setminus \gamma_1^\new \big)$ is a fundamental domain.
\end{thm}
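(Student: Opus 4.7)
The plan is to reduce \thmref{thm:quot of fund domain} to the partial-homeomorphism version, \propref{prop:quot of fund domain}, by working through the renormalization triangulation $\bDelta_n(\bF)$. Recall that the triangles $\Delta_n(0,\bF)\cup \Delta_n(1,\bF)$ are the embeddings of $\Delta_0(0,\bF_n)\cup\Delta_0(1,\bF_n) = \overline\bU_-(\bF_n)\cup\overline\bU_+(\bF_n)$ rescaled by $A_\str^{-n}$, and the standard projection $\brho_n\colon \bS\to V$ (gluing $\lambda(\bS)$ to $\rho(\bS)$ via the deck transformation $\bchi_n \coloneqq \bF^{(n,0,1)-(n,1,0)}=\bbf_{n,+}^\#\circ(\bbf_{n,-}^\#)^{-1}$) semiconjugates $\bF$ to $f_n$. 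This is the ``standard'' instance of a fundamental domain, so the task is to show that any other admissible $\bS^\new$ produces the same quotient.

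First I would verify that the modification from $\bS$ to $\bS^\new$ is confined to $\bQ_n$, where $\bF^{\pm 1}$ is univalent by construction of the wall $\bPi_n$ (see \S\ref{ss:MP:walls}). Thus $\bS\setminus \bQ_n = \bS^\new\setminus \bQ_n$ inherits a canonical identification with $V$ away from a neighborhood of $\alpha(f_n)$. Inside $\bQ_n$ the situation is exactly that of \propref{prop:quot of fund domain}: the sides $\lambda(\bS^\new),\rho(\bS^\new)$ are dividing arcs for a partial homeomorphism (the restriction of $f_n$ to a wall-region disk), the compatibility with $\bS$ outside $\bQ_n$ plays the role of Condition~\eqref{defn:FD:Cond:1}, and Condition~\eqref{cond:3:fund domain} of \thmref{thm:quot of fund domain} — the existence of an arc $\ell\subset\intr(\bS^\new)$ with $\bbf_{n,-}^\#(\ell)=\lambda(\bS^\new)$ and $\bbf_{n,+}^\#(\ell)=\rho(\bS^\new)$ — is the analog of the dividing-arc hypothesis in that proposition. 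The landing requirement \eqref{cond:2:fund domain} at $\balpha$ guarantees, via Lemma~\ref{lem:F:motion of pts} and the fundamental-domain argument of \propref{prop:beta:FunDom}, that every $\bchi_n$-orbit in $\bQ_n$ meets $\bS^\new$ exactly once.

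Given this, I would construct the canonical homeomorphism $h\colon V\to W^\new$, where $W^\new$ is the quotient of $\bS^\new$, as follows. On $V\setminus \brho_n(\bQ_n)$ we set $h$ to be the canonical identification. On the wall region we extend $h$ by the rule: if $\brho_n(x)=p\in\brho_n(\bQ_n)$ for some $x\in\bS\cap\bQ_n$, then $h(p)\coloneqq\brho_n^\new(\bchi_n^{k}(x))$, where $k\in\Z$ is the unique integer such that $\bchi_n^k(x)\in\bS^\new$ — existence and uniqueness of $k$ come from the previous paragraph. Holomorphicity of $\bF$ and of $\bchi_n$ makes $h$ conformal on the interior, and the equivariance $\bF\to f_n$ is automatic because both projections $\brho_n$ and $\brho_n^\new$ semiconjugate $\bF$ to the same dynamics on the common open set $V\setminus \brho_n(\bQ_n)$. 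This proves the forward direction and the uniqueness characterization described right after the theorem statement.

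For the converse direction, I would argue as in~\cite{DLS}*{Theorem B.8}. Near the critical value $c_1(f_n)$, the linearizing coordinate $A_\str^{-n}\circ h_n$ is already defined canonically, giving $h$ there. Since $V\setminus \gamma_1^\new$ is simply connected and $h$ extends by unique analytic continuation along any arc that avoids the discrete set of points where the lifting would fail (points mapped to $\lambda(\bS)$ or $\rho(\bS)$ that do not lie on $\gamma_1^\new$), one obtains a conformal extension $h\colon V\setminus \gamma_1^\new\to\C$; the hypothesis $\gamma_1^\new\setminus\bQ_n=\gamma_1$ is what ensures that the continuation does not run into obstructions outside the wall. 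The closure of the image is then a sector $\bS^\new$ whose boundary arcs $\lambda(\bS^\new),\rho(\bS^\new)$ are the two ``sides'' of the cut along $\gamma_1^\new$, both landing at $\balpha$ by the dividing property; the arc $\ell$ of Condition~\eqref{cond:3:fund domain} is the image of $\gamma_0^\new$ under $h$.

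The main obstacle I expect is precisely Condition~\eqref{cond:3:fund domain} together with the landing requirement~\eqref{cond:2:fund domain}: without these, $\bchi_n$-orbits might bypass $\bS^\new$ entirely (compare the counterexample sketched in the figure preceding \propref{prop:beta:FunDom}), so one really needs the wall-topology statement of Lemma~\ref{lem:F:motion of pts} and the fundamental-domain argument of \propref{prop:beta:FunDom} to rule out escaping orbits. Once this is established, the conformality and equivariance are essentially formal consequences of the constructions in \S\ref{ss:PartiHomeo}.
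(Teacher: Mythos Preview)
Your proposal is correct and takes essentially the same approach as the paper: both reduce the theorem to \propref{prop:quot of fund domain} by restricting attention to the wall region, where the cascade acts as a partial homeomorphism of a disk. The paper's proof is terser—it simply chooses a univalent wall $A\subset\bPi_n(f)$, takes full lifts $\bA,\binn$, and invokes \propref{prop:quot of fund domain} applied to $f\mid\inn\cup A$ and $\bF\mid\binn\cup\bA$—whereas you spell out the deck-transformation construction of $h$ and the analytic-continuation argument for the converse, but these are exactly the content of that proposition and the characterization following it.
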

\begin{proof}
Choose in $\bPi_n(f)$ a univalent wall $A$ respecting $\gamma_0,\gamma_1$ and surrounding $\inn\ni\alpha$. Let $\bA\subset \bPi_n(\bF)$ and $\binn$ be the full lifts of $A$ and $\inn$ to the dynamical plane of $\bF$. The theorem now follows from Proposition~\ref{prop:quot of fund domain} applied to $f\mid \inn\cup A$ and $\bF\mid \binn\cup \bA$.
\end{proof}

The image of $\bS^\new$ is of the form $\bDelta_0^\new(f)$ as in~\S\ref{sss:SiegTriang}; it has the full lift to $\bDelta^\new_{n-m}(f_m)$ for all $m\le 0$. Condition~\eqref{cond:1:fund domain} can be related into ``$\bS$ and $\bS^\new$ are sufficiently close in $\bPi_n(\bF)$.''

\begin{figure}[t!]\begin{tikzpicture}[scale=0.85]
\draw[red] (-7,0.5)--(7,0.5)
 (-7,0)--(7,0)
 (-6,0.5)--(-6,0)
 (-5,0.5)--(-5,0)
 (-4,0.5)--(-4,0)
 (-3,0.5)--(-3,0) 
 (-2,0.5)--(-2,0)
 (-1,0.5)--(-1,0)
 (-0,0.5)--(-0,0) 
 ( 6,0.5)--( 6,0)
 ( 5,0.5)--( 5,0)
 ( 4,0.5)--( 4,0)
 ( 3,0.5)--( 3,0) 
 ( 2,0.5)--( 2,0)
 ( 1,0.5)--( 1,0)
 ;
\draw (2.5,-5) -- (2.5,2)--(-3.5,2)--(-3.5,-5);

\draw[blue,dashed] (2.5,0) 
     .. controls (2.8,-2) and (4,-4) ..
 (5.5,-5)
 (-3.5,0) 
     .. controls (-3.3,-2) and (-2,-4) ..
 (-0.5,-5)
;

\node[blue] at(3.3,-4.5){$\bS^\new$};

\node at(2,1.2){$\bS$};
\node[red] at (5.3,0.8){$\bPi_n$};

\node at (5.5,-1){$\bQ_n$};

\draw (-2.1,-3) edge[->,bend left] node[above]{$\bbf_-\circ \bbf_+$}(3.4,-3);

\node[blue] at(-1.3,-3.5){$\lambda^\new$};
\node[blue] at(4.7,-3.5){$\rho^\new$};

\node at(-3.8,-3.5){$\lambda$};
\node at(2.2,-3.5){$\rho$};
\end{tikzpicture}

\caption{A fundamental domain $\bS^\new$ is a sector in $\C\sqcup \{\alpha\}$ such that $\bS^\new \setminus \bQ_n$ coincide (or close) to $\bS\setminus \bQ_n$ and such that the ``deck transformation'' $\bbf_-^{-1}\circ \bbf_+$ maps the left boundary $\lambda^\new$ of $\bS^\new$ to its right boundary  $\rho^\new$. }\label{Fig:FundDomain}
\end{figure}

\subsection{Fatou, Julia, and escaping sets}
\label{ss:Fat Jul  Esc}
 
Consider $x\in \C$. If there is an open set $U$ such that $U\subset \Dom \bF^P $ for all $P\ge 0$ and, moreover, $\{\bF^P\mid U\}_{P\ge 0}$ forms a normal family, then $x$ is a \emph{regular point of $\bF$}. The \emph{Fatou set} $\Fat(\bF)$ of $\bF$ is the set of regular points of $\bF$. By construction, all $\bF^{\#}_{n}$ have the same Fatou sets.

The \emph{Julia set}  $\Jul(\bF)$ of $\bF$ is $\C\setminus \Fat(\bF)$. 
 Clearly, all repelling periodic points are within the Julia set of $\bF$. 

The cascade $\bF^{\ge 0}$ acts on the set of components of $\Fat(\bF)$.  A component $X$ of $\Fat(\bF)$ is \emph{periodic} if there is a power-triple $P$ such that $\bF^P(X)= X$. We call $P$ a \emph{period} of $X$. 

A Fatou component $X$ is \emph{invariant} if $\bF^P(X)= X$ for every $P\in \PT$. By Corollary~\ref{cor:min period:per pts}, if a Fatou component $X$ has an attracting point in $\C$, then $X$ has a minimal period; in particular, $X$ is not invariant. (Note that $\balpha(\bF)$ is an attracting point of an invariant Fatou component if $\alpha(f)$ is attracting.)

Two Fatou components $X,Y$ are \emph{dynamically related} if they are in the same grand orbit: there are $P,Q\in \PT$ such that a certain branch of $\bF^{-P}\circ \bF^Q$ maps $X$ to $Y$. Dynamically related periodic components have the same periods.

A Fatou component $X$ is \emph{preperiodic} if there is a power-triple $Q$ such that $\bF^Q(X)$ is a periodic Fatou component. In this case, $Q$ is the \emph{preperiod}.

 Given $P\in \PT$, we define the \emph{$P$-th escaping set} as 
\[\Esc_P (\bF) \coloneqq \C\setminus \Dom(\bF^P).\]
The \emph{escaping set} is
\[\Esc(\bF)\coloneqq \bigcup_{P\ge0} \Esc_P(\bF).\]

Since the domains of $\bbf_\pm$ are simply connected (see~\S\ref{sss:max prepacmen}) for $\bF\in \UnstLoc$, every connected component of $\Dom \bF^P$ is simply connected. Therefore, every connected component of $\Esc_P(\bF)$ is unbounded. Since $\Esc_P(\bF)$ is a rescaling of $\Esc_{\tt ^nP} (\bF_{-n})$, every connected component of $\Esc_P(\bG)$ is unbounded for every $\bG\in \Unst$.

By definition, $\overline {\Esc(\bF) }\subset \Jul(\bF)$. We will show in Corollary~\ref{cor:Jul is ovl Esc} that $\overline {\Esc(\bF) }\not=\emptyset$, hence $\overline {\Esc(\bF) }= \Jul(\bF)$.

\subsection{QC deformation of maximal prepacmen}
\label{ss:QC deform}
Suppose that $\C$ has a Beltrami form $\mu$ such that $\mu$ is invariant under the cascade $\bF^{\ge 0}$, where $\bF\in \Unst$. Integrating $\mu$, we obtain a path $\bF_t^{\ge 0}$ with $t\ge 0$ of cascades emerging from $\bF^{\ge0}_0=\bF^{\ge0}$. We claim that $\bF_t$ is, up to scaling, a path on the unstable manifold $\Unst$. We will use the argument from \cite{DLS}*{\S 8.1.2} to find a correct scaling. 

Applying antirenormalization, we can assume that $\bF$ is in a small neighborhood of $\bF_\str$. In particular, $\bF\in \UnstLoc$. Projecting $\mu$ to the dynamical plane of $f_n$ for $n\le 0$, we obtain the Beltrami form $\mu_n$ invariant under $f_n$. Integrating $\mu_n$, we obtain a path $f_{n,t}\in \BB$ emerging from $f_{n,0}=f_n$. Set $f_{t}^{(n)}\coloneqq \RR^{-n} f_{n,t}$, and observe that for small $t$ all $f^{(n)}_t$ are qc conjugate with bounded dilatation uniformly in $n$. Therefore, we can take a limit and construct a path $f^{(\infty)}_t$ in $\BB$ of infinitely anti-renormalizable pacmen. Therefore,  $f_t\coloneqq f^{(\infty)}_t$ is a path in $\WW^u$.

The qc deformation of maximal prepacmen allows us to modify multipliers of attracting periodic cycles.

\section{External structure of $\bF_\str$}
\label{s:Dyn F_str}
In this section we set $\bF=\bF_\str$ and we let $\bZ=\bZ_\str$ to be its Siegel disk which is defined to be the full lift of $Z_\str$, see~\S\ref{ss:RenTriang}.  We also write the fixed pacman $f_\str \colon U_\str\to V$ as $f \colon U\to V$.

\subsection{Chess-board rule} Let us say that a simply connected open set $U\subset \Disk$ has a \emph{single access to infinity} if $ \partial \Disk\cap \partial U\not= \emptyset$ and $\Disk\setminus U$ is connected. Similarly, a simply connected open set $U\subset \C$ has a \emph{single access to infinity} if $U$ is unbounded and $\C\setminus U$ is connected.

We need the following fact.

\begin{lem}
\label{lem:chess board}
Let $g\colon \Dom g \to \C$ be a $\sigma$-proper map, where $\Dom g$ is either $\Disk$ or $\C$. Suppose that the set of critical values $\CV(g)$ of $g$ is discrete and assume that $\ell\colon \R\to \C $ is a simple properly embedded arc such that
\begin{itemize}
\item[1)] $\ell(\R)\supset \CV(g)$, and 
\item[2)] $\ell$ splits $\C$ into two open half-planes $V$ and $W$.
\end{itemize} 

Then 
\begin{itemize}
\item $g^{-1}(\ell)$ is a tree in $\Dom g$; in particular, if $U$ is a connected component of $\Dom g \setminus g^{-1}(\ell)$, then $U$ has a single access to infinity; and
\item there is a ``chess-board rule'': if $U_1$ and $U_2$ are two different components of $g^{-1}(V)$, then $\partial U_1\cap \partial U_2\cap \Dom g$ is either empty or a single critical point of $g$. 
\end{itemize}
\end{lem}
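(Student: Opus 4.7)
The plan is to combine the open mapping theorem with the contractibility of $\ell$ to rule out cycles in $g^{-1}(\ell)$, then use the absence of critical values off $\ell$ to trivialize the coverings over $V$ and $W$, and finally read off the chess-board rule from the local branched-cover form of $g$ at (and away from) critical points.

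First, I would show that $g^{-1}(\ell)$ contains no Jordan curve. If $\gamma\subset g^{-1}(\ell)$ were such a curve, then by simple connectivity of $\Dom g$ it would bound a closed topological disk $\bar D\subset \Dom g$. For any $p\in\C\setminus\ell$ the winding number of $g|_{\partial D}$ around $p$ vanishes, since $\ell$ is simply connected in $\C\setminus\{p\}$; the argument principle then yields $g^{-1}(p)\cap D=\emptyset$, whence $g(D)\subset\ell$. This contradicts the open mapping theorem, so $g^{-1}(\ell)$ is a graph without cycles in $\Dom g$.

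Next I would analyze a component $U$ of $\Dom g\setminus g^{-1}(\ell)$. Such $U$ maps into exactly one of $V,W$, say $V$, and because $\CV(g)\subset\ell$ the restriction $g|_U\colon U\to V$ is unbranched. Choosing a $\sigma$-proper exhaustion $g\colon X_k\to Y_k$ by proper maps with $Y_k$ round disks whose boundaries meet $\ell$ transversely in exactly two points, the intersection $\ell\cap Y_k$ cuts $Y_k$ into two Jordan domains $V_k,W_k$, and each connected component of $g^{-1}(V_k)\cap X_k$ is a topological disk mapping homeomorphically onto $V_k$ (the no-cycle argument runs identically inside $X_k$, and a proper unbranched covering of a Jordan domain is a disjoint union of homeomorphic copies). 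Passing to the limit $k\to\infty$ and using simple connectivity of $V$, the map $g|_U\colon U\to V$ is a homeomorphism, so $U$ is simply connected and homeomorphic to a half-plane. A tree-pruning argument in the ambient plane then gives both that $U$ has single access to infinity and that the forest $g^{-1}(\ell)$ is in fact connected: two disjoint unbounded subtrees would cut off a bounded component of $\Dom g\setminus U$, contradicting the one-ended structure of $U\simeq V$.

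Finally, for the chess-board rule, let $x\in\partial U_1\cap\partial U_2\cap \Dom g$ with $U_1\ne U_2$ components of $g^{-1}(V)$. Then $g(x)\in\ell$. If $x$ were not critical, $g$ would be a local homeomorphism at $x$, and a neighborhood of $x$ would be split by $g^{-1}(\ell)$ into exactly two pieces mapping to $V$ and $W$ respectively; this forces $U_1=U_2$ near $x$, a contradiction. Hence $x$ is critical. A second critical point $y\ne x$ in $\partial U_1\cap\partial U_2\cap \Dom g$ would provide two disjoint arcs, one through $\partial U_1$ and one through $\partial U_2$, joining $x$ and $y$ in $g^{-1}(\ell)$; concatenating them produces a Jordan curve in $g^{-1}(\ell)$, contradicting the first step. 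The hard part of the whole argument is this interplay of local and global: verifying that $g|_U\colon U\to V$ really is proper (hence a homeomorphism rather than just a local one), that the single-access condition genuinely holds, and that two distinct components meet at a unique critical point, all require carefully combining the local branched-cover picture of $g$ with the $\sigma$-proper exhaustion and the planar topology of trees in $\Dom g$.
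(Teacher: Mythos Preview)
Your proposal is correct and follows the same overall architecture as the paper's proof: first rule out cycles in $g^{-1}(\ell)$, then upgrade ``forest'' to ``tree'', then deduce the chess-board rule from the local branched-cover picture at critical points. The differences are in the mechanisms.

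For the no-cycle step, the paper bypasses the argument principle entirely. It observes that a $\sigma$-proper map has no asymptotic values; combined with $\CV(g)\subset\ell$, this makes $g|_U\colon U\to g(U)\in\{V,W\}$ an honest covering map, not merely a local homeomorphism. Since $V,W$ are simply connected, $g|_U$ is automatically univalent and $U$ is simply connected --- so $g^{-1}(\ell)$ is a forest in one stroke, and the fact that $U$ is not relatively compact in $\Dom g$ follows because its image $V$ (or $W$) is unbounded. This replaces your winding-number computation and your $\sigma$-proper exhaustion argument with a single line. (Incidentally, your exhaustion by round disks $Y_k$ whose boundaries meet $\ell$ in exactly two points need not exist when $\ell$ is wild; you would need topological disks, which is harmless but worth noting.)

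For the forest-to-tree step, the paper's phrasing is the contrapositive of yours: if the forest were disconnected, some component $U$ would have two accesses to $\partial\Disk$; pushing forward by the homeomorphism $g|_U$ would give $g(U)$ two accesses to infinity, contradicting that $\ell$ is connected. Your version (two unbounded subtrees cutting off a bounded piece) is equivalent.

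For the chess-board rule, the paper's endgame is slightly cleaner than yours: if $\partial U_1\cap\partial U_2\cap\Dom g$ contained two points, then $\overline{U_1}\cup\overline{U_2}$ would \emph{surround} a component of $g^{-1}(W)$, forcing that component to be bounded --- impossible, since it maps homeomorphically onto the unbounded set $W$. This avoids having to argue that the two boundary arcs you produce are disjoint away from their endpoints (which is true but requires a word, since a priori $\partial U_1\cap\partial U_2$ could contain further critical points between $x$ and $y$).
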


\noindent Note that since $g$ is $\sigma$-proper, $g^{-1}(\CV(g))$ is in discrete $\Dom g$.
\begin{proof}
We will verify the case when $\Dom g=\Disk$; the case $\Dom g=\C$ is completely analogous.

Consider a connected component $U$ of $\Disk\setminus g^{-1}(\ell)$. Recall that a $\sigma$-proper map has no asymptotic values. Since $\CV(g)\subset \ell$, the map \[g\colon U\to g(U)\in \{V,W\}\] is a covering. Since $V$ and $W$  are simply connected, so is $U$; i.e.~$g\colon U\to g(U)$ is univalent. And since $V$ and $W$  are unbounded, $U$ is not properly contained in $\Disk$. This implies that $g^{-1}(\ell)$ is a forest.

Since $\ell$ is a properly embedded arc in $\C$, we can express $\C$ as an increasing union of open topological disks $Y_i$ such that $Y_i\cap \ell $ is an arc. Defining the $X_i$ to be lifts of the $Y_i$ with $X_{i+1}\supset X_i$, we express $\Disk $ as an increasing union of open disks $X_i$ such that $g\colon X_i\to Y_i$ is proper. Then ${(g\mid X_i)^{-1}(Y_i\cap \ell )}=g^{-1}(\ell)\cap X_i$ is a finite tree. Therefore, $g^{-1}(\ell)$ is a tree in $\Disk$.

Let $U_1$ and $U_2$ be two different components of $g^{-1}(V)$. Then $\partial U_1\cap \partial U_2\cap \Dom g$ is a discrete set of critical points. If $\partial U_1\cap \partial U_2\cap \Dom g$ has at least two points, then $\overline U_1\cup \overline U_2$ surround a component of  $g^{-1}(W)$; this is a contradiction.
\end{proof}

\subsection{Bubbles of $\bF$}
\label{ss:FatComp of bFstr}
Recall from~\S\ref{ss:RenTriang} that $\bZ=\bZ_\str$ is the full lift of $Z=Z_\str$. Alternatively, $\overline \bZ_\str$ can be viewed as a rescaled limit of $\overline Z_\str$,~\cite{McM2}. Then $\bbf_\pm \mid \overline \bZ$ is a pair of homeomorphisms, and $ \bbf_+\circ \bbf_-^{-1}$ is a deck transformation of $\overline \bZ$: the quotient $\overline \bZ/\langle\bbf_-^{-1}\circ \bbf_+ \rangle$ is identified with $\overline Z_\str$; i.e.~$ \overline \bZ$ is the universal cover of $ \overline Z_\str\setminus \{\alpha\}$ and $\bbf_\pm\mid \overline\bZ$ are lifts of $f\mid Z_\str$.

\begin{lem}[Compare with~\cite{McM2}*{Theorem 8.1}]
\label{lem:irr rot of bZ}
The disk $\bZ$ is an invariant Fatou component of $\bF$. Let $\bbh\colon  \bZ\to \{ \Im z< 0\}$ be a conformal map. Then $\bbh$ extends to a quasisymmetric map $\bbh\colon \overline \bZ\to \{ \Im z \le 0 \}$. 

Let us normalize $\bbh\colon  \bZ\to \{ \Im z< 0\}$ to be the unique conformal map so that $\bbh(0)=0$ and $|\bbh'(0)|=1$. Then $\bbh$ conjugates $\bbf_{-},\bbf_{+}$ to a pair of translations $z\mapsto z -\vv$ and $z\mapsto z+\ww$ with $\vv,\ww\ge0$ such that $\theta_\str=\vv/(\vv+\ww)$. Moreover, $\bbh$ conjugates the cascade $\big(\bF^P\mid \overline \bZ\big)_{P\in \PT}$ with the cascade of translations $\big(T^P\big)_{P\in \PT}$ from \S\ref{sss:renorm:pair of rotat}. 
\end{lem}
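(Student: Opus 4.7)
The plan is in three steps.

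\emph{Invariance and Fatou-component status.} The set $\bZ$ is defined as the full lift of the Siegel disk $Z_\str$, so by the spreading construction of the cascade $\bF^{\ge 0}$ on $\bDelta_0(\bF)$ combined with invariance of $Z_\str$ under $f_\str$, I would directly check that $\bF^P(\bZ) \subset \bZ$ for every $P \in \PT$. Normality on $\bZ$ then follows from Theorem~\ref{thm:quot of fund domain}: under the quotient $\bZ / \langle \bchi \rangle \simeq Z_\str \setminus \{\alpha\}$ induced by the fundamental domain $\bZ \cap (\Delta_0(0,\bF) \cup \Delta_0(1,\bF))$, each $\bF^P \mid \bZ$ projects to an iterate of the rotation $f_\str \mid Z_\str$, which gives uniformly bounded distortion on compact subsets. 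That $\bZ$ is an entire component of $\Fat(\bF)$ follows because $\partial \bZ$ projects to $\partial Z_\str$, which by~\S\ref{sss:SiegPacmen} lies in the closure of repelling cycles of $f_\str$; these lift to points of $\partial \bZ$ which lie in $\Jul(\bF)$.

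\emph{Reduction to the rotation picture.} The Siegel linearizer $\phi\colon (Z_\str, \alpha) \to (\ovDisk, 0)$ conjugates $f_\str \mid Z_\str$ to $\Lbb_{\theta_\str}$, and by~\S\ref{sss:SiegPacmen} extends quasisymmetrically to $\overline Z_\str \to \overline \Disk$ since the Siegel disk of a Siegel pacman is a quasidisk. Composing with the universal covering $\ee_-\colon \oH_- \to \ovDisk^*$ from~\S\ref{sss:renorm:pair of rotat}, I would define $\bbh$ to be the unique lift of $\phi \circ \pi$ through $\ee_-$, where $\pi\colon \bZ \to Z_\str \setminus \{\alpha\}$ is the covering projection from Theorem~\ref{thm:quot of fund domain}, normalized by $\bbh(0)=0$ and $|\bbh'(0)|=1$. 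The quasisymmetric extension $\bbh\colon \overline \bZ \to \oH_- \cup \R$ is then inherited from the quasisymmetric extension of $\phi$ via the covering $\ee_-$.

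\emph{Cascade conjugacy.} On the Siegel disk, the pair $(\bbf_-, \bbf_+)$, being iterates $(f_\str^\aa, f_\str^\bb)$ realizing the first return to $S_\str$, restricts to a pair of rotations of $Z_\str$; lifting through $\ee_-$, the correct choice of branches (as specified in~\S\ref{sss:pre antiren}) yields the translations $(T_{-\vv}, T_{+\ww})$ with $\vv+\ww=1$ and $\theta_\str = \vv/(\vv+\ww)$. The cascade conjugacy $\bF^P \mid \overline \bZ \equiv T^P$ for all $P\in\PT$ follows by induction on the antirenormalization tower: the relation~\eqref{eq:iter of max prep} matches the rescaling $\bF^\#_n$ with the rescaling of the translation pair $(T_{\vv_n^-}, T_{\ww_n})$ given by $\vv_n = \tt^{-n}\vv$, $\ww_n = \tt^{-n}\ww$ in Lemma~\ref{lem:PT:rot}, which is precisely the construction of the action of $\PT$ on $\R$ by translations. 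The main obstacle will be verifying that $\pi\colon \bZ \to Z_\str \setminus \{\alpha\}$ is truly the \emph{universal} cover, i.e.~that $\bZ$ is simply connected rather than a finite cyclic cover; this reduces to showing that the spreading-around of $\bZ \cap \bS$ by powers of $\bchi$ unrolls $Z_\str \setminus \{\alpha\}$ exactly once, paralleling the passage from $\ovDisk^*$ to $\oH_-$ in Section~\ref{s:SectRenorm}, and is plausible from the linear tree-like enumeration of triangles in $\bDelta_0(\bF)$ but requires a careful combinatorial check.
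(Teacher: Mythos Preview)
Your approach is essentially the same as the paper's---lift the Siegel-disk rotation structure through the full-lift/covering $\pi\colon \bZ\to Z_\str\setminus\{\alpha\}$, and show $\partial\bZ\subset\Jul(\bF)$ by lifting the repelling periodic points that accumulate on $\partial Z_\str$. The cascade conjugacy step is also handled the same way in both.

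Where you diverge is in the quasisymmetric extension, and your argument there is the weakest point. You propose to inherit quasisymmetry of $\bbh$ by lifting the quasisymmetric extension of $\phi\colon\overline Z_\str\to\ovDisk$ through the covering $\ee_-$. This is not automatic: quasisymmetry is a global condition, and a lift of a quasisymmetric map through an infinite covering need not be quasisymmetric without uniform control near the boundary at all scales. The paper sidesteps this entirely by a one-line argument you overlooked: $\partial\bZ$ is \emph{locally} a quasiarc because it is a lift of $\partial Z_\str$ inside the triangulation $\bDelta_0(\bF)$, and it is \emph{globally} a quasiarc because it is invariant under the linear scaling $A_\str\colon z\mapsto\mu_\str z$. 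Once $\partial\bZ$ is a quasiarc, the quasisymmetric extension of the Riemann map $\bbh$ is standard.

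Your ``main obstacle''---simple connectivity of $\bZ$---is not an obstacle at all. By construction $\bZ$ is the full lift of $Z_\str$ inside $\bDelta_0(\bF)$; its boundary $\partial\bZ$ is a single properly embedded arc in $\C$ (the lift of $\partial Z_\str$, enumerated left-to-right by the triangles $\Delta_0(i)$), so $\bZ$ is one of the two complementary half-planes and is simply connected. No combinatorial check is needed beyond what is already built into the spreading-around definition of $\bDelta_0(\bF)$.
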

\begin{proof}
Recall from \S\ref{ss:RenTriang} that the support of the renormalization triangulation $\bDelta_0(\bF)$ is a neighborhood of $\overline \bZ$ and $\bDelta_0(\bF)$ is the full lift of $\bDelta_0(f)$. As a consequence, $\partial \bZ$ is locally a quasiarc; $\partial \bZ$ is globally a quasiarc  because it is invariant under the scaling $A_\str$.

In the dynamical plane of $f$, the boundary $\partial Z_\str$ is contained in the closure of repelling periodic points (see~\S\ref{sss:SiegPacmen}). Lifting these periodic points to the dynamical plane of $\bF$ we obtain that $\partial \bZ$ is also in the closure of repelling periodic points; thus $\partial \bZ\subset \Jul(\bF)$. 

Since $\bbf_\pm\mid \overline\bZ$ are lifts of $f\mid \overline Z_\str$, the pair $\bbf_\pm\mid \overline \bZ$ is conjugate to a pair of translations $z\mapsto z -\vv$ and $z\mapsto z+\ww$ as required (see also~\S\ref{sss:renorm:pair of rotat}); thus $\bZ$ is a Fatou component of $\bF$.   
\end{proof}

 Observe that $\partial \bZ\supset \Post(\bF) \supset\CV(\bF)$ because $0\in \partial \bZ$ and $\partial \bZ$ is invariant. We have the following corollary of Lemma~\ref{lem:chess board}: 
\begin{cor}
\label{cor:preimage:bZ:tree}
For every $P\in \PT_{>0}$ the preimage $\bF^{-P}(\partial \bZ)$ is a tree in $\Dom \bF^P$. \qed
\end{cor}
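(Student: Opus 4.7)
The plan is to apply the chess-board lemma (Lemma~\ref{lem:chess board}) with $g = \bF^P$ and $\ell = \partial \bZ$, so I need to verify the three hypotheses there. First, $\partial \bZ$ must be a simple properly embedded arc splitting $\C$ into two open half-planes. By Lemma~\ref{lem:irr rot of bZ}, the conformal uniformization $\bbh\colon \bZ \to \{\Im z<0\}$ extends quasisymmetrically to $\overline\bZ$ and carries $\partial\bZ$ onto $\R$. Combined with a global quasiconformal extension to $\hC$, this identifies $\partial\bZ$ with a properly embedded quasiarc going from $\infty$ to $\infty$, which splits $\C$ into $\bZ$ and a second unbounded simply connected component.

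Second, I need $\CV(\bF^P)\subset \partial\bZ$. By Lemma~\ref{lem:crit pts}, $\CV(\bF^P)\subset \Post(\bF)$, so it suffices to verify $\Post(\bF)\subset\partial\bZ$. The base point $0$ lies in $\partial\bZ$: under the identification $h_\str$ of \S\ref{sss:max prepacmen} it corresponds to the critical value $c_1(f_\str)$, which lies on $\partial Z_\str$ because by the definition of a Siegel pacman (\S\ref{sss:SiegPacmen}) the critical point $c_0$ sits on $\partial Z_\str$ and $\partial Z_\str$ is forward-invariant. Since $\overline\bZ$ is $\bF^{\ge 0}$-invariant, so is $\partial\bZ$, and hence the entire forward orbit of $0$ stays in $\partial\bZ$.

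Third, each connected component of $\Dom\bF^P$ is simply connected. This is proved by induction on $P$: the domains $\Dom\bbf_\pm$ are simply connected by the construction in \S\ref{sss:max prepacmen}, and pulling back a simply connected open set by a $\sigma$-proper map defined on a simply connected domain yields components that are again simply connected. By uniformization each such component is conformally $\Disk$ or $\C$, as required by Lemma~\ref{lem:chess board}.

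Applying Lemma~\ref{lem:chess board} to $\bF^P$ restricted to each component $W$ of $\Dom\bF^P$ yields that $(\bF^P\mid W)^{-1}(\partial\bZ)$ is a tree in $W$. To globalize to a single tree in $\Dom\bF^P$, note that $\partial\bZ\subset \bF^{-P}(\partial\bZ)$ by invariance of $\overline\bZ$; being connected, $\partial\bZ$ lies in a single component $W_0$, and every other branch of $\bF^{-P}(\partial\bZ)$ attaches to $\partial\bZ$ at some precritical point, which therefore also belongs to $W_0$. The main obstacle I expect is precisely this last connectedness step, i.e., ruling out that pieces of $\bF^{-P}(\partial\bZ)$ sit in other components of $\Dom\bF^P$ disjoint from $W_0$, which amounts to establishing connectedness of $\Dom\bF^P$; given the inductive structure $\bF^P = \bbf_-^a \circ \bbf_+^b$ and the fact that pullbacks along the $\sigma$-proper maps $\bbf_\pm$ preserve simple connectivity componentwise, I expect this to reduce to checking that the single component through $\overline\bZ$ exhausts the pullback at every stage.
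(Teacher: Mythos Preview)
Your approach is the same as the paper's: the corollary is presented there as an immediate consequence of the chess-board lemma (Lemma~\ref{lem:chess board}), with the setup being the one-line observation just before the statement that $\partial\bZ \supset \Post(\bF) \supset \CV(\bF^P)$ (since $0 \in \partial\bZ$ and $\partial\bZ$ is forward invariant). Your verification of the three hypotheses is correct and simply expands on this, together with Lemma~\ref{lem:irr rot of bZ} for the properly embedded arc and Lemma~\ref{lem:discr of dyn} for discreteness of critical values.

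Your concern about connectedness of $\Dom\bF^P$ is a fair point that the paper does not address at this stage; strictly speaking, Lemma~\ref{lem:chess board} applied componentwise yields a forest. However, your proposed resolution---that ``every other branch of $\bF^{-P}(\partial\bZ)$ attaches to $\partial\bZ$ at some precritical point''---begs the question: a branch sitting in a component $W_1 \neq W_0$ would not attach to $\partial\bZ$ at all, so this presupposes what it aims to prove. In the paper this issue is absorbed by the subsequent structural analysis of bubbles and lakes in \S\ref{ss:FatComp of bFstr}--\S\ref{ss:Lakes}, which shows that every component of $\bF^{-P}(\bZ)$ and of $\bF^{-P}(\bO)$ is reached from $\partial\bZ$ by a path in $\bF^{-P}(\partial\bZ)$, hence lies in the component $W_0$ containing $\overline\bZ$. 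For all the applications that follow, only the tree structure inside $W_0$ is actually used.
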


 Since $\bF^P\mid \partial \bZ$ is a homeomorphism, for every $P>0$, there is a unique critical point $c_P\in \partial \bZ$ of $\bF^{\ge 0}$ of generation $P$, see Lemma~\ref{lem:crit pts}. Since $\bF^P$ is two-to-one around $c_P$, there is a preperiodic Fatou component $\bZ_P$ attached to $c_P$ such that $\bF^P(\bZ_P)=\bZ$. 
\begin{lem}
\label{lem:Z_p is att to Z}
For $c_P$ and $\bZ_P$ as above, $\overline \bZ\cap \overline \bZ_P=\{c_P\}$.  Set \[\widehat \bZ_P\coloneqq  \overline \bZ_P\cap \Dom \bF^P.\] Then $\bF^P\colon \widehat \bZ_P\to \overline \bZ$ is a homeomorphism.
\end{lem}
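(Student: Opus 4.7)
The plan is to deduce both assertions from the chess-board lemma (Lemma~\ref{lem:chess board}) applied to $g=\bF^P\colon\Dom\bF^P\to\C$ with the dividing arc $\ell=\partial\bZ$. First, I would check that $\partial\bZ$ satisfies the hypotheses. By Lemma~\ref{lem:irr rot of bZ}, the quasisymmetric uniformization $\bbh\colon\overline\bZ\to\{\Im z\le 0\}$ sends $\partial\bZ$ to $\R$, so $\partial\bZ$ is a simple properly embedded arc that splits $\C$ into the two topological half-planes $\bZ$ and $\C\setminus\overline\bZ$. Moreover, since $\bZ$ is invariant under the cascade $\bF^{\ge 0}$, the orbit $\Post(\bF)=\{\bF^Q(0)\mid Q\in\PT\}$ lies in $\partial\bZ$; by Lemma~\ref{lem:crit pts}, $\CV(\bF^P)\subset\Post(\bF)\subset\partial\bZ$. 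Thus Lemma~\ref{lem:chess board} applies and yields that $\bF^{-P}(\partial\bZ)$ is a tree in $\Dom\bF^P$, with the chess-board rule: two distinct components of $\bF^{-P}(\bZ)$ meet inside $\Dom\bF^P$ in at most one critical point.

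Next, I would identify $\bZ$ and $\bZ_P$ as two distinct components of $\bF^{-P}(\bZ)$. Invariance of $\bZ$ gives $\bF^P(\bZ)=\bZ$, so $\bZ$ itself is such a component, while $\bZ_P$ is by definition a component attached to $c_P\in\partial\bZ$ with $\bF^P(\bZ_P)=\bZ$. These are distinct because $c_P\in\partial\bZ_P\cap\partial\bZ$ belongs to $\partial\bZ$ (not the interior of $\bZ$), so $\bZ_P\ne\bZ$. Applying the chess-board rule gives $\overline\bZ\cap\overline\bZ_P\cap\Dom\bF^P=\{c_P\}$. Finally, I would use the fact that $\overline\bZ\subset\Dom\bF^P$: by Lemma~\ref{lem:irr rot of bZ}, $\bbf_\pm$ extend continuously to $\overline\bZ$, so every iterate $\bF^Q$ with $Q\le P$ is defined on $\overline\bZ$. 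Hence $\overline\bZ\cap\overline\bZ_P\subset\Dom\bF^P$, and the first claim $\overline\bZ\cap\overline\bZ_P=\{c_P\}$ follows.

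For the homeomorphism statement, I would first argue that $\bZ_P$ is simply connected. Indeed, since $\bF\in\Unst$, each component of $\Dom\bF^P$ is simply connected (see~\S\ref{ss:Fat Jul  Esc}), and by the chess-board lemma the preimage $\bF^{-P}(\partial\bZ)$ is a tree, so every component of the complement has a single access to the boundary of $\Dom\bF^P$ inside its ambient component, hence is simply connected. Since $\CV(\bF^P)\subset\partial\bZ$, the restriction $\bF^P\colon\bZ_P\to\bZ$ is an unbranched covering between simply connected surfaces, and therefore a conformal isomorphism.

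It remains to extend this isomorphism to $\widehat\bZ_P=\overline\bZ_P\cap\Dom\bF^P\to\overline\bZ$. Since $\bbf_\pm$ are continuous up to $\overline\bU_\pm$ and their iterates extend continuously to the closure of their domain inside $\Dom\bF^P$, the map $\bF^P\colon\bZ_P\to\bZ$ extends continuously to $\widehat\bZ_P\to\overline\bZ$. Injectivity on $\widehat\bZ_P$ is the part I expect to be the most delicate: two points of $\widehat\bZ_P$ with the same image would either both be interior points (impossible by the conformal isomorphism on interiors) or boundary points; in the latter case one obtains two distinct preimages of a single point of $\partial\bZ$ under $\bF^P$, contradicting the chess-board rule together with the fact that $\bF^P\mid\partial\bZ$ is injective (Lemma~\ref{lem:irr rot of bZ} shows $\bF^P\mid\partial\bZ$ is conjugate to a translation). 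Surjectivity onto $\overline\bZ$ follows because the boundary extension $\widehat\bZ_P\to\overline\bZ$ is proper, continuous, and its image is closed and contains $\bZ$. Combined with the compactness argument from Corollary~\ref{cor:orbit of bY}, this yields the required homeomorphism.
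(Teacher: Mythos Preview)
Your approach to the first claim is essentially the same as the paper's: both invoke the chess-board lemma (via Corollary~\ref{cor:preimage:bZ:tree}) to conclude that $\bF^{-P}(\partial\bZ)$ is a tree, then use the chess-board rule to get $\overline\bZ\cap\overline\bZ_P\cap\Dom\bF^P=\{c_P\}$, and finally $\overline\bZ\subset\Dom\bF^P$ removes the qualifier. That part is fine.

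The genuine gap is in your boundary injectivity argument for $\bF^P\colon\widehat\bZ_P\to\overline\bZ$. You write that two boundary points $x,y\in\partial^c\bZ_P$ with the same image would give ``two distinct preimages of a single point of $\partial\bZ$, contradicting the chess-board rule together with the fact that $\bF^P\mid\partial\bZ$ is injective.'' But this is not a contradiction: there are certainly many preimages of any point of $\partial\bZ$ under $\bF^P$, and the chess-board rule only controls how \emph{distinct components} of $\bF^{-P}(\bZ)$ touch each other, not how $\bF^P$ behaves along the boundary of a single component. Nothing you have said rules out a second critical point of $\bF^P$ sitting on $\partial^c\bZ_P$, which would make $\bF^P\mid\partial^c\bZ_P$ fold. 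The reference to Corollary~\ref{cor:orbit of bY} is also a red herring; that corollary concerns discreteness of forward orbits and does not bear on the boundary map.

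The paper fills exactly this gap with one short observation: for every $0<Q<P$ one has $\bF^Q(\partial^c\bZ_P)\subset\partial^c\bZ_{P-Q}$, and $\partial^c\bZ_{P-Q}$ is disjoint from $0$ (it meets $\partial\bZ$ only at $c_{P-Q}\neq 0$). Hence no intermediate iterate of $\partial^c\bZ_P$ passes through $0$, so by Lemma~\ref{lem:crit pts} the arc $\partial^c\bZ_P$ carries a \emph{unique} critical point of $\bF^P$, namely $c_P$. Once you know this, $\bF^P\colon\partial^c\bZ_P\to\partial\bZ$ is a local homeomorphism at every point (at $c_P$ the other local branch of $\bF^{-P}(\partial\bZ)$ is $\partial\bZ$ itself), and $\sigma$-properness upgrades this to a global homeomorphism. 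This is the missing idea you need.
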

\noindent The point $c_P$ is called the \emph{root} of $\bZ_P$.  We will show in Lemma~\ref{lem:alpha' is a pnt} that \[\widetilde \alpha_P\coloneqq \overline \bZ_P\setminus \widehat \bZ_P\] consists of a single point, called the \emph{top} of $\bZ_P$. We set 
\[\partial^c \bZ_P\coloneqq \partial \bZ_P \cap \Dom (\bF^P)=\widehat \bZ_P\setminus\bZ_P. \]
\begin{proof}
By Corollary~\ref{cor:preimage:bZ:tree}, $\bF^{-P}(\partial \bZ)$ is a tree in $\Dom(\bF^P)$. Therefore, $\bZ_P$ is a connected component of $\Dom(\bF^P)\setminus \bF^{-P}(\partial \bZ)$ specified so that $\overline \bZ\cap \overline \bZ_P=\{c_P\}$. Moreover, $\partial^c \bZ_P=\partial \bZ_P\cap \Dom(\bF^P)$ is a simple arc. 

Since $\bF^Q(\partial^c \bZ_P)$ is disjoint from $0$ for all $Q< P$ (because $\bF^Q(\partial^c \bZ_P)\subset \partial^c \bZ_{P-Q}$), the curve $\partial^c \bZ_P$ contains a single critical point $c_P$ of $\bF^P$. Therefore, $\bF^P\colon \partial^c \bZ_P \to \partial \bZ$ is a homeomorphism; this proves the second claim. 
\end{proof}

For every $Q\in \PT$, there is a unique critical point $c_{P,Q}\in \partial \bZ_P$ of $\bF^{\ge0}$ of generation $P+Q$. If $Q>0$, then there is a unique Fatou component $\bZ_{P,Q}$ attached to $\bZ_P$ at $c_{(P,Q)}$ such that $\bF^{P+Q}(\bZ_P)=\bZ$ . As above, we have a homeomorphism $\bF^{P+Q}\colon \widehat \bZ_{P,Q} \to \overline \bZ$. 

Continuing this process, we define $c_{P_1,\dots , P_m}$ and the \emph{bubble} $\bZ_{P_1,\dots , P_m}$ for every finite sequence in $s=(P_1,\dots, P_n)\in \PT^n_{>0}$. We call $|s|=P_1+\dots+P_n$ the \emph{generation} of $\bZ_s=\bZ_{P_1,\dots , P_m}$. Lemma~\ref{lem:Z_p is att to Z} implies:
\begin{lem}
\label{lem:Z_s is att to Z_v}
For every bubble $\bZ_s$ with $|s|>0$, there is a unique bubble $\bZ_v$ with $|v|\le |s|$ (possibly $\bZ_v=\bZ$) and $v\not=s$ such that $\partial^c \bZ_s\cap \partial^c \bZ_v  \not=\emptyset$.  Moreover, $\partial^c \bZ_s\cap \partial^c \bZ_v=\{c_s\}$.\qed
\end{lem}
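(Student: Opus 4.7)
The plan is to apply the chess-board rule (Lemma~\ref{lem:chess board}) to the iterate $\bF^{|s|}$ with dividing arc $\partial \bZ$, an unbounded simple curve in $\C$ containing $\CV(\bF^{|s|})=\{\bF^Q(0)\mid Q<|s|\}$ (since $0\in\partial\bZ$ and $\partial\bZ$ is $\bF$-invariant by Lemma~\ref{lem:irr rot of bZ}). Writing $s=(P_1,\dots,P_n)$ and $s':=(P_1,\dots,P_{n-1})$ (with $\bZ_{s'}:=\bZ$ if $n=1$), the inductive construction of bubbles in \S\ref{ss:FatComp of bFstr} attaches $\bZ_s$ to $\bZ_{s'}$ at $c_s=c_{s',P_n}$, so $c_s\in\partial^c\bZ_s\cap\partial^c\bZ_{s'}$. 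This establishes existence with $v=s'$.

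For uniqueness, suppose $\bZ_v$ is any bubble with $|v|\le|s|$, $v\ne s$, and $\partial^c\bZ_s\cap\partial^c\bZ_v\ne\emptyset$. I first verify that $\bZ_v$ is a connected component of $\bF^{-|s|}(\bZ)\cap\Dom\bF^{|s|}$ distinct from $\bZ_s$: the map $\bF^{|s|}\mid\bZ_v$ factors as $(\bF^{|s|-|v|}\mid\bZ)\circ(\bF^{|v|}\mid\bZ_v)$, where the second factor is a biholomorphism onto $\bZ$ by Lemma~\ref{lem:Z_p is att to Z}, and the first is a bijection of $\bZ$ by Lemma~\ref{lem:irr rot of bZ}; this also shows $\bZ_v\subset\Dom\bF^{|s|}$. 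By Corollary~\ref{cor:preimage:bZ:tree}, the preimage $\bF^{-|s|}(\partial\bZ)\cap\Dom\bF^{|s|}$ is a tree, and the chess-board rule of Lemma~\ref{lem:chess board} then forces $\partial^c\bZ_s\cap\partial^c\bZ_v=\{x\}$, where $x$ is a critical point of $\bF^{|s|}$.

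It remains to show $x=c_s$ and $\bZ_v=\bZ_{s'}$. By Lemma~\ref{lem:crit pts}, $x=c_u$ for a unique $u$ with $0<|u|\le|s|$. Since $0$ is not periodic (being a boundary point of the Siegel disk $\bZ$ whose rotation is irrational), the same lemma gives that the local degree of $\bF^{|u|}$ at $c_u$ is exactly $2$, so exactly two local sheets of $\bF^{-|u|}(\bZ)$ meet at $c_u$. By the recursive attachment rule, these two sheets are $\bZ_u$ and its parent $\bZ_{u'}$ (both carry $c_u$ on their boundaries by construction). Since $x\in\partial\bZ_s$, we have $\bZ_s\in\{\bZ_u,\bZ_{u'}\}$; the case $\bZ_s=\bZ_{u'}$ would force $u=(s,P)$ for some $P>0$ with $|u|=|s|+P>|s|$, contradicting $|u|\le|s|$. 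Thus $\bZ_s=\bZ_u$, hence $x=c_s$, and applying the same two-sheet analysis at $c_s$ identifies $\bZ_v$ as the remaining sheet $\bZ_{s'}$.

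The hard part will be the local two-sheet analysis at a critical point $c_u$: it depends on the degree-$2$ property (ultimately from $0$ being non-periodic) together with a careful reading of the recursive construction to confirm that $\bZ_u$ and $\bZ_{u'}$ are precisely the two components of $\bF^{-|u|}(\bZ)$ with $c_u$ in their closures. A minor technicality is verifying the hypotheses of Lemma~\ref{lem:chess board} on the connected component of $\Dom\bF^{|s|}$ containing $\bZ_s$ and $\bZ_v$; this is handled by the simple connectivity of $\Dom\bbf_\pm$ recorded in \S\ref{sss:max prepacmen} and preserved under iteration.
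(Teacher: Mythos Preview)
Your overall strategy matches the paper's (which simply writes ``Lemma~\ref{lem:Z_p is att to Z} implies:'' and places a \qed), and your use of the chess-board rule to pin down $\partial^c\bZ_s\cap\partial^c\bZ_v$ as a single critical point of $\bF^{|s|}$ is correct. The gap is in the identification $x=c_s$. The sentence ``By Lemma~\ref{lem:crit pts}, $x=c_u$ for a unique $u$'' is a misattribution: that lemma only says $\bF^Q(x)=0$ for some $0<Q\le|s|$, not that $x$ is one of the labelled attachment points $c_u$. More seriously, your local two-sheet analysis at $c_u$ with respect to $\bF^{|u|}$ does not yield $\bZ_s\in\{\bZ_u,\bZ_{u'}\}$ when $|u|<|s|$: in that case $\bF^{|u|}(\bZ_s)$ is a bubble of positive generation $|s|-|u|$, hence disjoint from $\bZ$, so $\bZ_s$ lies in \emph{neither} of the two components of $\bF^{-|u|}(\bZ)$ meeting at $c_u$. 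Thus the conclusion $\bZ_s\in\{\bZ_u,\bZ_{u'}\}$ is unjustified, and your deduction $u=s$ is circular.

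The direct fix (which is what the paper's appeal to Lemma~\ref{lem:Z_p is att to Z} encodes) is to show that $c_s$ is the \emph{only} critical point of $\bF^{|s|}$ on $\partial^c\bZ_s$. Write $s'=(P_1,\dots,P_{n-1})$; since $\bF^{|s|}\colon\widehat\bZ_s\to\overline\bZ$ and $\bF^{P_n}\colon\widehat\bZ_{P_n}\to\overline\bZ$ are homeomorphisms (recursive Lemma~\ref{lem:Z_p is att to Z}), the map $\bF^{|s'|}\colon\partial^c\bZ_s\to\partial^c\bZ_{P_n}$ is a homeomorphism. If $x\in\partial^c\bZ_s$ satisfies $\bF^Q(x)=0$ with $|s'|<Q\le|s|$, then $\bF^{|s'|}(x)\in\partial^c\bZ_{P_n}$ is a critical point of $\bF^{P_n}$, hence equals $c_{P_n}$ by the proof of Lemma~\ref{lem:Z_p is att to Z}, giving $x=c_s$ and $Q=|s|$. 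If instead $0<Q\le|s'|$, then $\bF^{|s'|}(x)=\bF^{|s'|-Q}(0)\in\partial\bZ\cap\partial^c\bZ_{P_n}=\{c_{P_n}\}$, whence $\bF^{|s|-Q}(0)=\bF^{P_n}(c_{P_n})=0$, contradicting that $0$ is not periodic. Thus $x=c_s$, and \emph{now} your degree-$2$ analysis (applied at $c_s$ with $\bF^{|s|}$) correctly identifies $\bZ_v=\bZ_{s'}$.
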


 We define the corresponding \emph{finite bubble chain} as \[B_s=(\widehat \bZ_{P_1},\widehat \bZ_{P_1,P_2},\dots\widehat \bZ_{P_1,P_2,\dots , P_n}).\]

The \emph{primary limb rooted at $c_{P_1}$} is
\[\bL_{P_1}\coloneqq \bigcup _{n\ge 1}\bigcup_{(P_1,\dots , P_n)}\widehat \bZ_{P_1,\dots , P_n},\]
where the union is taken over all finite sequences in $\PT_{>0}$ starting with $P_1$\footnote{This is not a standard definition of a limb because $\bL_{P_1}$ is not closed.}. Similarly, the \emph{secondary limbs} $\bL_{P_1,P_2}$ of $\bZ_{P_1}$ are defined.

We also consider \emph{infinite bubble chains}: given \[s=(P_1,P_2,\dots)\in \PT^{\N}_{>0},\] we set $B_s=(\widehat \bZ_{P_1},\widehat \bZ_{P_1,P_2},\dots)$. The \emph{generation} of $B_s$ is
 \[|s|=P_1+P_2+\dots \le \infty\]  
 (recall that we view $\PT$ as a sub-semigroup of $\R_{>0}$); it is the supremum of generations of all the bubbles in the chain.
 
 Given a finite or infinite bubble chain $B_s$ with $s=(P_1,P_2,\dots )$, we write its geometric realization as \[\bB_s=\widehat \bZ_{P_1}\cup\widehat\bZ_{P_1,P_2}\cup \dots  \]
 and call it a \emph{bubble chain} as well.

Suppose $s$ is an infinite sequence. The \emph{accumulating set} of $B_s$ (and of $\bB_s$) is the accumulating set of $\widehat \bZ_{P_1}, \widehat \bZ_{P_1,P_2},\dots$. If the accumulating set is a singleton $\{x\}$, then we say that $B_s$ \emph{lands} at $x$. (It will follow from Lemma~\ref{lem:nest wakes shrink} that every infinite bubble chain lands.)

If $s$ is a finite sequence, then the \emph{accumulating set} of $B_s$ is 
\begin{equation}
\label{eq:defn:accum set} 
\overline \bZ_{s}\setminus \widehat \bZ_s\subset \Esc_{P}.
\end{equation}

\begin{prop}
Every strictly preperiodic preimage of $\bZ$ is contained in some limb $\bL_x$.
\end{prop}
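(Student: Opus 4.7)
My plan is to apply Lemma~\ref{lem:chess board} to the $\sigma$-proper map $\bF^Q$, where $Q \in \PT_{>0}$ is minimal with $\bF^Q(Y) = \bZ$, and then to identify $Y$ with a bubble by walking along a canonical path in the resulting preimage tree. The hypotheses are readily verified: by Lemma~\ref{lem:irr rot of bZ}, $\partial \bZ$ is a simple arc properly embedded in $\C$, splitting $\C$ into the two half-planes $\bZ$ and $\C \setminus \overline \bZ$ (since $\bZ$ is conformally a half-plane and unbounded); every critical value of $\bF^Q$ lies in $\Post(\bF) \subset \partial \bZ$ by Lemma~\ref{lem:crit pts}; and each connected component of $\Dom \bF^Q$ is simply connected as noted in Section~\ref{ss:Fat Jul  Esc}. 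Lemma~\ref{lem:chess board} then yields that $T := \bF^{-Q}(\partial \bZ)$ is a tree in each component of $\Dom \bF^Q$, and that distinct preimage components of $\bZ$ meet on their boundaries only at critical points of $\bF^Q$.

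Next, I would form the dual graph whose vertices are the preimage components of $\bZ$ in the connected component of $\Dom \bF^Q$ containing both $Y$ and $\bZ$, with edges given by shared critical points. Since $T$ is a tree, so is this dual graph, and I obtain a unique combinatorial path $\bZ = Y_0, Y_1, \dots, Y_n = Y$ in which consecutive $Y_{k-1}, Y_k$ meet at a single critical point $c_k$.

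The heart of the argument is an induction on $k$ showing $Y_k = \bZ_{P_1, \dots, P_k}$ for a suitable sequence $(P_1, \dots, P_n) \in \PT_{>0}^n$. Assuming inductively that $Y_{k-1} = \bZ_{P_1, \dots, P_{k-1}}$, the critical point $c_k$ lies on $\partial Y_{k-1}$ and hence, by the enumeration of critical points on $\partial \bZ_{P_1,\dots,P_{k-1}}$ preceding Lemma~\ref{lem:Z_s is att to Z_v}, is of the form $c_{P_1, \dots, P_{k-1}, P_k}$ for some $P_k \in \PT_{>0}$. Locally at $c_k$, the map $\bF^{P_1 + \dots + P_k}$ is two-to-one onto a neighborhood of $0 \in \partial \bZ$ by Lemma~\ref{lem:crit pts}, so exactly two Fatou components map to $\bZ$ under $\bF^{P_1 + \dots + P_k}$ in this neighborhood: the parent $Y_{k-1} = \bZ_{P_1, \dots, P_{k-1}}$ and one other, which by construction coincides with the bubble $\bZ_{P_1, \dots, P_k}$. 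Since this other component also coincides locally with $Y_k$, and Fatou components sharing interior points are equal, we obtain $Y_k = \bZ_{P_1, \dots, P_k}$. Thus $Y = \bZ_{P_1, \dots, P_n} \subset \bL_{c_{P_1}}$.

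The main obstacle I anticipate is verifying that $Y$ and $\bZ$ actually lie in the same connected component of $\Dom \bF^Q$, so that the dual-tree path connects them; this subtlety is inherited from the fact that $\bF^Q$ is only $\sigma$-proper and its domain need not be connected. I would handle it by tracing the forward $\bF$-orbit of a small arc joining a point of $Y$ to a point of $\bZ$, using that each intermediate image $\bF^R(Y)$ ($0 \le R \le Q$) is a Fatou component hence belongs to $\Dom \bF^{\ge 0}$, or alternatively by applying the chess-board argument directly within the component of $\Dom \bF^Q$ containing $Y$ and showing via $\sigma$-properness that the "root" of the dual tree in this component must be $\bZ$ itself.
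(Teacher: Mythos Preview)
Your approach is essentially the paper's: both walk from $\partial\bZ$ to $\partial^c\bZ'$ along the tree $\bF^{-P}(\partial\bZ)$ (the paper via the tree-geodesic directly, you via the equivalent dual graph of preimage components), collecting the critical points encountered as the bubble index $(P_1,\dots,P_n)$. Your connectivity concern is handled by citing Corollary~\ref{cor:preimage:bZ:tree} directly rather than rederiving it from Lemma~\ref{lem:chess board}: that Corollary asserts $\bF^{-P}(\partial\bZ)$ is a single tree, and since $\partial\bZ$ is $\bF^P$-invariant it lies in this tree together with $\partial^c\bZ'$, so no separate argument is needed.
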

\begin{proof}
Let $\bZ'\not=\bZ$ be a component of $\bF^{-P}(\bZ)$. By Lemma~\ref{lem:chess board}, $\bZ'$ is a component of $\Dom \bF^P\setminus \bF^{-P}(\partial \bZ)$, where is $\bF^{-P}(\partial \bZ)$ is a tree in $\Dom \bF^P$. Let $\gamma\subset \bF^{-P}(\partial \bZ)$ be the tree-geodesic connecting $\partial \bZ$ to $\partial^c \bZ'$. There are finitely many  critical points \[c_{P_1},c_{P_1,P_2},\dots , c_{P_1,\dots, P_n}\] of $\bF^P$ in $\gamma$. We have $\bZ'=\bZ_{P_1,\dots, P_n}$.
\end{proof}

For a limb $\bL_s$ we write
\[\partial^c \bL_s = \bigcup_{\bZ_v\subset \bL_s}\partial ^c \bZ_v \] and, similarly, for a bubble chain $\bB_s$:
\[\partial^c \bB_s = \bigcup_{\bZ_v\subset \bB_s}\partial ^c \bZ_v.\]

\begin{figure}[tp!]
\begin{tikzpicture}
\draw(-5,0) -- (5,0);
\node at (-4,-0.2) {$\partial \bZ$};
\node at (0,-0.2) {$z=0$};
\node at (0.2,1){$\bO'$};
\begin{scope}[shift={(-1.2,0)}]
\draw 
(-1,1)--(-2,2)--(-3,1)--(-2,0);
\draw  (0.2,1.5)--(-0.6, 1.5)--(-1,1);
\draw (0.7,2)--(0.35, 1.85)--(0.2,1.5);

\filldraw[opacity=0.5] (-1,1)--(-2,2)--(-3,1)--(-2,0)--(-1,1);
\filldraw[opacity=0.5] (0.2,1.5)--(-0.6, 1.5)--(-1,1) --(-0.2,1)--(0.2,1.5);
\filldraw[opacity=0.5] (0.7,2)--(0.35, 1.85)--(0.2,1.5)--(0.55, 1.65)--(0.7,2);

\draw[line width=0.5mm, red] (1.2,0)--(-2,0)--(-1,1) --(-0.2,1) --(0.2,1.5)--(0.55, 1.65) --(0.7,2)-- (1,2.1);
\node at (-0.4,1.8) {$\bB_\lambda$};
\node[red] at(-1,0.6){$\bI_\lambda$};
\end{scope}

\begin{scope}[shift={(1.2,0)},xscale=-1]
\draw 
(-1,1)--(-2,2)--(-3,1)--(-2,0);
\draw  (0.2,1.5)--(-0.6, 1.5)--(-1,1);
\draw (0.7,2)--(0.35, 1.85)--(0.2,1.5);
\draw[line width=0.5mm, blue] (1.2,0)--(-2,0)--(-1,1) --(-0.2,1) --(0.2,1.5)--(0.55, 1.65) --(0.7,2)-- (1.4,2.1);

\node at (-0.4,1.8){$\bB_\rho$};
\node[blue] at(-1,0.6){$\bI_\rho$};

\filldraw[opacity=0.5] (-1,1)--(-2,2)--(-3,1)--(-2,0)--(-1,1);
\filldraw[opacity=0.5] (0.2,1.5)--(-0.6, 1.5)--(-1,1) --(-0.2,1)--(0.2,1.5);
\filldraw[opacity=0.5] (0.7,2)--(0.35, 1.85)--(0.2,1.5)--(0.55, 1.65)--(0.7,2);

\end{scope}
\end{tikzpicture}
\caption{The case $z=0$: the bubble chains $\bB_\lambda$ and $\bB_\rho$ of the lake $\bO'$ contain $\bI_\lambda\setminus \partial \bZ$ and  $\bI_\rho\setminus \partial \bZ$ -- the left and right sides of $\partial^c \bO'$.} 
\label{Fig:I lambda B Lambda}
\end{figure}

\subsection{Lakes}
\label{ss:Lakes}
We call $\bO\coloneqq \C\setminus \overline \bZ$ the \emph{lake of generation $0$} or the \emph{ocean}. A \emph{lake of generation $P\in \PT_{>0}$} is a connected component of $\bF^{-P}(\bO)$. In particular, lakes of generation $P$ are pairwise disjoint. If $\bO_1$ is a lake of generation $P\in \PT$, then its \emph{coast}  is \[\partial^c \bO_1\coloneqq  \partial \bO_1\cap \Dom \bF^P.\] By Lemma~\ref{lem:chess board}:
\begin{itemize}
\item $\bO_1$ has a single access to $\Esc_P (\bF)$ (i.e. after identifying $\C\setminus \Esc_P(\bF)\simeq \Disk$);
\item $\bF^P\colon \bO_1\to \bO$ is conformal;
\item $\partial^c \bO_1$ is a properly embedded simple arc in the tree $\bF^{-P}(\partial \bZ)$;
\item $\bF^P\colon \partial^c \bO_1\to \partial^c\bO=\partial \bZ$ is a homeomorphism.
\end{itemize} 
 We will call lakes of positive generation \emph{proper}. 

Lakes form a ``puzzle partition'' (by open sets): since $\bZ=\bF^S(\bZ)$ is invariant, lakes of generation $R$ are inside lakes of generations $P$ for $R>P\ge 0$. The map $\bF^{R-P}$ maps every lake of generation $R$ univalently to a lake of generation $P$. Informally speaking, when $P$ grows there are less water and more land, and the ``drought'' occurs in a self-similar fashion. We will use lakes to study $\Esc(\bF)$; lakes will not be used beyond the current section.

Consider a lake $\bO'$ of generation $P$. Let $z\in \partial^c \bO'$ be the closest point to $0$ in the tree $\bF^{-P}(\partial \bZ)$. Then $z$ splits $ \partial^c \bO'$ into two arcs $I_\lambda$ and $I_\rho$. We assume that $I_\lambda,[z,0],I_\rho$ has a counterclockwise orientation at $z$; if $z=0$, then we assume that $I_\lambda$ is on the left of $0$ relative $\partial \bZ$ while $I_\rho$ is on the right of $0$, see Figure~\ref{Fig:I lambda B Lambda}.

There are unique bubble chains $\bB_\lambda$ and $\bB_\rho$ containing $I_\lambda\setminus \partial \bZ$ and $I_\rho\setminus \partial \bZ $ respectively. (If $I_\lambda\subset \bZ$,\footnote{this is in fact impossible by Lemma~\ref{lem:lakes lambda and rho}} then set $\bB_\lambda=\emptyset$, and similarly with $I_\rho$.) If $\partial \bZ\cap \partial \bO'=\emptyset$, then
 $\partial^c \bO'\subset \partial^c\bB_\lambda \cup  \partial^c\bB_\rho.$
 We now view $\lambda=\lambda(\bO')$ and $\rho=\rho(\bO')$ as sequences of positive power-triples parameterizing bubble chains as above, and we say that $\bB_\lambda$ and $\bB_\rho$ are the \emph{left and the right} bubble chains of $\bO'$.

\begin{lem}
\label{lem:lakes lambda and rho}
Consider a lake $\bO'$ of generation $P$ and let $\bB_\lambda$ and $\bB_\rho$ be the left and right bubble chains of $\bO'$. Then $\bB_\lambda$ and $\bB_\rho$ have generation $P$ as well. Moreover, one of the $\lambda$, $\rho$ is a finite sequence while another is an infinite sequence.

\begin{figure}[tp!]
\begin{tikzpicture}

\draw (-5,0)--(5,0);
\filldraw (-4,0) circle (1pt);
\node[above] at (-4,0){$z$};

\filldraw (4.5,0) circle (1pt);  
\node[above] at (4.5,0){$c_P$};

\draw (0,0)--(-1.5,2)--(0,4)--(1.5,2)--(0,0);

\node at (0,2){$\bZ_R$};

\filldraw (0,0) circle (1pt);  
\node[below] at (0,0){$c_R$};

\node[above] at (0,4){$\widetilde \alpha_P$};

\filldraw (-0.75,3) circle (1pt);  
\node[above left] at (-0.75,3) {$c_{(R,P-R)}$};

\node[ right] at (-1.5,2) {$c_{(R,Q)}$};

\filldraw[opacity=0.3]  (-1.5,2) -- (-2,2.3) -- (-2.5,2)--(-2,1.7) --(-1.5,2);
\node[above] at (-2,2.3){$\bZ_{(R,Q)}$};

\node[left] at(-2.5,2) {$\widetilde \alpha_{(R,Q)}$};

\filldraw[opacity=0.7] (-2,1.7) -- (-1.75,1.35)--(-2,1)--(-2.25,1.35)-- (-2,1.7);
\node[below ] at (-2,1) {$\bZ_{(R,Q,P)}$}; 

\end{tikzpicture}
\caption{Suppose a lake $\bO'$ touches $\partial \bZ$; say $z\in \overline \bO'\cap \partial \bZ$. If $c_P$ is on the right from $z$, then $\bB_{\rho}(\bO')$ is finite. Indeed, let $c_R$ be the critical point of generation $\le P$ such that $c_R$ is the closest to $z$ on the right. Assume $R<P$. Then, as shown on the figure, $c_{(R,P-R)}\subset \partial ^c \bZ_R$ is on the left of $c_R$. Let $c_{(P,Q)}$ be the critical point of generation $\le P-R$ closest to $c_R\in  \partial^c \bZ_R$ on the left.  Assume $Q<P-R$. Then $c_{(R,Q,P-R-Q)}\in \partial ^c \bZ_{(R,Q)}$ is on the left of $c_{(R,Q)}$. Assume now that $c_{(R,Q,P-R-Q)}$ is the closest to $c_{(R,Q)}\in \partial^c \bZ_{(R,Q)}$ on the left. Then $\bB_\rho(\bO')=\bB_{(R,Q,P-R-Q)}=\bZ_R\cup \bZ_{(R,Q)}\cup \bZ_{(R,Q,P-R-Q)}$.} 
\label{Fig:finite bubble chain}
\end{figure}

\end{lem}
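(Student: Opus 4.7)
The plan is to construct the two chains iteratively, following the coast $\partial^c \bO'$ from $z$ in each direction and using the conformal isomorphism $\bF^P \colon \bO' \to \bO$ together with the linearization $\bbh$ of Lemma~\ref{lem:irr rot of bZ}. If $\partial^c \bO' \cap \partial \bZ = \emptyset$, first reduce to the touching case: let $\bZ_v$ be a bubble of smallest generation $R = |v|$ whose boundary meets $\partial^c \bO'$; then $\bF^R$ conformally identifies $\bZ_v$ with $\bZ$ and sends $\bO'$ to a lake of generation $P - R$ touching $\partial \bZ$. Since the chains of $\bO'$ are obtained from those of the reduced lake by prepending the multi-index of $v$, the claim reduces correspondingly.

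Assume henceforth $z \in \partial^c \bO' \cap \partial \bZ$ and, via $\bbh$, identify $\partial \bZ$ with $\R$ so that without loss of generality $c_P$ lies to the right of $z$. Build $\bB_\rho$ following the recipe of Figure~\ref{Fig:finite bubble chain}. Going right from $z$ along $\partial \bZ$, the coast leaves $\partial \bZ$ at the first critical point $c_R$ where the wedge configuration forces it into $\partial^c \bZ_R$. If $R = P$, set $\bB_\rho = (P)$ and terminate; otherwise $R < P$ and the coast enters $\bZ_R$. Apply $\bF^R$, which identifies $(\bZ_R, c_P)$ conformally with $(\bZ, c_{P-R})$, and invoke the same construction recursively for a lake of generation $P - R$. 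The recursion strictly decreases the remaining generation at each step and halts precisely when it reaches zero, producing a finite chain $\bB_\rho = (R_1, \ldots, R_k)$ with $\sum_i R_i = P$.

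The chain $\bB_\lambda$ is constructed analogously going left from $z$; however $c_P$ now lies on the opposite side, so the recursive descent never lands on a critical point of the exact remaining generation. At step $i$ it enters a bubble of generation $Q_i \in \PT_{>0}$, yielding an infinite sequence with strictly increasing partial sums $\sum_{i \le k} Q_i < P$. To verify that these partial sums converge to $P$, identify $\partial \bZ$ with $\R$ via $\bbh$: the infinite descent corresponds to an infinite concatenation of sub-intervals of $\R$ exhausting the full budget consumed by $\bF^P$ along the left end of $\partial^c \bO'$, which equals $P$ by the combinatorial structure of the translation cascade $(T^P)_{P \in \PT}$ of Section~\ref{s:SectRenorm} (cf.~Lemma~\ref{lem:dynam of domin}).

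The main obstacle is the equality $\sum_i Q_i = P$ for the infinite chain, rather than convergence to some strict proper value. Using the quasisymmetric conjugacy of Lemma~\ref{lem:irr rot of bZ} between $(\bF^P \mid \overline \bZ)_{P \in \PT}$ and the translation cascade $(T^P)_{P \in \PT}$, this reduces to an analogous statement in the translation model on $\R$, where it follows from the combinatorial completeness of the dominant-points structure $(b_n)_{n \in \Z}$ of Section~\ref{ss:closed returns} and the explicit description in Lemma~\ref{lem:dynam of domin}.
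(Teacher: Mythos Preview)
Your approach has a genuine gap in the termination argument for the right chain, and a parallel gap in the non-termination and generation arguments for the left chain.

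For the right chain you write that the recursion ``strictly decreases the remaining generation at each step and halts precisely when it reaches zero.'' But $\PT$ is dense in $\R_{\ge 0}$, so a strictly decreasing sequence $P > P - R_1 > P - R_1 - R_2 > \cdots$ in $\PT$ need not reach $0$ in finitely many steps; nothing you have said rules out $\sum_i R_i < P$. The picture in Figure~\ref{Fig:finite bubble chain} is an \emph{example}, not an induction. To make your recursion terminate you would need an input such as: in any interval of $\partial \bZ$ there are only finitely many critical points of generation $\le P$ (proper discontinuity, Lemma~\ref{lem:discr of dyn}), and you would have to track carefully how the position of $c_{P-R_1-\cdots}$ relative to the new touching point evolves under $\bF^{R_i}$. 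You have not done this.

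Symmetrically, for the left chain you assert that ``the recursive descent never lands on a critical point of the exact remaining generation,'' but give no reason. And for $\sum_i Q_i = P$ you only sketch a reduction to the translation model; the appeal to Lemma~\ref{lem:dynam of domin} does not obviously yield this equality.

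The paper's proof avoids all of this by separating the three assertions. It first shows (via the Claim that a lake touching $\partial \bZ$ has both chains nonempty) that each chain has generation exactly $P$: if $\bB_\lambda$ had generation $R<P$, apply $\bF^S$ for some $R<S<P$ with $\overline{\bF^S(\bO')}$ meeting $\partial\bZ$; then $\bF^S(\bB_\lambda)=\emptyset$, contradicting the Claim. It then shows that \emph{not both} chains are finite: if $\lambda=(P_1,\dots,P_n)$ and $\rho=(Q_1,\dots,Q_m)$ both summed to $P$, pick $H$ with $P>H>\max\{P_1+\cdots+P_{n-1},\,Q_1+\cdots+Q_{m-1}\}$; then $\bF^H$ sends both last bubbles to the \emph{same} bubble $\bZ_{P-H}$, impossible since one is the left and the other the right chain of $\bF^H(\bO')$. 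Finally it shows \emph{at least one} chain is finite by reducing (via $\bF^R$) to the case where $[y,c_P]\subset\partial\bZ$ contains no critical point of generation $<P$; then $\widehat\bZ_P$ itself is one of the chains. The finiteness of critical points of generation $<P$ in $[y,c_P]$ is exactly the discreteness input your argument is missing.
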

\noindent Lemma~\ref{lem:lakes lambda and rho} is illustrated in Figure~\ref{Fig:finite bubble chain}.
\begin{proof}
\emph{Claim}. If $J\coloneqq \bO'\cap \bZ\not=\emptyset$, then $\bB_\lambda$ and $\bB_\rho$ are non-empty. 
\begin{proof}
By Corollary~\ref{cor:preimage:bZ:tree}, $J$ is an arc; choose $x\in J$. Both connected components of $\partial \bZ\setminus\{x\}$ contain infinitely many critical points of generation $\le P$ and these points are branch points of the tree $\bF^{-P}(\partial \bZ)$. Since $J$ does not cross such branch points, $\bB_\lambda$ and $\bB_\rho$ are non-empty.
\end{proof}

Suppose now that the generation $R$ of $\bB_\lambda$ is less than $P$. Choose $S$ with $R< S<P$ such that $\overline{\bF^S(\bO')}$ intersects $\partial \bZ$. The left bubble chain of $\bF^S(\bO')$ is $\bF^S(\bB_{\lambda})$ which is empty; this is impossible by the above \emph{Claim}. Since $\bO'$ is bounded by $\bF^{-P}(\partial \bZ)$, the generation of $\bB_\lambda$ can not be bigger than $P$. This shows that $\bB_\lambda$ (and similarly $\bB_\rho$) has generation $P$.

Suppose now that $\lambda$ and $\rho$ are finite; write $\lambda=(P_1,P_2,\dots, P_n)$ and $\rho=(Q_1,Q_2,\dots, Q_m)$. Then $P_1+\dots +P_n=Q_1+\dots + Q_m =P$. Choose $H\in \PT$ so that \[P>H>\max\{P_1+\dots + P_{n-1},Q_1+\dots + Q_{m-1}\}.\]
On the one hand,  $\bF^H\big(\widehat \bZ_\lambda\big)=\widehat \bZ_{P-H}$ (the unique bubble of generation $P-H$ attached to $\partial \bZ$) is the left bubble chain of the lake $\bF^H(\bO')$; and on the other hand $\bF^H\big(\widehat\bZ_\rho\big)=\widehat \bZ_{P-H}$ is the right bubble chain of $\bF^H(\bO')$ -- this is impossible.

Let us now prove that either $\lambda$ or $\rho$ is a finite sequence. 

\emph{Case 1:} assume $J=\overline \bO'\cap \overline\bZ\not=\emptyset$ and there is $y\in J$ such that the subarc $[y,c_P]$ of $\partial\bZ$ contains no critical points of generation $<P$. Since $[y,c_P)$ and $\partial^c\bZ_P\setminus \{c_P\} $ contain no branch points of the tree $\bF^{-P}(\partial \bZ)$, we obtain that $\widehat \bZ_P$ is ether $\bB_\lambda$ or $\bB_{\rho}$.

\emph{Case 2:} assume, more generally, $J=\overline \bO'\cap \overline\bZ\not=\emptyset$. Choose $y\in J$. There are at most finitely many critical points in $[y,c_P]$ of generation $<P$. Then the generation of these critical points is less than some $R<P$. Then $[\bF^R(y),c_{P-R}]$ contains no critical points of generation $<P-R$;~i.e.~$\bF^R(\bO')$ satisfies Case 1. Therefore, either $\bF^R(\bB_\lambda)$ or $\bF^R(\bB_\rho)$ is a finite chain; this implies that either $\bB_\lambda$ or $\bB_\rho$ is a finite chain.

For a general lake $\bO'$ consider $R<P$ such that $\overline{\bF^R(\bO')}$ intersects $\partial \bZ$. By Case 2, either $\bF^R(\bB_\lambda)$ or $\bF^R(\bB_\rho)$ is a finite chain, implying the desired.
\end{proof}

Consider a bubble $\bZ_s$ where $s=(P_1,P_2,\dots , P_n)\in  \PT^n_{>0}$, Write $P=P_1+\dots +P_n$. Let $\gamma\subset \bF^{-P}(\partial \bZ)$ be the unique tree-geodesic connecting $0$ to $c_{s}$ which is the root of $\bZ_s$. Denote by $\partial ^c_- \bZ_s$ and $\partial ^c_+ \bZ_s$ two connected components of $\partial ^c \bZ_s\setminus \{c_s\}$ enumerated so that the triple $\partial ^c_- \bZ_s,\gamma, \partial ^c_+ \bZ_s$ has counterclockwise orientation at $c_s$.

There are unique lakes $\bO_-(s)$ and $\bO_+(s)$ of generation $P$ such that \[\partial^c\bO_{-}(s)\supset \partial ^c_- \bZ_s\sp \text{ and }\sp \partial^c\bO_{+}(s)\supset \partial ^c_+ \bZ_s\] because $\partial ^c_- \bZ_s$ and $\partial ^c_+ \bZ_s$ have no critical points of generation $\le P$. We define $\bB_{\lambda(s)}$ to be the left bubble chain of $\bO_-(s)$ and we define $\bB_{\rho(s)}$ to be the right bubble chain of $\bO_+(s)$, see Figure~\ref{Fig:Blambda Brho}.

We call $\bO_{-}(s)$ the \emph{left lake} of $\bZ_s$ and we call $\bO_{-}(s)$ the \emph{right lake} of $\bZ_s$. By construction, $\bB_{\lambda(s)}$ and $\bB_{\rho(s)}$ are the closest to $\bB_s$ bubble chains of generation at most $P$. The next lemma implies that every lake is of the form $\bO_\pm(s)$ for a unique $s$.

\begin{lem}
\label{lem:closest BChains}
For every $\bZ_s$ the bubble chains $\bB_{\lambda(s)}$ and $\bB_{\rho(s)}$ are infinite chains of generation $|s|=P_1+P_2+\dots+P_n$.

For every proper lake $\bO'$ there is a unique $\bZ_s,\sp s=(P_1,\dots ,P_n),\sp n\ge1$ such that either $\bO'=\bO_-(s)$ or $\bO'=\bO_+(s)$.
\end{lem}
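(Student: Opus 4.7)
The plan is to derive both assertions from Lemma~\ref{lem:lakes lambda and rho}, which for any lake of generation $P$ supplies exactly one \emph{finite} and one \emph{infinite} bubble chain on its coast, each of generation $P$. The key additional observation I need is that if $\bB_v$ is an infinite bubble chain with finite total generation $|v|=P$, then no individual bubble of $\bB_v$ has generation equal to $P$: the partial sums of $v\in \PT_{>0}^{\mathbb N}$ are strictly increasing and reach $|v|$ only as a limit, so every bubble in the chain has generation strictly less than $P$. A finite chain $\bB_t$ with $|t|=P$, by contrast, has its terminal bubble $\bZ_t$ of generation exactly $P$. Consequently, the coast of any proper lake of generation $P$ contains at most one bubble of generation exactly $P$, namely the terminal bubble of its finite chain.

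For the first assertion, apply Lemma~\ref{lem:lakes lambda and rho} to $\bO_-(s)$. Since $\partial^c_-\bZ_s\subset \partial^c\bO_-(s)$ by the definition of $\bO_-(s)$ and $\bZ_s$ has generation $|s|$, the observation above forces $\bZ_s$ to be the terminal bubble of the finite chain of $\bO_-(s)$. Hence this finite chain is $\bB_s$, and the other chain of $\bO_-(s)$ -- which is $\bB_{\lambda(s)}$ by definition -- is infinite of generation $|s|$. The same argument at $\bO_+(s)$ shows that $\bB_{\rho(s)}$ is infinite of generation $|s|$.

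For the second assertion, let $\bO'$ be a proper lake of generation $P>0$, and let $\bB_t$ with $|t|=P$ be its unique finite bubble chain (from Lemma~\ref{lem:lakes lambda and rho}). The terminal bubble $\bZ_t$ has exactly one of $\partial^c_-\bZ_t, \partial^c_+\bZ_t$ on $\partial^c\bO'$, according to the counterclockwise orientation at the root $c_t$; this determines whether $\bO'=\bO_-(t)$ or $\bO'=\bO_+(t)$, and we set $s:=t$. For uniqueness: if $\bO'=\bO_{\varepsilon'}(s')$ for another $s'$ and sign $\varepsilon'$, then $\bB_{s'}$ is likewise the finite chain of $\bO'$, so $s=s'$ by the uniqueness of the finite chain; and $\varepsilon'$ is pinned down by which of $\partial^c_-\bZ_s, \partial^c_+\bZ_s$ lies on $\partial^c\bO'$. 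The only subtle point in the whole argument is the finite/infinite generation dichotomy noted at the outset; after that, both parts follow directly from Lemma~\ref{lem:lakes lambda and rho}.
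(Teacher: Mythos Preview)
Your proof is correct and takes essentially the same approach as the paper: both parts are deduced directly from Lemma~\ref{lem:lakes lambda and rho}. You add a useful observation the paper leaves implicit --- that in an infinite chain of total generation $P$ no individual bubble reaches generation $P$ --- to pin down the finite chain of $\bO_\pm(s)$ as $\bB_s$; the paper simply takes this identification (equivalently, that $\bB_s$ is the right chain of $\bO_-(s)$ and the left chain of $\bO_+(s)$) as part of the construction preceding the lemma. You also spell out uniqueness in the second assertion, which the paper's proof leaves unstated.
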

\begin{proof}
The first claim follows immediately from Lemma~\ref{lem:lakes lambda and rho}. 

By Lemma~\ref{lem:lakes lambda and rho}, either the left bubble chain $\bB_\lambda$ of $\bO'$ or its right bubble chain $\bB_\rho$ is a finite chain. In the former case, $\bO'=\bO_+(\lambda)$; in the latter case, $\bO'=\bO_-(\rho)$.
\end{proof}

\begin{figure}[t!]\begin{tikzpicture}[scale=0.85]

\draw[blue] (-4.68,-3.4)-- (1.66,3.58);
\draw[blue] (1.66,3.58)-- (5.7,0.64);
\draw[red] (-1.09977812767106,0.5416322821539432)-- (-2.54,0.86);
\draw[red] (-2.54,0.86)-- (-2.39,2.06);
\draw[red] (-0.88,1.73)-- (-2.39,2.06);
\draw[red] (-1.09977812767106,0.5416322821539432)-- (-0.88,1.73);
\draw (-3.9454909352647434,-2.5913449098025096)-- (-6.38,-2.4);
\draw (-6.38,-2.4)-- (-6.35,0.29);
\draw (-6.35,0.29)-- (-3.94,0.18);
\draw (-3.94,0.18)-- (-3.9454909352647434,-2.5913449098025096);
\draw (-4.347926861123204,0.19861906835002177)-- (-4.5,0.92);
\draw (-3.63,1.3)-- (-4.5,0.92);
\draw (-3.63,1.3)-- (-3.38,0.62);

\draw (-3.38,0.62)-- (-4.347926861123204,0.19861906835002177);
\draw (0.6993740609114214,2.5224023572810297)-- (0.58,4.52);
\draw (0.6993740609114214,2.5224023572810297)-- (-0.76,2.46);
\draw (-0.76,2.46)-- (-0.92,4.36);
\draw (-0.92,4.36)-- (0.58,4.52);
\draw (-0.84947959071405,3.522570139729343)-- (-1.75,3.87);
\draw (-1.75,3.87)-- (-2.27,3.07);
\draw (-2.27,3.07)-- (-1.43,2.66);
\draw (-0.84947959071405,3.522570139729343)-- (-1.43,2.66);
\draw (-3.5552183406113538,1.0965938864628821)-- (-3.37,1.5);
\draw (-3.06,1.39)-- (-3.37,1.5);
\draw (-3.06,1.39)-- (-3.19,1.02);
\draw (-3.19,1.02)-- (-3.5552183406113538,1.0965938864628821);

\draw (-2.0031441048034937,2.9397489082969432)-- (-2.28,2.73);
\draw (-2.28,2.73)-- (-2.02,2.52);
\draw (-2.02,2.52)-- (-1.76,2.75);
\draw (-1.76,2.75)-- (-2.0031441048034937,2.9397489082969432);

\draw (-3.06,1.39)-- (-2.8,1.49)--(-2.75,1.75)--(-2.99,1.75)--(-3.06,1.39);

\draw[dashed] (-2.02,2.52) -- (-2.39,2.06);
\draw[dashed] (-2.75,1.75) -- (-2.39,2.06);

\node[blue] at(1.12, -0.85){$\bZ_{P_1,\dots,P_{n-1} }$};
\node[red] at(-1.76, 1.25){$\bZ_s$};
\node at(.-0.4, 2.05){$\bO_+(s)$};
\node at (-5.24, -1.15){$\bZ_{\dots,Q_n}$};
\node at (-4.04, -0.){$c_{\dots,Q_{n+1}}$};
\node at (-3., -0.5){$\bO_-(s)$};
\node at(-0.05, 3.38){$\bZ_{\dots, R_n}$};
\node at (-2.66, 3.47){$\bB_{\rho(s)}$};
\node at (-4.43, 1.58) {$\bB_{\lambda(s)}$};
\node[red] at (-0.71, 0.26){$c_s$};
\node at (-3.4, -2.89) {$c_{\dots,Q_n}$};
\end{tikzpicture}
\caption{The bubble chains $\bB_{\lambda}$ and $\bB_{\rho}$ are the closest to $\bZ_{s}$ of generation $\le P$ (see also Figure~\ref{Fig:GeomScal}). By Lemma~\ref{lem:alpha' is a pnt}, $\bB_{\lambda}$ and $\bB_{\rho}$ actually land at the top of $\bZ_s$.}\label{Fig:Blambda Brho}  \end{figure}

For a finite or infinite sequence $s=(P_1,P_2,\dots)$ let us write $\tt s\coloneqq (\tt P_1,\tt P_2,\dots )$
\begin{lem}
\label{lem:prop of Astr}
The self-similarity $A_\str$ preserves the Fatou and Julia sets of the fixed maximal prepacman $\bF$; moreover for every finite sequence $s$ we have:
\begin{itemize}
\item $A_\str(c_s)=c_{\tt s}$;
\item $A_\str(\bZ_s)=\bZ_{\tt s}$;
\item $A_\str(\bB_s)=\bB_{\tt s}$, here $s$ is either a finite or an infinite sequence;
\item $A_\str(\bL_s)=\bL_{\tt s}$; 
\item $A_\str(\widetilde \alpha_s)=\widetilde \alpha_{\tt s}$.
\end{itemize}
\end{lem}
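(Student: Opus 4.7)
The entire argument is driven by the self-similarity relation~\eqref{eq:anti ren:F str^P} at the fixed pacman: for every $P\in\PT$,
\[
A_\str\circ \bF^P\circ A_\str^{-1}=\bF^{\tt P}.
\]
Since $P\mapsto \tt P$ is an automorphism of $\PT$ (Lemma~\ref{lem:power-triples:order}), this says that $A_\str$ conjugates the cascade $\bF^{\ge 0}$ to itself, merely reparametrizing $\PT$ by multiplication by $\tt$. Consequently the normality locus is preserved, which immediately gives $A_\str(\Fat(\bF))=\Fat(\bF)$ and $A_\str(\Jul(\bF))=\Jul(\bF)$. Moreover $A_\str(0)=0$, so $A_\str$ preserves the Fatou component containing $0$; since $\bZ$ is the unique invariant Fatou component containing a neighborhood of $0$ (Lemma~\ref{lem:irr rot of bZ}), we obtain $A_\str(\bZ)=\bZ$ and hence $A_\str(\partial \bZ)=\partial \bZ$.

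The remaining identities are proved by induction on the length $n$ of $s=(P_1,\dots,P_n)$ using the unique characterizations of the bubbles and their roots. For $n=1$: $c_{P_1}$ is the unique critical point on $\partial\bZ$ of generation $P_1$ (see~\S\ref{ss:FatComp of bFstr}); by the conjugacy, $A_\str(c_{P_1})\in\partial\bZ$ is a critical point with $\bF^{\tt P_1}(A_\str(c_{P_1}))=A_\str(0)=0$, whence $A_\str(c_{P_1})=c_{\tt P_1}$ by uniqueness. Next, $\bZ_{P_1}$ is the unique Fatou component attached to $c_{P_1}$ with $\bF^{P_1}(\bZ_{P_1})=\bZ$ (Lemma~\ref{lem:Z_p is att to Z}); since $A_\str$ sends Fatou components to Fatou components, sends $c_{P_1}$ to $c_{\tt P_1}$, and conjugates $\bF^{P_1}$ to $\bF^{\tt P_1}$, we get $A_\str(\bZ_{P_1})=\bZ_{\tt P_1}$. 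The inductive step is identical: $c_s$ is the unique critical point of generation $|s|$ on $\partial\bZ_{s'}$ (where $s'$ is $s$ with its last entry removed) and $\bZ_s$ is the unique bubble attached at $c_s$ with $\bF^{|s|}(\bZ_s)=\bZ$, and both characterizations transport across the conjugacy.

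Once $A_\str(\bZ_s)=\bZ_{\tt s}$ is established for every finite $s$, the remaining set-theoretic claims follow by taking unions and boundaries. Since $\widehat\bZ_s=\overline\bZ_s\cap \Dom(\bF^{|s|})$ and $A_\str$ maps $\Dom(\bF^{|s|})$ homeomorphically onto $\Dom(\bF^{\tt|s|})$, we have $A_\str(\widehat\bZ_s)=\widehat\bZ_{\tt s}$, hence $A_\str(\widetilde\alpha_s)=A_\str(\overline\bZ_s\setminus\widehat\bZ_s)=\widetilde\alpha_{\tt s}$. The bubble chain identity $A_\str(\bB_s)=\bB_{\tt s}$ follows, for finite $s$, by writing $\bB_s=\bigcup_i \widehat\bZ_{s_i}$ over the truncations of $s$; for infinite $s$ it follows by passing to the union, using that $A_\str$ is a homeomorphism of $\C$. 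Finally, $\bL_s=\bigcup_{v}\widehat\bZ_{s,v}$ ranges over all finite extensions $v$ of $s$, so $A_\str(\bL_s)=\bigcup_{v}\widehat\bZ_{\tt s,\tt v}=\bL_{\tt s}$.

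The only point that requires a small verification is the conjugacy formula itself (everything else is formal once it is in hand); but that is exactly the content of~\eqref{eq:anti ren:F str^P}, which is built into the construction of the maximal prepacman at the hyperbolic fixed point. No genuine obstacle remains.
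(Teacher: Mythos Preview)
Your proof is correct and follows essentially the same route as the paper's: both use the conjugacy $A_\str\circ\bF^P\circ A_\str^{-1}=\bF^{\tt P}$ to get Fatou/Julia invariance and $A_\str(\partial\bZ)=\partial\bZ$, then run an induction on the length of $s$ via the unique characterizations of $c_s$ and $\bZ_s$. One small slip: $0$ lies on $\partial\bZ$, not in $\bZ$ (see Lemma~\ref{lem:irr rot of bZ}), so your phrase ``the Fatou component containing a neighborhood of $0$'' is off; the clean justification is that $\bZ$ is the unique \emph{invariant} Fatou component, and the conjugacy sends invariant components to invariant components.
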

\begin{proof}
Recall that $A_\str$ conjugates $\bF^P$ to $\bF^{\tt P}$; hence $A_\str$ preserves the Fatou and Julia sets. Since $A_\str (\partial \bZ)=\partial \bZ$, we have $A_\str(c_P)=c_{\tt P}$. As a consequence, $A_\str (\bZ_{P})=\bZ_{\tt P}$ and $A_\str(\widetilde \alpha_P)=\widetilde \alpha_{\tt P}$. Since $A_\str (\partial^c \bZ_{P})=\partial^c \bZ_{\tt P}$, we also have $A_{\str}(c_{(P,Q)})=c_{(\tt P, \tt Q)}$ and we can proceed by induction on $|s|$.
\end{proof}

\subsection{Boundedness of limbs}

\begin{lem}
\label{lem:limbs are bounded}
The closure of every limb is compact.
\end{lem}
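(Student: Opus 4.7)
I will first reduce the claim to the case of a primary limb $\bL_{P_1}$. For a deeper limb $\bL_s$ with $s = (P_1,\dots,P_n)$, write $T = P_1+\dots+P_{n-1}$; then the homeomorphism $\bF^T\colon \widehat\bZ_{s}\to \widehat\bZ_{P_n}$ from Lemma~\ref{lem:Z_p is att to Z} extends, through the chess-board tree of Corollary~\ref{cor:preimage:bZ:tree}, to an equivariant conformal identification of a neighborhood of $\widehat\bZ_s \cup \bL_s$ with a neighborhood of $\widehat\bZ_{P_n}\cup \bL_{P_n}$. Since this map is continuous on the closures, compactness of $\overline{\bL_s}$ follows from compactness of $\overline{\bL_{P_n}}$.

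For the primary case, my plan is to trap $\bL_{P_1}$ inside a bounded Jordan domain. The bubble $\widehat\bZ_{P_1}$ has two adjacent infinite bubble chains $\bB_{\lambda(P_1)}$ and $\bB_{\rho(P_1)}$ of the same generation $P_1$, supplied by Lemma~\ref{lem:closest BChains}. Together with $\widehat\bZ_{P_1}$ itself and any transverse arc $\tau$ crossing $\bO_-(P_1)\cup \bO_+(P_1)$ sufficiently far out along the two chains, these pieces enclose a topological region that must contain all of $\bL_{P_1}$ (every sub-bubble of $\bL_{P_1}$ lies on the same side of $\partial^c\bO_\pm(P_1)$ as $\bZ_{P_1}$). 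Hence compactness of $\overline{\bL_{P_1}}$ reduces to showing that each of $\bB_{\lambda(P_1)}$ and $\bB_{\rho(P_1)}$ has compact closure.

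To bound a single infinite bubble chain $\bB_s$ of generation $P_1$, I will combine the self-similarity $A_\str$ with a uniform diameter estimate. By Lemma~\ref{lem:prop of Astr}, $\diam \bZ_{\tt^n Q} = |\mu_\str|^n \diam \bZ_Q$, so once the generations of the bubbles in the chain exceed one fundamental $\tt$-window the diameters decay geometrically and the tail of the chain is contained in a disk of arbitrarily small size. The missing ingredient is a uniform bound $\diam \bZ_Q \le M$ for $Q$ ranging over a fixed compact $\tt$-fundamental window $[P_1, \tt P_1]\cap \PT$, which I plan to extract from Lemma~\ref{lem:discr of dyn}: on a fixed bounded region of $\C$ the cascade $\bF^{\ge0}$ acts properly discontinuously, so for $Q$ in such a window the orbit $\{\bF^R(0)\mid R\le Q\}$ meets any fixed compact set in finitely many points, which confines each $\bZ_Q$ to a bounded union of triangles of $\bDelta_0(\bF)$.

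The hard part will be this uniform diameter bound, because $c_Q$ can lie anywhere along the unbounded curve $\partial \bZ$, and the renormalization triangles of $\bDelta_0(\bF)$ themselves have vertices at $\infty$. My strategy is to first reduce, via $A_\str^{\pm 1}$, to a single fundamental window, then show that within that window only finitely many ``combinatorial root positions'' on $\partial \bZ$ arise modulo the translation cascade of Lemma~\ref{lem:irr rot of bZ}, and finally observe that each of these is surrounded by a bounded Siegel-type wall built from $\bPi_0(\bF)$ of \S\ref{ss:MP:walls}. Granting this, the diameters of the bubbles in $\bB_s$ are dominated by a convergent geometric series, so the chain is bounded, and consequently $\overline{\bL_{P_1}}$ is compact.
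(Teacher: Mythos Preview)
Your proposal has a genuine gap in the bubble-chain step, and the gap is fatal.

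The self-similarity argument you propose does not apply to the bubbles in an infinite chain. In a chain $\bB_s$ with $s=(Q_1,Q_2,\dots)$ of generation $P_1$, the successive bubbles are $\bZ_{Q_1}$, $\bZ_{Q_1,Q_2}$, $\bZ_{Q_1,Q_2,Q_3}$, \dots, with generations $Q_1 < Q_1+Q_2 < \dots$ all bounded by $P_1$. These are \emph{not} related by $A_\str$: the scaling $A_\str$ sends $\bZ_v$ to $\bZ_{\tt v}$, which is a different bubble in a different limb. What relates consecutive bubbles in the chain is the dynamics, namely $\bF^{Q_k}$ maps $\bZ_{Q_1,\dots,Q_k}$ to $\bZ_{Q_1,\dots,Q_{k-1}}$. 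So your formula $\diam \bZ_{\tt^n Q}=|\mu_\str|^n\diam \bZ_Q$ is correct for primary bubbles but simply does not speak to the bubbles deep in a chain, and there is no geometric decay of diameters to read off. In addition, your proposed uniform bound on $\diam \bZ_Q$ for $Q$ in a $\tt$-window cannot be extracted from Lemma~\ref{lem:discr of dyn}: that lemma concerns small-time orbits on fixed compacta, while the roots $c_Q$ range over an unbounded arc of $\partial\bZ$ and the triangles of $\bDelta_0$ themselves reach $\infty$. There is also a logical issue with the trapping step: you need the two chains $\bB_{\lambda(P_1)}$, $\bB_{\rho(P_1)}$ to accumulate on something definite before they can serve as the sides of a Jordan domain, and in the paper that landing is Lemma~\ref{lem:alpha' is a pnt}, which is proved \emph{after} and \emph{using} the present lemma.

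The paper's proof proceeds by a completely different, dynamical-covering argument. It shows, by going down to the pacman $f_\str$ and using the external-ray structure of the Siegel Julia set there (which is locally connected), that for a suitable $Q>0$ the iterate $\bF^Q$ already maps the bounded region $\bS^\#_0\setminus\bZ$ onto all of $\C\setminus\bZ$. This is done by stacking self-similar ``covering steps'' $\bF^{R/\tt^n}\colon \bS^\#_{-n}\setminus\bZ \to \bS^\#_{-n-1}\setminus\bZ$ and summing the geometric series in time. Once one limb (of sufficiently high generation) is thereby trapped in $\bS_0^\#$, all other limbs are bounded by $\sigma$-properness and the fact that any two limbs are dynamically related. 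No a priori diameter control on individual bubbles is needed.
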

\begin{proof}
The idea of the proof is illustrated in Figure~\ref{Fg:lem:limbs are bounded}. We will show that there is an $R\in \PT_{>0}$ such that $\bF^R\left(\bS^{\#}_{0}\setminus \bZ\right)\supset \bS^{\#}_{-1}\setminus \bZ$. By self-similarity, $\bF^{R/\tt}\left(\bS^{\#}_{-1}\setminus \bZ\right)\supset \bS^{\#}_{-2}\setminus \bZ$. Therefore, if $Q\ge R+R/\tt+R/\tt^2+\dots$, then $\bF^Q\left(\bS^{\#}_{0}\setminus \bZ\right)=\C$. This implies that some limbs are bounded. Therefore, all the limbs are bounded because they are dynamically related. Let us provide more details.

\begin{figure}[tp!]
\[\begin{tikzpicture}[scale=1.3]
\draw[red] (-3.,1.)-- (5.64,1.);
\draw (1.2,0.7)-- (1.18,1.5)-- (2.,1.5)--(2,0.7);


\draw [
   red,  thick,
    decoration={
        brace,
        mirror,
        raise=0.04cm
    },
    decorate
](1.28,1.)-- (1.9,1.) node[pos=0.5,anchor=north,yshift=-0.08cm] {$\bJ_0$};

\draw [
   red, shift ={(1.6,1)}, scale=3,shift ={(-1.6,-1)}, thick,
    decoration={
        brace,amplitude=16pt,mirror,        
        raise=0.3cm
    },
    decorate
](1.28,1.)-- (1.9,1.) node[pos=0.5,anchor=north,yshift=-0.75cm] {$\bJ_{-1}$};

\draw [
   red, shift ={(1.6,1)}, scale=9,shift ={(-1.6,-1)}, thick,
    decoration={
        brace,amplitude=40pt,mirror,        
        raise=0.3cm
    },
    decorate
](1.28,1.)-- (1.9,1.) node[pos=0.5,anchor=north,yshift=-1.65cm] {$\bJ_{-2}$};


\node at (1.5,1.25) {$\bS^{\#}_0$};

 \draw (1.8,1.4) edge[->,bend left] node[above]{$\bF^R$}(2.5,1.8);

\node at (0.9,2) {$\bS^{\#}_{-1}$};

 \draw[shift ={(1.6,1)}, scale=3,shift ={(-1.6,-1)}]  (1.6,1.42) edge[->,bend left] node[above]{$\bF^{R/\tt}$}(2.5,1.8);

 \draw[shift ={(1.6,1)}, scale=9,shift ={(-1.6,-1)}]  (1.6,1.39) edge[->] node[above]{$\bF^{R/\tt^2}$}(1.9,1.52);
 
\node at (-0.9,4) {$\bS^{\#}_{-2}$};

\node[red] at (-0.9,1.2) {$\partial \bZ$};

\draw[shift ={(1.6,1)}, scale=3,shift ={(-1.6,-1)}] (1.2,0.8)-- (1.18,1.5)-- (2.,1.5)--(2,0.8);

\draw[shift ={(1.6,1)}, scale=9,shift ={(-1.6,-1)}] (1.2,0.9)-- (1.18,1.5)-- (2.,1.5)--(2,0.9);

\end{tikzpicture}\]
\caption{Illustration to the proof of Lemma~\ref{lem:limbs are bounded}: it takes $R/\tt^n$ iterates to cover $\bS_{-n-1}^{\#}\setminus \bZ$ from a domain in $\bS_{-n}^{\#}\setminus \bZ$.}
\label{Fg:lem:limbs are bounded}
\end{figure}

Consider the dynamical plane of $f\colon U \to V$. For every $n$, there is a gluing map $\rho_n \colon \bS_{n}^\#\to V\setminus \{\alpha\}$ projecting $\bF^\#_{n}$ to $f$. The map $\rho_n$ glues two distinguished sides of $\bS_{n}^\#$ to $\gamma_1$; the preimage of $\alpha$ is at infinity. Since $\bF$ is a renormalization fixed point, $\bS^\#_{n-1}$ is a rescaling of $\bS^\#_{n}$; we also recall that: 
\begin{itemize}
\item $\bS^\#_{n}\subset \bS^\#_{n-1}$; and
\item $\displaystyle\bigcup_n \bS^\#_{n}=\C$.
\end{itemize}

Choose a big $n\ll0$ and let $X$ and $Y$ be the open sectors in the dynamical plane of $f$ obtained by projecting $\bS^\#_0$ and $\bS_{-1}^{\#}$ via $\rho_n\colon \bS_n^{\#}\to V$, see Figure~\ref{Fg:cl1:lem:limbs are bounded}. Write $W\coloneqq Y\setminus \overline Z_\str$ and $I\coloneqq X\cap \partial Z_\str$ (depicted in blue bold in Figure~\ref{Fg:cl1:lem:limbs are bounded}); and let $J$ be a slightly shrunk version of $I$.

\begin{figure}[t!]
\[\begin{tikzpicture}[scale=0.6]
\draw (6.5, 6.62)--(5.57, 7.83)-- (-7.65, -4.19)-- (-6.5, -5.34);

\draw[blue,line width=0.5mm] (-1.8,1.13) -- (0.44,3.16);

\draw (4.42, -2.38)-- (-3.4343346716619574,2.0593418593933244);
\draw (4.42, -2.38)-- (-0.5944502811735275,4.596035745559144);
\draw (-0.5944502811735275,4.596035745559144)-- (-3.4343346716619574,2.0593418593933244);


\draw (-0.42, 10.58)--(-9.22, 0.98)-- (4.42, -2.38)  --(-0.42, 10.58);

\draw[] plot[shift={(-0.5641312307412661,2.8375308204880154)},domain=3.35268598681254:5.385892693271996,variable=\t]({1.*0.43410273889756196*cos(\t r)+0.*0.43410273889756196*sin(\t r)},{0.*0.43410273889756196*cos(\t r)+1.*0.43410273889756196*sin(\t r)})
(-0.9062147696931351,2.570278055681868)-- (-1.402958292700482,2.1705117109782783)
-- (-0.8907152326350126,1.9557134248772614);
\draw[draw opacity=0, fill=red, fill opacity=0.2]
(-0.29, 2.5)--(-0.9062147696931351,2.570278055681868)-- (-1.402958292700482,2.1705117109782783)
-- (-0.8907152326350126,1.9557134248772614)--(-0.29, 2.5)--(-0.4277, 2.41559)--(-0.57424, 2.3878)--(-0.76626, 2.4257)--(-0.91, 2.57);

\draw [
   red,  thick,
    decoration={
        brace,
        mirror,
        raise=0.0cm
    },
    decorate
](-1.41, 1.26) -- (0.24, 2.74)node[pos=0.5,anchor=north,yshift=-0.cm,xshift=0.2cm] {$J$};

\draw (-0.93807, 2.28168) edge[-> ,bend left] node[left]{$f^m$} (-1.7, 5.86);

\node[red] at (-0.62, 3.19) {$W_{-m}$};

\node at (2.32628, 1.4) {$Z_\str$};
\node at (-2.6, 2.2) {$X$};

\node at(-1.62, 6.8) {$W=Y\setminus \overline Z_\str$};
\end{tikzpicture}\]
\caption{Illustration to Claim~\ref{cl1:1}: the disk $W=Y\setminus \overline Z_\str$ pullbacks along the orbit of $x$ to the disk $W_{-m}\subset X$.}
\label{Fg:cl1:lem:limbs are bounded}
\end{figure}

\begin{claim}
\label{cl1:1}
There is an $M>0$ such that the following property holds. If $m\ge M$, $x\in J$, and $f^m(x)\in \partial W$, then $W$ has a conformal pullback $W_{-m}$ along the orbit $x,f(x),\dots, f^m(x)\in W$ such that $W_{-m}\subset X$.
\end{claim} 
\noindent We remark that if $x$ is a critical point, then there are two choices for $W_{-m}$: on the left and on the right of $c_0$.
\begin{proof}
Let $W_{-k}$ be the pullback of $W$ along the orbit of $f^{-k}(x),\dots, f^{m}(x)$. Let us show that $W_{-k}$ does not intersect the forbidden boundary $\partial^\frb U$ for all ${k\in  \{m-1,\dots ,0\}}$; this will imply that $W_{-m}$ is a conformal pullback.

Recall from~\S\ref{sss:SiegPacmen} that $f$ has a lamination by external rays. Choose two rays $R_-$ and $R_+$ landing at $\overline Z_\str$ such that $R_-$ is slightly on the left of $W$ and $R_+$ is slightly on the right of $W$. Since $n\ll 0$, the difference $\delta$ between the external angles of $R_-$ and $R_+$ is small. 

Let $R_{-k,-}$ and $R_{-k,+}$ be the preimages of $R_-$ and $R_+$ under $f^{m-k}$ such that $R_{-k,-}$ is slightly on the left of $W_{-k}$ and $R_{-k,+}$ is slightly on the right of $W_{-k}$. Then the difference between the external angles of $R_{-k,-}$ and $R_{-k,+}$ is $\delta/2^k$; i.e.~$W_{-k}$ has a small angular size. Recall that $\gamma_-\cup \gamma_+ \setminus \overline Z'_\str$ are external rays that are disjoint from $\overline Z_\str$, see~\S\ref{sss:SiegPacmen}. Since $\partial W_{-k}$ intersects $\partial Z_\str$, we obtain that $\partial W_{-k}$ is disjoint from  $\partial^\frb U$. This shows that $f^m\colon W_{-m}\to W$ is conformal.

If $m$ is big, then $W_{-m}$ is contained in a small neighborhood of the non-escaping set $K$ between $R_{-m,-}$ and $R_{-m,+}$. Since $K$ is locally connected and the difference between the external angles of $R_{-m,-}$ and $R_{-m,+}$ is small, the set $W_{-m}$ is contained in a small neighborhood of $J$; hence $W_{-m}\subset X$.
\end{proof}

Suppose $\bJ_n$ corresponds to $J$ under the identification $\bS_n^{\#} \simeq X$, see Figure~\ref{Fg:lem:limbs are bounded}.  As a corollary of Claim~\ref{cl1:1} we have:

\begin{claim}
\label{cl1:2}
There is power-triple $R> 0$ with $\bF^R(\bJ_0)\subset \bJ_{-1}$ such that the following property holds. If $x\in \bJ_0$ and $\bF^P(x)\in \bS_{-1}^\#$ for some $P\ge R$, then there is an open set $W_P\subset \bS_{0}$ with $x\in \partial W_P$ such that $\bF^P$ maps $ W_P$ conformally onto $\intr\left(\bS_{-1}^\#\setminus \bZ\right)$. 
\end{claim} 
\begin{proof}
By Lemma~\ref{lem:irr rot of bZ}, the cascade  $\big(\bF^P\mid  \partial \bZ\big)_{P\in \PT}$ is conjugate to the cascade of translations $\big(T^P \mid \R \big)_{P\in \PT}$. Therefore, we can choose a sufficiently big $R$ with $\bF^R(\bJ_0)\subset \bJ_{-1}$.

Since $\bF^P(x)\in \bS^\#_{-1}$, we can write 
\[\bF^P = \bbf^\#_{-1,\iota(1)}\circ\bbf^\#_{-1,\iota(2)}\circ\dots \circ \bbf^\#_{-1,\iota(m)},\sp\sp \iota(j)\in\{-,+\}\]
because the prepacman $\bbf^\#_{-1,\pm}\colon \bU^\#_\pm \to \bS^\#_{-1}$ realizes the first return of the cascade $\bF^{\ge0}$. If $R$ is sufficiently big, then $m\ge M$, where $M$ is the constant from Claim~\ref{cl1:1}. The statement now follows from Claim~\ref{cl1:1}.
\end{proof}

By self-similarity, we can shift indices in Claim~\ref{cl1:2}: we can replace $\bJ_0, \bJ_{-1},\bS^\#_{-1}$ by $\bJ_n, \bJ_{n-1},\bS^\#_{n-1}$ and replace $R$ by $R/\tt^{-n}$. Inductively applying Claim~\ref{cl1:2}, we obtain:
\begin{claim}
\label{cl1:3}
There is power-triple $Q> 0$ such that for every $n<0$ the following property holds. Let $x\in \bJ_0$ be a point such that $\bF^P(x)\in \bS_{n}^\#$ with $P\ge Q$. Then there is an open set $W\subset \bS_{0}$ with $x\in \partial W$ such that $\bF^P$ maps $ W$ conformally to $\intr\left(\bS_{n}^\#\setminus \bZ\right)$.
\end{claim} 
\begin{proof}
Let $R$ be a power-triple from Claim~\ref{cl1:2}. Choose a power-triple $Q$ with
\[Q> R+ R /t+ R /t^2+\dots, \]
see Lemma~\ref{lem:power-triples:order}. Write $x_0=x$ and for $j\in \{0,-1,\dots, n+2\}$ inductively set $P_j\coloneqq R/\tt^{-j}$ and $x_{j-1}\coloneqq \bF^{P_j}(x_j)$. Finally set 
\[P_{n+1}\coloneqq P-P_0-P_1-\dots -P_{n+2}\ge R/\tt^{n+1}.\]
and $x_n=\bF^P_{n+1}(x_{n+1})$. Since $P_{n+1}\ge R/t^{-n-1}$, we can apply Claim~\ref{cl1:2} and construct $W_{n+1}\subset \bS_{n+1}^{\#}\setminus \bZ$ so that $x_{n+1} \in \partial W_{n+1}$ and $\bF^{P_{n+1}}$ maps $W_{n+1}$ 
conformally to $\bS_{n}^{\#}\setminus \bZ$. Applying induction from upper levels to deeper ones, we construct $W_j\subset \bS_{j}^{\#}\setminus \bZ$ such that $x_{j} \in \partial W_{j}$ and 
$\bF^{P_{j}}$ maps $W_{j}$ conformally to $W_{j-1}$. Therefore, $\bF^P$ maps $W_0$ conformally to $\bS_{n}^{\#}\setminus \bZ$.
\end{proof}

Consider a limb $\bL_M$; recall that its root is denoted by $c_M$. Choose a big $T\ge Q+M $ such that the critical point $c_T$ is in $\bJ_0$. Then $c_T$ is the root of $\bL_T$ and $\bF^{T-M}$ maps $\bL_T$ to $\bL_M$.

By Claim~\ref{cl1:3}, for all $n\ll 0$ the connected component of $\bS_n^{\#}\cap \bL_M$ containing $c_M$ can be pulled back along ${\bF^{T-M}\colon c_T \mapsto c_M}$ and, moreover, the pullback is within $\bS_0$. Since $n\ll 0$ is arbitrary, the pullbacks of $\bS_n^{\#}\cap \bL_M$ exhaust $\bL_T$, and we obtain that $\bL_T\subset \bS_0$.

By self-similarity, the limb $\bL_{T/\tt^{m}}$ is within $\bS_{-m}^\#$ for all $m\ge 0$. For every $H> 0$ choose an $m\ge 0$ with $H\ge T/\tt^{m}$. Then  $\bF^{H-T/\tt^m}$ maps $\bL_H$ conformally to $\bL_{T/\tt^{m}}$. Since $\bL_{T/\tt^{m}}$ is bounded, so is $\bL_H$ by $\sigma$-properness.
\end{proof}

\begin{figure}[tp!]
\begin{tikzpicture}
\draw (-5.5,0)--(5.5,0);
\node[below] at(2.8,0) {$\partial \bZ$};

\draw (-5,0)--(-4.5,2)--(-3.5, 3)--(-2,3.8)--(0,4);

\draw ( 5,0)--( 4.5,2)--( 3.5, 3)--( 2,3.8)--(0,4);

\node[above] at (-1, 3) {$\bO_1$};

\draw (-0.7, 3.3) edge[ line width=0.8pt,bend left, ->] node[above]{$\bF^{(\tt-1)P}$} (1.7,4.7);

\node[above] at(2,4.55) {$\bO$};

\begin{scope}[shift={(-0.7,0)},scale=0.8,xscale=0.8]
\filldraw[opacity=0.5] (-2,0)--(-2.3,0.5) --(-2,1)--(-1.7,0.5) --(-2,0);
\node[below] at(-2,0) {$\bZ_{\tt P}$};

\draw[ line width=1pt,blue] (-3.2,0)--(-3.1,0.8)--(-2.6,1.1)--(-2.2,1.1)--(-2,1);
\node[blue,left] at (-3.1,0.8){$\bB_{\tt \lambda}$}; 

\draw[ line width=1pt,blue] (-0.8,0)--(-0.9,0.8)--(-1.4,1.1)--(-1.8,1.1)--(-2,1);
\node[blue,right] at (-0.9,0.3){$\bB_{\tt \rho}$};
\end{scope}

\begin{scope}[yscale=0.8,scale=3,shift={(2,0)}]
\filldraw[opacity=0.2] (-2,0)--(-2.2,0.5) --(-2,1)--(-1.8,0.5) --(-2,0);
\node at(-2,0.2) {$\bZ_P$};

\node[above] at(-2,1) {$\alpha_P$};

\draw[ line width=2pt,blue] (-3.2,0)--(-3.1,0.8)--(-2.6,1.1)--(-2.2,1.1)--(-2,1);
\node[blue,left] at (-3.1,0.8){$\bB_\lambda$}; 

\draw[ line width=2pt,blue] (-0.8,0)--(-0.9,0.8)--(-1.4,1.1)--(-1.8,1.1)--(-2,1);
\node[blue,right] at (-0.9,0.8){$\bB_\rho$};
\end{scope}

\draw (-1.98,0.7) edge[ line width=0.8pt, ->,bend left] node[above]{$\bF^{(\tt-1)P}$}(0,2);

\draw (-1.98,0.3) edge[ line width=0.8pt, <-,red] node[above]{$A_\str$} (0,1);

\begin{scope}[shift={(0,-5)}]

\draw(-4,0)--(4,0);
\filldraw[opacity=0.5,blue,scale=2,shift={(1,0)}]  (-2,0)--(-2.25,0.5) --(-2,1)--(-1.75,0.5) --(-2,0);
\filldraw[opacity=0.5,blue] (-1.75, 1.5) -- (-1.5,1.4)-- (-1.3,1.8)--(-1.55,1.9)-- (-1.75, 1.5);
\filldraw[opacity=0.5,blue](-1.3,1.8)-- (-1.1,1.7)--(-0.9,1.8)--(-1.1,1.9)--(-1.3,1.8);
\filldraw[opacity=0.5,blue](-0.9,1.8)-- (-0.8,1.75)--(-0.7,1.8)--(-0.8,1.85)--(-0.9,1.8);
\node[blue,above]at (-1.1,1.9){$\bB_\lambda$};

\draw[shift={(0,2)}]  (-2,0)--(-2.25,0.5) --(-2,1)--(-1.75,0.5) --(-2,0);

\node[left]at(-2.25,2.5) {$\bZ_{Q_1,R_-}$};

\draw[shift={(-0.2,-0.8)}](-1.3,1.8)-- (-1.1,1.7)--(-0.9,1.8)--(-1.1,1.9)--(-1.3,1.8);

\node[below right,shift={(-0.1,-0.8)} ]at(-1.1,1.7) {$\bZ_{Q_1,R_+}$};

\draw[red,line width =0.9pt]  (-1.1,1)--(-0.8,1.4)--(-0.7,1.8)--(-0.9,2.4)--(-1.5,2.8)--(-2,3);

\node[ above right,red]at (-1.5,2.8){$\Esc_S(\bF)$};
\node[below] at (2.5,0) {$\partial \bZ$};

\node[left,blue] at (-2.5,1) {$ \bZ_{Q_1}$};
\end{scope}

\end{tikzpicture}
 \caption{Illustration to the proof of Lemma~\ref{lem:alpha' is a pnt} (see also Figure~\ref{Fig:GeomScal}): the accumulating set of $\bZ_P, \bB_{\lambda}, \bB_{\rho}$ is invariant under the expanding map $\bF^{(\tt-1)P}\circ A_\str \colon \bO_1\to \bO$; thus $\bZ_P, \bB_{\lambda}$, and $\bB_{\rho}$  land at $\alpha_P$. Bottom: $\Esc_S(\bF)\cup \bZ_{Q_1,R_-}\cup \bZ_{Q_1}\cup \bZ_{Q_1,R_+}$ separates the accumulating set of $\bB_\lambda$ from $\partial \bZ$.
   }
 \label{Fig:B B Z land together}
\end{figure}

\subsection{Alpha-points}
\label{ss:alpha-points}
Consider a finite sequence $s=(P_1, P_2,\dots ,P_n)$ in $\PT_{>0}$ and the corresponding bubble $\bZ_s$. Write $P=|s|=P_1+\dots +P_n$.
Recall from~\eqref{eq:defn:accum set} that $\widetilde \alpha_s$ denotes the accumulating set of $\bB_s$.
By Lemma~\ref{lem:limbs are bounded}, $\widetilde \alpha_s$ is a compact subset of $\Esc_P(\bF)$, which is disjoint from $\partial \bZ$.

\begin{lem}
\label{lem:alpha' is a pnt} The set $\widetilde \alpha_s =\{\alpha_s\}$ is a singleton. Moreover, $\alpha_s$ is the landing point of $\bB_s,\bB_{\lambda(s)},\bB_{\rho(s)}$. We have: 
\[\partial \bO_-(s)=\partial ^c \bO_-(s)\cup \widetilde \alpha_s\sp \text { and }\sp  \partial \bO_+(s)=\partial ^c \bO_+(s)\cup \widetilde \alpha_s,\]
 $ \bO_-(s)$ and $ \bO_+(s)$ are bounded sets, and  $\overline \bZ_s$, $\overline \bO_-(s)$, $\overline \bO_+(s)$ are closed topological disks. 
\end{lem}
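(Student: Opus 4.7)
The plan is to produce a conformal self-map of the lake $\bO_+(s)$ and invoke the classification of conformal automorphisms of a simply connected hyperbolic domain, arguing by induction on the chain length $n$ of $s$.

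For the base case $s=(P)$, I would form the composition $\Psi \coloneqq \bF^{(\tt-1)P}\circ A_\str$. By Lemma~\ref{lem:prop of Astr}, $A_\str$ conformally identifies $\bO_+(P)$ with the lake $\bO_+(\tt P)$; the latter has generation $\tt P>(\tt-1)P$, so $\bF^{(\tt-1)P}$ is conformal on it. Since $\bF^{(\tt-1)P}$ acts on $\partial\bZ$ as a translation in the uniformizer of Lemma~\ref{lem:irr rot of bZ}, it sends $c_{\tt P}\mapsto c_P$, and orientation preservation then forces $\bF^{(\tt-1)P}(\bO_+(\tt P))=\bO_+(P)$. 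Hence $\Psi\in\Aut(\bO_+(P))$, and a direct generation count ($\Psi$ sends $c_{(P,Q)}\mapsto c_{(P,\tt Q)}$) shows $\Psi$ is not the identity.

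Via Riemann uniformization, $\Psi$ is a nontrivial M\"obius transformation of the disk, hence has at most two fixed prime ends. One is the corner $c_P$, a unique prime end because the quasiarcs $\partial\bZ$ and $\partial^c_+\bZ_P$ meet at $c_P$ at a definite angle. The remaining fixed prime end $e_\infty$ must absorb the $\Psi$-invariant continua $\widetilde\alpha_P$ (the top of $\bZ_P$) and the landing set of $\bB_{\rho(P)}$. Combining the Euclidean contraction $A_\str^n(\widetilde\alpha_P)=\widetilde\alpha_{\tt^n P}\subset\overline{\bZ_{\tt^n P}}\to\{0\}$ with the hyperbolic expansion of $\Psi$ at $e_\infty$ (linearization at an attracting fixed prime end) forces the impression of $e_\infty$ to collapse to a single point $\alpha_P$. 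The symmetric argument on $\bO_-(P)$ handles $\bB_{\lambda(P)}$, so $\bZ_P$, $\bB_{\lambda(P)}$, $\bB_{\rho(P)}$ all land at the common point $\alpha_P$.

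For the inductive step $s=(P_1,\dots,P_n)$ with $n>1$, I would use the conformal isomorphism $\bF^{P_1}\colon \bZ_s\to\bZ_{(P_2,\dots,P_n)}$ coming from the factorization $\bF^{|s|}=\bF^{|s|-P_1}\circ\bF^{P_1}$. After verifying $\widetilde\alpha_s\subset\Dom\bF^{P_1}$ (so $\bF^{P_1}$ extends continuously to $\overline{\bZ_s}$), the inductive hypothesis $\widetilde\alpha_{(P_2,\dots,P_n)}=\{\alpha_{(P_2,\dots,P_n)}\}$ forces $\widetilde\alpha_s$ to be a single point $\alpha_s$. An analogous self-map argument on $\bO_\pm(s)$ establishes that $\bB_{\lambda(s)}$ and $\bB_{\rho(s)}$ also land at $\alpha_s$. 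Once these landings are in place, $\partial\bO_\pm(s)=\partial^c\bO_\pm(s)\cup\{\alpha_s\}$ is a Jordan curve, boundedness of $\bO_\pm(s)$ is inherited from Lemma~\ref{lem:limbs are bounded}, and $\overline{\bZ_s}$, $\overline{\bO_\pm(s)}$ are closed topological disks.

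The hardest step is the last part of the base case: upgrading the M\"obius conclusion ``at most two fixed prime ends'' into the stronger assertion that the impression of the attracting prime end is a single point of $\C$. This is where the interplay between the conformal automorphism structure of $\Psi$ on $\bO_+(P)$ and the Euclidean contraction supplied by $A_\str$ becomes essential, and requires careful geometric control of $\partial\bO_+(P)$ near $e_\infty$.
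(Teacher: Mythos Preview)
You correctly identify the self-map $\Psi=\bF^{(\tt-1)P}\circ A_\str$, but you deploy it in the wrong domain. By treating $\Psi$ as an automorphism of the lake $\bO_+(P)$, you put $\widetilde\alpha_P$ on the \emph{boundary}, where the M\"obius classification of $\Aut(\Disk)$ controls only prime ends, not their impressions. A hyperbolic automorphism of a simply connected domain with wild boundary can perfectly well have an attracting prime end whose impression is a nondegenerate continuum; your appeal to ``Euclidean contraction $A_\str^n(\widetilde\alpha_P)\to\{0\}$'' does not help, because that shrinking takes place in \emph{other} bubbles $\overline{\bZ_{\tt^nP}}$ and carries no information back to the impression of $e_\infty$ inside $\bO_+(P)$ without distortion estimates you have not supplied. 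So the step you flag as ``hardest'' is a genuine gap, not just a technicality.

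The paper sidesteps this entirely by working one level up. Rather than $\bO_+(P)$, take a lake $\bO_1\subsetneq\bO$ of small generation containing $\bO_-(P)\cup\bZ_P\cup\bO_+(P)$, and regard $\Psi$ as a conformal map $\bO_1\to\bO$ into the ocean $\bO=\C\setminus\overline\bZ$. The key observation is that $\widetilde\alpha_P\subset\Esc_P(\bF)$ is disjoint from $\overline\bZ$, hence compactly contained in the \emph{interior} of $\bO$. By the Schwarz lemma $\Psi$ strictly expands the hyperbolic metric of $\bO$, and a $\Psi$-invariant compact set in the interior is then forced to be a single repelling fixed point --- no prime-end analysis required. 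The same expansion argument handles the accumulation sets of $\bB_{\lambda(P)}$ and $\bB_{\rho(P)}$, once one checks (via a short separation argument using $\Esc_S(\bF)$ together with two auxiliary bubbles) that those sets also lie compactly in $\bO$. Your induction on chain length is then unnecessary: the paper reduces general $s$ to the primary case by noting that all triples $(\bO_-(s),\bZ_s,\bO_+(s))$ are dynamically related via univalent branches of iterates, and once the primary case establishes that $\overline{\bO_\pm(P)}$ are closed topological disks, the transfer goes through.
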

\begin{proof}

Consider a critical point $c_P\in \partial \bZ$ of generation $P$. Since all $(\bO_-(s),\bZ_s,(\bO_+(s))$ are dynamically related it is sufficient to verify the statement for $(\bO_-(P),\bZ_P,(\bO_+(P))$.

By Lemma~\ref{lem:prop of Astr}, $c_{\tt P}=A_{\str} (c_{ P})$,  and 
\begin{equation}
\label{eq:1:lem:alpha' is a pnt}
A_{\str}\sp  \text{ maps }\sp  (\bO_-(P),\bZ_P,\bO_+(P))  \sp\text{ to }\sp  (\bO_-(\tt P),\bZ_{\tt P},\bO_+(\tt P)).
\end{equation}
 On the other hand (see Figure~\ref{Fig:B B Z land together}), by the classification of bubbles attached to $\bZ$:
 \begin{equation}
\label{eq:2:lem:alpha' is a pnt}
 \bF^{(\tt-1)P}\sp \text{ maps }\sp(\bO_-(\tt P),\bZ_{\tt P},\bO_+(\tt P))\sp\text{ to }\sp(\bO_-(P),\bZ_P,\bO_+(P)).
 \end{equation}

Let $\bO_1$ be the lake of generation $(\tt-1)P/\tt$ containing $\bO_-( P)\cup \bZ_{P}\cup \bO_+(P)$. Then the lake $A_{\str}(\bO_1)\supset \bO_-(\tt P)\cup \bZ_{\tt P}\cup \bO_+(\tt P)$ has generation $(\tt-1)P$. By the Shwarz lemma, \begin{equation}
\label{eq:scal+map}
\bF^{(\tt-1)P}\circ A_{\str}\colon \bO_1\to \bO\sp\sp \text{ (where $\bO=\C\setminus \bZ$ is an ocean)}
\end{equation}
expands the hyperbolic metric of $\bO$. Since $\widetilde \alpha_P$ is a set  compactly contained in $\bO$ (because $\widetilde \alpha_P$ is a compact subset of $\Esc_{P}(\bF)$, which is disjoint from $\partial \bZ$) and since  $\widetilde \alpha_P$ is invariant  under~\eqref{eq:scal+map}, we see that $\widetilde \alpha_P =\{\alpha_P\}$ is a singleton and $\alpha_P$ is a repelling fixed point of~\eqref{eq:scal+map}.

\begin{figure}[tp!] 
\centering{\begin{tikzpicture}
  \node at (0,0){\includegraphics[scale=0.6]{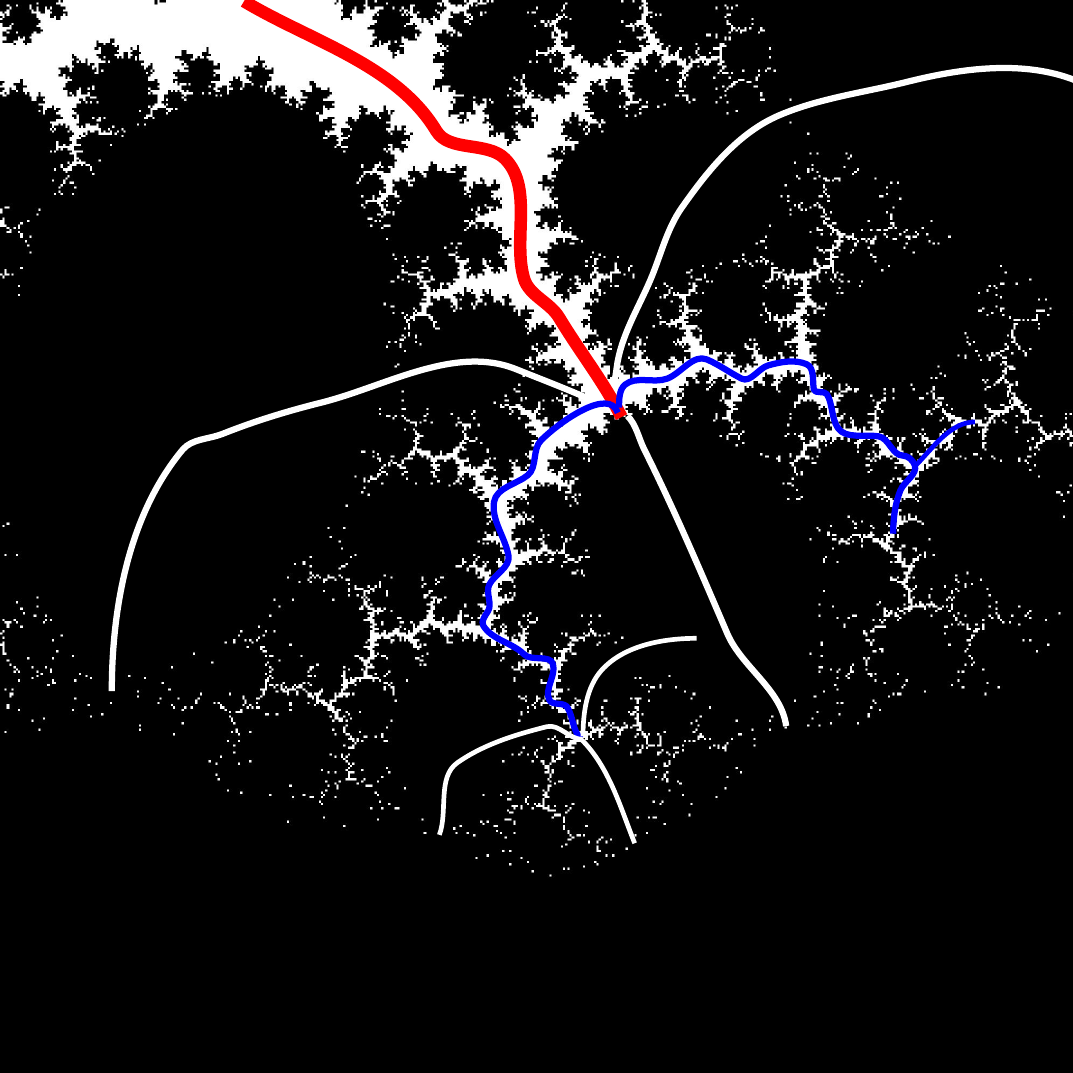}};
 \draw[white] node at (1.7,0.7) {$\bZ_P$};   
 \draw[white] node at (-2.7,1.7) {$B_\lambda$};   
 \draw[white] node at (1.1,3.5) {$B_\rho$};

 \end{tikzpicture}}
 \caption{Illustration to the proof of Lemma~\ref{lem:alpha' is a pnt} (compare with Figure~\ref{Fig:unst man+max sieg}): $\bZ_P, B_{\lambda}, B_{\rho}$ land at $\alpha_P$ that belongs to $\Esc_P(\bF)$ (partially marked red). There are two branches of $\Esc_{\tt P}(\bF)\setminus \Esc_{P}(\bF)$ (partially marked blue) at $\alpha_P$. One of the ends of the left branch is $\alpha_{\tt P}$; this point is the landing point of $\bZ_{\tt P}, B_{\tt \lambda }, B_{\tt \rho}$.}
 \label{Fig:GeomScal}
\end{figure}
Let $\widetilde \alpha^{-}_P$ be the accumulating set of $\bB_{\lambda(s)}$. Let us argue that $\widetilde \alpha^{-}_P\Subset \bO$. By Lemma~\ref{lem:limbs are bounded}, $\widetilde \alpha^{-}_P$ is a compact subset of $\C$. Write ${\lambda(s)}=(Q_1,Q_2,\dots)$ and choose two $R_-$ and $R_+$ such that
$\bZ_{Q_1,R_-}$ is on the left of $\bZ_{Q_1,Q_2}$ while $\bZ_{Q_1,R_+}$ is on the right of $\bZ_{Q_1,Q_2}$, see Figure~\ref{Fig:B B Z land together} (bottom). Set $S=\max\{Q_1+R_-,Q_2+R_+\}$. Since every connected component of $\Esc_P(\bF)$ is unbounded (see~\S\ref{ss:Fat Jul  Esc}),
 \[\Esc_{S}(\bF)\cup \widehat\bZ_{Q_1} \cup \widehat \bZ_{Q_1,R_-} \cup \widehat\bZ_{Q_1,R_+}\] separates $\widetilde \alpha^{-}_P\setminus \Esc_{S}(\bF)$ from $\bZ$.

Since $\widetilde \alpha^{-}_P$ is also invariant under~\eqref{eq:scal+map} (because of~\eqref{eq:1:lem:alpha' is a pnt} and~\eqref{eq:2:lem:alpha' is a pnt}), we obtain that $\widetilde \alpha^{-}_P=\widetilde \alpha_P=\{\alpha_P\}$; i.e.~$\bB_{\lambda(s)}$ lands at $\alpha_P$. Similarly, $\bB_{\rho(s)}$ lands at $\alpha_P$. As a consequence, $\overline \bZ_s$, $\overline \bO_-(s)$, $\overline \bO_+(s)$ are closed topological disks. 
\end{proof}

We say that $\{\alpha_s\}$ are \emph{alpha-points}. They are viewed as preimages of $\alpha$: 
\begin{lem}
\label{lem:preim of alpha}
Suppose $\gamma\colon [0,1)\to \bZ$ is a curve that goes to $\binfty$.  Let $\Gamma=\{\gamma_i\}$ be the set of lifts of $\gamma$ under $\bF^T$, where $T\in \PT_{>0}$. There is a unique lift $\gamma_0\in \Gamma$ such that $\gamma_0\subset \bZ$. Every remaining $\gamma_i\in \Gamma\setminus \{\gamma_0\}$ is within $\bZ_s$ with $0<|s|\le T$ and, moreover, $\gamma_i$ lands at $\alpha_s$. Conversely, for every $\alpha_s$ with $|s|\le T$ there is a unique $\gamma_i\in \Gamma$ such that $\gamma_i$ lands at $\alpha_i$.
\end{lem}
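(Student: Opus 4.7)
The strategy is to identify the connected components of $\bF^{-T}(\bZ)$, observe that $\bF^T$ restricts to a conformal bijection from each component onto $\bZ$, and then read off the landing behaviour by passing to closures in the Riemann sphere. By Lemma~\ref{lem:irr rot of bZ}, for every $T'\in \PT$ the map $\bF^{T'}\colon \bZ\to \bZ$ is a conformal bijection, since it is conjugate via $\bbh$ to the translation $T^{T'}$ on the lower half-plane. Combining this with iterated applications of Lemma~\ref{lem:Z_p is att to Z} along the attachment chain $\bZ, \bZ_{P_1}, \bZ_{P_1,P_2}, \dots, \bZ_s$, we see that for every bubble $\bZ_s$ with $|s|\le T$ the map $\bF^{|s|}\colon \bZ_s\to \bZ$ is a conformal bijection, and hence so is $\bF^T=\bF^{T-|s|}\circ \bF^{|s|}\colon \bZ_s\to \bZ$. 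By Corollary~\ref{cor:preimage:bZ:tree}, $\bF^{-T}(\partial \bZ)$ is a tree in $\Dom \bF^T$; for any pair $\bZ_s,\bZ_{s,Q}$ with $|s|+Q\le T$, the root $c_{(s,Q)}\in \bF^{-(|s|+Q)}(\partial \bZ)$ lies in this tree and separates them. A tree-walk from any point of $\bF^{-T}(\bZ)$ back to $\partial \bZ$ along $\bF^{-T}(\partial \bZ)$ then shows that the components of $\bF^{-T}(\bZ)$ are exactly $\bZ$ together with the bubbles $\bZ_s$ having $0<|s|\le T$. Since each such component is mapped conformally and bijectively onto $\bZ$, it contains exactly one lift of $\gamma$: the lift in $\bZ$ is $\gamma_0$, and the remaining lifts $\gamma_i$ are indexed by the bubbles, one $\gamma_i\subset \bZ_s$ for each $s$ with $0<|s|\le T$.

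To pin down the landing behaviour, I would pass to the Riemann sphere $\wC$. By Lemma~\ref{lem:alpha' is a pnt}, $\overline \bZ_s$ is a closed topological disk in $\C$ with boundary $\partial^c \bZ_s\cup \{\alpha_s\}$; and since $\bZ$ is conformally the lower half-plane, its closure $\overline \bZ\cup \{\infty\}$ in $\wC$ is also a closed topological disk, with boundary $\partial \bZ\cup \{\infty\}$. The homeomorphism $\bF^{|s|}\colon \widehat \bZ_s = \overline \bZ_s\setminus \{\alpha_s\}\to \overline \bZ$ supplied by Lemma~\ref{lem:Z_p is att to Z} therefore extends continuously to a homeomorphism of closed topological disks $\overline \bZ_s\to \overline \bZ\cup \{\infty\}$ sending $\alpha_s$ to $\infty$, as $\infty$ is the unique remaining boundary point to which the extension can converge. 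The translation $\bF^{T-|s|}\colon \bZ\to \bZ$ fixes $\infty$ in the half-plane model, so the composition $\bF^T\colon \overline \bZ_s\to \overline \bZ\cup \{\infty\}$ also sends $\alpha_s$ to $\infty$. Since $\gamma$ lands at $\infty$ by hypothesis, its lift $\gamma_i\subset \bZ_s$ lands at $\alpha_s$. The converse assertion, that every $\alpha_s$ with $0<|s|\le T$ is the landing point of a unique lift, is then immediate from the bijection between bubbles of generation at most $T$ and lifts $\gamma_i\ne \gamma_0$ established above.

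The main obstacle is verifying that the continuous sphere-extension of $\bF^{|s|}$ sends the top $\alpha_s$ precisely to $\infty$ rather than to a finite point of $\partial \bZ$; this relies on the Jordan-disk structure of $\overline \bZ_s$ supplied by Lemma~\ref{lem:alpha' is a pnt}, which guarantees that the two sides of the coast $\partial^c \bZ_s$ both emanate from $\alpha_s$ and map onto the two unbounded ends of $\partial \bZ$, after which continuity of the extension at $\alpha_s$ forces the value $\infty$.
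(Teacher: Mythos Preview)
Your proposal is correct and follows essentially the same route as the paper. The paper's proof is more compressed: it simply notes that the $\bZ_s$ with $|s|\le T$ are univalent preimages of $\bZ$, so each lift $\gamma_i$ sits in one of them, and since $\gamma$ goes to $\infty$ the lift $\gamma_i$ must accumulate on $\overline\bZ_s\setminus\widehat\bZ_s=\widetilde\alpha_s$, which Lemma~\ref{lem:alpha' is a pnt} identifies as the singleton $\{\alpha_s\}$. Your Riemann-sphere extension argument is a slightly more elaborate way to say the same thing, and your explicit enumeration of the components of $\bF^{-T}(\bZ)$ via the tree structure spells out what the paper takes for granted from the bubble setup.
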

\begin{proof}
Since $\bZ_s$ with $|s|\le T$ are univalent preimages of $\bZ$, every $\gamma_i$ is contained in a certain $\bZ_s$. Moreover, $\gamma_i$ accumulates at $\widetilde \alpha_s=\{\alpha_s\}$ which is a singleton by Lemma~\ref{lem:alpha' is a pnt};~i.e.~$\gamma_i$ lands at $\alpha_s$. 
\end{proof}

\begin{lem}
\label{lem:alpha_s= alpha_v}
If $\alpha_s=\alpha_v$, then $s=v$. 
\end{lem}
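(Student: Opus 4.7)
The approach is by contradiction. Suppose $\alpha_s = \alpha_v =: \alpha$ with $s \ne v$; without loss of generality $|s| \le |v|$. I would induct on the combined length $n+m$ of $s=(P_1,\dots,P_n)$ and $v=(Q_1,\dots,Q_m)$, with the base case $n=m=1$ handled directly by the size-comparison step below. The central tool is the $\sigma$-properness of the maps $\bF^P$: every connected component of $(\bF^P)^{-1}(K)$ for compact $K\subset\C$ is itself compact in $\C$.

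First I would rule out $|s|<|v|$. In that case $\bF^{|s|}$ restricts conformally on $\bZ_v$ with image a bounded bubble $\bZ_w$ of positive generation $|v|-|s|$ (Lemma~\ref{lem:limbs are bounded}), and by the homeomorphism $\bF^{|s|}\colon\widehat{\bZ_v}\to\widehat{\bZ_w}$ (Lemma~\ref{lem:Z_p is att to Z}) continuity forces $\bF^{|s|}(y_n)\to\alpha_w\in\C$ for any sequence $y_n\in\bZ_v$ with $y_n\to\alpha$. Choose a small compact disk $K=\overline{\Disk(\alpha_w,\eps)}$ disjoint from $\overline{\bZ}$; the preimage $(\bF^{|s|}|_{\widehat{\bZ_v}})^{-1}(K\cap\widehat{\bZ_w})$ is connected, so all large-$n$ terms $y_n$ lie in a single connected component $C$ of $(\bF^{|s|})^{-1}(K)$, which is compact by $\sigma$-properness. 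Being closed in $\C$, $C$ contains the limit $\alpha=\lim y_n\subset\Dom\bF^{|s|}$, contradicting $\alpha=\alpha_s\in\Esc_{|s|}$. So $|s|=|v|$.

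For the inductive step assume also $\bZ_s\ne\bZ_v$, and set $R=\min(P_1,Q_1)$; WLOG $P_1\le Q_1$ so $R=P_1<|s|$. Then $\bF^R$ sends $\bZ_s$ to $\bZ_{s^R}=\bZ_{(P_2,\dots,P_n)}$ (a sequence of length $n-1$) and $\bZ_v$ to $\bZ_{v^R}$, where the latter is obtained by pushing the entire bubble chain defining $\bZ_v$ forward by $\bF^R$: the parent–child attachments $\bZ_{(Q_1,\dots,Q_k)}\hookrightarrow\bZ_{(Q_1,\dots,Q_{k-1})}$ at $c_{(Q_1,\dots,Q_k)}$ map to corresponding attachments of $\bF^R(\bZ_{(Q_1,\dots,Q_k)})$ at $\bF^R(c_{(Q_1,\dots,Q_k)})$, preserving hierarchical depth $m$. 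Hence the combined length of $(s^R,v^R)$ is $(n-1)+m=n+m-1$. The first-step $\sigma$-properness argument, now applied to $\bF^R$ with target the finite point $\alpha_{v^R}$, forces $\alpha\in\Dom\bF^R$; single-valuedness of $\bF^R(\alpha)$ then makes the continuous extensions from $\bZ_s$ and $\bZ_v$ sides agree, giving $\alpha_{s^R}=\alpha_{v^R}$. The inductive hypothesis now yields $s^R=v^R$, in particular $\bZ_{s^R}=\bZ_{v^R}$.

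The final contradiction uses local injectivity. By Lemma~\ref{lem:crit pts}, $\alpha$ is critical for $\bF^R$ only if $\bF^P(\alpha)=0$ for some $0<P\le R$; but $\bF^P(\alpha)=\alpha_{s^P}$ is an alpha-point lying in the escape set, whereas $0\in\partial\bZ\subset\Jul(\bF)\setminus\Esc(\bF)$, so none of these images is $0$. Thus $\alpha$ is non-critical and $\bF^R$ is locally injective at $\alpha$ on some neighborhood $V$. Choose $x_0\in\bZ_{s^R}=\bZ_{v^R}$ close to $\alpha_{s^R}$: by continuity of the conformal inverses $\bZ_{s^R}\to\bZ_s$ and $\bZ_{v^R}\to\bZ_v$ (extending the limits $\alpha_{s^R}\mapsto\alpha$), the unique preimages $y_s\in\bZ_s$ and $y_v\in\bZ_v$ both lie in $V$, share the image $x_0$, and so are forced to coincide---contradicting $\bZ_s\cap\bZ_v=\emptyset$. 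Hence $\bZ_s=\bZ_v$, and the bubble hierarchy uniquely recovers $s=v$ from the Fatou component. The main technical point to verify carefully is the depth-preservation claim used when $P_1\ne Q_1$, which ensures the induction really decreases $n+m$.
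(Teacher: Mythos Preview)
Your proof is correct but takes a substantially longer route than the paper's. The two approaches share the key mechanism---using $\sigma$-properness to show that $\alpha_s\in\Dom\bF^R$ whenever $R<|s|$---and your first step (ruling out $|s|<|v|$) is actually more explicit than the paper, which simply asserts that alpha-points of different generations are distinct.

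Where the approaches diverge is the equal-generation case. You push forward by $R=\min(P_1,Q_1)$, reducing the combined address length by one, then induct, and finally invoke local injectivity of $\bF^R$ at $\alpha$ to force $\bZ_s=\bZ_v$. The paper instead picks $R$ \emph{large}: any $R$ with $\max\{P_1+\cdots+P_{n-1},\,Q_1+\cdots+Q_{m-1}\}<R<|s|$ sends both $\bZ_s$ and $\bZ_v$ directly to the same primary bubble $\bZ_{|s|-R}$. Since $\overline{\bZ_{|s|-R}}$ contains no critical value of $\bF^R$ (critical values lie on $\partial\bZ$, and $\overline{\bZ_{|s|-R}}\cap\partial\bZ=\{c_{|s|-R}\}$, which is a pre-critical point, not a critical value), $\sigma$-properness gives disjoint univalent preimage components containing $\overline{\bZ_s}$ and $\overline{\bZ_v}$ respectively; hence $\alpha_s\ne\alpha_v$. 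No induction, no local-injectivity argument.

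Your induction and local-injectivity detour are valid (note the implicit case split: if $n=1$ then $P_1=|s|$ and the WLOG $P_1\le Q_1$ forces $Q_1=|v|$, $m=1$, $s=v$; otherwise $R<|s|$ as you claim), but the paper's one-shot choice of $R$ replaces all of it. The moral: pushing deeper into the bubble chain before pulling back lets you land on a canonical target and separate preimages in a single stroke.
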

\begin{proof}
If $\alpha_s$ and $\alpha_v$ have different generations, then $\alpha_v\not=\alpha_s$. Suppose that $|s|=|v|$ and write $s=(P_1,P_2,\dots, P_n)$ and $v=(Q_1,Q_2,\dots, Q_n)$. Choose $R<|s|$ such that $R>\max\{P_1+\dots + P_{n-1}, Q_1+\dots + Q_{m-1},\}$. Then $\bF^R(\bZ_s) = \bF^R(\bZ_v)=\bZ_{|s|-R}$. Since $\overline \bZ_{|s|-R}$ does not contain a critical value of $\bF^{R}$, we obtain that $\bZ_s$ and $\bZ_v$ are different degree one preimages of $\bZ_{|s|-R}$.
\end{proof}

\begin{cor}[The tree structure of $\{\overline \bZ_s\}$]
\label{cor:tree str of closed bubbles}
Suppose that $\overline \bZ_v\cap \overline \bZ_w\not=\emptyset$ for $|w|\ge |v|$ and $w\not=v$. Then $c_w\in \partial \bZ_v$ and $\overline \bZ_v \cap \overline \bZ_w=\{c_w\}$.

For every two closed bubbles $\overline \bZ_v\not= \overline \bZ_w$, there is a unique sequence of pairwise different closed bubbles $\overline \bZ_{s(1)},\overline \bZ_{s(2)},\dots,\overline \bZ_{s(n)} $ such that
\begin{itemize}
\item $\overline \bZ_{s(i)}$ intersects $\overline \bZ_{s(i+1)}$;  
\item $s(1)=v$ and $s(n)=w$. 
\end{itemize}
\end{cor}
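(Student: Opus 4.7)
Assume without loss of generality $|w|\ge|v|$. The plan for part 1 is to locate the intersection $\overline\bZ_v\cap\overline\bZ_w$ by decomposing each closed bubble, via Lemma~\ref{lem:alpha' is a pnt}, as the disjoint union $\overline\bZ_s=\bZ_s\cup\partial^c\bZ_s\cup\{\alpha_s\}$, where $\partial^c\bZ_s\subset\Dom\bF^{|s|}$ and $\alpha_s\in\Esc_{|s|}(\bF)$. Since $\bZ_v$ and $\bZ_w$ are distinct Fatou components, their interiors are disjoint, so any intersection lies on the boundaries. We then invoke Lemma~\ref{lem:Z_s is att to Z_v} to pin the intersection point down to the root $c_w$. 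Part 2 is deduced by organizing the bubbles into a tree via the parent map and reading off the unique geodesic.

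The first task is to exclude $\alpha$-points from the intersection. The equality $\alpha_v=\alpha_w$ would force $v=w$ by Lemma~\ref{lem:alpha_s= alpha_v}. Next, suppose $\alpha_w\in\partial^c\bZ_v$: then $\alpha_w\in\Dom\bF^{|v|}$ and, as $\bF^{|v|}\colon\widehat\bZ_v\to\overline\bZ$ is a homeomorphism (Lemma~\ref{lem:Z_p is att to Z}), we get $\bF^{|v|}(\alpha_w)\in\partial\bZ$. By Lemma~\ref{lem:irr rot of bZ}, the maps $\bbf_{\pm}$ extend to homeomorphisms of $\overline\bZ$, hence $\overline\bZ\subset\Dom\bF^{P}$ for every $P\in\PT$; in particular $\bF^{|v|}(\alpha_w)\in\Dom\bF^{|w|-|v|}$, which gives $\alpha_w\in\Dom\bF^{|w|}$, contradicting $\alpha_w\in\Esc_{|w|}$. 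The symmetric possibility $\alpha_v\in\partial^c\bZ_w$ is even easier: it would place $\alpha_v$ in $\Dom\bF^{|w|}\subset\Dom\bF^{|v|}$, contradicting $\alpha_v\in\Esc_{|v|}$. Thus the intersection lies in $\partial^c\bZ_v\cap\partial^c\bZ_w$ and is nonempty. Lemma~\ref{lem:Z_s is att to Z_v} applied to $\bZ_w$ produces a \emph{unique} bubble $\bZ_{v'}$ with $|v'|\le|w|$, $v'\ne w$ and $\partial^c\bZ_w\cap\partial^c\bZ_{v'}\ne\emptyset$, and asserts that this intersection equals $\{c_w\}$. Since $\bZ_v$ satisfies these hypotheses, we conclude $v=v'$ and $\overline\bZ_v\cap\overline\bZ_w=\{c_w\}$, with $c_w\in\partial\bZ_v\cap\partial\bZ_w$, finishing part 1.

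For part 2, define the parent map $\pi\colon\bZ_{(P_1,\dots,P_n)}\mapsto\bZ_{(P_1,\dots,P_{n-1})}$ with the convention $\bZ_{\emptyset}=\bZ$. By the inductive construction of $c_s$ in \S\ref{ss:FatComp of bFstr}, the root $c_s$ sits on $\partial\bZ_{\pi(s)}$, so $\pi(\bZ_s)$ is precisely the unique ``parent'' produced by Lemma~\ref{lem:Z_s is att to Z_v}. Iterating $\pi$ endows $\{\overline\bZ_s\}$ with a rooted-tree structure; the desired chain between $\overline\bZ_v$ and $\overline\bZ_w$ is the tree geodesic, constructed explicitly by ascending from each multi-index to the longest common prefix of $v$ and $w$ and then descending. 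Uniqueness follows from the tree property: any alternate chain $\overline\bZ_{s(1)},\dots,\overline\bZ_{s(n)}$ with pairwise distinct terms corresponds, via part 1, to a walk in the parent tree whose consecutive intersections $\{c_{s(i+1)}\}$ (or $\{c_{s(i)}\}$) are single critical points uniquely attributable to parent-child edges by Lemma~\ref{lem:Z_s is att to Z_v}, so no cycles can occur, and simple paths in a tree are unique.

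The principal obstacle is the $\alpha$-point exclusion in part 1: the assertion that $\alpha_w$ cannot lie on the coast $\partial^c\bZ_v$ of a lower-generation bubble. This relies on combining the global invariance of $\overline\bZ$ under the entire cascade (Lemma~\ref{lem:irr rot of bZ}) with the composition rule $\Dom\bF^{|w|}=\{x\in\Dom\bF^{|v|}\mid\bF^{|v|}(x)\in\Dom\bF^{|w|-|v|}\}$, to force $\alpha_w$ into $\Dom\bF^{|w|}$ and reach a contradiction. Once this is in hand, the rest follows cleanly from the uniqueness clause in Lemma~\ref{lem:Z_s is att to Z_v}.
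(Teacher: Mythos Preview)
Your proof is correct and follows the same strategy as the paper: rule out the alpha-points from the intersection (the paper cites only Lemma~\ref{lem:alpha_s= alpha_v}) and then invoke the uniqueness clause of Lemma~\ref{lem:Z_s is att to Z_v}. You supply more detail than the paper's two-line argument, in particular the explicit case analysis showing that $\alpha_w\notin\partial^c\bZ_v$ and $\alpha_v\notin\partial^c\bZ_w$, which the paper leaves implicit.
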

\begin{proof}
By Lemma~\ref{lem:alpha_s= alpha_v}, $\overline \bZ_v $ and $\overline \bZ_w$ do not intersect at their alpha-points.  By Lemma~\ref{lem:Z_s is att to Z_v},  $\overline \bZ_v $ and $\overline \bZ_w$ can only intersect when $\overline \bZ_v $ contains the root of $\overline \bZ_w$.

The second claim follows from Lemma~\ref{lem:Z_s is att to Z_v}.
\end{proof}

Since every lake $\bO'$ is either $\bO_-(s)$ or $\bO_+(s)$ for a certain sequence $s$ (Lemma~\ref{lem:closest BChains}), we have:
\begin{cor}
\label{cor:lake:closure is bnd}
The closure of every proper lake is a compact subset of $\C$. For a proper lake $\bO'$, we have $\partial \bO'=\partial^c \bO' \cup \{\alpha'\}$, where $\alpha'$ is an alpha-point of the same generation as $\bO'$.
\qed 
\end{cor}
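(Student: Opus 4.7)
The plan is to combine Lemma~\ref{lem:closest BChains} (classification of proper lakes via bubbles) with Lemma~\ref{lem:alpha' is a pnt} (the landing/boundedness statement for $\bO_\pm(s)$). The whole argument is essentially a bookkeeping of what has already been proved; I do not expect any new obstacle.

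First I would apply Lemma~\ref{lem:closest BChains}: any proper lake $\bO'$ of generation $P$ is of the form $\bO_-(s)$ or $\bO_+(s)$ for a uniquely determined finite sequence $s\in\PT^n_{>0}$ with $|s|=P$. By Lemma~\ref{lem:alpha' is a pnt}, both $\overline{\bO_-(s)}$ and $\overline{\bO_+(s)}$ are closed topological disks; in particular $\overline{\bO'}$ is a compact subset of $\C$, giving the first assertion.

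For the boundary description, I would split $\partial \bO'$ according to whether points lie in $\Dom(\bF^P)$ or not. By definition $\partial^c \bO' = \partial \bO'\cap \Dom(\bF^P)$, and the complementary part $\partial \bO'\setminus \partial^c\bO'$ is contained in $\Esc_P(\bF)$. Lemma~\ref{lem:alpha' is a pnt} gives exactly
\[
\partial \bO_-(s)=\partial^c\bO_-(s)\cup\{\alpha_s\}\sp\text{and}\sp\partial\bO_+(s)=\partial^c\bO_+(s)\cup\{\alpha_s\},
\]
so in either case $\partial\bO'=\partial^c\bO'\cup\{\alpha'\}$ with $\alpha'\coloneqq \alpha_s$. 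Since $\alpha_s$ is by definition the landing point of $\bB_s,\bB_{\lambda(s)},\bB_{\rho(s)}$, it is an alpha-point of generation $|s|=P$, which coincides with the generation of $\bO'$.

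The only substantive step would be checking that $\partial\bO'\setminus\partial^c\bO'$ really reduces to a single point; but this is nothing more than the identity $\widetilde\alpha_s=\{\alpha_s\}$ from Lemma~\ref{lem:alpha' is a pnt}, together with the observation that $\partial \bO_\pm(s)\setminus\partial^c\bO_\pm(s)$ is precisely the accumulating set of the relevant bounding bubble chains. No further estimates are required.
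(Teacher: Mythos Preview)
Your proposal is correct and matches the paper's approach exactly: the paper states the corollary with a bare \qed, having noted just before it that every proper lake is of the form $\bO_-(s)$ or $\bO_+(s)$ by Lemma~\ref{lem:closest BChains}, so that Lemma~\ref{lem:alpha' is a pnt} applies directly. Your write-up simply unpacks this two-line reasoning.
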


\subsection{Lakes exhaust the ocean}
Choose $P\in\PT_{>0}$ and let $\bO(P)$ be the lake of generation $P$ such that $\partial \bO(P)\ni 0$. Then $\partial \bO(P)$ also contains an arc $J\ni 0$ of $\partial \bZ$ such that $0$ is not an endpoint of $J$. It follows that $\bO(\tt^n P) =A_\str^{n}(\bO(P))$ also contains $0$ on its boundary, and we have (see Figure~\ref{Fig:Lakes:O(t)})
\begin{equation}
\label{eq:lakes exhaust ocean}
\bigcup_{n<0}\bO(\tt^n P) =\bO. 
\end{equation}
Let us denote by $\alpha(\tt^n P)$ the unique alpha-point in $\partial \bO(\tt^n P)$, see Corollary~\ref{cor:lake:closure is bnd}. 

\begin{lem}
\label{lem:I contains alpha n}
Let $I$ be a connected component of $\Esc_P(\bF)$. Then $I$ contains $\alpha(\tt^{n} P)$ for all sufficiently big $n\ll 0$.

If $J$ is a connected subset of $\Esc_P(\bF)$ such that $J\cap \bO(\tt^nP)\not=\emptyset$ but $J\not\ni \alpha(\tt^n P)$, then $J\subset \bO(\tt^n P)$; in particular, $J$ is bounded.
\end{lem}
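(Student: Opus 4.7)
The plan is to show that the only way a connected set in $\Esc_P(\bF)$ can touch the boundary of the lake $\bO(\tt^nP)$ is through the distinguished alpha-point, so both claims become consequences of a simple connectedness argument.

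The main technical input I would establish first is that
\[
\partial\bO(\tt^nP)\cap\Esc_P(\bF)=\{\alpha(\tt^nP)\}\qquad\text{for every }n<0.
\]
By Corollary~\ref{cor:lake:closure is bnd}, $\partial\bO(\tt^nP)=\partial^c\bO(\tt^nP)\cup\{\alpha(\tt^nP)\}$. For a point $z\in\partial^c\bO(\tt^nP)$, by definition $\bF^{\tt^nP}(z)\in\partial\bZ$; by Lemma~\ref{lem:irr rot of bZ}, the cascade $\bF^{\ge0}\mid\overline\bZ$ is conjugate to a cascade of translations, so $\partial\bZ\subset\Dom\bF^Q$ for every $Q\in\PT$. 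Applying this with $Q=P-\tt^nP\in\PT$ (which is in $\PT$ because $n<0$ makes $\tt^nP<P$, and $\PT$ admits subtraction by Lemma~\ref{lem:power-triples:order}), I conclude that $\bF^P(z)=\bF^{Q}\circ\bF^{\tt^nP}(z)$ is defined, i.e.\ $z\in\Dom\bF^P$. Hence $\partial^c\bO(\tt^nP)$ is disjoint from $\Esc_P(\bF)$. Conversely, $\alpha(\tt^nP)\in\Esc_{\tt^nP}(\bF)\subset\Esc_P(\bF)$ since $\Dom\bF^P\subset\Dom\bF^{\tt^nP}$ (again using $\tt^nP\le P$).

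For the first statement, let $I$ be a component of $\Esc_P(\bF)$. Fix any $x\in I$; since $I\subset\Esc_P(\bF)\subset\bO$, the exhaustion~\eqref{eq:lakes exhaust ocean} provides $n_0<0$ with $x\in\bO(\tt^{n_0}P)$, and then $x\in\bO(\tt^nP)$ for all $n\le n_0$. On the other hand, by Corollary~\ref{cor:lake:closure is bnd}, $\overline{\bO(\tt^nP)}$ is compact, while $I$ is unbounded (every component of $\Esc_P(\bF)$ is unbounded, as noted in~\S\ref{ss:Fat Jul  Esc}), so $I\not\subset\bO(\tt^nP)$. Thus the connected set $I$ meets both the open set $\bO(\tt^nP)$ and the open set $\C\setminus\overline{\bO(\tt^nP)}$; if it avoided $\partial\bO(\tt^nP)$, it would decompose as a disjoint union of two non-empty relatively open pieces. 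Therefore $I\cap\partial\bO(\tt^nP)\ne\emptyset$, and by the first paragraph this intersection is forced to be $\{\alpha(\tt^nP)\}$.

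For the second statement, the identical argument runs in reverse. Suppose $J\subset\Esc_P(\bF)$ is connected, meets $\bO(\tt^nP)$, and does not contain $\alpha(\tt^nP)$. If $J$ were not contained in $\bO(\tt^nP)$, the decomposition
\[
J=(J\cap\bO(\tt^nP))\ \sqcup\ (J\cap(\C\setminus\overline{\bO(\tt^nP)}))
\]
would express $J$ as the disjoint union of two non-empty relatively open subsets, since $J\cap\partial\bO(\tt^nP)\subset\{\alpha(\tt^nP)\}=\emptyset$ by the key fact and our hypothesis. This contradicts connectedness of $J$, so $J\subset\bO(\tt^nP)$, which is bounded. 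I expect no real obstacle: everything reduces to the containment $\partial^c\bO(\tt^nP)\subset\Dom\bF^P$, and the main thing to be careful about is the bookkeeping that $P-\tt^nP\in\PT$ so that Lemma~\ref{lem:irr rot of bZ} can be invoked.
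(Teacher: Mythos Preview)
Your proof is correct and follows essentially the same approach as the paper: both argue that an unbounded connected subset of $\Esc_P(\bF)$ meeting a bounded lake $\bO(\tt^nP)$ must cross $\partial\bO(\tt^nP)$, and can only do so at $\alpha(\tt^nP)$. Your version is more explicit about the key technical point $\partial^c\bO(\tt^nP)\cap\Esc_P(\bF)=\emptyset$ (deducing it from Lemma~\ref{lem:irr rot of bZ} and the subtraction $P-\tt^nP\in\PT$), whereas the paper leaves this implicit in the phrase ``otherwise $\partial\bO(\tt^nP)$ encloses $I$''.
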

\begin{proof}
Recall from~\S\ref{ss:Fat Jul  Esc} that every connected component $I$ of $\Esc_Q(\bF)$ is unbounded. Thus if $I$ intersects $\overline{\bO(\tt^n P) }$, then $I\ni \alpha_{\tt^n P}$ (otherwise $\partial \bO(\tt^n s)$ encloses $I$ by Corollary~\ref{cor:lake:closure is bnd}). By~\eqref{eq:lakes exhaust ocean} $I$ intersects $\bO(\tt^nP)$ for $n\ll 0$. Therefore, $I$ contains all $\alpha(\tt^m P)$ for $m\ge n$. 

In the second claim, $J$ is surrounded by $\partial \bO(\tt^n P)$; thus $J\subset \bO(\tt^n P)$.
\end{proof}

\begin{cor}
\label{cor:Q-P comp are bound}
The escaping set $\Esc_Q(\bF)$ is connected. For every $R>Q$, every connected component of $\Esc_R(\bF)\setminus \Esc_Q(\bF)$ is bounded.\qed
\end{cor}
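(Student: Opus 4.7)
The plan is to derive both assertions directly from Lemma~\ref{lem:I contains alpha n}: the first claim will follow from its opening statement (that each component of an escaping set contains all sufficiently small alpha-points), and the second from its trapping clause.

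For the first claim, I will argue by contradiction and suppose $\Esc_Q(\bF)$ has two distinct connected components $I_1,I_2$. Each is unbounded by the discussion in \S\ref{ss:Fat Jul  Esc}, so Lemma~\ref{lem:I contains alpha n} says each contains the single point $\alpha(\tt^n Q)$ for all sufficiently negative $n$. That point would then lie in $I_1\cap I_2$, contradicting $I_1\cap I_2=\emptyset$.

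For the second claim, take a connected component $J$ of $\Esc_R(\bF)\setminus \Esc_Q(\bF)$ and pick any $x\in J$. First note that $\overline{\bZ}\subset \Dom\bF^P$ for every $P$, because $\bF^{\ge0}$ acts on $\partial\bZ$ as a group of translations by Lemma~\ref{lem:irr rot of bZ}; hence $\Esc_R(\bF)\subset \bO$, and in particular $x\in \bO$. By the exhaustion~\eqref{eq:lakes exhaust ocean} I can choose $n\ll 0$ with $x\in \bO(\tt^n R)$, and by taking $n$ yet more negative I may also arrange $\tt^n R\le Q$; the latter forces $\alpha(\tt^n R)\in \Esc_{\tt^n R}(\bF)\subset \Esc_Q(\bF)$, so $\alpha(\tt^n R)\notin J$. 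The trapping clause of Lemma~\ref{lem:I contains alpha n}, applied to the connected subset $J\subset \Esc_R(\bF)$ that meets $\bO(\tt^n R)$ but avoids its alpha-point, then yields $J\subset \bO(\tt^n R)$, and this lake is bounded by Corollary~\ref{cor:lake:closure is bnd}.

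I do not expect a genuine obstacle here; the only piece of bookkeeping is that $n$ must be chosen negative enough to enforce both $x\in \bO(\tt^n R)$ and $\tt^n R\le Q$ simultaneously, and both conditions are monotone in $n$ and hold eventually as $n\to -\infty$ (using that $P\mapsto \tt P$ is an order-preserving automorphism of $\PT$ from Lemma~\ref{lem:PT:rot}).
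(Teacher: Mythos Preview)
Your proof is correct and follows the same route as the paper's: both claims come straight from Lemma~\ref{lem:I contains alpha n}, the first from the fact that every component contains all sufficiently deep $\alpha(\tt^n P)$, the second from the trapping clause. Your bookkeeping is in fact a touch more careful than the paper's terse version---you explicitly verify $J\subset\bO$ and choose $n$ negative enough for both conditions, whereas the paper leaves this implicit.
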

\begin{proof}
By Lemma~\ref{lem:I contains alpha n}, every two connected components contain $\alpha(\tt^{n} P)$ for $n\ll 0$; thus $\Esc_Q(\bF)$ has a single connected component.

If $J$ is a connected component of $\Esc_R(\bF)\setminus \Esc_Q(\bF)$, then $J$ intersects $\bO(\tt^n P)$ for $n\ll 0$ but does not contain $\alpha(\tt^n P)$ for $n\ll 0$; thus $J$ is bounded.
\end{proof}

\begin{figure}[tp!]
\begin{tikzpicture}
\draw (-5,0)--(5,0); 

\begin{scope}[shift={(2,0)}]
\draw[ line width=1pt,blue] (-3.2,0)--(-3.1,0.8)--(-2.6,1.1)--(-2.2,1.1)--(-2,1);

\begin{scope}[shift={(-0.2,0)}]
\filldraw[opacity=0.5] (-1.5,0) -- (-1.7, 0.4) --(-1.5,0.8) -- (-1.3, 0.4)-- (-1.5,0);

\filldraw[opacity=0.5] (-1.5,0.8) -- (-1.6, 1) --(-1.5,1.2) -- (-1.4, 1)-- (-1.5,0.8);

\filldraw[opacity=0.5] (-1.6, 1) -- (-1.7, 1.05)--(-1.8, 1)--(-1.7, 0.95); 
\draw (-1.5,0) --(-1.5,0.8);
\draw (-1.5,0.8) edge[bend right=40] (-1.6, 1);
\draw (-1.6, 1)--(-1.8, 1);
\end{scope}

\coordinate (a1) at  (-2,1){};

\draw[red] (-2,1)--(-2.5,0.5);

\node at (-2.4,0.2){$\bO(P)$};

\end{scope}

\begin{scope}[xscale =2.5,yscale= 2, shift={(2.5,0)}]
\draw[ line width=1pt,blue] (-3.2,0)--(-3.1,0.8)--(-2.6,1.1)--(-2.2,1.1)--(-2,1);

\begin{scope}[shift={(-0.2,0)}]
\filldraw[opacity=0.5] (-1.5,0) -- (-1.7, 0.4) --(-1.5,0.8) -- (-1.3, 0.4)-- (-1.5,0);

\filldraw[opacity=0.5] (-1.5,0.8) -- (-1.6, 1) --(-1.5,1.2) -- (-1.4, 1)-- (-1.5,0.8);

\filldraw[opacity=0.5] (-1.6, 1) -- (-1.7, 1.05)--(-1.8, 1)--(-1.7, 0.95); 
\draw (-1.5,0) --(-1.5,0.8);
\draw (-1.5,0.8) edge[bend right=40] (-1.6, 1);
\draw (-1.6, 1)--(-1.8, 1);
\end{scope}

\coordinate (a2) at  (-2,1);

\draw[red] (a2)--(-2.3,0.8) --(a1);

\node at (-2.8,0.7){$\bO(P/\tt)$};

\node[red,below] at (-2,0.95) {$\alpha(P/\tt)$}; 

\draw[red](-2.3,0.8)-- (-2.5,0.75);

\end{scope}

\begin{scope}[xscale =5,yscale= 4, shift={(2.5,0)}]
\draw[ line width=1pt,blue] (-3.2,0)--(-3.1,0.8)--(-2.6,1.1)--(-2.2,1.1)--(-2,1);

\begin{scope}[shift={(-0.2,0)}]
\filldraw[opacity=0.5] (-1.5,0) -- (-1.7, 0.4) --(-1.5,0.8) -- (-1.3, 0.4)-- (-1.5,0);

\filldraw[opacity=0.5] (-1.5,0.8) -- (-1.6, 1) --(-1.5,1.2) -- (-1.4, 1)-- (-1.5,0.8);

\filldraw[opacity=0.5] (-1.6, 1) -- (-1.7, 1.05)--(-1.8, 1)--(-1.7, 0.95); 

\draw (-1.5,0) --(-1.5,0.8);
\draw (-1.5,0.8) edge[bend right=40] (-1.6, 1);
\draw (-1.6, 1)--(-1.8, 1);
\end{scope}

\coordinate (a3) at  (-2,1); 

\node[red,below] at (-2,0.95) {$\alpha(P/\tt^2)$};

\draw[red] (-2,1.2)--(a3)--(-2.3,0.8) --(a2);

\draw[red](-2.3,0.8)-- (-2.5,0.75); 

\node[left,red] at (-2,1.2) {$\Esc(\bF)$};
\node at (-2.8,0.7){$\bO(P/\tt^2)$};

\end{scope}

\filldraw[red] (a1) circle (2pt);
\filldraw[red] (a2) circle (2pt);
\filldraw[red] (a3) circle (2pt);
\node[below] at (-4.7,0) {$\partial \bZ$};

\end{tikzpicture}
\caption{Lakes $\bO(\tt^n P)$ cover $\C$. The boundary $\partial \bO(\tt^nP)$ contains a unique point $\alpha(\tt^n P)$ in $\Esc(\bF)$. Therefore, every connected component of $\Esc_P(\bF)$ intersecting $\bO(\tt^n P)$ also contains $\alpha(\tt^n P)$. Each bubble chain contains a skeleton (black), see~\S\ref{ss:TreeLike str}}
\label{Fig:Lakes:O(t)}
\end{figure}

Similarly to Lemma~\ref{lem:preim of alpha}, we have:

\begin{lem}
\label{lem:comp of Esc R - Esc T}
Let $\gamma\colon [0,1)\to \bO$ is a curve that goes to $\binfty$.  Let $\{\gamma_i\}$ be the set of lifts of $\gamma$ under $\bF^T$ with $T\in \PT_{>0}$. Then every $\gamma_i$ is contained in a unique $\bO_\iota(s)$ with $\iota\in \{-,+\}$ and $|s|=T$. Moreover, $\gamma_i$ lands at $\alpha_s$. Conversely, every $\bO_\iota(s)$ with $|s|=T$ contains a unique $\gamma_i$ which lands at $\alpha_s$.

For $R>T$ every connected component component $\bL$ of $\Esc_R(\bF)\setminus \Esc_T(\bF)$ is contained in a unique $\bO_\iota(s)$ with $\iota\in \{-,+\}$ and $|s|=T$.  Moreover, $\bL$ is a lift of $\Esc_{R-T}(\bF)$ under $\bF^{T}$ and $\bL$ is attached to $\alpha_s$: $\bL\cup\{\alpha_s\} $ is compact and connected. Conversely, for every $\alpha_s$ with $|s|=T$ there are two connected components of $\Esc_R(\bF)\setminus \Esc_T(\bF)$ attached to $\alpha_s$.
\end{lem}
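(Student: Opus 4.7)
The plan is to mirror Lemma~\ref{lem:preim of alpha}, letting lakes of generation $T$, i.e.\ the sets $\bO_\pm(s)$ with $|s|=T$, play the role that bubbles played there. The two conformal homeomorphisms $\bF^T\colon \bO_\pm(s)\to \bO$ guaranteed by Lemma~\ref{lem:lakes lambda and rho} and Corollary~\ref{cor:lake:closure is bnd} supply the inverse branches needed for every lifting argument below.

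For Part 1 I first observe that any lift $\gamma_i$ of $\gamma$ under $\bF^T$ lies in a connected component of $\bF^{-T}(\bO)\cap \Dom\bF^T$. The chess-board rule (Lemma~\ref{lem:chess board}) applied to the arc $\partial\bZ$ and the map $\bF^T$, together with the classification in Lemma~\ref{lem:closest BChains}, identifies these components with the lakes $\bO_\iota(s)$, $|s|=T$, $\iota\in\{-,+\}$. Since $\bF^T\colon \bO_\iota(s)\to\bO$ is a conformal homeomorphism, any accumulation point of $\gamma_i(t)$ as $t\to 1$ must lie in $\partial\bO_\iota(s)\setminus \partial^c\bO_\iota(s)=\{\alpha_s\}$ by Corollary~\ref{cor:lake:closure is bnd}. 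This yields the landing statement, and the converse direction is immediate by pulling $\gamma$ back along the unique conformal inverse branch inside each $\bO_\iota(s)$.

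For Part 2 I reduce to Part 1 via the identity
\[\Esc_R(\bF)\setminus \Esc_T(\bF)\;=\;\Esc_R(\bF)\cap \Dom\bF^T\;=\;\bF^{-T}\big(\Esc_{R-T}(\bF)\big)\cap \Dom\bF^T,\]
which follows from $\bF^R=\bF^{R-T}\circ\bF^T$ on $\Dom\bF^T$. By Corollary~\ref{cor:Q-P comp are bound} the set $\Esc_{R-T}(\bF)$ is connected, and it is contained in $\bO$ since $\overline\bZ\subset \Dom\bF^P$ for every $P$. Combining these facts with the chess-board analysis of Part 1, every component $\bL$ of $\Esc_R\setminus \Esc_T$ lies in a unique $\bO_\iota(s)$ with $|s|=T$, and $\bF^T\colon \bL\to \Esc_{R-T}(\bF)$ is a homeomorphism; in particular $\bL$ is precisely the conformal pullback of $\Esc_{R-T}(\bF)$ in $\bO_\iota(s)$.

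It then remains to prove that $\bL\cup\{\alpha_s\}$ is compact and connected. Boundedness of $\bL$ is Corollary~\ref{cor:Q-P comp are bound}. Since $\Esc_R(\bF)$ is closed in $\C$, any limit point of $\bL$ lying outside $\bL$ must belong to $\Esc_T(\bF)\cap \partial\bO_\iota(s)$; as $\partial^c\bO_\iota(s)\subset \bF^{-T}(\partial\bZ)\subset \Dom\bF^T$, this intersection is exactly $\{\alpha_s\}$. To confirm that $\alpha_s$ actually belongs to $\overline\bL$, I will choose any unbounded connected subset of $\Esc_{R-T}(\bF)$ inside $\bO$ (available since components of $\Esc_{R-T}$ are unbounded, see \S\ref{ss:Fat Jul  Esc}), pick a curve inside it escaping to infinity, and apply Part 1 to lift it to a curve in $\bL$ landing at $\alpha_s$. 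The converse statement, that there are exactly two components attached at each $\alpha_s$, then follows by applying the construction to the two lakes $\bO_-(s)$ and $\bO_+(s)$ guaranteed by Lemma~\ref{lem:alpha' is a pnt}. I expect the only delicate point is this verification that $\alpha_s\in \overline\bL$; the rest is a chess-board bookkeeping exercise once the correct analogue of Lemma~\ref{lem:preim of alpha} for lakes is set up.
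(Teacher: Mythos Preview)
Your proposal is correct and follows essentially the same route as the paper's proof, which is very terse: it cites Lemma~\ref{lem:closest BChains} to place each lift in a lake $\bO_\iota(s)$, and then Lemma~\ref{lem:alpha' is a pnt} (equivalently Corollary~\ref{cor:lake:closure is bnd}) for the landing/attachment at $\alpha_s$. Your write-up simply unpacks these citations.

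One small wrinkle: in the last step you propose to ``pick a curve inside $\Esc_{R-T}(\bF)$ escaping to infinity'' and lift it via Part~1. At this point in the paper the escaping set is only known to be connected and unbounded, not path-connected (that comes later, in \S\ref{ss:ExtChainsAreArcs}), so the existence of such a curve is not yet justified. The fix is immediate and avoids Part~1 altogether: if $\alpha_s\notin\overline\bL$, then by your own argument $\overline\bL=\bL$ is compact, hence $\bF^T(\bL)=\Esc_{R-T}(\bF)$ is compact, contradicting the unboundedness established in \S\ref{ss:Fat Jul  Esc}. This is presumably what the paper has in mind when it simply invokes Lemma~\ref{lem:alpha' is a pnt}.
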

\begin{proof}
Every $\gamma_i$ is contained in a unique lake which is of the form $\bO_\iota(s)$  by Lemma~\ref{lem:closest BChains}. By Lemma~\ref{lem:alpha' is a pnt}, $\gamma_i$ lands at $\alpha_s$. 

Similarly, connected components of $\Esc_R(\bF)\setminus \Esc_T(\bF)$ are in bijection with lifts of $\Esc_{R-T}(\bF)$ under $\bF^{T}$ and are in certain $\bO_\iota(s)$ with $|s|=T$. By 
Lemma~\ref{lem:alpha' is a pnt}, every component of $\Esc_R(\bF)\setminus \Esc_T(\bF)$ is attached to a certain $\alpha_s$ with $|s|=T$.
\end{proof}

\subsection{The tree-like structure of $\Esc(\bF)$}
\label{ss:TreeLike str} In this subsection, we will first discuss the combinatorics of alpha-points and then use these cut points to define external chains in $\Esc(\bF)$. We will introduce two orders on alpha-points: the tree order ``$\preceq$'' and the ambient order ``$\le$''. Informally, $\alpha_v\prec \alpha_w$ if $\alpha_v$ is closer to $\infty$ in $\Esc(\bF)$, and $\alpha_v< \alpha_w$ if $\alpha_v$ is on the ``left'' of $\alpha_w$ with respect to $\C\setminus \Esc(\bF)$. Proposition~\ref{prop:order of alphas} relates these two orders: the separation in ``$\prec$'' is equivalent to the separation in ``$<$''. 

Consider two sequences $v=(P_1,P_2,\dots )$ and $w=(Q_1,Q_2,\dots)$; each sequence can be finite or infinite. Let 
\[v\triangle w= (P_1,\dots, P_m)\] be the largest common prefix of $v$ and $w$; i.e.~$P_i=Q_i$ for $i\le m$ but $P_{m+1}\not=Q_{m+1}$. If $v=w$, then $v\triangle w= v=w$.

 We write $\alpha_v\succeq\alpha_w$ if
\[\alpha_v \in \overline{\bO_-(w)\cup \bO_+(w)}.\]
We call ``$\succeq$'' the \emph{tree order}. If $\alpha_s\in \bO_-(w)$ and $\alpha_v\in \bO_+(w)$, then we say that $\alpha_s$ and $\alpha_v$ are \emph{$\succ$-separated} by $\alpha_w$. In other words, $a_w$ is the biggest element with respect to the tree order such that $a_v\succneqq a_w$ and $a_s\succneqq a_w$.

Given a bubble $\bZ_s$ and two different points $x,y\in \partial \bZ_s$, let us write by 
$(x,y)_{\mF}\subset \bZ_s$ the unique hyperbolic geodesic joining $x$ and $y$ within the Fatou component $\bZ_s$ (recall that $\overline \bZ_s$ is a closed topological disk by Lemma~\ref{lem:alpha' is a pnt}). Similarly, for $x,y\in \partial \bZ$, the arc $(x,y)_{\mF}$ is the hyperbolic geodesic of $\bZ$ connecting $x$ and $y$. We also denote by $(0,\binfty)_{\mF}$ the hyperbolic geodesic of $\bZ$ connecting $0$ and $\binfty$. 

 Given a finite chain $s=(P_1,P_2,\dots, P_n)$, we define the \emph{skeleton} of $\bB_s$ as
\[\skel_s \coloneqq [0,c_{P_1})_{\mF} \cup [c_{P_1},c_{P_2})_{\mF}\cup \dots\cup  [c_{P_{n-1}},c_{P_n})_{\mF}\cup [c_{P_n}, \alpha_{P_n})_{\mF},\]
see Figure~\ref{Fig:Lakes:O(t)}. If $s=(P_1,P_2,\dots )$ is an infinite sequence, then the \emph{skeleton} of $\bB_s$ is
\[\skel_s \coloneqq [0,c_{P_1})_{\mF} \cup [c_{P_1},c_{P_2})_{\mF}\cup  [c_{P_{2}},c_{P_3})_{\mF}\cup \dots\]

We view each skeleton as an arc starting at $0$. For $v\not=w$, we denote by $c_{v\triangle w}$ the last common point of $\skel_v\cap \skel_w$. By construction, $c_{v\triangle w}$ is either $0$ or a critical point. Note also that the union of any number is skeletons is uniquely geodesic. Let us say that $\skel_v$ is on the \emph{left} of $\skel_w$ and \emph{write} $\skel_v< \skel_w$, if $[0,\binfty)_{\mF},\skel_w, \skel_v$ have a counterclockwise orientation around $\skel_w\cap \skel_v$. We say that $\skel_w$ and $ \skel_v$ are \emph{$<$-separated} by $\skel_s$ if either
\[\skel_v<\skel_s <\skel_w \sp \text{ or }\sp \skel_v>\skel_s >\skel_w\]
holds.

For an infinite sequence $s=(P_1,P_2,\dots)$, we say that $s(n)\coloneqq (P_1,P_2,\dots, P_n)$ is the $n$th \emph{truncation} of $s$. Clearly,  $\skel_v$, $\skel_w$ are $<$-separated by $\skel_{s}$ if and only if  $\skel_v$, $\skel_w$ are $<$-separated by $\skel_{s(n)}$ for all sufficiently big $n$. 

We also define the order ``$<$'' on alpha-points: $\alpha_v<\alpha_w$ if and only if $\skel_v<\skel_w$. We call ``$<$'' the \emph{ambient order}.

\begin{lem}
\label{lem:bubble chains:sep comb}
Consider finite distinct skeletons $\skel_v$ and $\skel_w$. 
\begin{itemize}
\item If $|v|=|w|$, then there is a finite skeleton $\skel_s$ with $|s|<|v|$ such that $\skel_v$ and $\skel_w$ are $<$-separated by $\skel_s$.
\item If $\skel_v$ and $\skel_w$ are separated by a finite bubble skeleton of generation $\leq\min\{|v|,|w|\}$, then there is a unique finite skeleton $\skel_s$ with the following properties. The skeletons $\skel_v$ and $\skel_w$ are $<$-separated by $\skel_s$ and $|s|<\min\{|v|,|w|\}$. And if $\skel_v$ and $\skel_w$ are $<$-separated by $\skel_{s'}$, then either $|s'|>|s|$ or $s'=s$.
\end{itemize}
\end{lem}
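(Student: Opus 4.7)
The plan is to reduce both parts to the combinatorics of critical points on the boundary of a single Siegel disk, using the conformal identification $\bF^{|u|}\colon \widehat{\bZ}_u \to \overline{\bZ}$ for $u = v\triangle w$, together with the real-line model of Lemma~\ref{lem:irr rot of bZ}. In Part~1, since $|v|=|w|$ and $v\neq w$, the largest common prefix $u$ is a strict prefix of both, so $v = u\cdot(P,P_2,\dots,P_n)$ and $w = u\cdot(Q,Q_2,\dots,Q_m)$ with $P\neq Q$. The maps carry $\skel_v$ and $\skel_w$ beyond $\bZ_u$ to skeletons rooted at $\bZ$ whose first critical points on $\partial\bZ$ correspond to the real numbers $b_P$ and $b_Q$ under the identification $\partial\bZ\simeq\R$.

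For Part~1 my approach is to construct a finite sequence $(R_1,\dots,R_k)$ of positive power-triples with $R_1+\cdots+R_k<|v|-|u|$ such that $\skel_{u\cdot(R_1,\dots,R_k)}$ lies $<$-between $\skel_v$ and $\skel_w$; setting $s=u\cdot(R_1,\dots,R_k)$ then gives $|s|<|v|$. The candidates are indexed by the dense $\PT$-orbit $\{b_T\}\subset\R$ (dense since $v/w$ is irrational, cf.\ \S\ref{sss:renorm:pair of rotat}), and the bound $|v|-|u|\geq P$ together with further room when $n\geq 2$ (resp.\ $m\geq 2$) provides the budget. The proper discontinuity of Lemma~\ref{lem:transl:prop disc} guarantees that among the infinitely many $b_T\in(b_P,b_Q)$ the generation is bounded below, so a minimum exists; the combinatorics of close returns of \S\ref{ss:closed returns}, in particular Lemma~\ref{lem:dynam of domin} describing how consecutive dominant intervals are mapped back to lower-indexed ones, is what allows one to drive this minimum strictly below $|v|-|u|$ by either taking a single $R_1$ or, if forced, by concatenating a bounded number of such entries to stay within the budget.

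For Part~2 the argument is an extremal one that bootstraps from Part~1. Let $\skel_s$ realise the minimum generation among all skeletons that $<$-separate $\skel_v$ from $\skel_w$; this minimum is attained because the hypothesis gives a separating skeleton of generation $\leq\min\{|v|,|w|\}$ and because the set of possible generations is discrete by Lemma~\ref{lem:discr of dyn}. Assuming WLOG $|v|\leq|w|$, if one had $|s|=|v|$, then Part~1 applied to the distinct equal-generation pair $(\skel_s,\skel_v)$ would produce a separator $\skel_{s''}$ with $|s''|<|v|$; since $\skel_s$ lies $<$-between $\skel_v$ and $\skel_w$ and $\skel_{s''}$ lies $<$-between $\skel_s$ and $\skel_v$, transitivity of the linear order $<$ (which follows from the tree structure of Corollary~\ref{cor:tree str of closed bubbles}) forces $\skel_{s''}$ to separate $\skel_v$ from $\skel_w$ as well, contradicting minimality. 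Uniqueness of $\skel_s$ at the minimum generation is identical: two distinct minimisers would, by Part~1, admit a strictly smaller separator which, again by transitivity of $<$, would separate $\skel_v$ from $\skel_w$.

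The main obstacle is the combinatorial step in Part~1, namely producing the sequence $(R_1,\dots,R_k)$ with total generation strictly less than $|v|-|u|$ when the bound $|v|-|u|$ is close to $\max\{P,Q\}$. A naive perturbation of $P$ (or $Q$) in $\PT$ fails because the scaling behaviour $t_{R/\tt}=\tt\,t_R$ versus $\iota(R/\tt)=\iota(R)/\tt$ of translation amounts and generations go in opposite directions, so making $b_R$ close to an endpoint forces $\iota(R)$ to be large. The resolution must exploit the Stern--Brocot-type structure of dominant points from \S\ref{ss:closed returns}: intervals $[b_i,b_{i+1}]$ between consecutive dominant points map by some $T^Q$ to intervals between earlier dominant points, and this self-similarity is what delivers a separator below any prescribed generation budget.
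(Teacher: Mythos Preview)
Your proposal has genuine gaps in both parts.

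\textbf{Part 1.} You are working much too hard, and the ``main obstacle'' you identify is real for your approach but entirely avoidable. The paper's argument is a one-liner given the machinery already in place: assume $\skel_v<\skel_w$; the infinite bubble chain $\bB_{\rho(v)}$ is by construction (see \S\ref{ss:Lakes} and Lemma~\ref{lem:closest BChains}) the closest chain on the right of $\bB_v$ of generation $\le |v|$, so $\skel_{\rho(v)}$ must lie strictly between $\skel_v$ and $\skel_w$. Since $\rho(v)$ is an \emph{infinite} sequence of generation exactly $|v|$, any sufficiently long truncation $s$ of $\rho(v)$ still separates and has $|s|<|v|$. There is no need for your reduction to the common prefix, nor for the close-returns combinatorics of \S\ref{ss:closed returns}; the budget problem you are fighting never arises.

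\textbf{Part 2.} Your justification that the minimum generation is attained is wrong. You invoke Lemma~\ref{lem:discr of dyn}, but that lemma asserts spatial discreteness of orbit segments $\bigcup_{P\le T}\bF^P\{x\}$ in $\C$; it says nothing about discreteness of the set of generations $|s|\in\PT$ of separating skeletons, and indeed $\PT$ is dense in $\R_{\ge 0}$. So the infimum $\tau$ could a priori fail to be realized. The paper fills this gap by a structural argument: it uses Corollary~\ref{cor:tree str of closed bubbles} to produce the finite chain of closed bubbles $\overline\bZ_{s(1)},\dots,\overline\bZ_{s(n)}$ joining $\bZ_v$ to $\bZ_w$, decomposes the infimum over separating skeletons according to which $\bZ_{s(i)}$ they last share with $\skel_v$ or $\skel_w$, and then in the relevant bubble $\bZ_{s(k)}$ finds the unique critical point $c_y$ of smallest generation in the open arc $J\subset\partial^c\bZ_{s(k)}$ between the neighboring bubbles. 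This exhibits the minimizer explicitly as $\skel_y$ (or $\skel_{s(k)}$ if it already separates). Your extremal/bootstrap framework for showing $|s|<\min\{|v|,|w|\}$ and uniqueness is fine once existence of the minimizer is established, but that existence is the nontrivial step and your citation does not supply it.
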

\begin{proof}
Suppose $\skel_v$ is on the left of $\skel_w$. Recall (see~\S\ref{ss:Lakes}) that the infinite bubble chain $\bB_{\rho(v)}$ is the closest bubble chain on the right of $\bB_v$ of generation $\le |v|$. Therefore, $\skel_v$ and $\skel_w$ are $<$-separated by $\skel_{\rho(v)}$. For $m\gg 0$, let $s$ be the $m$th truncation of $\rho(v)$. Then $\skel_s$ separates $\skel_v$ and $\skel_w$ and $|s|<|\rho(v)| = |v|$.

For the second claim, set $\tau\in \R_{\ge 0}$ be the infimum over all $|x|$ such that $\skel_x$ separates $\skel_v$ and $\skel_w$. 

Observe first that $\tau<\min\{|v|,|w||\}$. Indeed, let $\skel_y$ be a skeleton separating $\skel_v$ and $\skel_w$ with $|y|\le \min\{|v|,|w||\}$. Then $|y|=\tau$ and $y$ is finite (otherwise we can truncate $y$ and construct a skeleton of smaller generation that still separates $\skel_v$ and $\skel_w$). This contradicts to the first claim: there is a finite skeleton of smaller generation between $\skel_y$ and one of $\skel_v$, $\skel_w$.

We claim that every realization of $\tau$ is a finite sequence $x$. Then it would follow from the first claim that $x$ is unique.
\begin{proof}[Proof of the statement]
Let $\overline \bZ_{s(1)},\overline \bZ_{s(2)}, \dots , \overline \bZ_{s(n)} $ be a finite sequence of neighboring closed bubbles from Lemma~\ref{cor:tree str of closed bubbles} connecting $\bZ_v=\bZ_{s(1)}$ and $\bZ_w=\bZ_{s(n)}$. For every $s(i)$ let $\tau_i$ be the infimum over all $|x|$ such that 
\begin{itemize}
\item $\skel_x$ separates $\skel_v$ and $\skel_w$,
\item either $x\triangle v= s(i)$ or $x\triangle w= s(i)$ holds, and
\item if $v\triangle w= s(i)$, then  $x\triangle v= x\triangle w= s(i)$.
\end{itemize}
 In other words, the infimum is taken over all $\skel_x$ that coincide with one of $\skel_v,\skel_w$ up to $c_{s(i)}$ (if $\bZ_{s(i)}=\bZ_{\emptyset}=\bZ$, then $c_{s(i)}=0$). Observe that $\tau_1 \ge |v|$ and $\tau_k\ge |w|$ because every skeleton associated with either $\tau_1$ or $\tau_2$ travels trough either $c_v$ or $c_w$. Therefore, $\tau=\tau_k$ for some $k\in \{2,\dots, n-1\}$.

If $\skel_v$ and $\skel_w$ are $<$-separated by $\skel_{s(k)}$, then $|s(k)|=\tau_k$ and the claim follows. Otherwise, let $J\subset \partial^c \bZ_{s(k)}$ be an open arc between $\overline \bZ_{s(k-1)}$ and  $\overline \bZ_{s(k+1)}$. By construction, every skeleton associated with $\tau_k$ travels through $J$. Since $\overline J$ is disjoint from $\alpha_{s(k)}$, there is a unique point $c_y$ of the smallest generation in $J$. Therefore, $\skel_y$ is a required skeleton. 
\end{proof}
\end{proof}

\begin{prop}
\label{prop:order of alphas}
Consider $\alpha_w\not=\alpha_v$ with $|v|\ge |w|$. Then the following are equivalent:
\begin{enumerate}
\item ${\alpha_v\succ\alpha_w}$;\label{cond:1:prop:order of alphas}
\item $\alpha_v$ and $\alpha_w$ are not $<$-separated by $\alpha_s$ with $|s|<|w|$;\label{cond:2:prop:order of alphas}
\item $\alpha_v$ and $\alpha_w$ are not $\prec$-separated;\label{cond:3:prop:order of alphas}
\end{enumerate}
\end{prop}
\begin{proof}
Suppose that $\alpha_v< \alpha_w$; the opposite case is symmetric. 

We have ${\alpha_v\succ\alpha_w}$ if and only if $\alpha_v \in \bO_-(w)$. The latter is equivalent to the property that $\skel_v$ and $\skel_w$ are not $<$-separated by $\skel_{\lambda(w)}$ (where $\bB_{\lambda(w)}$ is defined in~\S\ref{ss:Lakes}).
For $m\gg 1$, let $s$ be the $m$th truncation of $\lambda(w)$. Note that $|s|<|\lambda(w)|=|w|$. Then $\skel_v$ and $\skel_w$ are not $<$-separated by $\skel_{\lambda(w)}$ if and only if $\skel_v$ and $\skel_w$ are not separated by $\skel_{s}$. This proves the equivalence between~\eqref{cond:1:prop:order of alphas} and~\eqref{cond:2:prop:order of alphas}.

Let us prove that \eqref{cond:3:prop:order of alphas} is also equivalent to \eqref{cond:1:prop:order of alphas} and \eqref{cond:2:prop:order of alphas}. If $\alpha_v\succ\alpha_w$, then clearly $\alpha_v$ and $\alpha_w$ are not $\prec$-separated. Suppose $\alpha_v\not \succ\alpha_w$. By the first claim, $\skel_v$ and $\skel_w$ are $<$-separated by a finite skeleton $\skel_s$ with $|s|<|w|$. Using Lemma~\ref{lem:bubble chains:sep comb} we can assume that $\skel_s$ has the smallest possible generation. Therefore, $\skel_v$ is between $\skel_{\lambda(s)}$ and $\skel_s$ while $\skel_w$ is between $\skel_{\rho(s)}$ and $\skel_s$. By definition,  $\alpha_v$ and $\alpha_w$ are $\prec$-separated by $\alpha_s$.
\end{proof}

Suppose $\alpha_v\succ\alpha_w$. The \emph{external chain} $[\alpha_v,\alpha_w]$ is \[ \Esc_{|v|}(\bF)\cap \overline{ \bO_-(w)\cup \bO_+(w)} \setminus \bigcup_{\bO_\iota(s)\not\ni \alpha_v}  \bO_\iota(s),\]
where the union is taken over all $\iota\in\{-,+\}$ and all the sequences $s$ satisfying $\bO_\iota(s)\not\ni \alpha_v$. In other words, $[\alpha_v,\alpha_w]$ is obtained from $\Esc (\bF)$ by chopping off all the lateral decorations at alpha-points.

\begin{lem}
\label{lem:conc of chains}
We have 
\[ [\alpha_v,\alpha_s]=[\alpha_v,\alpha_w] \cup [\alpha_w,\alpha_s]\sp \text{ and }\sp [\alpha_v,\alpha_w] \cap [\alpha_w,\alpha_s]=\{\alpha_w\}\]
for all $\alpha_v\succ \alpha_w\succ \alpha_s$, and
\[A_\str[\alpha_w,\alpha_v]= [\alpha_{\tt w }, \alpha_{\tt v}]\]
for all $\alpha_v\succ \alpha_w$.
\end{lem}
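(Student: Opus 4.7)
The plan is to unfold the defining formula for external chains and exploit the tree structure of $\Esc(\bF)$ developed in~\S\ref{ss:TreeLike str}. Two preliminary observations will be used repeatedly. First, the escaping sets are nested: $\Esc_Q(\bF)\subset \Esc_P(\bF)$ whenever $Q\le P$, since $\Dom \bF^P\subset \Dom \bF^Q$. Second, after fixing the unique side $\iota_w\in\{-,+\}$ with $\alpha_v\in\overline{\bO_{\iota_w}(w)}$ and the unique side $\iota_s\in\{-,+\}$ with $\alpha_w\in\overline{\bO_{\iota_s}(s)}$, transitivity of $\succ$ gives $\alpha_v\in\overline{\bO_{\iota_s}(s)}$, and every lake that contains $\alpha_w$ (in the chopping-off sense) also contains $\alpha_v$; hence the chopping-off condition ``$\bO_\iota(r)\not\ni\alpha_v$'' is weaker than ``$\bO_\iota(r)\not\ni\alpha_w$''.

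For the inclusion ``$\supset$'' in the concatenation identity I would verify the two pieces separately. The chain $[\alpha_w,\alpha_s]$ already lies in $\Esc_{|w|}(\bF)\subset \Esc_{|v|}(\bF)$ and inside $\overline{\bO_-(s)\cup \bO_+(s)}$, and the chopping-off condition at $\alpha_v$ is then automatic by the second observation above; similarly $[\alpha_v,\alpha_w]\subset\overline{\bO_{\iota_w}(w)}\subset \overline{\bO_-(s)\cup\bO_+(s)}$ and carries the correct chopping-off data tautologically. For ``$\subset$'', given $x\in[\alpha_v,\alpha_s]$, I would split into two cases. If $x\in\overline{\bO_{\iota_w}(w)}$, then $x\in[\alpha_v,\alpha_w]$ by definition. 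Otherwise, using Lemma~\ref{lem:closest BChains} together with Lemma~\ref{lem:alpha' is a pnt}, I would argue that $\bB_{\lambda(w)}\cup\overline{\bZ_w}\cup\bB_{\rho(w)}$ separates $\bO_{\iota_w}(w)$ from the remainder of $\bO_-(s)\cup\bO_+(s)$, and then invoke Lemma~\ref{lem:comp of Esc R - Esc T} to conclude that $x$ lies in a connected component of $\Esc_{|v|}(\bF)\setminus \Esc_{|w|}(\bF)$ attached to $\alpha_w$ (or equals $\alpha_w$); either way $x\in \Esc_{|w|}(\bF)\cup\{\alpha_w\}$, and so $x\in[\alpha_w,\alpha_s]$. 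The same separation fact handles the intersection statement: $[\alpha_v,\alpha_w]\subset\overline{\bO_{\iota_w}(w)}$ meets $[\alpha_w,\alpha_s]$ only along $\partial \bO_{\iota_w}(w)\cap \Esc_{|w|}(\bF)$, which equals $\{\alpha_w\}$ by Lemma~\ref{lem:alpha' is a pnt}.

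The scaling identity $A_\str[\alpha_w,\alpha_v]=[\alpha_{\tt w},\alpha_{\tt v}]$ will follow directly from Lemma~\ref{lem:prop of Astr}: $A_\str$ sends $\alpha_s\mapsto\alpha_{\tt s}$, $\bZ_s\mapsto\bZ_{\tt s}$, $\bB_s\mapsto\bB_{\tt s}$, and conjugates $\bF^P$ to $\bF^{\tt P}$, so in particular $A_\str(\Esc_P(\bF))=\Esc_{\tt P}(\bF)$; moreover $A_\str$ sends each lake $\bO_\iota(s)$ to $\bO_\iota(\tt s)$ because the left/right labeling is dictated by orientation around $\bZ_s$ and the tree $\bF^{-|s|}(\partial\bZ)$, both of which are $A_\str$-equivariant. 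Applying $A_\str$ term-by-term to the defining formula for $[\alpha_w,\alpha_v]$ then gives the defining formula for $[\alpha_{\tt w},\alpha_{\tt v}]$. The main obstacle in the whole argument is the reverse inclusion in the concatenation: one must confirm that a point $x\in[\alpha_v,\alpha_s]$ lying outside $\overline{\bO_{\iota_w}(w)}$ necessarily belongs to $\Esc_{|w|}(\bF)\cup\{\alpha_w\}$, and this uses the precise separation structure of the closest bubble chains around $\bZ_w$ supplied by Lemmas~\ref{lem:closest BChains}, \ref{lem:alpha' is a pnt}, and~\ref{lem:comp of Esc R - Esc T}.
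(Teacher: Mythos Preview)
Your overall plan---checking the two inclusions directly, handling the intersection via $\overline{\bO_-(w)\cup\bO_+(w)}\cap\Esc_{|w|}=\{\alpha_w\}$, and transporting everything by $A_\str$ using Lemma~\ref{lem:prop of Astr}---is sound, and the ``$\supset$'' direction and the scaling identity are fine. But the ``$\subset$'' direction contains a genuine slip. You assert that if $x\notin\overline{\bO_{\iota_w}(w)}$ then ``$x$ lies in a connected component of $\Esc_{|v|}\setminus\Esc_{|w|}$ attached to $\alpha_w$ \ldots\ either way $x\in\Esc_{|w|}\cup\{\alpha_w\}$''. These two clauses contradict each other: a point of $\Esc_{|v|}\setminus\Esc_{|w|}$ is by definition not in $\Esc_{|w|}$. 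What you need is the \emph{opposite} conclusion, and the chopping condition gives it directly: by Lemma~\ref{lem:comp of Esc R - Esc T}, every component of $\Esc_{|v|}\setminus\Esc_{|w|}$ lies in some lake of generation $|w|$; the only such lake containing $\alpha_v$ is $\bO_{\iota_w}(w)$, and every other one is removed in the definition of $[\alpha_v,\alpha_s]$. Since $x$ survived the chopping and $x\notin\bO_{\iota_w}(w)$, it lies in no lake of generation $|w|$, hence $x\in\Esc_{|w|}$. From there your verification that $x\in[\alpha_w,\alpha_s]$ goes through.

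The paper's proof packages this same observation differently. It notes that the defining formula for an external chain is insensitive to the level of the escaping set used: one may replace $\Esc_{|v|}$ by $\Esc_P$ for any $P\ge|v|$ in the definition of $[\alpha_v,\alpha_w]$, since every component of $\Esc_P\setminus\Esc_{|v|}$ sits in a lake not containing $\alpha_v$ and is therefore chopped off (again Lemma~\ref{lem:comp of Esc R - Esc T}). This level-independence lets one put both pieces at a common escaping level and read off the concatenation at once. Your inclusion-by-inclusion argument is more explicit but rests on the identical structural fact; once the slip above is repaired, it is a correct alternative.
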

\begin{proof}
Since $\alpha_w$ is a cut point between $\alpha_v$ and $\alpha_s$ with respect to the tree order ``$\prec$,'' we have $[\alpha_v,\alpha_w] \cap [\alpha_w,\alpha_s]=\{\alpha_w\}$. Recall from  Lemma~\ref{lem:comp of Esc R - Esc T} that components of $\Esc_P\setminus \Esc_{|v|}$ are attached to alpha-points of generation $|s|$. We also have 
\[ [\alpha_v,\alpha_w]=\Esc_{P}(\bF)\cap \overline{ \bO_-(w)\cup \bO_+(w)} \setminus \bigcup_{\bO_\iota(s)\not\ni \alpha_v}  \bO_\iota(s)\]
for all $P\ge |s|$ (because every component $\Esc_P\setminus \Esc_{|v|}$ is deleted). As a consequence,
$[\alpha_v,\alpha_s]=[\alpha_v,\alpha_w] \cup [\alpha_w,\alpha_s]$.

The second claim follows $A_\str(\alpha_w)=\alpha_{\tt w}$ and $A_\str(\alpha_v)=\alpha_{\tt v}$, see Lemma~\ref{lem:prop of Astr}.
\end{proof}

\subsection{External chains are arcs}
\label{ss:ExtChainsAreArcs}
Recall that $\bF^P\mid \overline \bZ$ is conjugate by $\bbh$ to the cascade of translations $(T^P)_{P\in \PT}$, see Lemma~\ref{lem:irr rot of bZ}. By construction, $\bbh(c_P)= b_P$, where $b_P$ are defined in~\S\ref{ss:closed returns}. As in \S\ref{ss:closed returns}, we say that a critical point $c_{P}$ is \emph{dominant} if the arc $[0,c_P]$ contains no critical point of generation less than $P$; in this case $\bZ_P$ and $\alpha_P$ are also called  \emph{dominant}.  

Let $c_P$ and $c_Q$ be two dominant critical points and assume that $P<Q$. Then $c_Q,0\in \partial \bZ$ are on the same side of $c_P$. 

As in~\S\ref{ss:closed returns}, we enumerate dominant critical points as $(c_{P_n})_{n\in \Z}$ with $P_{n+1}>P_{n}$. Suppressing indices, we write $\alpha_i=\alpha_{P_i}$, $c_i=c_{P_i}$, and $\bZ_i=\bZ_{P_i}$. 
\begin{lem}
We have
\[\dots \succ\alpha_{1} \succ\alpha_{0} \succ\alpha_{-1}\succ\dots , \]
\end{lem}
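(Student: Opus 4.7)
The plan is to verify $\alpha_{i+1}\succ\alpha_i$ by showing $\alpha_{i+1}\in\overline{\bO_\iota(P_i)}$ for one of $\iota\in\{-,+\}$, which is exactly the definition of $\succ$. This in turn will follow once $c_{i+1}$ is located on the $\partial\bZ$-portion of the coast $\partial^c\bO_\iota(P_i)$.

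First I would use the conjugacy $\bbh\colon\overline\bZ\to\{\Im z\le 0\}$ of Lemma~\ref{lem:irr rot of bZ} to translate the combinatorics of dominant critical points on $\partial\bZ$ into the linear model of Section~\ref{ss:closed returns}: critical points $c_P\in\partial\bZ$ correspond to $b_P=T^{-P}(0)\in\R$, and the notion of dominance is preserved. Since $P_{i+1}>P_i$ and dominants of higher generation sit closer to $0$ on their side, the position of $c_{i+1}$ on $\partial\bZ$ is closer to $0$ than every previously listed $c_j$ lying on the same side of $0$ as $c_{i+1}$. Consequently, the arc $I\subset\partial\bZ$ starting at $c_i$, traveling toward $0$ and, if necessary, crossing $0$ and continuing until the next critical point of generation $\leq P_i$ on the opposite side, always contains $c_{i+1}$ in its open interior: in the same-side case $c_{i+1}\in(c_i,0)$ by monotonicity, and in the opposite-side case $c_{i+1}$ lies between $0$ and the first opposite-side dominant of generation $\leq P_i$. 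The dominance of $c_i$, together with the fact that each generation occurs at a unique critical point on $\partial\bZ$, guarantees that the open interior of $I$ contains no critical point of generation $\leq P_i$.

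Next I would identify $I$ with the $\partial\bZ$-portion of $\partial^c\bO_\iota(P_i)$ for the appropriate $\iota\in\{-,+\}$. By the construction of left/right lakes in Section~\ref{ss:Lakes} and Lemma~\ref{lem:closest BChains}, the two lakes $\bO_\pm(P_i)$ adjacent to $\bZ_i$ are bounded away from $\bZ_i$ by the closest bubble chains of generation $P_i$, and their $\partial\bZ$-coasts are precisely the maximal arcs of $\partial\bZ$ from $c_i$ containing no critical point of generation $\leq P_i$ in their interiors; this matches the description of $I$ above. Hence $c_{i+1}\in\partial^c\bO_\iota(P_i)$. Since $\bZ_{i+1}\subset\Dom\bF^{P_i}$ is an open Fatou component attached to $\partial\bZ$ at $c_{i+1}$ and disjoint from the tree $\bF^{-P_i}(\partial\bZ)$ away from $c_{i+1}$, it is confined to a single component of $\Dom\bF^{P_i}\setminus\bF^{-P_i}(\partial\bZ)$, namely $\bO_\iota(P_i)$. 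Here I invoke the Jordan curve theorem applied to $\partial\bO_\iota(P_i)=\partial^c\bO_\iota(P_i)\cup\{\alpha_i\}$, which is a Jordan curve by Corollary~\ref{cor:lake:closure is bnd} and Lemma~\ref{lem:alpha' is a pnt}. This gives $\alpha_{i+1}\in\overline{\bZ_{i+1}}\subset\overline{\bO_-(P_i)\cup\bO_+(P_i)}$, i.e., $\alpha_{i+1}\succ\alpha_i$ by definition.

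The main obstacle will be the combinatorial bookkeeping in the identification step, in particular handling uniformly the two geometric configurations (same versus opposite side of $0$) and confirming that the $\partial\bZ$-coast of $\bO_\iota(P_i)$ extends exactly to the claimed endpoints along $\partial\bZ$. No serious analytic difficulty is anticipated beyond that, since all the required topological properties of lakes, bubbles and the tree $\bF^{-P}(\partial\bZ)$ are already in hand from Lemmas~\ref{lem:alpha' is a pnt}--\ref{lem:closest BChains} and Corollary~\ref{cor:lake:closure is bnd}.
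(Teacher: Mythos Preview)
Your proposal is correct and yields a valid proof, but it takes a different route from the paper. The paper's argument is a two-line application of Proposition~\ref{prop:order of alphas}: one shows by contradiction that no $\alpha_s$ with $|s|<P_i$ can $<$-separate $\alpha_{i+1}$ from $\alpha_i$, since writing $s=(R,\dots)$ with $R\le|s|<P_i$, the root $c_R$ would then lie in the arc $(c_{i+1},c_i)\subset\partial\bZ$, forcing $c_R\in[0,c_i]$ or $c_R\in[0,c_{i+1}]$ and contradicting dominance of $c_i$ or $c_{i+1}$. By contrast, you verify the definition of $\succ$ directly, locating $c_{i+1}$ on the $\partial\bZ$-coast of the appropriate lake $\bO_\iota(P_i)$ and then pushing $\bZ_{i+1}$ (and hence $\alpha_{i+1}\in\overline{\bZ_{i+1}}$) into that lake. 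Your route is more self-contained---it bypasses Proposition~\ref{prop:order of alphas} entirely---and makes the lake geometry explicit, at the cost of the combinatorial bookkeeping you correctly flag; the paper's route is shorter because all of that geometric work is already packaged in Proposition~\ref{prop:order of alphas}. One small simplification in your last step: the Jordan curve theorem is not really needed. Since $c_{i+1}$ has generation $P_{i+1}>P_i$, it is not a branch point of the tree $\bF^{-P_i}(\partial\bZ)$, so near $c_{i+1}$ the tree is locally just $\partial\bZ$; the connected open set $\bZ_{i+1}$, lying on the non-$\bZ$ side, is then automatically contained in the unique adjacent lake $\bO_\iota(P_i)$.
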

\begin{proof}
We claim that there is no $\alpha_s$ separating $\alpha_{i+1}$ and $ \alpha_{i}$ with respect to the ambient order ``$<$'' such that the generation of $\alpha_s$ is less than $P_i$. Write $s=(R,\dots )$. Since $R<P_i$, the chain $\bB_s$ does not go through $\bZ_{i}$ and $\bZ_{i+1}$; hence $\bZ_R$ is attached to $ (c_{i+1},c_i)$. This is impossible, because $c_R$ is not counted as dominant. By Proposition~\ref{prop:order of alphas},  $\alpha_{i+1}\succ\alpha_i$.
\end{proof}

The \emph{zero chain} (see Figure~\ref{fig:FibonComb}) is 
\begin{equation}
\label{eq:ZeroChain:defn}
\dots \cup[\alpha_1,\alpha_0]\cup [\alpha_0,\alpha_{-1}]\cup \dots
\end{equation}

It follows from the definition that $c_R$ is dominant if and only if $A_{\str} (c_R)$ is dominant. Therefore, there is a $k>0$ such that $\tt P_i=P_{i+k}$ and (equivalently) $A_{\str} (\alpha_{i})= \alpha_{i+k}$ for all $i\in \Z$. As a consequence,
\begin{equation}
\label{eq:zero chain:A str}
 A_{\str}[\alpha_{tk},\alpha_{(t-1)k}]=[\alpha_{(t+1)k},\alpha_{tk}],
 \end{equation} thus the $[\alpha_{(t+1)k},\alpha_{tk}]$ shrink to $0$; i.e.~$0$ is the landing point of the zero chain.

\begin{figure}
\begin{tikzpicture}
\draw (-5,0)--(5,0);

\begin{scope}[yscale=2/3, xscale=-2/3]
\draw[shift={(-0.5,0)}] (1,0) .. controls (1.1,0.3) and (1.1,0.6).. (0.7,0.8)
.. controls (0.6,0.6) and (0.7,0.2).. (1,0); 
\node[below] at(0.5,0) {$c_5$};

\coordinate (w5) at (0.2,0.8);
\end{scope}

\begin{scope}
\draw[shift={(-0.5,0)}] (1,0) .. controls (1.1,0.3) and (1.1,0.6).. (0.7,0.8)
.. controls (0.6,0.6) and (0.7,0.2).. (1,0); 
\node[below] at(0.5,0) {$c_4$};

\coordinate (w4) at (0.2,0.8);
\end{scope}

\begin{scope}[yscale=1.5, xscale=-1.5]
\draw[shift={(-0.5,0)}] (1,0) .. controls (1.1,0.3) and (1.1,0.6).. (0.7,0.8)
.. controls (0.6,0.6) and (0.7,0.2).. (1,0); 
\node[below] at(0.5,0) {$c_3$};

\coordinate (w3) at (0.2,0.8);
\node[red,above] at (w3) {$\alpha_{3}$};
\end{scope}

\begin{scope}[yscale=2.25, xscale=2.25]
\draw[shift={(-0.5,0)}] (1,0) .. controls (1.1,0.3) and (1.1,0.6).. (0.7,0.8)
.. controls (0.6,0.6) and (0.7,0.2).. (1,0); 
\node[below] at(0.5,0) {$c_2$};

\coordinate (w2) at (0.2,0.8);
\node[red,above] at (w2) {$\alpha_{2}$};
\end{scope}

\begin{scope}[yscale=1.5*2.25, xscale=-1.5*2.25]
\draw[shift={(-0.5,0)}] (1,0) .. controls (1.1,0.3) and (1.1,0.6).. (0.7,0.8)
.. controls (0.6,0.6) and (0.7,0.2).. (1,0); 
\node[below] at(0.5,0) {$c_1$};

\coordinate (w1) at (0.2,0.8);
\node[red,above] at (w1) {$\alpha_{1}$};
\end{scope}

\begin{scope}[yscale=2.25*2.25, xscale=2.25*2.25]
\draw[shift={(-0.5,0)}] (1,0) .. controls (1.1,0.3) and (1.1,0.6).. (0.7,0.8)
.. controls (0.6,0.6) and (0.7,0.2).. (1,0); 
\node[below] at(0.5,0) {$c_0$};

\coordinate (w0) at (0.2,0.8);
\node[red,above] at (w0) {$\alpha_{0}$};
\end{scope}

\begin{scope}[yscale=1.5*2.25*2.25, xscale=-1.5*2.25*2.25]
\draw[shift={(-0.5,0)}] (1,0) .. controls (1.1,0.3) and (1.1,0.6).. (0.7,0.8)
.. controls (0.6,0.6) and (0.7,0.2).. (1,0); 
\node[below] at(0.5,0) {$c_{-1}$};

\coordinate (ww) at (0.2,0.8);
\node[red,above] at (ww) {$\alpha_{-1}$};
\end{scope}

\draw[red,thick] (ww)--(w0)--(w1)--(w2)--(w3)--(w4)--(w5);

\end{tikzpicture}
\caption{Fibonacci (golden mean rotation number) combinatorics: for every $i$ the map $\bF^{P_{i+1}}$ maps $[\alpha_{i},\alpha_{i-1}]$ to  $[\alpha_{i-1},\alpha_{i-3}]$.}
\label{fig:FibonComb}
\end{figure}

\begin{lem}
\label{lem:dynam of domin:alpha}
For every $[\alpha_i,\alpha_{i+1}]$ there is a $Q\in \PT_{>0}$ and $[\alpha_n,\alpha_m]$ with $i\ge m>n$ such that $\bF^Q$ maps $[\alpha_i,\alpha_{i+1}]$ homeomorphically to $[\alpha_n,\alpha_m]$.
\end{lem}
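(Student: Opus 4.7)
The idea is to reduce the statement to its translation counterpart, Lemma~\ref{lem:dynam of domin}, via the quasisymmetric conjugacy $\bbh\colon \overline \bZ \to \oH_-$ supplied by Lemma~\ref{lem:irr rot of bZ}. Since $\bbh(c_P)= T^{-P}(0)=b_P$ for every critical point $c_P\in \partial \bZ$, dominant critical points correspond under $\bbh$ to dominant translation points, and the combinatorial position of the triple $(c_{i-1},c_i,c_{i+1})$ relative to $0$ is exactly that of $(b_{i-1},b_i,b_{i+1})$. Lemma~\ref{lem:dynam of domin} then produces $Q\in \PT_{>0}$ (equal to either $P_{i-1}$ or $P_i-P_{i-1}$) together with indices $i\ge m>n$ satisfying $T^Q[b_i,b_{i+1}]=[b_n,b_m]$. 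In particular $Q<P_i$, so $\bF^Q$ is defined at both $\alpha_i$ and $\alpha_{i+1}$.

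The heart of the proof is to show that the chain $[\alpha_i,\alpha_{i+1}]$ lies in $\overline{\bO'}$ for a single lake $\bO'$ of generation $Q$. In Case 1 ($Q=P_{i-1}$, so $0$ separates $c_i$ from $c_{i-1}$ on $\partial \bZ$), the points $c_i$ and $c_{i+1}$ both lie on the arc of $\partial \bZ$ cut off by $c_{i-1}$ and containing $0$; Proposition~\ref{prop:order of alphas} then places $\alpha_i$ and $\alpha_{i+1}$ in the closure of a single lake $\bO_{-}(P_{i-1})$ or $\bO_{+}(P_{i-1})$, while Lemma~\ref{lem:bubble chains:sep comb} rules out any alpha-point of generation strictly less than $Q$ appearing ``between'' $\alpha_i$ and $\alpha_{i+1}$ in the tree order, so no decoration of the chain escapes the lake. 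Case 2 ($Q=P_i-P_{i-1}$) is handled analogously, taking $\bO'$ to be the preimage of the relevant lake of generation $P_{i-1}$ under the univalent branch of $\bF^{P_{i-1}}$ whose closure contains $\alpha_i$.

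Once $[\alpha_i,\alpha_{i+1}]\subset \overline{\bO'}$, I would apply the extension of the conformal map $\bF^Q\colon \bO'\to \bO$ to $\overline{\bO'}\to\overline\bO$, whose boundary behavior is controlled by Lemma~\ref{lem:alpha' is a pnt} (pinning down the unique alpha-point in $\partial \bO'$). Because external chains transform naturally under $\bF^Q$ --- each decoration deleted from $[\alpha_i,\alpha_{i+1}]$ maps to a decoration deleted from the image, by Lemma~\ref{lem:comp of Esc R - Esc T} --- this yields the desired homeomorphism onto $[\alpha_n,\alpha_m]$, where $\alpha_n,\alpha_m$ are the dominant alpha-points with $\bbh(c_n)=b_n$ and $\bbh(c_m)=b_m$. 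The main obstacle will be the middle paragraph: certifying that no alpha-point of generation $\le Q$ is encountered between $\alpha_i$ and $\alpha_{i+1}$, which is exactly where the combinatorial characterization of dominance via $\bbh$ enters essentially and where the two cases of Lemma~\ref{lem:dynam of domin} must be matched with the tree-structure of Corollary~\ref{cor:tree str of closed bubbles}.
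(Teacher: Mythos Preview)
Your approach is correct and is exactly what the paper's one-sentence proof has in mind: reduce to Lemma~\ref{lem:dynam of domin} via the conjugacy $\bbh(c_P)=b_P$, then transport the chain. The lake containment you flag as the ``main obstacle'' is actually immediate: by definition $[\alpha_{i+1},\alpha_i]\subset \overline{\bO_-(P_i)\cup\bO_+(P_i)}$, and since $\alpha_i$ lies in the \emph{interior} of the (unique) lake $\bO'$ of generation $Q<P_i$ containing it, both $\bO_\pm(P_i)$ (being connected and accumulating at $\alpha_i$) lie in $\bO'$ --- no separate separation argument is needed. One small slip: in Case~2 the relevant lake has generation $Q=P_i-P_{i-1}$, not $P_{i-1}$, and is simply the lake of that generation containing $\alpha_i$, just as in Case~1.
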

\begin{proof}
Follows from Lemma~\ref{lem:dynam of domin}, which provides the corresponding property for $b_P=\bbh(c_P)$.
\end{proof}

Given $M\in \R_{>0}$, we write $\Esc_M(\bF)=\displaystyle\bigcap_{T>M}\Esc_T(\bF)$. Given $x\in \Esc(\bF)$, its \emph{escaping time} is the minimal $M$ such that $\Esc_M(\bF)\ni x$.

\begin{cor}
\label{cor:zero chian is arc}
The zero chain is an arc landing at $0$. The points on this arc are parametrized by their escaping time ranging continuously from $+\infty$ (for points close to $0$) to $0$. Alpha-points are dense on the zero chain. 
\end{cor}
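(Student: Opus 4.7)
The plan is to establish the three assertions of the corollary in sequence: first that each external chain $[\alpha_v,\alpha_w]$ with $\alpha_v\succ\alpha_w$ is a simple arc along which the escaping time varies monotonically from $|w|$ to $|v|$; second that the zero chain~\eqref{eq:ZeroChain:defn} is a simple arc landing at $0$; and third density of alpha-points together with the parametrization by escaping time.

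For the first claim, fix $\alpha_v\succ\alpha_w$ with $P=|v|$ and $Q=|w|$. By Corollary~\ref{cor:lake:closure is bnd} and Lemma~\ref{lem:alpha' is a pnt}, $\overline{\bO_-(w)}$ and $\overline{\bO_+(w)}$ are closed topological disks meeting only at $\alpha_w$. I would first show that
\[
E \coloneqq \Esc_P(\bF)\cap \overline{\bO_-(w)\cup \bO_+(w)}
\]
is a connected closed subset containing $\alpha_v$ and $\alpha_w$, using Corollary~\ref{cor:Q-P comp are bound} together with the fact that every connected component of $\Esc_R(\bF)\setminus \Esc_S(\bF)$ is attached at an alpha-point (Lemma~\ref{lem:comp of Esc R - Esc T}). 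Then, by the chess-board structure of $\bF^{-P}(\partial \bZ)$ (Lemma~\ref{lem:chess board}) and the tree structure of closed bubbles (Corollary~\ref{cor:tree str of closed bubbles}), $E$ is itself tree-like, with branches attached precisely at alpha-points $\alpha_s$ with $|s|\le P$ satisfying $\alpha_s\succeq\alpha_v\wedge\alpha_w= \alpha_w$ (here using Proposition~\ref{prop:order of alphas}). Removing all lateral lakes $\bO_\iota(s)\not\ni\alpha_v$ from $E$ prunes exactly the branches not lying on the geodesic from $\alpha_w$ to $\alpha_v$ in this tree, so what remains is a simple arc. Along this arc, escaping time is determined by the deepest lake containing a given point, and the pruning procedure shows that escaping time increases monotonically from $|w|=Q$ at $\alpha_w$ to $|v|=P$ at $\alpha_v$.

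For the second claim, Lemma~\ref{lem:conc of chains} gives $[\alpha_{i+1},\alpha_i]\cap[\alpha_i,\alpha_{i-1}]=\{\alpha_i\}$, so every finite concatenation $[\alpha_j,\alpha_i]=\bigcup_{m=i}^{j-1}[\alpha_{m+1},\alpha_m]$ is again a simple arc by the first claim. To pass to the infinite concatenation I would invoke~\eqref{eq:zero chain:A str}: since $A_\str$ is a contraction with factor $|\mu_\str|<1$, the chains $[\alpha_{tk},\alpha_{(t-1)k}]$ have diameters decaying geometrically in $t$, so $\bigcap_{t\ge 0}[\alpha_{tk},\alpha_{(t-1)k}]$ is the single fixed point $\{0\}$ of $A_\str$ in this family; this simultaneously forces landing at $0$ and rules out interior accumulation. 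At the other end, applying $A_\str^{-t}$ shows $[\alpha_{-tk},\alpha_{(-t-1)k}]$ exits every compact set, so the zero chain is a proper embedding of a half-open interval with endpoint $0$.

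For the third claim, density of alpha-points is immediate from Lemma~\ref{lem:bubble chains:sep comb}: between any two alpha-points $\alpha_v,\alpha_w$ of generations $\le P$ on the chain there is an intermediate skeleton $\skel_s$ with $|s|<\min\{|v|,|w|\}$ $<$-separating them, so $\alpha_s$ lies strictly between $\alpha_v$ and $\alpha_w$. Combined with the monotonicity of escaping time on each subchain from claim one, and the fact that $|P_i|\to+\infty$ as $i\to+\infty$ and $|P_i|\to 0$ as $i\to-\infty$ (via $\tt$-scaling), the escaping-time function extends continuously to a homeomorphism of the zero chain onto $[0,+\infty]$.

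The main technical obstacle is the first step: verifying that $E$, after pruning the lateral decorations, is really a single simple arc rather than a more complicated dendrite. Controlling this requires a careful inductive bookkeeping along the bubble-chain geodesic from $\alpha_w$ to $\alpha_v$, showing that at each alpha-point $\alpha_s$ encountered along the way, only the two ``through'' branches of $\Esc_P(\bF)$ survive the pruning (by Lemma~\ref{lem:comp of Esc R - Esc T}, exactly two components of $\Esc_P\setminus\Esc_{|s|}$ attach at $\alpha_s$ and sit inside the two lakes $\bO_\pm(s)$). Once this local picture is in place at every intermediate alpha-point, the global arc structure follows by concatenation and the earlier compactness of limbs (Lemma~\ref{lem:limbs are bounded}).
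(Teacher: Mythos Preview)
Your approach diverges substantially from the paper's, and the first step contains a genuine gap. You try to show directly that each $[\alpha_v,\alpha_w]$ is a simple arc by arguing that $E=\Esc_P(\bF)\cap\overline{\bO_-(w)\cup\bO_+(w)}$ is ``tree-like'' and then pruning. But nothing available at this point establishes any tree or dendrite structure for $\Esc_P(\bF)$: the chess-board lemma (Lemma~\ref{lem:chess board}) concerns $\bF^{-P}(\partial\bZ)$, not the escaping set, and Corollary~\ref{cor:tree str of closed bubbles} concerns closed bubbles. The unique-geodesic property of $\Esc_P(\bF)$ is Proposition~\ref{prop:Esc M}, proved \emph{after} this corollary and \emph{using} it. So your pruning argument is circular: you are assuming precisely the topological control of $\Esc_P$ that the corollary is meant to initiate. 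Even connectedness of $[\alpha_v,\alpha_w]$ (as defined set-theoretically) is not immediate from what precedes. Your density argument also misfires: for $\alpha_v\succ\alpha_w$ on the zero chain, Proposition~\ref{prop:order of alphas} says they are \emph{not} $<$-separated by any $\alpha_s$ with $|s|<|v|$, so Lemma~\ref{lem:bubble chains:sep comb} does not produce an intermediate alpha-point of smaller generation; the dense alpha-points on the chain have \emph{larger} generation.

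The paper's proof is dynamical rather than static. It builds Markov partitions $\mT_r$ of a fundamental segment $I=[\alpha_{tk},\alpha_{(t+1)k}]$ using Lemma~\ref{lem:dynam of domin:alpha}: each piece $[\alpha_i,\alpha_{i+1}]$ maps under some $\bF^Q$ onto a union of coarser pieces, and combining with the self-similarity $A_\str$ gives uniformly expanding transition maps $\chi_J=A_\str^m\circ\bF^Q$. Expansion forces the diameters of level-$r$ pieces to decay geometrically, which simultaneously yields (i) that $I$ is an arc via the standard characterization, (ii) density of alpha-points (endpoints of small pieces), and (iii) continuity of escaping time (the escaping-time variation across a level-$r$ piece is controlled by $\tt^{-m(r,i)}$ with $m(r,i)\to\infty$). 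The key idea you are missing is this expansion mechanism; without it there is no leverage to rule out a complicated continuum.
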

\begin{proof}
Consider $I\coloneqq [\alpha_{tk},\alpha_{(t+1)k}]$. Let us construct Markov partitions $\mI_r$ of $I$ for $r\ge0$. For $i\in  \{tk,\dots, (t+1)k-1\}$, the chain $I_i\coloneqq [\alpha_{i},\alpha_{i+1}]$ is an element of the partition $\mI_0$ of level $0$. By Lemma~\ref{lem:dynam of domin:alpha}, $\bF^Q(I_i)=[\alpha_n,\alpha_m]$ for some $Q>0$ depending on $i$. For $j\in  \{n,n+1,m-1\}$ we say that the preimage of $[\alpha_j,\alpha_{j+1}]$ under $\bF^Q\colon I_i\to [\alpha_n,\alpha_m]$ is an element of the partition $\mI_1$ of level $1$.

By construction, for every chain $J$ of $\mI_1$ there is a chain $\widetilde J$ of $\mI_0$ and a homeomorphism
\begin{equation}
\label{eq:chi:cor:zero chian is arc}
\chi _J=A_{\str}^{m}\circ \bF^Q\colon J\to \widetilde J\sp \sp \text{ with }Q>0, m\ge0.
\end{equation}
The map~\eqref{eq:chi:cor:zero chian is arc} expands the hyperbolic metric of $\bO$: if $\bO'\supset J$ is the lake of generation $Q$, then $\chi_J\colon \bO'\to \bO$ is expanding. 

 Elements of the partition $\mI_{r+1}$ are the preimages of the elements in $\mI_{r}$ under all possible $\chi_J$. Since $\chi_J$ are expanding, the diameters of elements in $\mI_r$ are bounded by $C \lambda^{-r}$ for some $C>0$ and $\lambda >1$.

We can enumerate chains in $\mI_r$ as $I^r_i$ with $i\in \{1,\dots, a(r)\}$ such that $I^r_i$ and $I^r_j$ are disjoint if $i>j+1$ (see Lemma~\ref{lem:conc of chains}). Since the diameters of chains in $\mI_r$ tend to $0$, we obtain that alpha-points are dense in $I$ and $I$ is an arc by a well-known characterization of arcs. More precisely, $\prec$ is a total order on $I$ compatible with topology: the sets $I_{ \succeq v}\coloneqq \{x \in I : x\succeq v\}$ and $I_{ \preceq v}\coloneqq \{x \in I : x\preceq v\}$ are closed in $\C$ for all $v\in I$ (they are closed if $v$ is an alpha-point (by Lemma~\ref{lem:alpha' is a pnt}) and we just showed that alpha-points are dense). By~\cite{Na}*{Theorems 6.16 and 6.17}, $I$ is an arc. The assertion that $I$ is an arc also follows from the next paragraph justifying a continuous parametrization of $I$ by an interval (a continuous bijection between compact sets is a homeomorphism).

By construction, $I^r_i$ are arcs connecting two alpha-points. For every $I^r_i$ write $\overline \chi =\chi_1\circ \dots \chi_{r}$ 
a composition of maps~\eqref{eq:chi:cor:zero chian is arc} mapping  $I^r_i$ to an element of $\mI_0$.  Using~\eqref{eq:ren:F:F_n^P} we write $\overline \chi =A^{m(r,i)}_\str\circ \bF^{Q(r,i)}$. Observe that $m(r,i)\to +\infty$ as $r\to +\infty$ uniformly in $i$. (Indeed, since $Q>0$ in ~\eqref{eq:chi:cor:zero chian is arc}, there is a constant $M\ge 0$ such that $\overline \chi$ contains at most $M$ consecutive $\chi_j$ that does not contain the scaling $A_\str$.) Then \[\bF^{Q(r,i)}(I^r_i)\in  A_\str^{-m(r,i)} [\alpha_{tk},\alpha_{(t+1)k}],\] and the difference in the escaping times between the endpoints of $I^r_{i}$ is less than $\big(|\alpha_{(t+1)k}|-|\alpha_{tk}|\big)/\tt^{m(r,i)}$. This proves that the escaping time parametrizes continuously points on $I$.
\end{proof}

\begin{rem}
In fact, the zero chain is a quasi-arc. In the proof of Corollary~\ref{cor:zero chian is arc}, we can extend $\chi_J$ to a conformal map defined on a neighborhood of $J$. The Koebe distortion theorem implies that the distance between any pair of points $x,y$ in $I$ is comparable to the diameter of the subarc $[x,y]\subset I$. This is one of the characterizations of a quasi-arc.
\end{rem}

For every $\alpha_s$ there is a sequence $\alpha_s \succ \alpha_{v_1}\succ \alpha_{v_2}\succ \dots$ with $|v_i|$ tending to $0$. We define:
\[ [\alpha_s,\binfty)\coloneqq [\alpha_s,\alpha_{v_1}]\cup [\alpha_{v_1},\alpha_{v_2}]\cup [\alpha_{v_2},\alpha_{v_3}]\cup \dots  \]

\begin{prop}
\label{prop:alpha v alpha s is arc}
  The chain $[\alpha_{s},\binfty)$ is a simple arc for every alpha-point $\alpha_s$ with $|s|>0$. Points in $[\alpha_{s},\binfty)$ are parametrized by their escaping time ranging continuously from $|s|$, the escaping time of $\alpha_s$, to $0$. Alpha-points are dense in $[\alpha_{s},\binfty)$.
\end{prop}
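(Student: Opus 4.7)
The strategy is to adapt the Markov-partition argument of Corollary~\ref{cor:zero chian is arc} to the chain $[\alpha_s, \binfty)$. Fix $\alpha_s$ with $|s|>0$ and choose any sequence $\alpha_s = \alpha_{v_0} \succ \alpha_{v_1} \succ \alpha_{v_2} \succ \cdots$ with $|v_i|\searrow 0$, giving the decomposition
\[
[\alpha_s, \binfty) \;=\; \bigcup_{i\ge 0}\,[\alpha_{v_i}, \alpha_{v_{i+1}}],
\]
where consecutive finite external sub-chains meet only at the intermediate $\alpha_{v_i}$ by Lemma~\ref{lem:conc of chains}. Note that $[\alpha_s,\binfty)$ is contained in the unique unbounded connected component of $\Esc_{|s|}(\bF)$ (Corollary~\ref{cor:Q-P comp are bound}), which will be used below.

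First I will show that each finite sub-chain $[\alpha_{v_i}, \alpha_{v_{i+1}}]$ is a simple arc with the escaping-time parametrization continuous and monotonic from $|v_i|$ to $|v_{i+1}|$. The plan is to repeat the Markov construction of Corollary~\ref{cor:zero chian is arc} inside the lake $\bO_\iota(v_{i+1})$ supplied by Lemma~\ref{lem:comp of Esc R - Esc T}: the interior $[\alpha_{v_i}, \alpha_{v_{i+1}})$ lies in this single lake, and $\bF^{|v_{i+1}|}$ maps $\bO_\iota(v_{i+1})$ conformally onto the ocean $\bO$. Iterated compositions of $\bF^Q$ (for appropriate small $Q\in \PT_{>0}$) with the self-similarity $A_\str$ give the required family of expanding maps, exactly as the $\chi_J = A_\str^m\circ \bF^Q$ in the zero-chain proof; the diameters of the resulting Markov pieces shrink by a uniform factor, so the arc characterization from \cite{Na}*{Theorems 6.16 and 6.17} applies. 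Continuity of the escaping time follows by tracking it under the Markov dynamics, using that $\bF^Q$ decreases escaping time by $Q$ and $A_\str$ scales it by $\tt$, as in Corollary~\ref{cor:zero chian is arc}.

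Next I will concatenate these arcs into a single simple arc from $\alpha_s$ to the ideal point $\binfty$. Again by \cite{Na}*{Theorems 6.16 and 6.17}, the hypotheses to check are: consecutive sub-arcs meet at a single alpha-point (immediate from the construction); non-consecutive sub-arcs are disjoint, which follows from the tree structure of the closed bubbles in Corollary~\ref{cor:tree str of closed bubbles}; and the tail sub-arcs $[\alpha_{v_i},\alpha_{v_{i+1}}]$ with $i\to\infty$ accumulate only at $\binfty$ in the wall topology of \S\ref{ss:balpha:wall topoloy}. For the last condition, for any wall $\bPi$ the alpha-points not lying below $\bPi$ have generation bounded above by a number depending only on $\bPi$ (an alpha-point of tiny generation cannot sit high above $\bZ$), so only finitely many $\alpha_{v_i}$ are above $\bPi$ and hence all but finitely many pieces lie in the $\binfty$-neighborhood $\binn(\bPi)\sqcup\{\binfty\}$. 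The escaping-time parametrization is continuous across the joining points $\alpha_{v_i}$ since both adjacent pieces assign the value $|v_i|$ there, and it extends continuously to the value $0$ at $\binfty$ by the same wall-topology argument. Density of alpha-points in $[\alpha_s,\binfty)$ is inherited from density in each finite piece.

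The main obstacle will be the first step: implementing the Markov partition on each finite sub-chain. In Corollary~\ref{cor:zero chian is arc} the dynamics was provided concretely by Lemma~\ref{lem:dynam of domin:alpha} in terms of dominant alphas; here one must set up the analogous self-similar Markov dynamics on a general finite external chain inside a lake, and verify that the composition $A_\str^m\circ \bF^Q$ is expanding on the hyperbolic metric of the ocean $\bO$ along the lines of the expansion established in~\eqref{eq:scal+map}. Once this expansion is in place, the rest of the argument proceeds in close parallel to the zero chain case.
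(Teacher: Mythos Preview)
Your plan is workable in outline but is far more elaborate than needed, and it contains a genuine confusion at the tail.

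The paper's proof is a two-line reduction to the zero-chain result (Corollary~\ref{cor:zero chian is arc}) via the dynamics, with no new Markov construction at all. For a primary alpha-point $\alpha_Q$ (i.e.\ $s=(Q)$ of length one), choose a dominant $\alpha_P$ with $P\ge Q$; the zero chain already supplies the simple arc $[\alpha_P,\binfty)$ together with its continuous escaping-time parametrization, so there is a unique point $x\in[\alpha_P,\binfty)$ of escaping time $P-Q$, and $\bF^{P-Q}$ carries the sub-arc $[\alpha_P,x)$ homeomorphically onto $[\alpha_Q,\binfty)$, transferring all three assertions at once. For a general finite sequence $s=(Q_1,\ldots,Q_n)$, the chain $(\binfty,\alpha_s]$ is the \emph{finite} concatenation of $(\binfty,\alpha_{Q_1}]$ with the univalent lifts of $(\binfty,\alpha_{Q_2}],\ldots,(\binfty,\alpha_{Q_n}]$ under $\bF^{Q_1},\bF^{Q_1+Q_2},\ldots$ (each lift lies in a single lake by Lemma~\ref{lem:comp of Esc R - Esc T}, hence is an arc). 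No infinite concatenation is needed, and no new expansion estimate.

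Your approach has two specific weaknesses. First, you propose to rerun the Markov argument on every segment $[\alpha_{v_i},\alpha_{v_{i+1}}]$, but you do not identify the map that plays the role of Lemma~\ref{lem:dynam of domin:alpha}; the expansion in~\eqref{eq:scal+map} is for the particular self-map $\bF^{(\tt-1)P}\circ A_\str$ of a particular lake, and there is no evident analogue for an arbitrary segment without first pushing it into the zero chain---which is precisely the paper's shortcut. Second, your tail argument confuses $\binfty=\infty$ with $\balpha$: the wall topology of \S\ref{ss:balpha:wall topoloy} describes neighborhoods of $\balpha$, and the sets $\binn(\bPi)$ are neighborhoods of $\balpha$, not of $\infty$. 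Alpha-points of small generation tend to $\infty$ in $\widehat\C$ (since $A_\str$ is contracting and $A_\str(\alpha_v)=\alpha_{\tt v}$), not into any $\binn(\bPi)$. In the paper's argument this issue never arises, because the concatenation is finite.
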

\noindent As a consequence, $[\alpha_{s},\alpha_v]$ is a simple arc for every $\alpha_s \succ\alpha_v$. We call both $[\alpha_{s},\alpha_v]$ and $[\alpha_s,\binfty)$ \emph{external ray segments.}
\begin{proof}
Suppose first $\alpha_s=\alpha_Q$. Choose a dominant $\alpha_P$ with $P\ge Q$. By Corollary~\ref{cor:zero chian is arc}, $[\alpha_P,\binfty)$ is a simple arc. Let $x\in [\alpha_P,\binfty)$ be the unique point of generation $P-Q$. Then $[\alpha_P,x)$ is a lift of $[\alpha_Q,\binfty)$ under $\bF^{P-Q}$; therefore, $[\alpha_Q,\binfty)$ is a simple arc.

Suppose now $s=(Q_1,Q_2,\dots, Q_n)$. Then $(\binfty,\alpha_s]$ is $(\binfty,\alpha_{Q_1}]$ followed by the lift of $(\binfty,\alpha_{Q_2}]$ under $\bF^{Q_1}$ connecting $\alpha_{Q_1}$ and $\alpha_{(Q_1,Q_2)}$, followed by the lift of $(\binfty,\alpha_{Q_3}]$ under $\bF^{Q_1+Q_2}$ connecting $\alpha_{(Q_1,Q_2)}$ and $\alpha_{(Q_1,Q_2, Q_3)}$, and so on. By Lemma~\ref{lem:comp of Esc R - Esc T}, $(\binfty,\alpha_s]$ is a simple arc. The claims about the parameterization and alpha-points follow from Corollary~\ref{cor:zero chian is arc}. 
 \end{proof}

\subsection{External rays}
\label{ss:ExtRayFstr}
 An external ray $\bR$ is a simple arc
\[\dots \cup [\alpha_{i+1},\alpha_i]\cup [\alpha_{i},\alpha_{i-1}]\cup \dots\] 
subject to the condition \[\dots \succ\alpha_{i+1}\succ\alpha_i\succ\alpha_{i-1}\succ\dots\] such that
\begin{itemize}
\item the generation of $\alpha_i$ tends to $0$ as $i$ tends $+\infty$, and
\item there is no alpha-point $\alpha'$ such that $\alpha'\succ \alpha_i$ for all $i\in \Z$.
\end{itemize}
In other words, an external ray is a simple arc between $\infty$ and an end of the escaping set. The \emph{generation} of $\bR$ is $\lim_{i\to +\infty } |\alpha_i|\in \R_{>0}\sqcup \{+\infty \}$. We say that
\begin{itemize}
\item $\bR$ has \emph{type \RN{1}} if the generation of $\bR$ is $\infty$; and
\item $\bR$ has \emph{type \RN{2}} if the generation of $\bR$ is $<\infty$.
\end{itemize}

Given a ray $\bR$, its \emph{image} is \[\bF^P(\bR)\coloneqq \bF^P(\bR\cap \Dom \bF^P).\]
Then $\bR\cap \Dom \bF^P$ is a subarc (possibly empty) of $\bR$ consisting of all the points with the escaping time in $(P,+\infty)$, see Proposition~\ref{prop:alpha v alpha s is arc}.  If $Q$ is the generation of $\bR$, then the generation of $\bF^P(\bR)$ is $Q-P$. Note that $\bF^P(\bR)$ is empty if and only if $P\ge Q$.

The following is a corollary of Proposition~\ref{prop:order of alphas}:
\begin{cor}
\label{cor:rays meet}
Any two external rays $\bR_1\neq \bR_2$ meet at a unique $\alpha_s$; i.e.~$\bR_1\cap \bR_2=[\alpha_s,\binfty)$.\qed
\end{cor}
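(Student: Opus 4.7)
The plan is to identify the branching alpha-point $\alpha_s$ via the tree order on alpha-points from Proposition~\ref{prop:order of alphas}, combined with the branching-at-alpha-points structure of $\Esc(\bF)$ from Lemma~\ref{lem:comp of Esc R - Esc T}. Set $A_i\coloneqq \bR_i \cap \{\alpha_s\}_s$, so that each $A_i$ is $\succ$-totally ordered with generations accumulating at $0$, and put $A_\cap\coloneqq A_1\cap A_2$.

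First I would verify that $A_\cap\neq \emptyset$. Given any $\alpha_v\in A_1$, pick $\alpha_w\in A_2$ with $|w|<|v|$, which exists since generations in $A_2$ accumulate at $0$. By Proposition~\ref{prop:order of alphas}, either $\alpha_v\succ\alpha_w$, in which case $\alpha_w\in[\alpha_v,\binfty)\subseteq\bR_1$ gives $\alpha_w\in A_\cap$; or the two points are $\prec$-separated by $\alpha_s\coloneqq\alpha_v\wedge\alpha_w$, in which case $\alpha_s\in[\alpha_v,\binfty)\cap[\alpha_w,\binfty)\subseteq\bR_1\cap\bR_2$ gives $\alpha_s\in A_\cap$. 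Moreover $A_\cap$ is $\succ$-downward closed in each $A_i$: if $\alpha\in A_\cap$ and $\alpha'\prec\alpha$ with $\alpha'\in A_i$, then $\alpha'\in[\alpha,\binfty)\subseteq\bR_1\cap \bR_2$ forces $\alpha'\in A_\cap$.

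Next I would extract a $\succ$-maximum $\alpha_s=\max_\succ A_\cap$, using the assumption $\bR_1\neq \bR_2$. Since the rays differ, some alpha-point in $A_1$ is not in $A_\cap$, and by downward closure this bounds $A_\cap$ from above in $A_1$. If the supremum $T^*\coloneqq\sup\{|\alpha|:\alpha\in A_\cap\}$ were not attained by any $\alpha\in A_\cap$, then a sequence $\alpha_n\in A_\cap$ with $|\alpha_n|\nearrow T^*$ would converge along $\bR_1\cap\bR_2$ to a non-alpha-point $x^*$. But Lemma~\ref{lem:comp of Esc R - Esc T} says that the two components of $\Esc_R(\bF)\setminus \Esc_T(\bF)$ attached to a point on their common boundary only branch at alpha-points, so $\Esc(\bF)$ is locally an arc at $x^*$ and the two rays coincide in a neighborhood of $x^*$; this produces alpha-points in $A_\cap$ on the high-generation side of $x^*$, contradicting maximality of $T^*$.

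Finally, the inclusion $[\alpha_s,\binfty)\subseteq \bR_1\cap\bR_2$ is immediate from $\alpha_s\in A_\cap$. For the reverse inclusion, if $y\in \bR_1\cap\bR_2$ were strictly on the high-generation side of $\alpha_s$ along $\bR_1$, the same local-arc argument at $y$ would yield an alpha-point $\alpha_y\in A_\cap$ arbitrarily close to $y$ from the $\binfty$-side of $\bR_1$; for $\alpha_y$ close enough, $\alpha_y$ lies on the subarc of $\bR_1$ from $\alpha_s$ to $y$, so $\alpha_y\succ \alpha_s$, contradicting maximality. Uniqueness of $\alpha_s$ follows since $[\alpha_s,\binfty)=[\alpha_{s'},\binfty)$ forces $\alpha_s=\alpha_{s'}$ as far endpoints of the same arc. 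The hard part is the existence of the branching alpha-point $\alpha_s$: since alpha-points are dense along each ray, this cannot be established purely from the order structure on $A_\cap$, and crucially requires the geometric input from Lemma~\ref{lem:comp of Esc R - Esc T} that $\Esc(\bF)$ branches only at alpha-points.
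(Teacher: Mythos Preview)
Your argument arrives at the right answer but takes an unnecessarily circuitous route, and the key step---that $A_\cap$ has a $\succ$-maximum---is not properly justified. The assertion ``$\Esc(\bF)$ is locally an arc at $x^*$'' is simply false: $\Esc(\bF)$ has branches attached at the dense set of alpha-points, so it is \emph{nowhere} locally an arc. What Lemma~\ref{lem:comp of Esc R - Esc T} actually gives is that decorations attach only at alpha-points; turning this into the claim that two rays through a non-alpha-point $x^*$ must agree on a two-sided neighborhood of $x^*$ still requires an argument (essentially applying the separator dichotomy of Proposition~\ref{prop:order of alphas} to alpha-points $\alpha_v\in\bR_1$, $\alpha_w\in\bR_2$ taken just beyond $x^*$ and ruling out each case), which you do not carry out. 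The same gap recurs in your reverse-inclusion step at the end.

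The paper's intended argument bypasses all of this and is essentially one line. Since $\bR_1\neq\bR_2$, pick an alpha-point $\alpha_v\in\bR_1\setminus\bR_2$ (alpha-points are dense by Proposition~\ref{prop:alpha v alpha s is arc}) and any $\alpha_w\in\bR_2$ with $|w|\geq|v|$. Then $\alpha_w\succ\alpha_v$ is impossible, as it would force $\alpha_v\in[\alpha_w,\binfty)\subset\bR_2$; so by Proposition~\ref{prop:order of alphas} the points $\alpha_v,\alpha_w$ are $\prec$-separated by some $\alpha_s$, say with $\alpha_v\in\bO_-(s)$ and $\alpha_w\in\bO_+(s)$. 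Then $[\alpha_s,\binfty)\subset\bR_1\cap\bR_2$, while $\bR_i\setminus[\alpha_s,\binfty)$ is a connected arc emanating from $\alpha_s$ containing $\alpha_v$ (resp.\ $\alpha_w$), hence lies entirely in $\overline{\bO_-(s)}$ (resp.\ $\overline{\bO_+(s)}$). These closures meet only at $\alpha_s$, so $\bR_1\cap\bR_2=[\alpha_s,\binfty)$. No supremum, no convergence, and the maximum of $A_\cap$ appears in one step as the separator.
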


A ray $\bR$ is \emph{periodic} if $\bF^P(\bR)=\bR$ for some $P>0$. In this case $P$ is a \emph{period} of $\bR$. We will show in Corollary~\ref{cor:ext rays land} that every periodic ray has a minimal period. Preperiodic rays are defined accordingly.

The zero chain~\eqref{eq:ZeroChain:defn} is a ray  landing at $0$; we will denote this ray as $\bR^\str=\bR^0$. Writing $\bI_t=[\alpha_{t k}, \alpha_{(t-1)k}]$
 in~\eqref{eq:zero chain:A str}, we obtain the decomposition
\begin{equation}
\label{eq:bRstr:decomp}
\bR^\str= \dots \cup \bI_1 \cup \bI_0\cup \bI_{-1}\cup \dots
\end{equation}
where 
\begin{equation}
\bI_i\subset \overline{\Esc_{P\tt^i}\setminus \Esc_{P\tt^{i-1}}(\bF)},\sp\sp \sp P\coloneqq |\balpha_0|
\end{equation}
 is an external ray segment satisfying 
\begin{equation}
\label{eq:I:lem:ray decomp as fund segm}
\bI_{i}=A_\str\big( \bI_{i-1}\big).
\end{equation}

\subsection{Wakes}
\label{ss:wakes}
Since the zero ray $\bR^0$ lands at $0$, for every critical point $c_s$, there are two preimages $\bR_{s,-}$ and $\bR_{s,+}$ of $\bR^0$ landing at $c_s$. We assume that $\bR_{s,-}$ is on the left of $c_s$ and $\bR_{s,+}$ is on the right of $c_s$ (relative the boundary of the bubble containing $c_s$). Let $\alpha_{s,-}\in \bR_{s,-}$ and $\alpha_{s,+}\in \bR_{s,+}$ be two alpha-points on the rays close to $c_s$. Observe that $\alpha_{s,+}$ and  $\alpha_{s,-}$ are $\prec$-separated by $\alpha_s$. We denote by $\bR'_{s,-}$ and $\bR'_{s,+}$ the closed subarcs of $\bR_{s,-}$ and $\bR_{s,+}$ between $\alpha_s$ and $c_s$, see Figure~\ref{fig:wake:defn}. By Corollary~\ref{cor:rays meet},  
\begin{equation}
\label{eq:part Ws}
\{c_s\} \cup \bR'_{s,-}\cup \bR'_{s,+} \eqqcolon \partial \bW_s
\end{equation}
 encloses the closed topological disk $\bW_s$ containing $\bL_s$. We call $\bW_s$ the (closed) \emph{wake at $c_s$}, and we say that $c_s$ is the \emph{root} of $\bW_s$. We will show in Corollary~\ref{cor:wake:clos:limb} that $\bW_s=\overline \bL_s$. If $s=(P_1,P_2\dots, P_m)$, then $m$ is called the \emph{level} of $\bW_s$. We say that $\alpha_s$ is the \emph{top} point of $\bW_s$. Wakes $\bW_P$ are called \emph{primary}.

\begin{figure}[tp!]
\begin{tikzpicture}
\draw[red](0,0)--(7,0);


\begin{scope}[scale=0.7]
\draw[yscale=2] (5,0) -- (4.2,1)--(5,2)--(5.8,1)--(5,0);

\draw (5,0) .. controls (3,0.8) and (3,3.7).. (5,4)
  .. controls (7,3.7) and (7,0.8)..(5,0);
\node at (5,1.7) {$\bZ_s$};

\draw (5,4) edge node[above left ]{$(\alpha_s,\binfty)$} (7,7);

\node[left] at (3.5,2) {$\bR'_{s,-}$};
\node[right] at (6.5,2) {$\bR'_{s,+}$};
\node[below] at (5,0) {$c_s$};
\node[above left] at (5,4) {$\alpha_s$};

\end{scope}

\end{tikzpicture}
\caption{The wake $\bW_s$ is the closed topological disk containing $\bZ_s$ and enclosed by $\overline {\bR_{s,-}\cup \bR_{s,+}}$.}
\label{fig:wake:defn}
\end{figure}

The dynamics of wakes follows the dynamics of their roots:
\begin{lem} If $\bF^Q(c_v)=c_s$, then $\bF^Q\colon \bW_v\to \bW_s$ is a homeomorphism. For every wake $\bW_s$, we have a conformal map \[ \bF^{|s|}\colon \intr \bW_s\to \C\setminus \overline \bR^0.\]
\end{lem}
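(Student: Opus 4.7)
The plan is to prove the second (conformal-bijection) statement first, and then deduce the first as an immediate consequence via the factorization $\bF^{|v|} = \bF^{|s|}\circ \bF^Q$.

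For the conformal map $\bF^{|s|}\colon \intr \bW_s \to \C\setminus \overline{\bR^0}$, I would proceed in four small steps. First, I claim $\intr \bW_s \subset \Dom \bF^{|s|}$: on the boundary $\partial \bW_s = \{c_s\}\cup \bR'_{s,-}\cup \bR'_{s,+}$, the escaping time runs (by Proposition~\ref{prop:alpha v alpha s is arc}) from the value $|s|$ at $\alpha_s$ up to $+\infty$ as one approaches $c_s$ (whose forward orbit lands on the invariant $\partial \bZ$ and never escapes), so $\partial \bW_s \cap \Esc_{|s|} = \{\alpha_s\}$; since each component of $\Esc_{|s|}$ is unbounded (cf.~\S\ref{ss:Fat Jul  Esc}) while $\bW_s$ is a bounded closed topological disk, any component of $\Esc_{|s|}$ meeting $\intr \bW_s$ would have to exit through $\partial \bW_s\setminus \{\alpha_s\}$, which is impossible. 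Second, $\bF^{|s|}$ is unramified on $\intr \bW_s$: by Lemma~\ref{lem:crit pts} its critical points are bubble-roots of generation $\le |s|$, and within $\bW_s$ the only such root is $c_s$, lying on $\partial \bW_s$, since all other roots of bubbles $\bZ_{s,w}\subset \bL_s$ have generation $>|s|$. Third, sending $\alpha_s\mapsto \binfty$ extends $\bF^{|s|}$ continuously to $\bW_s \to \C\cup\{\binfty\}$, with $c_s\mapsto 0$ and $\bR'_{s,\pm}$ mapping onto $\bR^0$ (the rays are, by definition, lifts of $\bR^0$ under $\bF^{|s|}$ landing at $c_s$); consequently $\bF^{|s|}\colon \intr \bW_s \to \C\setminus \overline{\bR^0}$ is proper. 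Finally, a proper unramified holomorphic map into a simply connected target is a trivial covering, and the degree is pinned down locally at $c_s$: there $\bF^{|s|}$ is a double branched cover with $\bR_{s,-}\cup \bR_{s,+}$ modeling the real axis and $\intr \bW_s$ modeling a single half-plane, so a point in $\C\setminus \bR^0$ close to $0$ has a unique preimage in $\intr \bW_s$ near $c_s$; and because $0$ is not periodic (the rotation number $\theta_\str$ is irrational), the only preimage of $0$ in $\bW_s$ under $\bF^{|s|}$ is $c_s$ itself, so the local count equals the global count and the degree is $1$.

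For the first claim, write $Q = |v|-|s|$ (forced by the generations of $c_v$ and $c_s$). Both $\bF^{|v|}\colon \intr \bW_v \to \C\setminus\overline{\bR^0}$ and $\bF^{|s|}\colon \intr \bW_s \to \C\setminus\overline{\bR^0}$ are conformal bijections by the previous paragraph, so $\bF^Q = (\bF^{|s|})^{-1}\circ \bF^{|v|}$ realizes a conformal bijection $\intr \bW_v \to \intr \bW_s$. To extend across the boundary, note that $Q<|v|$ implies $c_v$ is \emph{not} a critical point of $\bF^Q$ (Lemma~\ref{lem:crit pts}), so near $c_v$ the map $\bF^Q$ is locally conformal and orientation-preserving, sending the pair $(\bR_{v,-},\bR_{v,+})$ of rays landing at $c_v$ onto the pair $(\bR_{s,-},\bR_{s,+})$ landing at $c_s$ in matching order. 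Matching escaping times along the rays then forces $\bF^Q(\alpha_v)=\alpha_s$ and hence $\bF^Q(\bR'_{v,\pm}) = \bR'_{s,\pm}$; combined with the conformal bijection of interiors, this yields the homeomorphism $\bF^Q\colon \bW_v\to \bW_s$.

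The main obstacle is the degree computation: one must be sure that $\bW_s$ occupies exactly one of the two half-planes cut out by $\bR_{s,-}\cup \bR_{s,+}$ at $c_s$, rather than both, which comes down to the geometric setup of Figure~\ref{fig:wake:defn} and the fact that $\bR_{s,\pm}$ are the two distinct preimages of $\bR^0$ landing at $c_s$. Everything else is boundary bookkeeping with escaping times.
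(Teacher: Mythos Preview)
Your proof is correct. The paper takes the opposite order: it first observes that $\bF^Q$ carries $\overline{\bR_{v,-}\cup\bR_{v,+}}$ homeomorphically onto $\overline{\bR_{s,-}\cup\bR_{s,+}}$ (``by construction'', since rays push forward under the cascade with $\bF^Q(c_v)=c_s$ and $\bF^Q(\alpha_v)=\alpha_s$), and infers that $\bF^Q\colon\bW_v\to\bW_s$ is a homeomorphism from this boundary match; the second claim is then handled the same way, using that $\bF^{|s|}$ sends each of $\bR_{s,\pm}$ homeomorphically onto $\bR^0$. Your route---establishing the conformal bijection $\bF^{|s|}\colon\intr\bW_s\to\C\setminus\overline{\bR^0}$ first and then recovering the homeomorphism as $(\bF^{|s|})^{-1}\circ\bF^{|v|}$---is a pleasant reorganization: the factorization makes the first claim drop out with no separate boundary argument. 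The underlying geometric inputs (no critical points of generation $\le|s|$ in $\intr\bW_s$; the rays $\bR_{s,\pm}$ are the lifts of $\bR^0$) are the same in both approaches.

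One small point: your degree-$1$ step is more than you need. Once you know the map $\bF^{|s|}\colon\intr\bW_s\to\C\setminus\overline{\bR^0}$ is a proper unramified holomorphic map into a simply connected target, it is automatically a conformal isomorphism, because any covering of a simply connected space is trivial and the domain $\intr\bW_s$ is connected. The local count at $c_s$ and the aperiodicity of $0$ play no role. For step~3, the cleanest justification of properness is to note that $\intr\bW_s$ is a full connected component of $(\bF^{|s|})^{-1}(\C\setminus\overline{\bR^0})$ (the only preimage of $0$ in $\bW_s$ is $c_s$, and the two local preimage arcs of $\bR^0$ at $c_s$ are exactly $\bR'_{s,\pm}$), and then invoke $\sigma$-properness; your continuous extension at $\alpha_s$ amounts to the same thing but is stated less directly.
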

\begin{proof}
By construction, $\bF^Q$ maps $\overline{\bR_{v,-}\cup \bR_{v,+}}$ homeomorphically to $\overline{\bR_{s,-}\cup \bR_{s,+}}$. Therefore, $\bF^Q\colon \bW_v\to \bW_s$ is a homeomorphism. 

Since $\bF^{|s|}$ maps each curve $\bR_{v,-}, \bR_{v,+}$ homeomorphically to $\bR^0$, we see that  $\bF^{|s|}\colon \intr \bW_s\to \C\setminus \overline \bR^0$ is conformal.
\end{proof}

As before, we write $\bJ_n\coloneqq \partial \bZ\cap\bS^\#_n$ and $\bJ=\bJ_0$.
\begin{lem}[Primary wakes shrink]
\label{lem:wake shrinks}
For every $n\in \Z$ and every $\varepsilon>0$ there are at most finitely many primary wakes $\bW_P$ with $c_P\in \bJ_n$ such that the diameter of $\bW_P$ is greater than $\varepsilon$.
\end{lem}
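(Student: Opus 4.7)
The plan is a two-step reduction via the dynamical self-similarity, followed by a compactness argument on a fundamental annulus.

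\emph{Reduction via self-similarity.} By Lemma~\ref{lem:prop of Astr}, the rescaling $A_\str\colon z\mapsto \mu_\str z$ satisfies $A_\str(\bJ_n)=\bJ_{n+1}\subset \bJ_n$ and $A_\str(\bW_P)=\bW_{\tt P}$, scaling diameters by $|\mu_\str|<1$. Setting $\bK_n\coloneqq \bJ_n\setminus \bJ_{n+1}$---a relatively compact arc bounded away from $0$---I obtain the disjoint decomposition $\bJ_n\setminus\{0\}=\bigsqcup_{k\ge 0}A_\str^k(\bK_n)$, in which diameters within the $k$-th stratum are exactly $|\mu_\str|^k$ times the corresponding diameters in $\bK_n$. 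Assuming a uniform bound $D\coloneqq \sup\{\diam(\bW_P)\mid c_P\in \bK_n\}<\infty$, the inequality $\diam(\bW_P)>\varepsilon$ for $c_P\in A_\str^k(\bK_n)$ forces $|\mu_\str|^k D>\varepsilon$, so only finitely many strata can contribute. Combined with the proper discontinuity of $\bF^{\ge 0}$ (Lemma~\ref{lem:discr of dyn}), which gives only finitely many critical points of bounded generation in the compact set $\bK_n$, the assertion reduces to the single statement
\[
\diam(\bW_P)\to 0 \quad \text{as } P\to\infty \text{ with } c_P\in \bK_n.
\]

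\emph{Shrinking of wakes at $\bK_n$.} I argue by contradiction. Suppose there exist $\varepsilon>0$ and generations $P_i\to\infty$ with $c_{P_i}\in \bK_n$ and $\diam(\bW_{P_i})>\varepsilon$. Pick $x_i\in \bW_{P_i}$ with $|x_i-c_{P_i}|\ge \varepsilon/2$ and pass to a subsequence so that $c_{P_i}\to c^*\in \overline{\bK_n}$ and $x_i\to x^*$. The primary wakes $\bW_{P_i}$ are pairwise disjoint closed topological disks: their roots are distinct critical points in $\partial \bZ$, their bounding external-ray segments are subsegments of distinct external rays (hence disjoint by Corollary~\ref{cor:rays meet}), and each wake meets $\partial \bZ$ only at its root. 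The key conformal ingredient, already recorded just before the lemma, is that $\bF^{P}$ restricts to a conformal isomorphism $\intr \bW_P\to \C\setminus\overline{\bR^0}$ and extends as a degree-$2$ branched cover along $\partial \bW_P$, collapsing the two bounding ray segments onto $\bR^0$. Choosing a compact disk $D_0\Subset \C\setminus\overline{\bR^0}$ containing a basepoint $z_0$ and pulling it back by the univalent inverse branch, the Koebe distortion theorem yields
\[
\diam(\bW_P) \asymp \bigl|\bigl((\bF^P)^{-1}\bigr)'(z_0)\bigr|
\]
with constants independent of $P$. Renormalization hyperbolicity at $f_\str$, propagated via $A_\str$, should then force this derivative to zero as $P\to\infty$, giving the contradiction.

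\emph{Main obstacle.} The hard part will be the uniformity of the Koebe-based contraction over $c_P\in \bK_n$: the $\bF^P$-orbit of $(\bF^P)^{-1}(z_0)$ traverses many dynamical regions before landing at $z_0$, so $|((\bF^P)^{-1})'(z_0)|$ splits into a finite-time distortion (bounded by compactness) and a long-time expansion along an orbit accumulating on $\partial \bZ$. The cleanest route I anticipate is to transport the question to the pacman plane via the identifications $\bS^\#_n\simeq V\setminus\gamma_1$: primary wakes at $c_P\in \bJ_n$ correspond to primary wakes at critical points of iterates of $f_\str$ on $\partial Z_\str$, which shrink by the Siegel-disk geometry captured by the Siegel triangulations $\bDelta_n(f_\str)$ of Lemma~\ref{lem:SiegTriangLifting} whose walls $\bPi_n$ approximate $\partial Z_\str$. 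The self-similarity $A_\str$ then propagates this shrinking uniformly back to $\bF_\str$ and supplies the required estimate.
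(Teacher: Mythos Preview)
Your self-similarity reduction to the fundamental arc $\bK_n=\bJ_n\setminus\bJ_{n+1}$ is clean and correct, and it does reduce the lemma to showing $\diam(\bW_P)\to 0$ as $P\to\infty$ with $c_P\in\bK_n$. But the Koebe step you propose for this shrinking does not work. Koebe distortion controls a univalent map on compact subsets of its domain; it does not bound the diameter of the \emph{full} image by the derivative at one point. From $\bF^P\colon\intr\bW_P\to\C\setminus\overline{\bR^0}$ you only get $\diam\bigl((\bF^P)^{-1}(D_0)\bigr)\asymp\bigl|((\bF^P)^{-1})'(z_0)\bigr|$, not $\diam(\bW_P)$. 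The wake is pinched at its root $c_P\in\partial\bZ$, and the entire difficulty is controlling $\partial\bW_P$ near that root, which is exactly the boundary region Koebe says nothing about. (Incidentally, closed primary wakes are not pairwise disjoint: neighbouring ones share boundary external-ray segments, as Lemma~\ref{lem:prim wakes:comb} will show, so that part of your setup also needs adjustment.)

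The paper's argument attacks precisely this root difficulty, by a different route. It enumerates the primary wakes at $\bJ$ as iterated pullbacks of a single wake under the first return $\bbf_\pm\colon\bJ_\pm\to\bJ$, and observes that the return maps $\bbf_\pm\colon\bO_{\ell/\rho}\to\bO$ expand the hyperbolic metric of the ocean $\bO=\C\setminus\overline\bZ$. Hyperbolic expansion shrinks the bulk of each wake (any part compactly in $\bO$), but a separate claim is needed near the roots: short curves emanating from $c_n$ into $\bO$ pull back to short curves emanating from $c_{n+1}$. This claim is proved not by any intrinsic argument in the $\bF_\str$-plane but by transferring to the Siegel quadratic polynomial with rotation number $\theta_\str$ (via the hybrid conjugacy of Siegel maps with equal rotation number) and invoking Petersen's local connectivity theorem~\cite{Pe} for its Julia set. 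Your final paragraph gestures in the right direction with the pacman-plane transfer, but the actual mechanism you need is this external input from~\cite{Pe}, not an appeal to renormalization hyperbolicity.
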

\begin{proof}
It is sufficient to prove the statement for $n=0$. Write \[\bJ_-\coloneqq \bJ\cap \bU_- \sp \text{ and }\sp    \bJ_+\coloneqq \bJ\cap \bU_+  \]
(see~\eqref{eq:Max Prep U pm}); then $\bbf_{\pm}\colon \bJ_\pm \to \bJ$ realizes the first return of points in $\bJ$ back to $\bJ$. Let $\bW_{0}$ be the primary wake of smallest generation touching $\bJ$. Then all other primary wakes touching $\bJ$ are iterated lifts of $\bW_{0}$ under $\bbf_{\pm}\colon \bJ_\pm \to \bJ$; we enumerate these wakes as $(\bW_i)_{i\le 0}$ so that $\bW_{i-1}$ is a preimage of $\bW_i$ under $\bbf_\pm$. We will show that the diameter of $\bW_i$ tends to $0$.

 Let $\bO_\lambda$ be the union of all the lakes whose generation is $(0,0,1)$ and whose closure intersects $\bJ_-$. Similarly, $\bO_\rho$ is the union of all the lakes whose generation is $(0,1,0)$ and whose closure intersect $\bJ_+$. If $\bW_0=\bW_P$ intersects $\bJ_-$, then $\bO_\lambda=\bO_+(P)\cup \bO_i(P)$ because $\bJ_-$ contains a unique critical point $c_P$ of generation $\le P=(0,0,1)$; and $\bO_\rho$ consists of a single lake because $\bJ_+$ does not contain a critical point of generation $\le P$. If $\bW_0$ intersects $\bJ_+$, then $\bO_\rho$ consists of two lakes and $\bO_\lambda$ consists of a single lake.

The maps
\begin{equation}
\label{eq:prf:lem:wake shrinks}
\bbf_-\colon \bO_\lambda\to \bO=\C\setminus \overline \bZ \sp \text{ and }\sp \bbf_+\colon \bO_\rho \to \bO=\C\setminus \overline \bZ
\end{equation}
expand the hyperbolic metric of $\bO$.

Let us denote by $c_n$ the root of $\bW_n$. Let $y_0$ be an arbitrary point in $\bO$ and let $\ell_0$ be a curve in $\overline \bO$ connecting $c_0$ to $y_0$ such that $\ell_0\setminus\{c_0\}\subset \bO$. For $n\le 0$, denote by $\ell_n$ the unique lift of $\ell$ (by an appropriate $\bF^{P_n}$) starting at $c_n$ and denote by $y_n$ the endpoint of $\ell_n$.

\emph{Claim.} There is a sequence $\varepsilon_n>0$ converging to $0$ such that the following holds. If the diameter of $\ell_0$ is less than $\varepsilon_0$, then the Euclidean diameter of $\ell_n$ is less than $\varepsilon_n$.

Since the maps in~\eqref{eq:prf:lem:wake shrinks} expand the hyperbolic metric, it follows from the Claim that the $\bW_n$ shrink in the Euclidean metric. Indeed, $\bW_0$ minus a small neighborhood of $c_0$ is a compact subset of $\bO$. Lifts of this compact subset either shrink in the hyperbolic metric or converge to $\partial \bZ$; in the latter case, they shrink in the Euclidean metric.
\begin{proof}[Proof of the Claim]
It is sufficient to verify the claim in the dynamical plane of the pacman $f$. Since Siegel maps with the same rotation number are conjugate in small neighborhoods of  their Siegel disks (see~\S\ref{sss:SiegPacmen}), it is sufficient to prove the claim in the dynamical plane of the quadratic polynomial $p$ that has a Siegel fixed point with the same rotation number as $f$.

Choose two points $a,b\in \partial Z_p$ such that $a$ is slightly on the left of $c_0$ while $b$ is slightly on the right of $c_0$. Let $R_a$ and $R_b$ be two external rays landing at $a$ and $b$. Let $X$ be an open topological disk bounded by $\overline Z_p\cup R_a\cup R_b$ and truncated by some equipotential such that the boundary of $X$ contains $c_0$. Let $X_n$ be the unique lift of $X$ under $p^n$ such that $\partial X_n$ contains $c_n$ -- the unique preimage of $c_0$ under $p^n\colon \partial Z_p\to \partial Z_p$. Then $X_n$ is bounded by $p^{-n}(\overline Z_p) $, two preimages $R_{a,n},R_{b,n}$ of $R_a,R_b$, and an equipotential. If $n$ is big, then the difference between the external angles of $R_{a,n},R_{b,n}$ is small and the equipotential is close to the Julia set. Since the Julia set of $p$ is locally connected \cite{Pe}, the diameter of $X_n$ tends to $0$. 
\end{proof}
\end{proof}

\begin{figure}[tp!]
\begin{tikzpicture}[scale=0.7]
\draw[red] (-3.,0.)-- (10.,0.);
\node[red] at(0,-0.4){$\partial \bZ$};
\node at (3.1302, 1.50314) {$\bW_P$};
\node at (1.23603, 2.48491){$\bW_{P^-}$};
\node at (5.155, 3.55858){$\bW_{P^+}$};

\draw (3.08,0.)-- (2.06,1.34);
\draw (2.06,1.34)-- (2.98,2.64);
\draw (2.98,2.64)-- (4.04,1.54);
\draw (4.04,1.54)-- (3.08,0.);
\draw (0.7,0.)-- (2.06,1.34);
\draw (2.98,2.64)-- (3.06,3.66);
\draw (-0.32,2.28)-- (0.7,0.);
\draw (3.06,3.66)-- (2.34,5.08);
\draw (2.34,5.08)-- (0.82,4.5);
\draw (0.82,4.5)-- (-0.32,2.28);
\draw (5.7,0.)-- (4.04,1.54);
\draw (2.34,5.08)-- (4.878944786440976,7.453490331102311);
\draw (4.878944786440976,7.453490331102311)-- (7.034904302297577,5.596969636892458);
\draw (7.034904302297577,5.596969636892458)-- (7.76,3.24);
\draw (7.76,3.24)-- (5.7,0.);
\draw (4.220179378818126,0.)-- (3.5664083132199376,0.7802800024569837);
\draw (4.220179378818126,0.)-- (4.824697068361477,0.8120280209176659);

\draw[dashed ,blue] (5.7,0.)  .. controls (4.2,3.5) and (4.3,3.9).. (4.878944786440976,7.453490331102311) .. controls (6.3,3.5) and (6.2,3.7)  .. (5.7,0.) ;

\draw[dashed,blue ](3.08,0.) .. controls (2.7, 1) and (2.7,1.5).. (2.98,2.64)..
controls (3.3, 1) and (3.3,1.5).. (3.08,0.) ;
\draw[dashed,blue ] (0.7,0.)  .. controls (0, 2) and (1.5,4).. (2.34,5.08)
.. controls(2.4,4) and (2, 2) ..(0.7,0.)  ;
  .. controls (7,3.7) and (7,0.8)..(5,0);

 \draw[dashed,blue ]  (2.02,2.8) .. controls (2.3, 2.9) and (2.5,2.8).. (2.8,2.7)
..controls  (2.5,2.5) and  (2.3, 2.6) ..(2.02,2.8) ;
 \draw[blue] (2.8,2.7) -- (2.98,2.64);
 
  \draw[dashed,blue ]  (4.65,2.9)  .. controls (4.1, 3.1) and (3.8,3).. (3.3,2.7) 
  .. controls (3.8,2.5) and (4.1, 2.6) ..(4.65,2.9) ;
  
\draw[blue] (3.3,2.7) --  (2.98,2.64);

\draw (4.220179378818126,0.)-- (3.566408313219938,0.7802800024569837);
\draw (4.220179378818126,0.)-- (4.824697068361477,0.8120280209176661);
\draw (1.8891895158449743,0.)-- (1.3989135792855971,0.6886354384137502);
\draw (1.8891895158449743,0.)-- (2.5224136436502516,0.7325154093222184);
\draw (3.620070127306448,0.)-- (3.3443937688971115,0.4241316709391161);
\draw (3.620070127306448,0.)-- (3.9230063248666567,0.3546779652220272);
\draw (2.4607066988747057,0.)-- (2.1929615109123337,0.35140427778491135);
\draw (2.4607066988747057,0.)-- (2.8070748024118894,0.35854878898830234);
\draw (1.2768496768845468,0.027385750816400132)-- (1.049644019784973,0.34450219596460596);
\draw (1.2768496768845468,0.027385750816400132)-- (1.6467514256059657,0.34052550433446105);
\draw (4.524360199007429,0.40859573456984705)-- (4.9182305859025535,0.);
\draw (4.9182305859025535,0.)-- (5.267295908150453,0.4014242779809045);
\end{tikzpicture}
\caption{Combinatorics of primary wakes: $\partial W_{P}\setminus \{c_P\}$ is covered by neighboring wakes. The bubble chains $\bB_{\lambda(P)}, \bB_{P}=\bZ_P, \bB_{\rho(P)}$ landing at $\alpha_P$ are marked blue dashed.}
\label{FIg:comb:prim wakes}
\end{figure}

\begin{lem}[Combinatorics of primary wakes, Figure~\ref{FIg:comb:prim wakes}]
\label{lem:prim wakes:comb}
Consider a wake $\bW_P$. Write 
\[\lambda(P)=(P^-,\dots )\sp \text{ and }\sp \rho(P)=(P^+,\dots).\]
Then $\bW_P,\bW_{P^-},\bW_{P^+}$ are all the primary wakes containing $\alpha_P$ on the boundary.

As in~\eqref{eq:part Ws}, write $\partial \bW_P=\{c_P\} \cup \bR'_{P,-}\cup \bR'_{P,+}$. Then
$\bR'_{P,-}$ splits as a concatenation 
\[\bR'_{P,-}= [\alpha_P,\alpha_{Q_1}]\cup [\alpha_{Q_1},\alpha_{Q_2}]\cup \dots\] such that
\begin{itemize}
\item $[\alpha_P,\alpha_{Q_1}]=\bW_{P^-}\cap \bW_P $;
\item $[\alpha_{Q_i}, \alpha_{Q_{i+1}}]=\bW_{Q_{i}}\cap \bW_P$ for $i\ge 1$;
\item $Q_{i+1}^-=Q_{i},~ Q_1^-=P^-$, and $Q_{i}^+=P$;
\item  $\alpha_{Q_i}$ tends to $c_P$.
\end{itemize} 

Similarly, $\bR'_{P,+}$ splits as a concatenation 
\[\bR'_{P,+}= [\alpha_P,\alpha_{S_1}]\cup [\alpha_{S_1},\alpha_{S_2}]\cup \dots\] 
such that
\begin{itemize}
\item $[\alpha_P,\alpha_{S_1}]=\bW_{P^+}\cap \bW_P $;
\item $[\alpha_{S_i}, \alpha_{S_{i+1}}]=\bW_{S_{i}}\cap \bW_P$ for $i\ge 1$;
\item $S_{i+1}^+=S_{i},~ S_1^+=P^+$, and $S_{i}^-=P$;
\item  $\alpha_{S_i}$ tends to $c_P$.
\end{itemize} 
\end{lem}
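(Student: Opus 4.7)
The plan is to derive both the identification of the three primary wakes meeting at $\alpha_P$ and the combinatorial decomposition of $\partial \bW_P$ from the bubble-chain analysis of~\S\ref{ss:alpha-points} together with the dynamics of $\bF^P$ on $\bW_P$.

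First I would pin down the primary wakes through $\alpha_P$. By Lemma~\ref{lem:alpha' is a pnt}, the three bubble chains $\bZ_P$, $\bB_{\lambda(P)}$, $\bB_{\rho(P)}$ all land at $\alpha_P$; since $\lambda(P)$ begins with $P^-$ and $\rho(P)$ begins with $P^+$, the entire chain $\bB_{\lambda(P)}$ lies in $\overline{\bL_{P^-}}\subset\overline{\bW_{P^-}}$ and $\bB_{\rho(P)}$ lies in $\overline{\bW_{P^+}}$. Hence $\alpha_P\in \partial \bW_{P^-}\cap \partial \bW_{P^+}\cap \partial \bW_P$. Conversely, if a primary wake $\bW_R$ different from these three contains $\alpha_P$ on its boundary, then $\alpha_P$ lies on one of the rays $\bR'_{R,\pm}$; by Lemma~\ref{lem:comp of Esc R - Esc T} the subarc of that ray running from $\alpha_R$ to $\alpha_P$ would force an additional bubble chain through $\alpha_P$ starting from $\partial\bZ$ at $\bZ_R$, contradicting the list of three chains provided by Lemma~\ref{lem:alpha' is a pnt}.

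Second I would build the chain decomposition of $\bR'_{P,-}$. The map $\bF^P$ restricts to a conformal homeomorphism from $\intr \bW_P$ onto $\C\setminus \overline \bR^0$, sending $\bR'_{P,-}$ onto $\bR^0$ with $c_P\mapsto 0$ and $\alpha_P$ acting as the ideal point over $\binfty$. The primary wakes touching $\bR^0$ from one side organize $\bR^0$ into a canonical countable concatenation of ray segments between consecutive alpha-points, with adjacent wakes identified via the first step above applied at each intermediate alpha-point. Pulling back this tiling under $\bF^P$ produces the required decomposition
\[\bR'_{P,-}=[\alpha_P,\alpha_{Q_1}]\cup [\alpha_{Q_1},\alpha_{Q_2}]\cup \cdots,\]
in which each $[\alpha_{Q_i},\alpha_{Q_{i+1}}]$ is the shared boundary of $\bW_P$ with exactly one neighboring primary wake $\bW_{Q_i}$, and the initial segment $[\alpha_P,\alpha_{Q_1}]$ is $\bW_P\cap \bW_{P^-}$.

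Third I would verify the combinatorial identities. The relation $Q_i^+=P$ says that $\bB_{\rho(Q_i)}$ contains $\bZ_P$, which holds by construction because $\bW_P$ is the primary wake immediately across $[\alpha_{Q_i},\alpha_{Q_{i+1}}]$ from $\bW_{Q_i}$. The relation $Q_{i+1}^-=Q_i$ records the fact that at the interior alpha-point $\alpha_{Q_{i+1}}$ the three wakes meeting are $\bW_P$, $\bW_{Q_i}$, $\bW_{Q_{i+1}}$ (by the first step applied at $\alpha_{Q_{i+1}}$), so the remaining one besides $\bW_P$ and $\bW_{Q_{i+1}}$ is $\bW_{Q_{i+1}^-}=\bW_{Q_i}$. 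The boundary relation $Q_1^-=P^-$ follows identically at the endpoint $\alpha_P$. Finally, $\diam \bW_{Q_i}\to 0$ by Lemma~\ref{lem:wake shrinks}, which forces the accumulation of $\{\alpha_{Q_i}\}$ onto the only remaining endpoint of $\bR'_{P,-}$, namely $c_P$. The statement for $\bR'_{P,+}$ and $(S_i)$ is completely symmetric. The main obstacle I expect is the pullback-tiling step: one must show that the list of primary wakes touching $\bR^0$ from a fixed side is complete and that this combinatorial picture transfers intact along the conformal map $\bF^P\colon \intr\bW_P\to \C\setminus \overline \bR^0$ -- in particular, that no small-generation wakes hide in the accumulation near $c_P$ outside the constructed chain.
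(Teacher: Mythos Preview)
Your first step (the three primary wakes through $\alpha_P$) matches the paper's argument. The divergence is in step~2, and there the pullback idea has a genuine gap that you partly anticipate but underestimate.

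The problem is this: $\bF^P$ carries $\bR'_{P,-}$ to $\bR^0$ as a map of \emph{arcs}, and it carries $\intr\bW_P$ to $\C\setminus\overline\bR^0$, but it does \emph{not} carry the neighbouring primary wakes $\bW_{Q_i}$ (which lie \emph{outside} $\bW_P$) to anything: they are not in the domain where $\bF^P$ is the conformal isomorphism you want to use. What $\bF^P$ pulls back from the $\bR^0$ picture is the collection of \emph{secondary} wakes $\bW_{(P,T)}$ touching $\bR'_{P,-}$ from the \emph{inside} of $\bW_P$; their tops $\alpha_{(P,T)}$ are not primary alpha-points. Equivalently, if $\alpha_T$ is a dominant alpha-point on $\bR^0$, its $\bF^P$-preimage on $\bR'_{P,-}$ is an alpha-point of generation $P+T$, but there is no reason it equals the primary point $\alpha_{P+T}$ (whose bubble $\bZ_{P+T}$ is attached at $c_{P+T}\in\partial\bZ$, generally far from $c_P$). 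So the partition you produce by pullback is not the partition by the $\alpha_{Q_i}$, and your step~3 then applies the three-wake fact at the wrong points.

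The paper proceeds differently: it never uses $\bF^P$ on $\bW_P$. Instead it works entirely on the outside, using Corollary~\ref{cor:rays meet} to conclude that $\bW_{P^-}\cap\bW_P$ is a single arc $[\alpha_{Q_1},\alpha_P]$; then at $\alpha_{Q_1}$ the boundaries of $\bW_{P^-}$ and $\bW_P$ split, forcing the three bubble chains through $\alpha_{Q_1}$ to lie in three distinct primary wakes, hence $\alpha_{Q_1}$ is itself primary. Iterating this gives the whole chain $(Q_i)$ together with the relations $Q_i^+=P$, $Q_{i+1}^-=Q_i$. For the convergence $\alpha_{Q_i}\to c_P$, your invocation of Lemma~\ref{lem:wake shrinks} is not enough on its own: shrinking of $\bW_{Q_i}$ does not rule out the $\alpha_{Q_i}$ accumulating at an interior point of $\bR'_{P,-}$, leaving a gap not covered by any neighbouring primary wake. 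The paper handles this by taking an arbitrary alpha-point $\alpha_s\in\bR'_{P,-}$ close to $c_P$, observing that one of the three bubble chains through $\alpha_s$ must exit $\bW_P$ and hence lies in some primary wake $\bW_T\neq\bW_P$, and then using a finiteness count (only finitely many critical points of generation $<T$ separate $c_T$ from $c_{P^-}$) to show $T$ already appears among the $Q_i$. This is the exhaustion step your outline is missing.
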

\begin{proof}
Recall that $\alpha_P$ is the landing point of $\bB_P=\widehat \bZ_P,\bB_{\lambda(P)}, \bB_{\rho(P)}$, see Lemma~\ref{lem:alpha' is a pnt}. Since $\bB_{\lambda(P)}, \bB_{\rho(P)}$ are in wakes $\bW_{P^-},\bW_{P^+}$, the first claim follows.

We will verify the decomposition of $\bR'_{P,-}$; the decomposition of $\bR'_{P,+}$ can be verified similarly.  It follows from Corollary~\ref{cor:rays meet} that the intersection $ \bW_{P^-}\cap \bW_{P}$ is an arc of the form $[\alpha_v,\alpha_P]$ with $\alpha_v\succ \alpha_P$. Observe that all three bubble chains landing at $\alpha_v$ (see Lemma~\ref{lem:alpha' is a pnt}) belong to different prime wakes because $\alpha_v$ is a point where $\partial \bW_P$ and  $\partial \bW_{P^-}$ split. This implies that $\alpha_v=\alpha_{Q_1}$ for some $Q_1\in \PT_{>0}$; by the first claim, $Q_1^-=P^-$, and $Q_1^+=P$.
 
Similarly, the intersection $ \bW_{Q_1}\cap \bW_{P}$ is of the form $[\alpha_{Q_1},\alpha_{Q_2}]$, where $Q_2^-=Q_1$, and $Q_2^+=P$. Applying induction, we construct $Q_i$ for all $i\ge 1$. It remains to show that $\alpha_{Q_i}$ converges to $c_P$.

By Proposition~\ref{prop:alpha v alpha s is arc}, there is an alpha-point $\alpha_s\in \bR'_{P,-}$ close to $c_P$. At least one of the rays $\bB_s, \bB_{\lambda(s)},\bB_{\rho(s)}$ is not in $\bW_P$; thus $\alpha_s$ is on the boundary of a primary wake $\bW_T\not=\bW_P$. As above, the intersection $\bW_T\cap \bW_P$ is of the form $[\alpha_T,\alpha_{T_2}]$, where $T_2^-=T$.

Since there are at most finitely many critical points in $[c_T,c_{P^-}]$ (a subarc of $\partial \bZ$) of generation less than $T$, the arc $[\alpha_{T}, \alpha_{P}]$ intersects only finitely many primary wakes. This means that $T=Q_i$ for some $i\ge 1$. Since $c_s$ can be chosen arbitrary close to $c_P$, $\alpha_{Q_i}$ converges to $c_P$.
 \end{proof}

\begin{cor}[Tiling]
\label{cor:no ghost limb}
The union of primary wakes $\displaystyle\bigcup_{P>0} \bW_P$ contains $\bO$. Similarly, for every wake $\bW_s$
 with $s=(P_1,\dots, P_n)$ we have
\begin{equation}
\label{eq:cor:no ghost limb}
\bW_s=\overline \bZ_s\cup \bigcup_{P_{n+1}>0}\bW_{(P_1,\dots, P_n,P_{n+1})}. 
\end{equation} 

For every $z\in \Esc(\bF)$ and every $m\ge 1$ there are at most three wakes with disjoint interiors of level $\ge m$ containing $z$. The union of these wakes is a neighborhood of $z$.
\end{cor}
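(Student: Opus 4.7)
All three claims will be derived from the tiling combinatorics of primary wakes (Lemma~\ref{lem:prim wakes:comb}), the shrinking property (Lemma~\ref{lem:wake shrinks}), and the conformal self-similarity afforded by the cascade $\bF^{\ge 0}$.

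First I would establish the inclusion $\bO\subset\bigcup_{P>0}\bW_P$. Given $z\in\bO$, by~\eqref{eq:lakes exhaust ocean} the point $z$ lies in some proper lake, which by Lemma~\ref{lem:closest BChains} has the form $\bO_\iota(s)$ with $\iota\in\{-,+\}$ and $s=(P_1,\dots,P_k)$, $k\ge 1$. The crucial step is the containment $\bO_\iota(s)\subset\bW_{P_1}$. Using Lemma~\ref{lem:alpha' is a pnt} and Corollary~\ref{cor:lake:closure is bnd}, the boundary $\partial\bO_\iota(s)$ consists of subarcs of the bubble chain $\bB_s$ together with parts of $\bB_{\lambda(s)}$ or $\bB_{\rho(s)}$, all landing at the alpha-point $\alpha_s$, and $\alpha_s\succ\alpha_{P_1}$ with respect to the tree order. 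Since $\bZ_s\subset\bL_{P_1}\subset\bW_{P_1}$, the set $\partial\bO_\iota(s)$ is contained in $\bW_{P_1}$; a Jordan curve argument then places $\bO_\iota(s)$ itself inside the wake.

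The recursive equality~\eqref{eq:cor:no ghost limb} follows from the first claim via the conformal isomorphism $\bF^{|s|}\colon\intr\bW_s\to\C\setminus\overline\bR^0$: this map sends $\bZ_s$ onto $\bZ$ and, by uniqueness of external rays at critical points, each sub-wake $\bW_{(s,P)}$ bijectively onto the primary wake $\bW_P$. Pulling back the first claim to $\bW_s$ yields $\intr\bW_s\setminus\overline\bZ_s\subset\bigcup_{P>0}\bW_{(s,P)}$; adjoining $\overline\bZ_s$ and observing that the boundary rays $\bR'_{s,\pm}$ of $\bW_s$ are exhausted by the closures of adjacent sub-wakes (the pulled-back Lemma~\ref{lem:prim wakes:comb}) gives the equality.

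For the third claim I would distinguish whether $z\in\Esc(\bF)$ lies in the interior of a ray segment or at an alpha-point. In the ray-interior case, iterating the decomposition~\eqref{eq:cor:no ghost limb} to reach level $\ge m$ shows that exactly two level-$m$ wakes with disjoint interiors share the ray through $z$ on their boundary, with their union covering a neighborhood of $z$ by Lemma~\ref{lem:wake shrinks}. In the alpha-point case, $z=\alpha_v$ is the landing point of three bubble chains $\bB_v,\bB_{\lambda(v)},\bB_{\rho(v)}$ (Lemma~\ref{lem:alpha' is a pnt}); after truncating the latter two chains to finite prefixes of sufficiently large length, these three chains determine three wakes of level $\ge m$ whose disjoint interiors together cover a neighborhood of $z$ (again by Lemma~\ref{lem:wake shrinks}). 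The main obstacle is the containment $\bO_\iota(s)\subset\bW_{P_1}$ in the first step, which requires a precise geometric justification that the two rays bounding $\bW_{P_1}$ do not separate the lake from $\bZ_{P_1}$; once this is secured, the remaining claims reduce to the self-similar structure.
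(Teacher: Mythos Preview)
Your first step has a genuine gap: the containment $\bO_\iota(s)\subset\bW_{P_1}$ is \emph{false} in general. Take $s=(P_1)$ a single power-triple. The lake $\bO_-(P_1)$ has generation $P_1$, and its coast $\partial^c\bO_-(P_1)$ includes an arc of $\partial\bZ$ running from $c_{P_1}$ toward $c_{P_1^-}$ (the root of the first bubble in $\bB_{\lambda(P_1)}$); this arc lies entirely \emph{outside} $\bW_{P_1}$, since $\bW_{P_1}$ meets $\partial\bZ$ only at $c_{P_1}$. More broadly, a proper lake of generation $Q$ avoids $\Esc_Q(\bF)$ but can still cross the boundary rays $\bR'_{P_1,\pm}$ at points of escaping time $>Q$, near $c_{P_1}$. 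So lakes are not trapped inside primary wakes, and the lake-exhaustion \eqref{eq:lakes exhaust ocean} alone does not produce a primary wake containing a given $z\in\bO$.

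The paper's argument is necessarily different: it is a proof by contradiction that rules out ``ghost limbs''. One first uses the self-similarity $A_\str$ to enclose any $y\in\bO$ by $\bW_P\cup\bW_Q\cup\overline\bZ$ for some pair of primary wakes. If $y$ lies in no primary wake, Lemma~\ref{lem:prim wakes:comb} forces the component $Y\ni y$ of $\bO\setminus\bigcup_P\bW_P$ to reach $\partial\bZ$; then one squeezes $Y$ between primary wakes $\bW_{P_i},\bW_{Q_i}$ whose roots converge to a common point of $\partial\bZ$, and Lemma~\ref{lem:wake shrinks} collapses the diameter of these wakes to $0$, contradicting $y\in Y$. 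Your second step (pulling back via $\bF^{|s|}\colon\intr\bW_s\to\C\setminus\overline\bR^0$) matches the paper's and is fine once the first claim is established. Your sketch for the third claim is reasonable in spirit, though you should note that the paper's proof text does not spell it out either.
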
 

\begin{proof}
There is a pair of primary wakes $\bW_P$ and $\bW_Q$ such that $\bW_P\cup \bW_Q\cup \overline \bZ$ surrounds an open topological disk $X$ with $0\in \partial X$. Then for every $y\in \bO$, there is an $n\ll 0$ such that $A^n_\str\left(\bW_P\cup \bW_Q\cup \overline \bZ\right)$ encloses $y$. 

By Lemma~\ref{lem:prim wakes:comb}, if $y\not\in \overline \bZ$ is not contained in any prime wake, then there is connected set $Y\ni y$ (a ``ghost limb'') such that $Y\subset \bO\setminus \displaystyle\bigcup_P \bW_P$ and $\overline Y$ intersects $\partial \bZ$, say at $x$. We can choose sequences $\bW_{P_i}$ and $\bW_{Q_i}$ such that $\bW_{P_i}\cup \bW_{Q_i}\cup \overline \bZ$ encloses $Y$ and $c_{P_i},c_{Q_i}$ tend to $x$. By Lemma~\ref{lem:wake shrinks}, the diameters of $\bW_{P_i}$ and $\bW_{Q_i}$ tend to $0$. This is a contradiction.

By construction, 
\begin{equation}
\label{eq:2:prf:cor:no ghost limb}
\bF^{|s|}\colon \intr(\bW_s)\to \C\setminus \overline \bR^0
\end{equation}
is conformal. Then~\eqref{eq:cor:no ghost limb} follows from the first claim by applying the inverse of~\eqref{eq:2:prf:cor:no ghost limb}.
\end{proof}

\begin{cor}
\label{cor:wakes are close to bJ}
For every $\varepsilon>0$ there is a $P\in \PT_{>0}$ such that every connected component of 
\begin{equation}
\label{eq:gaps between bW:bFstr}
\C\setminus\left(\overline \bZ_\str \cup \bigcup_{S\le P} \bW_S\right) 
\end{equation}
is less than $\varepsilon$ in the spherical metric.
\end{cor}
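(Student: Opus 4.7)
My plan combines the self-similarity $A_\str(\bW_P)=\bW_{\tt P}$ of Lemma~\ref{lem:prop of Astr}, the shrinking of primary wakes rooted on a fixed compact arc (Lemma~\ref{lem:wake shrinks}), and the elementary geometric fact that a Euclidean disk around $c$ of radius comparable to $|c|$ has spherical diameter $O(1/|c|)$ when $|c|$ is large.

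First I would establish an individual-wake estimate: for every $\eta>0$ there is $P_1\in\PT_{>0}$ such that every primary wake $\bW_S$ with $|S|>P_1$ has $\mathrm{diam}_{sph}(\bW_S)<\eta$. Using the nested decomposition $\partial\bZ=\bigcup_{n\in\Z}A_\str^n(\bJ_0\setminus\bJ_1)$, every critical point admits a unique representation $c_S = A_\str^n c_{S_0}$ with $c_{S_0}$ in the compact arc $\bJ_0\setminus\bJ_1$ (disjoint from $0$), so $\bW_S=A_\str^n\bW_{S_0}$ and $|S|=\tt^n|S_0|$. When $n\ll 0$, $|c_S|\asymp|\mu_\str|^n\to\infty$ while the ratio $\mathrm{diam}_{Euc}(\bW_S)/|c_S|=\mathrm{diam}_{Euc}(\bW_{S_0})/|c_{S_0}|$ stays bounded (Euclidean wake-diameters on the compact $\bJ_0$ are uniformly bounded), so $\mathrm{diam}_{sph}(\bW_S)=O(1/|c_S|)\to 0$ independently of $|S|$. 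When $n$ is bounded below, $|S|\to\infty$ forces $|S_0|\to\infty$, so Lemma~\ref{lem:wake shrinks} applied to the fixed $\bJ_0$ gives $\mathrm{diam}_{Euc}(\bW_{S_0})\to 0$; since in this regime $c_S$ lies in a fixed compact subset of $\C$, $\mathrm{diam}_{sph}(\bW_S)\to 0$ as well.

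The second step is to pass from individual wakes to connected components. Fix $\varepsilon>0$, set $\eta=\varepsilon/3$, and let $P\ge P_1$. Let $C$ be a connected component of $\C\setminus(\overline\bZ\cup\bigcup_{S\le P}\bW_S)$. Using the coast description in Lemma~\ref{lem:prim wakes:comb} --- at each alpha-point $\alpha_R$ on the boundary of a primary wake, exactly three wakes $\bW_R,\bW_{R^-},\bW_{R^+}$ meet, with $|R^\pm|\le|R|$ --- I would show that $C$ is contained in a single wake $\bW_s$ (possibly secondary) of generation $>P$: starting from any primary $\bW_{S_0}\subset C$, one can only leave $\bW_{S_0}$ while staying in $C$ through alpha-points at which all three meeting wakes have generation $>P$, and iterating this ``ascent'' along the wake hierarchy terminates at a maximal wake $\bW_s$ containing $C$. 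The first step then yields $\mathrm{diam}_{sph}(C)\le\mathrm{diam}_{sph}(\bW_s)<\varepsilon$; for secondary $s=(S_1,S_2)$, one first transports the bound to a primary wake via the conformal map $\bF^{|S_1|}\colon\mathrm{int}(\bW_{S_1})\to\C\setminus\overline{\bR^0}$ used in~\S\ref{ss:wakes}.

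The main obstacle is exactly this containment step: individual-wake smallness does not by itself rule out chains of many small wakes of generation $>P$ concatenated at alpha-points into a single large component, and controlling such chains requires the generation hierarchy at alpha-points. If the explicit combinatorial ascent proves cumbersome, a clean fallback is a Hausdorff-compactness argument in $\hC$: suppose sequences $P_k\to\infty$ and components $C_k$ of spherical diameter $\ge\varepsilon$ existed; pass to a Hausdorff subsequential limit $C_\infty\subset\hC$, which is closed, connected, and of spherical diameter $\ge\varepsilon$; note that every interior point of every primary wake $\bW_{S^*}$ is eventually excluded from $C_k$ (once $P_k\ge|S^*|$), so $C_\infty\cap\C\subset\partial\bZ\cup\bigcup_P\partial\bW_P$; combining this with the first-step estimate (no individual large wake-boundary piece persists in the limit) and the disjointness $\overline\bZ\cap C_k=\emptyset$, one concludes $C_\infty\subset\{\binfty\}$, contradicting $\mathrm{diam}_{sph}(C_\infty)\ge\varepsilon$.
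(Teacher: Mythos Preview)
Your Step 1 is essentially right and uses the same ingredients (Lemma~\ref{lem:wake shrinks} together with the $A_\str$--self-similarity of Lemma~\ref{lem:prop of Astr}) that the paper implicitly relies on. The case split is a bit loose --- ``$n$ bounded below'' can still have $n\to+\infty$, so $c_S$ need not stay in a fixed compact --- but this is easily repaired by splitting into $n\le n_0$, $n\ge n_1$, and $n\in[n_0,n_1]$.

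The real problem is Step 2. Your containment claim --- that each component $C$ sits inside a single (primary or secondary) wake $\bW_s$ --- is \emph{false}. Take $R$ with $R^-,R^+\le P<R$ and look at the decomposition of $\bR'_{R,-}$ from Lemma~\ref{lem:prim wakes:comb}: the segment $[\alpha_{Q_1},\alpha_{Q_2}]=\bW_{Q_1}\cap\bW_R$ lies only in $\bW_{Q_1}$ and $\bW_R$, both of generation $>P$, so its interior belongs to $C$; hence $C$ contains $\intr(\bW_R)$ \emph{and} $\intr(\bW_{Q_1})$ (and, iterating, $\intr(\bW_{Q_i})$ for all $i\ge 1$). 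So a component genuinely spans a whole chain of primary wakes of generation $>P$, and no single wake (primary or secondary) contains it.

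Your compactness fallback does not close this gap either: from $\overline\bZ\cap C_k=\emptyset$ you only get $C_\infty\cap\bZ=\emptyset$, not $C_\infty\cap\partial\bZ=\emptyset$. Nothing you have said prevents $C_\infty$ from being a long arc in $\partial\bZ$, which would have spherical diameter $\ge\varepsilon$ with no contradiction.

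What is missing is a second smallness estimate, independent of Step 1: the $\partial\bZ$--arc between two \emph{consecutive} critical points of generation $\le P$ shrinks (spherically) as $P\to\infty$. This is elementary from the translation model of Lemma~\ref{lem:irr rot of bZ} plus the $A_\str$--invariance. With this in hand, Lemma~\ref{lem:prim wakes:comb} gives the containment you actually need: each component $C$ is enclosed by $\bB_{\lambda(R)}\cup\bB_{\rho(R)}\cup[c_{R^-},c_{R^+}]$ (equivalently $C\subset\overline{\bO_-(R)\cup\bZ_R\cup\bO_+(R)}$), since both bubble chains $\bB_{\lambda(R)},\bB_{\rho(R)}$ lie in $\bW_{R^\pm}\subset\mathcal U_P$. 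The diameter of $C$ is then bounded by the length of the gap arc $[c_{R^-},c_{R^+}]$ plus twice the maximal diameter of a wake rooted in that arc; both terms go to zero. This is what the one-line ``follows from Lemmas~\ref{lem:wake shrinks} and~\ref{lem:prim wakes:comb}'' is encoding.
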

\begin{proof}
Follows from Lemmas~\ref{lem:wake shrinks} and~\ref{lem:prim wakes:comb}. 
\end{proof}

\subsection{Rigidity of the escaping set} 
\label{ss:RigEscSet}
 For an open topological disk $U$, we denote by $\diam_U$ and $\dist_U$ the diameter and distance with respect to the hyperbolic metric of $U$. 
\begin{lem}
\label{lem:F^P exp}
For every primary wake $\bW_P$ the following holds. The map 
\begin{equation}
\label{eq:lem:F^P exp}
\bF^P\colon\bW_P\setminus \overline \bZ_P\to \bO
\end{equation}
 is uniformly expanding with respect to the hyperbolic metric of $\bO$. There is a $C>0$ such that $\diam_\bO(\bW_{(P,Q)})\le C=C_P$ for every secondary subwake $\bW_{(P,Q)}$ of $\bW_P$.
\end{lem}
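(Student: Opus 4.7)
The plan is to exploit the conformal isomorphism $\bF^P\colon\intr(\bW_P)\to\C\setminus\overline{\bR^0}$ noted just above the lemma. Restricting yields a conformal bijection $g=\bF^P|_V\colon V\to U$ where $V=\intr(\bW_P)\setminus\overline{\bZ}_P$ and $U=\bO\setminus\overline{\bR^0}$. Both $V$ and $U$ are proper subdomains of $\bO$: $V$ is a limb-like region compactly bounded in $\bO$ away from the single touching point $c_P\in\partial\bZ$, while $U$ is obtained from $\bO$ by cutting along the non-trivial continuum $\overline{\bR^0}$.

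For the expansion, my plan is to apply the Schwarz lemma along the lines of Lemma~\ref{lem:alpha' is a pnt}. Since $g\colon V\to U$ is a $\rho_V$--$\rho_U$ isometry, the derivative of $g$ measured in $\rho_\bO$ can be written
\begin{equation*}
\frac{\rho_\bO(g(z))\,|g'(z)|}{\rho_\bO(z)}=\frac{\rho_V(z)}{\rho_\bO(z)}\cdot\frac{\rho_\bO(g(z))}{\rho_U(g(z))}.
\end{equation*}
The first factor is $\ge 1$ strictly by Schwarz (since $V\subsetneq\bO$), the second is $\le 1$ strictly (since $U\subsetneq\bO$). To obtain a uniform lower bound $\lambda>1$ for the product, I will use the self-similarity $A_\str$, which is a conformal automorphism of $\bO$ (hence a $\rho_\bO$-isometry) and satisfies $A_\str\bW_P=\bW_{\tt P}$, $A_\str\circ\bF^{P/\tt}=\bF^P\circ A_\str$. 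This reduces the problem to the bounded generation range $P\in[1,\tt)$. In this range, Lemma~\ref{lem:wake shrinks} (primary wakes shrink) says only finitely many primary wakes with root on a fundamental arc of $\partial\bZ$ have Euclidean diameter exceeding a given $\varepsilon$; these finitely many yield strict expansion point-wise, and compactness plus continuous dependence gives a uniform bound. For the very small wakes the map sends a small set conformally onto the fixed large set $\C\setminus\overline{\bR^0}$, so the expansion factor of the Euclidean derivative is huge and the $\rho_\bO$-expansion is automatic.

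The main obstacle is the behavior near the root $c_P\in\partial\bZ$, where both factors in the displayed identity degenerate to $1$ and where $\rho_\bO$ blows up. To control this tip, my plan is to use the explicit comparison of the hyperbolic densities on $V$, $U$ and $\bO$ near the common boundary point: locally near $c_P$, the triple $(V,U,\bO)$ is qc-modelled on a half-plane with a slit attached, and an explicit computation (or an Ahlfors--Schwarz comparison in a local chart given by the conformal coordinate at $c_P$) shows that the product of the two ratios remains bounded below by a universal constant strictly greater than $1$. Combined with the compactness argument above, this gives the uniform expansion factor $\lambda>1$ on all of $V$.

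For the diameter bound on secondary subwakes, I will use the expansion together with the observation that $\bW_{(P,Q)}$ is attached to $\partial\bZ_P$ at $c_{(P,Q)}\ne c_P$, so that $\overline{\bW}_{(P,Q)}$ is a compact subset of $\bO$ and $\diam_\bO(\bW_{(P,Q)})<\infty$. Since $\bF^P(\bW_{(P,Q)})=\bW_Q$, and since $\bW_{(P,Q)}$ is at a definite $\rho_\bO$-distance from $\partial\bZ$ (it lies on the far side of $\bZ_P$ from $\partial\bZ$), the image $\bW_Q$ meets the part of $\bO$ bounded away from $\partial\bZ$ in a region of $\rho_\bO$-diameter at most some universal $C_0$ (using Lemma~\ref{lem:wake shrinks} and the tiling of Corollary~\ref{cor:no ghost limb} to bound the ``relevant'' part of $\bW_Q$ independently of $Q$). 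The uniform expansion then gives $\diam_\bO(\bW_{(P,Q)})\le C_0/\lambda$, as required.
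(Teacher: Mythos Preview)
Your overall structure is sensible, and you correctly identify that everything away from the root $c_P$ follows from the Schwarz lemma since $\partial\bW_P\cap\partial\bO=\{c_P\}$. But the analysis near $c_P$ is the entire content of the lemma, and your plan there does not go through. The qualitative claim that ``the triple $(V,U,\bO)$ is qc-modelled on a half-plane with a slit'' and that ``an explicit computation shows the product stays bounded above $1$'' is precisely what needs to be proved; no such computation is given, and in fact neither factor in your Schwarz decomposition has a good limit as $z\to c_P$: both $\rho_V/\rho_\bO$ and $\rho_\bO/\rho_U$ tend to $1$ along suitable approaches (since $c_P\in\partial V\cap\partial\bO$ and $0\in\partial U\cap\partial\bO$), so a bare Schwarz-lemma argument gives nothing. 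What the paper does instead is exploit a \emph{local} self-similarity at $c_P$: the global scaling $A_\str$ lifts (via the $2$-to-$1$ branch $\bF^P$ at $c_P$) to a local contraction $\psi$ fixing $c_P$, and linearizing $\psi$ yields a global model in which $(\bZ,\bZ_P,\bW_P)$ become $A_\nu$-invariant cones and $\bF^P$ becomes $z\mapsto az^2$. In that model the hyperbolic densities satisfy $\mu_{\bO^\circ}(z)\asymp 1/|z|$ (by $A_\nu$-invariance) while $\mu_\bO(\bP(z))\succeq 1/|z|^2$, so the expansion rate is $\asymp 1/|z|\to\infty$ near the tip. Your vague local comparison misses this mechanism entirely.

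The diameter bound has the same gap. Your assertion that ``$\bW_{(P,Q)}$ is at a definite $\rho_\bO$-distance from $\partial\bZ$'' is false: as $Q$ varies, the root $c_{(P,Q)}$ ranges over a dense subset of $\partial\bZ_P$ and in particular accumulates at $c_P\in\partial\bO$, so the secondary wakes are \emph{not} uniformly separated from $\partial\bO$. Likewise your bound on the image $\bW_Q$ fails, since a primary wake touches $\partial\bZ$ and has infinite $\rho_\bO$-diameter. The paper again resolves this in the linearized model: since $A_\nu$ is a hyperbolic isometry of $\bO^\circ$, one may normalize so that the root of $\bW^\circ_{(P,Q)}$ lies in a fixed fundamental annulus, and there all secondary wakes lie in a single bounded region of $\bO^\circ$. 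That is the missing ingredient.
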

\noindent Since all wakes are dynamically related, one can show that~\eqref{eq:lem:F^P exp} is uniformly expansion over all $P$. 
\begin{proof}
Let us denote by $\mu$ the hyperbolic metric of $\bO$ and by $\mu'$ the hyperbolic metric of $\bO \setminus \overline \bZ_P$. We will show that \[\iota\coloneqq \id\colon \sp (\bW_P\setminus \overline \bZ_P, \mu)\to (\bW_P\setminus \overline \bZ_P, \mu') \]
is uniformly contracting; this will imply that~\eqref{lem:F^P exp} is uniformly expanding as it factors through $\iota^{-1}$ followed by a (non-uniformly) expanding map.

\begin{figure}
\begin{tikzpicture}

\draw[white,fill=red, fill opacity=0.1] (-5,-2)-- (-5,0)--(0,0)--(0,-2);

\draw[red] (-5,0)--(0,0);

\draw (-3.5,2)--(-2.5,0);
\node[left]at (-3.5,2) {$\bR^0$};

\draw[red,fill=red, fill opacity=0.1] (1,-2)--(3.5,0)--(6,-2);

\node at (3.5,-1) {$\bZ^\circ$};

\node at (-2.5,-1) {$\bZ$};

\node at (3.5,1) {$\bZ_P^\circ$};


\node at (1.7,0) {$\partial \bW_P^\circ$};

\draw[orange] (3.3,0.1) edge[bend right,->] node[above]{$\bF^P\circ L^{-1}$}(-2.4,0.1);

\draw[blue,fill=blue, fill opacity=0.1] (1,2)--(3.5,0)--(6,2);

\draw (1,-0.5)--(3.5,0)--(6,0.5);

\end{tikzpicture}

\caption{$\bZ^\circ,\bZ_P^\circ$ are the preimages of $\bZ$ under $\bF^P\circ L^{-1}$. }
\label{Fig:Z circ}
\end{figure}

Since $\partial \bW_P\cap \partial \bO=\{c_P\}$, the map~$\iota$ is uniformly contracting away from a small neighborhood of $c_P$ (by compactness). In a small neighborhood of $c_P$ the uniform contraction of $\iota$ follows from the self-similarity of $\bW_P,\bZ_P, \overline \bZ$ at $c_P$; it implies that points on $\partial \bW$ have comparable distances to $\overline \bZ$ and $ \bZ_P$. Let us provide details.

 Let us lift the global self-similarity $A_\str$ to a local self-similarity of $\bW_P$ near $c_P$. Let $U\Supset A_\str(U)$ be a small open disk around $0$ such that $0$ is the only critical value of $\bF^P$ in $U$. Let $V\ni c_P$ be the lift of $U$ along $\bF^P\colon c_p\mapsto 0$. Then $A_\str\mid U$ lifts to a conformal map $\psi$ defined on $V$. By construction, $c_P$ is an attracting fixed point of $\psi$. Let $L\colon (V,c_P)\to (\C,0)$ with $L'(c_p)=1$ be the linearizer conjugating  $\psi$ to the scaling $A_\nu$. (We note that $\nu^2=\mu_\str$ because $\bF^P$ is $2$-to-$1$ near $c_P$.) Consider 
\begin{equation}
\label{eq:L:lem:F^P exp}
L(\bW_P), \sp L(\bZ),\sp  L(\bZ_P),\sp L(\bO);
\end{equation}
 these objects are forward invariant under $A_\nu$. Using backward iterates of $A_\nu$, we globalize~\eqref{eq:L:lem:F^P exp}; we denote the results by $\bW_P^\circ,\sp \bZ^\circ,\sp \bZ_P^\circ, \sp \bO^\circ;$ these new objects are completely invariant under $A_\nu$, see Figure~\ref{Fig:Z circ}. Let $\mu^\circ$ and $\mu'^\circ$ be the hyperbolic metrics of $\bO^\circ \cup \bZ^\circ_P$ and $\bO^\circ $ respectively. The invariance under $A_\nu$ implies that
\[ \iota^\circ \colon (\bW^\circ_P\setminus \bZ^\circ_P,\mu^\circ)\to (\bW^\circ_P\setminus \bZ^\circ_P,\mu'^\circ)\]
is uniformly contracting. Since $L'(c_p)=1$, the uniform contraction of $\iota^\circ$ implies the uniform contraction of $\iota$ near $c_P$. 
 
Consider a secondary wake $\bW_{(P,S)}$ and observe that if $\bW_{(P,S)}$ is close to $c_P$, then $\bW_{(P,S)}$ is contained in a small neighborhood of $c_P$. Indeed, if $\bF^P(\bW_{(P,S)})$ is close to $0$, then $\bF^P(\bW_{(P,S)})$ is small by Lemma~\ref{lem:wake shrinks} and Corollary~\ref{cor:no ghost limb}. Therefore, the second claim of the lemma is obvious unless $\bW_{(P,S)}$ is contained in a small neighborhood of $c_P$. At a small neighborhood of $c_P$, the second claim follows from the self-similarity at $c_P$ by applying $L$.
\end{proof}

\begin{lem}[Nested wakes shrink]
\label{lem:nest wakes shrink}
For an infinite sequence $(P_1,P_2,\dots)$ write $s(n)\coloneqq (P_1,\dots ,P_n)$. Then 
\[\bigcap _{n\ge 1} \bW_{s(n)}\]
is a singleton.
\end{lem}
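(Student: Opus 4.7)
The idea is to show that the hyperbolic diameter of $\bW_{s(n)}$ in the ocean $\bO = \C \setminus \overline{\bZ}$ tends to zero as $n\to\infty$. Once established, since the $\bW_{s(n)}$ form a nested sequence of compact, connected topological disks (by Lemma~\ref{lem:limbs are bounded} and Corollary~\ref{cor:no ghost limb}), the intersection $K = \bigcap_{n\ge 1} \bW_{s(n)}$ is a non-empty compact connected set, and the hyperbolic shrinking will force $K$ to be a singleton (using that $\bW_{s(n)}$ for $n\ge 2$ is disjoint from $\partial \bZ$ except possibly at a limit point, where the relation $\mu_\bO(z) \asymp 1/\dist(z,\partial \bZ)$ translates hyperbolic shrinking into spherical shrinking).

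The first step is the initial bound: since $\bW_{s(n)} \subset \bW_{(P_1,P_2)}$ for $n \ge 2$, Lemma~\ref{lem:F^P exp} (second assertion) gives $\diam_\bO(\bW_{s(n)}) \le C$ for $n \ge 2$. Next, we iterate: the map $\bF^{P_1}\colon \bW_{P_1}\setminus\overline{\bZ_{P_1}} \to \bO$ is uniformly expanding with respect to $\mu_\bO$ by some factor $\lambda_{P_1} > 1$, and $\bF^{P_1}(\bW_{s(n)}) = \bW_{(P_2,\dots,P_n)}$. Applying this at each level produces the estimate
\[
\diam_\bO\bigl(\bW_{s(n)}\bigr) \;\le\; \frac{1}{\lambda_{P_1}\lambda_{P_2}\cdots \lambda_{P_{n-2}}}\cdot \diam_\bO\bigl(\bW_{(P_{n-1},P_n)}\bigr) \;\le\; \frac{C}{\prod_{i=1}^{n-2}\lambda_{P_i}}.
\]

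The main obstacle is establishing a uniform lower bound $\lambda_{P_i} \ge \lambda_* > 1$ on the expansion constants so that the above product tends to infinity. This is handled by the self-similarity: the scaling $A_\str$ preserves $\bZ$ and hence acts as an isometry of $(\bO,\mu_\bO)$, while conjugating $\bF^P$ to $\bF^{\tt P}$ (Lemma~\ref{lem:prop of Astr} and~\eqref{eq:anti ren:F str^P}). Consequently $\lambda_{\tt P} = \lambda_P$, and it suffices to bound $\lambda_P$ below on one fundamental domain of the $\tt$-action on $\PT$. The local analysis near $c_P$ carried out in the proof of Lemma~\ref{lem:F^P exp} (where the expansion is of order $1/|z-c_P|$) together with a compactness argument on this fundamental domain—or, alternatively, by pulling back under $A_\str$ to reduce any $P_i$ to a bounded range—gives the required uniform $\lambda_* > 1$.

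Finally, with $\diam_\bO(\bW_{s(n)}) \to 0$ in hand, suppose $x,y \in K$. Pick $x_n,y_n \in \bW_{s(n)}$ converging to $x,y$. Then $\dist_\bO(x_n,y_n)\to 0$, so $x$ and $y$ are either equal or both lie on $\partial \bZ$; in the latter case, using the comparison $\mu_\bO(z)\asymp 1/\dist(z,\partial\bZ)$ one sees that two distinct points on $\partial\bZ$ cannot be approached by sequences at vanishing hyperbolic distance. Hence $x=y$, which completes the proof that $\bigcap_n \bW_{s(n)}$ is a singleton.
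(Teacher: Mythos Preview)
Your overall strategy---bound the hyperbolic diameter of $\bW_{s(n)}$ in $\bO$ and show it tends to zero---matches the paper's, and your use of Lemma~\ref{lem:F^P exp} for both the initial bound and the expansion step is exactly right. The iteration $\bF^{P_1}(\bW_{s(n)}) = \bW_{(P_2,\dots,P_n)}$ is also correct.

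However, there is a genuine gap in the step where you claim a uniform lower bound $\lambda_{P_i}\ge\lambda_*>1$. Your self-similarity observation $\lambda_{\tt P}=\lambda_P$ is correct (since $A_\str$ is an isometry of $(\bO,\mu_\bO)$ and conjugates $\bF^P$ to $\bF^{\tt P}$), but the ``compactness argument on a fundamental domain'' does not go through: a fundamental domain for the $\tt$-action on $\PT$ contains infinitely many power-triples, and Lemma~\ref{lem:F^P exp} gives the expansion constant $\lambda_P$ (and the diameter bound $C$) only for one fixed $P$ at a time, with no stated uniformity and no continuous dependence on $P$ to invoke. The same problem affects the constant $C$ in your final bound: after applying $\bF^{P_1},\dots,\bF^{P_{n-2}}$ you land in a secondary subwake of $\bW_{P_{n-1}}$, whose diameter is bounded by a constant depending on $P_{n-1}$, not a universal $C$.

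The paper circumvents both issues with a single idea: instead of applying $\bF^{P_1},\bF^{P_2},\dots$ in sequence, it defines a \emph{first scaled return} map $\chi = A_\str^{-n}\circ\bF^{P_1+Q}$ that sends each secondary subwake of $\bW_{P_1}$ back onto $\bW_{P_1}$ itself (choosing $Q$ and $n$ so that $\bF^{P_1+Q}$ lands in $\bW_{\tt^n P_1}$, then rescaling). Each such $\chi$ factors through $\bF^{P_1}\mid\bW_{P_1}\setminus\overline{\bZ_{P_1}}$, which expands by the \emph{fixed} constant $\lambda_{P_1}$ from Lemma~\ref{lem:F^P exp}, while $\bF^Q$ is non-contracting and $A_\str^{-n}$ is an isometry. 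One checks that $\chi$ lowers the level of a subwake by one, so $\chi^{k-2}(\bW_{s(k)})$ is a secondary subwake of $\bW_{P_1}$ with $\diam_\bO\le C_{P_1}$; hence $\diam_\bO(\bW_{s(k)})\le C_{P_1}\,\lambda_{P_1}^{-(k-2)}$. Only the single pair $(\lambda_{P_1},C_{P_1})$ is used. Your phrase ``pulling back under $A_\str$ to reduce any $P_i$ to a bounded range'' is close in spirit, but ``bounded range'' is not enough---the key is to return to one \emph{fixed} wake.
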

\begin{figure}
\begin{tikzpicture}
\draw[red](-4,0)--(7,0);
\draw
(0,0)--(-2,4);
\node[left] at(-2,4){$\bR^0$};
\node[below] at(0,0){0};
\draw[yscale=2] (5,0) -- (4.2,1)--(5,2)--(5.8,1)--(5,0);
\filldraw[opacity=0.3,shift={(0.3,1)}] (5.5,1) .. controls (5.7,0.8) and (5.9,0.8).. (6,1)
  .. controls (5.9,1.2) and (5.6,1.2)..(5.5,1);
  
\filldraw[shift={(0.3,1)}] (5.75,1) circle (1.5pt);  
\draw[shift={(0.3,1)}] (5.75,1) -- (6.3,0.7);
\node[shift={(0.3,1)},right] at  (6.3,0.7){$\bW_{v}$};
 
\draw  (5,0) .. controls (3,0.8) and (3,3.7).. (5,4)
  .. controls (7,3.7) and (7,0.8)..(5,0);
\node at (5.8,3) {$\bW_P$};

\filldraw[opacity=0.5]  (-3,0) .. controls (-3.3,0.1) and (-3.3,0.75).. (-3,0.8)
  .. controls (-2.7,0.75) and (-2.7,0.1)..(-3,0);

\node[above]at  (-3,0.8) {$\bF^P(\bW_{v})$};
\draw[blue](5.9,1.85) edge[->,bend left] node[below]{$\bF^P$}(-2.9,-0.1);

\draw[scale=0.3]  (5,0) .. controls (3,0.8) and (3,3.7).. (5,4)
  .. controls (7,3.7) and (7,0.8)..(5,0);
\node at (1.5,0.6) {$\bW_{\tt^n P}$};

\draw[blue] (-2.7,0.3) edge[->,bend left] node[below]{$\bF^Q$}(0.95,0.7);

\draw[red] (1.5,1.1) edge[->] node[above left]{$A^{-n}_\str$}(4.8,3.8);

\end{tikzpicture}
\caption{Construction of the first scaled return $\chi\coloneqq A_\str^{-n }\circ \bF^{P+Q}\colon \bW_{v}\to \bW_P$.}
\label{fig:lem:nest wakes shrink}
\end{figure}

\begin{proof}

For a secondary wake $\bW_{v}\subset \bW_P$, we define its \emph{first scaled return} \[\chi\coloneqq A_\str^{-n}\circ \bF^{P+Q}\colon \bW_v\to \bW_P\] as follows, see Figure~\ref{fig:lem:nest wakes shrink}. 
The map $\bF^P\colon \intr (\bW_P)\to \C\setminus \overline{\bR^0}$ is univalent and $\bF^P(\bW_{v})$ is a primary wake. Let $Q\in \PT$ be the minimal such that $\bF^{P+Q}(\bW_{v})=\bW_{\tt^{n} P}$ for some $n\in \Z$. Then $A_\str^{-n}\circ \bF^{P+Q}(\bW_v) =\bW_P$. By Lemma~\ref{lem:F^P exp}, $\chi=\bF^{P+Q}$ is expanding uniformly over all the secondary subwakes of $\bW_P$.

The first scaled return $\chi$ is defined for all $x\in\bW_P\setminus \bZ_P$ (with a natural interpretation of the ambiguity at common boundary points of secondary wakes). By~\eqref{lem:F^P exp}, 
\[\diam_{\bO}\chi^{k-2}(\bW_{s(k)}) \le C.\]
Since $\chi$ is uniformly expanding, we obtain that $\diam_{\bO} (\bW_{s(k)})$ tends to $0$. 
\end{proof}

\begin{cor}
\label{cor:ext rays land}
Every external ray lands.  Every periodic external ray has a minimal period.  
\end{cor}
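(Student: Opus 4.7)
The plan is to associate to each external ray $\bR = \cdots \cup [\alpha_{i+1},\alpha_i] \cup \cdots$ a canonical infinite sequence $s_\infty = (P_1, P_2, \ldots) \in \PT^{\N}_{>0}$, so that its alpha-points are indexed by strictly growing prefixes $s_i = (P_1, \ldots, P_{n_i})$, and then to deduce landing from Lemma~\ref{lem:nest wakes shrink} applied to $s_\infty$: the nested wakes $\bW_{s_i}$ shrink to a single point $x \in \C$, and the tail of the ray beyond escaping time $|s_i|$ will be shown to sit in $\bW_{s_i}$. This treats both type~I and type~II rays uniformly, since Lemma~\ref{lem:nest wakes shrink} only requires the indexing sequence to be infinite, regardless of whether $\sum P_j$ is finite.

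The technical core is building the nesting $\bW_{s_i} \supsetneq \bW_{s_{i+1}}$. Since $\alpha_{i+1} \succ \alpha_i$ are distinct, I would first invoke the first assertion of Lemma~\ref{lem:bubble chains:sep comb} together with Proposition~\ref{prop:order of alphas} to exclude $|s_{i+1}| = |s_i|$, obtaining $|s_{i+1}| > |s_i|$ and $\alpha_{i+1} \in \overline{\bO_-(s_i) \cup \bO_+(s_i)}$. Next, pulling back the tiling $\bO \subset \bigcup_P \bW_P$ of Corollary~\ref{cor:no ghost limb} by $\bF^{|s_i|}$ gives $\bO_\pm(s_i) \subset \bigcup_Q \bW_{(s_i, Q)} \subset \bW_{s_i}$, placing $\alpha_{i+1}$ inside $\bW_{s_i}$; and since every alpha-point of $\bW_{s_i} \setminus \{\alpha_{s_i}\}$ lies in some sub-wake $\bW_{(s_i, Q)}$, the sequence $s_{i+1}$ must begin with $s_i$. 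For the arc $[\alpha_{i+1}, \alpha_i]$ itself I would use Corollary~\ref{cor:rays meet}: the boundary $\partial \bW_{s_i} \setminus \{\alpha_{s_i}\}$ consists of subarcs of external rays distinct from $\bR$, and two distinct external rays only meet at an alpha-point, so a simple arc of $\bR$ connecting two points of $\bW_{s_i}$ cannot exit the wake. Hence the tail $\bR \cap \{T > |s_i|\}$ lies in $\bW_{s_i}$, and Lemma~\ref{lem:nest wakes shrink} delivers the landing point.

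For the minimal-period claim, suppose $\bF^P(\bR) = \bR$ for some $P > 0$; then the landing point $x$ is fixed by $\bF^P$. By Corollary~\ref{cor:min period:per pts} the semigroup of $\bF^{\ge 0}$-periods of $x$ equals $\{nS : n \ge 1\}$ for a minimal $S \in \PT_{>0}$, so the set of periods of $\bR$ is a non-empty sub-semigroup of this discrete set, which admits a minimum. The main obstacle I foresee is the combinatorial bookkeeping that confines the entire ray segment $[\alpha_{i+1}, \alpha_i]$ to $\bW_{s_i}$: this rests on Corollaries~\ref{cor:rays meet} and~\ref{cor:no ghost limb}, but one must combine them carefully with the topological fact that a simple arc between two points in a closed disk cannot leave the disk without crossing its boundary, and verify that every such crossing alpha-point must already lie on $\bR$.
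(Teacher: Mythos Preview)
Your landing argument has a genuine gap: the claim that $\bO_\pm(s_i) \subset \bW_{s_i}$ is false, and with it the assertion that $s_{i+1}$ extends $s_i$. The zero ray $\bR^\str$ is already a counterexample: its alpha-points $\alpha_{P_i}$ are all \emph{primary} (indexed by length-one sequences), with $P_{i+1} > P_i$ but $s_{i+1} = (P_{i+1})$ not a prefix extension of $s_i = (P_i)$. Geometrically, the lake $\bO_-(P)$ is bounded on its left by the bubble chain $\bB_{\lambda(P)}$, which begins in the \emph{neighboring} primary wake $\bW_{P^-}$ (Lemma~\ref{lem:prim wakes:comb}), so $\bO_-(P)$ spills outside $\bW_P$. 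Equivalently, $\bF^{|s_i|}\colon \bO_-(s_i) \to \bO$ is not the same branch as $\bF^{|s_i|}\colon \intr \bW_{s_i} \to \C \setminus \overline{\bR^0}$ (note $\bR^0 \subset \bO$, so the images differ), and your pullback of the tiling conflates the two branches.

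The paper's proof handles this by working with the accumulation set $Y$ of the ray and splitting into two cases. If $Y$ meets some $\partial \bZ_s$, then since $Y$ cannot cross wake boundaries transversally, the tiling of Corollary~\ref{cor:no ghost limb} forces $Y \subset \partial \bZ_s$, and density of wake roots on $\partial \bZ_s$ pins $Y$ to a point. Only if $Y$ avoids every $\partial \bZ_s$ does the tiling trap $Y$ in a strictly nested sequence of wakes, where Lemma~\ref{lem:nest wakes shrink} applies. Your approach effectively handles only this second case; rays landing on bubble boundaries---including $\bR^\str$ itself---require the first. Your minimal-period argument is fine and matches the paper's.
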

\begin{proof}
Observe that the accumulating set $Y$ of a ray does not transversally intersect the boundaries of wakes: either $Y\subset \bW_s$ or $Y\subset \overline{\C\setminus \bW_s }$ for every wake $\bW_s$. If $Y$ intersects $\partial \bZ_s$ for some $s$, then $Y\subset \partial \bZ_s$ by Corollary~\ref{cor:no ghost limb}. Since the roots of wakes (i.e.~critical points) are dense on $\partial \bZ_s$, we obtain that $Y$ is a singleton. If $Y$ does not intersect any  $\partial \bZ_s$, then $Y$ belongs to a nested sequence of closed wakes (by Corollary~\ref{cor:no ghost limb}). By Lemma~\ref{lem:nest wakes shrink}, $Y$ is a singleton.  

If $\bR$ is a periodic ray, then $\bR$ lands, say at $x$. By Corollary~\ref{cor:min period:per pts}, $x$ has a minimal period $P\in \PT_{>0}$. Therefore, $\bR$ has a minimal period $m P$ for some $m\ge 1$.
 \end{proof}

Lemma~\ref{lem:nest wakes shrink} implies that a limb is dense in the corresponding wake:
\begin{cor}
\label{cor:wake:clos:limb}
We have $\bW_s=\overline \bL_s$ for every $s$.\qed
\end{cor}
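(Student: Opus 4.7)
The inclusion $\overline{\bL_s}\subset \bW_s$ is immediate: every bubble $\widehat\bZ_{(s,P_1,\dots,P_n)}$ in the definition of $\bL_s$ lies inside $\bW_{(s,P_1,\dots,P_n)}\subset \bW_s$, and $\bW_s$ is closed by construction~\eqref{eq:part Ws}. It therefore suffices to prove that $\bW_s\subset\overline{\bL_s}$.

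Fix $x\in \bW_s$. I will iteratively locate $x$ using the tiling~\eqref{eq:cor:no ghost limb} from Corollary~\ref{cor:no ghost limb}:
\[ \bW_v\;=\;\overline\bZ_v\cup\bigcup_{P>0}\bW_{(v,P)}\qquad \text{for every finite sequence }v.\]
Note first that $\overline\bZ_v=\widehat\bZ_v\cup\{\alpha_v\}$ (Lemma~\ref{lem:alpha' is a pnt}), and since $\widehat\bZ_v\subset\bL_s$ for every $v$ extending $s$ (including $v=s$) and the set $\widehat\bZ_v$ accumulates on $\alpha_v$ along $\partial^c\bZ_v$, we have $\overline\bZ_v\subset \overline{\bL_s}$ for every such $v$. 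Applying the tiling to $\bW_s$, either $x\in\overline\bZ_s\subset\overline{\bL_s}$ and we are done, or $x\in\bW_{(s,P_1)}$ for some $P_1>0$. In the latter case, apply the tiling again to $\bW_{(s,P_1)}$: either $x\in\overline\bZ_{(s,P_1)}\subset\overline{\bL_s}$ and we are done, or $x\in\bW_{(s,P_1,P_2)}$ for some $P_2>0$. Continuing, we either find a finite $n$ and a finite sequence $(s,P_1,\dots,P_n)$ with $x\in\overline\bZ_{(s,P_1,\dots,P_n)}\subset\overline{\bL_s}$, or we produce an infinite sequence $(P_1,P_2,\dots)$ with
\[ x\;\in\;\bigcap_{n\ge 1}\bW_{(s,P_1,\dots,P_n)}.\]

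In the second (infinite) case, Lemma~\ref{lem:nest wakes shrink} applied to the sequence $s(n)\coloneqq(s,P_1,\dots,P_n)$ gives that the nested intersection $\bigcap_n \bW_{s(n)}$ is a single point $y$, so $x=y$. Since the wakes $\bW_{s(n)}$ are compact and nested with singleton intersection, their Euclidean diameters tend to $0$. Because each $\widehat\bZ_{s(n)}\subset \bW_{s(n)}$ is non-empty and contained in $\bL_s$, we can pick $z_n\in \widehat\bZ_{s(n)}\subset \bL_s$, and $z_n\to y=x$. Hence $x\in\overline{\bL_s}$, completing the reverse inclusion and therefore the proof. The main (mild) point in the argument is the infinite-depth case, where the shrinking of nested wakes established in Lemma~\ref{lem:nest wakes shrink} is exactly what forces the ``ghost'' possibility of a point in $\bW_s$ but outside $\overline{\bL_s}$ to collapse.
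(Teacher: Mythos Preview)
Your proof is correct and is precisely the argument the paper has in mind: the paper's one-line justification (``Lemma~\ref{lem:nest wakes shrink} implies that a limb is dense in the corresponding wake'') is exactly your dichotomy between landing in some $\overline\bZ_v$ via the tiling~\eqref{eq:cor:no ghost limb} or falling into an infinite nested sequence of wakes that shrinks to a point. You have simply fleshed out the details the paper leaves implicit.
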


Let us declare  $x,y\in \Esc_P(\bF)$ to be \emph{combinatorially equivalent} if there is no alpha-point $\alpha_s$ such that $x,y$ are in different connected components of $\Esc_P(\bF)\setminus \{\alpha_s\}$. This generates a combinatorial equivalence. A point $x\in  \Esc(\bF)$ is an \emph{endpoint} of $\Esc(\bF)$ if $x\not\in [\alpha_1,\alpha_2]$ for alpha-points $\alpha_1\succ \alpha_2$.

\begin{prop}
\label{prop:Esc M}
Every combinatorial equivalence class of $\Esc(\bF)$ is a singleton. For every $M\in \R_{>0}$ we have
\begin{equation}
\label{eq:prop:Esc M}
\Esc_M(\bF)=\overline{\bigcup_{P<M} \Esc_P(\bF)}.
\end{equation}
For every $P\in \PT_{>0}$ the escaping set $\Esc_P(\bF)$ is uniquely geodesic. 
\end{prop}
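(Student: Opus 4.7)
The plan is to treat the three assertions in the order (1), (3), (2), relying throughout on the wake/ray tree structure built up in \S\S\ref{ss:Lakes}--\ref{ss:ExtChainsAreArcs}.

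For (1), I would start from distinct $x,y\in\Esc(\bF)$ and produce a separating alpha-point. Corollary~\ref{cor:no ghost limb} says the wakes of level $\geq m$ containing $x$ form a neighborhood base at $x$, and Lemma~\ref{lem:nest wakes shrink} forces their diameters to shrink to $0$; so for $m$ large there is a wake $\bW_s$ with $x\in\bW_s$ but $y\notin\bW_s$. Now $\partial\bW_s=\bR'_{s,-}\cup\bR'_{s,+}\cup\{c_s\}$ meets $\Esc(\bF)$ only in the two closed ray segments $\bR'_{s,\pm}$, since the critical point $c_s$ is postcritical and lies in $\Dom\bF^Q$ for every $Q$. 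Each of these segments is an arc whose only ``exit'' from $\overline\bW_s$ within $\Esc(\bF)$ occurs at the top $\alpha_s$: beyond $\alpha_s$ the ray continues into $\C\setminus\bW_s$, while on the other side of $\alpha_s$ it stays on $\partial\bW_s$ all the way to $c_s$. Consequently, for any $P$ with $x,y,\alpha_s\in\Esc_P$, removing $\alpha_s$ disconnects $\Esc_P$ into a piece inside $\bW_s$ (containing $x$) and a piece outside (containing $y$), contradicting combinatorial equivalence.

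For (3), the goal is to upgrade $\Esc_P$ from merely being connected (Corollary~\ref{cor:Q-P comp are bound}) to being a topological tree. Given $x,y\in\Esc_P$, I would first assign an external ray $\bR_x$ to $x$: the nested wakes $\bW_{s_1}\supset\bW_{s_2}\supset\dots$ containing $x$ provided by Corollary~\ref{cor:no ghost limb} determine an $\prec$-decreasing sequence of alpha-points $\alpha_{s_i}$ cofinal at $x$, which I assemble into an external ray through $x$ via Proposition~\ref{prop:alpha v alpha s is arc}; similarly for $y$. By Corollary~\ref{cor:rays meet} the two rays agree on a half-ray $[\alpha_s,\binfty)$, and the concatenation of the initial subarcs of $\bR_x$ and $\bR_y$ meeting at $\alpha_s$ yields a simple arc in $\Esc_P$ joining $x$ and $y$. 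Uniqueness follows from (1): a second arc from $x$ to $y$ would contain a point $z$ not on the first, and (1) provides an alpha-point $\alpha_{s'}$ separating $z$ from its counterpart, incompatible with both arcs remaining connected in $\Esc_P$.

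For (2), ``$\supseteq$'' is immediate from monotonicity of $\Esc_T$ in $T$ and closedness of $\Esc_M=\bigcap_{T>M,\,T\in\PT}\Esc_T$. For ``$\subseteq$'', take $x\in\Esc_M$ with escaping time $M_0\leq M$. If $M_0<M$ there is nothing to do. If $M_0=M$, I apply (3) to obtain an external ray $\bR$ through $x$, along which Proposition~\ref{prop:alpha v alpha s is arc} supplies a continuous parametrization by escaping time; points on $\bR$ on the ``infinity side'' of $x$ then have escaping time strictly below $M$ and converge to $x$, giving the desired approximation in $\bigcup_{P<M}\Esc_P$.

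The main obstacle will be the core of step (1): verifying that $\alpha_s$ is a genuine cut point of $\Esc_P$ (and not merely a topological separator of the plane). This reduces to the local picture of $\Esc(\bF)$ along $\partial\bW_s$: away from $\alpha_s$, the escaping set touches $\partial\bW_s$ only along the ray segments $\bR'_{s,\pm}$, which themselves re-enter $\overline\bW_s$, while the only branching of $\Esc(\bF)$ into $\C\setminus\bW_s$ occurs at $\alpha_s$ itself. Establishing this local-structure claim requires a careful bookkeeping of the combinatorics of neighboring primary and secondary wakes from Lemma~\ref{lem:prim wakes:comb}, but is dictated by the wake/ray combinatorics already in place.
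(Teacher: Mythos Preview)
Your proposal is correct and follows essentially the same route as the paper, resting on the wake tiling (Corollary~\ref{cor:no ghost limb}) and the shrinking of nested wakes (Lemma~\ref{lem:nest wakes shrink}). The only organizational difference is that the paper splits part~(1) into two cases: for a point $x$ that is \emph{not} an endpoint it invokes Proposition~\ref{prop:alpha v alpha s is arc} directly (density of alpha-points on the ray segment through $x$ furnishes a separating alpha-point without any wake bookkeeping), and only for endpoints does it run your shrinking-wake argument; you instead use wakes uniformly for all $x$.

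Regarding your ``main obstacle'' --- showing $\alpha_s$ is a genuine cut point of $\Esc_P$ --- this is actually more immediate from the lake description than from the wake boundary. By Lemma~\ref{lem:alpha' is a pnt}, each lake $\bO_\pm(s)$ is a bounded topological disk whose boundary is its coast $\partial^c\bO_\pm(s)$ together with the single point $\alpha_s$; since the coast lies in $\bF^{-|s|}(\partial\bZ)\subset\Dom\bF^{|s|}$, it is disjoint from $\Esc_P$ for $P\ge|s|$, so any path in $\Esc_P$ from inside $\bO_\pm(s)$ to the exterior must cross $\alpha_s$. This is the content behind Lemma~\ref{lem:comp of Esc R - Esc T} and avoids the wake-boundary combinatorics you anticipate needing from Lemma~\ref{lem:prim wakes:comb}.
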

\begin{proof}
Consider $x\in \Esc_P(\bF)$. If $x$ is not an endpoint of $\Esc(\bF)$, then Proposition~\ref{prop:alpha v alpha s is arc}  implies that $x$ can be separated by infinitely many alpha-points from any other point in the escaping set.

 Suppose $x$ is an endpoint of $\Esc(\bF)$. By Corollary~\ref{cor:no ghost limb}, $x$ belongs to a nested sequence of wakes. By Lemma~\ref{lem:nest wakes shrink}, the combinatorial class of $x$ is a singleton. Moreover, the external ray $(x,\binfty)$ lands at $x$; i.e.~$\Esc_P$ is uniquely geodesic. 
 
 Equation~\eqref{eq:prop:Esc M} is immediate. 
\end{proof}

\begin{cor}
\label{cor:Esc is Tree}
Suppose $X\subset \Esc_P(\bF)$ is a discrete subset of $\C$. Then the \emph{connected hull} of $X$
\[\bigcup_{x,y\in X} [x,y]\]
is a tree.
\end{cor}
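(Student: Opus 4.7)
The plan is to build the hull inductively as an increasing union of finite subtrees, exploiting the unique geodesicity of $\Esc_P(\bF)$ from Proposition~\ref{prop:Esc M}.

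First I would establish a \emph{tripod property} in $\Esc_P(\bF)$: for any three distinct points $a, b, c \in \Esc_P(\bF)$ there exists a unique point $m = m(a,b,c) \in \Esc_P(\bF)$ such that
\begin{equation*}
[a,b] = [a,m] \cup [m,b],\quad [a,c] = [a,m] \cup [m,c],\quad [b,c] = [b,m] \cup [m,c],
\end{equation*}
with pairwise intersections equal to $\{m\}$. By Proposition~\ref{prop:Esc M} the combinatorial classes in $\Esc(\bF)$ are singletons, so alpha-points separate distinct points; together with density of alpha-points on every external ray segment (Proposition~\ref{prop:alpha v alpha s is arc} and Corollary~\ref{cor:zero chian is arc}), this identifies $m$ as the supremum along $[a,b]$ (oriented from $a$) of those alpha-points $\alpha\in[a,b]$ that still lie on $[a,c]$. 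Proposition~\ref{prop:order of alphas} is what ensures the $\prec$-separation dichotomy needed to characterize such alpha-points.

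Next, since $X$ is discrete in $\C$ and hence countable, I enumerate $X = \{x_1, x_2, \ldots\}$ and set $T_n \coloneqq \bigcup_{1\le i, j\le n}[x_i,x_j]$. I would prove by induction that each $T_n$ is a finite topological tree. The base case $T_1 = \{x_1\}$ is trivial. For the inductive step, applying the tripod property to the triples $(x_i, x_j, x_n)$ with $i,j < n$ shows that $T_{n-1}\cap [x_1,x_n]$ is an initial subarc $[x_1, b_n]$ of $[x_1, x_n]$; consequently $T_n = T_{n-1}\cup [b_n, x_n]$ with $T_{n-1}\cap [b_n, x_n] = \{b_n\}$, attaching one new edge and one new leaf $x_n$ to the finite tree $T_{n-1}$.

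Finally, I would verify the local-finiteness condition of \S\ref{ss:ConvNot}: that $T_n \setminus T_{n-1}$ misses any prescribed compact $K\subset \C$ for $n\gg 0$. Discreteness of $X$ lets me choose the enumeration so that $|x_n|\to \infty$; this immediately handles the endpoint $x_n$ of the new edge. The main obstacle, and what I expect to be the technical core, is that the new arc $[b_n, x_n]$ could a priori still cross $K$ even though $x_n$ is far. I plan to resolve this by enumerating $X$ along the shrinking-wake filtration: by Lemma~\ref{lem:nest wakes shrink} and Corollary~\ref{cor:no ghost limb}, any point outside a sufficiently large compact lies in a wake $\bW_s$ of small diameter whose root $c_s$ already lies outside $K$, so if $x_n\in \bW_s$ we can arrange that the attachment $b_n$ lies in $\bW_s$ as well and the entire new edge $[b_n, x_n]\subset \bW_s$ is disjoint from $K$. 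Combining this with the tripod-based inductive construction yields the desired tree structure on the connected hull of $X$.
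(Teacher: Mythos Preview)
Your tripod-plus-induction strategy is correct in principle and uses the right input (unique geodesicity from Proposition~\ref{prop:Esc M}), but it is considerably more elaborate than the paper's argument and, more importantly, the local-finiteness step has a genuine gap.

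The paper's proof is two lines: the hull intersected with any proper lake is a finite tree (proper lakes are bounded by Corollary~\ref{cor:lake:closure is bnd}, so contain only finitely many points of the discrete set $X$; and $\partial\bO'\cap\Esc_P$ is the single alpha-point $\alpha'$, so the intersection of the hull with $\overline{\bO'}$ is the hull of the finite set $(X\cap\overline{\bO'})\cup\{\alpha'\}$), and the lakes $\bO(\tt^n P)$ exhaust $\bO$ by~\eqref{eq:lakes exhaust ocean}. This immediately gives the increasing-union-of-finite-trees structure required by the convention in \S\ref{ss:ConvNot}.

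Your gap is in the last paragraph. Lemma~\ref{lem:nest wakes shrink} concerns \emph{nested} wakes shrinking, not wakes containing points far from the origin. In fact, by the self-similarity $A_\str$ (Lemma~\ref{lem:prop of Astr}), primary wakes with roots far out on $\partial\bZ$ are rescaled copies of wakes near $0$ and have \emph{large} Euclidean diameter, so the assertion ``any point outside a sufficiently large compact lies in a wake $\bW_s$ of small diameter'' is false as stated. More seriously, even granting a wake $\bW_s\ni x_n$, there is no mechanism to force the attachment point $b_n\in T_{n-1}$ to lie in $\bW_s$: $b_n$ is determined by $T_{n-1}$, and the geodesic $[x_1,x_n]$ typically crosses many wakes. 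Your enumeration freedom does not control this. The fix is exactly the lake filtration the paper uses: for a given compact $K$ choose $N$ with $K\cap\Esc_P\subset\overline{\bO(\tt^N P)}$; the single cut point $\alpha(\tt^N P)=\Esc_P\cap\partial\bO(\tt^N P)$ eventually lies in $T_{n-1}$, and once $x_n\notin\overline{\bO(\tt^N P)}$ the new edge $[b_n,x_n]$ lies entirely outside that lake, hence misses $K$. With this correction your argument goes through, but at that point you have reproduced the paper's proof inside a longer wrapper.
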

\begin{proof}
Since $\Esc_P(\bF)$ is uniquely geodesic, the intersection of $\displaystyle\bigcup_{x,y\in X} [x,y]$ with any proper lake is a finite tree. Since lakes exhaust $\bO$ (see  for example~\eqref{eq:lakes exhaust ocean}), the claim follows. 
\end{proof}

\begin{lem}
\label{lem:Esc has empty int}
The escaping set $\Esc(\bF)$ has empty interior and supports no invariant line field. Every Fatou component of $\bF$ is either $\bZ$ or its iterated preimage. 
\end{lem}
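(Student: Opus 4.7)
\emph{Empty interior.} My plan is to use Baire category. For each $P \in \PT_{>0}$, the set $\Esc_P(\bF) = \C \setminus \Dom(\bF^P)$ is closed, and by Proposition~\ref{prop:Esc M} it is uniquely geodesic; between any two points of a planar open disk there are multiple simple arcs, so unique geodesicity forces $\Esc_P(\bF)$ to have empty interior, hence to be a nowhere-dense closed subset of $\C$. Since $\PT_{>0}$ is countable and $\Esc(\bF) = \bigcup_{P \in \PT_{>0}} \Esc_P(\bF)$, Baire's theorem in the complete metric space $\C$ gives that $\Esc(\bF)$ is meager and therefore has empty interior.

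\emph{Classification of Fatou components.} Let $W$ be a Fatou component of $\bF$. Since $\partial \bZ \subset \Jul(\bF)$ by Lemma~\ref{lem:irr rot of bZ}, connectedness of $W$ forces either $W \subset \bZ$ (in which case $W = \bZ$, as $\bZ$ itself is a Fatou component) or $W \subset \bO$. In the latter case, pick $z \in W$; by Corollary~\ref{cor:no ghost limb}, $z$ lies in some primary wake $\bW_P$, and since $\partial \bW_P \subset \Esc_P(\bF) \subset \Jul(\bF)$, connectedness traps $W$ inside $\intr \bW_P$. Iterating the decomposition~\eqref{eq:cor:no ghost limb}, the component $W$ is either equal to some bubble $\bZ_s$ (at which point we are done) or contained in a strictly nested infinite sequence $\bW_{s(n)}$; the latter case is ruled out by Lemma~\ref{lem:nest wakes shrink}, which would collapse $W$ to a single point---impossible for the open set $W$. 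Hence $W = \bZ_s$ for some finite $s$.

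\emph{No invariant line field.} Suppose toward contradiction that $\mu$ is a $\bF_\str$-invariant Beltrami form supported on $\Esc(\bF_\str)$ with $\|\mu\|_\infty < 1$. By \S\ref{ss:QC deform}, integrating $t\mu$ yields a holomorphic one-parameter family $\bF_t \in \Unst$ with $\bF_0 = \bF_\str$, together with qc conjugacies $h_t\colon \C \to \C$. Descending to the pacman plane, the pacman $f_{\str, t}$ is qc conjugate to $f_\str$; since the rotation number of an irrational indifferent fixed point is a topological invariant, $f_{\str, t}$ has rotation number $\theta_\str$ at $\alpha$. By \S\ref{sss:HypSelfOper}, the stable manifold $\WW^s$ near $\bF_\str$ coincides with the locus of pacmen whose $\alpha$-multiplier equals $\ee(\theta_\str)$, so $\bF_t \in \WW^s$ for $t$ small. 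Combined with $\bF_t \in \Unst$, the transversality of $\WW^s$ and $\Unst$ at the hyperbolic fixed point $\bF_\str$ forces $\bF_t = \bF_\str$ for small $t$, so $h_t$ is conformal and $\mu = 0$.

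The principal obstacle is the third step: one must carefully pass between the maximal prepacman and the pacman pictures, verify that the qc deformation preserves the Siegel rotation number at $\alpha$, and invoke the local characterization of $\WW^s$ by this multiplier. The first two assertions, by contrast, reduce to routine applications of the tiling by wakes and the shrinking of nested wakes.
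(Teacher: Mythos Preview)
Your arguments for the first two claims are correct and essentially match the paper's: the paper also deduces empty interior from Proposition~\ref{prop:Esc M} (its one-line version hides the Baire step you make explicit), and the classification of Fatou components via the wake tiling and Lemma~\ref{lem:nest wakes shrink} is exactly the paper's route.

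The line-field argument has a genuine gap. From $\bF_t=\bF_\str$ you conclude ``$h_t$ is conformal and $\mu=0$'', but this does not follow: all you have established is that $h_t$ is a qc \emph{self}-conjugacy of $\bF_\str$ whose Beltrami coefficient is $t\mu$. Nothing yet prevents $\bF_\str$ from admitting a nontrivial qc automorphism with dilatation supported on $\Esc(\bF_\str)$. The paper explicitly splits into two cases for this reason: your argument is its Case~1 (the deformation path is nontrivial in $\Unst$), but you have not handled Case~2 (the path is constant yet $\mu\neq 0$). To close the gap you must show any such self-conjugacy $h$ is the identity: since $\mu$ is supported on $\Esc(\bF_\str)$, the map $h$ is conformal on $\bZ$ and commutes there with the irrational-translation cascade (Lemma~\ref{lem:irr rot of bZ}), forcing $h\mid\overline\bZ=\id$ after the natural normalization; then $h=\id$ on every bubble $\overline\bZ_s$, and by Corollary~\ref{cor:wake:clos:limb} together with Lemma~\ref{lem:nest wakes shrink} these are dense in $\C$, so $h=\id$ globally and $\mu=0$.
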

\noindent Such a statement is referred to as the Hairiness Theorem.
\begin{proof}
We give the sketch of the proof because the argument is standard.

Since every combinatorial class of $\Esc(\bF)$ is trivial (Proposition~\ref{prop:Esc M}), we have $\intr(\Esc(\bF))=\emptyset$.

Suppose $X$ is a Fatou component of $\bF$ such that $X$ is not an iterated preimage of $\bZ$. By Corollary~\ref{cor:no ghost limb}, $X$ belongs to a nested sequence of wakes; the wakes shrink to a point by Lemma~\ref{lem:nest wakes shrink}. This is a contradiction.

Suppose $\Esc(\bF)$ supports an invariant line field $L$. There are two cases. If the integration along $L$ as in~\S\ref{ss:QC deform} gives a non-trivial continuous path $\bG_t$ emerging from $\bF=\bF_{\str}$, then $\bG_t$ is a path in $\Unst$ contradicting the rigidity of $\bF$. 

In the second case, we would obtain a non-trivial qc map $h\colon \C\to \C$ commuting with $\bF$. Since $h$ is identity on $\overline \bZ$, we obtain that $h$ is identity on all $\bZ_s$ which are dense. This implies that $h$ is identity everywhere.
\end{proof}

\begin{lem}
The closure of the escaping set $\Esc_P(\bF)\cup \{\binfty\}\subset \widehat \C$ is locally connected for all $P\in \PT_{>0}$.
\end{lem}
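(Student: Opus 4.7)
\smallskip
\noindent\textbf{Plan.} First I would dispose of the easy topological properties. The set $\Esc_P(\bF)\cup\{\binfty\}$ is the complement in $\widehat\C$ of the open set $\Dom(\bF^P)\subset\C$, hence is closed and therefore compact in $\widehat\C$. For connectedness, $\Esc_P(\bF)$ is connected by Corollary~\ref{cor:Q-P comp are bound}, and every connected component of $\Esc_P(\bF)$ is unbounded (\S\ref{ss:Fat Jul  Esc}); consequently $\binfty\in\overline{\Esc_P(\bF)}$ and $\Esc_P(\bF)\cup\{\binfty\}$ is connected. The substantive content is local connectivity: I will show that every $x\in\Esc_P(\bF)\cup\{\binfty\}$ admits arbitrarily small connected neighborhoods in $\Esc_P(\bF)\cup\{\binfty\}$. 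The three tools are the uniquely-geodesic tree structure from Proposition~\ref{prop:Esc M}, the shrinking of nested and primary wakes (Lemmas~\ref{lem:wake shrinks} and~\ref{lem:nest wakes shrink}), and the global covering Corollary~\ref{cor:wakes are close to bJ} producing, for a given $\varepsilon>0$, a generation $Q\in\PT_{>0}$ with $Q\le P$ such that each gap component of $\C\setminus\bigl(\overline{\bZ_\str}\cup\bigcup_{|S|\le Q}\bW_S\bigr)$ has spherical diameter less than $\varepsilon/2$.

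The heart of the proof is case analysis for $x$. If $x=\binfty$, I use the nested lakes $\bO(\tt^n Q)$ from~\eqref{eq:lakes exhaust ocean} with $n\ll 0$: the coast $\partial^c\bO(\tt^n Q)$ has escape time $\tt^n Q\le P$ and therefore lies in $\Esc_P$, while $\widehat\C\setminus\overline{\bO(\tt^n Q)}$ shrinks to $\{\binfty\}$ as $n\to-\infty$; I claim the intersection of this complement with $\Esc_P\cup\{\binfty\}$ is connected, because the closure $\overline{\bO(\tt^n Q)}$ meets the tree $\Esc_P$ in a connected subtree (the coast together with the preimage of $\Esc_{P-\tt^n Q}\cap\bO$ under $\bF^{\tt^n Q}$), and the remaining components of $\Esc_P$ are unbounded, hence attach to $\binfty$. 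If $x$ lies in one of the $\varepsilon/2$-small gap components supplied by Corollary~\ref{cor:wakes are close to bJ}, then that gap together with a small arc neighborhood of its boundary in $\Esc_P$ supplies the required neighborhood directly. Finally, if $x\in\bW_S$ for some $|S|\le Q$, I iterate inside the wake: using Lemma~\ref{lem:nest wakes shrink} to find a nested sub-wake $\bW_{s'}\subset\bW_S$ containing $x$ of spherical diameter $<\varepsilon$, and cutting the tree $\Esc_P\cup\{\binfty\}$ at the finitely many alpha-tops of the primary subwakes of $\bW_{s'}$ that fail to shrink, I isolate the component of $x$.

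The main obstacle, in each case, is to certify that the candidate set is both open in $\Esc_P(\bF)\cup\{\binfty\}$ and connected. Openness is built in because the candidate is presented as the intersection of $\Esc_P\cup\{\binfty\}$ with the complement in $\widehat\C$ of a finite union of closed sets (closed wakes, a closed lake, or a closed external-ray segment ending at an alpha-point), all of which avoid $x$ by construction. Connectedness will follow from the tree property of $\Esc_P$: removing a connected closed subset from a uniquely arcwise connected compact set yields components each of which is a connected subtree, and the finite collection of cut points (alpha-points of generation bounded in terms of $Q$ and the iteration depth) is finite by the proper discontinuity supplied by Lemma~\ref{lem:discr of dyn}, since in any prescribed bounded region only finitely many critical points and hence alpha-points of generation $\le$ a given bound occur. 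The one delicate point is to verify that the cut through a wake top $\alpha_{s'}$ actually separates $x$ from the remainder of $\Esc_P\cup\{\binfty\}$; this is where I rely on the external-chain description of Proposition~\ref{prop:alpha v alpha s is arc} and the characterization of components of $\Esc_R\setminus\Esc_T$ in Lemma~\ref{lem:comp of Esc R - Esc T}, which describe precisely how branches of the escaping set are attached at any given alpha-point.
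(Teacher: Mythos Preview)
Your proposal is headed in the right direction and uses the correct main ingredients (shrinking of nested wakes, the lake exhaustion for the point at infinity), but it is considerably more elaborate than necessary and has a genuine gap in the handling of boundary points.

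The paper's proof is two lines for finite points: by the last clause of Corollary~\ref{cor:no ghost limb}, every $z\in\Esc(\bF)$ is contained in a union of at most three pairwise non-nested wakes of level $\ge m$, and this union is a neighborhood of $z$; by Lemma~\ref{lem:nest wakes shrink} these wakes shrink as $m\to\infty$. Since each $\bW_s\cap\Esc_P(\bF)$ is a connected subtree (attached to the rest of $\Esc_P$ only through the top $\alpha_s$), the intersection of this union with $\Esc_P(\bF)$ is a small connected neighborhood of $z$. For $\binfty$ the paper uses exactly your lake argument via~\eqref{eq:lakes exhaust ocean}. You never invoke the three-wakes statement, and this is what makes your case analysis awkward.

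The specific gap: in your case ``$x\in\bW_S$'', you propose to ``find a nested sub-wake $\bW_{s'}\subset\bW_S$ containing $x$ of spherical diameter $<\varepsilon$''. But if $x$ lies on $\partial\bW_s$ for some wake (e.g.\ $x$ is an alpha-point or lies on a ray segment), then $x$ is not in the interior of any single wake at deeper levels, and no single nested sequence of wakes contains $x$. Lemma~\ref{lem:nest wakes shrink} only tells you that \emph{strictly nested} sequences shrink; it does not produce a small sub-wake containing a boundary point. Your subsequent ``cutting the tree at the finitely many alpha-tops of the primary subwakes of $\bW_{s'}$ that fail to shrink'' is not a well-defined procedure once $\bW_{s'}$ is already supposed to be small. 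The three-wakes neighborhood from Corollary~\ref{cor:no ghost limb} is precisely designed to handle this boundary situation uniformly. Similarly, your use of Corollary~\ref{cor:wakes are close to bJ} for points in ``gap components'' is ill-suited: that corollary bounds spherical diameters of gaps globally, but a gap need not give a \emph{connected} neighborhood of a specific $x\in\Esc_P$ lying in its closure.
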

\begin{proof}
Local connectivity of $\Esc_P(\bF)$ at any its point $z$ follows from Lemma~\ref{lem:nest wakes shrink}. Indeed, if $\displaystyle\bigcup_i \bW_i$ is a neighborhood of $z$ consisting of at most $3$ pairwise non-nested wakes of level $\ge m$ (see Corollary~\ref{cor:no ghost limb}), then $\left(\displaystyle\bigcup_i \bW_i\right)\cap \Esc_P(\bF)$ is a neighborhood of $z$ in $\Esc_P(\bF)$. By Lemma~\ref{lem:nest wakes shrink},  $\displaystyle\bigcup_i \bW_i$ shrinks as $m$ increases.

Recall that there are nested lakes  $\bO(P)\subset \bO(\tt^{-1}P)\subset \dots$ such that
$\C=\displaystyle\bigcup_{n\le 0} \bO(\tt^n P)$, see~\eqref{eq:lakes exhaust ocean}. Then $\big(\Esc_{P}(\bF)\cup \{\binfty\}\big)\setminus\bO(\tt^{n}P) $ is a small neighborhood of $\binfty$.
\end{proof}

We can now replace $\RR\colon \BB\dashrightarrow \BB$ with a new operator so that its renormalization period $\mm$ is optimal: 
\begin{prop}
\label{prop:Ren oper with min per}
The pacman renormalization operator $\RR\colon \BB\dashrightarrow \BB$ from ~\S\ref{sss:HypSelfOper} can be constructed so that $\mm$ is the minimal period of $\theta_\str$ under $\cRRc$.
\end{prop}
\begin{proof} 
Let $\mm_{\min}$ be the minimal period of $\theta_\str$, and let $\tt_{\min}$ be the eigenvalue of the associated antirenormalization matrix, see Lemma~\ref{lem:lambda:t*t}. The cascades $\{ \bF_\str^P\}_{P\in \PT}$ and $\{\bF_\str^{\tt_{\min} P}\}_{P\in \PT}$ are combinatorially equivalent. By the McMullen Rigidity Theorem (see~\cite{McM2}*{Theorem 8.1}), these cascades are conformally conjugate. Let us denote by \[A_{\str,\min}\colon z\mapsto \mu_{\str,\min} z,\sp\sp |\mu_{\str,\min}|<1\] the scaling map conjugating $\bF_\str^P$ to $\bF_\str^{\tt_{\min} P}$. Consider the renormalization sector $\bS$ from \eqref{eq:Max Prep U pm}. A priori, $A_{\str,\min}(\bS)$ does not have to be in $\bS$. (In~\cite{DLS} we passed to an iteration to obtain such a property.) Below we will use external rays of $\bF_\str$ to construct a new $\bS^\new$ satisfying necessary inclusion properties.

Set $J=[x,y]\coloneqq \partial \bZ\cap \bS$. Let $\bR_x,\bR_y$ be the external rays landing at $x,y$, and let $\bI_x,\bI_y$ the internal rays of $\bZ_\str$ landing at $x,y$. Define $\bS^\new$ with $\bS^\new\cap \bZ_\str=\bS\cap \bZ_\str$ to be the sector bounded by $\bI_x\cup \bR_x\cup \bR_y\cup \bI_y$. Set \[\bR_2\coloneqq \bbf_-(\bR_y)=\bbf_+(\bR_x)\sp\sp\text{ and }\sp\sp \bI_2\coloneqq \bbf_-(\bI_y)=\bbf_+(\bI_x).\]
Cutting $\bS^\new$ along $\bI_2\cup \bR_2$ and and pulling back the resulting two sectors under $\bbf_{\str,\pm}=\bbf_{\pm}$, we obtain a full prepacman $\bF^\new= (\bbf_{\pm}\colon \bU^\new_{\pm }\to \bS^\new)$; see Figure~\ref{Fg:Prepacman} for illustration (where $\beta_-,\sp \beta_+$ are $\bI_x\cup \bR_x,\sp \bI_Y\cup \bR_y$ and $\beta$ is $\bI_2\cup \bR_2$). Conjugating $\bF^\new$ by $A_{\str,\min}$, we obtain a full prepacman $\bF_1^\new=(\bbf_{1,\min,\pm}\colon \bU^\new_{1,\pm }\to \bS_1^\new)$. Then $\bS_1^\new\subset \bS^\new$ and, moreover, the forward orbit of $\bU^\new_{1,\pm }$ under $\bbf_{\pm}\mid \bU^\new_{\pm }$ is within $\bU^\new_{-}\cup \bU^\new_{+}$.

Let $\alpha'$ be the preimage of $\balpha$ under $(\bbf_{\pm}\colon \bU^\new_{\pm }\to \bS^\new)$; the $\alpha'$ is the point where $\bR_x$ and $\bR_y$ meet. Choose an equipotential $\bE\subset \bZ_\str$ close to $\balpha$, and let $O$ be the connected component of $\bS^\new\setminus \bE$ attached to $\balpha$. Then $O$ has an $\bF^\new$ lift $O'$ attached to $\alpha'$. We remove $O'$ from $\bU^\new_{-}\cup \bU^\new_{+}$ and we glue dynamically the sides of $\bS^\new$ to obtain a pacman $f_{\str,\new}\colon U\to V$ out of $\bF^\new$. As with $\bF^\new$, we remove a lift of $A_{\str,\new}(O)$ from $\bU^\new_{1,- }\cup \bU^\new_{1, +}$; then we project the new prepacman into the dynamical plane of $f_{\str,\new}$; we denote by $F_1$ the resulting prepacman. By construction, $F_1$ is a prepacman for $f_{\str,\new}$. Moreover, the associated renormalization triangulation $\bDelta_{F_1}$ is compactly contained in $U$.  By~\cite{DLS}*{Theorem 2.7}, see~\S\ref{sss:RenOperat}, there is a pacman renormalization operator defined in a small Banach neighborhood of $f_{\str,\new}$ realizing $\mm_{\min}$. By \cite[Theorem 7.7]{DLS} (see~\S\ref{sss:HypSelfOper}), $\RR_\new$ is hyperbolic with one-dimensional unstable manifold.
\end{proof}

\section{Holomorphic motion of the escaping set}
\label{s:HolMot Esc}
Consider a maximal prepacman $\bF\in\Unst$. Recall that the escaping set $\Esc_S(\bF)=\C\setminus \Dom (\bF^S)$ is defined for $S\in \PT\subset \R_{\ge0}$. For $M\in \R_{\ge0}$ we define \[\displaystyle{\Esc_M(\bF)\coloneqq \bigcap_{S\ge M}\Esc_S(\bF)},\] where the intersection is taken over all $S\in \PT$ with $S\ge M$.  For $x\in \Esc(\bF)$ its \emph{escaping time} is the minimal $M$ such that $\Esc_M(\bF)\ni x$.

\subsection{Stability of $\boldsymbol\sigma$-branched structure}
\label{ss:impl finct thm}
We need the following approximation of $\bF$ by finite degree branched coverings.
\begin{lem}
\label{lem:fin cov appr}
There is a neighborhood $\bUU \subset \Unst$ of $\bF_\str$ such that for every ${\bF\in \bUU}$ there are sequences of disks $\bD^{(-1)}\subset \bD^{(-2)}\subset \dots $ and $ \bW^{(-1)}_{\pm}\subset  \bW^{(-2)}_{\pm}\subset \dots $ with $\displaystyle{\bigcup _{k<0}\bD^{(k)}=\C}$ and $\displaystyle{\bigcup_{k<0} \bW_\pm ^{(k)}=\Dom \bbf_\pm}$ such that
\begin{equation}
\label{eq:lem:fin cov appr:defn}
(\bbf_{0,-},\bbf_{0,+}) \colon  \bW^{(k)}_{-} ,  \sp \bW^{(k)}_{+} \to \bD^{(k)}
\end{equation}
are proper branched coverings of finite degree depending continuously on $\bF$ when $k$ is fixed. Moreover,  all maps~\eqref{eq:lem:fin cov appr:defn} are Hurwitz equivalent for a fixed $k$. 
\end{lem}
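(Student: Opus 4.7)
The plan is to mirror the construction of $\bF_\str$ from \S\ref{sss:max prepacmen} while tracking analytic dependence on $\bF$, and then to promote continuity at the single point $\bF_\str$ to a uniform statement on a neighborhood, using the contraction of antirenormalization along $\Unst$.

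First I would revisit the construction for $\bF = \bF_\str$ itself. Picking the fixed disk $D \subset V$ around $c_1(f_\str)$ provided by \cite{DLS}*{Theorem~5.5}, the pullbacks $W^{(k)}_-$ and $W^{(k)}_+$ of $D$ along the orbits $c_1, f_\str(c_1),\dots, f_\str^{\aa_k}(c_1)$ and $c_1, f_\str(c_1),\dots, f_\str^{\bb_k}(c_1)$ avoid the forbidden boundary $\partial^\frb U_k$, giving proper branched coverings $f_k^{\aa_k}\colon W^{(k)}_- \to D$ and $f_k^{\bb_k}\colon W^{(k)}_+ \to D$ of finite degrees $2^{\aa_k}$ and $2^{\bb_k}$. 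Conjugating by $A_\str^k \circ h_k$ transports these to~\eqref{eq:lem:fin cov appr:defn}, and the nested exhaustion $\bigcup_k \bD^{(k)} = \C$, $\bigcup_k \bW^{(k)}_\pm = \Dom\bbf_\pm$ is built into that construction.

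Next I would transfer this to nearby $\bF$. Because $\RR$ is hyperbolic at $f_\str$ with one-dimensional unstable direction and eigenvalue $\lambda_\str$ with $|\lambda_\str|>1$, antirenormalization $\RR^{-1}\restrict{\Unst}$ is uniformly contracting with rate $|\lambda_\str|^{-1}$. Hence for any prescribed neighborhood $\bUU$ of $\bF_\str$ in $\Unst$, the antirenormalizations $f_k = \RR^k f_0$ depend analytically on $\bF \in \bUU$ for each fixed $k \le -1$ and, uniformly over $\bF \in \bUU$, satisfy $f_k \to f_\str$ geometrically as $k \to -\infty$. In particular the linearizers $h_k$ (as in~\eqref{eq:h_0:defn}) and the scaling $A_\str^k$ depend analytically on $\bF$ for each fixed $k$.

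Now I would build $\bW^{(k)}_\pm(\bF)$ and $\bD^{(k)}(\bF)$ for each $k$ separately. For fixed $k$, the pullbacks of $D$ under $f_k^{\aa_k}$ and $f_k^{\bb_k}$ avoid $\partial^\frb U_k$ at $\bF=\bF_\str$; this is an open condition, so there is an open neighborhood $\bUU_k$ of $\bF_\str$ on which the pullbacks still avoid $\partial^\frb U_k$, yielding proper branched coverings of the same finite degree, varying analytically with $\bF \in \bUU_k$, and canonically Hurwitz-equivalent by their joint dependence on the continuously moving critical orbit. Transporting by $A_\str^k\circ h_k$ provides $\bD^{(k)}(\bF)$ and $\bW^{(k)}_\pm(\bF)$ with the claimed continuous dependence and Hurwitz equivalence for this fixed $k$. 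The exhaustion $\bigcup_k \bD^{(k)} = \C$ and $\bigcup_k \bW^{(k)}_\pm = \Dom\bbf_\pm$ again follows since the $\bbf_\pm$ were defined via the very same compatible system of finite covers.

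The main obstacle is the \emph{uniformity in $k$} of the neighborhood: a priori $\bUU_k$ could shrink as $k\to-\infty$, potentially preventing the existence of a single $\bUU$ that works for all $k$. This is exactly where the contraction of $\RR^{-1}$ on $\Unst$ is decisive: fixing once and for all some neighborhood $\bUU_0$ of $\bF_\str$, the images $\RR^k(\bUU_0) \subset \Unst$ are contained in a ball around $f_\str$ of radius $O(|\lambda_\str|^{k})$, so for all $k$ with $|k|$ sufficiently large, $f_k(\bF)$ lies in an arbitrarily small neighborhood of $f_\str$ and the avoidance of $\partial^\frb U_k$ for the finitely many relevant pullback steps is automatic. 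Only finitely many values of $k$ impose nontrivial restrictions, and $\bUU \coloneqq \bigcap_{k \in F} \bUU_k$ for this finite set $F$ is still an open neighborhood of $\bF_\str$ on which the entire tower $(\bD^{(k)}, \bW^{(k)}_\pm)_{k\le -1}$ exists with the stated properties.
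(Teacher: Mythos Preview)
Your approach mirrors the paper's, but two steps do not go through as written.

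The uniformity-in-$k$ argument is not closed by contraction alone. You correctly note that $f_k \to f_\str$ at rate $|\lambda_\str|^{k}$, but this does not by itself ensure that the $\aa_k$-step pullbacks of $D$ under $f_k$ avoid $\partial^\frb U_k$: the number of pullback steps $\aa_k,\bb_k$ grows exponentially with $|k|$, so the open-condition neighborhood of $f_\str$ in which $\aa_k$-step avoidance holds could shrink faster than contraction delivers. What actually closes this is \cite{DLS}*{Key Lemma~4.8} (restated in the paper's proof as Lemma~\ref{lem:KeyLemma}): it supplies a \emph{single} neighborhood $\UU\subset\WW^u$ of $f_\str$ such that for every large $n$ and every $f\in\RR^{-n}(\UU)$ the $\aa_n$- and $\bb_n$-step pullbacks work. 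In the present indexing this reads: for $f_0\in\UU$ the pullbacks work in the plane of $f_k$ for all $k\ll0$. Note the relevant condition is on $f_0$, not on the closeness of $f_k$ to $f_\str$; once you invoke the Key Lemma, uniformity follows without any contraction bookkeeping.

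The Hurwitz-equivalence claim also needs more. As $\bF$ varies, the critical-orbit points $c_i(f_k)$ move and can cross $\partial D$, changing the branching structure of $f_k^{\aa_k}\colon W_-^{(k)}\to D$. Your phrase ``canonically Hurwitz-equivalent by their joint dependence on the continuously moving critical orbit'' does not address this crossing. The paper handles it by invoking \cite{DLS}*{Theorem~4.6} to slightly and uniformly shrink $D$ to $D(f_0,k)$ so that $D(f_0,k)\ni c_i(f_k)$ if and only if $D(f_\str,k)\ni c_i(f_\str)$; this is the content of~\eqref{eq:1:prf:lem:fin cov appr}, and it is what guarantees the Hurwitz type is constant.

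(A minor point: the degrees are not $2^{\aa_k}$ and $2^{\bb_k}$. The map $f_k^{\aa_k}$ is restricted to a single component $W_-^{(k)}$ of the full preimage, and its degree is governed by how many $c_i(f_k)$ with $i\le\aa_k$ lie in $D$, not by $\aa_k$ itself; this is another reason the shrinking step matters.)
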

\noindent It follows, in particular, that the degree of $\bbf_{0,\pm }\mid  \bW^{(k)}_{\pm}$ is constant and critical points of $\bbf_{0,\pm }$ do not collide.
\begin{proof}
The proof follows the lines of~\cite[Theorem 5.5]{DLS} asserting that a pacman on the unstable manifold extends to a maximal prepacman. Let us first review the proof of \cite[Theorem 5.5]{DLS}; see also~\S\ref{sss:max prepacmen}.

For a pacman $f_0\in \WW^u$, let 
\[\mathfrak F_0 =\left(f_{0, \pm } \colon \mathfrak U_{0,\pm} \to \mathfrak S \coloneqq V\setminus (\gamma_1\cup O)\right)\] 
be a commuting pair obtained from $F_{0} = \left(f_{0, \pm } \colon U_{0,\pm}\to V\setminus \gamma_1\right)$ by removing a small neighborhood $O$ of $\alpha$ from $ V\setminus \gamma_1$ and by removing $f_{0,\pm}^{-1}(O)$ from $  U_{0,\pm}$. The map
$\phi_{k}\circ   \dots\circ \phi_{-1}$ embeds $\mathfrak F_0$ to the dynamical plane of $f_k$ as a commuting pair denoted by
\begin{equation}
\label{eq:CommPaif:n}
\mathfrak F^{(k)}_0 =\left( f_k^{\aa_{k}},f^{\bb_{k}}_k \right)\colon \mathfrak U^{(k)}_{0,-} ,\sp \mathfrak U^{(k)}_{0,+} \to  \mathfrak S_0^{(k)}.
\end{equation}

Since $\phi_{k}$ is contracting at the critical value the diameter of $U^{(k)}_{0,-} \cup U^{(k)}_{0,+}\cup  \mathfrak S^{(k)}_{0}\ni c_1(f_n)$ tends to $0$. Let us set $c_{1+k}(f_n)\coloneqq f^k(c_1)$. The key step in verifying the maximal $\sigma$-proper extension is the following lemma:

\begin{lem}[{\cite[Key Lemma 4.8]{DLS}}]
\label{lem:KeyLemma}
There is a small open topological disk $D$ around $c_1(f_\str)$ and there is a small neighborhood $\UU\subset \WW^u$ of $f_\str$ such that the following property holds. For every sufficiently big $n\ge 1$, for each $\tt\in \{\aa_n,\bb_n\}$, and for all $f\in \RR^{-n} (\UU)$, we have ${c_{1+\tt}\in D
}$ and $D$ pullbacks along the orbit $c_1(f), c_2(f),\dots , c_{1+\tt}(f)\in D$ to a disk $D_{0}$ such that $f^{\tt}\colon D_{0} \to D$ is a branched covering; moreover, $D_0\subset U_{f}\setminus \gamma_1$.
\end{lem}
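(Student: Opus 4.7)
The strategy will exploit the self-similarity $\RR f_\str = f_\str$ together with the contraction of the renormalization change of variables $\psi_\str$ at $c_1(f_\str)$ (multiplier $\mu_\str$ with $|\mu_\str|<1$). I will first establish the statement at $f_\str$, propagate it by continuity to a small neighborhood $\UU \subset \WW^u$ of $f_\str$, and finally transfer it to $f \in \RR^{-n}(\UU)$ down the renormalization tower via the conformal embedding $\Phi_n = \phi_1 \circ \cdots \circ \phi_n$ that conjugates the prepacman $(f_n^\aa, f_n^\bb)$ to $(f^{\aa_n}, f^{\bb_n})$ in the dynamical plane of $f$.

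The first step is to select $D$ using the Siegel structure of $f_\str$. Because $f_\str$ is a standard Siegel pacman, the critical value $c_1(f_\str)$ lies in $\partial Z_\str$ in the interior of $U_{f_\str}$ and is disjoint from $\gamma_1 \cup \partial^\frb U$. I can therefore choose a small quasidisk $D$ around $c_1(f_\str)$ with $\overline D \Subset U_{f_\str} \setminus (\gamma_1 \cup \partial^\frb U)$, contained in the renormalization sector $S$, and satisfying $\psi_\str^{-1}(D) \Subset D$ (possible because $\psi_\str^{-1}$ fixes $c_1(f_\str)$ with multiplier $\mu_\str$). Since $\psi_\str$ conjugates the prepacman $(f_\str^\aa, f_\str^\bb)$ to $f_\str$, the pullback of $D$ along the $f_\str^\tt$-orbit of $c_1$ for $\tt \in \{\aa, \bb\}$ is exactly $\psi_\str^{-1}(D) \Subset D$. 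The intermediate iterates $f_\str^{-k}(D)$ for $k = 1, \ldots, \tt-1$ trace the backwards orbit of $c_1$ on $\partial Z_\str$; by the definition of Siegel pacman this orbit is disjoint from the single (non-precritical) intersection $\gamma_1 \cap \partial Z_\str$, so for $D$ sufficiently small all the intermediate disks remain in $U_{f_\str} \setminus (\gamma_1 \cup \partial^\frb U)$.

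By induction on $m$, for $\tt \in \{\aa_m, \bb_m\}$ the pullback of $D$ along the $f_\str^\tt$-orbit of $c_1$ equals $\psi_\str^{-m}(D)$, with intermediate pullbacks organized hierarchically by the antirenormalization matrix $\M$; the nested containments $\psi_\str^{-m}(D) \Subset \cdots \Subset \psi_\str^{-1}(D) \Subset D$ provide uniform geometric buffer at every level. All the relevant conditions are open, so by the analyticity and compactness of the pacman renormalization operator they persist on a small neighborhood $\UU$ of $f_\str$: for every $g \in \UU$ the pullback of $D$ along the $g^\tt$-orbit of $c_1(g)$ is a quasidisk compactly contained in $D$ whose entire backwards orbit lies in $U_g \setminus (\gamma_1 \cup \partial^\frb U_g)$, and $g^\tt(c_1(g)) \in D$.

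Finally, for $f \in \RR^{-n}(\UU)$ set $g = f_n \in \UU$. The embedding $\Phi_n$ conjugates $(g^\aa, g^\bb)$ to $(f^{\aa_n}, f^{\bb_n})$, so $c_{1+\tt}(f) = \Phi_n(g^\tt(c_1(g)))$ lies within distance $O(|\mu_\str|^n)$ of $c_1(f_\str)$ and hence in $D$ for $n$ large. The pullback $D_0$ is built level-by-level down the tower $f_n, f_{n-1}, \ldots, f_0 = f$: at each intermediate level $k$ I apply the uniform statement of the previous paragraph to $f_k$, then transfer via $\phi_k \colon V \to S_{k-1} \Subset U_{f_{k-1}} \setminus \gamma_1$, which conformally embeds into the safe sector. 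The main obstacle will be coordinating all $\aa_n$ intermediate pullbacks simultaneously across the tower so that none of them hits the forbidden boundary $\partial^\frb U_f$; the buffer $\psi_\str^{-1}(D) \Subset D$ is inherited at every level, and hyperbolicity of $\RR$ at $f_\str$ supplies uniform distortion bounds for each $\phi_k$, so the crux will be verifying that the factor-$|\mu_\str|$ shrinkage at each level dominates the accumulated distortion from intermediate prepacman embeddings.
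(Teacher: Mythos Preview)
This lemma is not proven in the present paper; it is quoted from \cite[Key Lemma 4.8]{DLS} and invoked as a black box inside the proof of Lemma~\ref{lem:fin cov appr}. There is no proof here to compare your proposal against.

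Regarding your sketch on its own terms: the overall architecture --- establish the statement at $f_\str$ via the contraction $\psi_\str^{-1}(D)\Subset D$, perturb to a neighborhood $\UU$, then descend level by level through the tower $f_n,f_{n-1},\ldots,f_0=f$ --- is the natural one. But your proposal stops exactly at the genuine difficulty, which you yourself flag: controlling \emph{all} of the roughly $\tt^n$ intermediate pullbacks of $D$ under $f$ and showing that none of them meets $\partial^{\frb}U_f$ or $\gamma_1$. Saying that ``the buffer $\psi_\str^{-1}(D)\Subset D$ is inherited at every level'' and that ``the factor-$|\mu_\str|$ shrinkage dominates the accumulated distortion'' is not an argument; it is a restatement of what must be shown. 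The intermediate pullbacks at each level are spread around a renormalization triangulation, and one needs concrete geometric or combinatorial control of that spreading (of the kind developed in \cite[\S4]{DLS}) to see why they avoid the forbidden boundary. Your final paragraph names this as ``the crux'' and then leaves it open, so what you have is a plan rather than a proof.
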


Lemma~\ref{lem:KeyLemma} implies that for a sufficiently big $k<0$ the pair~\eqref{eq:CommPaif:n} extends into a pair of commuting branched coverings
\begin{equation}
\label{eq:CommPaif:n:ext}
F^{(k)}_0 =\left( f_{k}^{\aa_k},f^{\bb_k}_{k} \right)\colon  W_{-}^{(k)} ,\sp   W^{(k)}_{+} \to D,
\end{equation} with $W_{-}^{(k)} \cup   W^{(k)}_{+} \cup D\subset V\setminus \gamma_1$. Conjugating~\eqref{eq:CommPaif:n:ext} by $A_\str^k\circ h_k$ we obtain the commuting pair  
\[(\bbf_{0,-},\bbf_{0,+}) \colon\sp   \bW^{(k)}_{-} ,\sp   \bW^{(k)}_{+} \to \bD^{(k)}\]
such that $\displaystyle{\bigcup_{k\ll 0} \bD^{(k)} =\C}$ (because the modulus of $\bD^{(tm-t)} \setminus \bD^{(tm)}$ is uniformly bounded away from $0$ for $t\gg 0$ and all $m\le 0$). This implies that
\[(\bbf_{0,-},\bbf_{0,+}) \colon  \bigcup_{k\ll 0}\bW^{(k)}_{-} ,\sp   \bigcup_{k\ll 0}   \bW^{(k)}_{+} \to \C\] is a pair of $\sigma$-proper maps.

Let us argue that~\eqref{eq:CommPaif:n:ext} can be slightly adjusted such that the new commuting pair depends continuously on $f_0$ as required. Recall that $c_0$ and $c_1$ denote the critical point and the critical value of a pacman $f$. For $i\ge 0 $ we write $c_i=f^i(c_0)$.

By~\cite{DLS}*{Theorem 4.6}, if $f_0$ is sufficiently close to $f_\str$, then for  every $k\ll0$ the disk $D$ can be slightly shrunk to the disk $D(f_0,k)$ (uniformly in $k$ and $f_0$) such that the following property holds: for $i\le \max\{\aa(k),\bb(k)\}$ we have 
\begin{equation}
\label{eq:1:prf:lem:fin cov appr}
D(f_\str,k)\ni c_i(f_\str)\sp \text{ if and only if }\sp D(f_0,k)\ni c_i(f_k)
\end{equation}
(because points $c_i(f_k)$ are sufficiently close to $c_i(f_\str)$). Pulling back $D(f_0,k)$ along the orbit of~\eqref{eq:CommPaif:n:ext}, we obtain the extension of~\eqref{eq:CommPaif:n} to the commuting pair   
\begin{equation}
\label{eq:CommPaif:n:ext:new}
F^{(k)}_0 =\left( f_{k}^{\aa_k},f^{\bb_k}_{k} \right)\colon  W_{-}^{\new, (k)} ,  \sp W^{\new,(k)}_{+} \to D(f_0,k).
\end{equation}
Note that $f_k\colon f_k^i \big( W_{\iota}^{\new, (k)}  \big)\to  f_k^{i+1} \big( W_{\iota}^{\new, (k)}  \big)$ for $\iota\in\{-,+\}$ has degree $2$ if and only if $ f_k^i \big( W_{\iota}^{\new, (k)}  \big)$ contains the critical point of $f_k$. By~\eqref{eq:1:prf:lem:fin cov appr}, the pair~\eqref{eq:CommPaif:n:ext:new} depends continuously on $f_0$ as required. Conjugating~\eqref{eq:CommPaif:n:ext:new} by $A_\str^k\circ h_k$ we obtain the required commuting pair~\eqref{eq:lem:fin cov appr:defn}.
\end{proof}

Combining with the Implicit Function Theorem, we obtain:
\begin{cor}
\label{cor:hol mot of preimages}
Set \[T\coloneqq \min\{(0,1,0),\sp (0,0,1)\}.\]
There is a point $x\in \C$ and an open disk $\bUU$ containing $\bF_\str$ such that $\displaystyle{ \bigcup_{S\le T}\bF^{-S}(x)}$ moves holomorphically with $\bF\in \bUU$.
\end{cor}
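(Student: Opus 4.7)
\medskip

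The plan is to combine the Implicit Function Theorem with the $\sigma$-proper branched covering approximation from Lemma~\ref{lem:fin cov appr}, choosing $x$ generically so that every preimage under $\bF^S_\str$ with $S \le T$ is a regular (non-critical) point. First I would fix some $k_0$ and a compact disk $D$ compactly contained in $\bD^{(k_0)}(\bF_\str)$. By Lemma~\ref{lem:discr of dyn} the postcritical truncation $\CV(\bF_\str^T)=\{\bF_\str^Q(0):0\le Q<T\}$ is discrete, hence intersects $D$ in a finite set; the set of periodic points of $\bF_\str$ of period $\le T$ likewise meets $D$ finitely. I will pick $x \in D$ outside both sets, which ensures $x$ is non-periodic and a regular value of $\bF_\str^S$ for every $S \le T$.

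Next I will check that every element of the countable set $Y\coloneqq \bigcup_{S\le T}\bF_\str^{-S}(x)$ is non-critical. For $y\in Y$, non-periodicity of $x$ guarantees a unique minimal $S_y\in \PT$ with $S_y \le T$ and $\bF_\str^{S_y}(y)=x$. By Lemma~\ref{lem:crit pts} the critical set of $\bF_\str^{S_y}$ is $\bigcup_{0<R\le S_y}\bF_\str^{-R}(0)$; if $y$ belonged to it, we would have $x=\bF_\str^{S_y}(y)=\bF_\str^{S_y-R}(0)\in \CV(\bF_\str^T)$, contradicting our choice of $x$. Thus $(\bF_\str^{S_y})'(y)\neq 0$ for every $y\in Y$, and the Implicit Function Theorem, applied to the holomorphic expression $(\bF,z)\mapsto \bF^{S_y}(z)-x$, produces a unique holomorphic continuation $y(\bF)$ with $y(\bF_\str)=y$ on some neighborhood $\bUU_y$ of $\bF_\str$ in $\Unst$.

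The main obstacle is to find a single neighborhood $\bUU$ on which all of the (countably many) motions $y(\cdot)$ are simultaneously defined. This is where Lemma~\ref{lem:fin cov appr} is essential. For each fixed level $k$ the proper branched covering $(\bbf_{0,-},\bbf_{0,+})\colon\bW^{(k)}_-\cup \bW^{(k)}_+\to \bD^{(k)}$ has a combinatorial (Hurwitz) type independent of $\bF$ in a fixed neighborhood $\bUU_{k}$ of $\bF_\str$, with $\bW^{(k)}_\pm$ and $\bD^{(k)}$ varying continuously. Applying the same lemma to every rescaled level $\bF^{\#}_n$, I get analogous rigid Hurwitz approximations for each $\bbf^{\#}_{n,\pm}$, and hence for every composition $\bF^S=(\bbf^{\#}_{n,-})^a\!\circ\!(\bbf^{\#}_{n,+})^b$ with $S\le T$. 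Consequently the combinatorics by which any $y\in Y$ is produced from $x$ via inverse branches is locally constant in $\bF$; critical values of $\bF^{S_y}$ do not collide with $x$ and critical points do not collide with $y(\bF)$. Packaging all of these level-$k$ deformations together produces a common neighborhood $\bUU$ on which the countable family $\{y(\bF)\}_{y\in Y}$ is well-defined, holomorphic in $\bF$, and injective in $y$, yielding the desired holomorphic motion of $\bigcup_{S\le T}\bF^{-S}(x)$.

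The hard part will be the uniformity of $\bUU$: the set $Y$ is countably infinite and accumulates at infinity, so IFT alone only gives motion on $\bUU_y$ potentially shrinking with $y$. The Hurwitz-rigid $\sigma$-proper approximation from Lemma~\ref{lem:fin cov appr} is precisely what bypasses this obstacle, reducing the problem at each scale to a proper branched cover of finite degree whose preimage combinatorics is locally constant.
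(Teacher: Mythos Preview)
Your approach is essentially the paper's, but the paper executes it more cleanly by a specific rather than generic choice of $x$. The paper takes $x\in \intr\bU_-(\bF_\str)$; then \eqref{eq:CV are away from 0} gives $x\notin \CV(\bF^T)$ \emph{for every} $\bF\in\bUU$ (not just $\bF_\str$), and the triangulation $\bDelta_0(\bF)$ gives that the forward images $\bF^S(x)$, $0\le S\le T$, lie in pairwise different triangles, so $\bF^{-S}(x)\cap\bF^{-Q}(x)=\emptyset$ for $S<Q\le T$ uniformly in $\bF$. With these two observations, Lemma~\ref{lem:fin cov appr} yields the holomorphic motion directly---no separate appeal to the Implicit Function Theorem is needed.

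Two points in your write-up need care. First, ``periodic points of $\bF_\str$ of period $\le T$ meet $D$ finitely'' is not clear as stated: the set of periods $\le T$ in $\PT$ is countably infinite (dense in $[0,T]$), so a priori you only get a countable set of periodic points in $D$. That still lets you pick a non-periodic $x$, but the word ``finitely'' is not justified. Second, and more importantly, your injectivity across different exponents has a gap: you verify $x$ is non-periodic only at $\bF_\str$, but if $y_1(\bF)=y_2(\bF)$ with $S_{y_1}\ne S_{y_2}$ then $\bF^{S_{y_2}-S_{y_1}}(x)=x$, i.e.\ $x$ becomes periodic for that particular $\bF$. Hurwitz rigidity of each $\bF^{S}$ separately does not forbid this. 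The paper's placement of $x$ in a single triangle of $\bDelta_0$ resolves this uniformly, via the first-return combinatorics (equivalently via Lemma~\ref{lem:discr of dyn} with $m=0$, giving $Q=T$). If you want to keep your generic choice, you should either place $x$ inside some $\Delta_0(i)$ or invoke Lemma~\ref{lem:discr of dyn} on a small neighborhood of $x$ to get non-periodicity of period $\le T$ for all $\bF$ near $\bF_\str$.
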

\begin{proof}
By Lemma~\ref{lem:crit pts}, $\CV(\bF^T)$ moves holomorphically within a small neighborhood of $\bF_\str$. Therefore, we can shrink $\bUU$ from Lemma~\ref{lem:fin cov appr} and choose a point $x\in \bU_-(\bF_\str)$ such that $x$ belongs to the interior of $\bU_-(\bF)$ and $x$ does not hit $\CV(\bF)$ for all $\bF\in \bUU$.  By Lemma~\ref{lem:fin cov appr}, $\bF^{-S}(x)$ moves holomorphically with $\bF\in \bUU$ for all $S\le T$. Since the points $\bF^S(x)$ with $S\le T$ belong to different triangles of $\bDelta_0(\bF)$, the set $\bF^{-S}(x)$ is disjoint from $\bF^{-Q}(x)$ for $S<Q\le T$. We obtain a holomorphic motion of $\displaystyle{ \bigcup_{S\le T}\bF^{-S}(x)}$.
\end{proof}

\subsection{Holomorphic motion of the escaping set}
\label{ss:HolMotEsc}
We need the following facts.

\begin{lem}
\label{lem:unique crt pnt}
Let $g\colon U\to V$ be a finite branched covering between open topological disks. If $g$ has a unique critical value, then $g$ also has a unique critical point.
\end{lem}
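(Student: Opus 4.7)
The plan is to apply the Riemann--Hurwitz formula to the finite branched covering $g\colon U\to V$ and combine it with a count of the preimages of the unique critical value. Since both $U$ and $V$ are open topological disks, their Euler characteristics are $\chi(U)=\chi(V)=1$. If $d\coloneqq \deg g$, denote by $c_1,\dots,c_k$ the critical points of $g$ and by $d_1,\dots,d_k\ge 2$ their respective local degrees. Riemann--Hurwitz then reads
\begin{equation}
\label{eq:RH-plan}
1 = d\cdot 1 - \sum_{i=1}^{k}(d_i-1),\qquad\text{i.e.}\qquad \sum_{i=1}^{k}(d_i-1)=d-1.
\end{equation}

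Next I would use the hypothesis that $g$ has a unique critical value $v\in V$. Every critical point of $g$ must lie in $g^{-1}(v)$, so $c_1,\dots,c_k\in g^{-1}(v)$; let $p_1,\dots,p_m$ denote the remaining (unramified) points of $g^{-1}(v)$, each contributing local degree $1$. Since $g$ is a finite branched covering, the sum of the local degrees over $g^{-1}(v)$ equals $d$:
\begin{equation}
\label{eq:deg-count-plan}
\sum_{i=1}^{k}d_i + m = d.
\end{equation}

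Subtracting~\eqref{eq:RH-plan} from~\eqref{eq:deg-count-plan} gives $k+m=1$. If $d=1$ then $g$ is unramified with no critical value, and the statement is vacuous; otherwise~\eqref{eq:RH-plan} forces $k\ge 1$, so the only possibility is $k=1$ and $m=0$. Hence $g$ has exactly one critical point, as claimed. The main (and essentially only) subtlety is to make sure Riemann--Hurwitz is legitimately applied in this noncompact setting; this is standard for proper holomorphic maps between open Riemann surfaces with finite total branching, which is guaranteed here since $g$ is a finite branched covering.
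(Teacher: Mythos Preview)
Your proof is correct. The Riemann--Hurwitz count together with the fiber count over the unique critical value cleanly forces $k+m=1$, and the justification for applying Riemann--Hurwitz to a proper finite branched cover between open disks is fine.

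The paper's argument is genuinely different: it observes that removing the critical locus gives an honest covering $g\colon U\setminus\CP(g)\to V\setminus\CV(g)$, and since $V\setminus\CV(g)$ is a disk minus one point, its fundamental group is $\mathbb Z$, hence abelian. The fundamental group of the cover $U\setminus\CP(g)$ is then a subgroup, hence also abelian; but $U\setminus\CP(g)$ is a disk minus $|\CP(g)|$ points, whose fundamental group is free of rank $|\CP(g)|$, and this is abelian only when $|\CP(g)|\le 1$. Your approach is more combinatorial and gives slightly more (namely $m=0$: the critical value has a single preimage, which is totally ramified), while the paper's is a one-line topological observation that avoids invoking Euler characteristics altogether. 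Both are short and self-contained; neither is clearly superior.
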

\begin{proof}
By assumption, $g\colon U\setminus \CP(g)\to V\setminus \CV(g)$ is a covering. Since $\pi_1(V\setminus \CV(g))$ is an abelian group, so is $\pi_1(U\setminus \CP(g))$. Therefore, $\CP(g)$ is a singleton. 
\end{proof}

\begin{lem}
\label{lem:Acc Of preim}
Let $g\colon \Disk \to \C$ be a $\sigma$-proper map. Fix an open disk $W\subset \C$. Let $U$ be an open set intersecting $\partial \Disk$. Then $g^{-1}(W)$ intersects $U$.
\end{lem}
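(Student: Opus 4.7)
The plan is to argue by contradiction: assume $g^{-1}(W)\cap U=\emptyset$. Fix $z_0\in U\cap\partial\Disk$ and choose $r>0$ small enough that $B(z_0,r)\subset U$. Set $V:=B(z_0,r)\cap\Disk$. Then $V$ is a Jordan crescent in $\C$ whose boundary contains a non-degenerate open arc $A\subset\partial\Disk$, and by hypothesis $g(V)\subset\C\setminus W$. Translating coordinates on the target, I may assume $0\in W$; then $g$ is zero-free on $V$, so $h:=1/g$ is a bounded holomorphic function on $V$ with $|h|\le 1/\dist(0,\C\setminus W)$.

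Next I would uniformize. Since $V$ is a Jordan domain, a Riemann map $\phi\colon\Disk\to V$ extends (by Carath\'eodory) to a homeomorphism $\overline\Disk\to\overline V$, and $E:=\phi^{-1}(A)$ is an arc in $\partial\Disk$ of positive Lebesgue measure. The composition $H:=h\circ\phi\colon\Disk\to\C$ is bounded and holomorphic, so by Fatou's theorem it has non-tangential limits at almost every point of $\partial\Disk$, in particular at almost every $\zeta\in E$. I would then show that these radial limits must vanish almost everywhere on $E$: if $\zeta\in E$ and the radial limit $H^{\ast}(\zeta)=c\ne 0$, then $g(\phi(r\zeta))\to 1/c\in\C\setminus W$ while $\phi(r\zeta)\to\phi(\zeta)\in\partial\Disk$, so $1/c$ would be a finite asymptotic value of $g$ at $\phi(\zeta)$. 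This is incompatible with $\sigma$-properness: the tail of the approach curve would lie in a single component of $g^{-1}(\overline{B(1/c,\varepsilon)})$ for small $\varepsilon>0$, yet this component's closure meets $\partial\Disk$ at $\phi(\zeta)$ and therefore cannot be compact in $\Disk$.

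Having established that $H$ has radial limit $0$ at almost every $\zeta\in E$, I would invoke the Luzin--Privalov uniqueness theorem (equivalently, the F.\ and M.\ Riesz theorem for bounded holomorphic functions): since $E$ has positive measure, $H\equiv 0$ on $\Disk$, contradicting the fact that $h=1/g$ is nowhere zero. The main obstacle the plan has to bypass is that $\sigma$-properness does \emph{not} forbid $\infty$ as an asymptotic value, so a direct sequence-and-finite-asymptotic-value argument cannot dispatch the possibility that $g(z)\to\infty$ along boundary approaches inside $U$. Passing to $h=1/g$ converts that scenario into a statement about zero radial limits of a bounded holomorphic function, which is then eliminated in bulk via Luzin--Privalov rather than one sequence at a time.
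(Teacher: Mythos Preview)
Your proof is correct and follows essentially the same route as the paper's: argue by contradiction, pass to a bounded holomorphic function on a crescent $V$ with an arc on $\partial\Disk$, and combine Fatou's theorem, the Riesz/Luzin--Privalov uniqueness theorem, and the fact that $\sigma$-properness forbids finite asymptotic values. The only cosmetic differences are that you invert via $1/g$ rather than postcomposing with a M\"obius map sending a point of $W$ to $\infty$, and that you apply the two key facts in the opposite order (you first use $\sigma$-properness to force all radial limits of $H$ on $E$ to be zero, then invoke Luzin--Privalov; the paper first applies Riesz to find a radial limit of $s$ different from $h(\infty)$, then derives a finite asymptotic value of $g$ and contradicts $\sigma$-properness).
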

\begin{proof}
Suppose $g^{-1}(W)\cap U=\emptyset$. Choose an open topological disk $V\subset U\cap \Disk$ such that $ \partial V$ is a simple closed curve containing an arc $I\subset \partial \Disk$. Choosing a point $w$ in $W$ and postcomposing $g$ with a  M\"obius transformation $h$ moving $w$ to $\infty$, we obtain that the new function $s\coloneqq h\circ g\colon \Disk \to \widehat \C$ is bounded on $V$.  By the Fatou Theorem, $s\mid V$ has a radial limit at almost every point in $\partial V$. By the Riesz Theorem, almost all of the radial limits are different from any fixed number, in particular from $h(\infty)$. Therefore, there is an $x\in I$ such that $g$ has a finite radial limit $y$ at $x$. This contradicts to the assertion that $g$ is $\sigma$-proper: if $\Sigma$ is a small open neighborhood of $y$, then every connected component of $g^{-1}(\Sigma)$ is disjoint from $\partial \Disk\ni x$ and the radial limit at $x$ is not $y\in \Sigma$. (In fact, $\sigma$-proper maps have no asymptotic values.) Therefore, $g^{-1}(W)\cap U\not=\emptyset$.
\end{proof}

\begin{rem}
\label{rem:lem:Acc Of preim}
Lemma~\ref{lem:Acc Of preim} also holds for every $\sigma$-proper map $g\colon U \to \C$ defined on a simply connected set $U\subsetneq \C$. In the proof, the angular metric of $\partial \Disk$ is replaced with the harmonic measure of $\partial U\subset \hC$. 
\end{rem}

\begin{cor}
\label{cor:preim accum at the boundary}
Consider $\bF\in \Unst$ and $P\in \PT_{>0}$. Then for every $x\in \C$, the boundary $\partial \Dom(\bF^P)$ is the set of accumulating points of $\bF^{-P}(x)$.
\end{cor}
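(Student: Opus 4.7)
The plan is to prove the asserted identity by establishing both inclusions. For the easy direction --- accumulation points of $\bF^{-P}(x)$ lie in $\partial\Dom(\bF^P)$ --- observe that $\bF^P$ is a non-constant holomorphic map on the open set $\Dom(\bF^P)$, so its fiber $\bF^{-P}(x)$ is discrete in $\Dom(\bF^P)$, whence any accumulation point in $\C$ lies in $\Esc_P(\bF)=\C\setminus\Dom(\bF^P)$; an interior point of $\Esc_P(\bF)$ has a whole neighborhood disjoint from $\Dom(\bF^P)$ and hence from $\bF^{-P}(x)$, contradicting the assumption that it is an accumulation point.

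For the reverse inclusion, fix $y\in\partial\Dom(\bF^P)$ and a neighborhood $N$ of $y$; the task is to produce a genuine $\bF^P$-preimage of $x$ inside $N$. Since $y$ is a boundary point of the open set $\Dom(\bF^P)$, there is a connected component $U$ of $\Dom(\bF^P)$ with $y\in\partial U$; by the discussion in \S\ref{ss:Fat Jul  Esc} (applied after anti-renormalizing $\bF$ into $\UnstLoc$ if necessary), $U$ is simply connected, and the restriction $g\coloneqq \bF^P|_U\colon U\to\C$ is $\sigma$-proper. Uniformize $U$ by a Riemann map $\varphi\colon\Disk\to U$; then $h\coloneqq g\circ\varphi\colon\Disk\to\C$ is again $\sigma$-proper.

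Next I pick an accessible boundary point $y'\in N\cap\partial U$ close to $y$ together with a Jordan path inside $U\cap N$ approaching $y'$; by Carath\'eodory its $\varphi^{-1}$-lift is a Jordan arc in $\Disk\cup\partial\Disk$ landing at some $p\in\partial\Disk$. I enclose the terminal segment of this lifted arc in a thin Jordan subdomain $T\subset\Disk$ whose boundary is the union of a short arc $I\subset\partial\Disk$ containing $p$ and a Jordan arc $J\subset\Disk$ joining the endpoints of $I$, with $T$ chosen thin enough that $\varphi(T)\subset N$. A small perturbation of $J$ guarantees $J\cap h^{-1}(x)=\emptyset$ (since $h^{-1}(x)$ is discrete in $\Disk$ and $J$ is compact in $\Disk$), and so $h(J)$ is a compact subset of $\C$ not containing $x$; I then pick an open disk $W\ni x$ disjoint from $h(J)$. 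Lemma~\ref{lem:Acc Of preim} applied to $h$ and the open set $T$ (which meets $\partial\Disk$ along $I$) yields $z\in h^{-1}(W)\cap T$, and by $\sigma$-properness the connected component $K$ of $h^{-1}(W)$ containing $z$ is compactly contained in $\Disk$ with $h\colon K\to W$ proper and hence surjective onto the connected disk $W$; in particular $K$ contains a genuine preimage of $x$. Because $W\cap h(J)=\emptyset$, the set $K$ cannot meet $J$, hence, being connected and intersecting $T$, it is contained in $T$. Therefore $\varphi$ maps the preimage of $x$ in $K$ into $\varphi(T)\subset N\cap U$, as desired.

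The main obstacle is to confine the genuine preimage of $x$ --- produced abstractly by $\sigma$-properness --- to the prescribed neighborhood $N$ of $y$, rather than letting the component $K$ of $h^{-1}(W)$ wander elsewhere in $\Disk$. This is handled by the ``barrier'' arc $J$, whose compactness inside $\Disk$ forces $h(J)$ to be disjoint from a suitably small disk $W$ around $x$, so that $K$ cannot cross $J$ and therefore must stay within the tube $T\subset\varphi^{-1}(N)$.
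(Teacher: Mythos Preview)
Your easy direction is fine, and the paper does not spell it out. For the hard direction, your barrier-arc approach is quite different from the paper's, and as written it has a genuine gap. The crosscut $J$ you construct joins two points of $\partial\Disk$; hence $J\cap\Disk$ --- the only part on which $h$ is defined --- is \emph{not} compact in $\Disk$, contrary to your assertion. Consequently $h(J\cap\Disk)$ need not be compact (or even bounded) in $\C$, and you cannot in general separate $x$ from its closure by a small disk $W$. There is a second, related problem: requiring the cap region $T$ at $p$ to satisfy $\varphi(T)\subset N$ amounts to asking that the impression of the prime end $p$ lie inside $N$. For a general simply connected domain this can fail even when the chosen boundary point $y'$ is accessible (think of a topologist's-sine-curve boundary), and at this stage of the argument nothing is known about the boundary regularity of $\Dom(\bF^P)$.

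The paper sidesteps both issues by a different mechanism. Rather than a barrier, it uses a \emph{modulus bound}: one nests disks $\Omega\Subset W$ around $x$ so that $W$ meets at most the single critical value $x$ (Lemma~\ref{lem:discr of dyn}); then Lemma~\ref{lem:unique crt pnt} together with the uniform critical-degree bound of Lemma~\ref{lem:crit pts} forces every component $W'$ of $(\bF^P)^{-1}(W)$ to have bounded degree over $W$, so $\mod(W'\setminus\Omega')$ is uniformly bounded below. After uniformizing, Lemma~\ref{lem:Acc Of preim} produces components $\Omega'$ near any boundary point of $\Disk$, and since each sits inside $W'\subset\Disk$ with a definite surrounding modulus, it is small. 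It is this smallness --- an intrinsic conformal estimate that survives conformal change of variable --- that confines the preimages; no compactness-of-$h(J)$ claim is ever needed.
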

\begin{proof}
Let $U$ be a connected component of $\Dom(\bF^P)$. We claim that the preimages of $x$ under $\bF^P\mid U$ accumulate at $\partial U$.

 Since the set of critical values of $\bF^P$ is discrete (Lemma~\ref{lem:discr of dyn}), we can choose small neighborhoods $W\Supset \Omega \ni x$ so that $W\cap \CV(\bF^P)\subset  \{x\}$. Thus $W$ contains at most one critical value. By Lemma~\ref{lem:unique crt pnt}, every connected component $W'$ of $\bF^{-P}(W)$ contains at most one critical point. By Lemma~\ref{lem:crit pts}, the degree of $\bF^P\colon W'\to W$ is at most $k$, where $k$ is independent of $W'$. Therefore, if $\Omega'$ is the connected component of $\bF^{-P}(\Omega)$ within $W'$, then the modulus of $W'\setminus \Omega'$ is uniformly bounded from $0$.

  Let us precompose $\bF^P\colon U\to \C$ with a Riemann map $\Disk\to  \Dom(\bF^P)$; we obtain a $\sigma$-proper map $g\colon \Disk\to \C$. By Lemma~\ref{lem:Acc Of preim} and Remark~\ref{rem:lem:Acc Of preim}, for every $y\in \partial \Disk$, there is a connected component $\Omega'$ of $g^{-1}(\Omega)$ close to $y$. Denote by $W'$ the connected component of $g^{-1}(W)$ containing $\Omega'$. Since $W'\setminus \Omega'\subset \Disk$ and the modulus of $W'\setminus \Omega'$ is bounded from $0$, the component $\Omega'$ is small. Since $\Omega'$ contains a preimage of $x$, the corollary is verified.
\end{proof}

A \emph{conformal motion} of a set $E\subset \C$ parametrized by a complex manifold is a holomorphic motion whose dilatation on $E$ is $0$. The following lemma follows from Corollaries~\ref{cor:preim accum at the boundary} and~\ref{cor:hol mot of preimages} and is reminiscent to \cite{Re}*{Theorem 1.1}.

\begin{lem}
\label{lem:esc set moves hol}
The set 
\begin{equation}
\label{eq:lem:esc set moves hol}
\DEsc_P=\C\setminus \ParEsc_P\coloneqq\{ \bF\in \Unst\mid 0\not \in \Esc_P(\bF)\}
\end{equation}
is open. There is a unique equivariant conformal motion $\tau$ of $\Esc_P(\bF)$ along any curve in $\DEsc_P$.

The escaping set $\Esc(\bF)$ has empty interior and supports no invariant line field for every $\bF\in \Unst$.
\end{lem}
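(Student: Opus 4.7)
Write $P\simeq (n,a,b)$ with $n\ll 0$. The condition $0\in\Dom(\bF^P)$ says that the composition $(\bbf^{\#}_{n,-})^a\circ(\bbf^{\#}_{n,+})^b$ is well-defined at $0$, i.e., every intermediate iterate lies in the appropriate domain. By Lemma~\ref{lem:fin cov appr}, for $k\ll0$ the finite branched covering approximations $(\bbf_{0,\pm})\colon \bW^{(k)}_{\pm}\to \bD^{(k)}$ and their iterates depend continuously on $\bF$. Picking $k$ deep enough so that the entire orbit of $0$ under $\bF_0^P$ sits in the ``finite'' picture, continuity gives $0\in\Dom(\bF^P)$ for $\bF$ near $\bF_0$. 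Hence $\DEsc_P$ is open.

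\textbf{Construction of the motion.} Fix $\bF_0\in\DEsc_P$ and choose $x\in\C$ off the (discrete, by Lemma~\ref{lem:discr of dyn}) postcritical set $\{\bF_0^S(0):0\le S<P\}$ of $\bF_0^P$. By Lemma~\ref{lem:crit pts}, the critical values of $\bF^P$ are exactly $\{\bF^S(0)\}_{S<P}$, and these depend holomorphically on $\bF$ throughout $\DEsc_P$; so $x$ remains a regular value on a neighborhood. Applying the Implicit Function Theorem to the finite approximations of Lemma~\ref{lem:fin cov appr}, every preimage $y\in E_0\coloneqq \bF_0^{-P}(x)$ continues holomorphically along every curve in $\DEsc_P$, yielding a conformal motion of the discrete set $E_0$. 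By Corollary~\ref{cor:preim accum at the boundary}, $\Esc_P(\bF_0)=\partial\Dom(\bF_0^P)$ is the accumulation set of $E_0$, so the Slodkowski $\lambda$-lemma extends the motion to $\overline{E_0}=E_0\cup\Esc_P(\bF_0)$ along any curve in $\DEsc_P$. Restricting to $\Esc_P(\bF_0)$ produces the desired motion $\tau$. Equivariance is tautological (preimages go to preimages, and sections commute with $\bF^Q$ for $Q\le P$), and uniqueness follows because equivariance pins down the motion on iterated preimages of any orbit, which are dense in $\Esc_P(\bF_0)$. Conformality on $\Esc_P(\bF_0)$ in the Lebesgue sense is automatic once empty interior is established.

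\textbf{Empty interior and invariant line fields.} For $\bF=\bF_\str$ these are Lemma~\ref{lem:Esc has empty int}. In general, every $\bF\in\Unst$ arises from a pacman $f\in\WW^u$ whose critical orbit is defined for all time, and the maximal prepacman construction of \S\ref{sss:max prepacmen} then gives $\bF^P(0)$ defined for every $P\in\PT_{>0}$; thus $\Unst=\DEsc_P$. Since $\Unst\simeq\C$ is simply connected, for any $\bF\in\Unst$ choose a path from $\bF_\str$ to $\bF$ in $\DEsc_P$. By Slodkowski, the motion $\tau$ along this path extends to a holomorphic motion of $\C$, giving at the endpoint a quasiconformal homeomorphism $h\colon\C\to\C$ with $h(\Esc_P(\bF_\str))=\Esc_P(\bF)$. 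Quasiconformal maps preserve topological interior and transport invariant line fields; thus interior points or an invariant line field on $\Esc_P(\bF)$ would pull back to the same on $\Esc_P(\bF_\str)$, contradicting Lemma~\ref{lem:Esc has empty int}. Since $\Esc(\bF)=\bigcup_{P\in\PT}\Esc_P(\bF)$ is a countable union of closed sets with empty interior, Baire category delivers empty interior for $\Esc(\bF)$; the invariant-line-field statement follows by the same transport argument applied to any hypothetical $\bF$-invariant line field restricted to each $\Esc_P(\bF)$.

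\textbf{Main obstacle.} The subtlest step is verifying $\Unst=\DEsc_P$ — that the critical orbit of the maximal prepacman is defined for all $P\in\PT$ globally on $\Unst$, not just near $\bF_\str$ — which must be extracted from the global construction of $\Unst$ by rescaling and from the fact that pacmen on $\WW^u$ have critical orbits trapped inside their domains. A secondary technical point is that the Slodkowski extension is globally only quasiconformal, so transporting the \emph{non-existence} of an invariant line field (rather than the existence) requires care: one uses that pulling back a $\bF$-invariant line field by $h$ yields a field invariant under the conjugated dynamics, which after integration (as in \S\ref{ss:QC deform}) violates rigidity of $\bF_\str$.
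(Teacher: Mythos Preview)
Your argument has a genuine gap at exactly the point you flag as the ``main obstacle'': the claim that $\Unst=\DEsc_P$ for every $P$. First, the assertion that every $\bF\in\Unst$ arises from a pacman $f\in\WW^u$ is false: only $\bF\in\UnstLoc$ do, while $\Unst$ is obtained by globalizing via forward iterates of $\RR$ (see the beginning of \S\ref{s:max prep}). Second, even for $f\in\WW^u$, you give no argument that the critical orbit stays in the domain forever. The paper explicitly sidesteps this---note the disclaimer after the lemma that $\DEsc_P$ is not claimed to be connected. Your strategy of transporting empty interior and absence of line fields along a path from $\bF_\str$ to $\bF$ inside $\DEsc_P$ therefore has no footing.

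The paper's mechanism is different and does not need connectivity of $\DEsc_P$. One works only with a \emph{small} power-triple $T$ in a neighborhood $\bUU$ of $\bF_\str$, where $\DEsc_T\supset\bUU$ is automatic; there one gets the motion of $\Esc_T$ and transfers empty interior and absence of line fields from $\bF_\str$. For arbitrary $\bF\in\Unst$ and arbitrary $P$, one chooses $n\ll 0$ with $\bF_n\in\bUU$ and uses $\Esc_P(\bF)=A_\str^{n}\bigl(\Esc_{\tt^{n}P}(\bF_n)\bigr)$ with $\tt^n P$ small; the conclusions then follow by conformal rescaling and pullback, never leaving the regime near $\bF_\str$. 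This same rescaling trick also gives \emph{conformality} of the motion: the dilatation of $\tau$ on $\Esc_P(\bF)$ equals that on $\Esc_{\tt^n P}(\bF_n)$, which by the $\lambda$-lemma tends to $0$ as $\bF_n\to\bF_\str$. Your claim that conformality ``is automatic once empty interior is established'' is incorrect---empty interior does not imply measure zero, and even on measure-zero sets this is not the operative notion.
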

\noindent We do not claim in this lemma that $\DEsc_P$ is connected. As in~\cite{Re}, we will show that the motion of $\Esc_S(\bF)$ has small dilatation for small $S$ (here~\eqref{eq:ren:F:F_n} is used), and then dynamically extend the motion of $\Esc_S(\bF)$ to the motion of $\Esc_P(\bF)$ with the same dilatation. 
\begin{proof}

Consider $T,\bUU$, and $x$ from Corollary~\ref{cor:hol mot of preimages}; i.e.~$\displaystyle{ {\bigcup_{S\le T}\bF^{-S}(x)}}$ moves holomorphically with $\bF\in \bUU$, where $\bUU$ is a neighborhood of $\bF_\str$. Applying the $\lambda$-lemma, we obtain the holomorphic motion $\tau$ of $\displaystyle{ \overline{\bigcup_{S\le T}\bF^{-S}(x)}}$. By Corollary~\ref{cor:preim accum at the boundary}, 
\begin{equation}
\label{eq:prf:eq:lem:esc set moves hol}
\partial \Esc_T(\bF)\subset { \overline{\bigcup_{S\le T}\bF^{-S}(x)}}
\end{equation}
 moves holomorphically with $\bF\in \bUU$. Clearly, $\tau$ is an equivariant motion. 
 
 In particular, $\tau$ induces an equivariant qc map between $\Esc_T(\bF)$ and $\Esc_{T}(\bF_\str)$ for every $\bF\in \bUU$. By Lemma~\ref{lem:Esc has empty int}, $\Esc_T(\bF_\str)$ has empty interior and supports no invariant line field. Therefore, $\Esc_T(\bF)$ has empty interior and supports no invariant line field and $\tau$ is a holomorphic motion of $\Esc_T(\bF)=\partial \Esc_T(\bF)$. Observe that the dilatation of $\tau$ is small if $\bF$ is close to $\bF_\str$. Moreover, by Proposition~\ref{prop:Esc M}, we have:
\begin{equation}
\label{eq:Esc M is: cup Esc S}
 \Esc_M(\bF)=\overline{\bigcup_{S<M} \Esc_S(\bF)}.
\end{equation}
for all $M\le T$.

Let us now show that the definition of $\tau$ is independent of $x$. Suppose that $y_{\bG}\in \C\setminus \CV(\bG)$ is a point that depends holomorphically on $\bG$ in a small neighborhood of $\bF\in \bUU$. In a smaller neighborhood of $\bF$, we can connect $x,y_\bG$ by a simple arc $\ell_\bG\subset \C\setminus \CV(\bG)$ and surround $\ell_\bG$ by an annulus $A_\bG\subset \C\setminus \CV(\bG)$ (we recall that $\CV(\bG^T)$ depends continuously on $\bG$ by Lemma~\ref{lem:crit pts}). If $\ell_n$ is a sequence of preimages of $\ell_\bG$ under $\bG^{T}$ accumulating at a point in $\Esc_T(\bG)=\partial \Esc_T(\bG)$, then the diameter of $\ell_n$ tends to $0$ because every $\ell_n$ is separated from $\Esc_T(\bG)$ by a conformal preimage of $A_\bG$. Therefore, the motion $\tau$ coincides with the motion of 
\[ \Esc_T(\bG)\subset \overline {\bG^{-T}(y_{\bF})}\] in a small neighborhood of $\bF$.

 We claim that $\tau$ is the unique equivariant holomorphic motion of $\Esc_T(\bF)$. Suppose  $\tau'$ is another equivariant holomorphic motion of $\Esc_T(\bG)$ defined in some neighborhood of $\bF$. Choose a small $S\in \PT$ and a point $y\in \Esc_S(\bG)$ and let $y_\bF$ be the motion of $y$ induced by $\tau'$. Since $\tau'$ is equivariant, the motion of $\bF^{S-T}(y_{\bF})$ is $\tau'$. Since
\[ \Esc_{T-S}(\bF)\subset \overline {\bF^{S-T}(y_{\bF})}\] we see that $\tau'$ and $\tau$ coincide on $ \Esc_{T-S}(\bF)$ for all $S\in \PT_{>0}$ such that $S<T$. Therefore, the motions $\tau$ and $\tau'$ coincide on $\Esc_{T-S}(\bF)$ and, by~\eqref{eq:Esc M is: cup Esc S}, on $\Esc_{T}(\bF)$.

 Let us show that the set
\begin{equation}
\label{eq:lem:esc set moves hol:2}
\DEsc_P\cap \bUU=\{\bF\in \bUU\mid 0\not \in \Esc_P(\bF)\}
\end{equation} is open for every $P\in \PT$ and that the motion of $\Esc_T(\bF)$ can be dynamically extended (with the same dilatation) to a motion of $\Esc_P(\bF)$ with $\bF\in \DEsc_P$. If $P\le T$, then $\DEsc_P\supset  \bUU$ and the claim is immediate. Assume that $T<P\le 2T$. We have: $\bF\in \DEsc_P\cap \bUU$ if and only if $\bF^{T}(0)\not \in \Esc_{P-T}(\bF)$ because $0\in \Dom(\bF^T)$; this is an open condition because $\Esc_{P-T}(\bF)\subset \Esc_{T}(\bF)$ moves holomorphically with $\bF\in\bUU$. Moreover, for every $\bF\in  \DEsc_P\cap \bUU$, we  can pull back the holomorphic motion of $ \Esc_T(\bF)$ to the holomorphic motion of $ \Esc_P(\bF)\setminus  \Esc_{P-T}(\bF)$ via a covering \[\bF^T\colon \Esc_P(\bF)\setminus  \Esc_{P-T}(\bF)\to \Esc_T(\bF);\] combining with the motion of $ \Esc_{P-T}(\bF)\subset \Esc_T(\bF)$, we obtain the motion of $\Esc_P(\bF)$ without changing dilatation. For $2^{n-1}T<P\le2^nT$, we apply an induction.

For every $\bF\in \Unst$, there is a sufficiently big $n\ll0$ such that $\bF_n$ is close to $\bF_\str$; in particular, $\bF_n\in \bUU$. Setting $L=P \tt^{-n}$, we obtain from~\eqref{eq:ren:F:F_n} that $\bF\in \DEsc_P$ if and only if $\bF_n\in \DEsc_L$. This shows that~\eqref{eq:lem:esc set moves hol} is open as a union of open sets. It also follows that $ \Esc_P(\bF)$ moves holomorphically with $\bF\in \DEsc_P$. The dilatation of the motion of $ \Esc_P(\bF)$ is equal to the dilatation of $\Esc_L(\bF_n)$ and can be made arbitrary small by choosing $\bF_n$ close to $\bF_\str$.  This shows that the holomorphic motion is conformal.

It was already proven that $\Esc_{L}(\bF)$ has empty interior and supports no invariant line field for sufficiently small $L$. Therefore, $\Esc_{P}(\bF)$ supports no invariant line field and has empty interior.
\end{proof}

\begin{cor}
\label{cor:Jul is ovl Esc}
For every $\bF\in \Unst$ we have $\Esc(\bF)\not=\emptyset$ and $\Jul (\bF)=\overline {\Esc(\bF)}$.  
\end{cor}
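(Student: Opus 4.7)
The reverse inclusion $\overline{\Esc(\bF)}\subset\Jul(\bF)$ is immediate from the definitions, so the corollary reduces to two assertions: (i) $\Esc(\bF)\neq\emptyset$, and (ii) $\Jul(\bF)\subset\overline{\Esc(\bF)}$.

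For (i), I plan to propagate non-emptiness from $\bF_\str$ by combining holomorphic motion with the renormalization scaling. At $\bF=\bF_\str$ the escaping set contains all the alpha-points constructed in \S\ref{ss:alpha-points}, so $\Esc(\bF_\str)\neq\emptyset$. If $\bF\in\UnstLoc$ lies close enough to $\bF_\str$, then either $0\in\Esc(\bF)$ (and we are done at once), or $\bF\in\DEsc_P$ for every $P$; since $\DEsc_P$ is open by Lemma~\ref{lem:esc set moves hol}, its conformal motion of $\Esc_P$ extends to a connected neighborhood of $\bF_\str$ inside $\DEsc_P$ and transports non-emptiness to $\bF$. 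For arbitrary $\bF\in\Unst$, hyperbolicity of $\RR$ forces $\bF_{-n}=\RR^{-n}\bF\to\bF_\str$ as $n\to+\infty$, so the previous step applies to $\bF_{-n}$ for $n$ large; the conjugacy $\bF^\#_{-n}=A_\str^{-n}\bF_{-n}A_\str^n$ combined with~\eqref{eq:ren:F:F_n^P} then yields $\Esc(\bF)=A_\str^{-n}\Esc(\bF_{-n})$, and non-emptiness descends to $\bF$.

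For (ii), I plan a Montel argument on $U:=\C\setminus\overline{\Esc(\bF)}$. The inclusion $U\subset\bigcap_P\Dom(\bF^P)$ makes every $\bF^P|_U$ well defined, and I first verify the forward invariance $\bF^P(U)\cap\overline{\Esc(\bF)}=\emptyset$: if $\bF^P(x)\in\overline{\Esc(\bF)}$ for some $x\in U$, pick $z_n\to\bF^P(x)$ with $z_n\in\Esc_{Q_n}(\bF)$; the open mapping theorem supplies $y_n\in U$ with $\bF^P(y_n)=z_n$ and $y_n\to x$, and the backward invariance $\bF^{-P}(\Esc_{Q_n})\subset\Esc_{P+Q_n}$ puts each $y_n$ into $\Esc(\bF)$, forcing $x\in\overline{\Esc(\bF)}$, a contradiction. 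Hence $\{\bF^P|_U\}_{P\in\PT}$ omits every point of $\overline{\Esc(\bF)}$. Since each non-empty component of $\Esc_Q(\bF)$ is unbounded (as recorded in \S\ref{ss:Fat Jul  Esc}), part~(i) already forces $\overline{\Esc(\bF)}$ to contain infinitely many points, so Montel's theorem yields normality of the family on $U$, whence $U\subset\Fat(\bF)$, i.e.~$\Jul(\bF)\subset\overline{\Esc(\bF)}$. The main delicate point is the book-keeping in step~(i): Lemma~\ref{lem:esc set moves hol} only provides the conformal motion along curves in $\DEsc_P$, so one must first apply it inside a connected open neighborhood of $\bF_\str$ and then bootstrap through the scaling identity to reach arbitrary $\bF\in\Unst$; once that is in place, the Montel step of~(ii) is routine.
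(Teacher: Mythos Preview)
Your proposal is correct and follows essentially the same route as the paper: non-emptiness is transported from $\bF_\str$ via the holomorphic motion of Lemma~\ref{lem:esc set moves hol} together with the rescaling identity $\Esc_P(\bF)=A_\str^{-n}\Esc_{\tt^nP}(\bF_{-n})$, and the inclusion $\Jul(\bF)\subset\overline{\Esc(\bF)}$ is a Montel argument using that $\Esc_P(\bF)$ contains at least two points. The paper compresses both steps into two sentences (invoking the canonical identification already set up just before the corollary), while you spell out the scaling bootstrap and the forward-invariance of $U=\C\setminus\overline{\Esc(\bF)}$ explicitly; your extra care in showing $\bF^P(U)\cap\overline{\Esc(\bF)}=\emptyset$ via the open mapping theorem is sound but in fact more than needed, since omitting the two points of $\Esc_P(\bF)$ already suffices for Montel.
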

\begin{proof}
For a small $P\in \PT_{>0}$, the escaping set $\Esc_P(\bF)$ is homeomorphic to $\Esc_P(\bF_\str)\not=\emptyset$. Since $\Esc_P(\bF)$ contains at least two points, applying the Montel theorem we obtain that every neighborhood of $z\in \Jul (\bF)$ contains a preimage of  a point in  $\Esc_P(\bF)$.  
\end{proof}

\subsection{External rays and alpha-points of $\bF$} Choose $S\in \PT_{>0}$ and a sufficiently small $\varepsilon>0$ such that the open $\varepsilon$-neighborhood $\bUU$ of $\bF_\str\in\Unst\simeq \C$ is contained in $ \DEsc_S$, see Lemma~\ref{lem:esc set moves hol}. For every pair of prepacmen $\bF,\bG$ there is a sufficiently big $n\ll 0$ such that $\bF_n, \bG_n\in \bUU$. Since $\bUU$ is a topological disk, there is a unique up to homotopy path $\ell$ in $\bUU$ connecting $\bF_n$ and $\bG_n$. The holomorphic motion  $\tau$ (from Lemma~\ref{lem:esc set moves hol}) along $\ell$ produces an equivariant homeomorphism between $\Esc_S(\bF_n)$ and $\Esc_S(\bG_n)$; after rescaling we obtain an equivariant homeomorphism
 \[h\colon \Esc_{\tt^n S}(\bF)\to \Esc_{\tt^n S}(\bG). \]
Taking the restriction, we obtain an equivariant homeomorphism 
\[h\colon \Esc_{Q}(\bF)\to \Esc_{Q}(\bG). \]
for all $Q<\tt^n S$; clearly $h$ is independent of $n$ (because $\RR^{-1}(\bUU)\subset \bUU$). We say that $h$ is the \emph{canonical identification} of $\Esc_{Q}(\bF)$ and $\Esc_Q(\bG)$.

Consider $\bG\in \Unst$ and choose a sufficiently small $T\in \PT_{>0}$ so that $\Esc_T(\bG)$ and $\Esc_T(\bF_\str)$ are canonically identified.  \emph{Alpha-points of $\bG$ of generation $S\le T$} are the images of the corresponding alpha-points of $\bF_\str$ under the canonical identification. Similarly, \emph{ray segments in $\Esc_T(\bG)$}  are the images of ray segments in $\Esc_T(\bF_\str)$ under the canonical identification. 

If $\alpha_i\in \Esc_T(\bG)$ is an alpha-point of generation $S$, then $\bF^{-P}(\alpha_i)$ are \emph{alpha-points of generation $S+P$.} Similarly, if $\gamma \subset \Esc_T(\bF)$ is a ray segment connecting two alpha-points and $\ell$ is a connected component of $ \bF^{-P}(\gamma)$ such that $\bF^P\colon \gamma\to \ell $ is a homeomorphism, then $\ell$ is also a \emph{ray segment}. Note that $\ell$ also connects two alpha-points.

External rays for $\bG$ are defined in the same way as for $\bF_\str$, see~\S\ref{ss:ExtRayFstr}. Namely, an external ray $\bR$ is a maximal concatenation of external ray segments provided that it does not hit an iterated preimage of $0$. (An external ray ``breaks'' at a pre-critical point.)

\begin{lem}
\label{lem:Esc:Lifting}
Suppose $0\not \in \Esc_P(\bF)$ and $0\not \in \Esc_P(\bG)$. Then there is a unique equivariant bijection $h\colon \Esc_P(\bF)\to\Esc_P(\bG)$ such that $h\colon \Esc_Q(\bF)\to\Esc_Q(\bG)$ coincides with the homeomorphism induced by $\tau$ for all sufficiently small $Q\in\PT_{>0}$.

Let $\bR$ be an external ray of $\bG$. Then $\bR$ has a unique counterpart $\bR(\bF_\str)$ in the dynamical plane of $\bF_\str$ such that for all sufficiently small $Q$ the natural homeomorphism $h\colon \Esc_Q (\bG)\to \Esc_Q(\bF_\str)$ induced by $\tau$ extends to a homeomorphism \[h\colon \Esc_Q(\bG)\cup \bR (\bG)\to \Esc_Q(\bF_\str)\cup \bR (\bF_\str)\]
that is equivariant in the following sense. For every $x\in \bR(\bG)$ with $\bG^T(x)\in \Esc_Q(\bG)$ we have $h\circ \bG^T(x)=\bF_\str^T \circ h(x)$.
\end{lem}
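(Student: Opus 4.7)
The strategy is to extend the canonical identification of small escaping sets to all of $\Esc_P$ by a dynamical pullback, using equivariance as the defining property. I proceed in three steps.

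First, recall that the canonical identification $h\colon \Esc_Q(\bF)\to\Esc_Q(\bG)$ is already defined for all sufficiently small $Q$, via rescaling together with holomorphic motion through the neighborhood $\bUU$ of $\bF_\str$. To extend $h$ to $x\in\Esc_P(\bF)$ with escaping time $T\le P$, note that for every $R\in\PT$ with $0<R<T$ the iterate $\bF^R(x)$ is defined and belongs to $\Esc_{T-R}(\bF)$. Choosing $R$ close to $T$, we may assume $\bF^R(x)$ lies in a small $\Esc_{Q'}(\bF)$ on which $h$ is canonically defined, so $h(\bF^R(x))\in\Esc_{Q'}(\bG)$ is known. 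Equivariance then forces $h(x)$ to be some preimage of $h(\bF^R(x))$ under $\bG^R$. The main issue is that $\bG^R$ has many preimage branches, and we must single out the correct one.

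To select the correct branch, I would use the combinatorial tree structure developed in \S\ref{ss:TreeLike str} and the description of preimages of escaping components from Lemma~\ref{lem:comp of Esc R - Esc T}: each point of $\Esc_P(\bF)$ has a well-defined address in terms of alpha-points, lakes $\bO_\iota(s)$, and wakes attached along the critical orbit. The hypothesis $0\notin\Esc_P(\bF)$ together with $0\notin\Esc_P(\bG)$ ensures that the critical orbits $\{\bF^S(0):S<P\}$ and $\{\bG^S(0):S<P\}$ are both well-defined points of $\C$; applying the already-built canonical identification on small $\Esc_Q$ to tiny perturbations of these orbits gives a bijection between their combinatorial addresses. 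Hence the connected components of $\bF^{-R}$-preimages of any connected piece of $\Esc_{T-R}(\bF)$ correspond one-to-one to those of $\bG^{-R}$, and one defines $h(x)$ to be the preimage of $h(\bF^R(x))$ lying in the component of $\bG^{-R}(\cdots)$ matched to the component of $\bF^{-R}(\cdots)$ containing $x$.

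Well-definedness and uniqueness then follow from two observations: (a) the combinatorial address of $x\in\Esc_P(\bF)$ in the tree of preimages does not depend on the auxiliary $R$, so different choices give the same $h(x)$; and (b) any equivariant extension of the canonical identification must respect this address, since dynamics preserves addresses by the tree structure of \S\ref{ss:TreeLike str}. Bijectivity is then automatic from the symmetric construction with $\bF$ and $\bG$ interchanged. For part~(2), each external ray $\bR$ of $\bG$ is a maximal concatenation of ray segments, each of which eventually lands in $\Esc_Q(\bG)$ for some small $Q$ after finitely many iterations of $\bG$; one assigns to $\bR$ the ray $\bR(\bF_\str)$ built from ray segments of $\bF_\str$ having the matching combinatorial addresses. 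Equivariance on ray points follows from equivariance on all $\Esc_Q$ together with the fact that each point of $\bR(\bG)$ eventually maps into some $\Esc_Q(\bG)$ under an iterate of $\bG$.

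\textbf{Main obstacle.} The hardest part will be verifying that the preimage branch of $\bG^{-R}$ used to define $h(x)$ is consistent with the branch used for $\bF$: i.e., that the combinatorial address of $x$ in the tree of iterated preimages of the escaping set is preserved. This reduces to carefully tracking the critical orbits of $\bF$ and $\bG$ up to time $P$ (where the hypothesis $0\notin\Esc_P(\bF),\Esc_P(\bG)$ is essential) and showing that, under the canonical identification on small $\Esc_Q$'s, the wake/lake combinatorics around these critical orbits match with those of $\bF_\str$, so that the tree-structural arguments from \S\ref{s:Dyn F_str} transfer verbatim.
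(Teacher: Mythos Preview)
Your overall strategy is correct and is essentially the same as the paper's: extend the canonical identification on small escaping sets to $\Esc_P$ by dynamical pullback, using combinatorial data to match preimage branches. The difference is in how the branch-matching is made concrete.

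You propose to pull back in one step of size $R$ close to the escaping time $T$, and to select the correct branch of $\bG^{-R}$ by matching ``combinatorial addresses'' built from alpha-points, lakes, and wakes. You correctly flag this branch-matching as the main obstacle, but your proposed resolution --- transferring the wake/lake combinatorics of \S\ref{s:Dyn F_str} to $\bG$ by tracking critical orbits and their ``tiny perturbations'' --- remains a sketch. The combinatorics of \S\ref{s:Dyn F_str} was established only for $\bF_\str$, so invoking it ``verbatim'' for $\bG$ is precisely what needs to be proved.

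The paper avoids this by taking \emph{small} equal steps instead of one large one. It partitions $[0,P]$ into $m$ intervals of length $R<Q/3$ and calls a connected component of $\Esc_{iR}\setminus\Esc_{(i-1)R}$ a \emph{decoration} of generation $i$. The key Claim (proved for $\bF_\str$ via Lemma~\ref{lem:comp of Esc R - Esc T}, then for $\bG$ by noting that decorations of generation $>3$ are univalent pullbacks of generation-$2$ decorations, which are already matched by the holomorphic motion) is that each decoration of generation $i>1$ is attached to a \emph{unique} alpha-point of generation $(i-1)R$, and conversely each such alpha-point carries exactly one decoration mapping forward to a prescribed lower-generation decoration. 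This reduces the branch-matching at each inductive step to matching alpha-points in $\Esc_{(i-1)R}$, which has \emph{already} been done at the previous step. No separate analysis of critical-orbit addresses is needed; the hypothesis $0\notin\Esc_P$ enters only to guarantee that decorations avoid critical values, so their pullbacks are univalent.

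In short: your plan is right, but the paper's inductive decoration argument is what turns your ``main obstacle'' from a promissory note into a two-line induction.
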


\begin{proof}
Let $P'$ be the minimal escaping time of $0$ for $\bF$ and $\bG$; we have $P<P'$. Since $\PT$ is dense in $\R_{\ge0}$, we can slightly increase $P$ so that the new $P$ is still less than $P'$, and $R\coloneqq P/m \in \PT$, and $R<Q/3$ for some $m\in \Z_{>0}$.

Let us say that a \emph{decoration} is a connected component of $\Esc_{iR}(\bF)\setminus \Esc_{(i-1)R}(\bF)$ for $i\in \{1,2,\dots ,m\}$; we say that $i$ is the \emph{generation} of the decoration. Note that $\Esc_R(\bF)$ is a decoration of generation $1$. We will show that decorations for $\bG$ and $\bF_\str$ are arranged in the same way.

\emph{Claim.} In the dynamical planes of $\bF\in \{\bF_\str, \bG\}$ consider  a decoration $X_j$ of generation $i>1$. Then $X_j$ is precompact and there is a unique alpha-point $\alpha_j$ of generation $(i-1)R$ such that $X_j\cup \alpha_j$ is compact. Moreover, $X_j$ and $X_j\cup \alpha_j$ are filled-in (i.e., their complements are connected). The image $\bF^R(X_j)$ is a decoration $X_{\bF_\str(j)}$ of generation $i-1$. If $i<m$, then for every $\alpha_s\in \bF^{-R}(\alpha_i)$, there is a unique decoration $X_s$ of generation $i+1$ such that $\overline X_s=X_s\cup \{\alpha_s\}$ and $\bF^R (X_s) =X_j$.
\begin{proof}[Proof of the Claim]
The case $\bF=\bF_\str$ follows from the second part of Lemma~\ref{lem:comp of Esc R - Esc T}. Since there is an equivariant homeomorphism between $\Esc_{3R}(\bG)$ and $\Esc_{3R}(\bF_\str)$ (recall that ), the claim is also true if $\bF=\bG$ and $i\le 3$.

Suppose that  $X_j$ is a decoration of generation $i>3$ in the dynamical plane of $\bG$. Choose $x\in X_j$ and let $X_k$ be the decoration of generation $2$ containing $\bF^{(i-2)R}(x)$. Since the claim is already verified for $X_k$, we have $\overline X_k=X_k\cup \{\alpha_k\}$ is compact and filled-in. Since $\overline X_k$  does not contain a critical value of $\bF^{(i-2)R}$ and since the set of the critical values is discrete (see Lemma~\ref{lem:discr of dyn}), there is an open topological disk $U_k\supset \overline X_k$ such that $U_k$ does contain any critical point of $\bF^{(i-2)R}$. Pulling back  $U_k$ along the orbit of $x$ and using $\sigma$-properness, we construct a univalent preimage $U_j\ni x$ of $U_k$ under $\bF^{(i-2)R}$. It now follows that $\overline X_j=X_j\cup \{\alpha_j\}$ is the preimage of $X_k\cup \{\alpha_k\}$ under $\bF^{(i-2)R}\colon U_j\to U_k$.

If $\alpha_s\in \bG^{-R}(\alpha_j)$, then pulling back $U_i$ along $\bF^R\colon \alpha_s\mapsto \alpha_i$  (and possibly shrinking $U_i$ so that $U_i$ does not contain a critical value of $\bF^R$) we construct a univalent preimage $U_s\ni \alpha_s$ of $U_i$. Then $X_s$ is the preimage of $X_j$ under $\bF^{R}\colon U_s\to U_k$.
\end{proof}

Observe that all decorations of $\bF_\str$ and $\bG$ are canonically homeomorphic by an equivariant homeomorphism: all decorations are univalent preimages of $\Esc_R(\bF_\str)$ and  $\Esc_R(\bG)$ which are canonically identified. We proceed by induction: suppose that $h$ has been already extended to an equivariant bijection
\[h\colon \Esc_{t R}(\bF_\str)\to  \Esc_{t R}(\bG)\]
where $t<m$. Then $h$ induces a bijection between alpha-points of generation $tR$. There is a decoration of generation $t+1$ attached to every alpha-point of generation $tR$; since these decorations are canonically homeomorphic, we can uniquely extend $h$ to the bijection 
\[h\colon \Esc_{(t+1) R}(\bF_\str)\to  \Esc_{(t+1) R}(\bG).\]

To prove the second claim, fix $T<Q$ and decompose $\bR$ as a concatenation of arcs $\bR_1\cup \bR_2\cup \dots$ such that $\bR_i\subset  \Esc_{iT}(\bG)\setminus \Esc_{(i-1)T}(\bG)$. Inductively define $\bR_i(\bF_\str)$ to be the unique lift of $h \circ \bG^{T(i-1)} \big( \bR_i(\bG)\big) $ under $\bF_\str^{T(i-1)}$ starting where $\bR_{i-1}(\bF_\str)$ ends.
 \end{proof}
 
By Lemmas~\ref{lem:esc set moves hol} and~\ref{lem:Esc:Lifting}, the combinatorics of external rays for $\bG\in \Unst$ is the same as for $\bF_\str$.

\begin{cor}[$\tau$ has no holonomy]
If $\bF,\bG$ are in the same connected component of $\DEsc_P$, then the equivariant homeomorphism $h\colon \Esc_P(\bF)\to \Esc_{P}(\bG)$ induced by $\tau$ (see Lemma~\ref{lem:esc set moves hol}) is independent of the curve connecting $\bF$ and $\bG$.\qed
\end{cor}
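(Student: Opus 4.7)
The plan is to reduce the claim to the uniqueness statement in \lemref{lem:Esc:Lifting}. Given two curves $\ell_1,\ell_2\subset\DEsc_P$ joining $\bF$ to $\bG$, form the loop $L\coloneqq \ell_1\cdot\ell_2^{-1}\subset \DEsc_P$ based at $\bF$; it suffices to show that the monodromy $h_L\colon\Esc_P(\bF)\to\Esc_P(\bF)$ of $\tau$ along $L$ is the identity. By \lemref{lem:esc set moves hol}, $\tau$ commutes with the dynamics, so $h_L$ is an equivariant self-bijection of $\Esc_P(\bF)$. Applied with $\bG=\bF$, \lemref{lem:Esc:Lifting} characterizes the identity as the unique equivariant self-bijection of $\Esc_P(\bF)$ that agrees, for all sufficiently small $Q\in\PT_{>0}$, with the canonical small-generation identification (which for the pair $(\bF,\bF)$ is the identity, since the path used in its construction is a null-homotopic loop in the topological disk $\bUU$). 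Thus it suffices to verify that $h_L|_{\Esc_Q(\bF)}$ is the identity for arbitrarily small $Q$.

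The next step is to antirenormalize the loop. Since $\RR$ is hyperbolic at $\bF_\str$ with expanding eigenvalue $\lambda_\str$, its inverse $\RR^{-1}$ contracts the parameter plane $\Unst\simeq\C$ toward $\bF_\str$ by the factor $1/|\lambda_\str|<1$. As the loop $L\subset\Unst$ is compact, for all sufficiently negative $n$ the iterated image $L_n\coloneqq\RR^n(L)$, based at $\bF_n$ and contained in $\RR^n(\DEsc_P)=\DEsc_{\tt^{-n}P}$, lies entirely in $\bUU$; in particular $L_n\subset\bUU\subset\DEsc_S$, where $S$ is the escape level fixed in the paragraph preceding \lemref{lem:Esc:Lifting}. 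Combining the conjugacy $\bF^\#_n=A_\str^n\bF_nA_\str^{-n}$ with the iteration formula $\bF_0^P=(\bF^\#_n)^{\tt^{-n}P}$ from~\eqref{eq:ren:F:F_n^P} yields the scaling identity
\[
\Esc_{\tt^nS}(\bF)=A_\str^{n}\bigl(\Esc_S(\bF_n)\bigr),
\]
under which the dynamical rescaling $A_\str^n$ conjugates the monodromy of $\tau$ along $L$ on $\Esc_{\tt^nS}(\bF)$ to the monodromy of $\tau$ along $L_n$ on $\Esc_S(\bF_n)$.

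Finally, the monodromy of the motion of $\Esc_S$ around $L_n$ is trivial. Indeed, by the proof of \lemref{lem:esc set moves hol} the motion of $\Esc_S$ on $\bUU$ is obtained by applying the $\lambda$-lemma to the holomorphic motion of the finite set $\bigcup_{S'\le T}\bF^{-S'}(x)$ furnished by \corref{cor:hol mot of preimages}; since $\bUU$ is a topological disk and hence simply connected, this motion is single-valued on $\bUU$ and has trivial monodromy around every loop contained in $\bUU$, in particular around $L_n$. Conjugating back by $A_\str^n$ gives triviality of $h_L|_{\Esc_{\tt^nS}(\bF)}$, and letting $n\to-\infty$ makes $\tt^nS$ arbitrarily small, completing the argument by the first paragraph. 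The main delicate point is ensuring that $L_n$ eventually enters $\bUU$, which is where the expansion of $\RR$ at $\bF_\str$ and the boundedness of $L$ in $\Unst\simeq\C$ are essential.
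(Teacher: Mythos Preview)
Your proof is correct and is precisely the argument the paper has in mind; the paper omits it entirely (the \qed\ follows the statement) because the ingredients---the uniqueness clause of \lemref{lem:Esc:Lifting} and the simple connectivity of $\bUU$ built into the definition of the canonical identification---are already in place. Your write-up makes explicit the one point that deserves justification: that the monodromy restricted to $\Esc_Q$ for small $Q$ is trivial, which you establish by contracting the loop into $\bUU$ under antirenormalization and invoking single-valuedness of the motion there.
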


\begin{cor}
For every $K>1$ and $\bF,\bG\in \Unst$, there exists a sufficiently small $T\in \PT_{>0}$ such that the equivariant homeomorphism 
\[h\colon \Esc_T(\bF)\to \Esc_T(\bG)\]
induced by the holomorphic motion from Lemma~\ref{lem:esc set moves hol}
extends to a qc map $\widetilde h\colon \C\to \C$ with dilatation less than $K$. 
\end{cor}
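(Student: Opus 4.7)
The plan is to reduce, by anti-renormalizing, to the case when $\bF$ and $\bG$ lie in an arbitrarily small disk around $\bF_\str$ in $\Unst\simeq \C$, and then to apply Slodkowski's $\lambda$-lemma to the equivariant conformal motion $\tau$ furnished by Lemma~\ref{lem:esc set moves hol}. Fix once and for all $S\in \PT_{>0}$ small enough that some round disk $\bD_0\coloneqq \{\bH\in \Unst\mid |\bbh(\bH)|<r_0\}$ is contained in the open set $\DEsc_S$, so that the unique equivariant conformal motion $\tau$ of $\Esc_S(\,\cdot\,)$ is defined over $\bD_0$. Because $\RR$ acts on $\Unst$ as $v\mapsto \lambda_\str v$ with $|\lambda_\str|>1$, the anti-renormalizations $\bF_n\coloneqq \RR^n\bF$ and $\bG_n\coloneqq \RR^n\bG$ satisfy
\[|\bbh(\bF_n)|,\ |\bbh(\bG_n)|\ =\ O(|\lambda_\str|^n)\longrightarrow 0 \qquad\text{as } n\to -\infty,\]
so for all $n\ll 0$ both $\bF_n$ and $\bG_n$ lie in $\bD_0$ and are arbitrarily close to $\bF_\str=0$ and to one another.

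For such $n$, consider the open subdisk $\bD_n\coloneqq \{\bH\mid |\bbh(\bH)-\bbh(\bF_n)|<r_0-|\bbh(\bF_n)|\}\subset \bD_0$, affinely identified with the unit disk via the center $\bF_n$. The restriction of $\tau$ to $\bD_n$ is a holomorphic motion of $\Esc_S(\bF_n)$ based at $\bF_n$, and by Slodkowski's $\lambda$-lemma it extends to a holomorphic motion $\widetilde\tau$ of $\widehat\C$ over $\bD_n$. At the parameter $\bG_n$, the map $\widetilde\tau_{\bG_n}\colon \widehat\C\to\widehat\C$ is quasiconformal with dilatation bounded by $(1+\mu_n)/(1-\mu_n)$, where
\[\mu_n\ \coloneqq\ \frac{|\bbh(\bG_n)-\bbh(\bF_n)|}{r_0-|\bbh(\bF_n)|}\ \longrightarrow\ 0.\]
Choosing $n$ sufficiently negative so that $\mu_n<(K-1)/(K+1)$ makes this dilatation strictly less than $K$. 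By construction, $\widetilde\tau_{\bG_n}$ restricted to $\Esc_S(\bF_n)$ is the original motion $\tau$, i.e.\ the unique equivariant identification $h_n\colon \Esc_S(\bF_n)\to \Esc_S(\bG_n)$ provided by Lemma~\ref{lem:esc set moves hol}.

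It remains to rescale. The linear map $A_\str^n$ is conformal, conjugates $\bF^{\ge 0}$ to $(\bF_n^\#)^{\ge 0}$, sends $\Esc_{\tt^n S}(\bF)$ onto $\Esc_S(\bF_n)$ (and similarly for $\bG$), and intertwines the canonical equivariant identifications of \S\ref{ss:HolMotEsc}. Setting $T\coloneqq \tt^n S\in \PT_{>0}$ and
\[\widetilde h\ \coloneqq\ A_\str^{-n}\circ \widetilde\tau_{\bG_n}\circ A_\str^n\colon\C\to\C,\]
we obtain the desired quasiconformal extension of $h\colon \Esc_T(\bF)\to \Esc_T(\bG)$, whose dilatation coincides with that of $\widetilde\tau_{\bG_n}$ and is therefore less than $K$. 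The one non-cosmetic point to verify is that the Slodkowski extension restricts on $\Esc_S(\bF_n)$ to the canonical equivariant identification induced by $\tau$; this is automatic, since Slodkowski's extension by definition agrees with the given motion on its domain of definition, and the equivariant conformal motion on $\Esc_S$ is unique by Lemma~\ref{lem:esc set moves hol}.
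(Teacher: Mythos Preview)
Your argument is correct and is essentially the paper's own proof spelled out in detail: the paper simply says that for small $T$ the hyperbolic distance between $\bF$ and $\bG$ in $\DEsc_T$ is small, and then the $\lambda$-lemma gives a qc extension with small dilatation; your anti-renormalization step is precisely how one makes that hyperbolic-distance statement concrete (and is exactly how the canonical identification is set up in the paragraph following Lemma~\ref{lem:esc set moves hol}). One cosmetic slip: since $\bF_n^\#=A_\str^{n}\circ\bF_n\circ A_\str^{-n}$ and $\bF_0^P=(\bF_n^\#)^{\tt^{-n}P}$, it is $A_\str^{-n}$ (not $A_\str^{n}$) that sends $\Esc_{\tt^nS}(\bF)$ onto $\Esc_S(\bF_n)$, so the final formula should read $\widetilde h=A_\str^{\,n}\circ\widetilde\tau_{\bG_n}\circ A_\str^{-n}$.
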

\begin{proof}
For a sufficiently small $T$, the hyperbolic distance between $\bF$ and $\bG$ in $\DEsc_T$ is small. The $\lambda$-lemma extends $h$ to a qc map with a small dilatation. 
\end{proof}

\subsection{Puzzle pieces}
\label{ss:puzzles} 
Consider a dynamical plane of $\bG$. Let us say a ray $\bR$ \emph{lands} if either $\bR$ lands at a point $x\in \C$ in the classical sense, or $\bR$ lands at $\balpha$, see~\S\ref{ss:balpha:wall topoloy}. A \emph{rational ray} is either a periodic or preperiodic ray.

Let $\widetilde \bR= \{\bR^1,\bR^2,\dots, \bR^n\}$ be a finite set of rational rays. We assume that every $\bR^i$ lands. We define 
\[\overline \bR\coloneqq  \overline{\bigcup_{i} \bigcup_{P\ge 0} \bG^P(\bR^i)}\]
to be the forward orbit of rays in $\widetilde \bR$.
\begin{lem}
\label{lem:over R is graph}
The set $\overline \bR$ is a forward invariant connected graph.
\end{lem}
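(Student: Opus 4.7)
The forward invariance follows directly from the definition. For any $Q\in\PT$, the composition identity $\bG^Q\circ\bG^P(\bR^i)\subset \bG^{P+Q}(\bR^i)$ holds wherever defined, so $\bG^Q$ carries the forward orbit $\bigcup_{i,P}\bG^P(\bR^i)$ into itself on $\Dom(\bG^Q)$; since $\bG^Q$ is continuous on its domain, this inclusion passes to the closure, giving $\bG^Q\big(\overline\bR\cap \Dom(\bG^Q)\big)\subset \overline\bR$.

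Connectedness is essentially a consequence of the combinatorial picture already established. By Lemma~\ref{lem:Esc:Lifting} the external rays of $\bG$ are in canonical equivariant bijection with those of $\bF_\str$, and by Corollary~\ref{cor:rays meet} any two rays of $\bF_\str$ share a tail of the form $[\alpha_s,\binfty)$. Hence any two of the rays in $\{\bR^1,\dots,\bR^n\}$, and more generally any two iterates $\bG^P(\bR^i)$ and $\bG^{P'}(\bR^j)$, intersect. Since each ray is a connected simple arc, the forward orbit $\bigcup_{i,P}\bG^P(\bR^i)$ is a union of pairwise intersecting connected sets, hence connected, and its closure $\overline{\bR}$ is connected.

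The graph structure is the substantive part. I would describe $\overline\bR$ combinatorially as follows: its vertices are the alpha-points lying on it (together with the landing points added by taking the closure), and its edges are the maximal ray segments $[\alpha,\alpha']$ between consecutive such vertices. To verify that this is a graph in the sense of \S\ref{ss:ConvNot}, I would fix an exhaustion $K_1\Subset K_2\Subset\dots$ of $\C$ by compact sets and show that $\overline\bR\cap K_n$ is a finite graph for every $n$, and that the material added in passing from $K_{n-1}$ to $K_n$ eventually leaves any fixed compact subset of $\C$. Local finiteness inside $K_n$ should rest on three ingredients already available: the proper discontinuity of the cascade (Lemma~\ref{lem:discr of dyn}) together with Corollary~\ref{cor:orbit of bY}, which bounds the number of iterates of a compact initial arc meeting $K_n$; the tree structure of $\Esc(\bG)$ (Corollary~\ref{cor:Esc is Tree}), which organizes ray segments truncated by escaping time into a locally finite combinatorial skeleton; and the shrinking of wakes (Lemmas~\ref{lem:wake shrinks} and~\ref{lem:nest wakes shrink}), which forces high-generation ray segments localized in $K_n$ to have small diameter.

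The main obstacle is the last point. Since each $\bR^i$ is merely \emph{rational}, its forward orbit under the cascade $\bG^{\ge 0}$ can contain infinitely many distinct rays (the parameter $P\in\PT$ ranges through a dense subsemigroup of $\R_{\ge 0}$), and one must argue that only finitely many of these distinct rays have a segment that is geometrically large within the fixed compact set $K_n$. I would handle this by transferring the analysis to $\bF_\str$ via Lemma~\ref{lem:Esc:Lifting} and exploiting the self-similarity $A_\str$: ray segments of deep generation are small copies of a finite list of ``fundamental'' segments rescaled by a power of $\mu_\str$, so those meeting $K_n$ must shrink in diameter as their generation grows. Combining this with the wake-shrinking estimates yields the required local finiteness and the exhaustion by finite subgraphs.
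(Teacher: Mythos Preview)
Your treatment of forward invariance and connectedness matches the paper's. The substantive divergence is in the local-finiteness argument.

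The paper's route is much more direct than what you outline. First, rationality is used to bound the time window: if $T_1$ is a common period and $T_2$ a common preperiod of the $\bR^i$, then every ray in the forward orbit equals $\bG^P(\bR^i)$ for some $P<T\coloneqq T_1+T_2$. Fix a compact $\bX\Subset\C$. The paper splits each $\bR^i$ into three pieces: a tail in $\Esc_Q(\bG)$ for some small $Q$ chosen so that $\Esc_Q(\bG)\cap\bX=\emptyset$; a tip in a small neighborhood $\bO$ of $\balpha$ chosen so that $\bG^P(\bO)\cap\bX=\emptyset$ for all $P<T$; and a remaining middle portion covered by finitely many compact ray segments $\bR_1,\dots,\bR_m$, each satisfying $\bR_j\Subset\Dom(\bG^{P_j})$ with $\bG^{P_j}(\bR_j)\subset\Esc_Q(\bG)$. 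Corollary~\ref{cor:orbit of bY} then gives that each $\bR_j$ meets $\bX$ under only finitely many $\bG^P$ with $P<P_j$, and the other two pieces contribute nothing by construction. This yields local finiteness immediately.

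Your proposal instead tries to control the deep-generation part of the rays by transferring size estimates from $\bF_\str$ via Lemma~\ref{lem:Esc:Lifting} and then invoking self-similarity and wake shrinking. This has a genuine gap: Lemma~\ref{lem:Esc:Lifting} gives only a combinatorial bijection, not geometric control, and the holomorphic motion of Lemma~\ref{lem:esc set moves hol} governs $\Esc_P(\bG)$ only for $P$ below the first escape time of $0$, whereas rational rays run through arbitrarily large generations. The self-similarity $A_\str$ and the wake-shrinking Lemmas~\ref{lem:wake shrinks}, \ref{lem:nest wakes shrink} are specific to $\bF_\str$ and do not transfer to $\bG$ at large escape time without further work. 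More importantly, none of this addresses the end of the ray near the landing point $\balpha$, which is where the paper's argument does the real work: one needs a neighborhood $\bO$ of $\balpha$ whose images $\bG^P(\bO)$ for $P<T$ avoid $\bX$, and this comes from the wall topology at $\balpha$, not from any shrinking estimate on the escaping side.
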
 
\begin{proof}
Recall from Corollary~\ref{cor:rays meet} that every two external rays eventually meet. Therefore, $\overline \bR$ is connected. 

Fix a compact subset $\bX\Subset \C$. Let $T_1$ be a common period of rays in $\widetilde \bR$ and let $T_2$ be a common preperiod of rays in $\widetilde \bR$; write $T\coloneqq T_1+T_2$.  We need to prove that there are at most finitely many $P<T$ such that $\bG^P(\bR^i)$ intersects $\bX$ for some $i$; this would imply that $\overline \bR$ is an increasing union of finite graphs as required, see List of Notations.

Choose a sufficiently small $Q>0$ such that $\Esc_Q(\bG)$ is disjoint from $\bX$. Let $\bxx'$ be the set of the landing points of rays in $\widetilde \bR$, and set $\bxx=\bigcup_{P\in  \PT} \bF^{P}(\bxx')=\bigcup_{P\le T} \bF^{P}(\bxx')$. By Lemma~\ref{lem:discr of dyn}, $\bX\cap \bxx$ is a finite set. Note that $\bxx$ may contain $\balpha$. Choose a small neighborhood $\bO$ of $\bxx$. 

 We can choose finitely many external ray segments $\bR_1,\dots , \bR_m$ in $\overline \bR$ such that 
 \begin{itemize}
 \item $\overline \bR\cap \big(\bX\setminus \bO\big)$ is covered by the forward orbits of the $\bR_j$; and
 \item for every $\bR_j$ there exists $P_j$ with $\bR_j\Subset \Dom( \bG^{P_j})$ such that $\bG^{P_j}\big(\bR^j\big)\subset \Esc_Q(\bG)$.
 \end{itemize}
 By Corollary~\ref{cor:orbit of bY}, there are at most finitely many $S<P_j$ such that $\bG^S(\bR_j)$ intersects $\bX\setminus \bO$. Therefore, at most finitely many rays in $\overline \bR$ intersect $\bX\setminus \bO$. Since $\bO$ is a sufficiently small neighborhood of $\bxx$, at most finitely many rays in $\overline \bR$ can enter $\bO$.
\end{proof}

A \emph{puzzle piece} $\bX$ is the closure of a connected component of $\C\setminus \overline \bR$. Since the forward orbit of $\partial \bX$ is disjoint from $\intr \bX$, we have the following classical property. If $\bY$ is the closure of a connected component of $\bF^{-S}(\intr \bX)$, then either $\bX$ and $\bY$ have disjoint interiors, or $\bY\subset \bX$.

Let us note that a puzzle piece can be surrounded by a single external ray, see example in Lemma~\ref{lem:decomp:bW(bF)}. This happens when a preperiodic ray lands at itself; a certain iterate of such ray eventually lands at $\balpha$.

We say that a puzzle piece $\bX(\bG)$ from the dynamical plane of $\bG$ \emph{exists} in the dynamical plane of $\bF$ if  $\bF$ has a puzzle piece $\bX(\bF)$ such that $\partial \bX(\bG)$ and $\partial \bX(\bF)$ are combinatorially equivalent: there is a homeomorphism $h\colon \partial \bX(\bG)\to \partial \bX(\bF)$ induced by the natural identification of external rays, see Lemma~\ref{lem:Esc:Lifting}.

\subsection{Parameter rays} \label{ss:param rays}Similar to many parameter spaces in complex dynamics, it is natural to expect:
\begin{conj}
\label{conj:P-P relation}
For every $x\in \Esc(\bF_\str)$ there is a unique parameter $\bG\in \Unst$ such that $0=x(\bG)$; i.e.~there is a path in $\Unst$ connecting $\bF_\str$ and $\bG$ such that the geodesic ray segment $[x,\infty]\in \Esc(\bF_\str)$ moves holomorphically along the path and such that $0$ collides with $x$ in the dynamical plane of $\bG$.
\end{conj}

Conjecture~\ref{conj:P-P relation} would imply that the \emph{phase parameter relation} 
\begin{equation}
\label{eq:P-P relation}
 \Esc(\bF_\str) \longrightarrow \Unst, \sp\sp\sp\ x(\bF_\str)\mapsto \bG \sp \text{ such that } \sp 0=x(\bG)
\end{equation}
is a well-defined homeomorphism onto the image. A \emph{parameter ray} is the image of a dynamic ray under~\eqref{eq:P-P relation}. Conjecture~\ref{conj:P-P relation} would immediately imply that the parameter limbs are bounded (as pictures predict, see Figure~\ref{Fig:unst man+max sieg}) and it should significantly improve the results of our paper. Combined with the conjectural Full Hyperbolicity of neutral renormalization, this may give a complete understanding of geometry near the boundaries of hyperbolic components with a single bifurcating critical point, see Remark~\ref{rem: pacman str map}.

\section{Parabolic bifurcation and small $\boldsymbol\Mandel$-copies}
\label{s:par pacm}

\subsection{Parabolic prepacman $\bF_\rr$}
\label{ss:par pacm:bF rr}
In a small neighborhood of $f_\str$ consider a parabolic pacman $f_\rr\in \WW^u$  with rotation number $\rr=\pp/\qq$. Recall from \S\ref{sss:par pacmen} that $\bH$ denotes the global attracting basin; its periodic components are parametrized as $\big(\bH^i\big)_{i\in \Z}$. We write the first return map to $\bH^0$ as 
 \begin{equation}
\label{eq:FRM:H^0}
 \bF_\rr^{Q(\rr)}\colon \bH^0\to\bH^0.
 \end{equation}
Then~\eqref{eq:FRM:H^0} is a two-to-one map with a unique critical value at $0$.  Moreover, $\bF_\rr^T\colon  \bH^0\to  \bF_\rr^T\big(\bH^0\big)=\bH^{i(T)}$ is univalent for all $T<Q(\rr)$, where $i(T)\in \Z\setminus\{0\}$. For standard reasons, $\bH^i$ is a Fatou component.

Let $U\subsetneq \C$ be an open simply connected set. 

We say that a univalent map $f\colon U\to U$ is a \emph{local attracting parabolic petal} if it admits a \emph{local Fatou coordinate}: a univalent map $h\colon U\to \C$ conjugating $f\colon U\to U$ to the translation $T_1\colon z\to z+1$ such that $\Im f \supset \{z\in \C \mid \Re z\ge M\}$ for some $M\in \R$. 

 We say that a branched covering of finite degree $f\colon U\to U$ is a \emph{full attracting parabolic petal} it $f$ restricts to a local attracting parabolic petal $f:U'\to U'$ for some $U'\subset U$. For standard reasons, every point in $U$ is eventually attracted by $U'$. Hence, a local Fatou coordinate system $h\colon U'\to \C$ extends to a branched covering $h\colon U\to \C$ semi-conjugating $f$ to $T_1$. We call $h\colon U\to \C$ a \emph{global Fatou coordinate}.

We say that a full attracting parabolic petal $f\colon U\to U$ is \emph{unicritical} if $f$ has a unique critical value. By Lemma~\ref{lem:unique crt pnt}, $f$ also has a unique critical point.

\begin{lem}
\label{lem:unique conj}
Let $f\colon U\to U,g\colon V\to V$ be two unicritical full attracting parabolic petals of the same degree. Then $f$ and $g$ are conformally conjugate.
\end{lem}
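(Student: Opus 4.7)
The plan is to construct the conformal conjugacy from matching global Fatou coordinates, using unicriticality to force identical ramification profiles on both sides.

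First I would introduce global Fatou coordinates $h_f\colon U\to\C$ and $h_g\colon V\to\C$ that semi-conjugate $f,g$ to the translation $T_1(w)=w+1$, normalized by $h_f(c_f)=h_g(c_g)=0$, where $c_f,c_g$ denote the unique critical points. Differentiating the relation $h_f\circ f = T_1\circ h_f$ yields $h_f'(f(z))\,f'(z)=h_f'(z)$. Together with unicriticality of $f$ one checks: $c_f$ is a critical point of $h_f$ of local degree $d$; the forward iterates $f^n(c_f)$, $n\ge 1$, are regular for $h_f$ (since along the orbit the factor $f'$ is nonzero, propagating nonvanishing of $h_f'$ into the local petal where $h_f$ is biholomorphic); and by the same relation every point in $f^{-n}(c_f)$, $n\ge 1$, is a critical point of $h_f$ of local degree $d$. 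Hence the set of critical values of $h_f$ equals $-\Z_{\ge 0}\subset\C$, with uniform local degree $d$ at every preimage of every critical value. An identical analysis applies to $h_g$.

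Next, I would build the conjugacy $\phi\colon U\to V$. On a local attracting petal $U_0\subset U$ near the parabolic fixed point in $\partial U$, $h_f$ is univalent onto a right half-plane $\{\Re w>M\}$; likewise $h_g$ is univalent on some $V_0\subset V$ onto $\{\Re w>M'\}$. After enlarging $M$ if needed, the germ $\phi\coloneqq h_g^{-1}\circ h_f\colon U_0\to V_0$ is a biholomorphism that intertwines $f$ and $g$ by construction. I would then extend $\phi$ to all of $U$ by equivariance: for $z\in U$, pick $n\ge 0$ with $f^n(z)\in U_0$, select the branch of $g^{-n}$ obtained by analytically continuing along the path $k\mapsto f^{n-k}(z)$, $k=0,\dots,n$, and set $\phi(z)\coloneqq g^{-n}\bigl(\phi(f^n(z))\bigr)$.

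The main obstacle will be verifying that this extension is well-defined, independently of the choice of $n$ and of the lifting path. By simple connectivity of $U$, this reduces to triviality of the monodromy of the formal composition $h_g^{-1}\circ h_f$ around every critical value of $h_f$, which holds precisely because the local ramification indices on both sides coincide: every $d$-fold branching of $h_f$ is matched by a $d$-fold branching of $h_g$, so branch choices for $g^{-n}$ canonically track branch choices for $f^{-n}$. Once this is established, $\phi\colon U\to V$ is a global biholomorphism conjugating $f$ to $g$, and the normalization $h_f(c_f)=h_g(c_g)=0$ makes it the unique such map sending $c_f$ to $c_g$.
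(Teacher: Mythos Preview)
Your approach is correct and essentially the same as the paper's: normalize global Fatou coordinates compatibly, set up the local conjugacy as $h_g^{-1}\circ h_f$ on a local petal, and extend by the pullback argument. The paper normalizes at the critical \emph{value} (a harmless shift by $1$ from your normalization at the critical point), explicitly takes the local petals $U',V'$ to contain the critical values, and replaces your ramification/monodromy discussion with the single line ``since $h$ respects the postcritical sets, we can apply the pullback argument''; your extra analysis of the branching of $h_f,h_g$ is correct but more than is needed, and your phrase ``analytically continuing along the path $k\mapsto f^{n-k}(z)$'' should be read as the standard step-by-step lifting of the map $\phi$ rather than continuation along a discrete sequence of points.
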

\begin{proof}
We normalize full Fatou coordinate systems $h_f\colon U\to \C$ and $h_g\colon V\to \C$ so that \[0=h_f(\text{critical value of }f)=h_g(\text{critical value of }g).\]
Choose next local attracting parabolic petals $f:U'\to U'$ and $g\colon V'\to V'$ so that $h\coloneqq h_f\circ h_g^{-1}$ is a conjugacy between $f:U'\to U'$ and $g\colon V'\to V'$. We can also assume that $U'$ and $V'$ contain the critical values of $f$ and $g$. Since $h$ respects the postcritical sets, we can apply the pullback argument and extend $h$ to a global conjugacy $h\colon U\to V$ between $f$ and $g$.
\end{proof}

Recall that the quadratic polynomial $p_{1/4}=z^2+1/4$ has a parabolic fixed point at $\alpha(p_{1/4})=1/2$. We denote by $\Parab$ the attracting basin of $\alpha(p_{1/4})$. Note that $p_{1/4}\colon \partial \Parab \to \partial \Parab$ is topologically conjugate to $z\mapsto z^2\colon \mathbb S^1\to \mathbb S^1$. By Lemma~\ref{lem:unique conj}, there is a conformal conjugacy 
\begin{equation}
\label{eq:bHi:conj}
\bbh_i\colon \bH^i \to \Parab
\end{equation}
between $\bF_\rr^{Q(\rr)}\colon \bH^i\to\bH^i$ and $p_{1/4}\colon\Parab\to \Parab$.

Recall that $\HH$ denotes the main hyperbolic component of $\Unst$, see~\S\ref{ss:geom pict}. Choose a curve $\ell\subset \overline \HH$ connecting $\bF_\str$ to $\bF_\rr$. For every $T\in \PT$ there is a small neighborhood of $\ell$ where the holomorphic motion $\tau$ of $\Esc_T(\bG)$ is defined, see Lemma~\ref{lem:esc set moves hol}. Then $\tau$ produces an equivariant homeomorphism \[h\colon \Esc_T(\bF_\str)\to \Esc_T(\bF_\rr).\]
Therefore, $\Esc(\bF_\rr)$ has the same properties as $\Esc(\bF_\str)$; in particular, $\Esc(\bF_\rr)$ is uniquely geodesic.

\begin{thm}
\label{thm:parab prepacm}
If $i\not=j$, then $\overline \bH^i \cap \overline \bH^j=\emptyset$. The conjugacy $\bbh_i \colon \bH^i\to \Parab$ extends uniquely to a topological conjugacy
\begin{equation}
\label{eq:over:bHi:conj}
\bbh_i\colon \overline \bH^i \cup \{\balpha(\bF_\rr)\}\to \overline \Parab
\end{equation}

For every $i$ there is a unique external ray $\bR^i$ landing at $\balpha(\bF_\rr)$ such that $\bR^i$ is between $\bH^i$ and $\bH^{i+1}$. The ray $\bR^i$ is $Q(\rr)$-periodic. Moreover, $\bR^i$ and $\bR^{i-1}$ meet at $\alpha(1/2,\bH^i)$ -- the unique preimage of $\balpha(\bF_\rr)$ in $\C$ under $\bF_\rr^{Q(\rr)}\colon \overline \bH^0 \cup \{\balpha(\bF_\rr)\} \to\overline \bH^0  \cup \{\balpha(\bF_\rr)\}.$

Let $\bW(i)$ be the closed puzzle piece bounded by $\bR^{i-1}$ and $\bR^i$ and containing $\bH^i$, and let $\bW^1$ be the pullback of $\bW\coloneqq \bW(0)$ along $\bF_\rr^{Q(\rr)}\colon \bH^0\to \bH^0$, see {\rm Figure~\ref{Fig:ss:geom pict}}. Then 
\begin{equation}
\label{eq:ren map:par}
\bF^{Q(\rr)}_\rr\colon \bW^1\to \bW.
\end{equation}
is a $2$-to-$1$ map. 
\end{thm}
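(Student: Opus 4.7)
The plan is to treat $\bF_\rr^{Q(\rr)}$ as a parabolic germ at the boundary point $\balpha(\bF_\rr)$ (working in the wall topology of~\S\ref{ss:balpha:wall topoloy}), with the $\bH^i$ playing the role of the $\qq$ attracting petals of the Leau--Fatou flower and with the rays $\bR^i$ to be built inside the corresponding $\qq$ repelling petals. This single picture simultaneously yields the count of rays, their combinatorial placement between consecutive $\bH^i$, the required meeting points, and the degree--$2$ assertion.

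First I would tackle parts~(1) and~(2). Local Fatou coordinates inside each attracting petal of $\bF_\rr^{Q(\rr)}$ at $\balpha$ show that $\bbh_i$ extends continuously across $\balpha$ on a neighborhood of $\balpha$ in $\overline{\bH}^i$. To extend $\bbh_i$ globally as a homeomorphism of $\overline{\bH}^i\cup\{\balpha\}$ onto $\overline{\Parab}$, I would follow the standard Carath\'eodory--Yoccoz scheme: pull back a near-boundary strip along $\bF_\rr^{Q(\rr)}\colon \bH^i\to \bH^i$, use the fact that $\overline{\Parab}$ is a Jordan disk (the cauliflower), and invoke Koebe bounds to show the pullbacks shrink, so that $\partial\bH^i$ is locally connected and the conjugacy extends to the boundary. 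The mutual disjointness of $\overline{\bH}^i$ away from $\balpha$ then follows from $\sigma$-properness together with the combinatorics~\eqref{eq:ActionOnHp}: any hypothetical common boundary point $z\ne\balpha$ would be preperiodic with an iterate lying in the intersection of two different components of the parabolic basin, which is impossible.

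Next I would construct the rays $\bR^i$ and verify (3)--(5). The holomorphic motion $\tau$ of the escaping set (Lemma~\ref{lem:esc set moves hol}) transports the combinatorial structure of $\Esc(\bF_\str)$ (rays, $\alpha$-points, tree order) from \S\ref{s:Dyn F_str} to $\Esc(\bF_\rr)$ along a curve in $\WW^u$ connecting $f_\str$ to $f_\rr$, as described in~\S\ref{ss:HolMotEsc}. As $f\in\WW^u$ approaches $f_\rr$, a cycle of $\qq$ periodic $\alpha$-points of $\bF$ is pushed into $\balpha$, and I would identify $\bR^i$ as the limiting rays landing at these colliding $\alpha$-points, located in the repelling petals of the flower. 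Periodicity with period $Q(\rr)$ comes from the fact that $\bF_\rr^{Q(\rr)}$ fixes each repelling petal (the combinatorial rotation number is $\pp/\qq$, so $\qq$-fold iteration of the cyclic action fixes each petal), together with uniqueness of a periodic ray inside a given repelling petal landing at a parabolic point (a form of the Douady--Yoccoz landing theorem, applied here using the uniquely-geodesic property of Corollary~\ref{cor:Esc is Tree}). The meeting point of $\bR^i$ and $\bR^{i+1}$ at $\alpha(1/2,\bH^i)$ is then extracted from the tree structure of $\Esc(\bF_\rr)$: the unique tree-geodesic in $\Esc(\bF_\rr)$ joining $\bR^i$ and $\bR^{i+1}$ must pass through the single preimage of $\balpha$ in $\overline{\bH}^i$ under $\bF_\rr^{Q(\rr)}$, which via $\bbh_i$ corresponds to $-1/2\in\overline{\Parab}$.

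For the final assertion, $\bW=\bW(0)$ is the closed puzzle piece bounded by $\bR^{-1}\cup\bR^0\cup\{\balpha\}$ containing $\bH^0$, and $\bW^1$ is its pullback along $\bF_\rr^{Q(\rr)}\colon \bH^0\to\bH^0$. Since $\bH^0\ni 0$ and $0$ is the unique critical point of $\bF_\rr^{Q(\rr)}$ lying in $\bH^0$ by uniqueness of the critical orbit of the cascade (Lemma~\ref{lem:crit pts}), and since $\bF_\rr^{Q(\rr)}$ maps $\partial\bW^1$ properly onto $\partial\bW$, the restriction $\bF^{Q(\rr)}_\rr\colon \bW^1\to \bW$ is a branched covering of degree~$2$. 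The main obstacle I anticipate is making the ray construction in~(3) fully rigorous in the transcendental $\sigma$-proper setting: one must verify, in the wall topology, that the rays produced by the holomorphic motion actually land at $\balpha$ (rather than merely accumulate on $\partial\bH^i\setminus\{\balpha\}$), and that no additional external ray of $\bF_\rr$ sneaks between consecutive $\bH^i$. I expect this to require a careful interplay between the repelling petal, the renormalization wall $\bPi_n(\bF_\rr)$, and the nested-wake shrinking estimates from~\S\ref{ss:wakes}.
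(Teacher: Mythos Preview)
Your overall architecture diverges from the paper's, and the ray-construction step contains a genuine gap that you yourself flag. You propose to obtain $\bR^i$ as limits of rays under the holomorphic motion, tracking ``a cycle of $\qq$ periodic $\alpha$-points pushed into $\balpha$.'' But the holomorphic motion $\tau$ only gives equivariant homeomorphisms $\Esc_T(\bF_\str)\to\Esc_T(\bF_\rr)$ for each fixed finite $T$; it does not hand you an infinite ray landing at $\balpha$, and there is no cycle of periodic $\alpha$-points of $\bF_\str$ that collapses under the motion (the Siegel map has no such cycle). Making this limiting picture rigorous would require exactly the wall-topology landing control you identify as the main obstacle, and you give no mechanism for it.

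The paper avoids this entirely by a direct construction in the dynamical plane of $\bF_\rr$. First (Claim~1) it produces $\alpha$-points $\alpha(k/2^n,\bH^0)$ on $\partial\bH^0$: lift dyadic internal curves $\delta_{k/2^n}'$ from $\Parab$ via $\bbh_0^{-1}$, and show the resulting curves land at distinct $\alpha$-points by transferring the corresponding statement for $\bF_\star$ (Lemma~\ref{lem:preim of alpha}) through the holomorphic motion of $\overline{\bG^{-T}(\delta)}$ and then pulling back using the expanding metric of \S\ref{sss:Frr:exp metric}. Second (Claim~2), for each $n$ take the unique escaping-set geodesic $\ell_n=[\alpha(1/2^n,\bH^0),\alpha((2^n-1)/2^n,\bH^{-1})]$, show $\bF_\rr^{Q(\rr)}(\ell_{n+1})=\ell_n$, and prove $\ell_n\to\balpha$ by a Denjoy--Wolff style argument using the expansion of \eqref{eq:bX:covering} on the complement of $\overline\bB$; then $\bR^{-1}$ is defined as the concatenation of the geodesics $[b_n,b_{n+1}]$ joining consecutive $\ell_n$, and it lands at $\balpha$ because the pieces shrink. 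The meeting point $\alpha(1/2,\bH^0)$ (Claim~3) comes from a generation count: it is the unique $\alpha$-point of generation $Q(\rr)$ in $\bW(0)$, hence $\prec$-separates $\bR^{-1}$ from $\bR^0$.

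Note also that the paper's order is the reverse of yours: the local connectivity of $\partial\bH^i$ and the disjointness $\overline\bH^i\cap\overline\bH^j=\emptyset$ are proved \emph{after} the rays and puzzle pieces are in place, via a puzzle-shrinking argument and by observing that $\overline\bH^0$ is the non-escaping set of $\bF_\rr^{Q(\rr)}\colon\bW^1\to\bW$ (so $\partial\bH^i$ meets $\Esc(\bF_\rr)$ only at $\alpha$-points, whence distinct $\overline\bH^i$ lie in distinct puzzle pieces). Your Carath\'eodory scheme for the boundary extension is plausible in spirit but is not what the paper does, and in any case it cannot serve as input for the ray construction the way you have it ordered.
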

\noindent We say that~\eqref{eq:ren map:par} is the \emph{primary renormalization map}. It can be thickened to a ``pinched quadratic-like map''.  Before giving the proof of Theorem~\ref{thm:parab prepacm} let us introduce an expanding metric for $\bF_\rr$.

\subsubsection{Expanding metric for $\bF_\rr$}
\label{sss:Frr:exp metric}
Let $B\subset \Parab$ be a forward invariant open topological disk containing $[1/4,1/2)$ such that $\overline B$ is a closed topological disk with 
\[\partial B\cap p_{1/4}(\overline B)=\alpha(p_{1/4})= \overline B \cap \partial \Parab\]
(recall that $1/4$ is the critical value of $p_{1/4}$). We can take $B$ to be an appropriate small neighborhood of $[1/4,1/2)$.

Define $\bB^0\coloneqq \bbh_0^{-1}(B)$ and observe that $\overline \bB^0\subset \bH^0$. Spreading around $\overline \bB^0$, we obtain 
\[\bB\coloneqq \bigcup_{P\ge 0} \bF^P_{\rr}\big(\overline \bB^0\big)\subset \bigcup_{i\in \Z}\bH^i.\] 
 By construction,  $\bB$ contains the postcritical set $\Post(\bF_\rr)$. Consider the hyperbolic surface $\bX\coloneqq \C\setminus \overline \bB$. Then $\bF_\rr^{-T}(\bX)\subsetneq \bX$ and 
\begin{equation}
\label{eq:bX:covering}
\bF_\rr^{T}\colon \bF_\rr^{-T}(\bX)\to \bX
\end{equation}
is a covering for all $T$. By Schwartz's lemma,~\eqref{eq:bX:covering} expands the hyperbolic metric.

\begin{figure} 
\centering{\begin{tikzpicture}[scale=0.25,xscale=3]
\draw[red] (-5.5,-18.5)-- (-5.5,-3.2);
\draw[red] (-5.5,-3.2).. controls(-5.41088, -0.47673) and (-4.62753, 3.35108) .. (-2.875,4.5);
\draw[red] (-0.25,-3.2).. controls(+5.41088-2*2.875, -0.47673) and (4.62753-2*2.875, 3.35108) .. (-2.875,4.5);

\draw[blue] (-4.2,-0.2).. controls (-4.4,-0.2) and (-4.6,0.91).. (-4.9,0.91);
\node[blue, right] at  (-4.2,-0.2) {$\delta_{5/8}$};
\node[ above left] at (-4.9,0.91){$\alpha(5/8)$};

\draw[blue] (-1.5,-0.2) ..controls (-1.3,-0.2 ) and  (-1.1, 0.98) .. (-0.85,0.98);
\node[blue, left] at  (-1.5,-0.2) {$\delta_{3/8}$};
\node[ above right] at (-0.85,0.98){$\alpha (3/8)$};

\node[below] at (-2.875,-18.5){$\balpha(\bF_\rr)$};
\draw[blue](-2.875,-15.5) .. controls (-2.6,-16) and (-3.1,-17).. (-2.875,-18.5);
\node[blue,right] at (-2.85,-16) {$\delta_0$};

\node at (-2.875,5.5) {$\alpha(1/2)$};
\draw[blue](-2.875, 1.5) .. controls (-2.6,3) and (-3.1,4).. (-2.875,4.5);
\node[blue,right] at (-2.85,2) {$\delta_{1/2}$};

\node at (0.55,-3.2) {$\alpha (1/4)$};
\draw[blue](-1.5,-3.2) .. controls (-1.2,-3.8) and (-0.8,-2.6).. (-0.25,-3.2);
\node[blue,below] at (-1,-3.2) {$\delta_{1/4}$};

\node at (0.55,-10.2) {$\alpha (1/8)$};
\begin{scope}[shift={(0,-7)}]
\draw[blue](-1.5,-3.2) .. controls (-1.2,-3.8) and (-0.8,-2.6).. (-0.25,-3.2);
\node[blue,below] at (-1,-3.2) {$\delta_{1/8}$};
\end{scope}

\node at (0.6,-17.2) {$\alpha (1/16)$};
\begin{scope}[shift={(0,-14)}]
\draw[blue](-1.5,-3.2) .. controls (-1.2,-3.8) and (-0.8,-2.6).. (-0.25,-3.2);
\node[blue,below] at (-1,-3.2) {$\delta_{1/16}$};
\end{scope}

\node at (-6.25,-3.2) {$\alpha (3/4)$};
\begin{scope}[shift={(-5.75,0)},xscale=-1]
\draw[blue](-1.5,-3.2) .. controls (-1.2,-3.8) and (-0.8,-2.6).. (-0.25,-3.2);
\node[blue,below] at (-1,-3.2) {$\delta_{3/4}$};
\end{scope}

\node at (-6.25,-10.2) {$\alpha (7/8)$};
\begin{scope}[shift={(-5.75,-7)},xscale=-1]
\draw[blue](-1.5,-3.2) .. controls (-1.2,-3.8) and (-0.8,-2.6).. (-0.25,-3.2);
\node[blue,below] at (-1,-3.2) {$\delta_{7/8}$};
\end{scope}

\node at (-6.5,-17.2) {$\alpha (15/16)$};
\begin{scope}[shift={(-5.75,-14)},xscale=-1]
\draw[blue](-1.5,-3.2) .. controls (-1.2,-3.8) and (-0.8,-2.6).. (-0.25,-3.2);
\node[blue,below] at (-1.1,-3.2) {$\delta_{15/16}$};
\end{scope}

\draw[red] (-0.25,-3.2)-- (-0.25,-18.5);
\draw[red] (-5.5,-18.5)-- (-5.5,-3.2);

\node at (-2.875, -10.58232){$\bH^0$};

\node[red,left] at (-5.5, -12.97131) {$\partial \bH^0$};
  \end{tikzpicture}}
\caption{Alpha-points of $\partial \bH^0$ are defined at the landing points of curves in $\bH^0$ (compare with Figure~\ref{Fig:ss:geom pict}).}
 \label{Fig:par pacm:alpha points}
\end{figure}

\begin{figure} 
\centering{\begin{tikzpicture}
\node at (0,0){\includegraphics[scale=0.8]{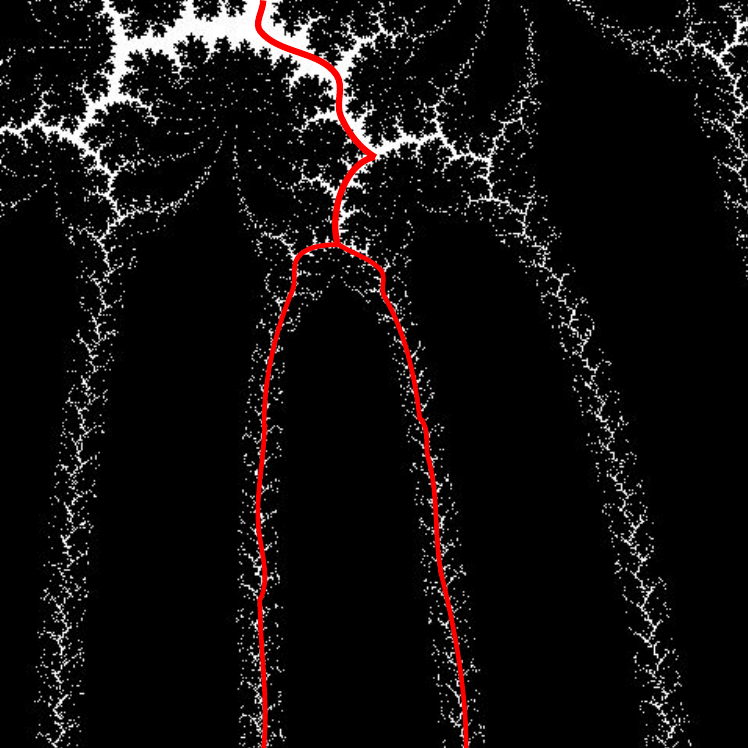}};
  \node at (0,-10.5){\includegraphics[scale=0.557]{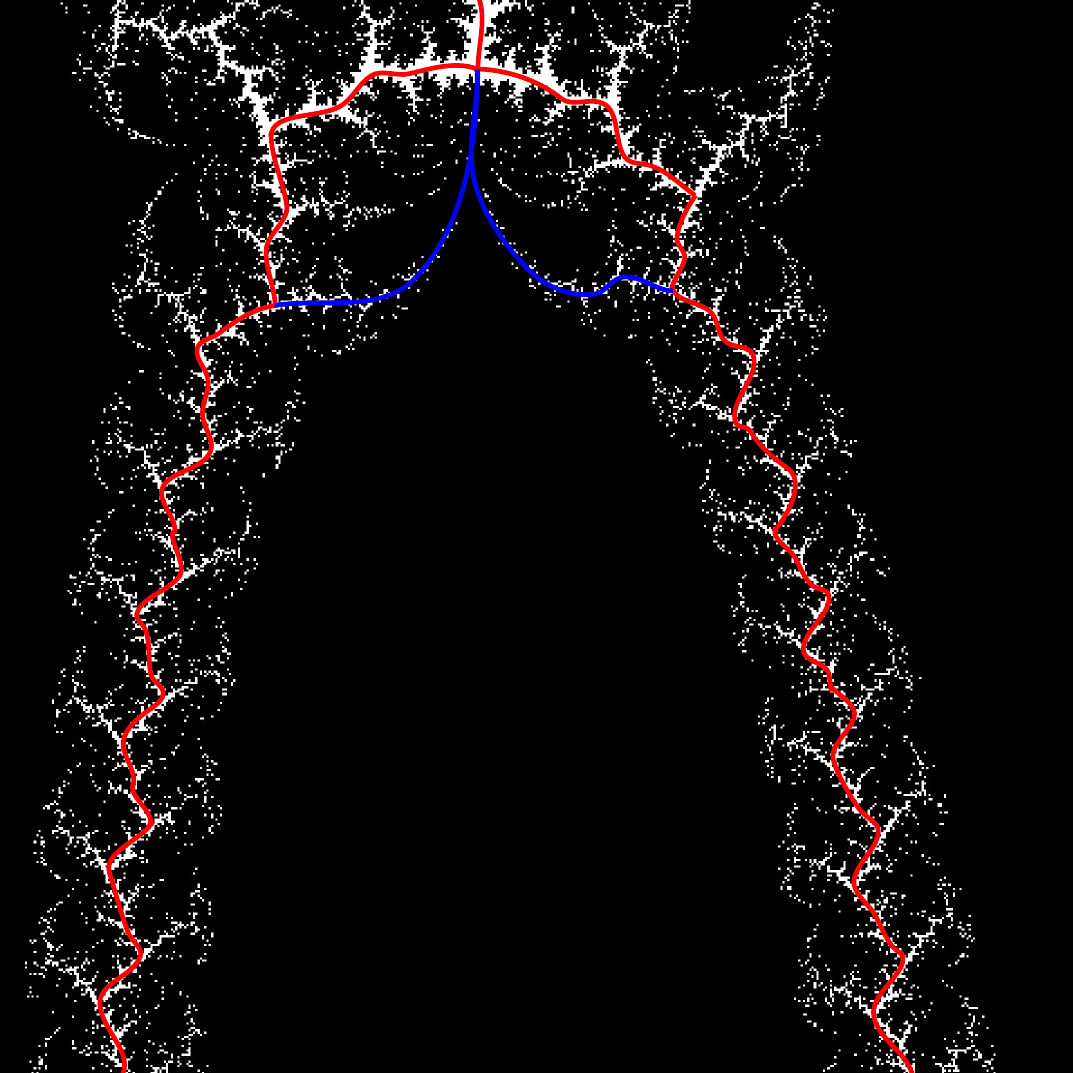}};
  
    \node[white] at(-0.4,-1.6) {$\bH^0$};
    
    \node[red] at (-2.3,-3){$\bR^{-1}$};
    \node[red,right] at (1.2,-3){$\bR^{0}$};
    \node[white] at(-0.4,-1.95) {$\cap$};
     \node[white] at(-0.4,-2.3) {$W(0)$};
        
    \node[white] at(1.9,-1) {$\bH^1$};   
    \node[white] at(4.2,0.5) {$\bH^2$};  
     \node[white] at(-2.6,-1) {$\bH^{-1}$};   
    \node[white] at(-4.5,1) {$\bH^{-2}$};   

  \node[white] at(-0.5,-10) {$W^1(0)$};
  
    \node[white] at(0.3,-7.7) {$W^b$};

     \node[white] at(-1.3,-7.73) {$W^a$};
 
 \end{tikzpicture}}
\caption{Parabolic maximal prepacmen: attracting petals $\bH^i$ and puzzle pieces $\bW=\bW^1\cup \bW^a\cup \bW^b$ (where $\bW^1$ is bounded by two red and two blue curves).}
\vspace{128in} \label{Fig:ss:geom pict}
\end{figure}

\begin{proof}[Proof of Theorem~\ref{thm:parab prepacm}]
Choose $\mu\in (1/4,1/2)$. In the dynamical plane of $p_{1/4}$, define $\delta'_0$ to be the interval $[\mu,1/2)\subset \Parab$ landing at $1/2$. We view $\delta'_0$ as a simple arc parametrized by $[0,1)$. Set $\alpha(k/2^n,p_{1/4})$ to be the landing point of the external ray with angle $k/2^n$, where $k\in \{1,3,5,\dots, 2^n-1\}$. Note that $\alpha(k/2^n,p_{1/4})$ is a preimage of $\alpha$ under $p_{1/4}^n$. We define $\delta'_{k/2^n}$ to be the lift of $\delta'_0$ under $p_{1/4}^n$ landing at $\alpha(k/2^n,p_{1/4})$. Let $\delta_{k/2^n}$  be the lift of $\delta'_{k/2^n}$ under $\bbh_0$.

\emph{Claim $1$:} $\delta_0$ lands at $\balpha(\bF_\rr)$ while $\delta_{k/2^n}$ with $k\in  \{1,3,5,\dots, 2^n-1\}$ land at pairwise different alpha-points of generation $nQ(\rr)$. We denote by $\alpha(k/2^n,\bH^0)$ the landing point of $\delta_{k/2^n}$, see Figure~\ref{Fig:par pacm:alpha points}.
\begin{proof}[Proof of the claim]
Recall that $\UnstLoc$ denotes the set of maximal prepacmen close to $\bF_\str$ that have  the associated pacmen on $\Unst$. By definition, $\bF_\rr\in \Unst$. Applying the $\lambda$-lemma, we will first show that a certain version of Lemma~\ref{lem:preim of alpha} holds in the dynamical plane of $\bG\in  \UnstLoc$.

Since $\delta_0$ is invariant under $\bF_\rr^{Q(\rr)}$, by replacing $\delta_0$ with its subcurve $\delta_0 [t,1)$, we can assume that $\delta_0$ is contained in $\bH^0_0$ (see~\S\ref{sss:par pacmen}). Thus $\delta_0(\bF_\rr)$ projects to the dynamical plane of $f_\rr$, and the projection, call it $\delta_0(f_\rr)$, is disjoint from the critical arc $\gamma_1$, possibly up to a slight rotation of $\gamma_1$ as in~\S\ref{sss:par pacmen}. 

For $g\in \WW^u$, let $T_g$ be the unique translation mapping $\alpha(f_\rr)$ to $\alpha(g)$. For every $\bG\in \UnstLoc$, we define $\delta_0(\bG)$ to be the lift of $\delta_0(g)\coloneqq T_g(\delta_0(f_\rr))$ to $\bS(\bG)\simeq V\setminus \gamma_1$. By construction, $\delta_0(\bG)$ depends holomorphically on $\bG$.

For a small $T\in \PT_{>0}$ and $\bG\in \UnstLoc$, the set of critical values $\CV\big(\bG^T\big)$ is disjoint from $\delta(\bG)$ because  \[\CV\big(\bG^T\big)\setminus \{0\}\subset \bDelta_0(\bG)\setminus \bS\sp\sp \text{ and }\sp\delta(\bG)\subset \bS,\] see \eqref{eq:CV are away from 0}. Therefore,  $\bG^{-T}(\delta)$ moves holomorphically with $\bG\in \UnstLoc$. Applying the $\lambda$-lemma, we obtain the holomorphic motion of $\displaystyle{\overline {\bG^{-T}_{\ }(\delta)}}$ with $\bG\in \UnstLoc$. Since curves in  $\overline {\bF_\str^{-T}(\delta)}$ land at pairwise different alpha-point (Lemma~\ref{lem:preim of alpha}), the same is true for $\overline {\bF_\rr^{-T}(\delta)}$ in the dynamical plane of $\bF_\rr$.

 Let $\bH^i$ be the unique (by~\eqref{eq:ActionOnHp}) periodic preimage of $\bH^0$ under $\bF^T_\rr$. Since $T$ is small, the map $\bF_\rr^T\colon \bH^i\to \bH^0$ is two-to-one. There are two lifts $\delta^T_0$ and $\delta^T_{1/2}$ of $\delta$ under $\bF_\rr^T\colon \bH^i\to \bH^0$. One of them lands at $\balpha(\bF_\rr)$, while the other $\delta^T_{1/2}$ 
  lands at the alpha-point $\alpha(1/2, \bH^i)$ of generation $T$.
  
   Observe that $\overline {\delta^T_{1/2}}=\delta^T_{1/2}\cup \alpha(1/2, \bH^i)$ is contained in $\bX$ (see~\S\ref{sss:Frr:exp metric}). Therefore, for all $P\in  \PT$ all the lifts of $\delta^T_{1/2}$ under $\bF_\rr^P$ land at pairwise different preimages of $ \alpha(1/2, \bH^i)$. Since $\delta_{k/2^n}$ are lifts of $\delta^T_{1/2}$, the claim follows.
\end{proof}

For $\bH^i$ we define $\alpha(k/2^n, \bH^i)$ to be the images of $\alpha(k/2^n, \bH^i)$ under the univalent map $\bF_\rr^S\colon \bH^0\to \bH^i$, where $S<Q(\rr)$.

Let \[\ell_n \coloneqq [\alpha(1/2^n, \bH^0),\alpha((2^n-1)/2^n, \bH^{-1})]\subset \Esc_{n Q(\rr)}(\bF_\rr)\] 
be the unique geodesic (in $\Esc(\bF_\rr)$) connecting the alpha-points, see Figure~\ref{Fig:Curves ell_n}.

\begin{figure} 
\centering{\begin{tikzpicture}

\begin{scope}[scale=0.2,xscale=3]
\draw[] (-5.5,-18.5)-- (-5.5,-3.2);
\draw[] (-5.5,-3.2).. controls(-5.41088, -0.47673) and (-4.62753, 3.35108) .. (-2.875,4.5);
\draw[] (-0.25,-3.2).. controls(+5.41088-2*2.875, -0.47673) and (4.62753-2*2.875, 3.35108) .. (-2.875,4.5);

\node[right] at (-2.875,5.5) {$\alpha(1/2)=a_1$};
\coordinate  (a1) at (-2.875,4.5);

\begin{scope}[shift={(-5.75,0)},xscale=-1]
\node[right]at (-0.2,-3.2) {$\alpha(3/4)=a_2$};
\coordinate  (a2) at (-0.25,-3.2);
\end{scope}

\begin{scope}[shift={(-5.75,-7)},xscale=-1]
\node[right] at (-0.2,-3.2) {$\alpha(7/8)=a_3$};
\coordinate  (a3) at (-0.25,-3.2);
\end{scope}

\begin{scope}[shift={(-5.75,-14)},xscale=-1]
\node[] at (-1.6,-3.2) {$\alpha(15/16)$};
\coordinate  (a4) at (-0.25,-3.2);
\end{scope}

\draw[] (-0.25,-3.2)-- (-0.25,-18.5);
\draw[] (-5.5,-18.5)-- (-5.5,-3.2);

\node at (-1.875, -14.58232){$\bH^0$};
\end{scope}

\begin{scope}[shift={(-5,0.6)},scale=0.2,xscale=3]
\draw[] (-5.5,-18.5)-- (-5.5,-3.2);
\draw[] (-5.5,-3.2).. controls(-5.41088, -0.47673) and (-4.62753, 3.35108) .. (-2.875,4.5);
\draw[] (-0.25,-3.2).. controls(+5.41088-2*2.875, -0.47673) and (4.62753-2*2.875, 3.35108) .. (-2.875,4.5);

\node at (-2.875,5.5) {$\alpha(1/2)$};
\draw[red] (a1)edge node[above]{$\ell_1$}(-2.875,4.5);

\node[left] at (-0.2,-3.2) {$b_2=\alpha(1/4)$};

\draw[red] (a2)edge node[above]{$\ell_2$}(-0.25,-3.2);
\draw[dashed, red] (a1) edge (-0.25,-3.2);

\begin{scope}[shift={(0,-7)}]
\node[left]  at (-0.2,-3.2) {$b_3=\alpha(1/8)$};
\draw[red] (a3)edge node[above]{$\ell_3$}(-0.25,-3.2);
\draw[dashed, red] (a2) edge (-0.25,-3.2);

\end{scope}

\begin{scope}[shift={(0,-14)}]
\node[left]  at (-0.2,-3.2) {$b_4=\alpha(1/16)$};
\draw[red] (a4)edge node[above]{$\ell_4$}(-0.25,-3.2);
\draw[dashed, red] (a3) edge (-0.25,-3.2);

\end{scope}

\draw[] (-0.25,-3.2)-- (-0.25,-21.5);
\draw[] (-5.5,-21.5)-- (-5.5,-3.2);

\node at (-3.875, -13.58232){$\bH^{-1}$};
\end{scope}

  \end{tikzpicture}}
\caption{Curves $\ell_n$ and $[b_n,a_{n+1}]$ (dashed), see also Figure~\ref{Fig:ss:geom pict}.}
 \label{Fig:Curves ell_n}
\end{figure}

\emph{Claim $2$:} $\bF^{Q(\rr)}_\rr$ maps $\ell_{n+1}$ to $\ell_{n}$.  The curves $\ell_n$ converge to $\balpha(\bF_\rr)$, and for $n\gg 0$ the curve $\ell_n$ is contained in a repelling petal between $\bH^{-1}$ and $\bH^0$ 
\begin{proof}[Proof of the claim]
Note that $(-1/4,1/4)\subset \R\cap \Parab$ is $p_{1/4}$-invariant and connects $\alpha(p_{1/4})$ and $\alpha(1/2,p_{1/4})$. Let $\beta'_{+,n}$ be the unique geodesic in $p_{1/4}^{-n+1}( (-1/4,1/4) )$ connecting $\alpha((2^n-1)/2^n,p_{1/4})$ and $\alpha(p_{1/4})$. Set $\beta_{+,n}\coloneqq \bbh^{-1}_0(\beta'_{+,n})$. Similarly, let $\beta'_{-,n}$ be the unique geodesic in $p_{1/4}^{-n+1}( (-1/4,1/4) )$ connecting $\alpha(1/2^n,p_{1/4})$ and $\alpha(p_{1/4})$; set $\beta_{-,n}\coloneqq \bbh^{-1}_{-1}(\beta'_{-,n})$.

Define $\bO_n$ to be the open disk bounded by $\beta_{-,n}\cup \ell_n\cup \beta_{+,n}$ and containing a small repelling petal between $\bH^{-1}$ and $\bH^0$. Then $\bO_{n+1}\subset \bO_n$ and we claim that $\bF_\rr^{Q(\rr)} $ maps $\bO_{n+1}$ univalently onto $\bO_{n}$. Indeed, $\bF_\rr^{Q(\rr)} $  maps  $\beta_{-,n+1}$ and $\beta_{+,n+1}$ to  $\beta_{-,n}$ and $\beta_{+,n}$. Since $\bO_n$  does no contain any critical values of $\bF_\rr^{Q(\rr)}$, the lift of $\bO_n$ attached to $\beta_{-,n+1}$ and $\beta_{+,n+1}$ is attached to the unique lift $\widetilde \ell_{n+1}$ of $\ell_n$. Then $\widetilde \ell_{n+1}$ connects $\alpha(\bH^0,1/2^n)$ and $\alpha(\bH^{-1},(2^n-1)/2^n)$; since $\Esc_{(n+1)Q(\rr)}$ is uniquely geodesic, we obtain $\widetilde \ell_{n+1}=\ell_{n+1}$.

Note that between $\bH^{-1}_0$ and $\bH^0_0$ there is a repelling petal $\bP^{-1}$ emerging from $\balpha$; this repelling petal is obtained from lifting the repelling petal of $\alpha(f_\rr)$ between $H^{-1}$ and $H^0$.  Let $(y_n)_{n\ge 0}$ with $\bF^{Q(\rr)}_{\rr}(y_{n+1})=y_n\in \bX\cap \bO_n$ be the orbit emerging from $\balpha(\bF_\rr)$ in $\bP^{-1}$. Connect $\ell_{n+1}$ and $y_{n+1}$ by a curve in $\bO_n\cap \bX$; then the lift of this curve is a curve connecting $\ell_{n+2}$ and $y_{n+2}.$  Since $\bF_\rr^{Q(\rr)}$ expands the hyperbolic metric of $\bX$ (see~\eqref{eq:bX:covering}), and since $ {\ell_n}\subset \bX$, we obtain that $\ell_n$ shrinks to $\balpha(\bF_\rr)$.
\end{proof}

Since $\Esc(\bF_\rr)$ is uniquely geodesic, there is a unique geodesic $[b_n,a_{n+1}]\subset \Esc(\bF_\rr)$ connecting $\ell_n$ and $\ell_{n+1}$. (In fact, $b_n=\alpha(2^n-1)/2,\bH^{0})$ and $a_n=\alpha(1/2^n,\bH^{-1})$, see Figure~\ref{Fig:Curves ell_n}.) Then $\bF_\rr^{Q(\rr)}$ maps $[b_{n+1},b_{n+2}]$ to $[b_{n},b_{n+1}]$. Since $\bF_\rr^{Q(\rr)}$ is expanding (see~\eqref{eq:bX:covering}), $[b_{n},b_{n+1}]$ shrinks to $\balpha(\bF_\rr)$. We obtain that
\[\bR^{-1}\coloneqq (\binfty,b_1]\bigcup_{n\ge 0}[b_n,b_{n+1}] \]
is a periodic ray landing between $\bH^{-1}$ and $\bH^{0}$.

Similarly, there is a periodic ray $\bR^i$ landing at $\balpha(\bF_\rr)$ between $\bH^i$ and $\bH^{i+1}$. The rays $(\bR^i)_i$ form a periodic cycle.

Let $\bW(i)\supset \bH^i$ be the puzzle piece bounded by $\bR^i\cup \bR^{i+1}$. Recall \eqref{eq:ActionOnHp} that for every $\bH^i$ there is a unique $P\in \PT$ such that $\bF_\rr^P\colon \bH^0\to \bH^i$ is a conformal map.
 
\emph{Claim $3$:} the conformal map $\bF_\rr^P\colon \bH^0\to \bH^i$ extends to a conformal map 
$\bF_\rr^P\colon \bW(0)\to \bW(i)$. Moreover, $\bR^{-1}$ and $\bR^0$ meet at $\alpha(\bH^0,1/2)$.
\begin{proof}[Proof of the Claim]
The critical values of $\bF_\rr^P$ are exactly \[\{\bF_\rr^Q(0)\mid Q<P\}\subset \bigcup_{Q<P} \bF_\rr^Q(\bH^0) \] and this set is disjoint from $\bW(i)$. Therefore, $\bW(0)$ is the conformal pullback of $\bW(i)$ along $\bF_\rr^P\colon \bH^0\to \bH^i$. This shows the first claim

For every $S<Q(\rr)$ there is an $i\in \Z$ such that $\bF_\rr^S\colon \bW(0)\to \bW(i)$ is conformal. Therefore, $\balpha(\bH^i)=\bF_\rr^S(\balpha(\bH^0,1/2))$ and we see that the generation of $\alpha(\bH^0,1/2)$ is exactly $Q(\rr)$. This also implies that $\alpha(\bH^0,1/2)$ has the smallest generation among all the preimages of $\balpha(\bF_\rr)$ in $\bW(0)$. If $\alpha_-$ and $\alpha_+$ are alpha-points in $\bR^{-1}\setminus \bR^0$ and $\bR^{0}\setminus \bR^{-1}$ respectively, then $\alpha_-$ and $\alpha_+$ are $\prec$-separated by  $\alpha(\bH^0,1/2)$ (see~\S\ref{ss:TreeLike str}). By Corollary~\ref{cor:rays meet}, $\bR^{0}\cap  \bR^{-1}=[\alpha(\bH^0,1/2), \binfty)$. 
\end{proof}

 Since $\bW(0)$ contains a unique critical value of $\bF_\rr^{Q(\rr)}$, pulling back $\bW(0)$ we obtain the two-to-one map~\eqref{eq:ren map:par}.

By a standard puzzle argument, $\partial \bH^i$ is a simple arc. Indeed, every alpha-point $\alpha(t, \bH^i)$ is accessible from the interior and the exterior of $\bH^i$. We can cover $\partial \bH^i$ by closed topological disks $(D_j)_{j\in \Z}$ with disjoint interiors whose boundaries intersect $\partial \bH^i$ at exactly two alpha-points: for $j<0$ the disk $D_j$ intersects $\partial \bH^0$ at $\alpha(2^{j})$ and $\alpha(2^{j-1})$, and for $j\ge 0$ the disk $D_j$ intersects $\partial \bH^0$ at $\alpha(1-1/2^{j+1})$ and $\alpha(1-1/2^{j+2})$. Moreover, we can assume that $D_j$ is disjoint from the postcritical set. Taking preimages, we obtain a systems of disks $(D_\eta)_{\eta}$; every $D_\eta$ still intersects $\partial \bH^0$ at exactly two alpha-points. By expansion~\S\ref{sss:Frr:exp metric}, for every $x\in \partial \bH^i$ there is a sequence of disks $D_{i,x}\ni x$ as above shrinking to $x$. This implies that $\partial \bH^0$ is a simple arc, \cite{Na}*{Theorems 6.16 and 6.17}.

Note that $\overline \bH^0$ is the non-escaping set of $\bF^{Q(\rr)}_\rr\colon \bW^1\to \bW.$ Therefore, the escaping set intersects $\partial \bH^i$ at alpha-points. This implies that $\bH^i\cap \bR^k\cap \bH^j=\emptyset$ for all $i\not=j$ and all $k$ because the generation of alpha-point in $\partial \bH^i$ is different from those in $\partial\bH^j$. Since $\bH^i$ and $\bH^j$ are in different puzzle pieces, $\bH^i\cap  \bH^j=\emptyset$ if $i\not=j$. 
 \end{proof}

We will show in Lemma~\ref{lem:bW is tiling} that every $z\in \C$ is contained in some puzzle piece $\bW(i)$.

\subsection{Properties of $\bW(i)$}

\begin{figure}
\begin{tikzpicture}

\begin{scope}[shift={(-8,0)}]

\coordinate (b1) at (2,1.5);
\coordinate (c1) at (-2,1.5);

\draw[] (-0.915,-0.2 ).. 
controls (-2.5,0.5)  and (-4,2.8) ..
(0,3)
..controls (4,2.8)  and (2.5,0.5) ..
(0.915,-0.2 );

 \draw(0,4)--(0,3);

\draw (0,3) .. controls  (1,2 )and (1,-1)..
(1,-4);
\draw (0,3) .. controls  (-1,2 )and (-1,-1)..
(-1,-4);

\node[above] (a1) at (0,-0.35){};

\node[left] at (-1,-3) {$\bR^{-1}$}; 
\node[right] at (1,-3) {$\bR^{0}$};

\node[left] at (-0.3,2.3) {$\bR^{a}$}; 

\node[right] at (0.38,2.3) {$\bR^{b}$};

\node[above right] at (0,3) {$\alpha(1/2,\bH^0)$}; 

\node at(-1.5,1) {$\bW^a$};
\node at(1.5,1) {$\bW^b$};

\node[above] at (0,-1){$\bW^1$};

\end{scope}

\end{tikzpicture}
\caption{The decomposition of $\bW=\bW^1\cup \bW^a\cup \bW^b$. The rays $\bR^a$ and $\bR^b$ land at $\alpha(1/2,\bH^0)$ and bound puzzle pieces $\bW^a$ and $\bW^b$. Compare with Figure~\ref{Fig:ss:geom pict}.}
\label{fig:Decomp of bW}
\end{figure}

\begin{lem}[Decomposition $\bW=\bW^1\cup \bW^a\cup \bW^b$]
\label{lem:decomp:bW(bF)}
The puzzle piece $\bW^1$ is bounded by $4$ external rays; two of them are $\bR^{-1}$ and $\bR^{0}$, we denote the other two by $\bR^a$ and $\bR^b$, see Figure~\ref{fig:Decomp of bW}. The ray $\bR^a$ lands at $\alpha(1/2,\bH^0)\in \bR^a$ and surrounds a puzzle piece $\bW^a$. Similarly, $\bR^b$ lands at $\alpha(1/2,\bH^0)\in \bR^b$ and surrounds a puzzle piece $\bW^b$. The puzzle pieces $\bW^1$, $\bW^a, \bW^b$ have pairwise disjoint interiors, and we have $\bW=\bW^1\cup \bW^a\cup \bW^b$. The map $\bF_\rr^{Q(\rr)}$ maps univalently $\intr \bW^a$ and $\intr \bW^b$ to two different connected components of $\C\setminus (\bW\cup \bR^{0})$.
\end{lem}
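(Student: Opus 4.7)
The strategy is to pull back the boundary structure of $\bW$ through the degree-$2$ branched cover $\bF_\rr^{Q(\rr)}\colon \bW^1 \to \bW$ furnished by Theorem~\ref{thm:parab prepacm}. Since the unique critical point $c_0 \in \bH^0 \subset \intr \bW^1$ maps to the critical value $0 \in \bH^0$, the rays $\bR^{-1}, \bR^0 \subset \partial \bW$ avoid the critical value, and $\bF_\rr^{Q(\rr)}$ restricts to an unramified degree-$2$ cover $\partial \bW^1 \to \partial \bW$.

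First I locate the preimages of $\balpha$ in the closure of $\bW^1$. Using the conjugacy $\bF_\rr^{Q(\rr)}|_{\overline{\bH^0}\cup\{\balpha\}} \cong p_{1/4}|_{\overline{\Parab}}$ from Theorem~\ref{thm:parab prepacm} together with the identity $p_{1/4}^{-1}(1/2) = \{\pm 1/2\}$, the preimages of $\balpha$ under $\bF_\rr^{Q(\rr)}$ restricted to $\overline{\bH^0}\cup\{\balpha\}$ are exactly $\balpha$ and $\alpha(1/2, \bH^0)$. Because the global cover has degree $2$, these are in fact all preimages of $\balpha$ in $\overline{\bW^1}$.

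Next I produce $\bR^a$ and $\bR^b$. Each of $\bR^{-1}$ and $\bR^0$ is $Q(\rr)$-periodic, hence a self-preimage providing one of the two sheets of the cover $\partial \bW^1 \to \partial \bW$; each therefore admits exactly one additional preimage component in $\partial \bW^1$, which I denote $\bR^a$ and $\bR^b$ respectively. By the preimage construction of rays in~\S\ref{s:HolMot Esc} (connected lifts of ray segments under univalent branches are again ray segments), both are external rays. Their landing points lie in $(\bF_\rr^{Q(\rr)})^{-1}(\balpha) \setminus \{\balpha\}$, so both must land at $\alpha(1/2, \bH^0)$ by the previous step. The decomposition $\bW = \bW^1 \cup \bW^a \cup \bW^b$ then follows: the rays $\bR^a \cup \bR^b$, together with $\alpha(1/2, \bH^0)$ and $\balpha$, subdivide $\bW$ into the central piece $\bW^1$ containing $\bH^0$ and two peripheral pieces $\bW^a, \bW^b$. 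Since all ramification is confined to $\bW^1$, the restrictions $\bF_\rr^{Q(\rr)}|_{\intr \bW^a}$ and $\bF_\rr^{Q(\rr)}|_{\intr \bW^b}$ are conformal onto their images, and combinatorially these images are the two distinct components of $\C \setminus (\bW \cup \bR^0)$.

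The main subtlety I anticipate is justifying the literal phrasing ``$\bR^a$ surrounds $\bW^a$'' (and analogously for $\bR^b$), meaning that a single external ray bounds $\bW^a$. As flagged in~\S\ref{ss:puzzles}, this is the ``preperiodic ray lands at itself'' phenomenon: the landing point $\alpha(1/2, \bH^0)$ actually lies on $\bR^a$ as a point of the ray itself, so that the tail of $\bR^a$ beyond $\alpha(1/2, \bH^0)$ closes up into a loop enclosing $\bW^a$. Verifying this self-incidence requires a careful local combinatorial analysis at $\alpha(1/2, \bH^0)$, tracing how the two local branches of $(\bF_\rr^{Q(\rr)})^{-1}(\bR^{-1})$ glue at that point within the concatenation-of-ray-segments framework of~\S\ref{ss:ExtRayFstr}.
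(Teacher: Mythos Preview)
Your overall strategy matches the paper's: pull back $\partial\bW$ through the degree-two branched cover $\bF_\rr^{Q(\rr)}\colon\bW^1\to\bW$, identify the two extra boundary rays $\bR^a,\bR^b$ as the non-periodic preimages, and observe they land at $\alpha(1/2,\bH^0)$ because that is the unique finite preimage of $\balpha$. Two points deserve comment.

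First, a minor labeling issue: in the paper $\bR^a$ is the second preimage of $\bR^0$ and $\bR^b$ is the second preimage of $\bR^{-1}$, opposite to your assignment. This matters for consistency with the figure and with the statement about where $\intr\bW^a,\intr\bW^b$ land.

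Second, and more substantively, you correctly flag the ``$\bR^a$ surrounds $\bW^a$'' claim as the crux, but the resolution is much simpler than the local combinatorial analysis at $\alpha(1/2,\bH^0)$ you anticipate. The boundary $\partial\bW^1$ is an unramified double cover of $\partial\bW$, hence a topological circle with four vertices: the two preimages of $\balpha$ (namely $\balpha$ and $\alpha(1/2,\bH^0)$) and the two preimages of the meeting point $\alpha(1/2,\bH^0)$ of $\bR^{-1},\bR^0$. Via the conjugacy $\bbh_0$ to $p_{1/4}$ (angle doubling on $\partial\bH^0$), these latter preimages are $\alpha(1/4,\bH^0)$ and $\alpha(3/4,\bH^0)$. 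The paper's key observation is that $\bR^{-1}$ and $\bR^a$ meet at $\alpha(1/4,\bH^0)$ while $\bR^0$ and $\bR^b$ meet at $\alpha(3/4,\bH^0)$. Since $\alpha(1/4,\bH^0)$ has generation $2Q(\rr)>Q(\rr)$, the shared tail $[\alpha(1/4,\bH^0),\binfty)$ of $\bR^a$ and $\bR^{-1}$ already contains $\alpha(1/2,\bH^0)$; thus $\bR^a$ passes through $\alpha(1/2,\bH^0)$ on its way to infinity and then lands at $\alpha(1/2,\bH^0)$ on its finite end, closing the loop that bounds $\bW^a$. This single observation simultaneously yields the decomposition $\bW=\bW^1\cup\bW^a\cup\bW^b$ and the self-landing phenomenon, with no further local analysis needed.
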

\begin{proof}
Since $\bF_\rr^{Q(\rr)}\colon \bW^1\to \bW$ is two-to-one, the puzzle piece $ \bW^1$ is bounded by $4$ rays $\bR^{-1}, \bR^a,\bR^b,\bR^0$, where $\bR^{-1}, \bR^b$ are preimages of $\bR^{-1}$ while $\bR^a,\bR^0$ are preimages of $\bR^0$. Since $\bR^{-1},\bR^0$ land at $\balpha$, we obtain that $\bR^a$ and $\bR^b$ land at $\alpha(1/2,\bH^0)$. As a consequence, $\bR^a$ and $\bR^b$ bound puzzle pieces $\bW^a$ and $\bW^b$. 

Observe that $\bR^{-1}$ and $\bR^a$ meet at $\alpha(1/4,\bH^0)$ while  $\bR^0$ and $\bR^b$ meet at $\alpha(3/4,\bH^0)$. This implies that $\bW=\bW^1\cup \bW^a\cup \bW^b$ and $\bF^{Q(\rr)}_\rr$ maps $\intr \bW^a\cup \intr \bW^a$ to $\C\setminus  (\bW\cup \bR^{0})$ as required.
\end{proof}

Fix an open neighborhood $\bO\coloneqq \Disk(\eta)$ of $\bF_\str$ containing $\bF_\rr$. Recall from~\S\ref{ss:FatComp of bFstr}  that $c_s(\bF_\str)\in \bF_\str^{-S}(0)$ denotes the unique critical point in $\partial \bZ_\str$ of generation $S$.  By Lemma~\ref{lem:discr of dyn}, $\bG^P(0)$ does not collide with $0$ for small $P$ and for $\bG\in \bO$. Therefore, for small $P$ the set $\displaystyle{\bigcup_{S\le P} \bG^{-S}(0)}$ moves holomorphically with $\bG\in \bO$. For $\bG\in \bO$, we denote by $c_S(\bG)$ the image of $c_S(\bF_\str)$ under the holomorphic motion.

For every $S<Q(\rr)$, there is a unique $i=i(S)$ such that $\bF^{Q(\rr)-S}_\rr$ maps univalently $ \bW$ onto $\bW(i)$. We set $\bW^1(i)\coloneqq \bF_\rr ^{Q(\rr)-S}(\bW^1)$. Then 
\begin{equation}
\label{eq:bW^1 i to bW}
\bF^S_\rr\colon \bW^1\big(i(S)\big) \to \bW
\end{equation} is a two-to-one map. 
 
\begin{lem}
\label{lem:c_S is correct}
For a small $S>0$, the point $c_S(\bF_\rr)$ is the unique critical point of~\eqref{eq:bW^1 i to bW}.
\end{lem}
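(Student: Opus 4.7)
The map $\bF^S_\rr\colon\bW^1(i(S))\to\bW$ is a two-to-one branched covering between topological disks, so by the Riemann--Hurwitz formula and Lemma~\ref{lem:unique crt pnt} it admits a unique critical point (of degree two) and a unique critical value. The first step is to pin down the critical value combinatorially. By Lemma~\ref{lem:crit pts}, this critical value lies in $\{\bF^Q_\rr(0)\mid 0\le Q<S\}\cap\bW$. By Theorem~\ref{thm:parab prepacm}, distinct petals $\bH^i$ lie in pairwise disjoint puzzle pieces $\bW(i)$, so $\bW=\bW(0)$ meets $\bigcup_i\bH^i$ only in $\bH^0$; and since $Q(\rr)$ is the period of $\bH^0$, the orbit of $0$ returns to $\bH^0$ only after $Q(\rr)$ steps. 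For $0<Q<S<Q(\rr)$ the point $\bF^Q_\rr(0)$ thus lies in a petal $\bH^{i_Q}\subset\bW(i_Q)\ne\bW$; only $Q=0$ (giving $0\in\bH^0\subset\bW$) survives. Hence the critical value equals $0$, and the unique critical point is a preimage of $0$ under $\bF^S_\rr$ of generation exactly $S$ lying in $\bW^1(i(S))$.

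It remains to match this critical point with $c_S(\bF_\rr)$. By equivariance of the holomorphic motion $\tau$ of Lemma~\ref{lem:esc set moves hol} and Corollary~\ref{cor:hol mot of preimages}, $c_S(\bF_\rr)$ is itself a preimage of $0$ under $\bF^S_\rr$ of generation $S$; so it suffices to show that $c_S(\bF_\rr)$ actually lies inside $\bW^1(i(S))$. For this I would lift to generation $Q(\rr)$: the conjugacy $\bbh_0$ of~\eqref{eq:bHi:conj} identifies the unique critical point of $\bF^{Q(\rr)}_\rr\colon\bH^0\to\bH^0$ as the (necessarily unique) preimage of $0$ of generation $Q(\rr)$ inside $\bH^0$, since $\bF^{Q(\rr)}$ carries each petal into itself and $0\in\bH^0$. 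On the Siegel side, Lemma~\ref{lem:irr rot of bZ} gives the rotation identity $\bF^{Q(\rr)-S}_\str(c_{Q(\rr)}(\bF_\str))=c_S(\bF_\str)$ on $\partial\bZ_\str$. Extending the holomorphic motion of the finite-generation preimages of $0$ along a path in $\Unst\cap\bO$ from $\bF_\str$ to $\bF_\rr$, I would obtain $c_{Q(\rr)}(\bF_\rr)\in\bH^0$ together with the equivariant identity $\bF^{Q(\rr)-S}_\rr(c_{Q(\rr)}(\bF_\rr))=c_S(\bF_\rr)$. Since $\bF^{Q(\rr)-S}_\rr(\bW^1)=\bW^1(i(S))$ and $c_{Q(\rr)}(\bF_\rr)\in\bH^0\subset\bW^1$, this places $c_S(\bF_\rr)$ inside $\bW^1(i(S))$, which by the first step is enough to conclude that $c_S(\bF_\rr)$ is the unique critical point of $\bF^S_\rr\colon\bW^1(i(S))\to\bW$.

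The main technical obstacle will be the extension of the holomorphic motion up through generation $Q(\rr)$, which is not small in general, together with the verification that the moved point $c_{Q(\rr)}(\bF_\rr)$ lands in the specific petal $\bH^0$ rather than on its boundary or in a different component of the basin. Both points rely on the non-periodicity of the critical orbit of $\bF_\rr$ (which is an open condition precluding collisions in $\bigcup_{Q\le Q(\rr)}\bG^{-Q}(0)$ along the motion) and on choosing the neighborhood $\bO$ small enough that, for $\bF_\rr$ sufficiently close to $\bF_\str$, the continuous motion of $c_{Q(\rr)}(\bF_\str)\in\partial\bZ_\str$ sends it into $\bH^0$; this is where the hypothesis that $S$ is small is used.
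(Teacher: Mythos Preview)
Your first step---identifying the critical value of $\bF^S_\rr\colon\bW^1(i(S))\to\bW$ as $0$ via the petal structure---is correct and clean. The gap is in the second step, and you have correctly located it yourself: extending the holomorphic motion of $\bigcup_{Q\le P}\bG^{-Q}(0)$ from small $P$ up to $P=Q(\rr)$ is not justified, and the workaround you sketch does not close. The motion of preimages breaks down whenever $\bG^Q(0)=0$ for some $0<Q\le Q(\rr)$ along the path, i.e.\ at postcritically finite parameters of period $\le Q(\rr)$; non-periodicity at the endpoint $\bF_\rr$ does not rule these out along the way. Shrinking $\bO$ does not help either: to keep $\bF_\rr\in\bO$ one must take $\rr$ closer to the irrational $\theta_\str$, which forces $\qq\to\infty$ and hence $Q(\rr)\to\infty$, so the required generation grows rather than shrinks. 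Thus there is no regime in which your scheme of tracking $c_{Q(\rr)}$ and then pushing forward by $\bF^{Q(\rr)-S}$ can be made to work as stated.

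The paper avoids this altogether by never leaving the small-generation regime. Instead of following discrete preimages of $0$, it follows an \emph{arc} $\bJ_0$ from $0$ to the boundary point $\balpha$ (the globalized $\alpha$-fixed point), chosen so that $\bJ_0(\bG)$ moves holomorphically and stays disjoint from $\CV(\bG^S)$ for small $S$ (this uses \eqref{eq:CV are away from 0}). Among the lifts of $\bJ_0$ under $\bF_\str^S$ there is a unique one that again lands at $\balpha$ (rather than at a finite alpha-point), and for $\bF_\str$ this lift starts at $c_S(\bF_\str)$; the landing behavior at $\balpha$ is the combinatorial marker that singles out the correct preimage, in place of your large-generation tracking. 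Transporting to $\bF_\rr$, one then checks that $\bJ_0(\bF_\rr)$ is homotopic rel $\CV(\bF_\rr^S)$ and a neighborhood of $\balpha\cup\{0\}$ to a curve inside $\bW$; this is easy because the arc was engineered to enter the petal $H^0$ near $\alpha$, and for small $S$ the finitely many critical values $\CV(\bF_\rr^S)\setminus\{0\}$ are far from $0$. The homotopy lifting property then places the starting point of the lifted arc---which is $c_S(\bF_\rr)$ by holomorphic motion at generation $S$---inside $\bW^1(i(S))$. The point is that $\balpha$ serves as a stable topological anchor, so one never needs to control preimages beyond generation $S$.
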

\begin{proof}
The proof will be similar to the argument in Claim 1 of the proof of Theorem~\ref{thm:parab prepacm}. Applying $\RR$, we can assume that $\bF_\rr$ is in a small neighborhood $\bO$ of $\bF_\str$. For $\bG\in \bO$, denote by $L_g$ the unique affine map mapping $\alpha(f_\str)$ and $c_1(f_\str)$ to $\alpha(g)$ and $c_1(g)$ respectively.

Let $J(f_\str)\colon (0,1)\to Z_\str$ be a simple arc connecting the critical value $c_1(f_\str)$ and $\alpha$ (i.e.~$J$ lands at $c_1$ and $\alpha$) such that
\begin{enumerate}

\item $L_{f_\rr}(J(f_\str))$ intersected with a small neighborhood of $\alpha(f_\rr)$ is within $H^0_0$; and\label{cond:1:lem:c_S is correct}
\item $J(f_\str)\subset V\setminus \gamma_1$, possibly up to a slight rotation of $\gamma_1.$
\end{enumerate}
 Let $\bJ_0(\bF_\str)$ be the lift of $J(f_\str)$ via $\bS\simeq V\setminus \gamma_1$. Then $\bJ_0(\bF_\str)$ is an arc connecting from $0$ and $\balpha$ such that $\bJ_0\subset \bZ_\str$. For $g$ close to $f_\str$, we set $J(g)\coloneqq  L_g (J(f_\str))$. And we define $\bJ_0(\bG)$ to be the lift of $J(g)$ via $\bS\simeq V\setminus \gamma_1$. We obtain a holomorphic motion of $\bJ_0(\bG)$ in $ \bO$. By~\eqref{eq:CV are away from 0}, $\bJ_0(\bG)$ does not collide with $\CV\big(\bG^S\big)$ for a small $S$. Therefore, we also have a holomorphic motion of \[\widetilde \bJ(\bG) \coloneqq\displaystyle{ \bigcup_{S\le P}} \bG^{-S} \big( \bJ_0(\bG) \big).\]
 
 By Lemma~\ref{lem:preim of alpha}, there is a unique lift $\bJ_S(\bF_\str)\subset \widetilde \bJ$ of $\bJ_0(\bF_\str)$ under $\bF^S_\str$ such that $\bJ_S$ ends at $\balpha$, where $S$ is small. Moreover, $\bJ_S(\bF_\str)\subset\bZ_\str$ and $\bJ_S(\bF_\str)$ starts at $c_S$. Let $\bJ_S(\bF_\rr)$ be the image of $\bJ_S(\bF_\str)$ under the holomorphic motion.  We need to show that $\bJ_S(\bF_\rr)$  starts at the unique critical point of~\eqref{eq:bW^1 i to bW}. This follows from the following claim (using the homotopy lifting property).

\emph{Claim.} Let $N$ be a small neighborhood of $\balpha\cup \{0\}$. For a small $S>0$ the arc $\bJ_0(\bF_\rr)$ is homotopic rel $N\cup \CV\big(\bF_\rr^S\big)$ to a curve within $\bW$.
\begin{proof}[Proof of the Claim]
It follows from Condition~\eqref{cond:1:lem:c_S is correct} that $\bJ_S(\bF_\rr)\cap N\subset \bW$ for a small neighborhood $N$ of $\balpha\cup \{0\}$. Let $\bJ'_S$ be a simple arc in $\bW$ such that $\bJ'_S\cap N=\bJ_S \cap N$. For a small $S$, the set $\CV\big(\bF_\rr^S\big)\setminus \{0\}$ is far from $0$, thus $\bJ_S$ is homotopic to $\bJ'_S$ as required.
\end{proof}

This completes the proof of Theorem~\ref{thm:parab prepacm}.
\end{proof}

\subsection{Secondary parabolic prepacman $\bF_{\rr,\ss}$}
\label{ss:SecParPacm}
Since $f_\rr$ has $\qq$ attracting petals at $\alpha$, there is a small neighborhood $\UU$ of $f_\rr$ such that $\alpha(f_\rr)$ splits into the fixed point $\alpha(g)$ and a $\qq$-periodic cycle $\gamma (g)$ for $g\in \UU\setminus \{f_\rr\}$. Moreover, we can assume that the multiplier of $\gamma(g)$ parametrizes $\UU$, possibly by shrinking $\UU$. We also denote by \[\bUU\coloneqq \{\bG\mid g\in \UU\}\subset \Unst\]
the corresponding neighborhood of $\bF_\rr$.

For $\bG\in \bUU$ we denote by $\bgamma(\bG)$ the full lift of $\gamma(g)$ to the dynamical plane of $\bG$. More precisely, choose a point $\bgamma_0(g)$ in $\bgamma(g)$ and let $\bgamma_0(\bG)$ be the lift of $\gamma_0(g)$ to $\bS\simeq V\setminus \gamma_1$. Then $\bgamma(\bG)$ is the full orbit of $\bgamma_0$. Every point in  $\bgamma(\bG)$ is $Q(\rr)$-periodic.

Let us consider a path $g_{t}\in \UU$ with $t\ge 0$ emerging from $f_\rr=g_0$ such that $\gamma(g_t)$ is attracting and $\alpha(g_t)$ is on the boundary of the immediate attracting basin of $\gamma(g_t)$ for $t>0$. All $\bG_t$ with $t>0$ are conjugate by qc maps that are conformal on the Julia sets. We denote by $\HH_\rr$ the set of $\bG\in \Unst$ obtained by a qc deformation of $\bG_t$ changing the multiplier of $\bgamma$, see~\S\ref{ss:QC deform}. We say that $\HH_\rr$ is the \emph{primary satellite hyperbolic component attached to $\bF_\rr$}.

The external rays $\bR^i$ (see Theorem~\ref{thm:parab prepacm}) still land at $\balpha$ for all $\bG_t$. Therefore, the first renormalization map~\eqref{eq:ren map:par} exists for all $\bG_t$ as well as for all $\bG\in \HH_\rr$.

 By appropriately shrinking $\UU$ (and respectively $\bUU$) we can assume that the set of pacmen $g\in \UU$ with non-repelling $\gamma(g)$ is connected. In particular, every $g\in \UU$ with attracting $\gamma(g)$ is contained in $\HH_\rr$. 
 
 Consider a rational number $\ss=\pp_{\ss}/\qq_{\ss}>0$. If $\ss$ is close to $0$, then there is a unique parabolic prepacman $\bF_{\rr,\ss}\in \bUU\cap \partial \HH_\rr$ such that the multiplier of $\gamma(f_{\rr,\ss})$ is $\ee(\ss)$.

Consider a point $\bgamma_0$ in the periodic cycle $\bgamma(\bF_{\rr,\ss})$. An \emph{attracting flower} at $\bgamma_0$ is an open set $\bUpsilon_0$ such that
\begin{itemize}
\item $\bUpsilon_0 \cup \{\bgamma_0\}$ is connected; 
\item $\bF_{\rr,\ss}^{Q(\rr)}(\bUpsilon_0 )\subset \bUpsilon_0 $; and
\item all points in $\bUpsilon_0 $ are attracted by $\bgamma_0$ under the iterations of $\bF_{\rr,\ss}^{Q(\rr)}$. 
\end{itemize}

A connected component of $\bUpsilon_0 $ is called a \emph{petal}. Petals are permuted by $\bF_{\rr,\ss}^{Q(\rr)}$ and every petal is $Q(\rr,\ss)\coloneqq \qq_\ss Q(\rr)$ periodic. The flower $\bUpsilon_0 $ contains $m\qq_s$ petals; we will show in Lemma~\ref{lem:enum per comp of Frs} that $m=1$. 

We denote by $\bH$ the full orbit of $\bUpsilon_0$. Clearly, every connected component of $\bH$ is a Fatou component, see Figure~\ref{Fig:LimRabbit}.

\begin{figure} 
\centering{\begin{tikzpicture}
\node at (0,0){\includegraphics[scale=0.56]{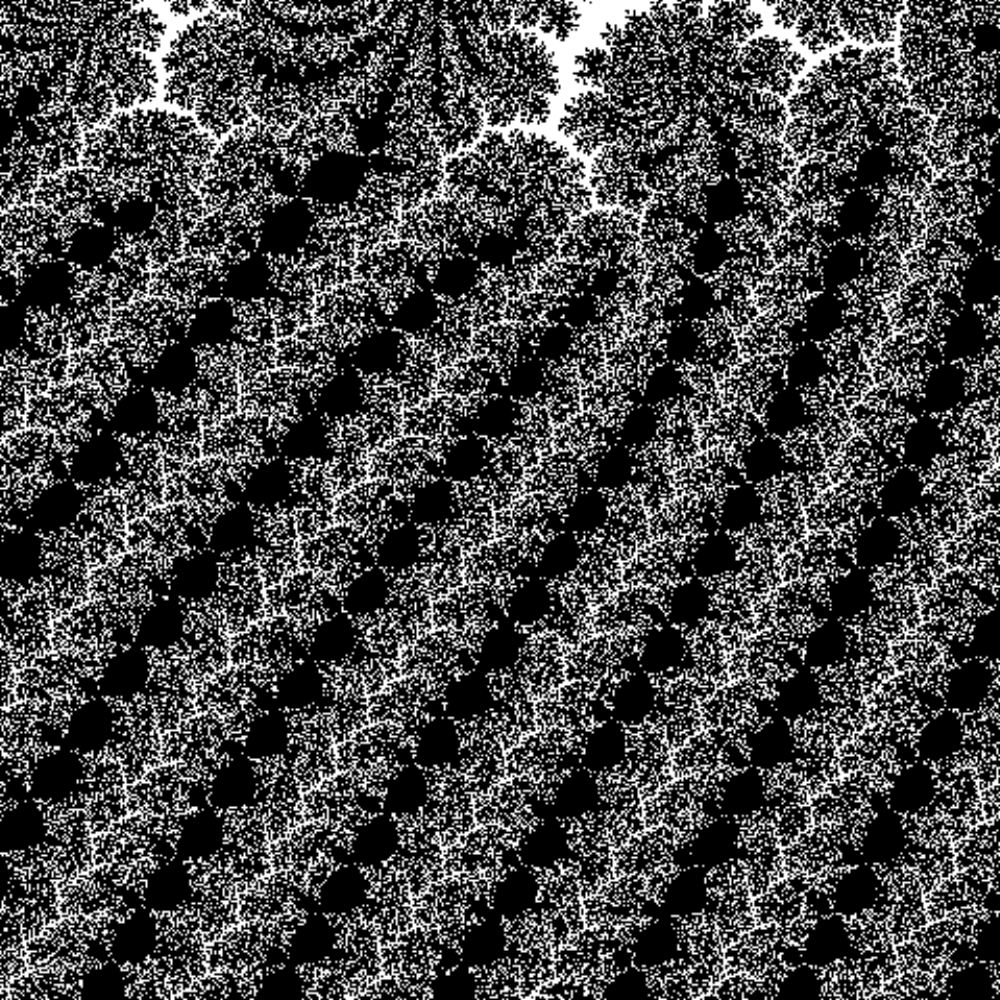}};
  \node at (0,-10.5){\includegraphics[scale=0.56]{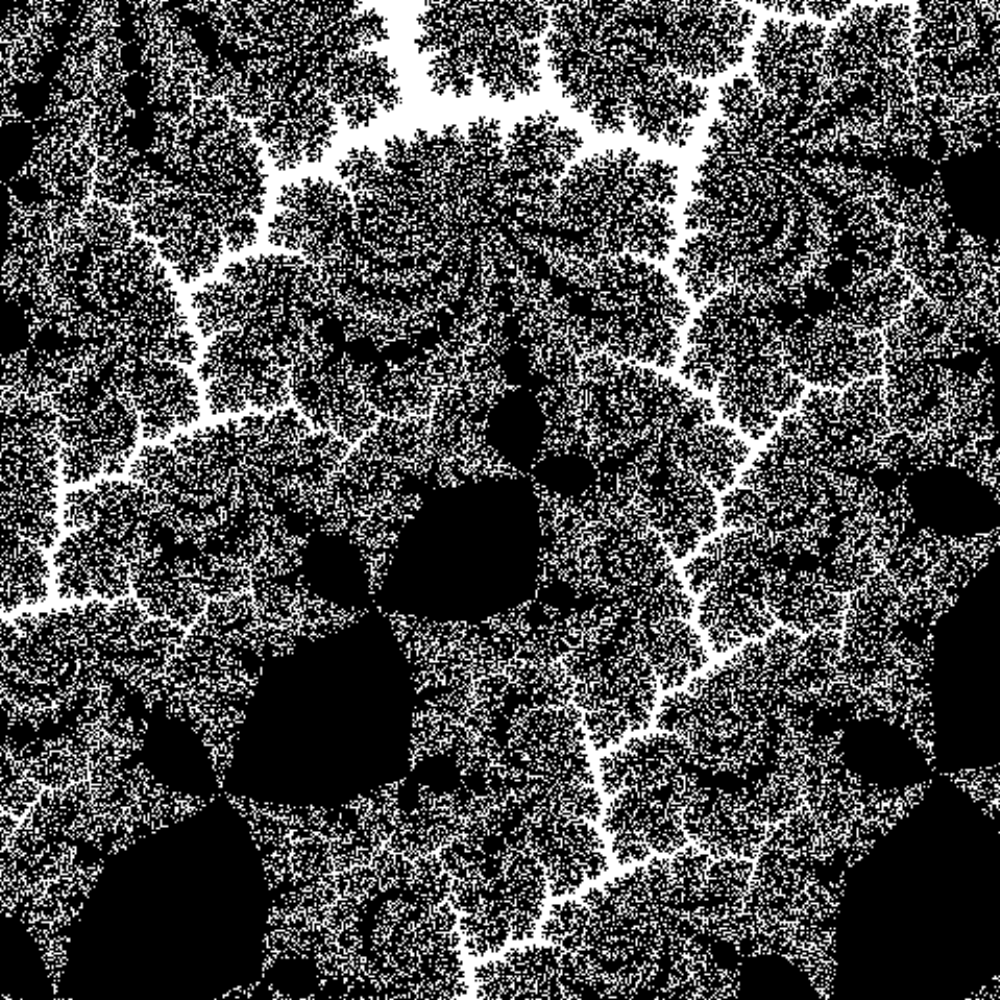}};

\begin{scope}[ line width=1pt,blue]
\draw[red]  (-0.26,-11.2) edge[->,bend right]  (0.65,-10.4)
 (0.65,-10.23) edge[->,bend right]  (0.28,-9.7)  
 (0.1,-9.7) edge[->,bend right] (-0.4,-11.) ; 
 
\node[red,right,scale=1.6] at (0.65,-10.4){$\bH_{\mathbf 0}$};

 \end{scope}
 \end{tikzpicture}}
\caption{The Rabbit maximal prepacman. There are three Fatou components attached to $\bgamma_0$. These components are cyclically permuted under the first return map. Note that there is a ``spiraling'' at the $\balpha=\text{``-$\infty i$''}$-fixed point.}
\vspace{128in} \label{Fig:LimRabbit}
\end{figure}

\begin{lem}
\label{lem:enum per comp of Frs}
The set $\bH$ has a periodic component $\bH^0$ containing $0$.  Moreover, $\bH$ has a unique cycle of periodic components. 

By re-enumerating points in $\bgamma$, we assume that $\bH^0$ is attached to $\bgamma_0\in \bgamma$. There are $\qq_s$ components $\bH^0,\bH^1,\dots, \bH^{\qq_\ss-1}$ of $\bH$ such that $\bH^i$ are attached to $\bgamma_0$ counting counterclockwise. The components $\bH^i$ are cyclically $\pp_s/\qq_s$-permuted under $\bF^{Q(\rr)}_{\rr,\ss}$.
 \end{lem}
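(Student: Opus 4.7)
The plan is to combine parabolic perturbation theory with the single critical orbit property of the cascade. First, to locate $\bH^0$, I would approach $\bF_{\rr,\ss}$ from inside $\HH_\rr$: by construction of $\HH_\rr$ there is a path $\bG_t \to \bF_{\rr,\ss}$ with $\gamma(g_t)$ attracting. For each such $\bG_t$, the critical point $0$ is attracted by the cycle $\bgamma(\bG_t)$. Indeed, the starting point $\bG_0$ is a small perturbation of $\bF_\rr$ along the direction in which $\alpha(\bF_\rr)$ splits into $\alpha(g_t)$ plus an attracting cycle $\gamma(g_t)$, and on $\bF_\rr$ itself one has $0 \in \bH^0(\bF_\rr)$ by Theorem~\ref{thm:parab prepacm}, so the immediate basin of $\gamma(g_t)$ near $\bgamma_0(g_t)$ contains the image of $0$ (which moves continuously) by the standard parabolic implosion picture. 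The subsequent qc deformation defining $\HH_\rr$ is conformal on the Julia set and therefore preserves the property that $0$ is attracted by $\gamma$. Passing to the limit $t \to 0$ with $\bG_t \to \bF_{\rr,\ss}$, the forward orbit of $0$ under $\bF_{\rr,\ss}^{\ge 0}$ accumulates on $\bgamma(\bF_{\rr,\ss})$, and a suitably small attracting petal at some $\bgamma_0 \in \bgamma$ swallows the tail of that orbit. That petal (or its Fatou component in $\bH$) will be taken as $\bH^0$.

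Next, to establish uniqueness of the cycle of periodic components of $\bH$, I would invoke Lemma~\ref{lem:crit pts}: the cascade $\bF_{\rr,\ss}^{\ge0}$ has a single grand orbit of critical values, namely $\bigcup_{P \in \PT} \bF^P_{\rr,\ss}\{0\}$. The standard Fatou argument adapts verbatim to the $\sigma$-proper setting: any cycle of attracting or parabolic Fatou components must contain at least one point of the postcritical set $\Post(\bF_{\rr,\ss})$ in its immediate basin (otherwise one could iterate the inverse branches to obtain an infinite family of univalent inverse branches, contradicting a Montel-type argument combined with the $\sigma$-proper structure). Since $\Post(\bF_{\rr,\ss})$ is a single forward orbit already absorbed by the cycle through $\bH^0$, there can be no other cycle; this yields the second claim.

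For the count of petals at $\gamma_0$, I would apply the classical Leau--Fatou flower theorem to the local germ $\bF_{\rr,\ss}^{Q(\rr)}$ at $\gamma_0$. Its multiplier is $\ee(\ss) = e^{2\pi i \pp_\ss/\qq_\ss}$ by construction, so locally the $\qq_\ss$th iterate $\bF_{\rr,\ss}^{Q(\rr,\ss)}$ is tangent to the identity and has some positive multiple $k\qq_\ss$ of attracting petals, which are cyclically $\pp_\ss/\qq_\ss$-permuted by $\bF_{\rr,\ss}^{Q(\rr)}$ into $k$ cycles of length $\qq_\ss$. Each of those $k$ cycles is a cycle of periodic components of $\bH$ in its own right, so by the uniqueness established above $k=1$. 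Enumerating the $\qq_\ss$ petals counterclockwise around $\gamma_0$ and re-labeling $\bgamma$ so that $\bH^0$ sits at $\gamma_0$, we obtain the advertised $\bH^0, \bH^1, \dots, \bH^{\qq_\ss-1}$ with rotation $\pp_\ss/\qq_\ss$.

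The main obstacle is Step~1: verifying rigorously that $0$ lands in an \emph{attracting} petal of $\gamma$ rather than being trapped on the repelling manifold of $\balpha(\bF_{\rr,\ss})$ or on $\partial \bH$. The cleanest way I foresee is to use the puzzle piece $\bW$ from Theorem~\ref{thm:parab prepacm} (which persists in a whole neighborhood $\bUU$ of $\bF_\rr$ by holomorphic stability of the bounding rays via Lemma~\ref{lem:esc set moves hol}) together with Lemma~\ref{lem:c_S is correct}, which pins down the motion of the critical point $0$ inside $\bW$, so that the orbit of $0$ remains in $\bW \setminus \overline{\bgamma}$ and is therefore forced into an attracting petal once $\ss \ne 0$.
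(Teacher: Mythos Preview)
Your Steps~2 and~3 are essentially what the paper does, but your Step~1 is unnecessary work. The paper's proof is a two-line classical argument that handles your Steps~1 and~2 simultaneously: take any periodic component $\bH'$ of $\bH$; if its forward cycle does not contain $0$, then the first return map $\bF_{\rr,\ss}^{Q(\rr,\ss)}\colon \bH'\to \bH'$ has no critical values (by Lemma~\ref{lem:crit pts}), so the local Fatou coordinate extends to a conformal isomorphism $\bH'\to\C$, contradicting $\bH'\subsetneq\C$. Hence every periodic cycle of $\bH$ contains $0$, which in one stroke gives both existence of $\bH^0\ni 0$ and uniqueness of the cycle.

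You recognized this argument in your Step~2 (phrased via inverse branches and Montel rather than via Fatou coordinate extension, which is equivalent), but did not notice that it already delivers Step~1. Your entire perturbation-theoretic route through $\HH_\rr$, parabolic implosion, and the persistence of the puzzle piece $\bW$ can be dropped; in particular the ``main obstacle'' you worry about at the end simply does not arise. The lesson is that once you know the single-critical-orbit Fatou argument forces every periodic basin cycle to capture $0$, there is nothing left to locate.
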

\begin{proof}
Follows from a classical argument. Let $\bH'$ be a periodic component of $\bH$. If the forward orbit of $\bH'$ does not contain $0$, then the local Fatou coordinates of $\bH'$ can be extended to a conformal map between $\bH'$ and $\C$. This contradicts to $\bH'\subsetneqq\C$.

As a consequence, $\bH$ has a unique cycle of periodic components. The second statement follows from the fact that $\ee(\ss)$ is the multiplier of $\bgamma$.
\end{proof}

\begin{figure}[tp!]  
\centering{\begin{tikzpicture}
\begin{scope}[scale=0.7]
\draw (-2.,0.)-- (-2.,3.);
\draw (-2.,3.)-- (1.,3.);
\draw (1.,3.)-- (1.,0.);
\draw (1.,3.)-- (1.,3.58);
\draw (1.,3.58)-- (3.58,3.56);
\draw (3.58,3.56)-- (3.62,0.);
\draw (3.58,3.56)-- (3.58,3.98);
\draw (3.58,3.98)-- (6.38,4.);
\draw (6.38,4.)-- (6.42,0.);
\draw (-2.,3.)-- (-2.017159667882821,3.3905028845801137);
\draw (-2.017159667882821,3.3905028845801137)-- (-4.24,3.38);
\draw (-4.345349585700933,0.)-- (-4.24,3.38);
\draw (-4.24,3.38)-- (-4.296845629079723,4.231238132681097);
\draw (-4.296845629079723,4.231238132681097)-- (-7.207083026352363,4.2797420893023075);
\draw (-7.207083026352363,4.2797420893023075)-- (-7.207083026352363,0.);
\draw[red] (-0.6913848535697298,2.549767636479132)-- (-0.6913848535697298,5.330661149428534);
\draw[red] (-0.6913848535697298,5.330661149428534)-- (-2.69621506057977,4.554597843489166);
\draw[red] (-2.69621506057977,4.554597843489166)-- (-3.0842467135494553,2.905463318368009);
\draw[red] (-0.6913848535697298,5.330661149428534)-- (0.24635830777367618,6.721107905903234);
\draw[red] (0.24635830777367618,6.721107905903234)-- (1.8308208907332242,5.233653236186113);
\draw[red] (1.8308208907332242,5.233653236186113)-- (2.2835244858645236,2.67911152080236);
\draw[red] (0.24635830777367618,6.721107905903234)-- (0.3110302499352904,8.305570488862777);
\draw[red] (0.3110302499352904,8.305570488862777)-- (4.547042461521021,5.68635683131741);
\draw[red] (4.547042461521021,5.68635683131741)-- (5.258433825298777,2.970135260529623);
\draw[red] (0.3110302499352904,8.305570488862777)-- (0.3110302499352904,9.404993505610214);
\draw[red] (0.3110302499352904,9.404993505610214)-- (-5.15374886272111,6.2684043107719365);
\draw[red] (-5.15374886272111,6.2684043107719365)-- (-5.6711244000140235,3.131815115933658);
\draw[red] (0.3110302499352904,9.404993505610214)-- (1.7014770064099958,10.084048898307161);
\draw (6.38,4.)-- (6.390192813127025,4.683941727812393);
\draw (7.748303598520923,4.683941727812393)-- (6.390192813127025,4.683941727812393);
\node[red] at (-0.3,2.5) {$y_0$};
\node at (-0.5,1.6) {$\Delta_0(0)$};
\node at (-3.24,2.2) {$\Delta_0(-1)$};
\node at (-5.8,2.5) {$\Delta_0(-2)$};
\node at (2.2, 2.2)  {$\Delta_0(1)$};
\node at (5.2, 2.4)  {$\Delta_0(2)$};

\draw[blue] 
(4.5,5)
 .. controls (4.4, 3.7) and (4.4, 4.3)..
(4, 3) .. controls (3, 0.8) and (1, 1.5).. (0,0); 
\node[blue] at (2,0.8) {$R_i$};

\end{scope}
\end{tikzpicture}  }

\caption{Illustration to the proof of Theorem~\ref{thm:RatRays land}. The cycle of periodic rays $R_y$ (red) lands at a repelling periodic cycle $y$ that is within the triangulation $\bDelta_0$. The ray segment $R_i$ (blue) does not intersect $R_y$. Either the hyperbolic diameter of $R_i$ decreases, or $R_i$ shrinks to the cycle $\byy$, or $R_i$ tends to $\balpha$. }
\label{fig:lem:RatRays land}
\end{figure}

\subsection{Landing of dynamic rays}  
\begin{thm}
\label{thm:RatRays land}
Suppose $\bG\in \Unst$ has a parabolic or attracting periodic point ${x_0\in \C}$ and suppose that $\alpha(g)$ is repelling. Then 
every rational ray of $\bG$ lands. 
\end{thm}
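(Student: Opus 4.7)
The plan is to reduce preperiodic rays to the periodic case by univalent pullback, and then handle periodic rays using hyperbolic contraction. For a strictly preperiodic ray $\bR$ with $\bG^T(\bR)$ periodic landing at $y \in \C$, I would pick a small neighborhood $\bU$ of $y$ disjoint from the discrete set $\{\bG^S(0): S < T\}$ (Lemma~\ref{lem:discr of dyn}) and use the $\sigma$-proper structure to pull back $\bU$ univalently along the orbit of $\bR$, forcing $\bR$ to land at a single point of $\bG^{-T}(y)$.

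For a periodic ray $\bR$ of minimal period $P$, let $Y \subset \C$ denote the accumulation set at the inner end; then $Y$ is nonempty, compact, connected, contained in $\Jul(\bG)$, and $\bG^P$-invariant. The goal is to show $|Y|=1$. The holomorphic motion of Lemma~\ref{lem:esc set moves hol} transfers the uniquely geodesic structure of $\Esc(\bF_{\str})$ to $\Esc(\bG)$, so $Y$ cannot transversally cross a wake boundary. If $Y$ is trapped in a nested sequence of wakes, the shrinking argument of Lemma~\ref{lem:nest wakes shrink} applies (since wake combinatorics is preserved by the holomorphic motion) and yields $|Y|=1$; if $Y$ meets a bubble boundary $\partial \bZ_s$, density of critical points on $\partial \bZ_s$ combined with the shrinking of subwakes similarly forces $|Y|=1$.

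The remaining and hardest case is when $Y$ avoids the $\partial \bZ_s$ and is not confined to shrinking wakes. Here I would construct a $\bG^P$-invariant hyperbolic surface $\bX \subset \C$ containing $Y$ on which $\bG^P$ acts as a covering. If $x_0$ is attracting, take $\bK$ to be a small closed forward-invariant neighborhood of the attracting cycle that absorbs the critical orbit, together with a closed neighborhood of $\balpha$ (which makes sense via the wall topology of \S\ref{ss:balpha:wall topoloy}), and set $\bX = \C \setminus \bK$. If $x_0$ is only repelling, use instead a small neighborhood of $\overline{\Post(\bG)} \cup \{\balpha\}$; boundedness of $\overline{\Post(\bG)}$ in $\C$ follows from an analog of Lemma~\ref{lem:limbs are bounded}, and the repelling hypothesis on $\alpha(g)$ is what lets us absorb $\balpha$ into $\bK$ while keeping the strict inclusion $\bG^{-P}(\bX) \subsetneq \bX$. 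By Schwarz--Pick, $\bG^P$ then expands the hyperbolic metric on $\bX$, and invariance $\bG^P(Y) = Y$ forces $Y$ to be a single point. The main obstacle will be the repelling-only case: one must separate $Y$ from the closure of the postcritical set (so $Y \subset \bX$), which I expect to handle via a Mañé-style argument exploiting the repelling nature of $\alpha(g)$ together with the observation that $Y \subset \Jul(\bG)$ is not recurrently hit by the critical orbit in this subcase.
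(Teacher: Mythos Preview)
Your approach has a fundamental gap in its use of the wake machinery. The entire bubble/wake structure of \S\ref{s:Dyn F_str}---including Lemma~\ref{lem:nest wakes shrink}---is proved only for $\bF_\str$, and relies essentially on its Siegel disk $\bZ_\str$ (nested wakes shrink because $\bF_\str^P$ expands the hyperbolic metric of $\bO=\C\setminus\overline{\bZ_\str}$). For a general $\bG\in\Unst$ there is no Siegel disk, hence no bubbles $\bZ_s$ and no wakes; the holomorphic motion of Lemma~\ref{lem:esc set moves hol} transports $\Esc_P$ but neither creates Fatou components nor transfers metric shrinking statements. So your trichotomy (``trapped in nested wakes'' / ``meets $\partial\bZ_s$'' / ``remaining'') never gets started for $\bG\neq\bF_\str$. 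A secondary gap: you take for granted that the accumulation set $Y$ is compact in $\C$, but a periodic ray of $\bG$ may well land at $\balpha$ (i.e.\ escape to $\infty$ in the wall topology), and ruling out uncontrolled escape to $\infty$ is exactly where the real work lies.

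The paper's argument is quite different and uses only structures that survive perturbation from $\bF_\str$ to $\bG$: the renormalization triangulation $\bDelta_0(\bG)$, together with an auxiliary repelling cycle $\byy\subset\bDelta_0$ and its landing ray cycle $\bR_\byy$ (existing for $\bF_\str$ by Corollary~\ref{cor:ext rays land} and stable under perturbation). The cycle $\bR_\byy$ blocks escape to $\infty$ outside $\bDelta_0$, while inside $\bDelta_0$ the near-translation structure of $(\bbg_-,\bbg_+)$ forces any escape to be toward $\balpha$. The hyperbolic surface is $\bX=\C\setminus\bB$ where $\bB$ is built from the \emph{attracting} (in the proof, parabolic) basin of $x_0$ and contains $\Post(\bG)$---this is why that hypothesis matters, and the proof does not actually treat a merely repelling $x_0$; your attempt to cover it via ``boundedness of $\overline{\Post(\bG)}$, an analog of Lemma~\ref{lem:limbs are bounded}'' again invokes $\bF_\str$-specific structure. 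Contraction is obtained not by a global Schwarz--Pick argument on $Y$ but by showing that whenever a rectifiable representative $\bR_i'$ of a ray segment (taken in $\bX\setminus\bR_\byy$) meets $\bDelta_0$ away from small neighborhoods of $\bxx$ and $\balpha$, the first-return structure of $\bDelta_0$ forces a definite drop in hyperbolic length; the remaining cases (eventually near $\balpha$, eventually near $\bxx$, or trapped in a bounded component of $\C\setminus(\bDelta_0\cup\bR_\byy)$) are handled separately.
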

\begin{proof}
Consider a rational ray $\bR$. Let us first give a sketch of the argument. If $\bR$ does not go to infinity, then $\bR$ lands by the expansion of $\bG$.  The infinity in $\C\setminus \bDelta_0$ is blocked by another periodic ray cycle (the red cycle in Figure~\ref{fig:lem:RatRays land}) that exists in a neighborhood of $\bF_\str$. Since $(\bbg_-,\sp \bbg_+)\mid \bDelta_0$ is a pair of almost translations, if $\bR$ goes to infinity in $\bDelta_0$, then $\bR$ tends to $\balpha$; in this case $\bR$ lands at $\balpha$.

We assume that $x_0$ is parabolic; the case when $x_0$ is attracting is similar. Denote by $\bxx$ the periodic cycle of $x_0$. Let $\bH$ be the attracting basin of $\bxx$. Since the Fatou coordinates in $\bH$ capture critical points of $\bG^{\ge 0}$, there is a unique periodic component $\bH^0$ of $\bH$ containing $0$. By re-enumerating points in $\bxx$, we can assume that $\bH^0$ is attached to $x_0$.

By Lemma~\ref{lem:unique conj}, the first return map $\bG\colon \bH^0\to \bH^0$ is conformally conjugate to $p_{1/4}\colon \Parab\to \Parab$, say via $\bbh_0\colon \bH^0\to \Parab$, where $p_{1/4}(z)=z^2+1/4$ and $ \Parab$ is the attracting basin of $\alpha(p_{1/4})$. As in~\S\ref{sss:Frr:exp metric}, let $B\subset \Parab$ be a forward invariant open topological disk containing $[1/4,1/2)$ such that $\overline B$ is a closed topological disk with 
\[\partial B\cap p_{c}(\overline B)=\alpha(p_{1/4})= \overline B \cap \partial \Parab.\] 
Set $\bB^0\coloneqq \bbh^{-1}_0(B)$; spreading around $\overline \bB^0$, we obtain
\[\bB\coloneqq \bigcup_{P\ge 0} \bG^P\big(\overline \bB^0\big) \subset \bH\cup \bxx.\]

 Consider the hyperbolic surface $\bX\coloneqq \C\setminus  \bB$. Since $\bB$ contains the postcritical set $\Post(\bG)$, we have $\bG^{-T}(\bX)\subsetneq \bX$ and 
\[
\bG^{T}\colon \bG^{-T}(\bX)\to \bX
\]expands the hyperbolic metric of $\bX$ for all $T\in \PT_{>0}$.

Let us assume that the rational ray $\bR$ is periodic with period $N\in \PT$; the preperiodic case follows from the periodic case. Let us decompose $\bR$ as a concatenation of ray segments \[ \dots  \cup \bR_{2}\cup \bR_{1}\cup \bR_0\]
such that $\bG^N$ maps $\bR_{i+1}$ to $\bR_{i}$.

Consider a renormalization triangulation $\bDelta_0(\bF_\str)$ and choose a periodic repelling point $y_0\in \Delta_0(0,1)$ such that $y_0$ does not escape $\Delta_0(0,1)$ under
\[\bF_\str^{A_0}\colon \Delta_0(0)\to \bS,\sp\sp \bF_\str^{B_0}\colon \Delta_0(1)\to \bS.\] 
(To existence of $y_0$ follows from the existence of a periodic point $y'_0$ in the dynamical plane of $f_\str$, see~\S\ref{sss:SiegPacmen}; then $y_0$ is the lift of $y'_0$.) Let $\byy(\bF_\str)\coloneqq \displaystyle\bigcup_{P\ge 0}\bF^{P}_\str
\{y_0\}$ be the periodic cycle of $y_0$. By Corollary~\ref{cor:ext rays land}, there is a cycle of periodic rays $\bR_\byy$ landing at $\byy(\bF_\str)$, see Figure~\ref{fig:lem:RatRays land}. 

Since the landing of a periodic ray at a repelling periodic point is stable,  there is a neighborhood $\YY$ of $\bF_\str$ such that 
\begin{itemize}
\item $\overline{\bR_\byy}(\bG)$ exists and depends continuously on $\bG\in \YY$; and 
\item $\byy\in \bDelta_0(\bG)$ for all $\bG\in \YY$.
\end{itemize}
By replacing $\bG$ with its antirenormalization, we can assume that $\bG\in \YY$.

Choose a sufficiently big $n\gg 0$ such that $\bR_n$ is disjoint from $\bR_\byy$. (If $\bR_n$ intersects $\bR_\byy$ for all $n$, then $\bR\subset \bR_\byy$ and the claim is trivial.) Let $\bR'_n$ be a rectifiable curve in $\bX\setminus \bR_\byy$ connecting the endpoints of $\bR_n$ such that $\bR'_n$ is homotopic (in $\bX\setminus \bR_\byy$) to $\bR_n$ rel the endpoints. For $j\ge 0$ we define $\bR'_{n+j}$ to be the lift of $\bR'_n$ under $\bG^{jN}$ such that $\bR'_{n+j}$  connects the endpoints of $\bR_{n+j}$. Clearly, $|\bR'_i|\le  |\bR'_{i-1}|,$ where ``$|~|$'' denotes the hyperbolic length in $\bX$. 

\begin{claim2} 
\label{cl2:bO of balpha}There is a neighborhood $\bO$ of $\balpha$ with the following property. If $\bR'_n\subset \bO$ for some $n$, then $\bR$ lands at $\balpha$.
\end{claim2}
\begin{proof}
Recall from \S\ref{ss:balpha:wall topoloy} that $\C\sqcup \{\balpha\}$ is endowed with the wall topology, where a neighborhood $\bO$ of $\balpha(\bG)$ is the full lift of a neighborhood $O$ of $\alpha(f)$. Since $\alpha(f)$ is repelling, we can choose a small open disk $O$ around $\alpha(f)$ such that
\begin{itemize}
\item $\partial O$ intersects the curves $\gamma_0\cup \gamma_1$ at two points; and
\item $O_2\coloneqq f(O)\Supset O$ and $O_2$ also intersects $\gamma_0\cup \gamma_1$ at two points, 
\end{itemize}
 see Figure~\ref{Fig:balpha:neigbb}. The pair $\gamma_0\cup \gamma_1$ cuts $O$ into two sectors. Lifting these sectors to the dynamical plane of $\bG$ and spreading the lifts around, we obtain a neighborhood $\bO$ of $\balpha(\bG)$ such that $\bO$ is backward  invariant. If $\bR'_n\subset \bO$, then $\bR'_{n+i}\subset \bO$ for all $i$, and, moreover, $\bR'_m$ tends to $\balpha$ as $m\to +\infty$.
 \end{proof}

\begin{figure}
\centering{\begin{tikzpicture}

\begin{scope}[shift={(0,4)}]

\draw[red](0.,0.) circle (2cm);
\draw[](0.,0.) circle (1.5cm);

\draw [rotate around={30:(0,0)}] (0,0)-- (2,2)node[left]{$\gamma_0$};
\draw[rotate around={160:(0,0)}]  (0,0)  --(2.1,2.1) node[above]{$\gamma_1$};
\node[left] at (1.5,0) {$O$};
\node[right, red] at (2,0) {$O_2$};
\node[below  ] at (0,0){$\alpha(f)$}; 

\draw (0.9,0.9) edge [->,bend right]  node[above]{$f$}(-1.3,1.3);
\end{scope}

\draw (-0.5,0) -- (1.5,0);

\draw (-0.5,0)--(-0.5,-3);
\draw (-2,0)--(-0.5,0);

\node[below] at (-0.5,-3) {$\balpha(\bG)= \text{``$-\infty i$''}$};

\draw[red] (1.5,-3)--(1.5,1)--(-2,1)--(-2,-3);

\begin{scope}[shift={(2,1)}]
\draw (-0.5,0) -- (1.5,0);
\end{scope}

\draw (3.5,1)--(3.5,-3);
\draw (3.5,1)--(4.5,1);

\begin{scope}[shift={(-3.5,0)}]

\draw (-0.5,0) -- (1.5,0);
\end{scope}

\draw (-4,0)--(-4,-3);
\draw (-4,0)--(-6,0);
\draw (-5.5,0)--(-5.5,-3);

\draw (-1.2,-1.1) edge[->, bend left ]  (0.8,0.5);
\node[left]at (-1,-0.8) {$\bbf_+$};
\draw (0.4,-1.2) edge[->, bend right ]  (-1.2,0.5);
\node[left]at (0.35,-0.8){$\bbf_-$};

\node[left]at (-2.8,-2) {$\bO$};

\end{tikzpicture}}
\caption{Top: a neighborhood $O$ of $\alpha(f)$ and its image $O_2\coloneqq f(O)$.  Bottom: the neighborhood $\bO$ of $\balpha(f)$ is obtained by lifting and spreading around $O(f)$. Note that the points in $\bO$ stay in $\bO$ under the backward iteration of the cascade $\bG^{\ge 0}$.}
\label{Fig:balpha:neigbb}
\end{figure}

\begin{claim2} 
\label{cl2:bM}
 Let $\bM$ be a sufficiently small neighborhood of $\bxx\cup\{\balpha\}$. Then there is an $\varepsilon>0$ such that the following holds. If $\bR'_i$ intersects $\bDelta_0\setminus \bM$, then  
\[|\bR'_i |  \le \max \left\{ |\bR'_{i-1}|-\varepsilon , \frac{|\bR'_{i-1}|}{1+\varepsilon} \right\} .\]
\end{claim2} 
\begin{proof}
By increasing $N$, we can assume that $N\ge 2\max\{A_0,B_0\}$. Choose a point $z\in (\bDelta_0\setminus \bM)\cap \bR'_i$. Let $A\le \max\{A_0,B_0\} $ be the first moment such that $\bG^A(z)\in \bS$.

If $\bG^A(z)\in \Delta_0(0,1)$, then either \[\bbf_+\colon \Delta_0(0)\to \bS\sp \text{ or }\sp \bbf_-\colon \Delta_0(1)\to \bS\] expands the hyperbolic length of $\bG^A(\bR'_i)$.

If $\bG^A(z)\in \bS\setminus \Delta_0(0,1)$, then $ \bS\setminus \Delta_0(0,1)\subset \Delta_{-1}(0,1)$ (see~\eqref{eq:S in Delta-1}), and either
\[\bbf^\#_{-1,+}\colon \Delta_{-1}(0)\to \bS_{-1}^\#\sp \text{ or }\sp \bbf^\#_{-1,-}\colon \Delta_{-1}(1)\to \bS^\#_{-1}\] 
expands the hyperbolic length of $\bG^A(\bR'_i)$.
\end{proof}

As a consequence of Claim \ref{cl2:bM}, either the diameters of the $\bR'_i$ shrink, or the $\bR'_i$
 are eventually in $\C\setminus (\bDelta_0\setminus \bM)$. Suppose $\bR'_i\subset \C\setminus (\bDelta_0\setminus \bM)$ for all $i\ge n$. If $\bR'_n$ is in a small neighborhood of $\balpha$, then $\bR$ lands at $\balpha$ by Claim \ref{cl2:bO of balpha}. If $\bR'_n$ is in a small neighborhood of $\bxx$, then $\bR$ lands at $\bxx$. 
 The remaining case is when all $\bR'_i$ are in some connected component of $\C\setminus (\bDelta_0\cup \bR_\byy)$ for all $i\gg 0$. Every connected component of  $\C\setminus (\bDelta_0\cup \bR_\byy)$ is bounded; by hyperbolic contraction, the hyperbolic diameters of the $\bR'_i$ tend to $0$.
\end{proof}

\begin{cor}
\label{cor:FRM exists in bVV}
The primary renormalization map~\eqref{eq:ren map:par}
\begin{equation}
\label{eq:FRM:bG}
\bG^{Q(\rr)}\colon \bW^1\to \bW.
\end{equation}
 exists for all $\bG$ close to $\bF_{\rr,\ss}$. In particular, $\bR^{-1}(\bG)$ and $\bR^0(\bG)$ land at $\balpha$.
\end{cor}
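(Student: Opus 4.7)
The plan is to propagate the construction of the primary renormalization map from $\bF_\rr$ (Theorem~\ref{thm:parab prepacm}) to a neighborhood of $\bF_{\rr,\ss}$ by a stability argument: first establish that the rays $\bR^{-1}(\bG)$ and $\bR^0(\bG)$ continue to land at $\balpha(\bG)$, then define $\bW(\bG)$ and pull back along $\bG^{Q(\rr)}\colon 0\mapsto 0$. For $\bG$ close to $\bF_{\rr,\ss}$, the fixed point $\alpha(g)$ remains repelling by openness (since $\bF_{\rr,\ss}\in \bUU\setminus\{\bF_\rr\}$), and the cycle $\bgamma(\bG)$ provides a non-indifferent periodic point in $\C$ (attracting inside $\HH_\rr$, parabolic on $\partial\HH_\rr$, repelling outside). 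So Theorem~\ref{thm:RatRays land} applies and every rational ray of $\bG$ lands.

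To identify the landing point as $\balpha(\bG)$, I would use the holomorphic motion of the escaping set from Lemma~\ref{lem:esc set moves hol}. For $\bG$ sufficiently close to $\bF_{\rr,\ss}$, the orbit of $0$ remains trapped in or near the parabolic basin of $\bgamma(\bF_{\rr,\ss})$, so the escaping time of $0$ is arbitrarily large; in particular $\bG\in \DEsc_P$ for any prescribed $P\in \PT_{>0}$, and the conformal motion $\tau$ gives a continuous deformation of $\Esc_P(\bG)$ along any small curve joining $\bF_\rr$ and $\bG$. Truncating $\bR^{-1}(\bF_\rr), \bR^0(\bF_\rr)$ at sufficiently deep alpha-points $\alpha_s$, the truncated segments move continuously in $\bG$; since these deep tails already lie in a wall-topology neighborhood of $\balpha(\bF_\rr)$, for $\bG$ close enough they remain in the corresponding neighborhood $\bO$ of $\balpha(\bG)$ constructed in Claim~2 of the proof of Theorem~\ref{thm:RatRays land}. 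Because $\bO$ is backward invariant under the cascade (open in the wall topology), the full rays $\bR^{-1}(\bG)$ and $\bR^0(\bG)$ are trapped in $\bO$ from some generation on and must land at $\balpha(\bG)$.

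With the landing at $\balpha(\bG)$ established, the puzzle pieces $\bW(i,\bG)$ are defined verbatim as in Theorem~\ref{thm:parab prepacm}, and $\bW^1(\bG)$ is the connected component of $\bG^{-Q(\rr)}(\bW(\bG))$ containing $0$. The finitely many intermediate critical values $\bG^i(0)$ with $0<i<Q(\rr)$ lie, for $\bG$ in a sufficiently small neighborhood of $\bF_{\rr,\ss}$, in the Fatou components $\bH^i(\bG)$ inside the respective $\bW(i,\bG)$ (by continuity from $\bF_\rr$, where this holds by Lemma~\ref{lem:c_S is correct} together with the fact that $\bH^i(\bF_\rr)\subset \intr\bW(i,\bF_\rr)$). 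Hence the topological degree of $\bG^{Q(\rr)}\colon \bW^1(\bG)\to \bW(\bG)$ is the same as for $\bG=\bF_\rr$, namely two, giving the required $2$-to-$1$ primary renormalization map.

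The main obstacle is preventing the $Q(\rr)$-periodic rays from switching their landing point off $\balpha(\bG)$ to a point in $\bgamma(\bG)$ or some other nearby repelling $Q(\rr)$-cycle when the parabolic fixed point of $f_\rr$ splits into $\alpha$ and $\bgamma$. This is the parabolic bifurcation subtlety that is resolved precisely by the backward-invariance of the wall-topology neighborhood $\bO$ of $\balpha$: once a deep enough segment of the ray enters $\bO$ for $\bG=\bF_\rr$, the stability of $\tau$ guarantees the same trapping persists for $\bG$ near $\bF_{\rr,\ss}$, eliminating any possibility that the landing point migrates into $\C$.
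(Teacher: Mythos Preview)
Your proposal has a genuine gap in the stability step, and the paper takes a different and cleaner route that you miss.

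The gap is the phrase ``for $\bG$ close enough they remain in the corresponding neighborhood $\bO$ of $\balpha(\bG)$.'' Close enough to what? The prepacmen $\bF_\rr$ and $\bF_{\rr,\ss}$ are \emph{not} close to each other: $\bF_\rr$ is the root of $\HH_\rr$ while $\bF_{\rr,\ss}\in\partial\HH_\rr$ is a secondary parabolic point on its boundary. So a small-perturbation argument near $\bF_\rr$ does not produce a neighborhood of $\bF_{\rr,\ss}$. If instead you mean ``along a curve from $\bF_\rr$ to $\bG$ through $\overline{\HH_\rr}$,'' then you must show that the deep truncated tail of the ray stays inside the (moving) backward-invariant wall neighborhood $\bO(\bG')$ for \emph{every} $\bG'$ on this curve, not just at the endpoints. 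You give no mechanism for this; the truncation depth needed to trap the tail could, a priori, blow up along the curve. Moreover, at $\bF_\rr$ itself $\alpha$ is parabolic, so the backward-invariant disk from Claim~1 of Theorem~\ref{thm:RatRays land} is not even constructed there.

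The paper avoids all of this by working first at $\bF_{\rr,\ss}$ itself. Theorem~\ref{thm:RatRays land} says the $Q(\rr)$-periodic ray cycle $(\bR^i)$ lands at some periodic cycle $\delta$. The key observation is a period comparison: the repelling petals at $\bgamma(\bF_{\rr,\ss})$ have period $Q(\rr,\ss)=\qq_\ss Q(\rr)>Q(\rr)$, so $\delta\neq\bgamma$. Since $\bgamma$ is the only non-repelling cycle of $\bF_{\rr,\ss}$, this forces $\delta$ to be repelling. Landing of a periodic ray at a repelling cycle is automatically stable under perturbation, so the rays land at the continuation $\delta(\bG)$ throughout a neighborhood of $\bF_{\rr,\ss}$ with no further work. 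Finally, for $\bG\in\HH_\rr$ (where all maps are qc-conjugate with conformal restriction to the escaping set) one reads off $\delta(\bG)=\balpha(\bG)$; by continuity this identification holds on the whole neighborhood. The period-comparison step is the idea you are missing; it replaces your attempted global continuation from $\bF_\rr$ by a purely local argument at $\bF_{\rr,\ss}$.
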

\begin{proof}
 In the dynamical plane of $\bF_{\rr,\ss}$, let us consider the cycle of rays from Theorem~\ref{thm:parab prepacm}. By Theorem~\ref{thm:RatRays land} the cycle $(\bR_i)$ lands at a periodic cycle of points $\delta$. We need to show that $\delta=\balpha$. 
 
 Observe that $\delta\not= \bgamma$ because the period $Q(\rr,\ss)$ of repelling petals at $\bgamma$ is greater than the period $Q(\rr)$ of $(\bR_i)_{i\in \Z}$. Therefore, $\delta$ is repelling. Since the landing at a repelling periodic cycle is stable under a small perturbation, $(\bR_i(\bG))_{i\in \Z}$ lands at $\delta(\bG)$ for $\bG$ in a small neighborhood of $\bF_{\rr,\ss}$. We obtain that $\delta(\bG)=\balpha(\bG)$ for $\bG\in \HH_\rr$, see~\S\ref{ss:SecParPacm}. Therefore, $\delta(\bG)=\balpha(\bG)$ for all $\bG$ close to $\bF_{\rr,\ss}$, and~\eqref{eq:ren map:par} exists for all $\bG$ close to $\bF_{\rr,\ss}$.
\end{proof}

\begin{figure}

\centering{\begin{tikzpicture}[red]

\node[ black,shift={(5,0)},scale=1] at(0.3,-0.25) {$\bgamma_0$};

\begin{scope}[shift={(5,0)},rotate=144, yscale=1.1, xscale =1.4,scale =0.55,blue]

\node at (4,2) {$\bR_\ell$};
\node at (4,-2) {$\bR_\rho$};

\draw (0,0) .. controls (1,1.2) and (3,1.5).. (5,1.5) .. controls (5.5,1) and (5.5,-1).. (5,-1.5) .. controls (3,-1.5) and (1,-1.2).. (0,0);

\draw[green] (2.4,1.26) 
.. controls (2.6,1) and (2.8,0.5).. 
 (2.81,0)
.. controls (2.8,-0.5) and (2.6,-1) ..  
 (2.4,-1.26) ; 
\draw (5.35,0) -- (7,0);

\node(A)[black] at (2.1,-0.7) {$\bW_{\rr,\ss}^1$};

\node(B)[black] at (4.7,1) {$\bW_{\rr,\ss}$}; 

\draw[black] (A) edge[->,bend right] node[above] {$\bF_{\rr,\ss}^{Q(\rr,\ss)}$} (B);
 
\end{scope}

\begin{scope}[shift={(5,0)},rotate=0,scale =0.4]
\draw[fill, fill opacity=0.1] (0,0) .. controls (1,0.4) and (3,0.7).. (5,0.7) .. controls (5.5,0.3) and (5.5,-0.3).. (5,-0.7) .. controls (3,-0.7) and (1,-0.4).. (0,0);
\node at (4,0){$\bH_{3}$};

\end{scope}

\begin{scope}[shift={(5,0)},rotate=72,scale =0.4]
\draw[fill, fill opacity=0.1] (0,0) .. controls (1,0.4) and (3,0.7).. (5,0.7) .. controls (5.5,0.3) and (5.5,-0.3).. (5,-0.7) .. controls (3,-0.7) and (1,-0.4).. (0,0);

\node at (4,0){$\bH_{4}$};
\end{scope}

\begin{scope}[shift={(5,0)},rotate=144,scale =0.4,yscale=0.8]
\draw[fill, fill opacity=0.1] (0,0) .. controls (1,0.4) and (3,0.7).. (5,0.7) .. controls (5.5,0.3) and (5.5,-0.3).. (5,-0.7) .. controls (3,-0.7) and (1,-0.4).. (0,0);

\node at (4,0){$\bH_0$};
\end{scope}

\begin{scope}[shift={(5,0)},rotate=216,scale =0.4]
\draw[fill, fill opacity=0.1] (0,0) .. controls (1,0.4) and (3,0.7).. (5,0.7) .. controls (5.5,0.3) and (5.5,-0.3).. (5,-0.7) .. controls (3,-0.7) and (1,-0.4).. (0,0);

\node at (4,0){$\bH_1$};
\end{scope}

\begin{scope}[shift={(5,0)},rotate=-72,scale =0.4]
\draw[fill, fill opacity=0.1] (0,0) .. controls (1,0.4) and (3,0.7).. (5,0.7) .. controls (5.5,0.3) and (5.5,-0.3).. (5,-0.7) .. controls (3,-0.7) and (1,-0.4).. (0,0);
\node at (4,0){$\bH_{2}$};

\end{scope}

\end{tikzpicture}}
\caption{The secondary renormalization map $\bF^{Q(\rr,\ss)}\colon \bW^1_{\rr,\ss}\to \bW_{\rr,\ss}$ in the dynamical plane of $\bF=\bF_{\rr,\ss}$. Petals in the attracting flower $(\bH_i)$ around $\bgamma_0$ are enumerated counterclockwise so that $\bH_0\ni 0$. The puzzle piece $\bW_{\rr,\ss}\supset \bH^0$ is bounded by the characteristic ray pair $\bR_\ell, \bR_\rho$. The puzzle piece $\bW^1_{\rr,\ss}$ is bounded by $\bR_\ell, \bR_\rho$ and there preimages (marked green). When $\bgamma_0$ becomes repelling (for example if $\bF\in \HH_{\rr,\ss}$), the secondary renormalization map can be thickened to a quadratic-like map.} \label{Fig:Flower around gamma 0}
\end{figure}

Recall from Lemma~\ref{lem:enum per comp of Frs} that $\bH^0$ denotes the periodic Fatou component of $\bF_{\rr,\ss}$ containing $0$, and recall that $\bH^0$ is attached to $\bgamma_0$, see Figure~\ref{Fig:Flower around gamma 0}. 
\begin{lem}
In the dynamical plane of $\bF_{\rr,\ss}$ there is a characteristic pair $\bR_\ell,\bR_\rho$ of periodic rays landing at $\bgamma_0$: it is the unique ray pair such that $\overline {\bR_\ell\cup \bR_{\rho}}$ separates $\bH^0$ from all $\bH^i$ with $i\not=0$.
\end{lem}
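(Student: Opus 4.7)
The plan is to produce a periodic cycle of rays landing at $\bgamma$ and then to single out the two that are adjacent to $\bH^0$. Existence is obtained by perturbing $\bF_{\rr,\ss}$ into the parameter region where $\bgamma$ is repelling, applying Douady--Hubbard-type landing at a repelling cycle, and passing to the parabolic limit; uniqueness follows from the combinatorics of the flower at $\gamma_0$.

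First I would use the parameterization of $\bUU\subset \Unst$ by the multiplier of $\bgamma$ to pick a path $\bG_t\in \bUU$, $t\in[0,\epsilon)$, with $\bG_0=\bF_{\rr,\ss}$ and with $\bgamma(\bG_t)$ repelling for $t>0$, while $\balpha(\bG_t)$ remains repelling throughout (as $\balpha$ is repelling on $\WW^u$ near $f_\str$). By Corollary~\ref{cor:FRM exists in bVV}, the primary renormalization map $\bG_t^{Q(\rr)}\colon \bW^1(\bG_t)\to\bW(\bG_t)$ persists, and the cycle $\bR^i(\bG_t)$ from Theorem~\ref{thm:parab prepacm} lands at $\balpha(\bG_t)$. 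Applying Theorem~\ref{thm:RatRays land} to $\bG_t$ (which has repelling $\balpha$ and a repelling periodic cycle $\bgamma(\bG_t)$), and then the classical Douady--Hubbard argument for landing at a repelling cycle---transplanted to our setting via the expansion of $\bG_t^{Q(\rr,\ss)}$ near $\bgamma(\bG_t)$ constructed as in \S\ref{sss:Frr:exp metric}---I obtain a nonempty finite $\bG_t^{Q(\rr)}$-invariant set of periodic rays landing at $\bgamma(\bG_t)$. The combinatorial rotation number of the induced cyclic action is $\pp_\ss/\qq_\ss$, forced by the argument of the multiplier of $\bgamma(\bG_t)$, so at least $\qq_\ss$ such rays $\bR^{(t)}_0,\dots,\bR^{(t)}_{\qq_\ss-1}$ land at $\gamma_0(\bG_t)$ and cyclically separate the nascent Fatou components $\bH^0(\bG_t),\dots,\bH^{\qq_\ss-1}(\bG_t)$.

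Next, let $t\to 0$. Each ray is a concatenation of escaping-set ray segments moving by the holomorphic motion $\tau$ of Lemma~\ref{lem:esc set moves hol}, and by the uniform hyperbolic contraction of the tails the rays $\bR^{(t)}_i$ converge to periodic rays $\bR^{(0)}_i$ of $\bF_{\rr,\ss}$. Since $\bgamma(\bG_t)\to\bgamma(\bF_{\rr,\ss})$ and the limit rays cannot escape to $\balpha$ (those landings at $\balpha$ are already carried by $\bR^{-1},\bR^0$ from Theorem~\ref{thm:parab prepacm}), the $\bR^{(0)}_i$ land at $\bgamma(\bF_{\rr,\ss})$, giving $\qq_\ss$ periodic rays landing at $\gamma_0$, one in each repelling axis between consecutive petals $\bH^0,\dots,\bH^{\qq_\ss-1}$. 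Define $\bR_\ell$ and $\bR_\rho$ to be the two rays immediately flanking $\bH^0$ on the left and right in the cyclic order; by the cyclic petal arrangement in Lemma~\ref{lem:enum per comp of Frs}, the closure $\overline{\bR_\ell\cup\bR_\rho}$ bounds a closed topological disk containing $\bH^0$ and no other $\bH^i$.

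For uniqueness, any ray pair whose closure separates $\bH^0$ from all $\bH^i$, $i\ne 0$, must land at a common point of $\bigcap_i\partial\bH^i$, which forces the landing point to be $\gamma_0$; among rays landing at $\gamma_0$, the two bounding $\bH^0$ on either side are uniquely determined by the repelling axes of the parabolic linearization of $\bF_{\rr,\ss}^{Q(\rr,\ss)}$ at $\gamma_0$. The main obstacle is the Douady--Hubbard step for $\bG_t$: obtaining landing of periodic rays at the repelling cycle $\bgamma(\bG_t)$ in our $\sigma$-proper transcendental setting. While the local expansion near $\bgamma(\bG_t)$ is handled by \S\ref{sss:Frr:exp metric}, the classical argument requires globally available preperiodic rays with controlled combinatorics, and verifying that these do not ``leak out'' toward $\balpha$ takes some care; once this is established, the limit $t\to 0$ and the uniqueness step follow from the stability of rays under $\tau$ and the local petal structure.
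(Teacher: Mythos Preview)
Your approach is genuinely different from the paper's, and it has a real gap that you yourself flag but do not close.

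The paper works \emph{directly} at the parabolic parameter $\bF_{\rr,\ss}$, with no perturbation and no limit. Using the primary renormalization map $\bF_{\rr,\ss}^{Q(\rr)}\colon\bW^1\to\bW$ (already available from Corollary~\ref{cor:FRM exists in bVV}), one defines the nested sequence $\bW^{n+1}\subset\bW^n$ of iterated degree-two preimages of $\bW$. The components of $\bW^n\setminus\bW^{n+1}$ have bounded diameter in the hyperbolic metric of $\C\setminus\overline{\Post(\bF_{\rr,\ss})}$; since $\bF_{\rr,\ss}^{Q(\rr)}$ expands this metric, their spherical diameters tend to zero. In each of the $\qq_\ss$ repelling petals at $\gamma_0$ one then finds a nested sequence of such pieces shrinking to $\gamma_0$, and the unique-geodesic property of $\Esc(\bF_{\rr,\ss})$ produces a single periodic ray through them, of minimal period $Q(\rr,\ss)$. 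This is a self-contained Yoccoz-puzzle argument and needs nothing beyond what is already on the table.

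Your route, by contrast, requires a \emph{converse} landing theorem---that a given repelling periodic cycle of $\bG_t$ is the landing point of at least one periodic ray---and this is not provided anywhere in the paper's transcendental framework. Theorem~\ref{thm:RatRays land} only says that periodic rays land; it says nothing about which cycles receive them. The classical Douady--Hubbard argument relies on there being finitely many periodic rays of a given period (a counting argument against the finitely many periodic points), which fails here since $\bF^P$ is $\sigma$-proper with infinitely many rays of each period. Your appeal to ``expansion as in \S\ref{sss:Frr:exp metric}'' does not manufacture rays; it only controls rays once you have them. So the existence step at $\bG_t$ is genuinely missing.

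Even granting that step, the parabolic limit $t\to 0$ is delicate: hyperbolic expansion near $\bgamma(\bG_t)$ degenerates as $\bgamma$ becomes parabolic, so ``uniform hyperbolic contraction of the tails'' is exactly what you lose, and the limit rays could in principle fail to land or crash into $\balpha$. The paper's direct argument sidesteps both problems.
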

\begin{proof}
The parabolic point $\bgamma_0$ has exactly $\qq_\ss$ local repelling petals, and we need to show that there is a periodic ray landing at every repelling petal of $\bgamma_0$. Recall from Corollary~\ref{cor:FRM exists in bVV} that the first renormalization map
 \begin{equation}
 \label{eq:FRM:bF rr ss}
\bF_{\rr,\ss}^{Q(\rr)}\colon \bW^1\to \bW
\end{equation}
exists.

Denote by $\frakX$ the non-escaping set of~\eqref{eq:FRM:bF rr ss}, and note that $\frakX$ contains $\bgamma_0$, $\bH^0$, and $0$. Define inductively $\bW^{n+1}$ to be the unique degree two preimage of $\bW^n$ under~\eqref{eq:FRM:bF rr ss}. By induction (the case $n=1$ is in Lemma~\ref{lem:decomp:bW(bF)}), $\bW^{n}\setminus \bW^{n-1}$ consists of $2^n$ connected components, $\bW^{n+1}\subset \bW^n$, and $\displaystyle{\bigcap_{n\ge0} \bW^n=\frakX}$. Note that the components of $\bW\setminus \bW^1$ have bounded diameters with respect to the hyperbolic metric of $\C\setminus \overline \Post(\bF_{\rr,\ss})$, see Figure~\ref{fig:Decomp of bW}. Since $\bF^{Q(\rr)}_{\rr,\ss}$ expands the hyperbolic metric of $\C\setminus \overline \Post(\bF_{\rr,\ss})$, the spherical diameter of the components of $\bW^{n+1}\setminus \bW^n$ tends to $0$. Therefore, for every local repelling petal $P$ of $\bgamma_0$, there is a sequence $\bT_k\subset P$, $k\gg0$ shrinking to $\bgamma_0$ such that $\bT_k$ is a component of $\bW^{k}\setminus \bW^{k-1}$ and $\bF_{\rr,\ss}^{Q(\rr,\ss)}$ maps $\bT_{k+1}$ to $\bT_k$. This implies that there is a unique geodesic $\bR$ in the escaping set $\Esc (\bF_{\rr,\ss})$ intersecting every $\bT_k$; thus $\bR$ is a periodic ray landing in $P$ and $Q(\rr,\ss)$ is the minimal period of $\bR$.
\end{proof}

 We denote by $\bW_{\rr,\ss}$ the puzzle piece bounded by $\bR_\ell\cup \bR_{\rho}$ and containing $\bH^0$. Let  $\bW^1_{\rr,\ss}\subset \bW_{\rr,\ss} $ be the two-to-one pullback of $ \bW_{\rr,\ss}$ along $\bF^{Q(\rr,\ss)}_{\rr,\ss}\colon \bH^0\to \bH^0$. We call 
\begin{equation}
\label{eq:SRM:rr ss}
\bF^{Q(\rr,\ss)}_{\rr,\ss}\colon \bW_{\rr,\ss}^1 \to \bW_{\rr,\ss}
\end{equation}
a \emph{secondary renormalization map}.

\subsection{Secondary parabolic bifurcation}
Since $\bF_{\rr,\ss}$ has $\qq_\ss$ attracting petals at every point of $\bgamma$ (see~\S\ref{ss:SecParPacm}), there is a small neighborhood $\bVV$ of $\bF_{\rr,\ss}$ such that $\bgamma(\bF_{\rr,\ss})$ splits into the $Q(\rr)$-periodic cycle $\bgamma(\bG)$ and a $Q(\rr,\ss)$-periodic cycle  $\bdelta(\bG)$ for $\bG\in \bVV\setminus \{\bF_{\rr,\ss}\}$. By shrinking $\bVV$,  we can assume that 
\begin{itemize}
\item the multiplier of $\bdelta(\bG)$ parametrizes $\bVV$;
\item the primary renormalization map~\eqref{eq:ren map:par} exists for all $\bG\in \bVV$ (by Corollary~\ref{cor:FRM exists in bVV}).
\end{itemize}

Moreover, there is a small path $\bG_t$, $t\in [0,1]$ emerging from $\bF_{\rr,\ss}=\bG_0$ such that $\bdelta(\bG_t)$ is attracting and $\bgamma(\bG_t)$ is on the boundary of the immediate attracting basin of $\bdelta(\bG_t)$ for all $t\in (0,1]$. Moreover, we can assume that the characteristic ray pair $\bR_\ell(\bG_t), \bR_\rho(\bG_t)$ lands at $\bgamma_0$ for all $t>0.$

Since the escaping set $\Esc(\bG)$ moves holomorphically for all $\bG $ in a small neighborhood of $\{\bG_t\mid t>0\}$, we obtain that all $\bG_t$ with $t>0$ are qc conjugate. We denote by $\HH_{\rr,\ss}\supset \{\bG_t\mid t>0\}$ the set of $\bG\in \bVV$ obtained through a qc deformation changing the multiplier of $\bdelta$, see~\S\ref{ss:QC deform}. By appropriately shrinking $\bVV$, we can also assume that  $\HH_{\rr,\ss}\cap \bVV$ has a single connected component attached to $\bF_{\rr,\ss}$, and: 
\begin{equation}
\label{eq:delta is attr}
\HH_{\rr,\ss}\cap  \bVV=\{\bG\in \bVV: \bdelta(\bG)\text{ is attracting}\}.
\end{equation}

Since the rays $\bR_\ell$ and $\bR_\rho$ land at $\bgamma_0$ for all $\bG\in \HH_{\rr,\ss}$, the secondary renormalization \eqref{eq:SRM:rr ss}
\begin{equation}
\label{eq:SRM}
\bG^{Q(\rr,\ss)}\colon \bW_{\rr,\ss}^1 \to \bW_{\rr,\ss}
\end{equation}
exists for $\bG\in \HH_{\rr,\ss}$.

\begin{figure}[tp!]
\begin{tikzpicture}
\draw[white,fill=black, fill opacity=0.1](-2.3,0)--(2.3,0)--(2.3,-2)--(-2.3,-2);
\draw (-2.3,0)--(2.3,0);
\filldraw(0.,0.) circle (0.04cm);
\node[below] at (0,0) {$\bF_\str$}; 

\node[below] at (-1.6,-0.2) {$\HH$};
\node[below] at (-1,-0.7) {$\left(\begin{matrix}
       \balpha(\bG)\\
 \text{is attracting}
        \end{matrix} \right)$};
        
 \draw[fill=black, fill opacity=0.1] (1,0.5) arc (180:360:0.5);       
 
 \node[above] at (1.5,0.1) {$\HH_\rr$};

\filldraw(1.5,0.) circle (0.04cm);
\node[below]at(1.5,0){$\bF_\rr$}; 
\draw[dashed,blue](1.5,0.) circle (0.75cm);

 \node[below,blue] at(1.5,-0.75){$\bUU$}; 
 
 \begin{scope}[shift={(1.02,0.05)},scale=0.2]
\draw[red] (0,0)--(1,0)--(1,1)--(0,1)--(0,0); 
\coordinate  (w0) at (1,0.6);
 \end{scope}

 \begin{scope}[shift={(2.8,-1)},scale=2]
 

 \draw[fill=black, fill opacity=0.1] (0.2,1) .. controls (0.45,0.6) and (0.6,0.45) .. (1,0.2) --(1,1);  

\draw[red](w0) --(0,0.6);

\filldraw(0.6,0.49) circle (0.02cm);

\node[above right ] at (0.6,0.49){$\bF_{\rr,\ss}$};

\draw[dashed,blue](0.6,0.49) circle (0.3cm);
\node[below,blue] at (0.6,0.21) {$\bVV$};

\draw[fill=black, fill opacity=0.1] (0.4,0.47) 
.. controls (0.45,0.51) and (0.55,0.52) ..
(0.6,0.49)
.. controls (0.63,0.45) and (0.62,0.35) ..
(0.61,0.3) ;

 \begin{scope}[shift={(0.58,0.35)},scale=0.08]
\draw[red] (0,0)--(1,0)--(1,1)--(0,1)--(0,0); 
\coordinate  (w1) at (1,0.6);
 \end{scope}
 
\draw[red] (0,0)--(1,0)--(1,1)--(0,1)--(0,0); 

 \end{scope}

 \begin{scope}[shift={(5.86,-1)},scale=2]
 
\draw [fill=black, fill opacity=0.1] (0,0)--(0,1)--(0.5,1) --(0.5,0);

\draw[red](w1) --(0,0.6);

\draw[rotate around={135:(0.5,0.5)}, fill=black, fill opacity=0.1,shift={(-0.1,0.01)}] (0.4,0.44) 
.. controls (0.45,0.51) and (0.55,0.52) ..
(0.6,0.49)
.. controls (0.67,0.45) and (0.65,0.35) ..
(0.61,0.247) ;

 \draw[dashed,blue](0.6,0.49) circle (0.3cm);
\node[below,blue] at (0.6,0.21) {$\bWW$};
\filldraw(0.5,0.51) circle (0.02cm);

\node[left] at  (0.5,0.51) {$\bF_{\rr,\ss,\ff}$}; 

 \begin{scope}[shift={(0.475,0.33)},scale=0.08]
\draw[red] (0,0)--(1,0)--(1,1)--(0,1)--(0,0); 
\coordinate  (w2) at (0,0);
 \end{scope}

\draw[red] (0,0)--(1,0)--(1,1)--(0,1)--(0,0);

 \end{scope}

 \begin{scope}[shift={(4.5,-3.5)},scale=2]
 
\draw [fill=black, fill opacity=0.1] (0,0)--(0,1)--(0.2,1) --(0.2,0); 
 
 \draw[red] (0,0)--(1,0)--(1,1)--(0,1)--(0,0); 

\draw[red](w2) --(0.6,1);

\draw[dashed,fill=black, fill opacity=0.1] (0.5,0.5) ellipse (0.3cm and 0.15cm);
 
 \node at (0.5,0.5){$\bMM_0$};

 \end{scope}

\end{tikzpicture}
\caption{Recognizing  a small copy on the unstable manifold. A parabolic prepacman $\bF_\rr$ has a small neighborhood $\bUU$ parametrized by the multiplier of $\bgamma$. A secondary parabolic prepacman $\bF_{\rr,\ss}$ has a small neighborhood $\bVV\subset \bUU$ where the primary renormalization map~\eqref{eq:FRM:bG} exists. A ternary parabolic prepacman $\bF_{\rr,\ss,\ff}$ has a neighborhood $\bWW\subset \bVV$ where the secondary renormalization map~\eqref{eq:SRM} exists. The set $\bWW$ contains a ternary small copy $\bMM_0$.}
\label{Fig:Rec Small copy}
\end{figure}

\subsection{A ternary small copy $\bMM_{0}$}
\label{ss: ternary sat copy}
Choose a rational number $\ff=\pp_\ff/\qq_\ff$ close to $0$ and let $\bF_{\rr,\ss,\ff}\in \partial \HH_{\rr,\ss}$ be a parabolic prepacman such that the multiplier of $\bdelta(\bF_{\rr,\kk})$ is $\ee(\pp_\ff/\qq_\ff)$. By Theorem~\ref{thm:RatRays land}, rational rays land in the dynamical plane of $\bF_{\rr,\ss,\ff}$. The same argument as in Corollary~\ref{cor:FRM exists in bVV} shows that the secondary renormalization~\eqref{eq:SRM} exists in a small neighborhood $\bWW$ of $\bF_{\rr,\ss,\ff}$.

Observe that the secondary renormalization~\eqref{eq:SRM} can be slightly thickened to a quadratic-like map, compare with~\cites{DH, Mi}. Indeed, since the rays $\bR_\ell, \bR_\rho$ forming $\partial \bW_{\rr,\ss}^1$ land at a (regular) repelling periodic point $\bgamma_0\in\Dom\big(\bG^{Q(\rr,\ss)}\big)$, see Figure~\ref{Fig:Flower around gamma 0}, we thicken $\bW_{\rr,\ss}^1$ in a small neighborhood of $\bgamma_0$ respecting its linear coordinates, and then extend the thickening for finitely many iterations along $\bR_\ell, \bR_\rho$. Let $\brho$ be the Douady-Hubbard straightening map associated with~\eqref{eq:SRM} ($\brho$ is $\strai$ from \S\ref{ssss:QL anal ren} appropriately restricted). Then $\brho$ is a homeomorphism in a small neighborhood of $\bF_{\rr,\ss,\ff}$. Observe that $p_\ff\coloneqq \brho(\bF_{\rr,\ss,\ff})$ is the quadratic polynomial whose $\alpha$-fixed point has multiplier $\ee(\ff)$. Choose $\kk\in \Q$ close to $\ff$.  By the Yoccoz inequality in the quadratic family, the primary satellite small copy $\Mandel_\kk$ (see notations in \S\ref{sss:not:SatCopies}) is in a small neighborhood of $p_\ff$. We obtain that $\bMM_{\rr,\ss,\kk}\coloneqq \brho^{-1}(\Mandel_\kk)$ is a full small copy of the Mandelbrot set (see Figure~\ref{Fig:Rec Small copy}):

\begin{thm}
\label{thm:small copy M1}
In a small neighborhood of $\bF_{\rr,\ss,\ff}$ there is a ternary satellite copy $\bMM_0= \bMM_{\rr,\ss,\kk}$ of the Mandelbrot set.\qed
\end{thm}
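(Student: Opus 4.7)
\medskip

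The plan is to produce $\bMM_0$ by analytic straightening of the secondary renormalization cascade together with the hyperbolic component structure in the quadratic family. First I would thicken the pinched secondary renormalization map $\bF_{\rr,\ss,\ff}^{Q(\rr,\ss)}\colon \bW^1_{\rr,\ss}\to \bW_{\rr,\ss}$ into a genuine quadratic-like map. Since $\bF_{\rr,\ss,\ff}\in\partial\HH_{\rr,\ss}$ and the characteristic ray pair $\bR_\ell,\bR_\rho$ lands at the parabolic cycle $\bgamma$, one may slightly deform the boundary rays to equipotential-plus-ray pieces that enclose $\bW^1_{\rr,\ss}$ with a positive annular modulus; this uses exactly the same puzzle-thickening recipe as in \S\ref{sss:par pacmen} and the fact, established via Theorem~\ref{thm:RatRays land}, that the relevant rational rays land stably under perturbation. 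By Lemma~\ref{lem:c_S is correct} (applied at the secondary level), the critical point $0$ is the unique critical point of the thickened map, so we obtain a genuine degree-two quadratic-like germ depending analytically on $\bG$ in a neighborhood $\bWW\ni \bF_{\rr,\ss,\ff}$.

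Next, I would apply the Douady--Hubbard straightening from \S\ref{ssss:QL anal ren} to this analytic family of quadratic-like germs, obtaining an analytic map $\brho\colon \bWW\to \QG$ into the space of quadratic-like germs, and then compose with the projection $\strai$ onto the Mandelbrot set to obtain a continuous map $\brho\colon \bWW\to \Mandel$. The key step is to check that $\brho$ is a homeomorphism onto a neighborhood of $p_\ff\coloneqq \brho(\bF_{\rr,\ss,\ff})$, where $p_\ff$ is the quadratic polynomial with multiplier $\ee(\ff)$ at $\alpha$. Injectivity comes from the fact that $\bWW$ is one-dimensional (since $\Unst$ is one-dimensional) and $\brho$ is non-constant (it distinguishes the multiplier of $\bdelta$, and by~\eqref{eq:delta is attr} $\HH_{\rr,\ss}\cap\bWW$ is parametrized by that multiplier, which corresponds under $\brho$ to the multiplier of the $\alpha$-fixed point of the straightened quadratic polynomial). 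Continuity of $\brho$ and a degree-one count at $p_\ff$ then give local homeomorphism.

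With $\brho$ in hand, the rest is immediate. Choose $\kk=\pp_\kk/\qq_\kk$ close to $\ff$. By the Yoccoz inequality, the primary satellite copy $\Mandel_\kk\subset\Mandel$ attached to the parabolic parameter $c(\ff)$ is contained in an arbitrarily small neighborhood of $c(\ff)=\brho(\bF_{\rr,\ss,\ff})$; in particular $\Mandel_\kk$ lies in the image $\brho(\bWW)$. Defining $\bMM_0\coloneqq \bMM_{\rr,\ss,\kk}\coloneqq \brho^{-1}(\Mandel_\kk)\subset \bWW$ and using that $\brho$ is a homeomorphism on $\bWW$ gives a canonical homeomorphism between $\bMM_0$ and $\Mandel$, so $\bMM_0$ is a ternary satellite copy of the Mandelbrot set, attached to $\partial\HH_{\rr,\ss}$ at $\bF_{\rr,\ss,\ff}$.

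The main obstacle will be the first step: honestly thickening the pinched map~\eqref{eq:SRM} to a quadratic-like map. The puzzle pieces $\bW^1_{\rr,\ss}\subset\bW_{\rr,\ss}$ touch along $\bR_\ell\cup\bR_\rho$ at the parabolic cycle $\bgamma$, and one must carefully use the repelling petals at $\bgamma$ (on the side opposite to the attracting flower containing $\bH^0,\dots,\bH^{\qq_\ss-1}$) to slightly push $\partial\bW_{\rr,\ss}$ outward and to pull $\partial\bW^1_{\rr,\ss}$ inward, while preserving holomorphic dependence on $\bG$ in a full neighborhood of $\bF_{\rr,\ss,\ff}$ (not only on $\HH_{\rr,\ss}$). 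This is a parabolic-perturbation argument, but once it is carried out, analyticity of $\brho$ and the rest of the proof proceed by standard quadratic-like renormalization theory.
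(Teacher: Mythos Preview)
Your approach is essentially the same as the paper's: thicken the secondary renormalization to a genuine quadratic-like map, apply Douady--Hubbard straightening to obtain a local homeomorphism $\brho$ onto a neighborhood of $p_\ff$, and then pull back a primary satellite copy $\Mandel_\kk$ located near $p_\ff$ via the Yoccoz inequality.

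One correction: you have overestimated the difficulty of the thickening step. At $\bF_{\rr,\ss,\ff}$ the cycle $\bgamma$ is \emph{repelling}, not parabolic. The parabolic bifurcation at $\bF_{\rr,\ss}$ splits $\bgamma$ into a repelling cycle $\bgamma$ and a new cycle $\bdelta$; it is $\bdelta$ that becomes parabolic (with multiplier $\ee(\ff)$) at $\bF_{\rr,\ss,\ff}\in\partial\HH_{\rr,\ss}$, while $\bgamma$ remains repelling throughout $\overline{\HH_{\rr,\ss}}\setminus\{\bF_{\rr,\ss}\}$. Since the characteristic rays $\bR_\ell,\bR_\rho$ land at the repelling point $\gamma_0$, the thickening of $\bW^1_{\rr,\ss}\subset\bW_{\rr,\ss}$ to a quadratic-like pair is the standard Douady--Hubbard construction at a repelling fixed point (push the boundary outward along the linearizing coordinate), not a parabolic-perturbation argument; see the caption to Figure~\ref{Fig:Flower around gamma 0}. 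So the ``main obstacle'' you identify is not present, and the argument is shorter than you anticipate.
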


\noindent We say that $\bF_\rr$ is the \emph{root of the limb containing} $\bMM_0$.

\begin{rem}
The fact that $\bMM_{\rr,\ss,\kk}$ is a bounded subset of $\Unst$ follows from the Yoccoz inequality. Note that we use the Yoccoz inequality in the quadratic family after applying the straightening map associated with the second renormalization~\eqref{eq:SRM}. By applying the Yoccoz inequality directly in $\Unst$ (after slight thickening of external rays), it is possible to construct a \emph{secondary} satellite copy $\bMM_{\rr,\ss'}$ of the Mandelbrot set in a small neighborhood of $\bF_{\rr,\ss}$. This would imply the scaling law for secondary satellite copies as in~\eqref{eq:thm:main}.

The case of primary copies of the Mandelbrot set is more delicate because the first renormalization map~\eqref{eq:FRM:bG} is pinched and the pinching can not be resolved:~\eqref{eq:FRM:bG}  maps $\alpha(1/2)\in \partial \bW^1$ to $\balpha\in \partial \bW$ and $\alpha(1/2)\in \Esc_{Q(\rr)}$ is an essential singularity for $\bG^{Q(\rr)}$ by Lemma~\ref{lem:Acc Of preim}; i.e.~$\bG^{Q(\rr)}$ has no extension through $\Esc_{Q(\rr)}$. Moreover, the map $\mRRc^\mm$ in~\eqref{eq:thm:main} cannot be quasiconformal because $\Mandel_{p/q}$ and $\Mandel_{p'/q'}$ are not qc homeomorphic if $q\not=q'$~\cite{LP}. However, the pictures suggest that the scaling law~\eqref{eq:thm:main} for primary copies may still be valid.
\end{rem}

\section{The Valuable Flower Theorem}
\label{s:val flow}

Let us fix a copy $\bMM_0\subset \Unst$ from Theorem~\ref{thm:small copy M1}. We set $\bMM_{n}\coloneqq \RR^{n}(\bMM_0)$.

\begin{figure}
\begin{tikzpicture}[xscale=1.2]

\begin{scope}[shift={(6,0)}, line width=3pt,blue,scale =1.2]

\draw (0.7161732054529941,2.479411658020722)-- (0.8094459499434268,4.377758721340394);

\draw[ line width=4pt,black, shift={(0.54, 2.5)},scale=0.3] (0.7161732054529941,2.479411658020722)-- (0.8094459499434268,4.377758721340394);

\draw[black] (0.5344819805156678,3.5617366185225294)-- (1.0489306975095394,3.535127202126295);

\draw(0.7161732054529941,2.479411658020722)-- (2.4414901555791584,2.298676320247921);

\draw[ line width=4pt,black, shift={(1.22, 1.65)},scale=0.3] (0.7161732054529941,2.479411658020722)-- (2.4414901555791584,2.298676320247921);

\draw[black] (1.689330652112256,2.767002048821652)-- (1.6325638971336218,2.0006508566100925);

\draw (0.7161732054529941,2.479411658020722)-- (-0.4145872042833716,3.3932103146797097);

\draw [ line width=4pt,black, shift={(0.05, 2.1)},scale=0.3](0.7161732054529941,2.479411658020722)-- (-0.4145872042833716,3.3932103146797097);

\draw[black] (-0.05092518020149673,2.8078031539625465)-- (0.2329085946916739,3.144855761648186);

\draw (-0.5742437026607801,1.9474320238176246)-- (0.7161732054529941,2.479411658020722);

\draw[ line width=4pt,black, shift={(-0.05, 1.5)},scale=0.3] (-0.5742437026607801,1.9474320238176246)-- (0.7161732054529941,2.479411658020722);

\draw[black] (-0.08640440206314305,2.3731826861573797)-- (0.06438229084885384,2.0095206620755057);

\draw (0.7161732054529941,2.479411658020722)-- (0.8804043936667194,0.17347093073531167);

\draw[ line width=4pt,black, shift={(0.56, 0.8)},scale=0.3] (0.7161732054529941,2.479411658020722)-- (0.8804043936667194,0.17347093073531167);

\draw[black] (0.3748254821382592,1.166889142861407)-- (1.1553683630944784,1.228977781119288);

\node (v0) at (0, 3.05) {};

\draw[red,line width=1pt,dashed, rotate around={50:(v0)}]  (v0) ellipse (0.3 and 1.1);

\draw[red,line width=1pt,dashed, rotate around={50:(v0)}]  (v0) ellipse (0.45 and 1.25);

\draw[line width=1pt,black] (0.4, 3.05) edge[bend right=70, ->]  (-0.9, 3.8);

\node[above,black ] at (-0.9, 3.9){$\bG^{Q(\rr,\ss)}\colon {\color{red} \bO}\to {\color{red} \bO'}$}; 
\end{scope}

\begin{scope}[scale=0.9,line width=0.3mm]

\begin{scope}[red]
\draw (-1.223793823139081,-4.202696996018645)edge[ ->] (-1.162207568263142,2.7257566775244775);
\draw (3.087244018176641,-4.202696996018645)edge[->] (2.8716921261108554,3.187653589094019);
\draw (-1.162207568263142,2.7257566775244775)edge[-] (-0.46,5.06);
\draw (-0.46,5.06)edge[->](0.7161732054529941,6.4517250975187785);
\draw (0.7161732054529941,6.4517250975187785)edge[-<] (2.0094845578477107,5.035241235372185);

\draw (2.0094845578477107,5.035241235372185)-- (2.8716921261108554,3.187653589094019);
\draw (-0.46,5.06)-- (0.7161732054529941,2.479411658020722);
\draw (0.7161732054529941,2.479411658020722)-- (-1.162207568263142,2.7257566775244775);

\draw (0.7161732054529941,2.479411658020722)-- (2.4008595320400254,4.196580576388654);

\draw 
 (0.7161732054529941,2.479411658020722)edge[->] (-1.1838311866418052,0.2930996099248926);

\draw (0.7161732054529941,2.479411658020722)edge[->] (2.9864482592175015,-0.7468424031338423);
\end{scope}

\begin{scope}[ line width=3pt,blue]

\draw (0.7161732054529941,2.479411658020722)-- (0.8094459499434268,4.377758721340394);

\draw[ line width=4pt,black, shift={(0.54, 2.5)},scale=0.3] (0.7161732054529941,2.479411658020722)-- (0.8094459499434268,4.377758721340394);

\draw[black] (0.5344819805156678,3.5617366185225294)-- (1.0489306975095394,3.535127202126295);

\draw(0.7161732054529941,2.479411658020722)-- (2.4414901555791584,2.298676320247921);

\draw[ line width=4pt,black, shift={(1.22, 1.65)},scale=0.3] (0.7161732054529941,2.479411658020722)-- (2.4414901555791584,2.298676320247921);

\draw[black] (1.689330652112256,2.767002048821652)-- (1.6325638971336218,2.0006508566100925);

\draw (0.7161732054529941,2.479411658020722)-- (-0.4145872042833716,3.3932103146797097);

\draw [ line width=4pt,black, shift={(0.05, 2.1)},scale=0.3](0.7161732054529941,2.479411658020722)-- (-0.4145872042833716,3.3932103146797097);

\draw[black] (-0.05092518020149673,2.8078031539625465)-- (0.2329085946916739,3.144855761648186);

\draw (-0.5742437026607801,1.9474320238176246)-- (0.7161732054529941,2.479411658020722);

\draw[ line width=4pt,black, shift={(-0.05, 1.5)},scale=0.3] (-0.5742437026607801,1.9474320238176246)-- (0.7161732054529941,2.479411658020722);

\draw[black] (-0.08640440206314305,2.3731826861573797)-- (0.06438229084885384,2.0095206620755057);

\draw (0.7161732054529941,2.479411658020722)-- (0.8804043936667194,0.17347093073531167);

\draw[ line width=4pt,black, shift={(0.56, 0.8)},scale=0.3] (0.7161732054529941,2.479411658020722)-- (0.8804043936667194,0.17347093073531167);

\draw[black] (0.3748254821382592,1.166889142861407)-- (1.1553683630944784,1.228977781119288);

\end{scope}

\begin{scope}[ line width=3pt,green]
\draw (0.8804043936667194,0.17347093073531167)-- (0.9364484013943152,-2.303019436185083);
\draw (0.30544804802662356,-0.9915285056561595)-- (1.5055859750200762,-0.9667833937593874);
\draw (0.9364484013943152,-2.303019436185083)-- (0.9302564191127121,-4.035815749377829);
\draw (0.46500940066983343,-3.191025110626291)-- (1.4689634930992033,-3.166538425445087);
\end{scope}

\node[green] at (-0.01248, -1.01171) {$\frakZ_1$};
\node[green] at (0.01201, -3.20327){$\frakZ_2$};
\node[blue] at (-0.02472, 1.14312){$\frakY_j$};

\node[blue] at (-0.73484, 1.57164) {$\frakY_{j-1}$};

\node[blue] at (2.28927, 1.79202) {$\frakY_{j+1}$};

\draw (1.57094, -1.04844) edge[->,bend right ] node[below right]{$\bG^{Q(\rr)}$}  (1.35877, 0.76357) ;

\draw(1.48524, -3.39103) edge[->,bend right ] node[below right]{$\bG^{Q(\rr)}$} (1.57094, -1.24844) ;
\node[red] at (-0.63919, 0.44871) {$\bR_-$};
\node[red] at (-0.63919, 5.3) {$\bR_-$};

\node[red] at(2.02963, 1.02851) {$\bR_+$};
\node[red] at(2.02963, 5.4) {$\bR_+$};

\node[red,right] at(0.8,6.5) {$\alpha(1/2)$};

\node at (1.2801, -4.50312){$\balpha=\infty$};
\node[above right] at (0.8617, 2.47941){$\bgamma_0$};

\draw[red] (0.71617, 6.45173)edge [->] (0.71617, 6.45173+0.25); 
\draw[red] (0.71617, 6.45173)edge (0.71617, 6.45173+0.5);

\end{scope}
\end{tikzpicture}

\caption{Left: the valuable petal $\bX^0(\bG)$ consists of a periodic cycle of secondary small filled-in Julia sets $\frakY_i$ (blue) and secondary preperiodic small filled-in Julia sets $\frakZ_i$ (green) converging to $\balpha$. The ternary Julia sets are marked black. External rays landing at $\bgamma_0$ and $\balpha$ are marked red. Arrows indicate the direction of $\bR_-,\bR_+$ from $\bgamma_0$ to $\infty$. Right: the quadratic-like map~\eqref{eq:QL domain bO} realizing the secondary renormalization.}
\label{Fig:Val Flower X(G)}
\end{figure}

\subsection{Valuable flowers of prepacmen}
\label{ss:ValFlower:prep}
Consider a pacman $\bG\in \bMM_0$. We denote by $\frakX^0(\bG)$ the non-escaping set (the ``little Julia set'') of the primary renormalization map~\eqref{eq:FRM:bG}. Similarly, let $\frakY_0$ be the non-escaping set of the secondary renormalization map~\eqref{eq:SRM}. We say that $\frakY_0$ is a \emph{secondary small filled-in Julia set} (recall that~\eqref{eq:SRM} can be thickened to a quadratic-like map). Applying $\bG^{Q(\rr)}$ to $\frakY_0$, we obtain a periodic cycle $(\frakY_i)_{i}$ of small filled-in Julia sets, see Figure~\ref{Fig:Val Flower X(G)}. By construction, all $\frakY_i$ are attached to $\bgamma_0$ (which is the $\beta$-fixed point of $\frakY_i$); we enumerate $\frakY_i$ counterclockwise around $\bgamma_0$.

Choose an index $j$ such that $\frakY_j$ contains a unique critical point of the primary renomalization map~\eqref{eq:FRM:bG}. Since~\eqref{eq:FRM:bG} is a branched covering of degree $2$ (a pinched quadratic-like map), the following holds because the combinatorics of lifts for~\eqref{eq:FRM:bG}  is the same as for quadratic polynomials. There are external rays $\bR_-,\bR_+$ landing at $\bgamma_0$ and separating $\frakY_j$ from all remaining $\frakY_i$, see Figure~\ref{Fig:Val Flower X(G)}. Then $\frakY_j$ and $\balpha$ are on the same side of $\overline{\bR_-\cup\bR_+}$: the rays $\overline{\bR_-\cup\bR_+}$ bound a closed topological disk containing $\displaystyle{\bigcup_{i\not=j} \frakY_i}$. Let $\frakZ_1$ be the unique preimage of $\frakY_j$ under~\eqref{eq:FRM:bG} intersecting $\frakY_j$; and similarly, let $\frakZ_{i+1}$ be the unique $\bG^{Q(\rr)}$-preimage of $\frakZ_i$ intersecting $\frakZ_i$.

\begin{lem}
\label{lem:frakZ_i land balpha}
The sets $\frakZ_i$ converge to $\balpha$.
\end{lem}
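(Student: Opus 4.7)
The plan is to set $y_0 := \gamma_0$ and, for each $i \ge 1$, define $y_i$ to be the unique common point of $\frakZ_i$ and $\frakZ_{i-1}$ (with the convention $\frakZ_0 := \frakY_j$). By construction $\bG^{Q(\rr)}(y_{i+1}) = y_i$, and the $y_i$ are pairwise distinct preimages of $\gamma_0$ lying strictly on the $\balpha$-side of $\overline{\bR_- \cup \bR_+}$. The lemma then reduces to two assertions: (i) $y_i \to \balpha$ in the wall topology of \S\ref{ss:balpha:wall topoloy}, and (ii) $\diam(\frakZ_i) \to 0$ in the ambient spherical metric.

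Next I would build nested puzzle pieces $\bP_0 \supset \bP_1 \supset \cdots$, each containing $\balpha$. Let $\bP_0$ be the closed region bounded by $\overline{\bR_- \cup \bR_+}$ that contains $\balpha$ and $\frakY_j$. Theorem~\ref{thm:RatRays land} applied to the prepacman $\bG$ lets me pull back $\bR_-, \bR_+$ along the inverse branch of $\bG^{Q(\rr)}$ carrying $y_i$ to $y_{i+1}$; these pullbacks land at $y_{i+1}$ and, together with sub-arcs of the $\balpha$-rays $\bR^{-1}, \bR^0$ from Theorem~\ref{thm:parab prepacm}, bound a puzzle piece $\bP_{i+1} \subsetneq \bP_i$ containing $\balpha$ and all $\frakZ_k$ with $k \ge i+1$. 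By construction, $\frakZ_i \subset \bP_{i-1} \setminus \intr(\bP_i)$, so it suffices to show that $\bP_i$ shrinks to $\{\balpha\}$ in the wall topology.

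The third step is to install an expanding metric. The postcritical set of the primary renormalization $\bG^{Q(\rr)}\colon \bW^1 \to \bW$ is captured by the filled Julia set of the secondary renormalization $\bG^{Q(\rr,\ss)}\colon \bW^1_{\rr,\ss} \to \bW_{\rr,\ss}$, and hence lies in $\bigcup_i \frakY_i$. Choose a small open neighborhood $B'$ of $\bigcup_i \frakY_i$ that is forward invariant under $\bG^{Q(\rr)}|_{\bW^1}$ and whose closure meets $\bR_-, \bR_+$ only at $\gamma_0$. On the hyperbolic Riemann surface $\bX := \bW \setminus \overline{B'}$, the map $\bG^{Q(\rr)}$ restricts to a covering onto its image, so inverse branches strictly contract the hyperbolic metric, exactly as in \S\ref{sss:Frr:exp metric} and Claim~2 in the proof of Theorem~\ref{thm:RatRays land}. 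Since $\bP_1 \cap \overline{\bX}$ is a relatively compact neighborhood of $y_1$ of finite hyperbolic $\bX$-diameter, iterated pullback along the chosen branch shrinks $\diam_{\bX}(\bP_i \cap \bX)$—and hence the Euclidean diameters—exponentially. Any accumulation point of $(y_i)$ must then be a fixed point of $\bG^{Q(\rr)}$ in $\overline{\bW^1}$; of the two candidates $\gamma_0$ and $\balpha$, the first is excluded because every $y_i$ with $i \ge 1$ lies in $\bP_1$, which is separated from $\gamma_0$ by $\overline{\bR_- \cup \bR_+}$. Thus $y_i \to \balpha$, and combined with shrinking diameters this gives $\frakZ_i \to \balpha$.

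The main obstacle will be controlling the expansion near the parabolic/repelling vertex $\gamma_0 \in \partial \bX$: one must verify that the inverse orbits of $y_1$ do not drift into $\gamma_0$ but remain in a precompact subset of $\bX$. This is guaranteed by the combinatorial separation provided by $\frakY_j$: since $y_1 \in \partial \frakY_j$ and $\frakY_j$ sits inside $\bP_0$ between $\gamma_0$ and $y_1$, a definite $\bX$-distance between $\gamma_0$ and $y_1$ is carved out at step one and preserved under pullback, by the same mechanism used in Claim~2 of the proof of Theorem~\ref{thm:RatRays land}.
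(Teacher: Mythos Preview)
Your approach has a genuine gap in the hyperbolic expansion step. You claim that ``$\bP_1 \cap \overline{\bX}$ is a relatively compact neighborhood of $y_1$ of finite hyperbolic $\bX$-diameter,'' but this is false: by your own construction each $\bP_i$ contains $\balpha$, and hence is unbounded in $\C$. Consequently $\bP_i \cap \bX$ is not precompact in $\bX$, its hyperbolic diameter is infinite, and the standard contraction argument does not apply. The difficulty is precisely that you must control behavior in the unbounded direction toward $\balpha$, and the hyperbolic metric of $\bX = \bW \setminus \overline{B'}$ gives you no grip there. Your subsequent claim that accumulation points of $(y_i)$ must be fixed points of $\bG^{Q(\rr)}$ in $\overline{\bW^1}$ also does not follow from what you have established; without control on diameters you cannot rule out that the $y_i$ wander.

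The paper's proof bypasses all of this with a one-line Denjoy--Wolff argument. Let $\bW'$ be the closure of the component of $\bW \setminus \overline{\bR_- \cup \bR_+}$ containing the $\frakZ_i$; it contains $\frakY_j$ and is attached to $\balpha$. There is a unique component $\bW'' \subset \bW'$ of $\bG^{-Q(\rr)}(\bW')$, and $\bG^{Q(\rr)}\colon \bW'' \to \bW'$ is univalent. Since $\balpha$ is the only boundary point of $\bW'$ that does not escape under forward iteration (every other boundary point lies on a ray and is eventually pushed out of $\bW'$), the inverse branch $\bW' \to \bW''$ is a self-map of a simply connected domain with Denjoy--Wolff point $\balpha$. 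Hence every point of $\bW'$, and in particular every $\frakZ_i$, converges to $\balpha$ under backward iteration. The wall-topology convergence follows because $\partial \bW \subset \bR^{-1} \cup \bR^0$ and these rays land at $\balpha$. This avoids building nested puzzle pieces, installing an auxiliary metric, or tracking individual points $y_i$.
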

\begin{proof}
Recall from Corollary~\ref{cor:FRM exists in bVV} that the first renormalization map is written as ${\bG^{Q(\rr)}\colon \bW^1\to \bW}$. Let $\bW'$ be the closure of the connected component of ${\bW\setminus \overline {\bR_-\cup \bR_+}}$ containing all $\frakZ_{i}$. There is a unique connected component $\bW''$ of $\bG^{-Q(\rr)}(\bW')$ such that $\bW''\subset \bW'$. The map 
\begin{equation}
\label{eq:prf:lem:frakZ_i land balpha}
\bG^{Q(\rr)}\colon \bW''\to \bW'
\end{equation}
is univalent, and $\balpha$ is the only point in $\partial \bW'$ that does not escape under the iterations of~\eqref{eq:prf:lem:frakZ_i land balpha}. Therefore, $\balpha$ is the Denjoy--Wolff fixed point of the inverse of~\eqref{eq:prf:lem:frakZ_i land balpha}: under the backward iterations of~\eqref{eq:prf:lem:frakZ_i land balpha} every point in $\bW'$ converges to $\balpha$; the convergence is with respect to the wall topology because $\partial \bW\subset \bR^{-1}\cup  \bR^0$ and the rays $\bR^{-1}, \bR^0$ land at $\balpha$, see Corollary~\ref{cor:FRM exists in bVV}.
\end{proof}

The \emph{valuable petal} $\displaystyle{\bX^0(\bG)\subset \frakX^0(\bG)}$ is the union $\displaystyle{\bigcup_i \frakY_i\cup \bigcup_{i\ge 1}\frakZ_i}$. The \emph{upper part} of $\bX^0(\bG)$ is $\displaystyle\bX^0_\up(\bG)\coloneqq \bigcup_i \frakY_i$. By construction, both $\bX^0(\bG)$ and $\bX^0_\up(\bG)$ are $\bG^{Q(\rr)}$-invariant.  Spreading around $\bX^0(\bG)$, we obtain the \emph{valuable flower}: \[\bX(\bG) \coloneqq {\bigcup_{P<Q(\rr)}\bG^P\big(\bX^0(\bG)\big)}.\]
Similarly, $\bX_\up \subset \bX(\bG)$ is obtained by spreading around $\bX_\up^0$. We enumerate petals of $\bX(\bG)$ from left-to-right as $\bX^i$ so that $\bX^i\subset \bW(i,\bG)$, where $\bW(i)$ are puzzle pieces from Theorem~\ref{thm:parab prepacm}.

Recall that the secondary renormalization~\eqref{eq:SRM} admits a thickening to a quadratic-like map. For every $\bG\in \bMM_0$, the quadratic-like germ of~\eqref{eq:SRM} can be presented as a quadratic-like map 
\begin{equation}
\label{eq:QL domain bO}
\bG^{Q(\rr,\ss)}\colon \bO \to \bO'
\end{equation} 
such that
\begin{itemize}
\item $\bO'\Subset \bW$;
\item $\overline \bO'\setminus \intr \bW_{\rr,\ss}$ is in a small neighborhood of $\bgamma_0$,
\item $\bO'$ depends continuously on $\bG\in \bMM_0$; and
\item the \emph{unbranched condition} holds:
\begin{equation}
\label{eq:unbr cond:bMM}
 \Post(\bG)\cap \bO'\subset \frakY^0,
 \end{equation} 
\end{itemize}
where~$\frakY^0$ is the secondary small filled-in Julia set containing $0$, see~\S\ref{ss:ValFlower:prep}.
(For the unbranched condition, observe that $\Post(\bF)$ is within the cycle of ternary filled-in Julia sets; these sets are disjoint from $\bgamma_0$.)  

The \emph{enlarged valuable flower} is 
\[\widetilde \bX(\bG) \coloneqq \bX(\bG)\cup \bigcup_{P\le Q(\rr,\ss)} \bG^P(\bO).\] 
Since different  sets in $(\bG^P(\bO))_{P\le Q(\rr,\ss)}$ intersect only in a small neighborhood of $\bgamma$, different petals of  $\bX(\bG)$ are in different petals of  $\widetilde \bX(\bG)$. Petals in $\widetilde\bX$ are enumerated as $\widetilde \bX^i, i\in \Z,$ so that $\bX^i\subset\widetilde \bX^i\subset \bW(i)$. Similarly, we set \[\widetilde \bX_\up\coloneqq \bX_\up(\bG)\cup \bigcup_{P\le Q(\rr,\ss)} \bG^P(\bO)\sp \text{ and }\sp \widetilde \bX_\up^i\coloneqq \widetilde \bX_\up\cap \widetilde \bX^i.\]

For $\bG_n\coloneqq \RR^n(\bG)\in \bMM_n$ we define $ \bX(\bG_n)$ and  $\widetilde \bX(\bG_n)$ to be the $A_\str^{-n}$-images of $\bX(\bG)$ and  $\widetilde \bX(\bG)$. Since $\bO'(\bG_n)$ and $\bO(\bG_n)$ are rescalings of $\bO'(\bG)$ and $\bO(\bG)$, there is an $\varepsilon>0$ such that 
\[\mod\big( \bO'(\bG_n) \setminus \bO(\bG_n) \big)\ge \varepsilon>0\] for all $n$ and $\bG_n\in \bMM_n$. The upper parts $\bX_\up(\bF) $ and $\widetilde \bX_\up (\bF)$ are defined accordingly.

\subsection{Valuable flowers of pacmen} 
\label{ss:ValFlow:pacman}
Consider a pacman $f\in \BB$ from a Banach neighborhood of $f_\str$ where the pacman renormalization $\RR\colon \BB\dashrightarrow \BB$ is defined, see~\S\ref{sss:HypSelfOper}.  By a \emph{flower} we mean a connected set $T\ni \alpha$ such that $T\setminus \{\alpha\}$ has finitely many connected components, called \emph{petals}.  We say that the flower $T$ is \emph{nice} if $f$ has a Siegel triangulation $\bDelta$ (see~\S\ref{sss:SiegTriang}) with a wall approximating $\partial Z_{f_\str}$ such that different petals of $T$ are in different triangles of $\bDelta$. As a consequence if $f=\RR f_{-1}$ (or more generally, $f=\RR_{\Sieg} f_{-1}$ for an operator $\RR_\Sieg$ as in~\S\ref{sss:from SM to SP}), then the flower $T$ admits a full lift $T_{-1}$ to the dynamical plane of $f_{-1}$, see Lemma~\ref{lem:SiegTriangLifting}.

\begin{figure}
\begin{tikzpicture}
\node at (0,0){\includegraphics[scale=0.6]{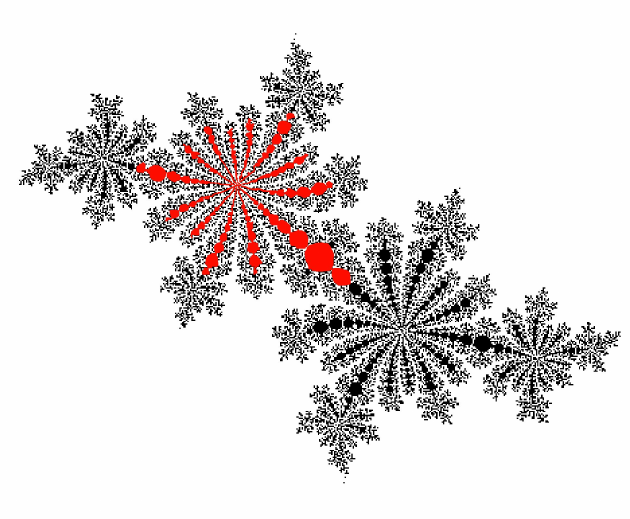}};
\node at (6,0){\includegraphics[scale=0.725]{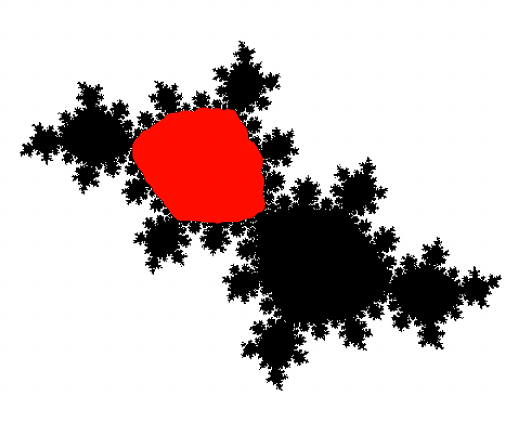}};

\end{tikzpicture}

\caption{Illustration to Theorem~\ref{thm:proj of val flow}: the valuable flower (red) of the $5/13$ Rabbit tuned with the Basilica approximates the golden Siegel disk (also red).}
\label{Fig:thm:proj of val flow}
\end{figure}

\begin{thm}
\label{thm:proj of val flow}
Consider the dynamical plane of $f_\str$ and fix a small open neighborhood $N_\str$ of $\overline Z_\str$. For $n\ll 0$ and every $\bG_n\in \bMM_n$ the flowers $\widetilde \bX(\bG_n)$ and $\bX(\bG_n)$ projects to the dynamical plane of $g_n$. Moreover, these projections $\widetilde X(g_n)$ and $X(g_n)$ are nice flowers within $N_\str$.

More precisely, $\bG_n$ has a fundamental domain $\bS^\new$ such that if a petal $\widetilde \bX^i$ intersects $\bS^\new$, then $\widetilde \bX^i\subset \bS^\new$. The projection $\widetilde X$ of $\widetilde \bX\cap \bS^\new$ is within the Siegel triangulation $\bDelta(g_n)$ of $g_n$, and the projection $\widetilde X_\up$ of $\widetilde \bX_\up$ is within the wall $\bPi(g_n)$ of $\bDelta(g_n)$. The triangulation $\bDelta(g_n)$ has a full lift to $\bDelta(g_m)$ for $m\le n$; for big $m\ll n<0$, the triangulation $\bDelta(g_m)$ approximates $\overline Z_\str$. 
\end{thm}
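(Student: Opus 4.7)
\medskip
\textbf{Proof sketch.}
The plan is to combine the scaling contraction coming from $A_\str$ with the quotient construction of Theorem~\ref{thm:quot of fund domain} and the Siegel triangulation lifting of Lemma~\ref{lem:SiegTriangLifting}. Since $\RR$ acts on $\Unst\simeq\C$ as multiplication by $\lambda_\str$ with $|\lambda_\str|>1$, the copy $\bMM_n=\RR^n(\bMM_0)$ shrinks to $\{\bF_\str\}$ as $n\to-\infty$; hence for $n\ll 0$ each $\bG_n\in\bMM_n$ is arbitrarily close to $\bF_\str$, and the corresponding pacman $g_n$ is arbitrarily close to $f_\str$. In particular, $g_n$ admits a Siegel triangulation $\bDelta(g_n)\subset N_\str$ whose wall $\bPi(g_n)$ approximates $\partial Z_\str$; by Lemma~\ref{lem:SiegTriangLifting} its full lift $\bDelta(\bG_n)$ with wall $\bPi(\bG_n)$ is a triangulated neighborhood of $\balpha(\bG_n)$ with the same approximation property.

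First I would localize $\widetilde\bX(\bG_n)=A_\str^{-n}(\widetilde\bX(\bG))$. Since $A_\str$ is a contraction toward $0$ and fixes $\balpha$, iterating $-n\ge 0$ times pulls the ``finite'' portion of the flower (the cycle $\bgamma(\bG)$ and the secondary filled-in Julia sets $\frakY_i$) into an arbitrarily small Euclidean neighborhood of $0$, while preserving the accumulation of $\frakZ_i$ at $\balpha$ (Lemma~\ref{lem:frakZ_i land balpha}). Combined with the unbranched bound \eqref{eq:unbr cond:bMM}, this forces the whole $\widetilde\bX(\bG_n)$ to lie in an arbitrarily small wall-neighborhood of $\balpha(\bG_n)$. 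In particular, for $n\ll 0$ each petal $\widetilde\bX^i(\bG_n)$ is contained in a single triangle $\Delta(j_i,\bG_n)$ of $\bDelta(\bG_n)$, and its upper part $\widetilde\bX^i_\up(\bG_n)$, being a fixed-distance neighborhood (in the $\bG$-coordinates) of the cycle $\bgamma(\bG)$ and hence scaled by $A_\str^{-n}$ toward $0$, is captured inside the corresponding rectangle $\Pi(j_i,\bG_n)$ of the wall once $n$ is sufficiently negative. This step uses that the combinatorial rotation number $\pp_n/\qq_n$ of $g_n$ controls simultaneously the permutation of petals $\bX^i\mapsto\bX^{i+\pp_n}$ and the permutation of triangles $\Delta(j)\mapsto\Delta(j+\pp_n)$ of $\bDelta_n$ (Lemma~\ref{lem:RR:rot numb act}), so the assignment $i\mapsto j_i$ is equivariant and depends only on the rotation combinatorics.

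Next I would construct the fundamental domain. Pick a dividing pair $\gamma_0^\new,\gamma_1^\new$ in the dynamical plane of $g_n$ such that $\gamma_0^\new,\gamma_1^\new$ land at $\alpha(g_n)$ and pass between two consecutive triangles of $\bDelta(g_n)$ that have been arranged to contain no petal of the (yet-unconstructed) flower; this is possible because the localization step shows that in the lifted picture the petals sit in a sparse subset of the triangles, and in the downstairs picture one has the freedom to isotope $\gamma_0^\new,\gamma_1^\new$ away from any finite set of small arcs near $\alpha$. The full lift $\bS^\new$ of the resulting sector is a fundamental domain for $\bG_n$ in the sense of \S\ref{ss:fund domain for bF}, and by construction every petal $\widetilde\bX^i$ meeting $\bS^\new$ lies entirely in $\bS^\new$. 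Theorem~\ref{thm:quot of fund domain} then yields a canonical conformal identification of the quotient of $\bS^\new$ (modulo $\bbf_+\circ\bbf_-^{-1}$) with $V$; projecting $\widetilde\bX(\bG_n)\cap\bS^\new$ along $\brho^\new$ produces $\widetilde X(g_n)\subset V$, with the equivariance of Step~1 ensuring that the image lies in $\bDelta(g_n)\subset N_\str$ and the image of $\widetilde\bX_\up(\bG_n)$ lies in $\bPi(g_n)$. Niceness of the flower follows because distinct petals $\widetilde\bX^i\cap\bS^\new$ lie in distinct triangles of $\bDelta(\bG_n)$, hence their projections lie in distinct triangles of $\bDelta(g_n)$.

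\medskip
\textbf{Main obstacle.} The delicate point is the combinatorial matching in Step~1 and Step~3: one must verify that after scaling by $A_\str^{-n}$ the petals of $\widetilde\bX(\bG_n)$ are genuinely confined to \emph{individual} triangles of the Siegel triangulation, not straddling two adjacent ones. This requires the petals to be thinner than the triangles near $\balpha$, which in turn rests on the uniform modulus bound $\mod(\bO'(\bG_n)\setminus\bO(\bG_n))\ge\varepsilon>0$ from~\S\ref{ss:ValFlower:prep} together with the geometric control on $\bDelta(\bG_n)$ inherited from Lemma~\ref{lem:SiegTriangLifting}. Once this separation is established, the rest of the argument is a clean application of the fundamental-domain quotient construction.
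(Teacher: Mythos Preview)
Your endgame—adjust the fundamental sector and apply Theorem~\ref{thm:quot of fund domain}—agrees with the paper, but the localization step has a real gap. The valuable flower $\widetilde\bX(\bG_n)$ has \emph{infinitely many} petals $\widetilde\bX^i$, one in every puzzle piece $\bW(i,\bG_n)$ for $i\in\Z$; they are not ``a sparse subset of the triangles''. Your contraction argument via $A_\str^{-n}$ controls only the single petal $\widetilde\bX^0\ni 0$ (and the tail $\frakZ_k\to\balpha$ inside that petal). It says nothing about the remaining $\widetilde\bX^i$: the cycle $\bgamma(\bG_0)$ has one point in each $\bW(i,\bG_0)$ and these march off to infinity, so after rescaling the petals $\widetilde\bX^i(\bG_n)$ still spread over the whole triangulation $\bDelta_0(\bG_n)$. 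What is needed is that each $\widetilde\bX^i\setminus\bQ_{-n}$ is small \emph{uniformly in $i$} and localized near a specific point of $\partial\bZ_\str$; only then can $\partial\bS^\new$ be threaded between two adjacent petals.

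The paper gets this uniform localization by a completely different mechanism: it compares the puzzle pieces of $\bG_n$ to the primary \emph{wakes} $\bW_P(\bF_\str)$ of the fixed prepacman built in \S\ref{s:Dyn F_str}. After relabeling $\bW(i)$ by its landing time $P=P(i)$ and setting $\bW^\#_P(\bG_n)=A_\str^{-n}\bW_{\tt^n P}(\bG_0)$, one decomposes each piece further into $\bW^\bullet_P\cup\bW^\pm_P\cup\bW^a_P\cup\bW^b_P$ and shows, using the holomorphic motion of the escaping set (Lemma~\ref{lem:Esc bG_n is close to Fstr}) together with the identification of the critical point of $\bG_n^P\colon\bW^1_P\to\bW_0$ with $c_P(\bF_\str)$ (Lemma~\ref{lem:c_S is correct}), that $\bW^{\out}_P(\bG_n)$ is $C^0$-close to $\bW_P(\bF_\str)$ for each fixed $P$ (Lemma~\ref{lem:bW:bG to bFstr}). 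Wake-shrinking for $\bF_\str$ (Lemma~\ref{lem:wake shrinks}) then handles large $P$, and for the finitely many bounded $P$ one pulls back the shrinking of $\widetilde\bX^0_\up(\bG_n)$ via $\bG_n^{P(i)}$. The outcome (Corollary~\ref{cor:contr bW'}) is that every $\widetilde\bX^i\setminus\bQ_{-n}$ lies in an $\varepsilon$-neighborhood of the single point $c_{P(i)}(\bF_\str)\in\partial\bZ_\str$, which is exactly what the fundamental-domain construction needs. The modulus bound $\mod(\bO'\setminus\bO)\ge\varepsilon$ you cite plays no role in this localization.
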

\begin{rem}The valuable flower $X(g_n)$ is a forward invariant set containing the postcritical set of $g_n$. The purpose of $X(g_n)$ is to encode the hybrid class of $g_n$, i.e.~the combinatorial rotation numbers of dividing periodic cycles, as well as hybrid classes of secondary small filled-in Julia sets. 
\end{rem}
\begin{proof}
Let us first give an outline of the proof. We also slightly simplify the notations in the outline. In the actual proof, $\bZ_\str$ is replaced by the renormalization triangulation $\bDelta_{-n}(\bG_n)$.

In the dynamical plane of $\bG\in \bMM_0$, we re-enumerate the puzzle pieces $\bW(i,\bG), i\in \Z$, from Theorem~\ref{thm:parab prepacm} by the ``landing time.'' Namely, for every $\bW(i)$ there is a unique minimal $P(i)\in \PT$ such that $\bG^{P(i)}\big(\bW(i) \big)\supset \bW(0)$, see details in \S\ref{sss:Decomp bW_P}. We write 
\begin{equation}
\label{eq:P(i)}
\bW_{P(i)} \coloneqq \bW(i)\sp\sp \text{ and }\sp\sp i\big( P(i)\big)\coloneqq i.
\end{equation}

For $n\in \Z$, we have $\bW_P (\bG_n)= A_\str ^{-n}\bW_{\tt^n P}(\bG)$ by~\eqref{eq:ren:F:F_n}. Recall from~\S\ref{ss:wakes} that $\bW_P(\bF_\str)$ denotes the primary wake of generation $P$. In Lemma~\ref{lem:bW:bG to bFstr}, we will show that $\bW_P(\bG_n)\setminus \bZ_\str$ converges to $\bW_P(\bF_\str)$ for every fixed $P>0$. Combing with Lemma~\ref{lem:wake shrinks}, we obtain that if $P$ is sufficiently big, then $\bW_P(\bG_n)\setminus \bZ_\str$ is small in the spherical metric of $\widehat \C$.

The main step is to show that $\widetilde \bX^i\setminus \bZ_\str$ is uniformly small. If $P(i)$ is big, then $\widetilde \bX^i\setminus \bZ_\str$ is small because $\bW_{P(i)}\setminus \bZ_\str$ is small. If $P(i)$ is bounded but $\bW_{P(i)}$ is far from $0$ in $\C$, then $\widetilde \bX^i\subset \bW_{P(i)}$ is small in the spherical metric of $\widehat \C$ (in fact, this case can be ignored). There are finitely many $i$ in the remaining case; i.e.~when $P(i)$ is bounded and $\bW_{P(i)}$ is not far from $0$ in $\C$. Let us fix such $i$. For $n\ll 0$, we have $\widetilde \bX^i_\up(\bG_n)  \ni c_{P(i)}(\bG_n)$, where $c_{P(i)}(\bF_\str)\in \partial \bZ_\str$ is the unique critical point of $\bF_\str^{\ge 0}$ in $\partial \bZ_\str$ of generation $P(i)$. We have a map \[\bG_n^{P(i)} \colon  \widetilde\bX^i_\up\to  \widetilde\bX^0_\up\ni 0 .\]
Since $\widetilde \bX^0_\up(\bG_n)=A_\str^{-n} \big(\widetilde\bX^0_\up(\bG_0)\big)$ shrinks to $0$ as $n\to -\infty$, we obtain that $\widetilde \bX^i_\up(\bG_n)$ shrinks to $c_{P(i)}$. This allows us to deduce that  $\widetilde \bX^i\setminus \bZ_\str$ is small because the remaining part of $\widetilde \bX^i$ is ``below'' $\widetilde \bX^i_\up$, compare with Figure~\ref{fig:Decomp of bW_P}.

Recall that the parabolic prepacman $\bF_\rr$ is the root of the limb containing $\bMM_0$, see Figure~\ref{Fig:Rec Small copy}. For $n\le 0$, the parabolic pacman 
\begin{equation}
\label{eq:f rr_n}
f_{\rr_n}\coloneqq \RR^n f_\rr
\end{equation}
 has rotation number $\rr_n=\pp_n/\qq_n\in\Q$ satisfying $\cRRc^{-n \mm }(\rr_n)=\rr$, see Lemma~\ref{lem:RR:rot numb act}. Then $\bF_{\rr_n}$ is the root of the limb containing $\bMM_n$ and $\rr_n$ is the combinatorial rotation number of $g_n$. Since  $\widetilde \bX^i\setminus \bZ_\str$ is small for every $i$, we can adjust the fundamental domain $\bS(\bG_n)$ (see~\S\ref{ss:fund domain for bF}) so that the new $\bS^\new(\bG_n)$ contains the petals $\widetilde \bX^i$ for $i\in \{-\pp_n+1,\pp_n+2,\dots , \qq_n-\pp_n\}$. By Theorem~\ref{thm:quot of fund domain}, $\widetilde \bX$ projects to the dynamical plane of $g_n$ and $\bS^\new$ projects into $\bDelta_\Sieg\coloneqq \bDelta_0^\new(g_n)$; applying antirenormalizations, we can assume that $\bDelta_\Sieg$ approximates $\overline Z_\str$. 
 
 Let us now provide the details.
\subsubsection{The escaping set} Up to replacing $\bMM_0$ with its antirenormalization, we can assume that ${\bMM_0\subset \UnstLoc}$. In particular, every $\bG\in \bMM_0$ has a renormalization triangulation $\bDelta_0(\bG)$ with the wall $\bPi_0(\bG)$ that bounds $\bQ_0=\bDelta_0\setminus \bPi_0$, see~\S\ref{ss:MP:walls}.

Fix a big $T\in \PT$. For $n\ll0$ sufficiently big,  the holomorphic motion $\tau$ from  Lemma~\ref{lem:esc set moves hol} induces an equivariant map 
\begin{equation}
\label{eq:h_n:close to 1}
h_n \colon \Esc_T(\bG_n)\to \Esc_T(\bF_\str).
\end{equation}

Applying the $\lambda$-lemma, we obtain:
\begin{lem}
\label{lem:Esc bG_n is close to Fstr}
For $n\ll 0$ sufficiently big (depending on $T$), \eqref{eq:h_n:close to 1} is close to the identity with respect to the spherical distance.\qed
\end{lem}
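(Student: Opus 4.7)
The plan is to combine the hyperbolicity of $\RR$ at $\bF_\str$ with joint continuity of the holomorphic motion $\tau$. Since $\RR$ acts on the one-dimensional unstable manifold $\Unst \simeq \C$ as multiplication by $\lambda_\str$ with $|\lambda_\str|>1$, the copies $\bMM_n = \RR^n(\bMM_0)$ satisfy $\diam(\bMM_n) = |\lambda_\str|^n \diam(\bMM_0) \to 0$ as $n\to-\infty$. Hence $\bG_n \to \bF_\str$ in $\Unst$, uniformly over the choice of $\bG \in \bMM_0$.

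Next I would invoke the $\lambda$-lemma. By Lemma~\ref{lem:esc set moves hol}, $\tau$ is a holomorphic motion of $\Esc_T(\bF_\str)$ over the connected neighborhood $\bUU \subset \DEsc_T$ of $\bF_\str$, with $\tau(\cdot,\bF_\str)=\id$. Applying Slodkowski's extension theorem on a complex disk in $\bUU$ around $\bF_\str$, one extends $\tau$ to a holomorphic motion of $\widehat{\C}$ that is jointly continuous in $(z,\bG)$. By joint continuity, the normalization $\tau(\cdot,\bF_\str)=\id$, and compactness of $\widehat{\C}$ in the spherical metric, for every $\varepsilon>0$ there is a neighborhood $V$ of $\bF_\str$ in $\Unst$ such that
\[
  \sup_{z\in\widehat{\C}} d_{\mathrm{sph}}\bigl(z,\tau(z,\bG)\bigr)<\varepsilon \quad \text{for all } \bG\in V.
\]

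Combining these: given $\varepsilon>0$, choose $V$ as above, and then choose $n_0$ so that $\bMM_n \subset V$ for all $n\leq n_0$. Then for every $n\leq n_0$, every $\bG\in\bMM_0$, and every $z\in \Esc_T(\bG_n)$, the identity $z = \tau(h_n(z),\bG_n)$ yields $d_{\mathrm{sph}}(z, h_n(z))<\varepsilon$. The only subtle ingredient is the global extension of the motion via Slodkowski; alternatively, one can apply the plain $\lambda$-lemma directly on the compact set $\Esc_T(\bF_\str) \cup \{\binfty\}\subset\widehat{\C}$ (together with a normalization by three reference points of $\Esc_T(\bF_\str)$ whose motions tend to them as $\bG_n\to\bF_\str$), which is sufficient since only spherical closeness of $h_n$ to the identity on this set is required.
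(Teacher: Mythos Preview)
Your proposal is correct and follows the same approach as the paper, which treats the lemma as an immediate consequence of the $\lambda$-lemma (the statement is preceded by the single phrase ``Applying the $\lambda$-lemma, we obtain:'' and carries a bare \qed). You have simply filled in the details: $\bMM_n \to \bF_\str$ by hyperbolicity of $\RR$, and the motion $\tau$ is jointly continuous by the $\lambda$-lemma, so $h_n \to \id$ uniformly in the spherical metric.
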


\subsubsection{Decomposition $\bW_P=\bW^a_P\cup \bW_P^1\cup \bW^b_P$} 
\label{sss:Decomp bW_P}
Recall from Theorem~\ref{thm:parab prepacm} that $\bW(i,\bF_\rr)$ denotes the puzzle piece bounded by $\bR^{i-1}$ and $\bR^i$. Since $\bMM_0\subset \bVV$ (see Figure~\ref{Fig:Rec Small copy}), the puzzle pieces $\bW(i, \bG)$ exist (in the sense of~\S\ref{ss:puzzles}) in the dynamical plane of $\bG\in \bMM_0$. Let us first discuss the combinatorics of $\bW(i,\bG)$. For $i\not= 0$, the \emph{generation of $\bW(i,\bG_0)$} is the unique ``landing time'' $P(i)<Q(\rr)$ such that 
\begin{equation}
\label{eq:bWi to bW0}
\bG^{Q(\rr)-P(i)}\colon \bW(0)\to \bW(i)
\end{equation}
 is a univalent map.  Let us re-label the puzzle pieces by their landing time:
   \[\bW_{P(i)}(\bG)\coloneqq \bW(i,\bG)\sp\sp \text{ and }\sp\sp \bW_0(\bG)\coloneqq \bW(0,\bG).\]
Then~\eqref{eq:bWi to bW0} takes the form 
\begin{equation}
\label{eq:bW_0 to bW_P}
\bG^{Q(\rr)-P}\colon \bW_0\to \bW_P.
\end{equation}

Recall from Lemma~\ref{lem:decomp:bW(bF)}  that $\bW=\bW^1\cup \bW^a \cup \bW^b$.  Let $\bW^1_P$, $\bW^a_P$, and $\bW^b_P$ be the images of $\bW^1, \bW^a ,$ and $\bW^b$  under~\eqref{eq:bW_0 to bW_P} respectively. We have a two-to-one map 
\begin{equation}
\label{eq:bW_P to bW_0}
\bG^P\colon \bW^1_P\to \bW_0,
\end{equation}
 while $\bG^P$ maps univalently $\intr \bW^a_P$ and $\intr \bW^b_P$ to two different components of $\C\setminus (\bW\cup \bR^{0})$, see Figure~\ref{fig:Decomp of bW_P}.

\begin{figure}
\begin{tikzpicture}

\begin{scope}[shift={(-8,0)}]

\draw[white, fill=black, opacity =0.1]
(0.86,0.3)
.. controls  (0.9,-0.1 )and  (0.93,-0.6 )..
(0.95,-1) 
.. controls  (0,-1 )and  (0,-1 )..
(-0.95,-1)
.. controls   (-0.93,-0.6 ) and (-0.9,-0.1 )..
(-0.86,0.3) ;

\draw (0.86,0.3) -- (-0.86,0.3)
(-0.95,-1)--(0.95,-1);

\coordinate (b1) at (2,1.5);
\coordinate (c1) at (-2,1.5);

\draw[] (-0.915,-0.2 ).. 
controls (-2.5,0.5)  and (-4,2.8) ..
(0,3)
..controls (4,2.8)  and (2.5,0.5) ..
(0.915,-0.2 );

\draw[red] (-2,-0.35)--(2,-0.35)
 (-2,-0.45)--(2,-0.45);

 \draw(0,4)--(0,3);

\draw (0,3) .. controls  (1,2 )and (1,-1)..
(1,-4);
\draw (0,3) .. controls  (-1,2 )and (-1,-1)..
(-1,-4);

\node[above] (a1) at (0,-0.35){};

\node at(0,1.4) {$\bW_P^+$};

\node at(-1.5,1) {$\bW_P^a$};
\node at(1.5,1) {$\bW_P^b$};

\node[above] at (0,-1){$\bW_P^\bullet$};

\node at(0,-3) {$\bW_P^-$};

\end{scope}

\begin{scope}
\draw(0,4)--(0,0);

\draw[white, fill=black, opacity =0.1] (0,0) .. controls  (0.5,-0.3 )and  (0.6,-0.7 )..
 (0.7,-1)
.. controls  (0.2,-1 )and  (-0.2,-1 )..
 (-0.7,-1)
 .. controls   (-0.6,-0.7 ) and (-0.5,-0.3 ).. 
 (0,0);

\draw  (0.7,-1)-- (-0.7,-1);

\draw[red] (-2,-0.35)--(2,-0.35)
 (-2,-0.45)--(2,-0.45);

\draw (0,0) .. controls  (1,-0.5 )and (1,-3)..
(1,-4);
\draw (0,0) .. controls  (-1,-0.5 )and (-1,-3)..
(-1,-4);

\node[above] (a2)at (0,-1){$\bW_0^\bullet$};

\node at (0,-3){$\bW_0^-$};

\draw[blue] (a1) edge[->,bend left] node[below]{$2:1$} (-0.1,-0.3);

\draw[blue] (b1) edge[->,bend left] (-0.5,1);
\draw[blue] (c1) edge[->,bend left] (1,1.5);

\node[below,red] at (2,0.15){$\bPi_0$};

\node[] at (0.4,3){$\bR^\bullet$};
\end{scope}

\end{tikzpicture}
\caption{The decomposition of $\bW_P$, see also Figure~\ref{fig:Decomp of bW}. The regions $\bW_P^\bullet$ and $\bW_P^-\cup \bW_P^+$ are the preimages of $\bW_0^\bullet$ and $\bW_0^-$ under $\bG^P\colon \bW^1_P\to \bW_0$ respectively. The region $\intr\big(\bW_P^a\cup \bW_P^b\big)$ is the preimages of $\C\setminus \big(\bW_0\cup \bR^\bullet\big)$ under $\bG^P\colon \bW_P\to \C$. We define $\bW_P^\out\coloneqq \bW^\bullet_P\cup \bW^+_P\cup \bW^a _P\cup \bW^b_P$.}
\label{fig:Decomp of bW_P}
\end{figure}

\subsubsection{Decomposition $\bW^1_P=\bW^+_P\cup \bW_P^\bullet \cup \bW^-$} 
We still consider the dynamical plane of $\bG\in \bMM_0$. Choose an auxiliary univalent $2$-wall $\bA\subset \bQ$, see~\S\ref{sss:walls bPi} and~\S\ref{ss:MP:walls}. We denote by $\bQ'$ the connected component of $\bQ\setminus \bA$ attached to $\balpha$.  Let us fix a closed topological disk $\bW^\bullet_0 $ in $ \bW_0$ such that
\begin{itemize}
\item[(A)] $\bW_0 \setminus \bQ'\subset  \bW^\bullet_0$;
\item[(B)] $\displaystyle \bW^\bullet_0 \supset \bX^0_\up $ (recall that $\bX^0_\up =\bigcup_i \frakY_i$).
\end{itemize}
We set $\bW^-_0\coloneqq \bW_0\setminus \bW^\bullet_0$.

Since $\bW^\bullet_0$ contains the unique critical value of~\eqref{eq:bW_P to bW_0}, the preimage of $\bW^\bullet_0$ under~\eqref{eq:bW_P to bW_0} consists of a single connected component, call it  $\bW^\bullet_P$. On the other hand, the preimage of $\bW^-_0$ under~\eqref{eq:bW_P to bW_0}  consists of two connected components, we denote them by $\bW_P^-$ and $\bW_P^+$  specified so that $\bW_P^-$ is attached to $\balpha$, see Figure~\ref{fig:Decomp of bW_P}.

Finally, we define $\bW^\out_0\coloneqq \bW^\bullet _0$ and \[\bW_P^\out\coloneqq \bW^\bullet_P\cup \bW^+_P\cup \bW^a _P\cup \bW^b_P\sp\sp \text{ for }P>0.\]

For $n\in \Z$ and $P<Q(\rr)$, we define:
\begin{equation}
\label{eq:bW^hash _P}
 \bW_P(\bG_n) \coloneqq \bW_{ P} (\bG_n)\coloneqq A_\str ^{-n}\bW_{\tt^{n} P}( \bG).
\end{equation}
 The regions $\bW_P^1(\bG_n),\bW_P^\bullet(\bG_n), \bW_P^a(\bG_n), \bW_P^b(\bG_n), \bW_P^-(\bG_n),\bW_P^+(\bG_n), \bW_P^\out(\bG_n)$ are defined accordingly using~\eqref{eq:bW^hash _P}.

As in~\S\ref{ss:MP:walls}, $\bQ'_{-n}(\bG_n)$ is the rescaling of $\bQ'(\bG)$. It follows from Condition~(A) that
\[\bW_P(\bG_n) \setminus \bQ'_{-n}\subset  \bW^\out_P(\bG_n) \sp\sp\text{ and } \sp\sp \bW^-_P(\bG_n) \subset  \bQ'_{-n}(\bG_n).\]
Condition (B) implies 
\begin{equation}
\label{eq:bX^i is in bWbullet}
\widetilde \bX^i(\bG_n) \subset \bW_{P(i)}^\bullet\cup \bW_{P(i)}^- (\bG_n)\sp\sp \text{ and }\sp \sp   \widetilde\bX^i_\up(\bG_n) \subset \bW_{P(i)}^\bullet(\bG_n).
\end{equation}

\subsubsection{$\bR^\bullet(\bG_n)$ converges to $\bR^\str(\bF_\str)$ }
We say that arcs $\beta,\gamma\subset \wC$ are \emph{$C^0$-close} if, up to re-parameterization, the functions $\beta, \gamma\colon[0,1]\to \C$ are close with respect to the spherical metric of $\wC$. The $C^0$-closeness for closed curves is defined in the same way. 

Two topological closed disks $D_1,D_2\subset \wC$ are \emph{$C^0$-close} if $\partial D_1,\partial D_2$ are $C^0$-close closed curves. Equivalently, viewing $D_1,D_2$ as injective functions ${D_1,D_2\colon \ovDisk\to \wC}$, the disks $D_1,D_2$ are $C^0$-close if, up to re-parameterization, the corresponding functions are close with respect to the spherical metric of $\wC$.

Let us write:
\[\bR^{\bullet}(\bG_0)\coloneqq \bR^{-1}\cap \bR^0(\bG_0)\sp \text{ and }
\sp \bR^{\bullet}(\bG_n)\coloneqq A_\str^{-n} \bR^{\bullet}(\bG_0).\]

Recall that $\bR^\str(\bF_\str)$ is the ray landing at $0$, see~\S\ref{ss:ExtRayFstr}.
\begin{lem}
If $n\ll0$ is sufficiently big, then $\bR^\bullet(\bG_n)$ is close to $\bR^\str(\bF_\str)$ with respect to the $C^0$-distance.
\end{lem}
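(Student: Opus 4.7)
The plan is to split each of $\bR^\bullet(\bG_n)$ and $\bR^\str(\bF_\str)$ at a large escape time $T\in\PT_{>0}$ into an initial segment lying in $\Esc_T$ and a tail approaching the landing point, and to control these two pieces separately: the initial segments will be matched via the stability of the escaping set (Lemma~\ref{lem:Esc bG_n is close to Fstr}), while both tails will be confined to a small spherical neighborhood of $0$.

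Fix $\varepsilon>0$. By Corollary~\ref{cor:zero chian is arc}, the points of $\bR^\str(\bF_\str)$ are parametrized continuously by escape time, ranging from $0$ at $\binfty$ to $+\infty$ at the landing point $0$, so for $T$ large enough the tail $\bR^\str(\bF_\str)\setminus\Esc_T(\bF_\str)$ lies in the spherical $\varepsilon/3$-ball around $0$. Fix such a $T$. By Lemma~\ref{lem:Esc bG_n is close to Fstr}, for $n\ll 0$ the equivariant map $h_n\colon\Esc_T(\bG_n)\to\Esc_T(\bF_\str)$ induced by the holomorphic motion is $C^0$-close to the identity, and by Lemma~\ref{lem:Esc:Lifting} it preserves the combinatorics of alpha-points and external rays. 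The ray $\bR^\bullet(\bG)$ was characterized in Theorem~\ref{thm:parab prepacm} and Lemma~\ref{lem:decomp:bW(bF)} as the common arc of the rays $\bR^{-1},\bR^0$ landing at $\balpha(\bG)$. Tracing the holomorphic motion from $\bG_n$ back to $\bF_\str$, I would identify $\bR^\bullet(\bG_n)\cap\Esc_T(\bG_n)$ with $\bR^\str(\bF_\str)\cap\Esc_T(\bF_\str)$: both visit the same combinatorial sequence of dominant alpha-points on $\partial\bZ_\str$ near $\binfty$, namely the sequence enumerated in the decomposition \eqref{eq:bRstr:decomp}. This makes the initial segments $C^0$-close in the spherical metric.

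For the tail $\bR^\bullet(\bG_n)\setminus\Esc_T(\bG_n)$: the landing point
\[
\alpha(\bH^0,1/2,\bG_n)=A_\str^{-n}\,\alpha(\bH^0,1/2,\bG_0)=\mu_\str^{|n|}\,\alpha(\bH^0,1/2,\bG_0)
\]
tends to $0$ as $n\to-\infty$, since $|\mu_\str|<1$. The other endpoint of the tail lies at escape time $T$ and, by the previous paragraph, is within spherical distance $\varepsilon/3$ of the corresponding endpoint of $\bR^\str(\bF_\str)$, hence within $\varepsilon/2$ of $0$. To control the geometry of the tail between these two endpoints, I would use that this tail is confined to the primary wake rooted at the alpha-point of generation $T$ through which $\bR^\bullet(\bG_n)$ passes; the corresponding wake of $\bF_\str$ has spherical diameter less than $\varepsilon/3$ for $T$ large by Lemma~\ref{lem:wake shrinks}, and this estimate persists for $\bG_n$ with $n\ll 0$ by the stability of wake structure under $h_n$ (holomorphic motion of its bounding rays). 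Hence the entire tail lies within the spherical $\varepsilon$-ball of $0$, matching the tail of $\bR^\str(\bF_\str)$.

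Combining both parts, suitable parameterizations realize a $C^0$-approximation within $\varepsilon$. The main obstacle is the combinatorial identification in the second paragraph: one must carefully track the rays $\bR^{-1},\bR^0$, which are defined dynamically in the parabolic plane of $\bG_n$ via preimages of $\balpha$ along the petals $\bH^i$, through the holomorphic motion back to their combinatorial origin at $\bF_\str$, and verify that the rescaling $A_\str^{-n}$ used in the definition of $\bR^\bullet(\bG_n)$ is compatible with the self-similarity $A_\str(\bI_{i-1})=\bI_i$ of the fundamental ray segments of the zero ray recorded in~\eqref{eq:I:lem:ray decomp as fund segm}.
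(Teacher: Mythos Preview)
Your split-at-$T$ strategy matches the paper's opening move, and the head is correctly handled via Lemma~\ref{lem:Esc bG_n is close to Fstr}. The gap is in your control of the tail $\bR^\bullet(\bG_n)\setminus\Esc_T(\bG_n)$. You know its two endpoints are near $0$, but you need to rule out a large excursion in between; your proposed mechanism, confining the tail to a single primary wake of small diameter, does not work. First, wakes are rooted at critical points $c_P$, not at alpha-points, and even for $\bF_\str$ the zero ray is not contained in any single primary wake: it runs along shared boundaries of consecutive dominant wakes (see Lemma~\ref{lem:prim wakes:comb} and Figure~\ref{FIg:comb:prim wakes}), jumping from one to the next at each dominant alpha-point. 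Second, and more seriously, the bounding rays $\bR'_{P,\pm}$ of a primary wake have \emph{infinite} generation (they land at $c_P$), so you cannot transport them to the plane of $\bG_n$ via $h_n$, which only identifies $\Esc_T$. Your invocation of ``stability of wake structure under $h_n$'' is exactly what is established later in Lemma~\ref{lem:bW:bG to bFstr}, using the present lemma as input; the argument is circular.

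The paper sidesteps a direct bound on the tail by exploiting the self-similarity $\bR^\bullet(\bG_{n-1})=A_\str\,\bR^\bullet(\bG_n)$. Fix $n$ and a decomposition $\bR^\bullet(\bG_n)=\bL^n\cup\bigcup_{j\le k}\bI_j(\bG_n)$ with the $\bI_j$ in $\Esc_T$, controlled by $h_n$. Passing to $\bG_{n-1}$ peels one more fundamental segment $\bI_{k+1}(\bG_{n-1})\coloneqq A_\str\bI_k(\bG_n)$ off the tail; since $\bI_k(\bG_n)$ was close to $\bI_k(\bF_\str)$ and hence near $0$, and $A_\str$ contracts near $0$, this new segment is close to $\bI_{k+1}(\bF_\str)$. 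Iterating, the uncontrolled tail $\bL^m=A_\str^{n-m}\bL^n$ is driven to $0$ as $m\to-\infty$, while the matched head grows to cover the whole ray. Thus the proof is inductive over the renormalization level rather than a direct geometric estimate at a fixed level.
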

\begin{proof}
Fix a small $\varepsilon>0$. We will show that $\bR^\bullet(\bG_n)$ is $\varepsilon$-close to $\bR^\str(\bF_\str)$ for $n\ll0$.

Recall from~\eqref{eq:bRstr:decomp} that $\bR^\str(\bF_\str)$ decomposes as a concatenation $\displaystyle \bigcup_{j\in \Z} \bI_j$, where $\bI_j$ is a ray segment in the closure of $\Esc_{\tt^j P}\setminus \Esc_{\tt^{j-1} P}(\bF_\str)$ such that $A_\str \bI_j =\bI_{j+1}$. 

By Corollary~\ref{cor:rays meet}, every two rays eventually meet at an alpha-point. Suppose that $\bR^\bullet (\bG_n)$ meets $\bR^\str(\bG_n)$ at $\alpha_n$, where  $\bR^\str(\bG_n)$ is the counterpart of $\bR^\str(\bG_n)$. Then $\alpha_n=A_\str\alpha_{n+1} $ and the $\alpha_n$ tends to $0$ as $n\to -\infty$.

Choose first a big $k\gg 0$, then a sufficiently big $T>0$. For a sufficiently big $n\ll 0$, we can decompose 
\[\bR^\bullet(\bG_n)= \bL^n\cup \bigcup_{j\le k} \bI_j (\bG_n)\]
such that $\displaystyle  \bigcup_{j\le k} \bI_k \subset \Esc_T (\bG_n)$. By Lemma~\ref{lem:Esc bG_n is close to Fstr}, $ \bI_j (\bG_n)$ is $\varepsilon$-close to $\bI_j (\bF_\str)$ with respect to the spherical metric for all $j\le k$. 

Applying $A_\str$, we obtain the decomposition  
\[\bR^\bullet(\bG_{n-1})= \bL^{n-1}\cup \bigcup_{j\le k+1} \bI_j (\bG_{n-1})\]
where \[\bL^{n-1}(\bG_{n-1})=A_\str \bL^n (\bG_n)\sp \text { and }\sp \bI_{k+1}(\bG_{n-1})=A_\str \bI_k(\bG_n).\]
By Lemma~\ref{lem:Esc bG_n is close to Fstr}, for $j\le k$ the ray segment $\bI_j(\bG_{n-1})$ is $\varepsilon$-close to $\bI_j(\bF_\str).$
 Recall that $\bI_k(\bG_n)$ is $\varepsilon$-close to $\bI_k(\bF_\str)$ which is close to $0$ because $k\gg0$. Since $A_\str$ contracts the spherical metric in a neighborhood of $0$, we see that  $\bI_{k+1}(\bG_{n-1})$ is $\varepsilon$-close to $\bI_{k+1}(\bF_{\str})$. 
 
Continuing the process, we obtain the decomposition  
\[\bR^\bullet(\bG_{m})= \bL^{m}\cup \bigcup_{j\le k + n-m} \bI_j (\bG_{m})\]
for $m\le n$, where $\displaystyle \bigcup_{j\le k + n-m} \bI_j (\bG_{m})$ is $\varepsilon$-close to $\displaystyle\bigcup_{j\le k + n-m} \bI_j (\bF_\str)$. Since $\bL^{m}(\bG_m)=A_\str^{n-m} \bL^{n}(\bG_n)$, the chain $\bL^{m}(\bG_m)$ is eventually in a small neighborhood of $0$, and the claim follows.
\end{proof}

\subsubsection{$\bW^\out_P (\bG_n)$ approximates $\bW_P(\bF_\str)$}
Clearly, $\bW^\bullet_0(\bG_n)=A_\str^{-n}\bW_0^\bullet(\bG_0)$ shrinks to $0$  as $n\to -\infty$.

\begin{lem} 
\label{lem:bW:bG to bFstr}
For every $\varepsilon>0$ and $P\in \PT$ the following holds. If $n\ll 0$ is sufficiently big, then $\bW^\out_S(\bG_n)$ is $\varepsilon$-close to $\bW_S(\bF_\str)$ with respect to $C^0$-metric for every $S\le P$. Moreover, $\bW_S^\bullet$ is in the $\varepsilon$-neighborhood of $c_s(\bF_\str)$.
\end{lem}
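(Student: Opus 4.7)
The plan is to split the argument into establishing the \emph{moreover} claim (the collapse of $\bW_S^\bullet$ onto $c_S(\bF_\str)$) and then deducing the full $C^0$-convergence from it together with the ray-convergence available from Lemma~\ref{lem:Esc bG_n is close to Fstr} and the preceding paragraph.

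First I would prove that $\bW_S^\bullet(\bG_n)\subset\ovDisk(c_S(\bF_\str),\varepsilon)$ for $n\ll 0$. By \S\ref{sss:Decomp bW_P} and Lemma~\ref{lem:c_S is correct}, the map $\bG_n^S\colon\bW_S^\bullet(\bG_n)\to\bW_0^\bullet(\bG_n)$ is a $2{:}1$ branched covering with unique critical point $c_S(\bG_n)$, while $\bW_0^\bullet(\bG_n)=A_\str^{-n}\bW_0^\bullet(\bG_0)$ has diameter of order $|\mu_\str|^{|n|}\to 0$. The quadratic-like thickening~\eqref{eq:QL domain bO} together with the unbranched condition~\eqref{eq:unbr cond:bMM} extend the map to a proper degree-two restriction on a domain of uniformly bounded collar modulus, and a standard Koebe-type estimate at the critical point yields $\diam\bW_S^\bullet(\bG_n)=O(|\mu_\str|^{|n|/2})$. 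Since $c_S(\bG_n)\to c_S(\bF_\str)$ by the holomorphic motion of precritical points (\S\ref{ss:impl finct thm}), the moreover claim follows.

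For the $C^0$-convergence, I would observe that $\partial\bW_S^\out(\bG_n)$ is a concatenation of five arcs of external rays: segments of $\bR^{S-1},\bR^S,\bR^a_S,\bR^b_S$ and of the $\bG_n^{-S}$-pullback of $\bR^\bullet(\bG_n)$. All five arcs lie in $\Esc_T(\bG_n)$ for a single escaping time $T=T(P)$ depending only on $P$, because the periods of the bounding rays in the plane of $\bG_0$ are finite and the rescaling $A_\str^{-n}$ together with pulling back at most $S\le P$ times keeps these values bounded uniformly in $n$. By Lemma~\ref{lem:Esc bG_n is close to Fstr}, the equivariant identification $h_n\colon\Esc_T(\bG_n)\to\Esc_T(\bF_\str)$ is uniformly $C^0$-close to the identity for $n\ll 0$, so each of the five arcs sits uniformly near its image under $h_n$.

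Finally I would match these five arcs to the two rays $\bR_{S,\pm}$ bounding $\bW_S(\bF_\str)$. Outside the disk $\ovDisk(c_S(\bF_\str),\varepsilon)$, the two outer arcs converge to $\bR_{S,\pm}$ while the three inner arcs are absent from the limit. Inside the disk, the three inner arcs collapse to $c_S(\bF_\str)$: for the $\bG_n^{-S}$-pullback of $\bR^\bullet$ this follows from $\bR^\bullet(\bG_n)\to\bR^\str(\bF_\str)$ (proved in the preceding lemma) combined with the Koebe estimate of Step~1, and for the arcs of $\bR^a_S,\bR^b_S$ from the same Koebe argument applied to the univalent neighbors $\bW_S^a,\bW_S^b$, which are pullbacks of bounded pieces by the same degree-two map. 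Uniformity in $S\le P$ is automatic, since all quantitative bounds depend only on $P$ and on the uniform modulus furnished by~\eqref{eq:unbr cond:bMM}.

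The main obstacle I anticipate is the last paragraph: rigorously identifying how the four ``corner'' alpha-points of $\bW_S^1(\bG_n)$, at which the rays $\bR^{S-1},\bR^S,\bR^a,\bR^b$ meet pairwise, collapse to the single root $c_S(\bF_\str)$ of the Siegel wake $\bW_S(\bF_\str)$. This is where the combinatorial difference between the parabolic/attracting structure of $\bG_n$ and the irrational Siegel structure of $\bF_\str$ concentrates, and it is essential that the Koebe bound of Step~1 be applied not only to $\bW_S^\bullet$ but simultaneously to each of the adjacent univalent pieces of $\bW_S^\out$.
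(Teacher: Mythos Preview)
Your overall strategy---split $\partial\bW^\out_S$ into a piece near $c_S$ that collapses and a piece in $\Esc_T$ that is controlled by Lemma~\ref{lem:Esc bG_n is close to Fstr}---is exactly the paper's $\beta_1\cup\beta_2$ decomposition. But two steps in your execution are off.

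First, the Koebe step for the ``moreover'' claim is mis-justified. The quadratic-like map in~\eqref{eq:QL domain bO} and the unbranched condition~\eqref{eq:unbr cond:bMM} concern the \emph{secondary} renormalization $\bG^{Q(\rr,\ss)}\colon\bO\to\bO'$ around the small Julia set $\frakY^0$; they say nothing about a collar for $\bG_n^S\colon\bW^\bullet_S\to\bW^\bullet_0$. What you actually need is what the paper does: fix a small disk $D_0\ni 0$ disjoint from $\CV(\bG^S)\setminus\{0\}$ for all $S\le P$ (possible by Lemma~\ref{lem:discr of dyn}); then its degree-two preimages $D_S(\bG)$ move holomorphically over a neighborhood $\bOO\ni\bF_\str$, hence $D_S(\bG_n)\to D_S(\bF_\str)$, and $\bW^\bullet_S(\bG_n)\subset D_S(\bG_n)$ once $\bW^\bullet_0(\bG_n)\subset D_0$. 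This replaces your Koebe estimate and avoids having to control $(\bG_n^S)''(c_S)$ uniformly in $n$.

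Second, your description of $\partial\bW^\out_S$ as ``five arcs of external rays'' is incorrect. The rays $\bR^a_S,\bR^b_S$ lie in the \emph{interior} of $\bW^\out_S$ (they separate $\bW^1_S$ from $\bW^a_S,\bW^b_S$, both of which belong to $\bW^\out_S$), so they are not boundary arcs. Moreover, the arc separating $\bW^\bullet_S$ from $\bW^-_S$ is not a ray segment at all: it is a $\bG_n^S$-preimage of the arbitrarily chosen curve $\partial\bW^\bullet_0\cap\partial\bW^-_0$. Consequently your assertion that all of $\partial\bW^\out_S$ sits in $\Esc_T(\bG_n)$ for some $T=T(P)$ fails for this arc. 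The paper handles this by choosing $T\gg P$ so that $\bR^\bullet(\bG_n)\setminus\Esc_{T-P}(\bG_n)\subset D_0$; then $\beta_1\coloneqq\partial\bW^\out_S\setminus\Esc_T(\bG_n)$ is automatically contained in $D_S(\bG_n)$, and $\beta_2\coloneqq\partial\bW^\out_S\cap\Esc_T(\bG_n)$ is handled by~$h_n$. Once you replace your five-ray decomposition by this $\beta_1\cup\beta_2$ split and use the fixed disk $D_0$ in place of~\eqref{eq:QL domain bO}, the argument goes through.
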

\begin{proof}
By Lemma~\ref{lem:discr of dyn}, we can choose a sufficiently small disk $D_0\coloneqq \ovDisk(\zeta)$ around $0$ and a sufficiently small neighborhood $\bOO$ of $\bF_\str$ such that $D_0$ is disjoint from $\CV(\bG^S)\cup \bG^S(D_0)$ for all $S\le P$. In particular, 
\[ \widetilde D(\bG)\coloneqq \bigcup_{S\le P} \bG^{-S}(D_0) \]
depends holomorphically on $\bG\in \bOO$ and every connected component of $\widetilde D(\bG)$ is a degree two preimage of $D_0$.

For $S\le P$, let $D_S(\bF_\str)$ be the lift of $D$ along $\bF_\str^S\colon c_S\mapsto 0$. For $\bF\in \bOO$, we define $D_S(\bF)$ to be the lift of $D_0$ under $\bF^S$ such that  $D_S(\bF)$ depends holomorphically on $\bF$. Let $c_S(\bF)$ be the unique preimage of $0$ under $\bF^S\colon D_S\to D_0$.
We claim that for a sufficiently big $n\ll 0$, the point $c_S(\bG_n)$ is the unique critical point of $\bG^S_n\colon \bW^1_S\to \bW_0$ for all $S\le P$. Indeed, Lemma~\ref{lem:c_S is correct} asserts such statement for $\bF_\rr$ and for sufficiently small $S$; perturbing $\bF_\rr$ to $\bG\in \bMM_0$ and passing to the antirenormalization, we obtain the required claim.

Choose a sufficiently big $T\gg P$ such that \[\bR^\bullet (\bG_n) \setminus  \Esc_{T-P}(\bG_n) \subset D_0\]
for all $n\ll 0$.

Consider $\bW^\out_S(\bG_n)$ for $S\le P$. We can decompose $\partial \bW^\out_S = \beta_1\cup \beta_2$, where $\beta_1$ and $\beta_2$ are simple arcs satisfying \[\beta_2\coloneqq \partial \bW^\out_S\cap \Esc_T(\bG_n) \sp \text{ and }\sp \beta_1\subset D_S(\bG_n).\] By Lemma~\ref{lem:Esc bG_n is close to Fstr}, $\beta_2(\bG_n)$ is close to \[\beta_2(\bF_\str)=h_n \big(\beta_2(\bG_n) \big)\subset \bW_S(\bF_\str).\] Since \[\beta_1(\bF_\str)\coloneqq \partial \bW_S(\bF_\str)\setminus \beta_2(\bF_\str)\subset D_S(\bF_\str) \] and since $D_S(\bG_n)$ is close to $D_S(\bF_\str)$, we obtain that $\bW^\out_S(\bG_n)$ is close to $\bW_S(\bF_\str)$.
\end{proof}

Combining with Corollary~\ref{cor:wakes are close to bJ}, we obtain:
 
\begin{cor}
\label{cor:contr bW'}
Fix a small $\varepsilon>0$ and then a sufficiently big $n\ll 0$. Then the set $  \bW^\bullet_S(\bG_n)\setminus \bQ'_{-n}(\bG_n)$ is within the spherical  $\varepsilon$-neighborhood of $c_S(\bF_\str)$ for every $S\in \PT$. 
\end{cor}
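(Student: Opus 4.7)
My plan is to split the range of generations $S \in \PT$ into a ``bounded'' regime and a ``tail'' regime, handle the bounded regime by the previous lemma, and handle the tail by combining the self-similarity $A_\str(\bW^\bullet_P(\bG_n)) = \bW^\bullet_{\tt P}(\bG_{n-1})$ with the spherical smallness of deep wakes in the plane of $\bF_\str$.

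First I will use Corollary~\ref{cor:wakes are close to bJ} to fix a threshold $P_0 \in \PT_{>0}$ so that every connected component of
\[
\C \setminus \Bigl( \overline{\bZ_\str} \cup \bigcup_{S \le P_0} \bW_S(\bF_\str) \Bigr)
\]
has spherical diameter less than $\varepsilon/3$. Since distinct primary wakes of $\bF_\str$ have pairwise disjoint interiors and each $\bW_S(\bF_\str)$ with $S > P_0$ has its root $c_S(\bF_\str)$ on $\partial \bZ_\str$, every such wake is contained in one of these small components and is therefore within the spherical $\varepsilon/3$-ball around $c_S(\bF_\str)$. Then I apply Lemma~\ref{lem:bW:bG to bFstr} to the parameters $(P_0,\varepsilon/3)$: there exists $N_0$ such that for $n \le -N_0$ and every $S \le P_0$, the set $\bW^\bullet_S(\bG_n)$ lies in the spherical $\varepsilon/3$-neighborhood of $c_S(\bF_\str)$. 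This immediately settles the claim for $S \le P_0$ (independently of whether we subtract $\bQ_{-n}(\bG_n)$).

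For the tail $S > P_0$ the direct approximation by $\bF_\str$ breaks down, so I will exploit the dynamical self-similarity $A_\str$. Given such $S$, let $k \ge 1$ be the smallest integer with $S/\tt^k \le P_0$; this is well-defined because $\tt > 1$ and $\PT$ is $\tt$-invariant by Lemma~\ref{lem:PT:rot}. Under the identifications $\bW^\#_P(\bG_m) = A_\str^{-m}\bW_{\tt^m P}(\bG)$ one has
\[
\bW^\bullet_S(\bG_n) \;=\; A_\str^{-k}\bigl( \bW^\bullet_{S/\tt^k}(\bG_{n-k}) \bigr),
\qquad
\bQ_{-n}(\bG_n) \;=\; A_\str^{-k}\bigl( \bQ_{-(n-k)}(\bG_{n-k}) \bigr),
\]
and $A_\str^{-k}(c_{S/\tt^k}(\bF_\str)) = c_S(\bF_\str)$ by Lemma~\ref{lem:prop of Astr}. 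Applying the bounded-regime statement to $\bG_{n-k}$ (which is at an even deeper antirenormalization level, so the previous step applies once $n$ is chosen small enough) controls $\bW^\bullet_{S/\tt^k}(\bG_{n-k})$ near $c_{S/\tt^k}(\bF_\str)$; the challenge is that $A_\str^{-k}$ is expanding, so this control alone does not give a spherical $\varepsilon$-bound after rescaling.

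The resolution, and the main obstacle I expect, is precisely the subtraction of $\bQ_{-n}(\bG_n)$. The wall $\bPi_{-n}(\bG_n)$ approximates $\partial\bZ_\str$ and $\bQ_{-n}$ is the ``inside'' of this wall; so $\bW^\bullet_S(\bG_n)\setminus\bQ_{-n}(\bG_n)$ consists only of those portions of the wake that poke out past the wall, and these portions correspond, after applying $A_\str^{-k}$, to the analogous truncated wake $\bW^\bullet_{S/\tt^k}(\bG_{n-k})\setminus\bQ_{-(n-k)}(\bG_{n-k})$. For $n-k$ sufficiently negative the approximation in Lemma~\ref{lem:bW:bG to bFstr} together with the first-step choice of $P_0$ shows this truncated set is contained in a component of the complement in $\bF_\str$'s plane of spherical diameter $<\varepsilon/3$, which in turn sits in a single primary wake $\bW_{S/\tt^k}(\bF_\str)$. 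The key remaining input is that $A_\str^{-k}$ maps the relevant small region to a set within the spherical $\varepsilon$-ball around $c_S(\bF_\str)$; this follows because the expansion of $A_\str^{-k}$ is matched by the geometric shrinking rate of wakes along the cascade $\{\bW_{\tt^j Q}(\bF_\str)\}_{j\ge 0}$, which is the content of Lemma~\ref{lem:wake shrinks} combined with the self-similar structure. Choosing $n \le -N_1$ with $N_1$ large enough to absorb the uniform error for every $S \in \PT$ then yields the claim.
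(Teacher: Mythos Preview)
Your bounded regime ($S\le P_0$) matches the paper's first step. The tail argument, however, has a genuine error in the self-similarity identities. From the definition $\bW^\#_P(\bG_m)=A_\str^{-m}\bW_{\tt^m P}(\bG_0)$ one computes
\[
A_\str^{-k}\bigl(\bW^\bullet_{S/\tt^k}(\bG_{n-k})\bigr)
=A_\str^{-n}\,\bW^\bullet_{\tt^{n-2k}S}(\bG_0),
\]
which is \emph{not} $\bW^\bullet_S(\bG_n)=A_\str^{-n}\bW^\bullet_{\tt^n S}(\bG_0)$ unless $k=0$. The correct relations are $\bW^\bullet_S(\bG_n)=A_\str^{-k}\bigl(\bW^\bullet_{\tt^k S}(\bG_{n-k})\bigr)$ or equivalently $\bW^\bullet_S(\bG_n)=A_\str^{k}\bigl(\bW^\bullet_{S/\tt^k}(\bG_{n+k})\bigr)$. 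Neither helps: the first \emph{increases} the generation to $\tt^k S$, so you never reach the bounded regime; the second replaces $\bG_n$ by the shallower $\bG_{n+k}$, and since $k$ grows with $S$ you lose any uniform choice of $n$. The analogous identity you claim for $\bQ_{-n}$ is wrong for the same reason. The final paragraph's claim that ``the expansion of $A_\str^{-k}$ is matched by the geometric shrinking rate of wakes'' also points the wrong way: $A_\str^{-k}(\bW_S(\bF_\str))=\bW_{S/\tt^k}(\bF_\str)$ is a \emph{larger} wake, not a smaller one.

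The paper bypasses self-similarity entirely for the tail. After handling $S\le P$ via Lemma~\ref{lem:bW:bG to bFstr}, it observes that the finitely many sets $\bW^\out_S(\bG_n)$, $S\le P$, together with $\bQ_{-n}(\bG_n)$, leave only $\varepsilon$-small complementary components (this follows by transferring Corollary~\ref{cor:wakes are close to bJ} from the $\bF_\str$-plane via the same lemma). Any $\bW_S(\bG_n)\setminus\bQ_{-n}$ with $S>P$ is trapped in one of these small gaps; the left-right ordering of the puzzle pieces $\bW_S$ matches the order of the roots $c_S(\bF_\str)$ along $\partial\bZ_\str$, which pins down \emph{which} gap and hence places the set near $c_S(\bF_\str)$. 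This sandwiching argument is what gives uniformity over all $S$ for a single sufficiently negative $n$.
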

\noindent Since  $\widetilde \bX^{i}\setminus \bQ'_{-n}(\bG_n)\subset  \bW^\bullet_S(\bG_n)\setminus \bQ'_{-n}(\bG_n)$ (see~\eqref{eq:bX^i is in bWbullet}), the set  $\widetilde \bX^{i}\setminus \bQ'_{-n}(\bG_n)$ is also in a small neighborhood of $c_S(\bF_\str)$.
\begin{proof}
Choose a sufficiently big $P\gg0$. By Lemma~\ref{lem:bW:bG to bFstr},  $\bW^\bullet_S(\bG_n)\setminus \bQ'_{-n}(\bG_n)$ is within a small neighborhood of $c_S(\bF_\str)$ for every $S\le P$ and every sufficiently big $n\ll0$.

By Corollary~\ref{cor:wakes are close to bJ} and Lemma~\ref{lem:bW:bG to bFstr}, every connected component of 
\begin{equation}
\label{eq:gaps between bWout}
\C\setminus \left( \bQ'_{-n}(\bG_n) \cup \bigcup_{S\le P} \bW^\out_S(\bG_n) \right)
\end{equation} is $\varepsilon$-small. Note that for $S>P$ and $T_1,T_2\le S$, the puzzle $\bW_S(\bG_n)$ is between $\bW_{T_1}(\bG_n)$ and  $\bW_{T_2}(\bG_n)$ with respect to the left-right order if and only if $c_S(\bF_\str)$ is between $c_{T_1}$  and $c_{T_2}$ with respect to $\partial \bZ_\str$. Therefore, for $S>P$, the sets  ${\bW_S(\bG_n)\setminus \bQ'_{-n}}$ and $\bW_S(\bF_\str)$ are in the closures of  $\varepsilon/2$-close components of ~\eqref{eq:gaps between bWout} and~\eqref{eq:gaps between bW:bFstr} respectively. This proves the corollary for $S> P$.
 \end{proof}

As a byproduct, we also obtain:
\begin{lem}
\label{lem:bW is tiling}
For every $\bG\in \bMM_0$, the puzzle pieces $\big(\bW(i,\bG)\big)_{i\in \Z}$ form a partition of $\C$: every $z\in \C$ belongs to some $\bW(i)$.
\end{lem}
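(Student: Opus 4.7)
The strategy is to show that $E \coloneqq \bigcup_{i \in \Z} \bW(i,\bG)$ is both open and closed in $\C$; since $E$ is non-empty (as $0 \in \bW(0,\bG) \subset E$) and $\C$ is connected, this will force $E = \C$. The structural input is that, for $\bG \in \bMM_0$ and by Corollary~\ref{cor:FRM exists in bVV}, the puzzle pieces $\bW(i,\bG)$ and the rays $\bR^i(\bG)$ inherit their combinatorics from the parabolic picture $\bF_\rr$ described in Theorem~\ref{thm:parab prepacm}: the $\bR^i(\bG)$ are pairwise disjoint simple arcs in $\C$ landing at $\balpha$, indexed left-to-right, and $\bW(i,\bG)$ is the closed puzzle piece bounded by the consecutive pair $\bR^{i-1},\bR^i$.

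To see that $E$ is closed, I would argue that the family $\{\bW(i,\bG)\}_{i \in \Z}$ is locally finite in $\C$: the pieces $\bW(i,\bG)$ accumulate only at the boundary point $\balpha$, which sits at ``$-i\infty$'' in the wall topology (see \S\ref{ss:balpha:wall topoloy}) and lies outside $\C$. Hence for any compact $K \subset \C$, only finitely many $\bW(i,\bG)$ meet $K$, and so the union $E$ is closed. For openness, let $z \in E$ with $z \in \bW(i_0,\bG)$. If $z \in \intr\bW(i_0)$ there is nothing to show; otherwise $z$ lies on a ray $\bR^j$ with $j \in \{i_0-1,i_0\}$. Because $\bR^j$ is a simple arc in $\C$, a small round disk around $z$ is locally split by $\bR^j$ into two half-disks; by the definition of the puzzle partition, these two half-disks lie in $\intr\bW(j,\bG)$ and $\intr\bW(j+1,\bG)$ respectively. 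Consequently $z$ has a full Euclidean neighborhood contained in $\bW(j) \cup \bW(j+1) \subset E$, so $E$ is open.

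The main obstacle will be ruling out any ``ghost'' regions in $\C \setminus E$: it must be shown that the two local sides of an arbitrary ray $\bR^j$ at an interior point are captured exactly by the two adjacent puzzle pieces $\bW(j,\bG)$ and $\bW(j+1,\bG)$, with no third region leaking through. This reduces to verifying that no additional ray lands at $\balpha$ between two consecutive $\bR^j,\bR^{j+1}$. For $\bF_\rr$ this is built into Theorem~\ref{thm:parab prepacm} (exactly one periodic ray lands in each repelling sector between consecutive attracting petals); for $\bG \in \bMM_0$ this persists by stability of rays landing at a repelling/parabolic cycle under perturbation, as used in Corollary~\ref{cor:FRM exists in bVV}.
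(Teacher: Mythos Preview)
Your overall strategy (show $E=\bigcup_i\bW(i)$ is open and closed) is reasonable, and the closedness step via local finiteness is fine (it follows from Lemma~\ref{lem:over R is graph}). The gap is in the openness step. You assert that at a non-branching boundary point $z\in\bR^j$ the two local half-disks lie in $\intr\bW(j)$ and $\intr\bW(j+1)$, but this is exactly the ``no ghost component'' assertion you are trying to prove: a priori one of those half-disks could lie in some other complementary component of the tree $\overline\bR$. Your proposed fix---checking that no extra ray lands at $\balpha$ between consecutive $\bR^j,\bR^{j+1}$---does not address this. By construction the puzzle graph consists only of the $\bR^i$, so there are no ``extra'' rays to exclude; and the uniqueness of landing sectors at $\balpha$ only controls the picture \emph{near $\balpha$}. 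Once $z$ sits on a shared segment of the ray tree (beyond the alpha-point where $\bR^j$ has merged with $\bR^{j-1}$ or $\bR^{j+1}$), neither local side need be $\bW(j)$ or $\bW(j+1)$, and identifying which $\bW(i)$ actually appear there is a global question about the embedding of the infinite tree in $\C$.

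The paper takes a different route that sidesteps this difficulty. It first reduces to showing that the ``big'' sectors $\bW_{i,j}$ (the component of $\C\setminus(\bR^i\cup\bR^j)$ attached to $\balpha$) exhaust $\C$; inside any $\bW_{i,j}$ only finitely many rays $\bR^k$, $i<k<j$, subdivide it, and the finite-tree-in-a-disk picture forces $\overline{\bW_{i,j}}=\bigcup_{i<k\le j}\bW(k)$. The exhaustion $\bigcup_{i<j}\bW_{i,j}=\C$ is then obtained geometrically: pass to the antirenormalization $\bG_n$, use the comparison of the outer pieces $\bW^\out_P(\bG_n)$ with the wakes $\bW_P(\bF_\str)$ (Lemma~\ref{lem:bW:bG to bFstr}, Corollary~\ref{cor:contr bW'}) together with the known tiling for $\bF_\str$ (Corollary~\ref{cor:no ghost limb}), so that after rescaling any $z$ is trapped by $\bQ_{-n}\cup\bW^\out_S\cup\bW^\out_T$ for suitable $S,T$. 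Your connectedness argument could likely be repaired by carrying out the finite reduction to $\bW_{i,j}$ explicitly, but as written the openness claim is circular.
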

\begin{proof} 
Recall that the rays $\bR^i$ land at $\balpha$, and the union $\bigcup_{i\in \Z}\bR^i$ is a tree in $\C$ (follows from Lemma~\ref{lem:over R is graph} and Theorem~\ref{thm:parab prepacm}). For $i<j$, let $\bW_{i,j}$ be the unique component of $\C\setminus \big(\bR^i\cup \bR^j\big)$ attached to $\balpha$ in the following sense: $\bW_{i,j}$ without a small neighborhood of $\balpha$ is precompact in $\C$. Since \[\overline {\bW_{i,j}}= \bigcup_{i<k\le j} \bW(k),\]
it is sufficient to show that $\displaystyle{\bigcup _{i<j}\bW_{i,j}= \C}$.

The case $z\in \bQ$ is straightforward. For $z\not \in\bQ$, we can surround $A_\str^{-n} z$ by $\bQ'_{-n} \cup \bW^\out _S\cup \bW_T^\out(\bG_n)$, where  $S,T$ are certain fixed power-triples and $n\ll0$.
\end{proof}

\begin{figure}[t!]
\[\begin{tikzpicture}

\draw (-6,0)--(5,0);

\node[above] at (-5.5,0) {$\partial \bZ_\str$};

\draw[red] (-3,-2)--(-3,3)--(3,3)--(3,-2);
\node[below,red] at (-2,2.5) {$\bS$};

\node[below] at (0,0) {$0$}; 
\filldraw (0,0) circle (1.5pt);

\node[below left] at (-3,0) {$\bF^L_\str(0)$};
\filldraw (-3,0) circle (1.5pt);

\node[below left] at (3,0) {$\bF^R_\str(0)$};
\filldraw (3,0) circle (1.5pt);

\node[above , blue] at (-2.5,0) {$c_A$};
\filldraw[blue] (-2.5,0) circle (1.5pt);

\node[above , blue] at (3.5,0) {$c_B$};
\filldraw[blue] (3.5,0) circle (1.5pt);

\end{tikzpicture}\]
\caption{The critical points $c_A$ and $c_B$ are near $\bF^R_\str(0)$ and $\bF^L_\str(0)$ respectively.}
\label{Fg:c_A c_B}
\end{figure}

\subsubsection{Fundamental domain}
Using Corollary~\ref{cor:contr bW'}, we can now construct a fundamental domain $\bS^\new$ as required.  Write $L=(0,1,0)$, $R=(0,0,1)$, and note that $\bS(\bF_\str)\cap \partial \bZ_\str$ is the arc $J\coloneqq \big[\bF_\str^L(0), \bF_\str^R(0)\big]\subset \partial \bZ_\str.$ Choose $c_A$ and $c_B$ close to $\bF_\str^L(0)$ and $\bF_\str^R(0)$ respectively such that $A+R=B+L$, see Figure~\ref{Fg:c_A c_B}. We can assume that $c_A\in J$ while $c_B\not\in J$. 

Since $\widetilde \bX^i\setminus \bQ'_{-n}(\bG_n)$ is in a small neighborhood of $c_{P(i)} (\bF_\str)$ (using notations from~\eqref{eq:P(i)}), we can adjust $\bS(\bG_n)$ such that the new fundamental domain $\bS^\new(\bG_n)$ (see~\S\ref{ss:fund domain for bF}) contains $\widetilde \bX^i$ for all $i\in \{i(A),i(A)+1,\dots , i(B)-1\}\eqqcolon I$ and $\bS^\new(\bG_n)$ is disjoint from $\widetilde \bX^i$ for all $i\not\in I$.  (To satisfy Conditions~\eqref{cond:2:fund domain} and~\eqref{cond:3:fund domain} in ~\S\ref{ss:fund domain for bF}, we can assume that $\lambda(\bS^\new)$ follows the ray $\bR^{i(A)}$ and  $\rho(\bS^\new)$ follows the ray  $\bR^{i(B)-1}$.) We also have \[\widetilde \bX^i_\up(\bG_n) \subset \bigcup_{P} \bW^\bullet_P (\bG_n)\subset \bPi_0(\bG_n). \]
 \end{proof}

\section{Proof of the main results}
\label{s:proof:main thms}
By Theorem~\ref{thm:small copy M1}, the unstable manifold $\Unst$ of $\bF_\str$ contains a sequence $(\bMM_n)_{n\le 0}$ of copies of the Mandelbrot sets such that $ \RR \bMM_{n-1}=\bMM_n$. Every $\bMM_n$ naturally corresponds to a small copy $\Mandel_n\subset \Mandel$, see~\eqref{eq:chi_Pacm:comb}. We will show that $(\Mandel_n)_{n\le m}$ for $m\ll 0$ is a sequence satisfying Theorem~\ref{thm:main}.

\subsection{Stable lamination} 
\label{ss:StabLamin}
For $n\ll0$, we define $\MM_n$ to be the set of pacmen $g_n\in \WW^u$ with $\bG_n\in \bMM_n$. By Theorem~\ref{thm:proj of val flow}, every $g_n\in \MM_n$ has a nice valuable flower $X(g_n)$ and a nice extended valuable flower $\widetilde X(g_n)$ in a small neighborhood of $\overline Z_\str$. Since the flowers are nice, $\widetilde X(g_{n-1})$ is a full lift of $\widetilde X(g_{n})$. 
 
For $g_n\in \MM_n$, we denote by $\frakY^0(g_n)$ and $O(g_n)$ the projections of $\frakY^0(\bG_n)$ and $\bO(\bG_n)$, see~\S\ref{ss:ValFlower:prep}. Then $\frakY^0(g_n)$ is the non-escaping set of the quadratic-like map
\begin{equation}
\label{eq:q-l:g}
g^{\aa_n}_n\colon O\to g_n^{\aa_n}(O) = O',
\end{equation} 
where $\rr_n= \pp_n/\qq_{n}$ is the combinatorial rotation number of $\alpha(g)$ and $\aa_n\coloneqq \qq_{n}\qq_{\ss}$. By construction, $g_n^i(O)\subset \widetilde X(g)$ for $i\le \aa_n$. Since $\widetilde X(g_n)$ is the projection of $\widetilde \bX(\bG_n)$, the unbranched condition~\eqref{eq:unbr cond:bMM}  implies the unbranched condition for~\eqref{eq:q-l:g}: 
\begin{equation}
\label{eq:unbr cond:MM} 
\Post(g_n)\cap O'\subset \frakY^0(g_n). 
\end{equation}
By Lemma~\ref{lem:RR:rot numb act}, 
\begin{equation}
\label{eq:rr_n:charact}
\cRRc^\mm (\rr_{n-1}) =\rr_n.
\end{equation}

Recall from \S\ref{sss:not:SatCopies} that $\Mandel_{\rr(n),\ss,\kk}$ denotes the ternary satellite copy of $\Mandel$ with rotation parameters $\rr(n),\ss,\kk$. Let us consider the canonical homeomorphism  
\[\strai_\Pacm\colon \MM_n\to \Mandel_{\rr(n),\ss,\kk}\eqqcolon \Mandel_n\] between two copies of the Mandelbrot sets. Then
\begin{equation}
\label{eq:chi_Pacm:comb}
\mRRc^\mm \circ \strai_{\Pacm}(g)=\strai_{\Pacm} \circ \RR(g),\sp\sp g\in \bigcup_{n\ll0 } \MM_n,
\end{equation}
where $\mRRc$ is the molecule map, see~\S\ref{ss:MolecMap}. We write $\mRR\coloneqq \mRRc^\mm$; then $ \strai_{\Pacm}$ conjugates $\RR $ to $\mRR$.

\begin{rem}
\label{rem: pacman str map}
We believe that $\chi_\Pacm$ on $\displaystyle{\bigcup_{n\ll 0} \MM_n}$ can be extended to a ``pacman straightening map'' defined on the connectedness locus of the space of pacmen. Such a result is related to the Full Hyperbolicity of neutral renormalization and Conjecture~\ref{conj:P-P relation}.
\end{rem}

\subsubsection{Lamination $\bFol_\bbn$}
Fix a big $\bbn\ll0$ and consider $g\in \MM_\bbn$. For a pacman $f\in \BB$ close to $g$, we set $O'(f)\coloneqq O'(g)$ and we set $O(f)$ to be the lift of $O'(f)$ under $f^{\aa_\bbn}$ along the orbit of $f^{\aa_\bbn}\colon \frakY^0(f)\to \frakY^0(f)$; this is well defined because $f$ is close to $g$. We obtain a quadratic-like map
\begin{equation}
\label{eq:q-l:forall f}
\RR_{\QL}^{\bullet 2} (f)\coloneqq f^{\aa_\bbn}\colon O(f)\to  O'(f).
\end{equation} Let $\NN_\bbn$ be a small Banach neighborhood of $\MM_\bbn$. Since $\MM_\bbn$ is within a partial secondary copy of the Mandelbrot set (see Figure~\ref{Fig:Rec Small copy}), \eqref{eq:q-l:forall f} defines analytic operators into the space of quadratic-like germs $\QG$, see~\S\ref{ssss:QL anal ren}: 
\[\RR_{\QL}^{\bullet 2}\colon \NN_\bbn \to \QL \sp\sp \text{ and }\sp\sp \RR_{\QL}^{\bullet 3}\coloneqq \RR_{\QL}\circ \RR_{\QL}^{\bullet 2}\colon \NN_\bbn \to \QL,\]
where $\bchi\circ \RR_{\QL}^{\bullet 3} (\MM_\bbn) =\Mandel$ and $\bchi\circ \RR_{\QL}^{\bullet 2} (\MM_\bbn)=\Mandel_\kk\subset \Mandel$ -- see Remark~\ref{rem:RR_QL^2:strai} below.

We denote by $\frakY^0(f)$ the non-escaping set of~\eqref{eq:q-l:forall f}. Slightly shrinking $O'(f)$, if necessary, we can assume that the unbranched a priori bounds holds for~\eqref{eq:q-l:forall f}:
\[ \mod \left( O'(f)\setminus  O(f)\right)\ge \varepsilon_{\MM_\bbn}=\varepsilon_{\bMM_0},\sp\sp\sp\sp O(f)\cap \Post(f)\subset \Kfilled \left(\RR_{\QL}^{\bullet 2} (f)\right)\]
 because they hold for~\eqref{eq:q-l:g}.

\begin{rem}
\label{rem:RR_QL^2:strai}
The operator $\RR_{\QL}^{\bullet 2} $ should be viewed as the composition $\RR_\QL\circ \RR_{\QL}^{\bullet}$, where $ \RR_{\QL}^{\bullet} f$ is a degenerate quadratic-like map associated with the first renormalization map~\eqref{eq:FRM:bG}. It follows that  
\begin{equation}
\label{eq:RR_QL^2:strai}
 \mRR_{\QL}^2\circ \strai_{\Pacm}(g)= \strai \circ \RR_\QL^{\bullet 2}(g)\in \Mandel_\kk,\sp\sp g\in \MM_\bbn
\end{equation}
where $\strai$ is the quadratic-like straightening map, see~\S\ref{ssss:QL anal ren}.
\end{rem}

 Recall that we often identify a lamination with its support.
 \begin{lem}
 \label{lem:Foliat}
 For $g\in \MM_\bbn$ and $p\coloneqq \strai_{\Pacm}(g)$, 
define $\Fol_{p}$ to be the set of $f$ close to $g$ such that~\eqref{eq:q-l:forall f} is hybrid equivalent to~\eqref{eq:q-l:g}.
 The set  
  \[\bFol_\bbn\coloneqq \{\Fol_{p}\}_{p\in \Mandel_\bbn}\]
 forms a codimension-one lamination with complex-analytic leaves in a small neighborhood of $\MM_\bbn$.
 \end{lem}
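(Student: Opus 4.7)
The strategy is to pull back the quadratic-like hybrid lamination $\bFol_{\QL}$ (from \S\ref{ssss:QL anal ren}) via the operator $\RR^{\bullet 2}_{\QL}$, and then verify that the pullback is genuinely a codimension-one analytic lamination, which reduces to a transversality check. By construction, $f\in \Fol_p$ if and only if $\RR^{\bullet 2}_{\QL}(f)$ lies in the hybrid class $\Fol_p^{\QL}\in \bFol_{\QL}$ containing $\RR^{\bullet 2}_{\QL}(g)$ for $g=\strai_{\Pacm}^{-1}(p)$; equivalently $\Fol_p = \big(\RR^{\bullet 2}_{\QL}\big)^{-1}\big(\Fol_p^{\QL}\big)$.

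First I would verify the operator-theoretic setup. The analytic renormalization operator $\RR^{\bullet 2}_{\QL}\colon \BB \dashrightarrow \QG$ is compact analytic on a Banach neighborhood of $\MM_\bbn$: the domain $O(f)$ is obtained as a holomorphic pullback of the fixed disk $O'(f)=O'(g)$ along the iterate $f^{\aa_\bbn}$, and by the compactness property of pullbacks of fixed topological disks (as in~\cite{DLS}*{\S 2.4}) this yields a compact analytic map into $\QG$. The unbranched condition~\eqref{eq:unbr cond:MM} persists in a neighborhood of $\MM_\bbn$ by continuity, since the postcritical set varies continuously and $\frakY^0(g)$ is uniformly separated from $\partial O'(g)\setminus \{\alpha\}$. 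By the Douady--Hubbard--Lyubich theory, the codimension-one analytic lamination $\bFol_{\QL}$ exists in a neighborhood of $\strai^{-1}(\Mandel_\tt)\subset \QG$, so its pullback under $\RR^{\bullet 2}_{\QL}$ produces a collection of complex-analytic subvarieties in a neighborhood of $\MM_\bbn$.

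The main step, and the only real issue, is transversality: I need to show that $\RR^{\bullet 2}_{\QL}$ is transverse to $\bFol_{\QL}$ at each $g\in \MM_\bbn$, so that the preimage is a genuine codimension-one \emph{analytic} lamination (rather than a higher-codimension object or one with singular leaves). Equivalently, the tangent map to $\RR^{\bullet 2}_{\QL}$ at $g$ must project nontrivially onto the one-dimensional transverse direction of $\bFol_{\QL}$. This is precisely the statement that the composition $\strai\circ \RR^{\bullet 2}_{\QL}\colon \MM_\bbn \to \Mandel_\tt$ is a local homeomorphism. But by identity~\eqref{eq:RR_QL^2:strai}, this composition equals $\mRR_{\QL}^2\circ \strai_{\Pacm}$, and both factors are homeomorphisms between copies of the Mandelbrot set (the former by the Douady--Hubbard straightening theorem, the latter by the pacman straightening of \S\ref{ss:StabLamin} combined with \eqref{eq:chi_Pacm:comb}). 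Hence $\RR^{\bullet 2}_{\QL}$ is transverse to $\bFol_{\QL}$ along $\MM_\bbn$, and by continuity it remains transverse in a neighborhood.

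Combining these steps, the pullback $\bFol_\bbn = \big(\RR^{\bullet 2}_{\QL}\big)^*\bFol_{\QL}$ is a codimension-one lamination with complex-analytic leaves defined in a small neighborhood of $\MM_\bbn$, and by construction the leaf through $g$ with $\strai_{\Pacm}(g)=p$ is exactly $\Fol_p$. The hard part, as noted, is the transversality verification; once that is in hand, the rest of the argument is formal pullback of an analytic lamination under a submersion.
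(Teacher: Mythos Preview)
Your proposal is correct and follows essentially the same approach as the paper: pull back the quadratic-like hybrid lamination $\bFol_{\QL}$ under the analytic operator $\RR^{\bullet 2}_{\QL}$, and verify that the pullback has codimension-one analytic leaves by checking transversality via the fact that $\strai\circ\RR^{\bullet 2}_{\QL}$ restricted to $\MM_\bbn$ is a homeomorphism onto its image. The paper phrases the transversality step slightly differently---writing each leaf locally as $\phi^{-1}(0)$ for an analytic $\phi$ and observing that $\phi\circ\RR^{\bullet 2}_{\QL}$ has an isolated zero on the one-dimensional transversal $\WW^u$ (citing the argument of \cite{ALM}*{Theorem 4.9})---but this is exactly your transversality claim unwound, so the two proofs are the same.
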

\begin{proof}
Recal from~\S\ref{ssss:QL anal ren} that the hybrid classes form a codimension-one lamination $\bFol_{\QL}$ of the connectedness locus $\Conn$ with complex codimension-one analytic leaves. We need to show that the pullback $\bFol_\bbn=\left(\RR_{\QL}^{\bullet 2}\right)^*\left(\bFol_{\QL} \right)$ is again a lamination of the same type. We will use an argument from~\cite{ALM}*{Theorem 4.9}.

Consider a local leaf $\Fol'$ of $\bFol_\QL$ intersecting $\RR_{\QL}^{\bullet 2}(\MM_\bbn)$. In a small neighborhood of  $\RR_{\QL}^{\bullet 2}(\MM_\bbn)$, the leaf $\Fol'$ is the zero set of some analytic function $\phi\colon \QL\dashrightarrow \C$. Note that $\Fol'$ intersects $\RR_{\QL}^{\bullet 2}(\MM_\bbn)$ at a single point. By the Inverse Function Theorem, 
\[\left(\RR_{\QL} ^{\bullet 2} \right)^{-1}(\Fol')= (\phi\circ \RR_{\QL}^{\bullet 2})^{-1}(0)\]
is a codimension-one analytic manifold in a small neighborhood of $\MM_\bbn$. Varying the $\Fol'$ we obtain the pairwise disjoint closed leaves $\left(\RR_{\QL} ^{\bullet 2} \right)^{-1}(\Fol')$ that are transverse to $\MM_\bbn$; these leaves form a lamination by the $\lambda$-lemma.
\end{proof}

    Since the local dynamics is structurally stable at $\alpha(g)$, by shrinking a neighborhood of $\MM_\bbn$, we can assume that the flower $X(f)$ exists and depends holomorphically on $f\in \Fol_{p}$; i.e.~certain preimages of $\frakY^0(f)$ assembly into the flower $X(f)$ in the same pattern as certain preimages of $\frakY^0(g)$ assemble into the flower $X(g)$. Indeed, by continuity, every preimage $\frakZ'\subset X(g)$ of $\frakY^0(g)$ of bounded generation has the corresponding preimage $\frakZ'(f)$ of $\frakY'(f)$ such that $\frakZ'(f)$ is close to $\frakZ'(g)$. Since the linear coordinate at $\alpha$ is structurally stable, every preimage of $\frakZ'\subset X(g)$ of $\frakY^0(g)$ has a counterpart $\frakZ'(f)$ if $\frakZ'$ is close to $\alpha$.
    
The \emph{extended flower} $\widetilde X(f)$ is $X(f)\cup \bigcup_{i=0}^{\aa_n} f^i(O)$.  Since $f$ is close to $g$, the flower $\widetilde X(f)$ is also in a small neighborhood of $\overline Z_\str$. 
\subsubsection{Lamination $\bFol$} For a fixed $\bbn\ll0$, we defined the lamination $\bFol_\bbn$ in Lemma~\ref{lem:Foliat}. For $m\le \bbn$ and $p\in \Mandel_m$, we define
\[\Fol_{p}\coloneqq \{f\in \BB\mid \RR^{\bbn-m}(f)\in \Fol_{\mRR^{\bbn-m}(p)}\},\] 
Since $\RR$ is hyperbolic,
 \begin{equation}
\label{eq:ScThm:bfol} 
\bFol_m\coloneqq\{\Fol_{p}\mid p\in \Mandel_m\} \sp \text{ and } \sp \bFol\coloneqq  \{\WW^s\}\cup  \bigcup_{m\le \bbn}\bFol_{m}
\end{equation} form codimension-one stable laminations in a neighborhood of $f_\str$. A pacman $f\in \Fol_{p}$ with $p\in \MM_m$ has nice flowers $X(f)$ and $\widetilde X(f)$ that are the full lifts of $X(\RR^{\bbn-m} f)$ and $\widetilde X(\RR^{\bbn-m} f)$ respectively. The flowers $X(f)$  and $\widetilde X(f)$ satisfy the same conditions as $X(\RR^{\bbn-m} f)$ and $\widetilde X(\RR^{\bbn-m} f)$. In particular, $X(f)$ and $\widetilde X(f)$ are in a small neighborhood of $\overline Z_\str$; and all pacmen in $\Fol_{p}$ are hybrid conjugate in neighborhoods of their valuable flowers.  

Let us write \[p_{\str}=p_{c(\theta_\str)}\eqqcolon \strai_\Pacm (f_\str)\sp \text{  and }\Fol_{p_\str}=\Fol_{\str}\coloneqq \WW^s,\]
where $p_{\str}=p_{c(\theta_\str)}\in \Mandel$ is the unique quadratic polynomial on the boundary of the main hyperbolic component with rotation number $\theta_\str$. Since $\theta_\str$ has bounded type, $p_{\str}$  is hybrid conjugate to $f_\str$ on neighborhoods of their closed Siegel disks, see~\S\ref{sss:from SM to SP}.
We obtain the parameterization of leaves of $\bFol$ by \[\Mandel'\coloneqq \{p_\str\}\cup \bigcup_{m\le \bbn}\Mandel_m\] such that
\begin{equation}
\RR(\Fol_p) \subset \Fol_{\mRR^\mm(p)},\sp\sp p\in \Mandel'.
\end{equation}

\begin{proof}[Proof of the {\bf scaling} theorem (the first part of Theorem~\ref{thm:main})]Since $p_\str$ and $f_\str$ are hybrid conjugate in neighborhoods of their Siegel disks, there is a compact analytic renormalization operator $\RR_2\colon\UU\dashrightarrow \BB$ from a small neighborhood of $p_\str$ in the space of quadratic polynomials to a small neighborhood of $f_\str$, see~\S\ref{sss:from SM to SP}. Since maps in a small neighborhood of $p_\str$ have different multipliers at their $\alpha$-fixed points, the image of the slice $\UU$ is transverse to the lamination $\bFol$ in a small neighborhood of $f_\str$. (Otherwise, the multiplier map in $\UU$ will be a covering near the Siegel value.)


The operator $\RR_2$ acts on the rotation angles of indifferent maps as $\cRRc^{k\mm}$ for some $k\in \N$. Recall that $\mRRc$ denotes the molecule map,~\S\ref{ss:MolecMap}. We claim that for every $p,\  \widetilde p\coloneqq \mRRc^{k\mm}( p)\in \Mandel'$, we have $\RR_2( p) \in \bFol_{\widetilde p}$. Indeed, let us define $g_{p}$ to be the unique intersection of $\Fol_{p}$ with $\RR_2(\UU)$. We define $\widetilde p\in \UU$ to be the preimage of $g_p$ via $\RR_2$. The nice flower $\widetilde X(g_p)$ lifts to the dynamical plane of $\widetilde p$; we denote the lift by $\widetilde X(\widetilde p)$. Since $\widetilde p$ is a quadratic polynomial, the valuable flower $\widetilde X(\widetilde p)$ uniquely determines $\widetilde p$. 
 Comparing the combinatorial rotation numbers at the $\alpha$-fixed points, we obtain $\widetilde p = \mRRc^{-k\mm} (p).$

 Since the holonomy along $\bFol$ is asymptotically conformal \cite[Appendix 2, The $\lambda$-lemma (quasi- conformality)]{L:FCT}, the hyperbolicity of $\RR$ and the holonomy along $\bFol$ imply the scaling result.
\end{proof}

\subsection{Homoclinic configuration}
  \label{ss:HomConf}

Recall that the operator $\RR_{\QL}^{\bullet 2}\colon \NN_\bbn\to \QL$ and $\RR_{\QL}^{\bullet 3}\colon \NN_\bbn\to \QL$ on a neighborhood of $\bFol_\bbn$ are defined by~\eqref{eq:q-l:forall f}. Set 
\begin{itemize}
\item $  \RR_\QL^{\bullet 2} (g)\coloneqq  \RR_\QL^{\bullet 2} \circ \RR^{\bbn-m} (g)$ for $g\in \bFol_m$ with $m\le \bbn$; 
\item $\RRR=\RR_\QL^{\bullet 3}\coloneqq \RR_\QL\circ \RR_\QL^{\bullet 2}$, and
\item $\MM_\cp\setminus \{\text{cusp}\}\coloneqq \RR_\QL^{\bullet 3} \big(\MM_\bbn\setminus \{\text{cusp}\}\big)=\RR_\QL^{\bullet 3} \big(\MM_m\setminus \{\text{cusp}\}\big)$,
\end{itemize}
 where $\RR_\QL$ is the quadratic-like operator, see~\S\ref{ssss:QL anal ren}. By Theorem~\ref{thm:small copy M1}, $\MM_\cp$ is a copy of the Mandelbrot set, and $\strai\colon \MM_\cp\to \Mandel$ is the canonical straightening homeomorphism. Note that the renormalization change of variables of $\RR_\QL^{\bullet 3}\mid \bFol_\bbn$ is linear, but the renormalization change of variables of $\RR_\QL^{\bullet 3}\mid \bFol_m$ is non-linear for $m<\bbn$.

  By construction and~\eqref{eq:RR_QL^2:strai}, 
\begin{equation}
 \strai \circ \RR_{\QL}^{\bullet 3}(g) = \mRR_\QL^3\circ  \strai_\Pacm(g),\sp\sp g\in \bFol.
\end{equation}

\begin{figure}
\begin{tikzpicture}

\begin{scope}[line width=1.2pt]
\draw (0,-2) -- (0,5)
(-2,0) -- (3,0)
(6,0)--(8,0);


\end{scope}

\draw[blue,line width=2.5pt ] (-0,3.5)--(-0,4.5);

\node(Mn)[ right,red] at  (-0,4) {$\NN_\bbn$};
\node [ left,blue] at  (-0,4) {$\MM_\bbn$};

\draw[line width=2pt,blue] (7,-1.5) -- (7,2.5);

\node[ right,blue] at (7,1){$\MM_\cp$};

\draw[dashed,red]  (7,0.5) ellipse (1.3 and 2.3);

\draw[dashed,red]  (7,0) ellipse (0.5 and 0.6);

\node(QL)[ left,red] at (7,2){$\QL$};

\draw (Mn) edge[->,bend left] node[above right]{$\RRR=\RR_\QL^{\bullet 3}$} (QL);

\filldraw (7,0) circle (1.5pt);
\node[below right ] at (7,0){$g_\str$};
\node(AA)[above left,red ] at (7,0){$\AA$};

\filldraw (2,0) circle (1.5pt);

\draw[dashed,red]  (2,0) ellipse (0.4 and 1);
\draw[blue,dashed](2,-0.8) --(2,0.9);

\draw (AA) edge[->,bend right]  (2.2,0.5);
\node[above] at(4.7,1.1) {$\RR_\Sieg\eqqcolon \RR^\tau$};

\draw[dashed,red ]  (0.2,1.8) ellipse (3.5 and 4.5);
\node[red] at (-2.3,2.6){(pacmen)};
\node[red,above] at (-2.3,2.8){$\BB$};

\draw[dashed,red] (0,4) ellipse (0.75 and 0.85);

\filldraw (0,0) circle (1.5pt);
\node[below left ] at (0,0){$f_\str$};

\node[left] at (-2,0){$\WW^s$};

\node[above] at (0,5){$\WW^u$};
\end{tikzpicture}
\caption{The space of pacmen $\BB$, the quadratic-like renormalization operator $\RR_\QL^{\bullet 3}\colon \NN_\bbn\to \QL$ defined on a neighborhood of $\MM_\bbn\setminus \{cusp\}$, and a Siegel renormalization operator $\RR_\Sieg\colon \AA\to \BB$ defined on a neighborhood $\AA$ of $g_\str$.} 
\label{fig:R Sieg}
\end{figure}

\subsubsection{Extension of $\bFol$}
Denote by $g_\str\in \MM_\cp$ the unique Siegel map on the main hyperbolic component of $\MM_\cp$ such that $g_\str$ is hybrid equivalent to $f_\str$. Equivalently, $\strai(g_\str)=p_\str=\strai_\Pacm(f_\str)$. By~\S\ref{sss:from SM to SP}, there is a compact analytic renormalization operator $\RR_{\Sieg}\colon \AA\to \BB$, where $\AA$ is a Banach neighborhood of $g_\str$, see Figure~\ref{fig:R Sieg}.

\begin{lem}
The stable lamination $\bFol$ admits a pullback via $\RR_\Sieg$. For all sufficiently big $m<0$, all leaves of the lamination $\RR_\Sieg^*( \bFol_m)$  transversally intersect $\MM_\cp$.
\end{lem}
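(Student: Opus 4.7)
The plan is to reduce both assertions to the single transversality statement that $\RR_\Sieg$ meets $\bFol$ transversally at $g_\str$. By openness of transversality under analytic maps, once this is verified at $g_\str$ it propagates to all of $\AA$ after shrinking, at which point the preimage $\RR_\Sieg^{-1}(\Fol_p) = (\phi_p \circ \RR_\Sieg)^{-1}(0)$ of each leaf (locally cut out by the vanishing of an analytic functional $\phi_p\colon \BB \dashrightarrow \C$) becomes a codimension-one analytic submanifold of $\AA$ with nonvanishing differential, and these submanifolds assemble into the codimension-one analytic lamination $\RR_\Sieg^*(\bFol)$. This is the content of the first assertion.

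To establish the transversality at $g_\str$, I will use that each leaf $\Fol_p$ near $\WW^s$ is cut out by a functional whose differential at $f_\str$ factors through $T_{f_\str}\BB/T_{f_\str}\WW^s$. By the hyperbolicity of $\RR$ at $f_\str$ (\S\ref{sss:HypSelfOper}), this quotient is canonically identified with $T\WW^u$, which is in turn parametrized by the multiplier $\ee(\theta)$ of the $\alpha$-fixed point. Hence transversality reduces to showing that the induced map on rotation numbers
\[\theta \longmapsto \theta(\RR_\Sieg g),\]
defined on indifferent QL germs near $g_\str$, has nonzero derivative at $\theta_\str$. By the construction of $\RR_\Sieg$ in \S\ref{sss:from SM to SP}, this action coincides with an iterate $\cRRc^{\bbk\mm}$ of the prime sector renormalization, where $\bbk$ is the number of prime sector renormalizations required to bring the rotation number of $g_\str$ to $\theta_\str$; by Lemma~\ref{lem:lambda:t*t}, its derivative at $\theta_\str$ equals $\lambda_\str^{\bbk}>1$, which is nonzero. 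This completes the proof of the first assertion and shows that $\RR_\Sieg$ is a local analytic submersion onto the transversal direction of $\bFol$.

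For the second assertion, note that $\MM_\cp$ is a holomorphic slice of $\AA$ through $g_\str$ on which the $\alpha$-multiplier varies nontrivially along the main hyperbolic component (the canonical homeomorphism $\strai \colon \MM_\cp \to \Mandel$ identifies the tangent line $T_{g_\str}\MM_\cp$ with the parameter direction of $\Mandel$ at $p_\str$). The composition $\strai_\Pacm \circ \RR_\Sieg\restrict{\MM_\cp}$ agrees with $\mRR^{\bbk\mm}\circ \strai\restrict{\MM_\cp}$, whose derivative at $g_\str$ is $\lambda_\str^{\bbk}\ne 0$. Thus $D\RR_\Sieg(g_\str)\big(T_{g_\str}\MM_\cp\big)$ projects nontrivially onto $T\WW^u$, showing that $\MM_\cp$ is transverse to the leaf $\RR_\Sieg^*(\WW^s)$ at $g_\str$. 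By Theorem~\ref{thm:small copy M1} combined with the hyperbolicity of $\RR$, the copies $\Mandel_m$ shrink to $\{p_\str\}$ as $m\to-\infty$, so the leaves $\Fol_p$ with $p \in \Mandel_m$ converge locally to $\WW^s$ in $\BB$; pulling back by $\RR_\Sieg$ and invoking openness of transversality, each leaf of $\RR_\Sieg^*(\bFol_m)$ intersects $\MM_\cp$ transversally for all sufficiently negative $m$.

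The main obstacle is the explicit identification of the action of $\RR_\Sieg$ on rotation numbers with the combinatorial iterate $\cRRc^{\bbk\mm}$ and the matching of the $\alpha$-multiplier functional with the transversal direction to $\WW^s$. Both rely on the compatibility of the Siegel renormalization built in \S\ref{sss:from SM to SP} with the sector-renormalization combinatorics of \S\ref{s:SectRenorm}; once this identification is in place, the remaining steps are routine applications of analyticity, continuity, and the openness of transversality for analytic maps between Banach manifolds.
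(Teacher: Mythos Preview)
Your approach matches the paper's: both establish the pullback via the same inverse-function-theorem argument as Lemma~\ref{lem:Foliat}, then verify transversality at $g_\str$ via the multiplier/rotation-number functional and extend by openness. Your explicit use of $(\cRRc^{\bbk\mm})'(\theta_\str)=\lambda_\str^{\bbk}\neq 0$ is exactly the mechanism underlying the paper's unexplained assertion that ``$\WW^s$ transversally intersects $\RR_\Sieg(\WW')$''.

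There is one imprecision worth correcting. You write that ``$\MM_\cp$ is a holomorphic slice of $\AA$'' and refer to $T_{g_\str}\MM_\cp$. But $\MM_\cp$ is a copy of the Mandelbrot set, not a complex submanifold, and the straightening map $\strai$ is only a homeomorphism, so it does not furnish a tangent-line identification. The paper avoids this by working instead with the genuine one-complex-dimensional curve $\WW'\subset\RR_\QL^{\bullet 3}(\WW^u)$ that contains $\MM_\cp$: one shows the leaves of $\RR_\Sieg^*(\bFol_m)$ are transverse to $\WW'$, and then separately observes that the intersection points lie in the non-escaping set $\MM_\cp$ (because membership in $\RR_\Sieg^{-1}(\Fol_p)$ forces the quadratic-like map to have connected Julia set). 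Your argument goes through verbatim once you replace $\MM_\cp$ by $\WW'$ and append this last observation, which you currently omit.
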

\begin{proof}
By the same argument as in Lemma~\ref{lem:Foliat}, $\RR_\Sieg^*(\bFol)$ is a lamination with complex-analytic leaves. 

Let $\WW'$ be a small neighborhood of $g_\str$ in $\RR_\QL^{\bullet 3}(\WW^u)$. Then $\WW^s$ transversally intersects $\RR_{\Sieg} (\WW')$ at $\RR_{\Sieg}  (g_\str)$. Since $\bFol$ forms a lamination, all the leaves of $\bFol_m$ transversally intersect $\RR_{\Sieg} (\WW')$ for $m\ll \bbn$. Taking the pullback, we obtain that the leaves of $\RR_\Sieg^*( \bFol_m)$ transversally intersect $\WW'$. Clearly, all the points in the intersections are within the non-escaping set $\MM_\cp$. 
\end{proof}

Let us extend the lamination $\bFol$ by adding $\RR_\Sieg^*  \bFol$ to $\bFol$. The operator $\RR_{\Sieg}$ acts on the rotation numbers of indifferent pacmen as  $\cRRc^{\mm\btau}$ for some $\btau\ge 1$. Let us view $\RR_{\Sieg}$ as $\RR^\btau$. We factorize $\RR_\Sieg$ as a composition of $\btau$ operators, each operator acts on the rotation numbers of indifferent maps as  $\cRRc^{\mm}$. With this convention, the lamination $\bFol$ naturally extends to $\AA$. Namely, for every $\Fol_{p}$ in $\bFol_{m}$ with $m\le \bbn-\btau$, we define $\Fol'_p$ to be the preimage of $\Fol_{\mRRc^{\mm\btau}(p)}$ under $\RR_{\Sieg}$, and we set 
\begin{equation}
\label{eq:fol:ext}
\Fol_p\coloneqq \Fol_p\cup \Fol'_p.
\end{equation}
Similarly, $\bFol$ is extended to $\RR^i(\AA)$ for $i<\btau$. The new extended lamination $\bFol$ is still $\RR$-invariant.

\begin{figure}[tp!]
\begin{tikzpicture}

\begin{scope}[line width=1.2pt]
\draw (0,-2) -- (0,5)
(-2,0) -- (8,0);

\end{scope}

\draw[white, fill=red, opacity=0.3] (5,2) --(7,2)--(7,1.6)--(5,1.6)--(5,2);

\draw[white, fill=orange, opacity=0.5] (-0.5,4)--(-0.5,4.4)--(0.5,4.4)--(0.5,4)--(-0.5,4)--(-0.5,4);

\draw[line width=0.0001pt, fill=orange, opacity=0.5]  (-1.5,1.9) --(7.5,1.9)--(7.5,1.8)--(-1.5,1.8)--(-1.5,1.9);

\draw (-0.5,3.5)--(-0.5,4.5)--(0.5,4.5)--(0.5,3.5)--(-0.5,3.5)--(-0.5,3.5); 

\draw[blue,line width=2.5pt ] (-0,3.5)--(-0,4.5);

\node[above right,blue] at  (-0,4.5) {$\MM_\bbn$};

\node[ left] at  (-0.5,4) {$\bFol_\bbn$};

\draw[blue,line width=2.5pt ] (0,2)--(0,1.6);

\node[above right,blue] at  (0,2) {$\MM_m$};

\node[ left] at  (-1.5,1.8) {$\bFol_m$};

\draw[blue,line width=2.5pt ] (0,0.9)--(0,0.6);

\node[above right,blue] at  (0,0.9) {$\MM_{m-1}$};

\node[ left] at  (-1.5,0.75) {$\bFol_{m-1}$};

\draw[line width=2pt,blue] (6,-1.5) -- (6,2.5);

\node[above right,blue] at (6,2.5){$\MM_\cp$};

\filldraw (6,0) circle (1.5pt);
\node[below right ] at (6,0){$g_\str$};

\draw (-1.5,2) --(7.5,2)--(7.5,1.6)--(-1.5,1.6)--(-1.5,2);

\draw (3,0.95) edge[->,bend right] node[right]{$\RR$} (3,1.55);

\draw (3,2.05) edge[->,bend right] node[above right]{$\RR^{\bbn-m}$} (0.55,3.45);

\draw (6.8,1.7) edge[<-,bend right] node[above right]{$\RRR=\RR_\QL^{\bullet 3}$} (0.25,4.2);

\node[below,red] at (6.5,1.7) {$\bFol'_m$};

\node[right,orange] at (7.5,1.9) {$\bFol^{(m)}_m$};

\draw[shift={(0,-1)}] (-1.5,1.9) --(7.5,1.9)--(7.5,1.6)--(-1.5,1.6)--(-1.5,1.9);

\draw[dashed] (5,-1.5)--(7,-1.5)--(7,2.5)--(5,2.5)--(5,-1.5);

\filldraw (5.6,1.85) circle (0.8pt);
\node[below] at (5.6,1.85){$g_m$};

\filldraw (0,0) circle (1.5pt);
\node[below left ] at (0,0){$f_\str$};

\node[left] at (-2,0){$\WW^s$};

\node[above] at (0,5){$\WW^u$};
\end{tikzpicture}
\caption{Homoclinic dynamics: the operator $\RRR$ has a unique hyperbolic fixed point $g_m\in \bFol_m$. (Note that we slightly simplified the picture and made leaves in $\bFol$ connected, see~\eqref{eq:fol:ext}.)}
\label{fig:HomDyn}
\end{figure}

\subsubsection{Hyperbolic horseshoe} 
Let $p$ be a hyperbolic fixed point of a $C^2$-smooth diffeomorphism. If the stable and unstable manifolds $W^s$ and $W^u$ of $p$ intersect, then any point $q\in W^s\cap W^u $ is called \emph{homoclinic} to $p$. If the intersection is transversal, then there is a hyperbolic set in a neighborhood of the orbit of $q$ union $p$, see \cite{PT}*{Chapters 1, 2} for reference. We will now adopt this principle to show that there is a hyperbolic renormalization horseshoe of $\RRR$ near $g_\str\in \WW^s\cap \RRR (\WW^u)$, where  $\RRR (\WW^u)$ can be viewed as an extension of $\WW^u$.
  
 Let $\bFol'_m$ be $\bFol_m$ intersected with a small neighborhood of $\MM_\cp$, see Figure~\ref{fig:HomDyn}.  We define $\bFol^{(m)}_\bbn$ to be the preimage of $\bFol'_m$ under $\RRR\mid \bFol_\bbn$, and we define  $\bFol^{(m)}_k\subset \bFol_k$ to be the preimage of $\bFol^{(m)}_\bbn$ under $\RR^{\bbn-k}$ (extended to a neighborhood of $g_\str$ as above).

\begin{lem}[The second part of Theorem~\ref{thm:main}: {\bf rigidity}]
 For $\bbk\ll \bbn$ the operator 
\begin{equation}
\label{eq:defn:hyp RR_QL}
 \RRR\colon \bigcup_{m,k\le \bbk}\bFol_m^{(k)}\to \bigcup_{m\le \bbk}\bFol'_m
\end{equation}
is uniformly hyperbolic. More precisely, let $\Hors$ be the non-escaping set of~\eqref{eq:defn:hyp RR_QL}; i.e. the set of points with bi-infinite orbit. Then $\Hors$ is a hyperbolic set. Let $\mHors$ be the non-escaping set of 
\[ \mRR_\QL^3\colon \bigcup_{t\le \bbk} \Mandel_t \to  \bigcup_{t\le \bbk} \Mandel_t . \]
Then 
\begin{itemize}
\item  every connected component of $\mHors$ is a singleton; and 
\item $ \RRR\colon \Hors \to \Hors$ is parametrized by the natural extension of $\mRR_\QL^3\colon \mHors\to \mHors$ via $\strai_\Pacm$.
\end{itemize}
\end{lem}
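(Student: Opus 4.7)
The plan is to derive uniform hyperbolicity of \eqref{eq:defn:hyp RR_QL} from the hyperbolic homoclinic configuration at $g_\str \in \WW^s \cap \RRR(\WW^u) \cap \MM_\cp$, and then to extract the horseshoe by a Smale–Birkhoff argument adapted to our Banach/lamination setup. First I would set up the hyperbolic splitting in a neighborhood of $f_\str$: the codimension-one stable distribution is given by the tangent spaces $T\Fol_p$ of the leaves of $\bFol$, and the one-dimensional unstable distribution is a smooth transversal to $\bFol$, tangent to $\WW^u$ at $f_\str$. The hyperbolicity of $\RR$ at $f_\str$ (\S\ref{sss:HypSelfOper}), together with the $\RR$-invariance of $\bFol$, gives contraction by rate $\le \sigma < 1$ along $T\bFol$ and expansion by rate $\ge \lambda_\str > 1$ along the transversal under iterations of $\RR$; this holds uniformly on a small Banach neighborhood $\BB_0$ of $f_\str$ containing $\bFol_m$ for all $m \ll 0$.

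Next I would control the homoclinic transition $\RRR$. On $\bFol_\bbn$ the operator $\RRR = \RR_\QL^{\bullet 3}$ is compact analytic with linear renormalization change of variables, and it maps $\MM_\bbn$ to $\MM_\cp \setminus \{\text{cusp}\}$. Crucially, the pullback $\RRR^*\bFol'_\bbn$ defines a codimension-one lamination in a neighborhood of $\MM_\bbn$ which is transverse to $\bFol_\bbn$: this transversality is equivalent to the statement that $\WW^s \pitchfork \RR_\Sieg(\MM_\cp)$ at $g_\str = \RR_\Sieg^{-1}(\RR_\Sieg g_\str)$, which in turn follows because the multiplier at the $\alpha$-fixed point is a local parameter for $\WW^s$ near $f_\str$ and varies nontrivially along $\MM_\cp$ at $g_\str$ (whose satellite component is attached to the main hyperbolic component). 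In particular $D\RRR|_{f_\str}$ sends the one-dimensional unstable direction to a direction transverse to $\bFol'_\bbn$ with nonzero derivative, and by compactness the corresponding bounds persist on $\bFol_\bbn$ near $\MM_\bbn$.

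The composition $\RRR \circ \RR^{\bbn-k}$ on $\bFol^{(m)}_k$ then factors as: apply $\RR^{\bbn-k}$, which expands the unstable direction by $\asymp \lambda_\str^{\bbn-k}$ and contracts the stable one by $\asymp \sigma^{\bbn-k}$; then apply $\RRR$, whose derivatives are uniformly bounded (above and below on the unstable direction, above on the stable one) on the compact transit set near $\MM_\bbn \cup \{g_\str\}$. For $\bbk \ll \bbn$ this composition expands the unstable direction by any prescribed factor $\Lambda > 1$ and contracts the stable direction uniformly; since the image lies in a small neighborhood of $\MM_\cp$ where the same splitting applies, we obtain uniform hyperbolicity of~\eqref{eq:defn:hyp RR_QL} in the sense of Hirsch--Pugh--Shub on Banach manifolds, using the asymptotic conformality of the holonomy along $\bFol$ (\cite{L:FCT}, Appendix~2) to guarantee continuity of the invariant splitting. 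The Smale--Birkhoff shadowing then produces a hyperbolic invariant set $\Hors$ with a Markov partition indexed by the pairs $(m,k)$ appearing in \eqref{eq:defn:hyp RR_QL}.

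Finally, the parameterization of $\Hors$ by $\mHors$ is established via the semi-conjugacy $\strai_\Pacm \circ \RRR = \mRR_\QL^3 \circ \strai_\Pacm$ on $\bigcup_{m\le \bbk}\bFol_m$, which is well-defined because $\RRR$ preserves hybrid classes within each leaf. Since the unstable direction is transverse to $\bFol$ and $\strai_\Pacm$ collapses each leaf to a point, the restriction $\strai_\Pacm: \Hors \to \mHors$ is a continuous bijection conjugating the dynamics; that components of $\mHors$ are singletons follows from uniform expansion along the (one-dimensional) unstable direction combined with the scaling already proved, exactly as in the rigidity argument in Yoccoz's work (no nested infinite intersection of $\mRR_\QL^3$-domains can have positive diameter under expansion). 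The main obstacle I anticipate is the verification of uniform bounded distortion of $\RRR$ across the $m$-slices of $\bFol_m$ (not just at $\bFol_\bbn$), which requires combining compactness of $\RR_\Sieg$ with the uniform bounds on the Douady–Hubbard straightening along the leaves; here one uses that the quadratic-like maps $f^{\aa_n}\colon O \to O'$ have moduli bounded below (the unbranched a~priori bounds~\eqref{eq:unbr cond:MM}) uniformly in $n$, as established in \S\ref{ss:StabLamin}.
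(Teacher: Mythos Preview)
Your approach coincides with the paper's: the hyperbolicity of~\eqref{eq:defn:hyp RR_QL} is produced by the long stretch of $\RR$-iterates near $f_\str$ (Figure~\ref{fig:RenOrbit}), with the homoclinic transitions contributing only fixed bounded factors, and the singleton property then follows from expansivity (``$\varepsilon$-close orbits for a hyperbolic map coincide''). Two remarks on your write-up. First, the claim that $\RRR^*\bFol'_\bbn$ is transverse to $\bFol_\bbn$ is misstated: since $\RRR$ respects hybrid classes, the pulled-back leaves lie \emph{inside} leaves of $\bFol_\bbn$ rather than crossing them; the transversality you actually need --- and correctly justify via the multiplier --- is that the extended $\bFol$ meets $\MM_\cp\subset\RRR(\WW^u)$ transversally, i.e.\ that $D\RRR$ carries the unstable direction off the stable distribution, which is exactly what the paper records when extending $\bFol$ via $\RR_\Sieg$. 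Second, your closing worry about uniform distortion across the $m$-slices dissolves once you note that $\RRR|_{\bFol_m}=(\RR_\QL^{\bullet 3}|_{\bFol_\bbn})\circ\RR^{\bbn-m}$, so the transition factor is one fixed compact analytic operator independent of $m$; the paper's phrase ``postcomposing and precomposing with finitely many iterates'' covers precisely this.
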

\noindent In particular, $\RRR\colon \bFol^{(m)}_m\to \bFol'_m$ has a unique hyperbolic fixed point $g_m\in \bFol^{(m)}_m\cap \bFol'_m$, see Figure~\ref{fig:HomDyn}. 

\begin{proof} This is a homoclinic configuration for the operator $\RR$ combined with the ``gluing'' operators ${\RR_\QL^{\bullet 3}\colon \NN_\bbn\to \QL}\supset\AA$ and $\RR_\Sieg\colon \AA\to \BB$, see Figure~\ref{fig:R Sieg}. A point $g_\str$ has a forward infinite orbit along $\WW^s$ towards $f_\str$ and it has a backward infinite orbit along $\WW^u$ towards $f_\str$. Therefore, the non-escaping set in a small neighborhood of $\{f_\str\}\cup \orb (g_\str)$ is hyperbolic, see \cite{PT}*{Theorem 1 in \S7, Chapter 2}. (In short, since the orbit of $\bFol_m^{(k)}$ stays in a small neighborhood of $f_\str$ for most of the iterations (Figure~\ref{fig:RenOrbit}), the map $\bFol_m^{(k)}\longrightarrow \bFol'_m$ has a big contraction in the horizontal direction and big expansion in the vertical direction.) Considering the first return map to a neighborhood of $g_\str$, we obtain that~\eqref{eq:defn:hyp RR_QL} is hyperbolic.

We have a natural surjective semi-conjugacy $\pi$ from $ \RRR\colon \Hors \to \Hors$ to the natural extension of $\mRR_\QL^3\colon \mHors\to \mHors$. Since $\varepsilon$-close orbits for a hyperbolic map coincide, $\pi$ is a homeomorphism and every connected component of $\mHors$ is a singleton.
 \end{proof}

\begin{figure}[tp!]
\begin{tikzpicture}

\begin{scope}[line width=1.2pt]
\draw (0,-3) -- (0,4)
(-3,0) -- (7,0);

\end{scope}

\draw[dashed,red] (0,0) circle (2);

\coordinate (b1) at (5.6,0) ;

\coordinate (a1) at (5.6,0.2) ; 
\coordinate (a2) at (1.57,0.25); 
\coordinate (a3) at (1.2,0.28);
\coordinate (a4) at (0.95,0.33);
\coordinate (a5) at (0.7,0.5);
\coordinate (a6) at (0.55,0.8);
\coordinate (a7) at (0.4,1.1);
\coordinate (a8) at (0.3,1.5);

\coordinate (a9) at (0.2,3.5);

\filldraw (b1) circle (1.5pt);
\node[below] at  (b1) {$g_\str$};

\filldraw (a1) circle (1pt);
\node[shift={(0.6,0)}, above] at (a1) {$g=g_m\in \bFol_m$};

\filldraw (a2) circle (1pt);
\node[right ] at (a2) {$\RR^t g\in \bFol_{m+t}$};

\filldraw (a3) circle (1pt);
\filldraw (a4) circle (1pt);
\filldraw (a5) circle (1pt);
\filldraw (a6) circle (1pt);
\filldraw (a7) circle (1pt);
\filldraw (a8) circle (1pt);
\filldraw (a9) circle (1pt);

\node[right] at (a8) {$\RR^{-m+\bbn-t} g\in \bFol_{\bbn-t}$};

\node[right] at (a9) {$\RR^{-m+\bbn} g\in \bFol_{\bbn}$};

\filldraw (0,0) circle (1.5pt);

\node[below left ] at (0,0){$f_\str$};

\node[left] at (-3,0){$\WW^s$};

\node[above] at (0,4){$\WW^u$};
\end{tikzpicture}
\caption{The orbit of $g\in \bFol_m$ stays within a small neighborhood of $f_\str$ for many iterates.}
\label{fig:RenOrbit}
\end{figure}

\begin{proof}[Theorem~\ref{thm:main}: proof of {\bf JLC}] Local connectivity of every map in $\Hors$ follows (see~\S\ref{sss:JLC}) from unbranched \emph{a priory} bounds:  
\begin{lem}
Every map in $\Hors$ has unbranched a priory bounds.  
\end{lem}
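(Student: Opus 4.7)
The plan is to combine the compactness of the hyperbolic set $\Hors$ with the unbranched condition~\eqref{eq:unbr cond:MM} that was built into the leaves of $\bFol$ when the small copy $\bMM_0$ was constructed. Concretely, for any $f\in\Hors$ I would observe that the full forward orbit $f_n\coloneqq\RRR^n(f)$, $n\ge 0$, stays in $\Hors$, and that each $f_n$ carries by construction a quadratic-like restriction $\RR_\QL^{\bullet 2}(f_n)\colon O(f_n)\to O'(f_n)$ with non-escaping set $\frakY^0(f_n)$ which captures the portion of the postcritical set entering $O'(f_n)$, thanks to~\eqref{eq:unbr cond:MM}. Since $\Hors$ is the hyperbolic invariant set of the compact analytic operator $\RRR$, it is compact; and since $\RR_\QL^{\bullet 2}$ depends continuously on pacmen in a neighbourhood of $\Hors$, there is a universal $\varepsilon>0$ with $\mod(O'(g)\setminus O(g))\ge\varepsilon$ for every $g\in\Hors$. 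In particular, every level $f_n$ enjoys this uniform modulus bound.

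Next I would translate these quadratic-like restrictions at the levels $f_n$ into an honest quadratic-like renormalization tower of the original map $f$. Because $\RRR=\RR_\QL\circ\RR_\QL^{\bullet 2}$ is a composition of two genuine quadratic-like renormalizations, each iteration of $\RRR$ contributes two levels to the quadratic-like renormalization tower of $f$, and each $\RR_\QL^{\bullet 2}(f_n)$ identifies, as a quadratic-like germ, with a specific level of this tower via a renormalization change of variables which is a suitable iterate of $f$. Pulling back through this change of variables preserves the modulus (the change of variables is conformal) and carries postcritical points of $f$ to postcritical points of $f_n$, while carrying the filled-in Julia set at the $f$-level onto $\frakY^0(f_n)$ at the $f_n$-level. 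Consequently, at infinitely many levels of its quadratic-like renormalization tower, $f$ has modulus $\ge\varepsilon$ and satisfies the inclusion $\Post(f)\cap V_n\subset\Kfilled(f^{m_n}|_{U_n})$, which is precisely the definition of unbranched \emph{a priori} bounds recalled in~\S\ref{sss:JLC}.

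The main obstacle is the verification in the previous paragraph that the unbranched condition~\eqref{eq:unbr cond:MM}, imposed at the level of the secondary quadratic-like renormalization of a pacman near $\bFol_\bbn$, transports cleanly through the pacman-renormalization iterates $\RR^{\bbn-m}$ implicit in the definition of $\bFol_m$ (and through their compositions making up $\RRR^n$) to yield the unbranched condition for the original $f$. The resolution uses the nice-flower conclusion of Theorem~\ref{thm:proj of val flow}: the cycle of secondary small filled-in Julia sets $\bigcup_i\frakY_i$ is forward-invariant and contains $\Post(f)\cap\widetilde X(f)$, and the valuable flower $\widetilde X(f)$ is a nice flower lying in a small neighbourhood of $\overline Z_\str$. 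Together these two facts pin the postcritical orbit of $f$ inside the small filled-in Julia sets throughout all the renormalization changes of variables, which is exactly what the unbranched condition requires. Once this bookkeeping is completed, the lemma follows immediately, and since the bound $\varepsilon$ is extracted from a single compact set $\Hors$, it is automatically uniform over $f\in\Hors$ (and hence, via the holonomy along $\bFol$, over the parameters $c_n$ in the statement of Theorem~\ref{thm:main}).
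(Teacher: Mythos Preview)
Your proposal is correct and follows essentially the same route as the paper: iterate $\RRR$ forward inside $\Hors$, use the secondary quadratic-like map $\RR_\QL^{\bullet 2}(f_n)\colon O(f_n)\to O'(f_n)$ with its built-in unbranched condition~\eqref{eq:unbr cond:MM}, extract a uniform modulus bound, and pull back conformally to the dynamical plane of $f$. The paper's proof is terser and simply notes that the conformal renormalization change of variables lifts $\RR_\QL^{\bullet 2}(f_{3n})$ to the quadratic-like map $\RR_\QL^{3n+2}f$ with the same modulus and unbranched condition, without the flower bookkeeping you sketch; one minor slip is that each iterate of $\RRR=\RR_\QL^{\bullet 3}$ contributes three levels to the quadratic-like tower rather than two, but this does not affect your argument.
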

\begin{proof}
Recall that we constructed a homoclinic configuration under the renormalization illustrated in Figure~\ref{fig:HomDyn}, a certain neighborhood of $g_\str$, and the hyperbolic horseshoe $ \RRR\colon \Hors \selfmap$ for the first return map to this neighborhood of $g_\str$. Every map $f\in \Hors$ before it returns to $\Hors$ travels through a small neighborhood of $\MM_n$ unbranched a priori bounds holds for the associated quadratic-like map~\eqref{eq:q-l:forall f}. Since the renormalization change of variables is conformal, unbranched a priori bounds descends to all deep scales, see~\S\ref{sss:renorm horsh}; i.e.~ setting $f_{3n}\coloneqq \left(\RRR\right)^n(f)$ we have unbranched a priori bounds for 
\begin{equation}
\label{eq:ql:f_3n+2}
 \RR_\QL^{\bullet 2} (f_{3n})\colon O(f_{3n})\to O'(f_{3n}).
\end{equation} 
\end{proof}
\end{proof}

\subsubsection{Upper semi-continuity of the valuable flower}
\label{sss:contr post set} Along the lines, we also obtained a geometric control of the postcritical sets of maps in $\bFol$. Let us write $ X(f)\coloneqq \overline Z_f$ for a Siegel map $f\in \WW^s$. Then $X(f)$ depends upper semi-continuously on $f\in \bFol$. Moreover, if a sequence $f_n\in \bFol_{n}$ tends to $f\in \WW^s$ as $n\to -\infty$, then $\Post(f_n)$ and $\widetilde X_\up (f_n)$ tend to $\partial  Z_f= \overline \Post(f)$.  Indeed, by Theorem~\ref{thm:proj of val flow}, the valuable flower $\widetilde X(f_n)$ is within a certain Siegel triangulation $\bDelta(f_n)$. And the wall $\bPi(f_n)$ of $\bDelta(f_n)$ contains $ \widetilde X_\up(f_n)$ -- the cycle of secondary small Julia sets. Since  $\bDelta(f_n)$ is a full lift of  $\bDelta(f_{n+1})$, Lemma~\ref{lem:SiegTriangLifting} implies that $\bDelta(f_n)$ tends to $\overline Z_f$ while $\bPi(f_n)$ tends to $\partial Z_f$.

The upper semi-continuity can easily be transferred to a parameter neighborhood of any Siegel map. Indeed, if $g$ is Siegel map, then there is a hyperbolic renormalization operator $\RR$ around a fixed pacman $f_\str$ as above such that a certain renormalization operator $\RR_\Sieg$ maps a neighborhood of $g$ to a neighborhood of $f_\str$. Pulling back the lamination $\bFol$ under $\RR_\Sieg$, we obtain the upper semi-continuity of $X(f)$ for $f\in \RR_\Sieg^* \big(  \bFol \big)$. In~\cite{DLS}*{Appendix C} a much stronger conjecture was stated that $\bFol$ can be extended to a lamination parametrized by a subset of the Mandelbrot set containing the main molecule of the Mandelbrot set. 

\subsection{Positive measure}
\label{ss:PositArea}
In this section we will show that for $m\ll \bbk<0$, the Julia set of $g\coloneqq g_m\colon U\to V$ has positive measure. The result essentially follows from the Koebe-type estimates~in \cite{AL-posmeas}*{\S\S6.6--6.8}, but we need to adjust them to our setting.

 Let us give a {\bf short outline}. By construction, $g_m=\RRR(g_m)$ is a renormalization fixed point (see~\S\ref{sss:renorm horsh}): it is conformally conjugate to its quadratic-like renormalization $g_\bullet = g^{t(m)}\colon U_\bullet \to V_\bullet$. As $m\to -\infty$, the map $g_m\colon  U\to V$ tends to the Siegel map $g_\str\colon U_\str\to V_\str$ so that the valuable flower $X(g_m)$ approximates the Siegel disk $\overline Z(g_\str)$. This will allows us to construct in the dynamical plane of $g_m$ ``trapping disks'' $D_0,\dots, D_\bbs$ with $\bbs\to \infty$ as $m\to -\infty$ so that, see~\S\ref{sss:TrapDisks} and Figure~\ref{Fig:Trapping disks}:
\begin{itemize}
\item[(\RN{1})] if a point $z\in \overline Z(g_\str)$ escapes $U$ under iterations of $g_m$, then the orbit of $z$ passes through all the $D_i$;
\item[(\RN{2})] a definite portion (independent of $m,\bbs$) of $D_i$ returns to $\overline Z(g_\str)$;
\item[(\RN{3})] a definite portion of $D_0$ maps to $U_\bullet$. 
\end{itemize}
Namely, trapping disks satisfying (\RN{1}) and (\RN{2}) exist in the dynamical plane of $f_\str$; let us choose one such disk $D$ close to $Z_{f_\str}$. Since the renormalization orbit $\RR^i g_m$ has many iterations in a small neighborhood of $f_\str$ (Figure~\ref{fig:RenOrbit}), we can lift $D$ from the dynamical plane of $\RR^i g_m$ to the dynamical plane of $g_m$. Different lifts will be in different renormalization scales. Disk $D_0$ is the lift of $D$ from the dynamical plane of $\bar g\coloneqq \RR^{-m+\bbn-t} g\in \bFol_{\bbn -t}$. Since $ \bFol_{\bbn -t}$ is independent of $m$, (\RN{3}) holds for $D$ in the dynamical plane of $\bar g$; lifting $D$ we obtain (\RN{3}) for $D_0(g_m)$.

Consider now the $g_m$-orbit of $z\in \overline Z(g_\str)$ that escapes $U$. Since $\bbs\gg 1$,  $(\RN{1})$ and $(\RN{2})$ imply that the orbit typically passes many times through $D_0$ before the orbit goes through all the $D_i$; in each such visit to $D_0$, the orbit has a definite chance to enter $U_\bullet$ -- by (\RN{3}). Therefore, the probability of $z$ to escape $U$ is much lower than the probability to enter $U_\bullet$. By~\cite{AL-AMS} the Julia set of $g_m$ (and hence of $p_m$) has positive area. 

Properties (\RN{2}) and (\RN{3}) are proven as Properties~\ref{prop:trap disk} and \ref{prop:first trap disk}. With two more ingredients (Properties~\ref{prop:trap disk:hyp diam is bound} and~\ref{property:g D_i is exp}), the Koebe distortion arguments allow us to formally justify the probability viewpoint in the same way as it was done in \cite{AL-posmeas}. 


\begin{rem}
\label{rem:posit area}
We can re-state the positive-area argument as follows. Once a full copy of the Mandelbrot set $\MM_0$ is recognized on the unstable manifold, methods of~\cite{AL-posmeas} imply that the Julia set has positive measure for the parameter associated with a sufficiently big pacman antirenormalization of $\MM_0$. In~\cite{AL-posmeas}, a full primitive copy is constructed by following a certain periodic point whose orbit is close to the Siegel disk. In this paper we use puzzle techniques to recognize a full satellite copy of the Mandelbrot set. (Potentially, puzzle techniques may allow one to recognize all the existing copies on the unstable manifold, see~\S~\ref{ss:param rays}.)          
\end{rem}

\subsubsection{Notations}\label{sss:Notat: posit Area}
By saying that a set $K$ is \emph{well inside}
a domain $D \Subset \C$ we mean that $K \Subset D$ with a definite $\mod(D \setminus K)$. The meaning
of expressions \emph{bounded, comparable}, etc. is similar.

Given a pointed domain $(D, \beta)$, we say that $\beta$ lies in the \emph{middle} of $D$, or equivalently,
that $D$ has a \emph{bounded shape} around $\beta$ if
\[\max _{\zeta\in \partial D} |\beta-\zeta| \le C\min_{\zeta\in \partial D} |\beta-\zeta|.\]

We set 
\begin{itemize}
\item $g \coloneqq g_m \colon U\to V$;
\item $\Jul \coloneqq \Jul(g)$;
\item $g_\bullet=g^{t(m)}\colon U_\bullet \to V_\bullet$ to be the $\RR_\QL^3$-pre-renormalization of $g$ normalized so that $g_\bullet\colon U_\bullet \to V_\bullet$ is conformally conjugate to $g\colon U\to V$ (this is possible because $g=\RRR( g)$; note that the conjugacy is not affine);
\item $Z$ is the Siegel disk of $g_\str$ and $Z'$ is prefixed Siegel disk of $g_\str$;
\item $\Jul _\bullet\coloneqq \Jul (g_\bullet)\subset \Jul$;
\item ``$\diam$'' and ``$\dist$'' denote the Euclidean diameter and distance.
\end{itemize}

By a \emph{hyperbolic metric}, we mean the metric of $V\setminus \ovPost(g)$, unless specified otherwise. Since \[g\colon U\setminus g^{-1}\big(\ovPost(g)\big)\to V\setminus \ovPost(g)\]
is a covering map, while
 \[ U\setminus g^{-1}\big(\ovPost(g)\big)\hookrightarrow  V\setminus \ovPost(g)\]
 is an inclusion, $g$ expands (non-uniformly) the hyperbolic metric.

\subsubsection{Parameters $\eta$ and $\xi$}
 Let us recall from~\cite{AL-AMS} a condition ensuring that the Julia set $\Jul$ of $g=g_m$ has positive area. Define
\begin{itemize}
\item $\eta$ to be the probability for an orbit starting in $U$ (the domain of $g$) to enter $U_\bullet$ (the domain of $g_\bullet$),
\item $\xi$ to be the probability that an orbit starting in $V_\bullet \setminus U_\bullet$ will never come back to $U_\bullet$.
\end{itemize}

There is a constant $C>0$ independent of $m$ such that if $\eta/\xi>C$, then the Julia set of $g=g_m$ has positive area. The constant $C$ depends on geometric bounds (like $\mod (V\setminus U)$, see \cite{AL-AMS}*{\S 2.7}) that are uniform over $m\ll \bbk$. We remark that the renormalization change of variables was assumed to be affine in~\cite{AL-AMS}; but the criterion is easily relaxed for a conformal change of variables by linearizing it, see~\S\ref{sss:renorm horsh}.

\begin{figure}[tp!]
\begin{tikzpicture}

\draw (-3,0)--(0,2)--(3,0)--(0,-2)--(-3,0);

\draw[blue,scale=1.2,dashed] (-3,0)--(0,2)--(3,0)--(0,-2)--(-3,0);
\draw[blue,scale=1.3,dashed] (-3,0)--(0,2)--(3,0)--(0,-2)--(-3,0);

\draw (4.5,-1)--(3,0)--(4.5,1);

\coordinate  (b1) at (3.6,0); 

\node[blue] at (-1.9,1.8) {$A$};

\draw[blue,fill=blue, fill opacity=0.1]  (b1) ellipse (0.41 and 0.6);
\node[blue,above,shift={(-0.1,0.6)}] at (b1) {$D$};

\node[above] at (0,0) {$Z$};
\node[] at (4.5,0) {$Z'$};
\node[ above ] at (2.87,0){$c_0$};

\end{tikzpicture}
\caption{A trapping disk $D\subset \C\setminus \overline Z$: every orbit escaping from the domain surrounded by $A$ passes through $D$. (The annulus $A$ is close to $\partial Z$ while $D$ is close to the critical point $c_0$).}
\label{Fig:Trapping disk}
\end{figure}

\subsubsection{Trapping disks}
\label{sss:TrapDisks}
Consider the dynamical plane of $f_\str$. Below we recall main properties of \emph{trapping disks}; see~\cite{AL-posmeas}*{\S~4.4.4.} for a detailed discussion.
There is an annulus $A\subset \C\setminus \overline Z_\str$ in a small neighborhood of $\overline Z_\str$ and a trapping disk $D\subset  \C\setminus \overline Z_\str$ such that (see Figure~\ref{Fig:Trapping disk})
\begin{itemize}
\item if $z$ is in the bounded component $O$ of $\C\setminus A$, then $f(z)\subset O\cup A$;  
\item if $z\in A$, then $f^i(z)\in D$ with $i\le q$, where $q$ depends on the renormalization scale of $D$ (i.e., on how close $D$ is to the critical point);
\item a definite portion of $D$ is in $\overline Z'_\str$.  
\end{itemize}
 Moreover, all the properties still hold under small shrinking of $A$ and $D$. Therefore, by continuity, the trapping disk $D$ exists in the dynamical plane of a nearby map. 
 
 Consider the orbit of $g$ under the pacman renormalization $\RR$ and note that $\RR^i g$ is close to $f_\str$ if $i\in \{t,t+1,\dots ,-m+\bbn-t\}$ with $-m+\bbn-t\gg t$, see Figure~\ref{fig:RenOrbit}. For such $i$, consider the dynamical plane of $\bar g\coloneqq \RR^i g$. By continuity, $D$ is a trapping disk for $\bar g$. Since $\widetilde X(\bar g)$ is in a small neighborhood of $\overline Z_\str$ (by~\S\ref{sss:contr post set}), $A$ surrounds $\widetilde X(\bar g)$. Let $\psi_0$ be the renormalization change of variables from the dynamical plane of $\bar g$ to the dynamical plane of $g$ normalized so that $\psi_0( c_0(\bar g)) =c_0 (g)$, see~\S\ref{sss:CofV near c_0}. Then $D' \coloneqq \psi_0 (D)$ is a trapping disk for $g$ with the property that every escaping orbit starting in $\widetilde X(g)$ passes through $D'$. Since $i$ can be chosen between $t$ and $-m+\bbn-t$, we can construct pairwise disjoint trapping disks
\[D_0,D_1,\dots , D_\bbs\]
in the dynamical plane of $g$, where $\bbs$ is sufficiently big (if $m\le \bbk \ll \bbn$ is sufficiently big), see Figure~\ref{Fig:Trapping disks}. We assume that $D_0$ is the lift of $D( \RR^{-m+\bbn-t} g)$.

\begin{property}
\label{prop:trap disk}
All trapping disks $D_i$ are within $\C\setminus \Post$. For every $i\le \bbs$, a definite portion of $D_i$ maps to $\overline Z(g_\str)$ under one iteration.\qed
\end{property}

\begin{property}
\label{prop:first trap disk}
There are degree two iterated preimages $U'_\bullet,V'_\bullet\subset D_0$ of $U_\bullet$ and $V_\bullet$ such that $U'_\bullet$ and $V'_\bullet$ occupy a definite  portion (i.e.~independent of $m,$) of $D_0$.
\end{property}
\begin{proof}
Since $\bar g=\RR^{-m+\bbn-t} (g)\in\bFol_{\bbn-t}$ where $\bFol_{\bbn-t}$ is independent of $m$, the property holds in the dynamical plane of $\bar g$ for $D(\bar g)$ and the associated quadratic-like renormalization of $\bar g$. Lifting $D(\bar g)$ to $D_0(g)$, we obtain the property for $g$.
\end{proof}

\subsubsection{Estimating $\eta$} (Similar to \cite{AL-posmeas}*{Proposition 6.22}.)
As a consequence of Property~\ref{prop:first trap disk}, for any point $z$ whose orbit passes through the first trapping
disk $D_0$ under the iterates of $g$, there exist quasidisks $U_\bullet(z) \subset V_\bullet(z)$ with bounded shape whose size is comparable with $\dist(z, V(z))$, and such that
\[f^{n} (U_\bullet(z)) \subset U_\bullet \text{ and }f^{n} (V_\bullet(z))\subset V_\bullet\text{ for some }n=n(z).\]

As a corollary, the landing probability $\eta$ is bounded below uniformly in $m$.
Indeed, it is known that almost every point in $\Jul$ lands in $\Jul_\bullet$,~\cite{L:TypBeh}. Since the Siegel disk $Z(g_\str)$ occupies a certain area, it is sufficient to check that a definite portion of points $z \in Z (g_\str)\setminus \Jul$ land
in $U_\bullet$. But any point $z \in Z (g_\str)\setminus \Jul$ on its way from $Z(g_\str)$ to $V\setminus U$ must pass through the
first trapping disk $D_0$. Since $U_\bullet(z)$ occupies a definite portion of some neighborhood of $z$, the statement follows.

\subsubsection{Expansion of $g\mid\big( D_i\setminus g^{-1}(\Post)\big)$}
In this subsection we will verify the following properties:
\begin{property}[similar to~\cite{AL-posmeas}*{(6.9)}]
\label{prop:trap disk:hyp diam is bound}
The hyperbolic diameter of $D_i$ is uniformly (in $i$ and $m$) bounded.
\end{property}

\begin{property}[similar to~\cite{AL-posmeas}*{\S 6.2.2.}]
\label{property:g D_i is exp}
The map $g\mid \big(D_i\setminus g^{-1}(\ovPost)\big)$ is uniformly expanding with respect to the hyperbolic metric of $V\setminus \ovPost(g)$. 
\end{property}

Property~\ref{property:g D_i is exp} has the following explanation.  Consider the dynamical plane of the Siegel map $g_\str$. Let $x\not\in \overline Z \cup \overline Z'$ be a point close to $c_0$. It was shown by McMullen~\cite{McM3} that if \[\dist \big(x, \overline Z'\big)\le C \dist\big(x,\overline Z\big),\] then $g_\str$ expands the hyperbolic metric of $\C\setminus \overline Z$ by a factor $\lambda>1$ depending only on the constant $C$. Recall from~\S\ref{sss:contr post set} that for a big $m\ll \bbk$, the postcritical set $\Post(g_m)$ approximates $\Post(g_\str)$. Suppose $x$ belongs to the self-similarity scale $t$; i.e.~$\dist(x, c_0)\asymp \mu_\str^t$. One can show that if $-t-m\gg 0$ is sufficiently big, then $\Post(g_m)$ is sufficiently close to $\partial Z$ (relative to $ \mu_\str^t$) and $g=g_m$ has a definite expansion at $x$.  Let us now proceed with the proof of Property~\ref{property:g D_i is exp}. We need the following fact:

\begin{property}
\label{property:hyp metr}
There is a function $\tau \colon \R_{>2}\to \R_{>1}$ such that \[\tau(r)\to 1\sp\sp \text{ as }\sp\sp r\to +\infty,\]
and such that the following property holds. Let $S_1,S_2$ be two closed connected subsets of $\C$ such that 
\begin{itemize}
\item $1\in S_1\cap S_2$ but $0\not\in S_1\cup S_2$; and 
\item $S_1\cap \ovDisk(t r)=S_2\cap \ovDisk(tr)$ for some $t>1$ and $r >2$.
\end{itemize}
Let $\rho_1$ and $\rho_2$ be the hyperbolic densities of $\C\setminus S_1$ and $\C\setminus S_2$ with respect to the Euclidean metric. Then
\[\frac {1}{\tau(r)}\le \frac{\rho_1(z)}{\rho_2(z)}\le \tau(r)\sp \sp \text{ for } z\in \Disk(t).\]\qed
\end{property} 

Consider now a trapping disk $D_i$. Recall from \S\ref{sss:TrapDisks} that $D_i=\psi_0 (D)$, where $D$ is the trapping disk in the dynamical plane of $\bar g\coloneqq \RR^{n(i)}g$ and $\psi_0$ is the renormalization change of variables specified so that $\psi_0 \big(c_0\big(\bar g \big)\big) = c_0(g)$.

\begin{figure}[tp!]
\begin{tikzpicture}

\draw[dashed] (-3,0)--(0,2)--(3,0)--(0,-2)--(-3,0);

\draw[dashed] (6,-2)--(3,0)--(6,2);
\coordinate  (aa) at (-1.37,0.87); 
\draw[rotate around={-45: (aa)},red,fill=red, fill opacity=0.1]  (aa) ellipse (0.5 and 0.3);

\node[red,above,shift={(-0.2,0.4)}] at (aa) {$U_\bullet$};

\coordinate  (b1) at (3.55,0);

\draw[blue,fill=blue, fill opacity=0.1]  (b1) ellipse (0.3 and 0.6);
\node[blue,above,shift={(-0.1,0.6)}] at (b1) {$D_0$};

\draw[red,fill=red, fill opacity=0.6,shift={(0,0.)},red,shift={(0,0.1)}] (3.45,-0.1)--(3.65,-0.1)--(3.65,0.1)--(3.45,0.1)--(3.45,-0.1);

\node[blue,scale =2] at (4.5,0) {$\dots$}; 

\coordinate  (b2) at (5.5,0); 

\draw[blue,fill=blue, fill opacity=0.1]  (b2) ellipse (0.4 and 1.9);
\node[blue,above,shift={(-0.1,1.9)}] at (b2) {$D_n$};

\draw (3.4,0) edge[red ,->,bend left] (-1.07,0.4);


\end{tikzpicture}
\caption{Trapping disks at different scales. A definite portion of the
first trapping disk $D_0$ returns to $U_\bullet$.}
\label{Fig:Trapping disks}
\end{figure}

\begin{property}
\label{st:hyp metr}
Assuming that the trapping disk $D$ from~\S\ref{sss:TrapDisks} is sufficiently close to $c_0(f_\str)$ and $\bar g$ is sufficiently close to $f_\str$, we have:
\begin{enumerate}
\item $\psi_0\mid D$ is almost an isometry with respect to the hyperbolic metrics of $\C\setminus \ovPost(\bar g)$ and $V\setminus \ovPost(g)$;
\label{st:hyp metr:1}
\item $\psi_0\mid D$ is almost an isometry with respect to the hyperbolic metrics of $\C\setminus \bar g^{-1}\big(\ovPost(\bar g)\big)$ and $V\setminus g^{-1}\big(\ovPost(g)\big)$;\label{st:hyp metr:2}
\item on $D$ the hyperbolic metric of $\C\setminus  \ovPost(\bar g)$ is almost the same as the hyperbolic metric of $\C\setminus \ovPost(f_\str)$;\label{st:hyp metr:3}
\item on $D$ the hyperbolic metric of $\C\setminus  \bar g^{-1}\big(\ovPost(\bar g)\big)$ is almost  the same as  (i.e., sufficiently close to)  the hyperbolic metric of $\C\setminus f_\str^{-1}\big(\ovPost(f_\str)\big)$; and \label{st:hyp metr:4}
\item on $D$ the hyperbolic density of $\C\setminus f_\str^{-1}\big(\ovPost(f_\str)\big)$ is by $\lambda>1$ smaller than the hyperbolic density of $\C\setminus \ovPost(f_\str)$.\label{st:hyp metr:5}
\end{enumerate}
\end{property}
\begin{proof}

Claims~\ref{st:hyp metr:1} and~\ref{st:hyp metr:2} follow from Property~\ref{property:hyp metr}: since $D$ and $c_0\in \Post(\bar g)$ are deep in $\Dom \psi_0$ and since $\psi_0$ respects the postcritical sets, $\psi_0$ is almost an isometry.

Claims~\ref{st:hyp metr:3} and~\ref{st:hyp metr:4} follow from $\Post(\bar g)\to \Post(f_\str)$ as $\bar g\to f_\str$, see~\S\ref{sss:contr post set}.

Claim~\ref{st:hyp metr:5} is equivalent to a strict expansion of $f_\str\mid D\setminus \big( f_\str^{-1}(\overline Z'_\str)\big)$ with respect to the hyperbolic metric of  $\C\setminus \ovPost(f_\str)$. It can be proven in the same way as Lemma~\ref{lem:F^P exp}. 
\end{proof}

\begin{proof}[Proof of Properties~\ref{prop:trap disk:hyp diam is bound} and~\ref{property:g D_i is exp} ]
Property~\ref{prop:trap disk:hyp diam is bound} follows from Claim~\ref{st:hyp metr:1} of Property~\ref{st:hyp metr}. 

Applying Property~\ref{st:hyp metr}, we obtain that the hyperbolic metric of $V\setminus g^{-1}\big(\ovPost(g)\big)$ is in $(\lambda-\varepsilon)>1$ smaller than the hyperbolic metric of $V\setminus \ovPost(g)$ uniformly in $D_i$. This implies Property~\ref{property:g D_i is exp}.
\end{proof}

\subsubsection{Porosity} 
A \emph{gap} of radius $r$ in a set $S$  is a round disk of radius $r$ disjoint from $S$. The following lemma asserts that if a set $S$ has density less than $1 -\epsilon$ in many scales, then it has small area.

\begin{lem}[\cite{AL-posmeas}*{Lemma 6.23}]
\label{lem:cond:small area}
For any $\rho\in  (0, 1), C > 1$ and $\epsilon > 0$ there exist $\sigma\in (0, 1)$ and $C_1 > 0$ with the following property. Assume that a measurable set $S \in \ovDisk(r)$ has the
property that for any $z \in S$ there are $n$ disks $\ovDisk(z, r_k)$ with radii
\[C^{-1} \rho^{\ell_k}\le r_k/r \le C \rho^{\ell_k},\sp \sp \ell_k\in \N,\sp \ell_1<\ell_2<\dots <\ell_n, \]
containing gaps in $S$ of radii $\epsilon r_k$. Then $\area S \le  C_1\sigma^nr^2$.
\end{lem}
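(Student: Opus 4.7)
The estimate is a standard porosity-type statement: a measurable set that is forced to miss a definite proportion of area at $n$ distinct scales must be exponentially small. I would prove it by induction on $n$, with the base case $n=0$ trivial since $\area(S)\le\pi r^{2}$. The engine of the induction is a one-scale density-reduction lemma: there exists $\eta=\eta(\epsilon,\rho,C)\in(0,1)$ such that for every round ball $B$ of radius $R$ and every measurable $T\subset B$ with the property that each $z\in T$ admits a disk of radius $\ge(\epsilon/C)\rho R$ inside a fixed bounded dilate of $B$ that is disjoint from $S$, one has $\area(T)\le(1-\eta)\area(B)$. This is a classical Besicovitch-covering argument: the family of gap disks associated to points of $T$ has radii of bounded ratio and all lie in a bounded dilate of $B$, so a disjoint subfamily of bounded cardinality already occupies a definite fraction of $\area(B)$, and the complement of $T$ in $B$ inherits this fraction.

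For the inductive step, I would partition $S$ by the value of the coarsest porosity scale, $S=\bigsqcup_{m\ge 1}S^{(m)}$ with $S^{(m)}=\{z\in S:\ell_{1}(z)=m\}$. Cover $\overline{\Disk}(r)$ by a finite-multiplicity family $\{B_{j}\}$ of balls of radius $R_{m}=2C\rho^{m}r$. For each $B_{j}$, every point of $S^{(m)}\cap B_{j}$ carries a gap of the correct size at scale $m$ in a bounded dilate of $B_{j}$, so the one-scale reduction gives $\area(S^{(m)}\cap B_{j})\le(1-\eta)\area(B_{j})$. Simultaneously, the remaining $n-1$ porosity scales $\ell_{2}(z)<\dots<\ell_{n}(z)$, all strictly greater than $m$, provide a rescaled porosity datum on $B_{j}$: after zooming $B_{j}$ to unit size and relabelling $\ell_{k}'\coloneqq\ell_{k}-m\ge k-1$, the inductive hypothesis yields $\area(S^{(m)}\cap B_{j})\le C_{1}\sigma^{n-1}\area(B_{j})$. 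Combining these two bounds multiplicatively on each $B_{j}$ and summing over $j$ gives $\area(S^{(m)})\le C_{1}\sigma^{n-1}(1-\eta)\,\pi r^{2}$, and then summing geometrically over $m$ (the extra factor $(1-\eta)$ controls the sum) completes the induction with $\sigma=(1-\eta)^{1/2}$ and $C_{1}$ chosen large.

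The main obstacle is the bookkeeping around the scale-dependence on the point: the chain $\ell_{1}(z)<\cdots<\ell_{n}(z)$ is only forced to satisfy $\ell_{k}(z)\ge k$, and the gaps between consecutive scales can be arbitrary, so one cannot apply the one-scale reduction at uniform global levels $1,\dots,n$. The remedy, as above, is the self-similarity of the porosity hypothesis under rescaling, which allows the inductive bound to be invoked on each cover ball $B_{j}$ with the same constants $(\eta,C_{1})$ after a change of variables; the delicate point is to ensure that grouping by $\ell_{1}(z)=m$ and summing the contributions over $m$ does not reintroduce a divergent constant, which is secured by the additional factor $(1-\eta)$ from the coarsest-scale reduction.
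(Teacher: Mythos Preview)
The paper does not prove this lemma---it is quoted from \cite{AL-posmeas}*{Lemma~6.23} without argument---so there is no in-paper proof to compare against. Your sketch, however, has a real gap at precisely the step you identify as delicate. You produce two separate upper bounds on $\area(S^{(m)}\cap B_j)$, namely $(1-\eta)\,\area(B_j)$ from the one-scale reduction and $C_1\sigma^{n-1}\area(B_j)$ from the inductive hypothesis, and then ``combine these multiplicatively''. This is not valid: two upper bounds on the same quantity yield only their minimum, not their product. Moreover, even the claimed product bound $\area(S^{(m)})\le C_1\sigma^{n-1}(1-\eta)\,\pi r^2$ is \emph{independent of $m$}, so the sum $\sum_{m\ge1}\area(S^{(m)})$ diverges; there is no geometric decay in $m$ to exploit, because at every level your balls $\{B_j\}$ retile the whole disk $\ovDisk(r)$.

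The repair is to replace the pointwise partition $\{\ell_1(z)=m\}$ by a stopping-time decomposition adapted to a nested base-$\rho$ grid. Call a grid square $Q$ \emph{light} if it contains a gap of definite relative size (which occurs whenever some $z\in S\cap Q$ has an active scale within a fixed window $K$ of the level of $Q$), and stop at the first light square along each descending chain. The stopping squares are \emph{pairwise disjoint} and cover $S$, so their total area is at most $\area(\ovDisk(r))$; this is exactly what fails for your covers. Inside each stopped light square $Q$, pass to its grid descendants $K$ levels down: at least one descendant lies entirely in the gap and carries no mass of $S$, while on each remaining descendant $Q''$ every point of $S\cap Q''$ still carries $n-1$ porosity scales relative to $Q''$ (the stopping condition forces $\ell_1(z)\ge\operatorname{level}(Q)+K$). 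Now the inductive bound on each $Q''$ and the $(1-\eta)$-fraction of surviving descendants genuinely multiply, and summing over the disjoint stopping squares closes the induction with $\sigma=1-\eta$.
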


\subsubsection{Estimating $\xi$: outline}\label{sss:outline: Est xi} A point in $V_\bullet \setminus U_\bullet$ escaping $U$ travels through each trapping disk $D_i$. Every $D_i$ has a definite portion returning to $Z$ (Property~\ref{prop:trap disk}). Therefore, with high probability, a point in $V_\bullet \setminus U_\bullet$ escaping $U$ travels through $D_0$ many times. Since a definite  portion of $D_0$ returns to $U_\bullet$ (Property~\ref{prop:first trap disk}), a point in $V_\bullet \setminus U_\bullet$ returns to $U_\bullet$ with high probability. 

We will use the following ingredients. Since almost every point in the Julia set is eventually in a small Julia set, it is sufficient to estimate $\xi$ for points escaping $U$. By expansion, different passages through $D_i$ create gaps in different scales (Lemma~\ref{lem:pullbacks are in different scales}), thus Lemma~\ref{lem:cond:small area} is applicable. Using area estimates, we obtain that points travel through $D_0$ many times with high probability (Lemma~\ref{lem:Sigma_n is small}). 
\subsubsection{Landing branches}

For any point $z$, let 
\[0\le r_1(z)< r_2(z)<\dots < r_n(z)<\dots\]
be all the landing times of $\orb z$ at $D_i$,~i.e. the moments at which $g^{r_n(z)}(z)\in D_i$. 

Let $P^n(z)$ be the pullback of $D_i$ along $g^{r(n)}\colon z\mapsto g^{r(n)}(z)\in D$. The map \[T_{P^n(z)}=T^n\coloneqq g^{r(n)}\colon P^n(z)\to D_i\] is univalent. Let $\PP(D_i)$ be the family of all domains $P=P^n(z)$. Combing the Koebe Distortion Theorem and Property~\ref{prop:first trap disk}, we obtain: 

\begin{property}[\cite{AL-posmeas}*{Lemma 6.24}] The following properties hold.
\begin{itemize}
\item  Landing branches $T_P\colon P\to D_i$ with $P\in \PP(D_i)$ have uniformly (in $P$ and $D_i$) bounded distortion. The domains $P\in \PP(D_i)$ have a bounded shape and are well inside $\C\setminus \ovPost(g)$. 
\item Each domain $P\in \PP(D_0)$ contains a pullback of $V_\bullet$ of comparable size. 
\end{itemize}
\end{property}

Combining expansion with the fact that the hyperbolic density near $D_i$ is bounded below, we obtain: 

\begin{property}[\cite{AL-posmeas}*{Lemma 6.25}]
\label{prop:P int D:bound}
There is a constant $C_0$ such that the following holds. If $P\in \PP(D_i)$ intersects $D_j$, then
\[\diam P\le C_0 \diam D_j.\]
\end{property}

The following lemma asserts that the intersecting pullbacks of $(D_i)_i$ belong to different scales. The lemma follows from the uniform expansion of $g\mid D_i$ (Property~\ref{property:g D_i is exp}) combined with the uniform boundedness of $D_i$ (Property~\ref{prop:trap disk:hyp diam is bound}) and the fact that the hyperbolic density near $D_i$ is bounded below.
\begin{lem}[\cite{AL-posmeas}*{Lemma 6.26}]
\label{lem:pullbacks are in different scales}
For any $\sigma\in (0,1)$ there is a $\nu \in \N$ with the following property. Consider a point $z$ landing at the $D_{i(t)}$ at moments $r_t$, where \[t\in \{0,1,\dots,\nu\}\sp \text{ and }\sp 0\le r_1< r_2<  \dots <r_\nu ,\] and
let $P^t\ni z$ be the corresponding pullback of the $D_{i(t)}$. Then
\[\diam P^\nu< \sigma \diam P^1.\]
\end{lem}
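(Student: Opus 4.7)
My plan is to establish exponential contraction of the pullback domains $P^t$ in the hyperbolic metric $\mu$ of $\C\setminus\ovPost(g)$, and then transfer the estimate to the Euclidean metric via the bounded distortion of landing branches recorded just before Property~\ref{prop:P int D:bound}.

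To set up the hyperbolic contraction, I would first observe that $\ovPost(g)$ is forward invariant, so $g\colon \C\setminus g^{-1}(\ovPost)\to \C\setminus \ovPost$ is a holomorphic covering, hence a local isometry between the corresponding hyperbolic metrics; combined with the Schwarz contraction for the inclusion $\C\setminus g^{-1}(\ovPost)\hookrightarrow \C\setminus \ovPost$, this yields $\|g'(w)\|_\mu\ge 1$ on $\C\setminus g^{-1}(\ovPost)$, with the strict improvement $\|g'(w)\|_\mu\ge \lambda>1$ on each $D_i\setminus g^{-1}(\ovPost)$ by Property~\ref{property:g D_i is exp}. I then factor
\[\|(T^t)'(z)\|_\mu \;=\; \prod_{k=0}^{r_t-1}\|g'(g^k(z))\|_\mu.\]
By hypothesis the only indices $k\in\{0,1,\dots,r_t-1\}$ at which $g^k(z)$ can lie in a trapping disk are $k=r_1,\dots,r_{t-1}$, so at least $t-1$ of the factors are $\ge\lambda$ and the remaining factors are $\ge 1$. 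Hence $\|(T^t)'(z)\|_\mu\ge \lambda^{t-1}$.

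To conclude, I would use the bounded distortion of $T^t$ and bounded shape of $P^t$ to obtain the comparisons
\[\diam P^t\asymp |(T^t)'(z)|^{-1}\diam D_{i(t)},\qquad \diam_\mu P^t\asymp \|(T^t)'(z)\|_\mu^{-1}\diam_\mu D_{i(t)}.\]
The hyperbolic diameters $\diam_\mu D_i$ are uniformly bounded above by Property~\ref{prop:trap disk:hyp diam is bound}, and bounded below because Property~\ref{prop:trap disk} forces $g(D_i)$ to cover a definite portion of $\overline Z(g_\str)\subset\ovPost(g)$, giving $D_i$ definite hyperbolic size. Since each $D_i$ has bounded $\mu$-diameter, the Euclidean density of $\mu$ varies on $D_i$ only by a bounded factor, so after normalizing both comparisons by $\diam_\mu D_{i(t)}$ the ratio between $|(T^t)'(z)|^{-1}$ and $\|(T^t)'(z)\|_\mu^{-1}$ is a universal constant up to a bounded multiplicative error independent of $t$. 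Putting the pieces together,
\[\frac{\diam P^\nu}{\diam P^1}\;\asymp\; \frac{\|(T^\nu)'(z)\|_\mu^{-1}}{\|(T^1)'(z)\|_\mu^{-1}}\;\le\; C\,\lambda^{-(\nu-1)},\]
and choosing $\nu$ so that $C\lambda^{-(\nu-1)}<\sigma$ finishes the proof.

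The main delicacy I anticipate is the uniform control on the Euclidean-to-hyperbolic density conversion across trapping disks at wildly different self-similar scales. Since the density of $\mu$ blows up near $\ovPost(g)$ and the $D_i$ cluster toward $c_0$, one has to verify that the bounded-shape and well-inside constants for the $P^t$ do not deteriorate as $t$ grows; this ultimately reduces to the near-isometric self-similar geometry of the $D_i$ produced via the renormalization change of coordinates $\psi_0$ in Property~\ref{st:hyp metr}, which makes both the densities and the distortion constants uniform in the scale of $D_{i(t)}$.
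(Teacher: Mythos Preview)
Your proposal is correct and follows exactly the approach the paper indicates: the paper does not give a written-out proof but only a one-sentence sketch saying the lemma ``follows from the uniform expansion of $g\mid D_i$ (Property~\ref{property:g D_i is exp}) combined with the uniform boundedness of $D_i$ (Property~\ref{prop:trap disk:hyp diam is bound}) and the fact that the hyperbolic density near $D_i$ is bounded below.'' Your chain-rule factorization of $\|(T^t)'(z)\|_\mu$ together with the Euclidean--hyperbolic conversion via bounded distortion and Property~\ref{st:hyp metr} is precisely how one fleshes out that sketch.
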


\subsubsection{Truncated Poincar\'e series}
We need to understand how disks in $\PP(D_0)$ intersect. Let $\scrP$ be the set of $P\in \PP(D_0)$ that intersect $D_0$. And let $\scrP^n$ be the set of domains $P\in \PP(D_0)$ that can be written as $P=P^m(z)$ with $m\le n$. In other words, the smallest landing time of $P$ is less or equal than $n$.

The \emph{truncated Poincar\'e series} is:
\[\phi_n (\xi)\coloneqq\sum _{P\in \scrP^n} \frac{1}{\big|T'_P(\xi_P)\big|^2},\sp \text{ where }\xi_P\in P\text{ and }T_P(\xi_P)=P.\]

The following lemma follows from the Koebe Distortion Theorem, Property~\ref{prop:P int D:bound}, and the observation that  the family $\scrP^n$ has the intersection multiplicity at most $n$; the proof uses area estimates.

\begin{lem}[\cite{AL-posmeas}*{Lemma 6.28}]
\label{lem:phi_n:est}
There is a constant $C>0$ such that ${\phi_n(\xi)\le Cn}$ for all $\xi \in D_0$.
\end{lem}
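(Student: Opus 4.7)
My plan is to convert $|T'_P(\xi_P)|^{-2}$ into an area factor via the Koebe distortion theorem, and then to control $\sum_{P\in\scrP^n}\area(P)$ by a multiplicity bound on the family $\scrP^n$.

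First I would invoke Koebe distortion. From the property cited just before the lemma, each $T_P\colon P\to D_0$ is univalent with uniformly bounded distortion, every $P$ has bounded shape around $\xi_P$, and the domains $P$ lie well inside $\C\setminus\ovPost(g)$. Hence, uniformly in $P\in\scrP^n$ and in $n$,
\[
|T'_P(\xi_P)|^{-2} \;\asymp\; \frac{\area(P)}{\area(D_0)},
\]
so the lemma reduces to proving $\sum_{P\in\scrP^n}\area(P) \le Cn\,\area(D_0)$.

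Next I would reduce to an area--multiplicity estimate. By Property~\ref{prop:P int D:bound}, every $P\in\scrP^n$ meets $D_0$ and satisfies $\diam P \le C_0\diam D_0$. Therefore $\bigcup_{P\in\scrP^n}P$ lies in a bounded neighborhood $W$ of $D_0$ with $\area(W)\le C'\area(D_0)$. Writing $\mu(w)\coloneqq \#\{P\in\scrP^n : w\in P\}$,
\[
\sum_{P\in\scrP^n}\area(P) \;=\; \int_W \mu(w)\, dA(w) \;\le\; \bigl(\sup_W \mu\bigr)\cdot \area(W),
\]
so it suffices to bound $\mu(w)\le n$.

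The main technical step is the multiplicity bound. If $w\in P^n(z_1)\cap\cdots\cap P^n(z_k)$ with distinct times $r_n(z_1)<\cdots<r_n(z_k)$, then each $r_n(z_i)$ is a landing time of $w$ at $D_0$, so $w$ has at least $k$ visits to $D_0$. Using the univalence of $g^{r_n(z_i)}\colon P^n(z_i)\to D_0$ (which forces the intermediate pullbacks to nest as $P^n(z_i)\subset P^{n-1}(z_i)\subset\cdots\subset P^1(z_i)$), each intermediate time $r_j(z_i)$, $j\le n$, is also a landing time of $w$. A bookkeeping argument on how many landing times of $w$ fit in $[0,r_n(z_k)]$ forces $k\le n$. (A sharper analysis indicates that the family $\PP^n(D_0)$ is essentially pairwise disjoint and hence $\mu\le 1$, but the weaker bound $\mu\le n$ is all that is needed.) Combining the three steps yields $\phi_n(\xi)\le Cn$ as claimed; the main obstacle is the combinatorial multiplicity bound, while the other two steps follow routinely from Koebe distortion and Property~\ref{prop:P int D:bound}.
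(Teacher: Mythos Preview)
Your overall strategy is exactly the paper's: convert $|T'_P(\xi_P)|^{-2}$ to $\area(P)/\area(D_0)$ by Koebe, confine every $P\in\scrP^n$ to a bounded neighborhood of $D_0$ via Property~\ref{prop:P int D:bound}, and then bound $\sum_{P\in\scrP^n}\area(P)$ by the intersection multiplicity of $\scrP^n$. The paper records precisely these three ingredients and defers to \cite{AL-posmeas} for the details.

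The gap is in your justification of the multiplicity bound. Univalence of $g^{r_n(z_i)}\colon P^n(z_i)\to D_0$ does \emph{not} force the nesting $P^n(z_i)\subset P^{n-1}(z_i)\subset\cdots\subset P^1(z_i)$: for that you need $g^{r_j(z_i)}\bigl(P^n(z_i)\bigr)\subset D_0$, i.e.\ that the whole intermediate image, not just the single point $g^{r_j(z_i)}(z_i)$, lies in $D_0$. Univalence gives no such control. Without the nesting, your claim that every intermediate time $r_j(z_i)$ is a landing time of $w$ fails, and the ``bookkeeping argument'' (which you do not actually carry out) has nothing to count. The parenthetical assertion that $\PP^n(D_0)$ is ``essentially pairwise disjoint'' is likewise unjustified: two domains $P^n(z)$ and $P^n(z')$ with different return times can certainly overlap, since different points of a single pullback need not share the same landing itinerary in $D_0$. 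So while you have correctly isolated the one nontrivial step and handled the routine ones, the argument you offer for the multiplicity bound does not go through; this is exactly the point the paper imports from \cite{AL-posmeas} rather than reproving.
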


\subsubsection{Few returns to the base} Let $\Sigma $ be the set of points in $D_0\setminus \Jul$ that under the iterates of $g$
never return back to $D_0$.

\begin{lem}[\cite{AL-posmeas}*{Lemma 6.30}]
\label{lem:Sigma is small}
For any $\sigma\in(0,1)$ and for any natural $\tau\in \N$, if $m\ll\bbk$ is sufficiently big, then 
\[\area \Sigma\le C\sigma^\tau \area D_0 \]
\end{lem}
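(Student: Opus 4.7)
The plan is to exhibit, in any neighborhood of every $z\in\Sigma$, many gaps in $\Sigma$ at geometrically decreasing scales, and then to apply the porosity Lemma~\ref{lem:cond:small area}. The argument follows the template of~\cite{AL-posmeas}*{Lemma~6.30}, adapted to our setting via the trapping disks produced in \S\ref{sss:TrapDisks}.

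First, I would fix $\sigma\in(0,1)$ and $\tau\in\N$, and let $\nu=\nu(\sigma)$ be the constant provided by Lemma~\ref{lem:pullbacks are in different scales}. I then choose $m\ll\bbk$ large enough so that the number $\bbs$ of trapping disks $D_0,D_1,\dots,D_\bbs$ available in the dynamical plane of $g=g_m$ satisfies $\bbs\ge\tau\nu$; this is possible because $\bbs\to\infty$ as $m\to-\infty$, see~\S\ref{sss:TrapDisks}. Since every $D_i$ is constructed as a lift of the universal trapping disk $D$ sitting inside the trapping annulus $A_i$ around $\widetilde X(\RR^{n(i)}g)$, an orbit that starts in $D_0$ and never returns to $D_0$ must leave each region bounded by $A_i$, and therefore must land in each $D_i$ for $i=0,1,\dots,\bbs$ at some moment $r_i$.

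Next, for each landing moment $r_i$ I consider the pullback $P^{(i)}\ni z$ of $D_i$ along the univalent landing branch $T^{(i)}=g^{r_i}\colon P^{(i)}\to D_i$. Property~\ref{prop:trap disk} provides a subset $E_i\subset D_i$ occupying a definite portion of $D_i$ (uniformly in $i$ and $m$) with $g(E_i)\subset\overline Z(g_\str)$, so orbits starting in $E_i$ do not escape and are in particular disjoint from $\Sigma$. By the bounded distortion of landing branches and the bounded shape of $P^{(i)}$ around $z$, the preimage $(T^{(i)})^{-1}(E_i)\subset P^{(i)}$ is a gap in $\Sigma$ of radius comparable to $\diam P^{(i)}$, while $P^{(i)}$ itself is contained in, and contains, round disks around $z$ of radius comparable to $\diam P^{(i)}$. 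Applying Lemma~\ref{lem:pullbacks are in different scales} to the $\bbs+1\ge\tau\nu$ landings, I extract a subsequence of $\tau$ pullbacks
\[
P^{(k_1)}\supset P^{(k_2)}\supset\dots\supset P^{(k_\tau)}\ni z, \qquad \diam P^{(k_{j+1})}<\sigma\,\diam P^{(k_j)},
\]
each carrying a gap in $\Sigma$ of comparable radius. Feeding this data into Lemma~\ref{lem:cond:small area} (with $n=\tau$, $\rho=\sigma$, and $\epsilon$ controlled by the distortion and shape bounds) yields $\area\Sigma\le C\sigma^\tau\area D_0$ after renaming $\sigma$.

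The main obstacle is justifying that every orbit in $\Sigma$ really does land in every trapping disk $D_0,\dots,D_\bbs$, which requires the nested structure of the trapping annuli $A_i$ around the renormalization-scale neighborhoods of $\widetilde X(\RR^{n(i)}g)$. The remaining ingredients — bounded distortion of $T^{(i)}$, bounded shape of $P^{(i)}$, and the fact that $(T^{(i)})^{-1}(E_i)$ is a round-sized gap — are inherited from Properties~\ref{prop:trap disk}--\ref{property:g D_i is exp} and the Koebe distortion theorem, and pose no new difficulty once $m\ll\bbk$ is large enough for the asymptotic geometry of $\RR^i g$ to match that of $f_\str$ on the trapping scale.
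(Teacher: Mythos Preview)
Your proposal is correct and follows essentially the same route as the paper: an escaping orbit from $D_0$ must visit all the trapping disks $D_1,\dots,D_{\bbs}$; each visit produces, via bounded-distortion pullback of the portion $W_i\subset D_i$ landing in $\overline Z(g_\star)$, a gap in $\Sigma$ of definite relative size; Lemma~\ref{lem:pullbacks are in different scales} puts these gaps at geometrically separated scales; and the porosity Lemma~\ref{lem:cond:small area} finishes. Your explicit bookkeeping ($\bbs\ge\tau\nu$, extraction of a $\tau$-term subsequence) makes precise what the paper leaves implicit.

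One small correction: the phrase ``orbits starting in $E_i$ do not escape'' is not quite right, since $Z(g_\star)$ is not $g$-invariant. The correct reason $(T^{(i)})^{-1}(E_i)$ is a gap in $\Sigma$ is that a point landing in $Z(g_\star)$ is back inside the region enclosed by the innermost trapping annulus $A_0$, so if its orbit escapes it must pass through $D_0$ again---contradicting membership in $\Sigma$. You clearly have this picture (you invoke the nested structure of the $A_i$ elsewhere), so this is just a wording fix.
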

\begin{proof}
Since the orbit of $z$ escapes, it passes through all trapping disks $D_1,\dots ,D_\bbs$. Each $D_i$ contains a disk $W_i$ of bounded shape that maps to the Siegel disk $Z(g_\str)$. The pullbacks of $W_i$ create gaps of definite size (distortion theorem) and in different scales (Lemma~\ref{lem:pullbacks are in different scales}). By Lemma~\ref{lem:cond:small area}, the area of $\Sigma$ is small.
\end{proof}

Set
\[\Sigma_n \coloneqq \bigcup _{P\in \scrP^n} T_P^{-1}(\Sigma).\]

As a consequence of Lemmas~\ref{lem:Sigma is small} and~\ref{lem:phi_n:est}, we have:
\begin{lem}[\cite{AL-posmeas}*{Lemma 6.31}]
\label{lem:Sigma_n is small}
Under the assumption of Lemma~\ref{lem:Sigma is small}, there is a constant $C>0$  such that for any $n\in \N$ we have
\[\area\ \Sigma_n\le Cn\sigma^\tau \area D_0.\]
\end{lem}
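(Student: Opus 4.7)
The plan is to bound each preimage $T_P^{-1}(\Sigma)$ using the Koebe distortion theorem, then sum over $P\in \scrP^n$ using the Poincaré series estimate of Lemma~\ref{lem:phi_n:est}, and finally insert the area bound for $\Sigma$ from Lemma~\ref{lem:Sigma is small}.

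First I would recall that for each $P\in \scrP^n$ the landing branch $T_P\colon P\to D_0$ is univalent with uniformly bounded distortion, and $P$ is well inside $\C\setminus \ovPost(g)$ with bounded shape. By the Koebe distortion theorem, $|T'_P|$ is comparable to the constant $|T'_P(\xi_P)|$ on all of $P$ (for the reference point $\xi_P\in P$ appearing in the definition of $\phi_n$). Applying the change-of-variables formula to $T_P^{-1}\colon D_0\to P$ and using this bounded distortion gives
\begin{equation*}
\area\bigl(T_P^{-1}(\Sigma)\bigr)\;\le\;C_1\,\frac{\area\Sigma}{|T'_P(\xi_P)|^2}
\end{equation*}
for a constant $C_1$ independent of $P$, $n$, and $m$.

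Next I would sum this estimate over $P\in \scrP^n$. Since $\Sigma_n=\bigcup_{P\in \scrP^n} T_P^{-1}(\Sigma)$, subadditivity of area yields
\begin{equation*}
\area \Sigma_n \;\le\; \sum_{P\in \scrP^n} \area\bigl(T_P^{-1}(\Sigma)\bigr) \;\le\; C_1\,\area\Sigma\,\sum_{P\in \scrP^n}\frac{1}{|T'_P(\xi_P)|^2} \;=\; C_1\,\phi_n(\xi)\,\area\Sigma,
\end{equation*}
where the last equality holds for any evaluation point, since the Poincaré sum is independent of $\xi$ up to a bounded factor via Koebe (alternatively, pick a specific $\xi\in D_0$). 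Now invoke Lemma~\ref{lem:phi_n:est} to bound $\phi_n(\xi)\le Cn$, and invoke Lemma~\ref{lem:Sigma is small} (applied with the given $\sigma$ and $\tau$) to bound $\area\Sigma\le C\sigma^\tau\area D_0$, provided $m\ll \bbk$ is sufficiently large. Combining these bounds gives $\area\Sigma_n\le C'n\sigma^\tau\area D_0$, with a new constant $C'$ depending only on the universal distortion and geometric constants.

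The main obstacle, if any, is bookkeeping: one must check that the intersection multiplicity used in Lemma~\ref{lem:phi_n:est} is correctly absorbed and that the Koebe distortion constant is truly uniform over all scales and all $m\ll \bbk$. Both facts are already encoded in Properties~\ref{prop:trap disk:hyp diam is bound}--\ref{prop:P int D:bound} and the uniform shape control of elements of $\scrP^n$, so no new analytic input is needed; the lemma then follows by the chain of inequalities above.
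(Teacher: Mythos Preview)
Your proof is correct and matches the paper's approach: the paper simply states this lemma as a consequence of Lemmas~\ref{lem:Sigma is small} and~\ref{lem:phi_n:est} (referring to \cite{AL-posmeas}*{Lemma 6.31} for details), and the argument you wrote out---Koebe distortion on each branch, summing via the truncated Poincar\'e series, then inserting the bound on $\area\Sigma$---is exactly how those two lemmas combine.
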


\subsubsection{Many returns to the base}
Set
\[\mbbS^n \coloneqq\bigcup_{P\in \scrP\setminus \scrP^n} P.\]
\begin{lem}[\cite{AL-posmeas}*{Lemma 6.32}]
\label{lem:mbbS is small}
There exist $C > 0$ and $\sigma\in (0, 1) $ such that for any $n\in \N$ the area of the set of points of $\mbbS^n$ that never land in $V_\bullet$ is at
most $C\sigma ^n\area D_0$.
\end{lem}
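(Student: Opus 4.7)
The plan is to show that at every return of an orbit to the first trapping disk $D_0$, there is a definite probability of being captured by $V_\bullet$, and then to iterate this through the nested family of pullbacks in $\scrP$. Concretely, by Property~\ref{prop:first trap disk} there is a subdomain $V'_\bullet \subset D_0$ of bounded shape that occupies a definite proportion $\eta_0 > 0$ of $\area D_0$ and is mapped onto $V_\bullet$ by a bounded iterate of $g$. This is the source of the geometric decay rate $\sigma = 1 - c \eta_0$.

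First I would transfer this estimate to arbitrary pullbacks. For every $P \in \scrP$, the landing map $T_P \colon P \to D_0$ is univalent with uniformly bounded distortion (by the Koebe Distortion Theorem together with the bounded shape / well-insideness properties of the pullbacks recorded just before Lemma~\ref{lem:phi_n:est}). Consequently, $T_P^{-1}(V'_\bullet)$ occupies at least a fraction $c\eta_0$ of $\area P$ for a universal $c > 0$, and every point in this subset lands in $V_\bullet$ under a bounded number of further iterates of $g$. So within each $P$, the proportion of points that \emph{escape capture by} $V_\bullet$ at the very next return is at most $1 - c\eta_0$.

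Next I would run the induction on the depth of the pullback hierarchy. The key structural fact is that pullbacks in $\scrP^{n+1}$ refine those in $\scrP^n$: inside every $P \in \scrP^n$, the points that subsequently return to $D_0$ are organized into a family of pullbacks in $\scrP^{n+1}$, and the residual set corresponds either to orbits that escape through $V_\bullet$ (good) or to orbits that never come back (controlled by Lemma~\ref{lem:Sigma_n is small}). Applying the one-step estimate to each $P \in \scrP^n$ and summing over the family, the area of the subset of $\mbbS^n$ that never lands in $V_\bullet$ decays at rate $(1-c\eta_0)^n$ up to a contribution coming from Lemma~\ref{lem:Sigma_n is small}. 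Choosing $\sigma \in (1-c\eta_0, 1)$ and selecting $\tau$ in Lemma~\ref{lem:Sigma_n is small} large enough that the side contribution is absorbed, one obtains the bound $\area(\mbbS^n \cap \{\text{never lands in } V_\bullet\}) \le C \sigma^n \area D_0$, which is the desired statement.

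The main obstacle is controlling the \emph{composition} of distortions and of overlap multiplicities along the iteration. Each individual $T_P$ has bounded distortion, but the geometric bound $(1-c\eta_0)^n$ would be vacuous if the pullbacks in $\scrP^n$ overlapped too much or if the shape of $P$ deteriorated as $n$ grows. This is exactly where Lemma~\ref{lem:pullbacks are in different scales} (pullbacks live on distinct scales) and Lemma~\ref{lem:phi_n:est} (the truncated Poincaré series grows only linearly in $n$) enter: together they guarantee that the summation over $P \in \scrP^n$ respects the geometric decay, so that the exponential factor $\sigma^n$ dominates the polynomial overhead. Packaging these pieces carefully is the delicate bookkeeping that closes the argument.
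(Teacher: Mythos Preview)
Your approach is in the right spirit but takes a more roundabout route than the paper. The paper's proof is a direct application of the porosity lemma (Lemma~\ref{lem:cond:small area}): every point of $\mbbS^n$ that never lands in $V_\bullet$ passes through $D_0$ at least $n$ times; each such passage, by Property~\ref{prop:first trap disk} together with the bounded-distortion pullback, creates a definite-size gap in the non-landing set near the point; and Lemma~\ref{lem:pullbacks are in different scales} ensures these $n$ gaps live at $n$ distinct scales. Feeding this into Lemma~\ref{lem:cond:small area} gives the bound $C\sigma^n\,\area D_0$ immediately, with no induction and no appeal to Lemma~\ref{lem:Sigma_n is small} or Lemma~\ref{lem:phi_n:est}.

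Your inductive area-decay argument essentially unpacks the proof of the porosity lemma by hand, which forces you to confront the overlap multiplicity of the family $\scrP^n$ directly. This is why you end up reaching for Lemma~\ref{lem:phi_n:est} and Lemma~\ref{lem:Sigma_n is small}; but note that for points genuinely in $\mbbS^n$ the first $n$ returns are guaranteed, so the detour through the ``never return again'' case and Lemma~\ref{lem:Sigma_n is small} is unnecessary. The step you flag as ``delicate bookkeeping'' is exactly the content that Lemma~\ref{lem:cond:small area} is designed to absorb: it converts the statement ``gaps of definite relative size at $n$ separated scales around each point'' into exponential area decay, without requiring the pullbacks to be disjoint or nested. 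So your argument can be made to work, but the paper's approach is both shorter and structurally cleaner because it isolates the porosity mechanism once and reuses it.
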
 
\begin{proof}
Each time the orbit of $z$ passes through $D_0$, we have a gap of points of definite size that eventually maps to $U_\bullet$ (Property~\ref{prop:first trap disk}). The gaps are in different scales (Lemma~\ref{lem:pullbacks are in different scales}). By Lemma~\ref{lem:cond:small area}, the area of all such $z$ is small.
\end{proof}

\subsubsection{Estimating $\xi$}

\begin{prop}[\cite{AL-posmeas}*{Proposition 6.33}]
For any $\epsilon>0$, if $m\ll\bbk$ is sufficiently big, then $\xi< \epsilon$.
\end{prop}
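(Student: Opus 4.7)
The plan is to reduce the estimation of $\xi$ to a measure estimate inside the first trapping disk $D_0$ and then decompose according to how many times an orbit visits $D_0$. First I would observe that almost every point in $V_\bullet\setminus U_\bullet$ (relative to Lebesgue measure on the Julia set's complement) that escapes $U$ must travel successively through each of the trapping disks $D_0,D_1,\dots,D_{\bbs}$, since every escape from a neighborhood of $Z(g_\str)$ is caught by the surrounding trapping annulus $A$ (Property~\ref{prop:trap disk}). Using the bounded-shape, bounded-distortion landing branches $T_P\colon P\to D_0$ of~$\scrP$, it suffices to bound the relative area in $D_0$ of the set $E$ of points whose forward orbit under $g$ never lands in $U_\bullet$; pulled back by the branches, this will control the bad set in $V_\bullet\setminus U_\bullet$ via the Poincar\'e series estimate of Lemma~\ref{lem:phi_n:est}.

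Next, I would split $E$ by the number of returns to $D_0$. Fix $n\in\mathbb{N}$. Each point of $E$ either (i) belongs to $\Sigma_n$, the preimage under some first $n$ landing branches of the set $\Sigma$ of points that escape $D_0$ forever without returning, or (ii) belongs to $\mathbb{S}^n$, i.e.\ it returns to $D_0$ at least $n$ times without ever landing in $V_\bullet$ (and thus without landing in $U_\bullet\subset V_\bullet$). Lemma~\ref{lem:Sigma_n is small} gives
\[
\area(\Sigma_n)\le Cn\sigma^{\tau}\area(D_0),
\]
provided we have chosen $\bbs\gg 0$ many trapping disks (which is ensured by $m\ll\bbk$; see~\S\ref{sss:TrapDisks}). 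Lemma~\ref{lem:mbbS is small} gives
\[
\area\bigl(\{z\in\mathbb{S}^n:\text{orbit of }z\text{ never meets }V_\bullet\}\bigr)\le C\sigma_1^{n}\area(D_0),
\]
for some $\sigma_1\in(0,1)$ independent of $m$.

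To conclude, I would first fix $n$ large enough that $C\sigma_1^{n}<\epsilon/3$, then choose $\tau$ so that $Cn\sigma^{\tau}<\epsilon/3$, and finally take $m\ll\bbk$ large enough that Lemma~\ref{lem:Sigma is small} applies with these parameters (which requires $\bbs$, the number of trapping disks, to be large, an open condition on how deep we are in the renormalization horseshoe). Transferring back via the Koebe-controlled landing branches and Lemma~\ref{lem:phi_n:est} (whose linear growth in $n$ is absorbed by the geometric factors $\sigma^{\tau}$ and $\sigma_1^n$), we obtain $\xi<\epsilon$.

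The main obstacle will be the careful bookkeeping in transferring the area estimate from the base disk $D_0$ back to $V_\bullet\setminus U_\bullet$: one has to combine the Poincar\'e series bound $\phi_n\le Cn$ with the geometric decay provided by Lemmas~\ref{lem:Sigma_n is small} and~\ref{lem:mbbS is small} in such a way that the linear factor $n$ does not spoil the smallness. This is precisely where the fact that gaps created by successive passages through the $D_i$ lie in genuinely different scales (Lemma~\ref{lem:pullbacks are in different scales}) is crucial, since it is what feeds the porosity criterion of Lemma~\ref{lem:cond:small area} underlying both Lemma~\ref{lem:Sigma is small} and Lemma~\ref{lem:mbbS is small}. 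Everything else is a finite hierarchy of quantifier choices: first $n$, then $\tau$, and finally $m\ll\bbk$ (equivalently, $\bbs$ large) to activate Lemma~\ref{lem:Sigma is small}.
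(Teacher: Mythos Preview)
Your approach is essentially the same as the paper's: reduce to an area estimate for the ``bad'' set $Y\subset D_0$ of points never landing in $V_\bullet$, split according to whether the number of returns to $D_0$ is at most $n$ (controlled by Lemma~\ref{lem:Sigma_n is small}) or exceeds $n$ (controlled by Lemma~\ref{lem:mbbS is small}), optimize over $n$ and $\tau$, and then transfer the estimate from $D_0$ back to $V_\bullet\setminus U_\bullet$.

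One point you should make explicit: your dichotomy $\Sigma_n\cup\mathbb S^n$ does not cover $Y\cap\Jul$, since by definition $\Sigma\subset D_0\setminus\Jul$. The paper handles this third case separately by invoking \cite{L:TypBeh}: almost every point of $\Jul$ eventually lands in the small Julia set $\Jul_\bullet\subset U_\bullet$, so $\area(Y\cap\Jul)=0$. Your parenthetical ``relative to Lebesgue measure on the Julia set's complement'' gestures at this but does not quite say it; the statement concerns the full Lebesgue measure on $V_\bullet\setminus U_\bullet$, and the Julia part must be disposed of by this ergodic input rather than excluded from the outset.
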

\begin{proof}
Let $Y$ be the set of point in $D_0$ that never land at $V_\str$. There are $3$ cases:
\begin{itemize}
\item by~\cite{L:TypBeh}, $\area \big(Y\cap \Jul\big)=0$ (because almost every point in $\Jul$ is eventually in small Julia sets); 
\item   $\area \big(Y\cap  \mbbS^n\big)$ is small by Lemma~\ref{lem:mbbS is small};
\item the area of remaining points in $Y$ is small by Lemma~\ref{lem:Sigma_n is small}.
\end{itemize}

We can now transfer the escaping density estimate for $D_0$ to the escaping density estimate for $V_\bullet\setminus U_\bullet$, see the argument in \cite{AL-posmeas}*{Proposition 6.33}. 
\end{proof}

\section*{Conventions and Notations}

\noindent {\bf Basic conventions.} We set:
\begin{itemize}
\item $\ovDisk(a,r)$ to be the closed disk around $a\in \C$ with radius $r$;
\item $\ovDisk(r)\coloneqq \ovDisk(0,r)$ and $\ovDisk\coloneqq \ovDisk(0,1)$;
\item $T_c  \colon z\mapsto z+c$ is the translation by $c\in \C$;
\item $A_c \colon z\mapsto cz$ is the scaling by $c\in \C\setminus \{0\}$.
\end{itemize}

A \emph{simple arc} is an embedding of a closed interval. We often say that a simple arc $\ell\colon [0,1]\to \C$ \emph{connects} $\ell(0)$ and $\ell(1)$. A \emph{simple closed curve} or a \emph{Jordan curve} is an embedding of the unit circle. A \emph{simple curve} is either a simple closed curve or a simple arc.

A \emph{closed topological disk} is a subset of a plane homeomorphic to the closed unite disk. In particular, the boundary of a closed topological disk is a Jordan curve. A \emph{quasidisk} is a closed topological disk qc homeomorphic to the closed unit disk.

Given a subset $U$ of the plane, we denote by $\intr U $ the interior of $U$.

Let $U$ be a closed topological disk. For simplicity we say that a homeomorphism $f\colon U\to \C$ is \emph{conformal} if $f\mid \intr U $ is conformal. Note that if $U$ is a quasidisk, then such an $f$ admits a qc extension through $\partial U$.

A \emph{closed sector}, or \emph{topological triangle} $S$ is a closed topological disk with two distinguished simple arcs $\gamma_-~,\gamma_+$ in $\partial S$ meeting at the \emph{vertex $v$ of $S$} satisfying $\{v\}=\gamma_-\cap \gamma_+$. Suppose further that $\gamma_-~,\intr S, \gamma_+$ have clockwise orientation at $v$. Then $\gamma_-$ is called the \emph{left boundary of $S$} while $\gamma_+$ is called the \emph{right boundary} of $S$. A closed \emph{topological rectangle} is a closed topological disk with four marked vertices.

Consider a continuous map $f\colon U\to \C$ and let $S\subset \C$ be a connected set. An \emph{$f$-lift} is a connected component of $f^{-1}(S)$. Let \[x_0,x_1,\dots x_n,\sp x_{i+1}=f(x_i)\]
be an $f$-orbit with $x_n\in S$. The connected component of $f^{-n}(S)$ containing $x_0$ is called the \emph{pullback of $S$ along the orbit $x_0,\dots, x_n$.}

Consider two partial maps $f\colon  X\dashrightarrow X$ and $g\colon Y\dashrightarrow Y$. A  homeomorphism $h\colon X\to Y$ is \emph{equivariant} if 
\begin{equation}
\label{eq:defn:equiv}
 h\circ f(x)=g\circ h (x)
 \end{equation}
for all $x$ with $x\in \Dom f$ and $h(x)\in \Dom g$. If~\eqref{eq:defn:equiv} holds for all $x\in T$, then we say that $h$ is \emph{equivariant on $T$.} 

We often write a partial map as $f\colon W\dashrightarrow W$; this means that $\Dom f\cup \Im f \subset W$.

By a \emph{tree} in an open set $U\subset  \C$ we mean an increasing union of finite trees $T_1\subset T_2\subset\dots $ such that $T_i\setminus T_{i-1}$ does not intersect any given compact subset of $U$ for $i\gg0$. A \emph{forest} and a \emph{graph} are defined similarly.

To keep notations simple, we will often suppress indices. For example, we denote
a pacman by $f\colon U_f \to V$, however a pacman indexed by $i$ is denoted as $f_i\colon U_i\to V$ instead of $f_i\colon U_{f_i}\to V$. 

Slightly abusing notations, we will often identify a lamination (or a triangulation) with its support. Given a triangulation $\bDelta$, we denote by $\Delta(i)$ its $i$-th triangle; $\Delta(i,i+1,\dots, i+j)$ denotes the union $\displaystyle \bigcup_{k=0}^j\Delta(i+k)$.

{\bf Renormalizations.} We will usually denote an analytic renormalization operator as ``$\RR$'', i.e.~$\RR f$ is a renormalization of $f$ obtained by an analytic change of variables. A renormalization postcomposed with a straightening will be denoted by ``$\mRR$''; for example, $\mRR_{s} \colon \Mandel_s\to \Mandel$ is the Douady-Hubbard straightening map from a small copy $\Mandel_s$ of $\Mandel$ to the Mandelbrot set. The action of the renormalization operator on the rotation numbers will be denoted by ``$\cRR$''.

 {\bf Rotations.} Combinatorial aspects of rotations are discussed in Section~\ref{s:SectRenorm}. The main notations:  
\begin{itemize}
\item $\ee\colon z\mapsto e^ {2\pi i z}$;
\item $\Lbb_\theta\colon  \ovDisk\to \ovDisk,\sp z\to \ee(\theta) z$,~\eqref{eq:defn:Lbb};
\item $\cRRc\colon \R/\Z\selfmap$ is the prime renormalization on rotation angles~\eqref{eq:R_prm}; 
\item $\mm$ is the renormalization period: $\theta_\str =\cRRc^\mm(\theta_\str)$, \S\ref{sss:renorm:pair of rotat},~\eqref{eq:theta is a per pnt};
\item $\M$ is the antirenormalization matrix,~\eqref{eq:sect to quarter},~\eqref{eq:def:M:2};
\item $\tt$ is the leading eigenvalue of $\M$,~\S\ref{sss:renorm:pair of rotat}; Lemma~\ref{lem:lambda:t*t}: $\lambda_\str=\tt^2=(\cRRc^\mm)'(\theta_\str)$; 
\item $\aa,\ \bb$ renormalization return times \eqref{eq:app:FirstReturnToSector};
\item $\PT$ is the semigroup of power-triples~\eqref{eq:defn:PT},~\S\ref{ss:PowerTriples}. 
\end{itemize}

Let $f : (W, \alpha) \ra (\C, \alpha)$  be a holomorphic map with a distinguished $\alpha$-fixed point. We will usually denote by $\lambda$ the multiplier at the $\alpha$-fixed point. If $\lambda=\ee(\phi)$ with $\phi\in \R$, then $\phi$ is called the \emph{rotation number of $f$}. If, moreover, $\phi=\pp/\qq\in \Q$, then $\pp/\qq$ is also the \emph{combinatorial rotation number}: there are exactly $\qq$ local attracting petals at $\alpha$ and $f$ maps the $i$-th petal to $i+\pp$ counting counterclockwise.

 {\bf Quadratic and quadratic-like families.} (See~\S\ref{ss:QL renorm}.) We denote by 
\begin{itemize}
\item $\Mandel$ is the Mandebrot set, $\Mandel_i\subset \Mandel$ small copies in $\Mandel$;
\item $\HH\subset \Mandel$ the main hyperbolic component;
\item $\QG$ the space of quadratic-like germs~\S\ref{ssss:QL anal ren};
\item $\Conn\subset \QG$ the connectedness locus $\QG$;
\item $\MM_i\subset \Conn$ copies of $\Mandel$ in $\Conn$,~\S\ref{ssss:QL anal ren};
\item $\RR_\QL\colon \QG\dashrightarrow \QG$ a quadratic-like analytic operator~\S\ref{ssss:QL anal ren};
\item $\strai\colon \Conn\to \Mandel$ the straightening;
\item $\mRRc\colon \Mandel \dashrightarrow \Mandel$ the molecule map~\S\ref{ss:MolecMap}; acts as $\cRRc$ on $\partial \HH$. 
\end{itemize}

{\bf Pacmen.} We denote a pacman by a lowercase letter (for example~$f$ or $g$), its bold capital version denotes the corresponding maximal prepacman (resp~$\bF$ or $\bG$). Objects in the dynamical planes of maximal prepacmen are often written in bold script. Objects in the parameter plane are usually written in calligraphic script.

For a pacman $f$ we write $f_n=\RR^n f$ if $f$ is in the domain of $\RR^n$; in particular $f_0=f$. If $f\in \WW^u$, then $f_n$ are well-defined for all $n\le 0$. The corresponding maximal prepacmen are denoted by $\bF_n$.

Renormalization fixed point and associated objects are indicated by ``$\str$'', f.e. $f_\str \colon  U_\str\to V, \sp \bF_\str,\sp \bZ_\str$.  We will suppress ``$\str$'' in~\S\ref{s:Dyn F_str}. For example, $\bF$ denotes the fixed maximal prepacman $\bF_\str$ in~\S\ref{s:Dyn F_str}.

We denote by 
\begin{itemize}

\item $\mu_\str$ and $\lambda_\str$ the dynamical and parameter self-similarity constants with $| \lambda_\str|>1$ and $|\mu_\str|<1$;
\item $A_\str=A_{\mu_\str}\colon z\mapsto \mu_\str z$ the dynamical scaling;
\item $f_\str\in \WW^u$ the fixed pacman: $f_\str=\RR(f_\str)$;
\item $\bF_\str$ is the fixed maximal prepacman; $\bF_\str=\bF$ in Section~\ref{s:Dyn F_str};
\item  $c_0$ and $c_1$ the critical point and the critical value of a pacman $f$;

\item $Z_f$ is the Siegel disk of a Siegel map $f$;

\item $\RR\colon \BB \dashrightarrow \BB$ is an analytic renormalization operator~\S\ref {sss:HypSelfOper}; it acts as $\cRRc^\mm$ on rotation numbers, see Lemma~\ref{lem:RR:rot numb act};
\item $\WW^s,\WW^u\subset \BB$ the stable and unstable manifolds of $\RR$; 
\item $\Unst$ the space of maximal prepacmen;

\item $F=(f_-,\ f_+),\  \bF=(\bbf_-,\ \bbf_+)$ a prepacman and a maximal prepacman~\S\ref{sss:max prepacmen};
\item  $\bF^P,\sp P\in \PT$ is defined in~\S\ref{ss:PowerTriples}; 
\item $\alpha=f(\alpha)$ is a fixed point of a pacman, Figure~\ref{Fg:Pacman};
\item $\balpha=\bF^P(\balpha)$ is a fixed boundary point of a maximal prepacman \S\ref{ss:balpha:wall topoloy};
\item in~\S\ref{s:par pacm} and \S\ref{s:val flow}, $\bgamma(\bG)$ and $\bdelta(\bG)$ denote the periodic cycles characterizing the satellite and the secondary satellite hyperbolic components;

\item $\bDelta_n$ a renormalization triangulation with a wall $\bPi_n$~\S\S\ref{sss:ren trian:pacmen},~\ref{sss:walls bPi},~\ref{ss:RenTriang};  
\item $\CP\big(\bF^P\big)$, $\CV\big(\bF^P\big)$, $\Post(\bF)$ the set of critical values, critical points, the postcritical set 
\item $\Fat(\bF), \Jul(\bF) , \Esc(\bF)$ the Fatou, Julia,  escaping sets~\S\ref{ss:Fat Jul  Esc};
\item $\bZ=\bZ_\str$ the invariant Siegel disk (half-plane) of $\bF_\str$;
\item $c_S(\bF_\str)$ is the unique critical point on $\partial \bZ_\str$ of generation $S\in \PT$;

\item $\bZ_s$ the lift of $\bZ$ along $\bF_\str^S\colon c_s \mapsto 0$;
\item  $\bL_s$ and $\bW_s$ the limb and wake centered at $\bZ_s$;

\item $\bG^{Q(\rr)}\colon \bW^1\to \bW$ a primary renormalization map~\eqref{eq:FRM:bG};
\item $\bW(i,\bG)$ primary wakes; see Theorem~\ref{thm:parab prepacm} and \S\ref{ss:ValFlower:prep};
\item $\bF^{Q(\rr,\ss)}_{\rr,\ss}\colon \bW_{\rr,\ss}^1 \to \bW_{\rr,\ss}$ a secondary renormalization map~\eqref{eq:SRM:rr ss}, 
\item $\frakY_j, \frakZ_i$ periodic and certain preperiodic small Julia sets of the secondary renormalization, see Figure~\ref{Fig:Val Flower X(G)};
\item $\bMM_0=\bMM_{\rr,\ss,\kk}\subset \Unst$ a ternary satellite copy~\S\ref{ss: ternary sat copy}; 
\item $\bMM_{n}\coloneqq \RR^{n}(\bMM_0)\subset \Unst$ the renormalization orbit of $\bMM_0$;
\item $\widetilde \bX(\bG), \bX(\bG), \sp \bG\in \Unst$ the (enlarged) valuable flower of a maximal prepacman~\S\ref{ss:ValFlower:prep};
\item $\widetilde X(g), X(g)$ the (enlarged) valuable flower of a pacman $g\in \MM_n\simeq \bMM_n, \sp {n\ll 0}$, Theorem~\ref{thm:proj of val flow}.
\end{itemize}

\end{document}